\theoremstyle{plain}
\newtheorem{theorem}{Theorem}[section]
\newtheorem{lemma}[theorem]{Lemma}
\newtheorem{proposition}[theorem]{Proposition}
\newtheorem{corollary}[theorem]{Corollary}
\theoremstyle{definition}
\newtheorem{definition}[theorem]{Definition}
\newtheorem{example}[theorem]{Example}
\theoremstyle{remark}
\newtheorem{remark}[theorem]{Remark}
\newcommand{\Ind}{\big\uparrow}
\newcommand{\Res}{\big\downarrow}
\newcommand{\ind}{\!\uparrow}
\newcommand{\res}{\!\downarrow}
\newcommand{\charwr}[3]{(#1)^{\widetilde{\times #3}} #2}
\newcommand{\charwrnb}[3]{#1^{\widetilde{\times #3}} #2}
\newcommand{\charwrdb}[3]{(#1)^{\widetilde{\times #3}} \left( #2\right) }
\DeclareMathOperator{\sgn}{sgn}
\DeclareMathOperator{\Gal}{Gal}
\DeclareMathOperator{\CF}{CF}
\DeclareMathOperator{\End}{End}
\newcommand{\List}[2]{#1_1,#1_2,\dots ,#1_{#2}}
\tikzstyle{Excluded}=[fill={rgb,255: red,191; green,191; blue,191}, draw=none, shape=rectangle, minimum width=0.875cm, minimum height=0.875cm]
\tikzstyle{Small excluded}=[fill={rgb,255: red,191; green,191; blue,191}, draw=none, shape=rectangle, minimum width=0.375cm, minimum height=0.375cm]
\tikzstyle{Big excluded}=[fill={rgb,255: red,191; green,191; blue,191}, draw=none, shape=rectangle, minimum width=1.75cm, minimum height=1.75cm]
\tikzstyle{Without excluded}=[fill={rgb,255: red,191; green,191; blue,191}, draw=none, shape=rectangle, minimum width=0.5cm, minimum height=0.5cm]
\tikzstyle{hdomino}=[fill=none, draw=black, shape=rectangle, minimum width=1cm, minimum height=0.5cm]
\tikzstyle{vdomino}=[fill=none, draw=black, shape=rectangle, minimum width=1cm, minimum height=0.5cm, rotate=90]
\tikzstyle{hnew}=[fill={rgb,255: red,197; green,197; blue,197}, draw=black, shape=rectangle, minimum width=1cm, minimum height=0.5cm]
\tikzstyle{vnew}=[fill={rgb,255: red,197; green,197; blue,197}, draw=black, shape=rectangle, minimum width=1cm, minimum height=0.5cm, rotate=90]
\tikzstyle{Rotated none}=[fill=none, draw=none, shape=circle, rotate=90]
\tikzstyle{h2domino} = [rectangle, draw=black, minimum height=0.5cm, minimum width=1cm]
\tikzstyle{v2domino} = [shape=rectangle, draw=black, minimum height=1cm, minimum width=0.5cm]
\tikzstyle{Border edge}=[-, thick, fill=none]
\tikzstyle{measuredots}=[<->]
\tikzstyle{Grey diagram}=[-, fill={rgb,255: red,122; green,122; blue,122}, thick]
\tikzstyle{Light grey column}=[-, fill={rgb,255: red,197; green,197; blue,197}]
\tikzstyle{Extra box}=[-, fill=none, dashed]
\tikzstyle{normal green}=[-, fill={rgb,255: red,132; green,255; blue,65}]
\tikzstyle{Move it}=[->]
\tikzstyle{special column}=[-, fill={rgb,255: red,8; green,243; blue,255}]
\tikzstyle{Temp Gray}=[-, fill={rgb,255: red,191; green,191; blue,191}]
\title{Multiplicity-free induced characters of symmetric groups}
\author{Pavel Turek}
\date{August 13, 2025}
\subjclass[2020]{Primary: 20C30, Secondary: 05E05, 20B35, 20C15}
\address{Department of Mathematics, Royal Holloway, University of London, Egham, Surrey TW20 0EX, UK}
\email{pkah149@live.rhul.ac.uk}
\begin{document}
	\maketitle	 
	\begin{abstract}
		Let $n$ be a non-negative integer. Combining algebraic and combinatorial techniques, we investigate for which pairs $(G,\rho)$ of a subgroup $G$ of the symmetric group $S_n$ and an irreducible character $\rho$ of $G$ the induced character $\rho\ind^{S_n}$ is multiplicity-free. As a result, for $n\geq 66$, we classify all subgroups $G\leq S_n$ which give rise to such a pair. Moreover, for the majority of these groups $G$ we identify all the possible choices of the irreducible character $\rho$, assuming $n\geq 73$.  
	\end{abstract} 
	
	\thispagestyle{empty}	
	\section{Introduction}
	
	Let $H$ be a finite group. We say that an ordinary character of $H$ is \textit{multiplicity-free} if each of its irreducible constituents appears with multiplicity one. Wildon \cite{WildonMultiplicity-free09} and independently Godsil and Meagher \cite{GodsilMeagherMultiplicity-free10} classified all multiplicity-free permutation characters of the symmetric group $S_n$, assuming $n\geq 66$ in the former. In this paper we solve, up to two exceptional families described below, the problem of classifying all multiplicity-free characters of symmetric groups induced from an arbitrary irreducible character of an arbitrary subgroup.
	
	We set the scene as follows. Fix a non-negative integer $n$. Throughout, given a subgroup $G$ of $S_n$, we say that a character $\rho$ of $G$ is \textit{induced-multiplicity-free} if $\rho\ind^{S_n}$ is multiplicity-free. Moreover, we say that $G$ is a \textit{multiplicity-free} subgroup of $S_n$ if there exists an induced-multiplicity-free character of $G$. Clearly, without altering the definition, we can require such a character to be irreducible.
	
	From the transitivity of induction, one concludes for any chain of subgroups $G\leq H\leq S_n$ that if $G$ is multiplicity-free, so is $H$. Another observation is that the induced character $\rho\ind^{S_n}$ does not change if we conjugate $G$ and $\rho$ by the same element of $S_n$. Therefore we work up to conjugation in $S_n$ throughout.
	
	Our first main result is the classification of all multiplicity-free subgroups of $S_n$ for $n\geq 66$. In the statement of the result and the rest of this paper we use the following notation for particular index two subgroups of groups $S_k\times S_m\wr S_h, S_m\wr S_h$ and $S_k\times S_2\wr S_2$.
	
	\begin{definition}\label{Defn important subgroups}
		Let $k,m$ and $h$ be positive integers and define $\psi$ to be a sign character $\sgn$ if $m$ is even and a trivial character $\mathbbm{1}$ otherwise. We define the subgroups $T_{k,m,h}\leq S_k\times S_m\wr S_h, T_{m,h}\leq S_m\wr S_h$ and $N_k\leq S_k\times S_2\wr S_2$ to be the kernels of $\sgn\boxtimes \left( \charwrnb{\sgn}{\psi}{h}\right), \charwrnb{\sgn}{\psi}{h}$ and $\sgn\boxtimes \left( \charwrnb{\mathbbm{1}}{\sgn}{2}\right)$, respectively.
	\end{definition}
	
	See \S\ref{Sec plethysms} for the notation used for characters of wreath products.
	\begin{theorem}\label{Theorem main}
		Let $n\geq 66$. A subgroup $G\leq S_n$ is multiplicity-free if and only if it belongs to the list (throughout $k,l\geq 1$, $m\geq 2$ and $h\geq 3$):
		\begin{enumerate}[label=\textnormal{(\roman*)}]
			\item $S_n$ and $A_n$,
			\item $S_k\times S_l$, $(S_k\times S_l)\cap A_{k+l}$ and for $k\neq 2$, respectively, $k,l\neq 2$ also $A_k\times S_l$, respectively, $A_k\times A_l$,
			\item $S_k\times S_m\wr S_2$ and for $k\notin\left\lbrace 2m-3,2m-2,2m-1,2m \right\rbrace $ also the groups $\left( S_k\times S_m\wr S_2\right) \cap A_{k+2m}$ and $T_{k,m,2}$,
			\item $S_k\times S_2\wr S_h$ and for $k\geq 2h+2$ also $\left( S_k\times S_2\wr S_h\right)\cap A_{k+2h} $,
			\item  $S_m\wr S_2, \left( S_m\wr S_2\right) \cap A_{2m}, A_m\wr S_2$ and $T_{m,2}$,
			\item $S_2\wr S_h$ and $(S_2\wr S_h)\cap A_{2h}$,
			\item $S_m\wr S_3, \left( S_m\wr S_3\right) \cap A_{3m}$ and $T_{m,3}$,
			\item $S_k\times L$, $A_k\times L$ and $(S_k\times L)\cap A_n$ where $L$ is one of $\mathrm{P}\Gamma\mathrm{L}_2(\mathbb{F}_8)\leq S_9, \mathrm{ASL}_3(\mathbb{F}_2)\leq S_8, \mathrm{PGL}_2(\mathbb{F}_5)\leq S_6$ and $\mathrm{AGL}_1(\mathbb{F}_5)\leq S_5$,
			\item $A_k\times S_2\wr S_2, N_k$ and $T_{k,2,h}$ with $h\in \left\lbrace 3,4 \right\rbrace$,
			\item $A_m\wr S_2$ embedded in $S_{2m+1}$ by the natural inclusion of $S_{2m}$ in $S_{2m+1}$, provided $m$ is a square.
		\end{enumerate}
	\end{theorem}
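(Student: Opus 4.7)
The plan is to prove the two directions of the if-and-only-if separately. For the ``if'' direction, for each subgroup $G$ in items (i)--(x) one must exhibit an irreducible character $\rho$ of $G$ with $\rho\ind^{S_n}$ multiplicity-free. By the transitivity argument recalled after Definition~\ref{Defn important subgroups}, multiplicity-freeness propagates from subgroups to supergroups, so it suffices to treat the ``smallest'' examples in each chain of containments; in particular, handling an index-two subgroup such as $T_{k,m,h}$ automatically yields its parent wreath product. The characters themselves would be produced using the plethysm language of \S\ref{Sec plethysms}: the sign twists appearing in Definition~\ref{Defn important subgroups} are chosen precisely so that the restriction of a carefully constructed irreducible of the parent wreath product onto the relevant index-two subgroup yields a multiplicity-free induction. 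For the direct products with a primitive group $L$ in item (viii), the verification reduces to a direct computation in the (small) group $L$ combined with Littlewood--Richardson manipulations, and the sporadic case (x) exploits the natural inclusion $S_{2m}\hookrightarrow S_{2m+1}$ together with the square hypothesis to match multiplicities via Pieri's rule.

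For the ``only if'' direction, I would invoke the O'Nan--Scott theorem to reduce to an analysis of the maximal subgroups of $S_n$ together with their low-index subgroups. Any subgroup of $S_n$ lies inside some maximal subgroup, and by the contrapositive of transitivity, once a chain of non-multiplicity-free supergroups is exhibited, every subgroup below is non-multiplicity-free. The three O'Nan--Scott types (intransitive, transitive imprimitive, primitive) would be handled separately: the primitive type, for $n\geq 66$, leaves only the four sporadic groups of item (viii) as survivors, every other primitive subgroup being excluded by a direct character-theoretic calculation and the classification of primitive groups of large enough degree; the intransitive and imprimitive types reduce to a finer analysis of characters of $S_k \times S_m \wr S_h$ and of its index-two refinements $T_{k,m,h}$, $T_{m,h}$, $N_k$ defined above.

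The main obstacle is the reverse direction's treatment of the borderline parameter regimes. The exclusion of $k\in\{2m-3,\ldots,2m\}$ in item (iii), and the analogous constraints in (iv) and (ix), reflects delicate coincidences of plethysm or Littlewood--Richardson coefficients that cause the induced character to acquire a repeated constituent. Establishing non-multiplicity-freeness in those regimes requires, for \emph{every} irreducible character $\rho$ of the candidate group, exhibiting some $S_n$-irreducible $\chi^\lambda$ with $\langle \rho\ind^{S_n}, \chi^\lambda\rangle \geq 2$; no uniform appeal to Wildon's or Godsil--Meagher's permutation-character classification suffices, since they cover only $\rho=\mathbbm{1}$. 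Conducting this parameter-by-parameter analysis, while separately handling the four sporadic primitive groups and the square-$m$ constraint in (x), is expected to constitute the principal technical burden of the proof.
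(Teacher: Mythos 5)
Your outline reproduces the paper's skeleton in broad strokes (the intransitive/imprimitive/primitive trichotomy, order bounds plus the classification of primitive groups, explicit character constructions for the listed families), but the reduction you propose for the ``only if'' direction has a genuine gap. Reducing to ``maximal subgroups of $S_n$ together with their low-index subgroups'' is not a valid scheme: most groups in the theorem, and most groups that must be excluded, sit at enormous index inside the maximal subgroup containing them. For instance $S_k\times S_m\wr S_2$ has index $(2m)!/\bigl(2(m!)^2\bigr)$ in the maximal intransitive subgroup $S_k\times S_{2m}$, and $A_m\wr S_2\leq S_{2m+1}$ has huge index in $S_{2m}\times S_1$; so high-index subgroups can neither be reached nor dismissed by your reduction, and the assertion that only low-index subgroups need analysis is unjustified (indeed false as stated). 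What the paper actually does, and what your plan omits, has three ingredients. First, for intransitive $G$ one passes to the product $K\times L$ of the orbit projections with $K,L$ \emph{arbitrary} subgroups, and one must classify all multiplicity-free products $K\times L$ (Theorem~\ref{Theorem direct product k geq 2}, Theorem~\ref{Theorem direct product arbitrary}); this amounts to classifying multiplicity-free symmetric functions $s_{\lambda}(s_{\nu}\circ s_{\mu})$ and occupies \S\ref{Sec S_k times L}--\S\ref{Sec consequences} --- it is not a computation inside $S_k\times S_m\wr S_h$ alone. Second, ``only small-index subgroups of a multiplicity-free parent matter'' needs a mechanism: the paper combines the involution bound $|G|\geq n!/a_n$ (Lemma~\ref{Lemma order bound - inv}, Corollary~\ref{Cor Lemma order bound - exp}) with the observation that if $G\leq H$ is multiplicity-free then $\rho\ind^H$ is an induced-multiplicity-free character of $H$ of degree at least $|H:G|$, so the index is bounded by the degrees of $H$'s induced-multiplicity-free characters; this is how one obtains index at most $2$ or $4$ in $S_k\times S_m\wr S_2$ and $S_m\wr S_3$, and at most $2h+4$ in $S_2\wr S_h$ (Proposition~\ref{Prop possible MF subgroups}). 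Third, the resulting small-index subgroups must be enumerated, which requires the Goursat-type analysis of subdirect and subwreath subgroups (Proposition~\ref{Prop subdirect subgps}, Lemma~\ref{Lemma wreath subgps}) and, for $S_2\wr S_h$, the submodule structure of the natural $\mathbb{F}_2S_h$-module (Lemma~\ref{Lemma submodules in char two}, Lemma~\ref{Lemma small index of S2 wr Sm}); without this, your list of ``index-two refinements'' $T_{k,m,h}$, $T_{m,h}$, $N_k$ is not known to be exhaustive.

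On the ``if'' direction, your appeal to transitivity and to the plethysm formalism is sound, but the borderline memberships are theorems in their own right which the plan does not engage: multiplicity-freeness of $T_{k,m,2}$ and $(S_k\times S_m\wr S_2)\cap A_{k+2m}$ exactly for $k\notin\{2m-3,\dots,2m\}$, of $(S_k\times S_2\wr S_h)\cap A_{k+2h}$ exactly for $k\geq 2h+2$, and of $A_m\wr S_2\leq S_{2m+1}$ exactly for $m$ a square all rest on the explicit combinatorics of \S\ref{Sec SIS I}--\S\ref{Sec rectangles} (domino-rule and Littlewood--Richardson computations, birectangular partitions and the divisibility phenomenon of Proposition~\ref{Prop conjugate (a,b)-birectangles}); Pieri's rule alone, as you suggest for (x), does not produce these. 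So while the route you sketch is the paper's route in outline, the two load-bearing components --- the classification of multiplicity-free products with arbitrary factors, and the quantitative index/structure arguments that justify restricting attention to the named small-index subgroups --- are missing, and without them neither direction of the theorem is established.
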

	
	We have structured the statement such that in all families apart from (ix) all groups $G$ are subgroups of index $1,2$ or $4$ of the first group in the family. The subgroups in (ix) are then index two subgroups of groups of the form $S_k\times S_2\wr S_h$ with $h\in \left\lbrace 2,3,4\right\rbrace $.
	
	From our result, one can spot the slightly surprising fact that there is a multiplicity-free subgroup $G$ of a symmetric group such that its corresponding permutation character is not multiplicity-free. An example of such $G$ is $S_k\times S_2 \wr S_h$ with $k,h\geq 2$: the corresponding permutation character is not multiplicity-free by, for instance, \cite[Theorem~2]{WildonMultiplicity-free09}, but the group is multiplicity-free, which has already been shown in \cite[Lemma~2]{InglisRichardsonSaxlModel90}. Comparing our classifications to the classifications in \cite{WildonMultiplicity-free09} or \cite{GodsilMeagherMultiplicity-free10}, we can list all such subgroups $G$ for $n\geq 66$: they are $A_k\times A_k$, $A_k\times A_{k+1}$, $S_k\times\mathrm{ASL}_3(\mathbb{F}_2)$, $A_k\times\mathrm{ASL}_3(\mathbb{F}_2)$, $(S_2\wr S_h)\cap A_{2h}$ for even $h$, the subgroups in (iii) and (iv) with $k\geq 2$ and the subgroups in (vii), (ix) and (x).
	
	Our other main results find for sufficiently large $n$ all the irreducible induced-multiplicity-free characters of all multiplicity-free groups $G$ apart from those in Theorem~\ref{Theorem main}(ii) and (v). Moreover, we classify certain `elementary irreducible' induced-multiplicity-free characters of groups in (v) and explain why the remaining `non-elementary irreducible' characters are `essentially induced' from irreducible characters of groups in (ii); see the beginning of \S\ref{Sec n=2m} for details. This leaves the classification for groups in (ii) as the only missing piece to the full classification of all irreducible induced-multiplicity-free characters. In fact, it only remains to tell which characters from a greatly reduced list of irreducible characters of $(S_k\times S_l)\cap A_{k+l}$, $A_k\times S_l$ and $A_k\times A_l$, arising from Theorem~\ref{Theorem Stembridge}, are induced multiplicity-free. However, due to intricate combinatorics of `finding conjugate partitions' this question remains open. 
	
	For practical reasons, we only provide references to the respective results for each family of groups (apart from (i) and (ii)) here. Note that (i) is omitted since all the irreducible characters of $S_n$ and $A_n$ are induced-multiplicity-free. This is clear for $S_n$ and well known for $A_n$ as $|S_n:A_n|=2$; see Lemma~\ref{Lemma index two}. For the remaining families see the notation $\rho_N$ introduced after Lemma~\ref{Lemma index two} as well as 
	\begin{enumerate}[label=\textnormal{(\roman*)}]
		\setcounter{enumi}{2}
		\item Theorem~\ref{Theorem direct product k geq 2}(i) and (ii), Proposition~\ref{Prop direct product k eq 1}(i), Corollary~\ref{Cor Sk times Sm sr S2 class}, Corollary~\ref{Cor S1 times Sm wr S2 class};
		\item Theorem~\ref{Theorem direct product k geq 2}(iii) and (iv), Proposition~\ref{Prop direct product k eq 1}(ii) and Corollary~\ref{Cor Sk times S2 wr Sm class};
		\item  Proposition~\ref{Prop MF wreath products}(i) and (ii), Corollary~\ref{Cor Am wr S2 class}, Corollary~\ref{Cor Sm wr S2 cap An class} and Corollary~\ref{Cor Tm,2 class};
		\item Proposition~\ref{Prop MF wreath products}(i) and (iv) and Corollary~\ref{Cor S2 wr Sm class}(i);
		\item Proposition~\ref{Prop MF wreath products}(iii) and Proposition~\ref{Prop Sm wr S3 class}(i);
		\item Proposition~\ref{Prop sk times primitive}, Theorem~\ref{Theorem direct product arbitrary}(iii), Proposition~\ref{Prop Sk times small easy class}(i) and Proposition~\ref{Prop Prop Sk times small hard class};
		\item Theorem~\ref{Theorem direct product arbitrary}(ii), Proposition~\ref{Prop Sk times small easy class}(ii) and (iii);
		\item Proposition~\ref{Prop Am wr S2 class}.
	\end{enumerate}
	
	We also prove several combinatorial results regarding conjugate partitions which may be of independent interest. We now state one of them which the author finds particularly surprising. In the statement, for positive integers $a,b$, we say that a partition $\lambda$ is \textit{$(a,b)$-birectangular} if $\chi^{\lambda}$ is an irreducible constituent of $\left(\chi^{(a^b)}\boxtimes \chi^{(a^b)} \right)\Ind^{S_{2ab}}$ (see Definition~\ref{Definition birectangular} for an alternative definition).
	
	\begin{proposition}\label{Prop conjugate (a,b)-birectangles}
		Let $a>b$ be positive integers and write $d=a-b$. There is a partition $\lambda$ which is both $(a,b)$-birectangular and $(b,a)$-birectangular if and only if $d|a$. Moreover, in such a case there is a unique such $\lambda$ given by $((2b)^{2d}, (2b-2d)^{2d}, \dots, (2d)^{2d})$.
	\end{proposition}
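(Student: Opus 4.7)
The plan is to use the alternative characterisation of $(a,b)$-birectangularity from Definition~\ref{Definition birectangular}: a partition $\lambda$ is $(a,b)$-birectangular if and only if $\lambda\subseteq (2a)^{2b}$ and $\lambda$ is self-complementary in that rectangle, i.e.\ $\lambda_i + \lambda_{2b+1-i} = 2a$ for all $1\le i\le 2b$; denote this symmetry by $(\ast_{a,b})$. Any common partition $\lambda$ then satisfies both $(\ast_{a,b})$ and $(\ast_{b,a})$ and, since $a>b$, fits inside the smaller box $(2b)^{2b}$.

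The heart of the argument is an induction that propagates forced values of $\lambda_i$ in consecutive blocks of $2d$ indices. Applying $(\ast_{b,a})$ to $i\in[1,2d]$, the partner $2a+1-i$ exceeds $2b$ and so has $\lambda$-value $0$, forcing $\lambda_i=2b$ on this initial block. Inductively I would show that $\lambda_i = 2b - 2(t-1)d$ for $i\in((t-1)\cdot 2d,\,t\cdot 2d]$ whenever this block fits inside $[1,2b]$, i.e.\ for $t\le\lfloor b/d\rfloor$: in the inductive step $(\ast_{a,b})$ first reflects the current block's values across the middle of $[1,2b]$, producing values $2td$ there, and then $(\ast_{b,a})$ carries these reflected values back up to form the next block.

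Writing $b = kd + r$ with $0\le r<d$: when $r = 0$ the blocks partition $[1,2b]$ and I read off exactly the partition $((2b)^{2d}, (2b-2d)^{2d}, \dots, (2d)^{2d})$ from the statement. When $r>0$, one further application of $(\ast_{a,b})$ to the leftover tail $i\in(2kd,2b]$ (whose partners $2b+1-i\in[1,2r]\subseteq[1,2d]$ already have value $2b$) forces $\lambda_i = 2d$ on the tail; comparing the resulting $|\lambda|$ with the required $2ab$ and substituting $b=kd+r$ reduces, after straightforward expansion, to the single equation $r(d-r)=0$, which admits no solution in $0<r<d$. This simultaneously proves necessity of $d\mid a$ and uniqueness of $\lambda$ whenever the divisibility holds.

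The remaining checks are routine: the displayed partition is self-conjugate by inspection (a staircase of $2d\times 2d$ square blocks), has size $2ab$, and directly satisfies both $(\ast_{a,b})$ and $(\ast_{b,a})$; the edge case $d>b$ is excluded already at the base of the induction, where forcing $\lambda_i = 2b$ on $[1,2d]$ would require $\ell(\lambda)\ge 2d > 2b$, contrary to $\ell(\lambda)\le 2b$. I expect the main obstacle to be cleanly coordinating the two symmetries $(\ast_{a,b})$ and $(\ast_{b,a})$ of different periods $2b$ and $2a$ in the inductive step; after that, the divisibility condition falls out of a one-line size computation.
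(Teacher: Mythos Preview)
Your argument is correct and follows essentially the same approach as the paper's proof: both establish the base case $\lambda_i=2b$ for $i\in[1,2d]$ via $(\ast_{b,a})$ and then propagate the shift $\lambda_{i+2d}=\lambda_i-2d$ (your block induction is exactly this recursion, phrased stepwise). The only difference is the endgame for necessity: the paper picks an index $2qd+1\in(2b,2a)$ and reads off $0=\lambda_{2qd+1}=2b-2qd$ directly, whereas you sum the forced values and compare with $2ab$; both are valid, with the paper's route avoiding the arithmetic expansion.
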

	
	As a consequence of this result the slightly surprising condition $a-b$ divides $a$, or more precisely its negation, appears in classifications of irreducible induced-multiplicity-free characters of several groups; for example, see Corollary~\ref{Cor S1 times Sm wr S2 class}. 
	
	A corollary of our proofs in $\S\ref{Sec S_k times L}$ is the classification of multiplicity-free symmetric functions of the form $s_{\lambda}(s_{\nu}\circ s_{\mu})$ with $|\lambda|\geq 1$ and $|\mu|,|\nu|\geq 2$. One can find the possible choices of $\lambda,\mu$ and $\nu$ in Proposition~\ref{Prop product with sm wr s2} and Proposition~\ref{Prop product with s2 wr sm}, bearing in mind that in the latter partitions $\lambda$ and $(2)$ can be simultaneously replaced with their conjugates.
	
	From Theorem~\ref{Theorem main} we see that the triple wreath product $S_a\wr S_b\wr S_c$ is not multiplicity-free, provided that $a,b,c\geq 2$ and $n=abc\geq 66$. Using the relation between plethysms of Schur functions and characters of wreath products described at the end of \S\ref{Sec background}, it follows that for any partitions $\lambda,\mu$ and $\nu$ of size at least two such that $|\lambda||\mu||\nu|\geq 66$, the symmetric function $s_{\nu}\circ s_{\mu}\circ s_{\lambda}$ is not multiplicity-free. This is analogous to \cite[Remark~3.5(a)]{StembridgeMultiplicity-free01} which states that a product of three non-trivial Schur functions is not multiplicity-free. Note that the condition $|\lambda||\mu||\nu|\geq 66$ is needed as, for instance, $s_{(2)}\circ s_{(1^2)}\circ s_{(2)} =s_{(2)}\circ s_{(3,1)}$ is multiplicity-free by \cite[Theorem~1.1]{BessenrodtBowmanPagetMF22}.  
	
	\subsection{Background}\label{Sec background}
	
	As already mentioned this paper generalises the classification of multiplicity-free permutation characters of symmetric groups established by Wildon \cite[Theorem~2]{WildonMultiplicity-free09} and independently by Godsil and Meagher \cite[\S5]{GodsilMeagherMultiplicity-free10}. Both papers are based on an earlier result by Saxl \cite[p.~340]{SaxlMultiplicity-free81} which narrows down the list of possible multiplicity-free permutation characters of symmetric groups.
	
	The above classification can be stated as the classification of Gelfand pairs $(S_n,G)$; recall that a pair of groups $(H,G)$ is a \textit{Gelfand pair} if $G\leq H$ and the permutation character of $H$ acting on the cosets of $G$ is multiplicity-free. See \cite[Ch. VII]{MacdonaldPolynomials95} for a background on Gelfand pairs and closely related zonal polynomials. 
	
	A stronger notion is given by \textit{strong Gelfand pairs}, defined as pairs of groups $(H,G)$ with $G\leq H$ such that for each irreducible character $\rho$ of $G$ the induced character $\rho\ind^H$ is multiplicity-free. The strong Gelfand pairs $(S_n,G)$ have recently been found by Anderson, Humphries and Nicholson \cite[Theorem~1.2]{AndersonHumphriesNicholsonStrongGelfPairsSn21}. Note that the classifications of Gelfand pairs and strong Gelfand pairs $(S_n,G)$ are immediate from our results as long as one omits groups $G$ from Theorem~\ref{Theorem main}(ii).
	
	In recent years there have been several new results regarding (strong) Gelfand pairs $(M\wr S_h,G)$, where $M$ and $G$ are finite groups. In particular, \cite[Theorems~1.1 and 1.2]{BensonRatcliffGelfarndWreath18} proved that $(M\wr S_h,\Delta M\times S_h)$ (where $\Delta M$ is the diagonal subgroup in $M\wr S_h$) is not a Gelfand pair as long as $M$ is not abelian and $h=h(M)$ is sufficiently large. In the case $M$ is abelian, we obtain a Gelfand pair which has been studied with connections to the parking functions in \cite{AkerCanParkingGelfand12}. According to Tout \cite[Theorem~1.1]{ToutGelfandWreath21}, also $(M\wr S_h, M\wr S_{h-1})$ is a Gelfand pair if and only if $M$ is abelian. Finally, a study of strong Gelfand pairs $(M\wr S_h, G)$ and their classification in the case $M=S_2$ can be found in Can, She and Speyer \cite{CanSheSpeyerStrongGelfPairsFwrSn21}.
	
	Inspired by the above definitions we say that a pair of groups $(H,G)$ with $G\leq H$ is a \textit{weak Gelfand pair} if there is an irreducible character $\rho$ of $G$ such that $\rho\ind^H$ is multiplicity-free. Our Theorem~\ref{Theorem main} then classifies weak Gelfand pairs $(S_n,G)$. Note that the word `weak' was chosen intentionally since every strong Gelfand pair is a Gelfand pair and every Gelfand pair is a weak Gelfand pair.   
	
	Another way of thinking about multiplicity-free characters uses that a character of $G$ is multiplicity-free if and only if the endomorphism ring of the associated $\mathbb{C}G$-module is commutative. Thus our work investigates induced $\mathbb{C}S_n$-modules $V$ with $\End(V)$ commutative. Such modules were used in \cite{GiannelliWildonFoulkesandDecomposition15} to prove a result on the decomposition matrices of the symmetric groups.
	
	A crucial role in this paper is played by the symmetric functions which provide combinatorial tools for computing characters of symmetric groups. The key symmetric functions for our purposes are the Schur functions $s_{\lambda}$ indexed by partitions $\lambda$ and defined, for instance, in \cite[\S7.10]{StanleyEnumerativeII99}. We also use two associative binary operations of symmetric functions: the natural multiplication and the plethysm product, defined, for instance, in \cite[p.~447]{StanleyEnumerativeII99} and \cite[\S I.8]{MacdonaldPolynomials95}.
	
	Products of Schur functions correspond to induced characters from Young subgroups; more precisely, the decompositions of $\left( \chi^{\mu}\boxtimes\chi^{\nu}\right)\ind^{S_{k+l}} $ (where $k=|\mu|, l=|\nu|$) into irreducible characters and of $s_{\mu} s_{\nu}$ into Schur functions are given by the same multiset of partitions. In the same way the induced character $\charwr{\chi^{\mu}}{\chi^{\nu}}{h}\ind^{S_{mh}}$ of the imprimitive wreath product $S_{m}\wr S_{h}$ (where $m=|\mu|, h=|\nu|$) corresponds to the plethysm $s_{\nu}\circ s_{\mu}$. While a combinatorial rule for decomposing products of Schur functions is known (see \S\ref{Sec LR} and, in particular, Theorem~\ref{Theorem LR rule}), the decompositions of even the simplest plethysms, such as $s_{(h)}\circ s_{(m)}$, is yet to be described; see \cite[Problem~9]{StanleyPositivity00}. The connection between plethysms and induced characters from wreath products plays an important role (not only) in this paper, which allows one to combine algebraic and combinatorial methods in studying either of the two objects.
	
	Many of our results are stated using multiplicity-free symmetric functions and two of the already proved results we build on also concern multiplicity-free symmetric functions. They are the classification of the multiplicity-free products of Schur functions found by Stembridge \cite[Theorem~3.1]{StembridgeMultiplicity-free01} and the classification of the multiplicity-free plethysms of Schur functions established by Bessenrodt, Bowman and Paget \cite[Theorem~1.1]{BessenrodtBowmanPagetMF22}. When classifying irreducible induced-multiplicity-free characters of more complicated groups, such as those in (ix), we often use these two results as the initial step followed by a novel combinatorial argument.
	
	\subsection{Outline}
	
	In \S\S\ref{Sec Par}--\ref{Sec plethysms} we recall definitions and relevant results regarding characters of symmetric groups and symmetric functions. In \S\ref{Sec primitive} we establish lower bounds on the size of a multiplicity-free subgroup of $S_n$ and in turn show that $S_n$ and $A_n$ are the only primitive multiplicity-free subgroups of $S_n$ for $n\geq 13$. The lower bounds are also used in \S\ref{Sec subgroups} when looking at particular subgroups of $S_k\times S_l$ and $S_m\wr S_2$.  
	
	The aim of \S\ref{Sec S_k times L} is to classify the multiplicity-free subgroups $G$ of the form $S_k\times L$ with $k\geq 2$, for $L<S_l$ not equal to $A_l$, as well as their irreducible induced-multiplicity-free characters.
	
	In \S\ref{Sec consequences} we build on the classification from \S\ref{Sec S_k times L} and consider other forms of the group $G$. We end the section by narrowing the list of all subgroups of symmetric groups down to a list of possible multiplicity-free subgroups not so different to the one in Theorem~\ref{Theorem main}.
	
	The goal of \S\S\ref{Sec SIS I}--\ref{Sec SIS II} is then to go through most of the groups on our narrowed list and seek their irreducible induced-multiplicity-free characters. This often involves using combinatorial results, such as Proposition~\ref{Prop conjugate (a,b)-birectangles}, to investigate when a sum of two particular multiplicity-free characters remains multiplicity-free. We have devoted \S\ref{Sec rectangles} to these results.
	
	Finally, Theorem~\ref{Theorem main} is proved in \S\ref{Sec main thm}.  
	
	\section{Preliminaries}
	
	\subsection{Partitions and tableaux}\label{Sec Par}
	A \textit{partition} is a non-increasing sequence $\lambda=(\lambda_1,\lambda_2,\dots,\lambda_t)$ of positive integers, called the \textit{parts} of $\lambda$. We refer to the sum $\sum_{i=1}^{t}\lambda_i$ as the \textit{size} of $\lambda$ and denote it by $|\lambda|$. If $n$ is the size of a partition $\lambda$, we say that $\lambda$ is a \textit{partition of $n$} and write $\lambda\vdash n$. The number of parts of a partition $\lambda$ is called the \textit{length} of $\lambda$ and is denoted by $\ell(\lambda)$. The \textit{Young diagram} of $\lambda$ is the set $Y_{\lambda}=\left\lbrace (i,j)\in\mathbb{N}^2 : i\leq \ell(\lambda), j\leq \lambda_i \right\rbrace $.We refer to the elements of a Young diagram as \textit{boxes}. Finally, the \textit{conjugate partition} of $\lambda$, denoted by $\lambda'$, is the partition such that $Y_{\lambda'}=\left\lbrace (i,j) : (j,i)\in Y_{\lambda} \right\rbrace $. We write $\lambda^{\prime r}$ for the partition obtained from $\lambda$ by conjugating $r$ times. That is, $\lambda^{\prime r}$ is $\lambda$ if $r$ is even and $\lambda'$ if $r$ is odd.
	
	We write $a^b$ for $b$ parts of size $a$ and, depending on the situation, we may attach a finite or an infinite number of zeros at the end of a partition. For example, $(3,3,3,1,1)$ and $(3^3,1^2,0)$ denote the same partition of length $5$.
	
	For two partitions $\lambda$ and $\mu$ we define $\lambda+\mu$ to be the partition $(\lambda_1+\mu_1,\lambda_2+\mu_2,\dots)$ using the convention on tailing zeros just stated. We also define $\lambda\sqcup\mu$ to be the conjugate of $\lambda'+\mu'$. Thus the multiset of the parts of $\lambda\sqcup\mu$ is the union of the multisets of the parts of $\lambda$ and $\mu$. Another pair of partitions we use are $M(\lambda,\mu)$ and $m(\lambda,\mu)$ defined by $M(\lambda,\mu)_i=\max(\lambda_i,\mu_i)$ and $m(\lambda,\mu)_i=\min(\lambda_i,\mu_i)$. Letting $|\lambda-\mu|=\sum_{i\geq 1}|\lambda_i-\mu_i|$, we obtain \[|M(\lambda,\mu)|=(|\lambda| + |\mu| + |\lambda-\mu|)/2\]
	and
	\[|m(\lambda,\mu)|=(|\lambda| + |\mu| - |\lambda-\mu|)/2.\]
	
	For two partitions $\lambda$ and $\mu$ we write $\mu\subseteq\lambda$ if $Y_{\mu}\subseteq Y_{\lambda}$. We call a pair of partitions $(\lambda,\mu)$ with $\mu\subseteq \lambda$ a \textit{skew partition} and denote it by $\lambda/\mu$. We write $Y_{\lambda/\mu}=Y_{\lambda}\setminus Y_{\mu}$ for its Young diagram.
	
	One can see that partitions $\lambda,\mu$ and $\nu$ satisfy $\mu,\nu\subseteq \lambda$ if and only if $M(\mu,\nu)\subseteq \lambda$. Similarly, the condition $\lambda\subseteq\mu,\nu$ is equivalent to $\lambda\subseteq m(\mu,\nu)$. We conclude the following.
	
	\begin{lemma}\label{Lemma contains two partitions}
		Let $\mu$ and $\nu$ be partitions and $n$ a non-negative integer.
		\begin{enumerate}[label=\textnormal{(\roman*)}]
			\item There is a partition $\lambda\vdash n$ such that $\mu,\nu\subseteq \lambda$ if and only if $|\mu-\nu|\leq 2n-|\mu|-|\nu|$.
			\item There is a partition $\lambda\vdash n$ such that $\lambda\subseteq\mu,\nu$ if and only if $|\mu-\nu|\leq |\mu|+|\nu|-2n$.
		\end{enumerate}
	\end{lemma}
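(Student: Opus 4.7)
The plan is to reduce both parts to the two reformulations given immediately before the lemma, namely that $\mu,\nu \subseteq \lambda$ is equivalent to $M(\mu,\nu)\subseteq\lambda$ and that $\lambda\subseteq \mu,\nu$ is equivalent to $\lambda \subseteq m(\mu,\nu)$, together with the size formulas $|M(\mu,\nu)| = (|\mu|+|\nu|+|\mu-\nu|)/2$ and $|m(\mu,\nu)| = (|\mu|+|\nu|-|\mu-\nu|)/2$.

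For (i), the question becomes: does there exist $\lambda \vdash n$ with $M(\mu,\nu)\subseteq\lambda$? If yes, then taking sizes forces $|M(\mu,\nu)| \leq n$, which is equivalent to $|\mu-\nu|\leq 2n - |\mu| - |\nu|$ by the displayed size formula. Conversely, if this inequality holds then one can add $n - |M(\mu,\nu)|$ boxes to the end of the first row of $M(\mu,\nu)$; explicitly, setting $\lambda_1 = M(\mu,\nu)_1 + (n - |M(\mu,\nu)|)$ and $\lambda_i = M(\mu,\nu)_i$ for $i\geq 2$ produces a partition $\lambda\vdash n$ that contains $M(\mu,\nu)$, since the new first part is at least the old one.

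For (ii), the analogous question is: does there exist $\lambda\vdash n$ with $\lambda\subseteq m(\mu,\nu)$? The forward direction gives $n = |\lambda| \leq |m(\mu,\nu)|$, equivalent to $|\mu-\nu| \leq |\mu|+|\nu|-2n$. Conversely, when $n \leq |m(\mu,\nu)|$ we obtain a suitable $\lambda$ by successively removing removable boxes (equivalently, truncating the bottom-right corners) from $m(\mu,\nu)$ until the total size equals $n$; any partition of $n$ produced this way sits inside $m(\mu,\nu)$ and hence inside both $\mu$ and $\nu$.

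The only subtlety is confirming that these explicit constructions yield genuine partitions (i.e., non-increasing sequences of positive integers), but both are routine: in (i) we only enlarge the first row, and in (ii) removing a removable box of a partition always yields a partition. I do not anticipate any real obstacle here, since the content of the lemma is essentially a restatement of the two size formulas via the equivalences already noted in the text.
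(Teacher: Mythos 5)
Your argument is correct and is exactly the route the paper takes: it derives the lemma from the equivalences $\mu,\nu\subseteq\lambda \iff M(\mu,\nu)\subseteq\lambda$ and $\lambda\subseteq\mu,\nu \iff \lambda\subseteq m(\mu,\nu)$ together with the size formulas stated just before the lemma, with the existence direction handled by adding boxes to $M(\mu,\nu)$ or removing boxes from $m(\mu,\nu)$. Your explicit constructions are valid, so there is nothing to fix.
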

	
	A \textit{$\lambda/\mu$-tableau} is a map $t:Y_{\lambda/\mu} \to \mathbb{N}$. A \textit{$\lambda$-tableau} is defined analogously. When the underlying (skew) partition is either implicit from the context or irrelevant we simply say tableau. The \textit{weight} $w(t)$ of a tableau $t$ is a sequence $(t_1,t_2,\dots)$, where $t_i$ is the number of boxes labelled $i$ in $t$, where by the label of a box $b$ we mean $t(b)$, so that $t_i:=|t^{-1}(i)|$. In this paper Young diagrams and tableaux are drawn using the `English' convention; see Figure~\ref{Figure tableau} for an example.
	
	A tableau is \textit{semistandard} if its entries are non-decreasing rightwards along its rows and increasing downwards along its columns. In Figure~\ref{Figure tableau} the middle tableau is semistandard, while the rightmost one is not.
	
	\begin{figure}[h!]
		$\young(\ \ \ \ast\ast,\ \ \ast,\ \ast\ast,\ast\ast,\ast) \qquad  \young(12445,345,556,78,8) \qquad \young(:::11,::1,:23,22,3)$
		\caption{The first diagram is the Young diagram of the partition $\lambda=(5,3^2,2,1)$. The starred boxes form the Young diagram of $\lambda/\mu$ where $\mu=(3,2,1)$. The second diagram is a $\lambda$-tableau with weight $(1,1,1,3,4,1,1,2)$. The final diagram is a $\lambda/\mu$-tableau with weight $(3,3,2)$.}
		\label{Figure tableau}
	\end{figure}
	
	Throughout, most of the appearing partitions are of particularly nice forms summarised in the following definition. We use an adaptation of the terminology from \cite{BessenrodtBowmanPagetMF22}.
	
	\begin{definition}\label{Defn nice partitions}
		We say that a partition $\lambda$ is
		\begin{itemize}
			\item the \textit{empty} partition denoted by $\o$ if $|\lambda|=0$;
			\item a \textit{row} partition if $\ell(\lambda)=1$;
			\item a \textit{column} partition if $\lambda_1=1$;
			\item a \textit{linear} partition if it is a row or a column partition;
			\item a \textit{square} partition if it is of the form $(a^a)$ for some $a>0$;
			\item a \textit{rectangular} partition if it is of the form $(a^b)$ for some $a,b>0$;
			\item a \textit{properly rectangular} partition if it is rectangular but not linear;
			\item a \textit{$2$-rectangular} partition if it is rectangular and either $\lambda_1=2$ or $\ell(\lambda)=2$;
			\item an \textit{almost rectangular} partition if it is non-empty, not rectangular but it becomes a rectangular partition after increasing or decreasing one of its parts by $1$;
			\item a \textit{row-near rectangular} partition if it is not rectangular but becomes rectangular after omitting the final part;
			\item a \textit{column-near rectangular} partition if $\lambda'$ is row-near rectangular;
			\item a \textit{near rectangular} partition if it is row-near rectangular or column-near rectangular;
			\item a \textit{hook} if it is of the form $(a+1,1^b)$ for some $a,b>0$ (compared to the usual conventions, this definition excludes linear partitions);
			\item a \textit{fat hook} if it is of the form $(a^b,c^d)$ for some $a,b,c,d>0$ with $a>c$;
			\item a \textit{row-unbalanced fat hook} if it is of the form $(a,c^d)$ for some $a,c,d>0$ with $a>c$;
			\item a \textit{column-unbalanced fat hook} if $\lambda'$ is a row-unbalanced fat hook. 
		\end{itemize}
	\end{definition}
	
	\subsection{Characters of symmetric groups and symmetric functions}\label{Sec chi and s}
	The irreducible characters of the symmetric group $S_n$ are indexed by partitions $\lambda$ of size $n$ and denoted by $\chi^{\lambda}$; see \cite[\S6]{JamesSymmetric78}. The two linear characters of $S_n$, namely the trivial character and the sign, are given by $\chi^{(n)}$ and $\chi^{(1^n)}$, respectively.
	
	We leave most of the needed results regarding characters of symmetric groups to \S\S\ref{Sec LR} and \ref{Sec plethysms}.  Now we only recall the identity $\sgn\times\chi^{\lambda}=\chi^{\lambda'}$, which holds for all partitions $\lambda$.
	
	Let us introduce a vital graded ring in the representation theory of symmetric groups called $\CF$. We write $\CF^n$ for the free abelian group of virtual characters of $S_n$, freely generated by the irreducible characters of $S_n$. As a graded abelian group $\CF=\bigoplus_{n\in\mathbb{N}_0}\CF^n$. It is equipped with a multiplication $\cdot$ given by the bilinear extension of the identity $\chi^{\mu}\cdot\chi^{\nu} = \left(\chi^{\mu}\boxtimes\chi^{\nu} \right)\Ind^{S_{k+l}}$, where $k=|\mu|, l=|\nu|$ and $\boxtimes$ denotes the outer tensor product.
	
	Let $\Lambda$ be the graded ring of the symmetric functions over integers defined in \cite[\S7.1]{StanleyEnumerativeII99}. An integral basis of $\Lambda$ is given by \textit{Schur functions} $s_{\lambda}$ indexed by partitions $\lambda$; see \cite[\S7.10]{StanleyEnumerativeII99}. The degree of $s_{\lambda}$ is $|\lambda|$ and we denote by $\left\langle \cdot,\cdot\right\rangle $ the unique bilinear form on $\Lambda$ with respect to which Schur functions form an orthonormal basis. 
	
	The following key result links the representation theory of symmetric groups and symmetric functions.
	
	\begin{theorem}[Frobenius characteristic map]\label{Theorem frobenius}
		The map $\xi:\CF \to \Lambda$ defined by $\chi^{\lambda}\mapsto s_{\lambda}$ is an isomorphism of graded rings.
	\end{theorem}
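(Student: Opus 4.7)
Since $\xi$ sends the basis $\{\chi^{\lambda} : \lambda \vdash n\}$ of $\CF^n$ bijectively to the basis $\{s_{\lambda} : \lambda \vdash n\}$ of $\Lambda^n$ for every $n$, it is automatically an isomorphism of graded abelian groups; the content of the theorem is that $\xi$ respects multiplication. My plan is to recover $\xi$ from an explicit formula in terms of power sum symmetric functions and then verify multiplicativity at that level, where it becomes a direct computation.

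First I would extend $\xi$ by $\mathbb{Q}$-linearity to a map $\tilde{\xi}$ from the $\mathbb{Q}$-space of class functions on symmetric groups to $\Lambda_{\mathbb{Q}}$, and show that it admits the description
\[
\tilde{\xi}(f) = \sum_{\lambda \vdash n} \frac{f(\lambda)}{z_{\lambda}}\, p_{\lambda},
\]
where $p_{\lambda}$ is the power sum symmetric function and $z_{\lambda} = \prod_i i^{m_i}\, m_i!$ is the centraliser order of a permutation of cycle type $\lambda$. Establishing this is equivalent to the Frobenius character formula $s_{\lambda} = \sum_{\mu \vdash n} z_{\mu}^{-1}\chi^{\lambda}(\mu)\, p_{\mu}$. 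To obtain it I would (a) verify that $\tilde{\xi}$ is an isometry from the standard inner product on class functions to the Hall inner product on $\Lambda_{\mathbb{Q}}$ (under which both $\{p_{\lambda}/\sqrt{z_{\lambda}}\}$ and $\{s_{\lambda}\}$ are orthonormal); (b) identify the image under $\tilde{\xi}$ of the permutation character of $S_n$ on the Young subgroup $S_{\lambda_1}\times S_{\lambda_2}\times\cdots$ as the complete homogeneous symmetric function $h_{\lambda}$; and (c) use that both these permutation characters and $\{h_{\lambda}\}$ are upper-unitriangular in $\{\chi^{\lambda}\}$ and $\{s_{\lambda}\}$ respectively under the dominance order, with matching transition matrices provided by Young's rule on the one side and the Jacobi--Trudi identity on the other. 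Combined with the isometry property, this forces $\tilde{\xi}(\chi^{\lambda}) = s_{\lambda}$ with the correct signs.

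Granted this description, multiplicativity is immediate. For $f\in\CF^k$ and $g\in\CF^l$, the value of $f\cdot g = (f\boxtimes g)\Ind^{S_{k+l}}$ at a permutation of cycle type $\lambda\vdash k+l$ is controlled, via the induction formula, by pairs $(\mu,\nu)$ with $\mu\vdash k$, $\nu\vdash l$ and $\mu\sqcup\nu = \lambda$; a bookkeeping computation using $|S_{k+l}|/(k!\,l!)$ and the orbit–stabiliser relation between the centralisers gives precisely the right combinatorial factor. Combined with the identity $p_{\mu}p_{\nu} = p_{\mu\sqcup\nu}$, this yields
\[
\tilde{\xi}(f\cdot g) = \sum_{\mu\vdash k,\,\nu\vdash l} \frac{f(\mu)}{z_{\mu}}\frac{g(\nu)}{z_{\nu}}\, p_{\mu}p_{\nu} = \tilde{\xi}(f)\,\tilde{\xi}(g),
\]
and restricting to integer coefficients recovers the stated graded ring isomorphism. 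The main obstacle is the identification $\tilde{\xi}(\chi^{\lambda}) = s_{\lambda}$: the isometry property and the Young-subgroup calculation are largely formal, but pinning down the correct indexing through dominance-order triangularity is where the genuine representation-theoretic input enters, and in practice this step would typically be cited from a standard reference such as \cite{MacdonaldPolynomials95} or \cite{StanleyEnumerativeII99} rather than reproved here.
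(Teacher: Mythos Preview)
The paper does not prove this theorem: it is stated as a standard result with no proof and no explicit citation attached (the surrounding text points to \cite{StanleyEnumerativeII99} and \cite{MacdonaldPolynomials95} for background). Your sketch is the standard argument one finds in those references, so there is nothing to compare against; in the context of this paper the appropriate ``proof'' is simply a citation.
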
 
	
	In light of Theorem~\ref{Theorem frobenius}, we will freely move between characters of symmetric groups and symmetric functions. To make this transition easier, write $\Lambda^{+}$ for the subset of the non-zero homogeneous elements $x$ of $\Lambda$ such that for any partition $\lambda$ we have $\left\langle x,s_{\lambda} \right\rangle \geq 0$. Thus $\Lambda^{+}$ is the image of $\xi$ restricted to the genuine characters. We say $x\in\Lambda^{+}$ is \textit{multiplicity-free} if for all partitions $\lambda$ the inequality $\left\langle x,s_{\lambda} \right\rangle \leq 1$ holds, or equivalently if $\xi^{-1}(x)$ is multiplicity-free. On a similar note, we say that $s_\lambda$ is a \textit{constituent} of $x\in\Lambda^{+}$ if $\left\langle x,s_{\lambda} \right\rangle \geq 1$.
	
	We also introduce the linear involution $\omega:\Lambda\to\Lambda$ given by the linear extension of $\omega(s_{\lambda})=s_{\lambda'}$ for all partitions $\lambda$. This involution corresponds via $\xi$ to tensoring with a sign, and consequently $\omega$ is a ring homomorphism.
	
	Note that one could carry most of the work in this paper with characters only. However, working with characters often requires a hard-to-read notation, while symmetric functions come with a simpler and more aesthetic notation; thus we prioritise them whenever possible. 
	
	\subsection{Products of Schur functions}\label{Sec LR}
	
	For three partitions $\lambda, \mu$ and $\nu$, the \textit{Littlewood--Richardson coefficient} $c(\mu,\nu;\lambda)$ is defined as $\left\langle s_{\mu}s_{\nu},s_{\lambda}\right\rangle $. We can immediately see that this coefficient is non-zero only if $|\mu|+|\nu|=|\lambda|$. Two easy properties one should keep in mind are $c(\mu,\nu;\lambda)=c(\nu,\mu;\lambda)$ and $c(\mu,\nu;\lambda)=c(\mu',\nu';\lambda')$ following after applying $\omega$ to $s_{\mu}s_{\nu}$.
	
	The coefficients $c(\mu,\nu;\lambda)$ can be computed combinatorially using the Littlewood--Richardson rule. We use the following two definitions to state the rule in the form we use it later.
	
	\begin{definition}\label{Definition reading word}
		A \textit{reading word} $r(t)$ of a tableau $t$ is a word of positive integers obtained by reading the columns of $t$ downwards, starting with the rightmost column and moving left.
	\end{definition}
	
	\begin{remark}\label{Remark reading word}
		In the literature various ways of reading the entries of tableaux (and domino tableaux, see Definition~\ref{Definition reading domino}) appear. For instance, James \cite{JamesSymmetric78} reads each row leftwards, starting with the top row and moving down, while Carr\'{e}--Leclerc \cite{CarreSplitting95} read our reading word in reverse order.
	\end{remark}
	
	\begin{example}\label{Example reading word}
		The tableaux in Figure~\ref{Figure LR rule} have reading words $123121$, $112132$ and $112231$, respectively.
	\end{example}
	
	\begin{figure}[h!]
		$\young(::::1,::12,:123) \qquad  \young(::11,:12,23) \qquad \young(::11,:22,13)$
		\caption{Tableaux for Examples~\ref{Example reading word} and \ref{Example LR rule}.}
		\label{Figure LR rule}
	\end{figure}
	
	\begin{definition}\label{Definition latticed word}
		Let $w$ be a word of positive integers. We say that $w$ is a \textit{latticed word} if for any positive integer $i$, any prefix of $w$ contains at least as many occurrences of $i$ as of $i+1$.
	\end{definition}
	
	\begin{example}\label{Example latticed word}
		The words $1122313$ and $121132434$ are latticed. On the other hand, the word $1123132$ is not latticed since in the first $6$ letters there are two occurrences of $3$ but only one of $2$.
	\end{example}
	
	\begin{theorem}[Littlewood--Richardson rule]\label{Theorem LR rule}
		The Littlewood--Richardson coefficient $c(\mu,\nu;\lambda)$ is zero unless $\mu\subseteq\lambda$ in which case it equals the number of the semistandard $\lambda/\mu$-tableaux with weight $\nu$ and a latticed reading word.
	\end{theorem}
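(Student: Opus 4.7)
The plan is to prove the Littlewood--Richardson rule via a combination of skew Schur function duality and jeu de taquin rectification. The first step is to establish the dual identity $s_{\lambda/\mu} = \sum_\nu c(\mu,\nu;\lambda) s_\nu$, which follows from the adjointness of multiplication by $s_\mu$ and the ``skew operator'' with respect to the inner product $\left\langle \cdot,\cdot\right\rangle$ (using that Schur functions form a self-dual basis). This reduces the statement to showing that $\left\langle s_{\lambda/\mu}, s_\nu\right\rangle$ equals the number of semistandard $\lambda/\mu$-tableaux of weight $\nu$ with latticed reading word; the vanishing when $\mu\not\subseteq\lambda$ is immediate since $\lambda/\mu$ is not defined otherwise as a skew partition.

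Next, I would develop jeu de taquin and prove the two foundational facts: (a) iterated slides from any semistandard skew tableau $T$ produce a uniquely determined straight-shape tableau, the \emph{rectification} of $T$, independent of the order of slides, and (b) rectification preserves the weight. The independence in (a) is the main technical input and is customarily established either through Knuth equivalence of reading words or via Greene's theorem on the lengths of strict and weak chains. Once this is in place, combining (a) and (b) with the combinatorial expansion $s_{\lambda/\mu} = \sum_T x^{w(T)}$ shows that the coefficient $\left\langle s_{\lambda/\mu}, s_\nu\right\rangle$ equals the number of semistandard $\lambda/\mu$-tableaux of weight $\nu$ that rectify to any chosen fixed semistandard tableau of shape $\nu$. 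A convenient choice is the ``superstandard'' tableau $T_\nu$ whose $i$th row is filled with $i$'s, whose reading word is manifestly latticed.

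The final, and most delicate, step is to characterize the tableaux of shape $\lambda/\mu$ and weight $\nu$ rectifying to $T_\nu$ as precisely those with latticed reading word. Since rectification corresponds to Knuth equivalence at the level of reading words, this amounts to showing that a semistandard tableau has latticed reading word if and only if its reading word is Knuth equivalent to that of $T_\nu$. One direction would follow by verifying that the two elementary Knuth transformations preserve the lattice property on reading words (so being latticed is a Knuth-class invariant); the other direction would follow from the uniqueness of straight-shape representatives in a Knuth class, forcing the unique such representative within a latticed class to be $T_\nu$ itself.

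The main obstacle is this Knuth--Yamanouchi characterization in the last step, together with the well-definedness of rectification in the middle step. Both require careful case analysis of how elementary slide moves, or equivalently elementary Knuth transformations, interact with prefix counts of letters and with column-strictness; all of the genuine combinatorial content of the Littlewood--Richardson rule is concentrated here, while the algebraic reductions in the first two paragraphs are essentially formal.
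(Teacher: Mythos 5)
The paper does not prove this statement at all: Theorem~\ref{Theorem LR rule} is classical background, stated without proof and used as a black box, so there is no argument of the paper's to compare yours against. Your outline is the standard jeu-de-taquin route, but as written it contains a genuine gap at exactly the step you identify as carrying ``all of the genuine combinatorial content.'' You propose to prove the forward direction of the Knuth--Yamanouchi characterization by ``verifying that the two elementary Knuth transformations preserve the lattice property on reading words (so being latticed is a Knuth-class invariant).'' That claim is false. Take $x=1$, $y=1$, $z=2$ in the elementary transformation $xzy \sim zxy$ (valid since $x\le y<z$): the word $121$ is latticed but its Knuth neighbour $211$ is not, and $121$ really does occur as the reading word of a semistandard skew tableau (shape $(2,2)/(1)$ with entries $1$ in row one and $1,2$ in row two, in the conventions of Definition~\ref{Definition reading word}). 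So latticedness is \emph{not} a Knuth-class invariant, and your reverse direction suffers the same defect, since deducing ``latticed $\Rightarrow$ rectifies to $T_\nu$'' from uniqueness of the straight-shape representative requires knowing that latticedness survives the rectification process. The correct statement, and the one the classical proofs actually establish, is that a \emph{single jeu de taquin slide applied to a semistandard skew tableau} preserves latticedness of the reading word; this must be proved by analysing how one slide rearranges the word (the intermediate words appearing in the Knuth factorization of a slide need not be reading words, and the class-invariance shortcut is unavailable). With that lemma in hand, both directions follow as you describe, since a straight-shape semistandard tableau of weight $\nu$ with latticed reading word is easily seen to be $T_\nu$. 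Alternatively, one can bypass jeu de taquin entirely via one of the short modern proofs (e.g.\ Stembridge's involution proof or the Remmel--Shimozono bijection).

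A secondary point to watch: the paper's reading word (Definition~\ref{Definition reading word}, Remark~\ref{Remark reading word}) reads columns downwards from the rightmost column leftwards, which is the \emph{reverse} of the column word for which the standard identities ``$w$ is Knuth-equivalent to the reading word of its rectification'' are usually stated, and the lattice condition here is a prefix condition on that reversed word. Reversal of words does not interact trivially with Knuth equivalence, so if you run the argument you must either translate everything into one fixed standard convention (restating latticedness as a suffix condition on the usual column word) or redo the slide-invariance analysis directly in the paper's convention. This is routine but not addressed in your outline, and conflating the two conventions is a common source of off-by-one errors in exactly the prefix-count bookkeeping your final step depends on.
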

	
	\begin{example}\label{Example LR rule}
		Let $\mu=(4,2,1)$ and $\nu=(3,2,1)$. If $\lambda=(5,4^2)$ and $\bar{\lambda}=(4^2,3,2)$, we compute that $c(\mu,\nu;\lambda)=1$ and $c(\mu,\nu;\bar{\lambda})=2$ using the $\lambda/\mu$-tableau and the $\bar{\lambda}/\mu$-tableaux from Figure~\ref{Figure LR rule}.
	\end{example}
	
	Let us mention several easy consequences of the Littlewood--Richardson rule we use. If $c(\mu,\nu;\lambda)\neq 0$, then $\lambda_1\leq \mu_1+\nu_1$ and $\ell(\lambda)\leq \ell(\mu)+\ell(\nu)$. Moreover, in such a case $\lambda/\mu$ has at least $\ell(\nu)$ rows. The next consequence plays an important role in this paper.
	
	\begin{lemma}\label{Lemma rotate linear}
		Let $\mu$ and $\nu$ be partitions of equal size and $x$ and $y$ be elements of $\Lambda^{+}$ of degree $l$. If $s_{\mu}x$ and $s_{\nu}y$ are multiplicity-free and $|\mu-\nu|>2l$, then $s_{\mu}x +s_{\nu}y$ is also multiplicity-free.
	\end{lemma}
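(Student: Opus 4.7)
The plan is to show that, under the hypothesis $|\mu-\nu|>2l$, the symmetric functions $s_\mu x$ and $s_\nu y$ have disjoint supports when expanded in the Schur basis. Once this is established, the conclusion is immediate: a sum of two multiplicity-free elements of $\Lambda^{+}$ whose Schur supports are disjoint is itself multiplicity-free, since each $s_\lambda$ then appears with coefficient at most~$1$ in $s_\mu x + s_\nu y$.

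To prove disjointness, I would argue by contradiction and take a common Schur constituent $s_\lambda$ of $s_\mu x$ and $s_\nu y$. Expanding $x = \sum_\rho c_\rho s_\rho$ and $y = \sum_\sigma d_\sigma s_\sigma$ with $c_\rho, d_\sigma \geq 0$ and $|\rho|=|\sigma|=l$, the coefficient of $s_\lambda$ in $s_\mu x$ is $\sum_\rho c_\rho\, c(\mu,\rho;\lambda)$; positivity of this quantity forces $c(\mu,\rho;\lambda)>0$ for some $\rho$, and then the Littlewood--Richardson rule (Theorem~\ref{Theorem LR rule}) yields $\mu\subseteq\lambda$ and $|\lambda|=|\mu|+l$. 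By the same token applied to $s_\nu y$, we also obtain $\nu\subseteq\lambda$. So there exists a partition $\lambda$ of size $n:=|\mu|+l=|\nu|+l$ containing both $\mu$ and $\nu$.

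Now I apply Lemma~\ref{Lemma contains two partitions}(i): the existence of such a $\lambda$ requires
\[
|\mu-\nu|\leq 2n-|\mu|-|\nu|=2l,
\]
using $|\mu|=|\nu|$. This directly contradicts the assumption $|\mu-\nu|>2l$, so no common Schur constituent can exist, and $s_\mu x + s_\nu y$ is multiplicity-free. There is no real obstacle beyond assembling these pieces; the crux of the argument is simply that Littlewood--Richardson containment from each summand combines with the degree constraint to pin $\lambda$ inside a partition of size $|M(\mu,\nu)|=|\mu|+|\mu-\nu|/2$, which exceeds $n$ precisely when $|\mu-\nu|>2l$.
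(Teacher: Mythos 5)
Your proposal is correct and follows essentially the same route as the paper: a common constituent $s_\lambda$ would satisfy $\mu,\nu\subseteq\lambda$ by the Littlewood--Richardson rule, and Lemma~\ref{Lemma contains two partitions}(i) then gives $|\mu-\nu|\leq 2l$, contradicting the hypothesis. The only difference is that you spell out the expansion of $x$ and $y$ into Schur constituents, which the paper leaves implicit.
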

	
	\begin{proof}
		Write $k=|\mu|=|\nu|$. If the statement fails to hold, there is $\lambda\vdash k+l$ such that $s_{\lambda}$ is a common constituent of $s_{\mu}x$ and $s_{\nu}y$. From the Littlewood--Richardson rule $\mu,\nu\subseteq\lambda$ and by Lemma~\ref{Lemma contains two partitions}(i) we have $|\mu-\nu|\leq 2(k+l)-2k=2l$, a contradiction.
	\end{proof}
	
	We often use this lemma with non-square rectangular partitions $\lambda$ and $\lambda'$ in which case the next lemma provides the required hypothesis.  
	
	\begin{lemma}\label{Lemma difference of rectangles}
		Suppose that $\lambda$ is a non-square rectangular partition and $l$ a positive integer. If $|\lambda|>l^2 +l$, then $|\lambda-\lambda'|>2l$.
	\end{lemma}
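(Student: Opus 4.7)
The plan is to write $\lambda=(a^b)$ with $a\neq b$, compute $|\lambda-\lambda'|$ explicitly, and then obtain the required inequality by a short case analysis on the size of the smaller dimension.

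First I would observe that $\lambda'=(b^a)$, and by symmetry (replacing $\lambda$ by $\lambda'$ if necessary, which changes neither $|\lambda|$ nor $|\lambda-\lambda'|$) I may assume $a>b$. Then $\lambda_i=a$ for $1\leq i\leq b$, while $\lambda'_i=b$ for $1\leq i\leq a$, with all other parts zero. Summing the absolute differences row by row,
\[
|\lambda-\lambda'|=\sum_{i=1}^{b}(a-b)+\sum_{i=b+1}^{a}b=b(a-b)+(a-b)b=2b(a-b).
\]
So the claim $|\lambda-\lambda'|>2l$ reduces to showing $b(a-b)>l$.

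To finish, I would argue by contradiction: suppose $b(a-b)\leq l$. Since $a>b\geq 1$, we have $a-b\geq 1$, which forces $b\leq l$, and hence $b^2\leq l^2$. Then
\[
|\lambda|=ab=b^2+b(a-b)\leq l^2+l,
\]
contradicting the hypothesis $|\lambda|>l^2+l$.

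There is no real obstacle here; the only subtlety is getting the formula $|\lambda-\lambda'|=2\min(a,b)\cdot|a-b|$ correct, which the explicit row-by-row computation handles cleanly.
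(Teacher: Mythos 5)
Your proof is correct and follows essentially the same route as the paper's: both compute $|\lambda-\lambda'|=2b(a-b)$ for $\lambda=(a^b)$ with $a>b$, and then derive the contradiction $|\lambda|=ab=b^2+b(a-b)\leq l^2+l$ from the assumption $b(a-b)\leq l$. The explicit row-by-row computation you include is a fine (and slightly more detailed) justification of the formula the paper states without proof.
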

	
	\begin{proof}
		Write $\lambda=(a^b)$ and without loss of generality let $a>b$. Then $|\lambda-\lambda'|$ equals $2b(a-b)$; thus we need to show that $b(a-b)>l$. If this fails to hold, then $b\leq l$ and $|\lambda|=ab\leq ab+l^2-b^2 = b(a-b)+l^2\leq l^2+l$, a contradiction.
	\end{proof}
	
	The Littlewood--Richardson rule specialises to simpler Young's and Pieri's rules when $\nu$ is a linear partition. We define a \textit{horizontal strip} to be a skew partition with at most one box in each column and a \textit{vertical strip} to be a skew partition with at most one box in each row.
	
	\begin{theorem}[Young's rule]\label{Theorem Y rule}
		The coefficient $c(\mu,(k);\lambda)$ is zero unless $\lambda/\mu$ is a horizontal strip of size $k$, in which case $c(\mu,(k);\lambda)=1$.
	\end{theorem}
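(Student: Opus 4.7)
The plan is to derive Young's rule as a direct specialisation of the Littlewood--Richardson rule (Theorem~\ref{Theorem LR rule}) with $\nu=(k)$. By that rule, $c(\mu,(k);\lambda)$ is zero unless $\mu\subseteq\lambda$ and $|\lambda|=|\mu|+k$, in which case it counts the semistandard $\lambda/\mu$-tableaux of weight $(k)$ whose reading word is latticed.

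First I would analyse what a semistandard $\lambda/\mu$-tableau of weight $(k)$ looks like. Since the weight is $(k)$, every one of the $k$ boxes of $\lambda/\mu$ is labelled $1$. The row condition (entries weakly increasing rightwards) is then automatic, and the column condition (entries strictly increasing downwards) forces each column of $\lambda/\mu$ to contain at most one box. This is exactly the definition of a horizontal strip. Hence if $\lambda/\mu$ is not a horizontal strip of size $k$, no such tableau exists and $c(\mu,(k);\lambda)=0$; if $\lambda/\mu$ is a horizontal strip of size $k$, there is a unique such filling (every box gets a $1$).

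Next I would verify the reading-word condition is free. The reading word of this unique tableau is $1^k$, which contains no letter $i+1$ for any $i\geq 1$, so every prefix trivially has at least as many $i$'s as $(i+1)$'s. Thus the word is latticed, and the unique tableau does contribute $1$ to the count. Combining the two cases gives the claimed formula.

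There is no real obstacle here: the only small point to take care of is to note that the case $\mu\not\subseteq\lambda$ is already absorbed into the conclusion, since whenever $\lambda/\mu$ is a horizontal strip (in particular a valid skew partition) we have $\mu\subseteq\lambda$ by Definition. Thus no separate argument is needed for that case, and Young's rule follows in one step from Theorem~\ref{Theorem LR rule}.
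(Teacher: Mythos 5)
Your proof is correct and matches the paper's approach: the paper offers no separate argument, presenting Young's rule precisely as the specialisation of the Littlewood--Richardson rule (Theorem~\ref{Theorem LR rule}) to $\nu=(k)$, which is exactly what you carry out (all-$1$ filling, column-strictness forcing a horizontal strip, and the reading word $1^k$ being trivially latticed).
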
 
	
	\begin{theorem}[Pieri's rule]\label{Theorem P rule}
		The coefficient $c(\mu,(1^k);\lambda)$ is zero unless $\lambda/\mu$ is a vertical strip of size $k$, in which case $c(\mu,(1^k);\lambda)=1$.
	\end{theorem}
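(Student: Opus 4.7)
The plan is to derive Pieri's rule directly from the Littlewood--Richardson rule (Theorem~\ref{Theorem LR rule}) by exploiting how restrictive the weight $(1^k)$ is on a semistandard skew tableau. Since every such tableau contains each of $1,2,\dots,k$ exactly once, the counting problem collapses to a uniqueness question.

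The first step is to observe that a word containing each of $1,\dots,k$ exactly once is latticed if and only if it is the word $1,2,\dots,k$: by Definition~\ref{Definition latticed word}, any occurrence of $i+1$ must be preceded by an occurrence of $i$, and there is only one of each symbol to work with. The second step is to use Definition~\ref{Definition reading word}: the reading word traverses columns from top to bottom, beginning with the rightmost column. Combined with the strict column-increase of a semistandard tableau, this forces the rightmost column of $\lambda/\mu$ to carry the entries $1,2,\dots,r_1$ (top to bottom, with $r_1$ its number of boxes), the next column to carry $r_1+1,\dots,r_1+r_2$, and so on. Thus there is at most one candidate semistandard tableau, and it is uniquely determined by the shape $\lambda/\mu$ alone.

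The third step is to decide for which $\lambda/\mu$ this candidate is genuinely semistandard. The column condition is automatic by construction; only the weakly-increasing row condition can fail. Because $\mu\subseteq\lambda$ both have left-justified Young diagrams, the boxes of $\lambda/\mu$ within a single row occupy consecutive columns. If some row of $\lambda/\mu$ contained two boxes, the enforced filling would place a strictly smaller value in the right box than in the left one (since the right box is read earlier), violating the row condition. Conversely, when each row of $\lambda/\mu$ contains at most one box -- that is, when $\lambda/\mu$ is a vertical strip of size $k$ -- the row condition is vacuous and the enforced filling is valid. Counting via the Littlewood--Richardson rule then yields exactly $1$ in the vertical-strip case and $0$ otherwise, which is the desired statement.

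There is no real obstacle here: the proof is a clean case analysis, and the only point requiring a moment of care is the combinatorial observation that the only latticed permutation of $1,2,\dots,k$ is the identity, together with the bookkeeping of how the reading word interacts with the column-by-column structure of the skew shape.
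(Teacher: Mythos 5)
Your proof is correct: the key observations (a word with each of $1,\dots,k$ exactly once is latticed only if it equals $12\cdots k$, which forces the filling column-by-column and makes the row condition fail precisely when some row of $\lambda/\mu$ has two boxes) are exactly the standard specialisation of Theorem~\ref{Theorem LR rule}. This matches the paper, which states Pieri's rule without proof as a direct specialisation of the Littlewood--Richardson rule, so your argument simply supplies the routine details.
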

	
	We give an example of an application of Pieri's rule in the following lemma.
	
	\begin{lemma}\label{Lemma Pieri application}
		Let $x$ be a multiplicity-free element of $\Lambda^{+}$ and $k$ a non-negative integer. Then $s_{(1^k)}x$ is multiplicity-free if and only if for any constituents $s_{\mu}$ and $s_{\nu}$ of $x$ with $\mu\neq \nu$ one of the following holds:
		\begin{enumerate}[label=\textnormal{(\roman*)}]
			\item $|\mu-\nu|>2k$,
			\item there is a positive integer $i$ such that $|\mu_i-\nu_i|\geq 2$. 
		\end{enumerate}
	\end{lemma}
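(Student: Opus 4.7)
The plan is to expand $s_{(1^k)}x$ via Pieri's rule (Theorem~\ref{Theorem P rule}) and identify when two distinct constituents of $x$ produce a common $s_\lambda$. Since $x\in\Lambda^{+}$ is homogeneous and multiplicity-free, I would write $x=\sum_{\mu\in X}s_\mu$ for some set $X$ of partitions of a common size; Pieri then gives
\[
s_{(1^k)}x=\sum_{\mu\in X}\sum_{\lambda/\mu\ \text{vertical strip of size }k}s_\lambda,
\]
so $s_{(1^k)}x$ fails to be multiplicity-free precisely when there exist distinct $\mu,\nu\in X$ and a partition $\lambda$ for which both $\lambda/\mu$ and $\lambda/\nu$ are vertical strips of size $k$. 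Everything reduces to characterising when such a $\lambda$ can be constructed.

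For the forward direction, fix such $\lambda,\mu,\nu$. The vertical-strip constraints force $\lambda_i-\mu_i,\lambda_i-\nu_i\in\{0,1\}$ for every $i$, whence $|\mu_i-\nu_i|\le 1$, so (ii) fails. Moreover, $\mu,\nu\subseteq\lambda$ with $|\lambda|=|\mu|+k=|\nu|+k$, so Lemma~\ref{Lemma contains two partitions}(i) yields $|\mu-\nu|\le 2k$, so (i) fails as well. Contrapositively, if each pair of distinct constituents satisfies (i) or (ii), then $s_{(1^k)}x$ is multiplicity-free. Case (i) alone can alternatively be handled by applying Lemma~\ref{Lemma rotate linear} with $y=s_{(1^k)}$.

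The main obstacle is the converse: given distinct $\mu,\nu\in X$ with $|\mu_i-\nu_i|\le 1$ for all $i$ and $|\mu-\nu|\le 2k$, I must exhibit a common $\lambda$. The row-by-row analysis above already pins any candidate $\lambda$ down to $M(\mu,\nu)$ possibly augmented by a vertical strip supported on the rows where $\mu_i=\nu_i$. Exploiting that $\mu_i=\nu_i=0$ for every $i$ beyond $N:=\max(\ell(\mu),\ell(\nu))\ge 1$, I would define $\lambda$ to agree with $M(\mu,\nu)$ on the first $N$ rows and append $r:=k-|\mu-\nu|/2$ further rows of length $1$. Here $r$ is a non-negative integer since $|\mu|=|\nu|$ together with $|\mu_i-\nu_i|\le 1$ forces $|\mu-\nu|$ to be even, and $|\mu-\nu|\le 2k$ gives $r\ge 0$. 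Routine verification then confirms that $\lambda$ is a partition (using $M(\mu,\nu)_N\ge 1$), that $|\lambda|=|M(\mu,\nu)|+r=|\mu|+k$, and that both $\lambda/\mu$ and $\lambda/\nu$ are vertical strips of size $k$, completing the argument.
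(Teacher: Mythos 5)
Your proof is correct and follows essentially the same route as the paper's: reduce via Pieri's rule to whether two distinct constituents admit a common $\lambda$ with both skews vertical strips, derive the failure of (i) and (ii) from the strip and containment constraints, and in the converse build exactly the paper's witness $\lambda=M(\mu,\nu)\sqcup(1^{k-|\mu-\nu|/2})$. No gaps; only cosmetic differences (you invoke Lemma~\ref{Lemma contains two partitions}(i) directly where the paper cites Lemma~\ref{Lemma rotate linear}).
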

	
	\begin{proof}
		By Pieri's rule, $s_{(1^k)}s_{\mu}$ is multiplicity-free for any partition $\mu$; thus $s_{(1^k)}x$ is multiplicity-free if and only if for any constituents $s_{\mu}$ and $s_{\nu}$ of $x$ with $\mu\neq \nu$, the symmetric functions $s_{(1^k)}s_{\mu}$ and $s_{(1^k)}s_{\nu}$ do not share any constituents. Hence it is sufficient to show that for any partitions $\mu\neq \nu$, the symmetric functions $s_{(1^k)}s_{\mu}$ and $s_{(1^k)}s_{\nu}$ share a constituent, say $s_{\lambda}$, if and only if (i) and (ii) fail to hold. 
		
		For the `only if' direction we see that (i) fails to hold from Lemma~\ref{Lemma rotate linear} applied with $x=y=s_{(1^k)}$. Moreover, $1\geq\lambda_i- \nu_i\geq \mu_i-\nu_i$ for any $i\geq 1$ by Pieri's rule. Swapping $\mu$ and $\nu$ shows that (ii) fails to hold.
		
		For the `if' direction let $l=k+|\mu|-|M(\mu,\nu)|=k-|\mu-\nu|/2$. Note $l\geq 0$ since (i) is false. Thus $\lambda=M(\mu,\nu)\sqcup (1^l)$ is a well-defined partition of size $|\mu|+k$ and $\mu,\nu\subseteq\lambda$. As (ii) is false we conclude that $\lambda/\mu$ and $\lambda/\nu$ are vertical strips, as needed.
	\end{proof}
	
	Young's and Pieri's rules coincide when $k=1$ and in such a case one obtains the induction branching rule. While the rule is implicit from Young's and Pieri's rules we state it here using characters to accompany the less obvious reduction branching rule.
	
	\begin{theorem}[Branching rules]\label{Theorem branching rules}
		Let $n$ be a non-negative integer.
		\begin{enumerate}[label=\textnormal{(\roman*)}]
			\item If $\mu$ is a partition of $n$, then
			\[\chi^{\mu}\ind^{S_{n+1}}=\sum_{\substack{\lambda\vdash n+1\\ \mu\subseteq\lambda}}\chi^{\lambda}.\]
			\item If $\lambda$ is a partition of $n+1$, then \[\chi^{\lambda}\res_{S_{n}}=\sum_{\substack{\mu\vdash n\\ \mu\subseteq\lambda}}\chi^{\mu}.\]
		\end{enumerate}		
	\end{theorem}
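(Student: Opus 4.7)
The plan is to deduce part (i) directly from the Frobenius characteristic map together with Pieri's rule (or equivalently Young's rule) specialised to $k=1$, and then obtain part (ii) as an immediate consequence of part (i) via Frobenius reciprocity.

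For part (i), I would start by applying the Frobenius characteristic isomorphism $\xi$ from Theorem~\ref{Theorem frobenius}. Since $\xi$ is a ring isomorphism, the character $\chi^{\mu}\ind^{S_{n+1}} = \chi^{\mu}\cdot \chi^{(1)}$ (using the multiplication in $\CF$) corresponds to the product $s_{\mu}s_{(1)}$ in $\Lambda$. By Pieri's rule (Theorem~\ref{Theorem P rule}) with $k=1$, we have $s_{\mu}s_{(1)} = \sum_{\lambda} s_{\lambda}$, where $\lambda$ ranges over partitions of $n+1$ such that $\lambda/\mu$ is a vertical strip of size $1$. But a skew shape of size $1$ is a single box, so the condition reduces to $\mu\subseteq \lambda$ and $|\lambda| = n+1$. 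Applying $\xi^{-1}$ yields the claimed decomposition.

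For part (ii), I would use Frobenius reciprocity. For any $\mu\vdash n$,
\[\left\langle \chi^{\lambda}\res_{S_n}, \chi^{\mu} \right\rangle_{S_n} = \left\langle \chi^{\lambda}, \chi^{\mu}\ind^{S_{n+1}} \right\rangle_{S_{n+1}}.\]
By part (i), the right-hand side equals $1$ if $\mu \subseteq \lambda$ and $0$ otherwise. Since the irreducible characters $\chi^{\mu}$ form an orthonormal basis for the class functions on $S_n$, this determines $\chi^{\lambda}\res_{S_n}$ as the claimed sum.

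There is no real obstacle here: once the Frobenius characteristic map and Pieri's rule are in hand, part (i) is a one-line identification, and Frobenius reciprocity immediately converts this into part (ii). The only thing to check is that Pieri's rule at $k=1$ really produces exactly the skew shapes consisting of a single added box, which is immediate from the definition of a vertical strip.
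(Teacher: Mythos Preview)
Your proof is correct and matches the paper's own approach: the paper explicitly remarks just before the theorem that Young's and Pieri's rules coincide when $k=1$ and that the induction branching rule is implicit from them, which is exactly your argument for (i). The paper does not spell out a proof of (ii), calling it ``less obvious'', but your use of Frobenius reciprocity is the standard and intended completion.
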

	
	One immediately obtains a simple corollary.
	
	\begin{corollary}\label{Cor branching rules}
		Let $m\leq n$ be non-negative integers and $\mu$ and $\lambda$ partitions of $m$ and $n$, respectively.
		\begin{enumerate}[label=\textnormal{(\roman*)}]
			\item $\chi^{\lambda}$ is an irreducible constituent of $\chi^{\mu}\ind^{S_n}$ if and only if $\mu\subseteq \lambda$.
			\item $\chi^{\mu}$ is an irreducible constituent of $\chi^{\lambda}\res_{S_m}$ if and only if $\mu\subseteq \lambda$.
		\end{enumerate}
	\end{corollary}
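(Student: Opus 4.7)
The plan is to prove part (i) by induction on $n-m$ and then derive part (ii) from it via Frobenius reciprocity. In both statements we are asserting that the relevant inner product is non-zero precisely when $\mu\subseteq\lambda$, so once (i) is established, writing $\langle \chi^{\mu}\ind^{S_n},\chi^{\lambda}\rangle_{S_n}=\langle \chi^{\mu},\chi^{\lambda}\res_{S_m}\rangle_{S_m}$ immediately yields (ii).

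For the base case $n=m$, the induced character $\chi^{\mu}\ind^{S_n}$ equals $\chi^{\mu}$, and $\chi^{\lambda}$ is a constituent if and only if $\lambda=\mu$; since $|\mu|=|\lambda|$, this is the same as $\mu\subseteq\lambda$. For the inductive step, I would use transitivity of induction to write
\[
\chi^{\mu}\ind^{S_n}=\bigl(\chi^{\mu}\ind^{S_{n-1}}\bigr)\ind^{S_n},
\]
apply the inductive hypothesis to the inner induction, and then use Theorem~\ref{Theorem branching rules}(i) on the outer step. The result is that $\chi^{\lambda}$ is a constituent of $\chi^{\mu}\ind^{S_n}$ if and only if there exists $\nu\vdash n-1$ with $\mu\subseteq\nu\subseteq\lambda$. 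The forward direction gives $\mu\subseteq\lambda$ by transitivity of containment.

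The one mildly delicate point is the converse: given $\mu\subseteq\lambda$ with $|\lambda|\geq |\mu|+1$, I must produce a partition $\nu\vdash n-1$ sandwiched between $\mu$ and $\lambda$, i.e.\ exhibit a removable corner of $\lambda$ that lies outside $Y_{\mu}$. I would do this by taking the largest index $i$ for which $\lambda_i>\mu_i$ and removing the box $(i,\lambda_i)$; the maximality of $i$ (together with $\mu$ being a partition) forces $\lambda_{i+1}\leq \mu_{i+1}\leq \mu_i<\lambda_i$, so $(i,\lambda_i)$ is indeed a corner of $\lambda$ not in $Y_{\mu}$, and removing it leaves a partition $\nu$ with $\mu\subseteq\nu\subseteq\lambda$. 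This completes the induction, and part (ii) then follows formally by Frobenius reciprocity as noted above. No step is genuinely hard; the combinatorial lemma about finding a removable corner outside $\mu$ is the only observation requiring a brief verification.
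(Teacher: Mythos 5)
Your proof is correct and follows essentially the same route as the paper, which deduces the corollary from the branching rules by induction on $n-m$; your interpolation argument (removing a corner of $\lambda$ outside $Y_{\mu}$) and the base case supply exactly the details the paper leaves implicit. Deriving (ii) from (i) by Frobenius reciprocity rather than running the parallel induction with the restriction branching rule is a negligible variation.
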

	
	\begin{proof}
		This follows from the branching rules by induction on $n-m$.
	\end{proof}
	
	Another particularly simple case of the Littlewood--Richardson rule is when $\mu$ and $\nu$ are rectangular partitions. For the scope of this paper it is sufficient to consider the cases $\mu=\nu$ and $\mu=\nu'$. To describe the irreducible constituents of $s_{\mu}s_{\nu}$ in these cases we introduce the following terminology. 
	
	\begin{definition}\label{Definition birectangular}
		Let $a$ and $b$ be positive integers and write $m=\min(a,b)$ and $M=\max(a,b)$. A partition $\lambda$ of $2ab$ is
		\begin{enumerate}[label=\textnormal{(\roman*)}]
			\item \textit{$(a,b)$-birectangular} if for all $1\leq i \leq 2b$ we have $\lambda_i + \lambda_{2b+1-i}=2a$;
			\item \textit{$\left\lbrace a,b \right\rbrace$-birectangular} if for all $1\leq i\leq m$ we have $\lambda_i + \lambda_{a+b+1-i}=a+b$ and for $m+1\leq i\leq M$ we have $\lambda_i=m$.
		\end{enumerate} 
	\end{definition}
	
	\begin{remark}\label{Remark birectangular}
		Every $(a,b)$-birectangular partition has length at most $2b$. This is since the sum of the first $2b$ parts of such a partition is $2ab$. Similarly, every $\left\lbrace a,b \right\rbrace$-birectangular partition has length at most $a+b$. 
	\end{remark}
	
	\begin{proposition}\label{Prop LR rectangles}
		Let $\mu=(a^b)$.
		\begin{enumerate}[label=\textnormal{(\roman*)}]
			\item $c(\mu,\mu;\lambda)$ is zero unless $\lambda$ is an $(a,b)$-birectangular partition, in which case $c(\mu,\mu;\lambda)=1$.
			\item $c(\mu,\mu';\lambda)$ is zero unless $\lambda$ is an $\left\lbrace a,b\right\rbrace$-birectangular partition, in which case $c(\mu,\mu';\lambda)=1$.
		\end{enumerate} 
	\end{proposition}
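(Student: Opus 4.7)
The plan is to apply the Littlewood--Richardson rule (Theorem~\ref{Theorem LR rule}) and, in each part, identify the birectangular condition as exactly the condition for a (unique) LR tableau to exist. Throughout $T$ denotes such an LR tableau of shape $\lambda/(a^b)$ with the prescribed content, and I write $\mu=(a^b)$.

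For part (i), the content is $(a^b)$. The easy consequences of the LR rule noted after Theorem~\ref{Theorem LR rule} give $\ell(\lambda)\leq 2b$ and $\lambda_1\leq 2a$, so $\lambda$ fits inside the $2a\times 2b$ box with $\mu\subseteq\lambda$. The key claim is that in any such $T$ one has $T(i,j)=i$ for every $i\leq b$ with $(i,j)\in Y_{\lambda/\mu}$; I prove this by induction on $i$. For $i=1$, the rightmost box of row~1, $T(1,\lambda_1)$, is the first letter of the reading word, so it must equal $1$ by the lattice property, and row weak monotonicity propagates this to the whole row. For the inductive step, column strictness together with the induction hypothesis force $T(i,\lambda_i)\geq i$; a careful count shows that the portion of the reading word already processed (which, by the hypothesis, has specific multiplicities of $1,\dots,i-1$ and none of $j\geq i$) forces $T(i,\lambda_i)=i$ via the lattice condition, and monotonicity then fills the row with $i$'s. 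This leaves a residual content of $2a-\lambda_i$ copies of $i$ for each $i\leq b$ to be placed in rows $b+1,\dots,\ell(\lambda)$. A symmetric analysis of the bottom part (working from the bottom-left of $\lambda$ upwards, or equivalently applying $\omega$ and re-using the above argument) uniquely determines this filling, and the compatibility of row-lengths with the required content turns out to be exactly $\lambda_{2b+1-i}=2a-\lambda_i$ for each $i\leq b$, i.e.\ $\lambda$ is $(a,b)$-birectangular.

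Part (ii) proceeds in the same way with $\nu=(b^a)$, where now $\lambda_1\leq a+b$ and $\ell(\lambda)\leq a+b$. The top $b$ rows of $\lambda/\mu$ are forced in precisely the same manner (the inductive argument above is insensitive to whether the content is $(a^b)$ or $(b^a)$, only to it being weakly decreasing). Once these are filled, the residual content on rows $>b$ is $b-\lambda_i+a$ copies of $i$ for $i\leq b$ together with $b$ copies of $i$ for $b<i\leq a$. Running the symmetric bottom-up analysis again, the lattice and column-strictness constraints on this part of $T$ admit a unique filling precisely when (a) $\lambda_i=\min(a,b)$ for $\min(a,b)<i\leq\max(a,b)$ (the ``flat'' middle rows), and (b) $\lambda_i+\lambda_{a+b+1-i}=a+b$ for $i\leq\min(a,b)$, i.e.\ $\lambda$ is $\{a,b\}$-birectangular.

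The main technical obstacle is the bottom-part analysis in both cases: one must track how the multiplicities of the different labels evolve along the reading word as it passes from the forced top portion into the rows below row $b$, and verify that the lattice condition, column strictness against the top entries, and the residual content are \emph{simultaneously} satisfiable in exactly one way iff the respective birectangular conditions on $\lambda$ hold. Getting this bookkeeping clean --- particularly the interplay between short and long bottom rows and the forced entries from above --- is where care is needed.
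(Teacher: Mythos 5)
Your forcing argument for the top $b$ rows is correct and cleanly stated: the lattice (prefix) condition together with the fact that every box read before $(i,\lambda_i)$ lies in a row of index less than $i$ does force $T(i,j)=i$ for all skew boxes with $i\leq b$, for either content. However, the decisive step of the proposition is not there. Once the top rows are fixed, what remains to be shown is that the straight shape $(\lambda_{b+1},\lambda_{b+2},\dots)$ admits a semistandard filling with the residual content whose reading word, appended after the forced top portion, stays latticed, and that this happens in exactly one way precisely when $\lambda$ is $(a,b)$-birectangular (resp.\ $\{a,b\}$-birectangular). You assert this and flag it as ``the main technical obstacle,'' but you do not carry it out, and this is where all the content of the statement sits: for instance for $\mu=(2^2)$ and $\lambda=(3,3,2)$ the forced top part and the residual content are perfectly consistent, and the filling fails only because the interleaving of the bottom columns into the reading word violates the lattice condition --- exactly the bookkeeping you postpone. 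Moreover, the shortcut you offer for it does not work as stated: ``applying $\omega$ and re-using the above argument'' gives information about Littlewood--Richardson tableaux of the \emph{conjugate} shape $\lambda'/(b^a)$ (a different set of tableaux, since transposing does not preserve semistandardness), not about the lower portion of the same tableau $T$; and ``working from the bottom-left of $\lambda$ upwards'' has no literal mirror of your top-row argument, because the lattice condition of Theorem~\ref{Theorem LR rule} is a condition on prefixes, which start at the top right. A genuine additional argument is needed here --- e.g.\ the $180^{\circ}$-rotation-plus-complementation symmetry of LR tableaux, which is available exactly because the content is a rectangle, or a direct induction on columns --- and in part (ii) one must additionally extract the condition $\lambda_i=\min(a,b)$ for the middle rows. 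As it stands the proof is a plausible programme with its central lemma unproved.

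For comparison: the paper does not argue combinatorially at all; it quotes the known expansions of $s_{(a^b)}^2$ (Stanley, Lemma~3.3(a) of his plane-partition symmetries paper) and of $s_{(a^b)}s_{(b^a)}$ (Okada, Theorem~2.4), the latter after a short observation matching the $\{a,b\}$-birectangular description. If you want a self-contained proof along your lines, the top-row forcing plus a rotation–complementation argument for the bottom part would do it, but that second half must actually be written.
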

	
	\begin{proof}
		See \cite[Lemma~3.3(a)]{StanleyPlaneSymmetries86} for (i) (and (ii) when $a=b$). After observing that for $a\neq b$ any $\left\lbrace a,b\right\rbrace$-birectangular partition $\lambda$ satisfies $\lambda_{\min(a,b)}=a+b-\lambda_{\max(a,b)+1}\geq a+b-\lambda_{\max(a,b)}= \max(a,b)$ we obtain (ii) from \cite[Theorem~2.4]{OkadaRectangularProducts98} applied with $s=n=\max(a,b)$ and $t=m=\min(a,b)$.
	\end{proof}
	
	For later use, we also mention a result concerning the coefficients $c(\mu,\mu;\lambda)$ when $\mu$ is a hook.
	
	\begin{lemma}\label{Lemma LR hooks}
		Let $a$ and $b$ be positive integers and $\mu=(a+1,1^b)$. Then $c(\mu,\mu;\lambda)$ is zero unless $\lambda_1+\lambda_2\in\left\lbrace 2a+2,2a+3,2a+4 \right\rbrace$. In the case that $\lambda_1 + \lambda_2=2a+4$, then
		\begin{align*}
		c(\mu,\mu;\lambda)=\begin{cases}
		1 &\text{if } \lambda_2\geq 2\geq \lambda_3,\\
		0 &\text{otherwise}.\\
		\end{cases}
		\end{align*}
		If instead $\lambda_1 + \lambda_2=2a+2$, then
		\begin{align*}
		c(\mu,\mu;\lambda)=\begin{cases}
		1 &\text{if } \lambda_2\geq 2\geq \lambda_3,\\
		1 &\text{if } \lambda=(2a+1,1^{2b+1}),\\
		0 &\text{otherwise}.\\
		\end{cases}
		\end{align*} 
	\end{lemma}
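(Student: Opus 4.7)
The plan is to apply the Littlewood--Richardson rule (Theorem~\ref{Theorem LR rule}) together with the standard dominance inequality $\lambda_1+\cdots+\lambda_j\le (\mu_1+\cdots+\mu_j)+(\nu_1+\cdots+\nu_j)$, valid whenever $c(\mu,\nu;\lambda)\neq 0$. Since $\mu_1+\mu_2=a+2$, dominance gives $\lambda_1+\lambda_2\le 2a+4$ and $\lambda_1\le 2a+2$; applying it at level three yields $\lambda_3\le 2$ whenever $\lambda_1+\lambda_2=2a+4$ (noting $\mu_3=1$ if $b\ge 2$, while $\mu_3=0$ forces $\lambda_3=0$ when $b=1$). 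These observations immediately deliver all the ``only if'' directions of the lemma.

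For the structural input, I would analyse the placement of the $a+1$ ones in a semistandard $\lambda/\mu$-tableau $T$ of weight $\mu=(a+1,1^b)$ with latticed reading word. A $1$ placed in row $i\ge 3$ at a column $c\ge 2$ would require the cell $(i-1,c)$, which lies in $Y_{\lambda/\mu}$ since $\lambda_{i-1}\ge\lambda_i\ge c$, to contain an entry strictly less than $1$, which is impossible. Using that $\mu_{i-1}=1$ for $3\le i\le b+2$ and $\mu_{i-1}=0$ otherwise, the only admissible position for a $1$ outside rows $1$ and $2$ is $(b+2,1)$. Writing $k,k',k''$ (with $k''\in\{0,1\}$) for the number of ones in row $1$, row $2$, and at $(b+2,1)$, we have $k+k'+k''=a+1$ together with $\lambda_1\ge a+1+k$ and $\lambda_2\ge 1+k'$. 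Summing gives $\lambda_1+\lambda_2\ge 2a+3-k''\ge 2a+2$, so $\lambda_1+\lambda_2\in\{2a+2,2a+3,2a+4\}$.

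A further observation is that row $1$ contains no non-$1$ entry: such an entry, necessarily sitting at column $\lambda_1$ by row-weak-increase, would be the first cell read in the reading word, violating the lattice condition. Denoting by $q$ the number of non-$1$ entries in row $2$, the row-length equality reduces to $q=k''+(\lambda_1+\lambda_2-2a-3)$. When $\lambda_1+\lambda_2=2a+4$ we get $q=k''+1$, and a short sub-case analysis on $k''\in\{0,1\}$ shows that the non-$1$ entries $2,3,\ldots,b+1$ have a unique valid placement (column-strictness forces them into column $\lambda_2$ and into columns $1$ and $2$ of the rows below), yielding $c(\mu,\mu;\lambda)=1$. When $\lambda_1+\lambda_2=2a+2$ the lower bound forces $k''=1$ and $q=0$, so rows $1$ and $2$ of $\lambda/\mu$ consist entirely of ones. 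If $\lambda_2=1$ the partition is pinned down to $\lambda=(2a+1,1^{2b+1})$ and the unique filling has column $1$ of $\lambda/\mu$ filled $1,2,\dots,b+1$ from top. If $\lambda_2\ge 2$, the constraint $\lambda_3\le 2$ confines all non-$1$ entries to columns $1$ and $2$, and then the lattice condition pins down a unique filling with values $2,\ldots,s+1$ in column $2$ and values $s+2,\ldots,b+1$ in column $1$ for an appropriate cut-off $s$.

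The main technical obstacle is this case analysis verifying existence and uniqueness of the LR tableau in each boundary case. The lattice condition is sensitive to whether a non-$1$ is read before sufficiently many $1$'s, and translating this into precise restrictions on the placement of $2,3,\ldots,b+1$ inside the skew shape $\lambda/\mu$ requires careful bookkeeping. Once these placement constraints are made explicit, existence and uniqueness in each case follow by routine verification.
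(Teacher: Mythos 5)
Your strategy---a direct analysis of semistandard $\lambda/\mu$-tableaux of weight $\mu=(a+1,1^b)$ via Theorem~\ref{Theorem LR rule}, combined with the dominance inequality---is genuinely different from the paper, whose proof of Lemma~\ref{Lemma LR hooks} is simply a citation of identity (3.5) in Bessenrodt--Bowman--Paget, and most of your outline is correct: the bounds $\lambda_1\leq 2a+2$ and $\lambda_1+\lambda_2\leq 2a+4$, the localisation of the $1$'s to rows $1$, $2$ and the single cell $(b+2,1)$, the identity $q=k''+(\lambda_1+\lambda_2-2a-3)$, and hence the lower bound $\lambda_1+\lambda_2\geq 2a+2$, all check out, and your description of the extremal cases matches what actually happens in examples.

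There is, however, a genuine gap in the case $\lambda_1+\lambda_2=2a+2$. You assert that the dominance observations ``immediately deliver all the only if directions'', but for $b\geq 2$ the level-three inequality only gives $\lambda_1+\lambda_2+\lambda_3\leq 2a+6$, hence $\lambda_3\leq 4$ rather than $\lambda_3\leq 2$ when $\lambda_1+\lambda_2=2a+2$. The vanishing $c(\mu,\mu;\lambda)=0$ for $\lambda_1+\lambda_2=2a+2$ and $\lambda_3\in\{3,4\}$ is therefore nowhere proved, and your later sentence ``the constraint $\lambda_3\leq 2$ confines all non-$1$ entries to columns $1$ and $2$'' silently assumes exactly the missing statement. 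The claim is true but not for free: for instance, with $\mu=(3,1^3)$ and $\lambda=(3,3,3,2,1)$ there are two semistandard fillings of weight $\mu$, and $c=0$ only because both violate the lattice condition. The gap is fixable inside your framework: once $q=0$ and $k''=1$, rows $1$ and $2$ of $\lambda/\mu$ consist of $1$'s and every other cell carries one of the distinct values $2,\dots,b+1$; if some non-$1$ entry sat in a column $j_0\geq 3$ with $j_0$ maximal, then $(3,j_0)\in Y_{\lambda/\mu}$ carries an entry at least $j_0\geq 3$ (row entries in row $3$ are distinct non-$1$ values, strictly increasing from $(3,2)\geq 2$), while every cell read before it---columns beyond $j_0$, which meet only rows $1$ and $2$ by maximality, and the cells above it in column $j_0$---is a $1$, so the reading word is not latticed. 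This forces all non-$1$ entries into columns $1$ and $2$, hence $\lambda_3\leq 2$, whenever $c\neq 0$ in this case. A smaller point in the same spirit: in the case $\lambda_1+\lambda_2=2a+4$, eliminating one of the two sub-cases $k''\in\{0,1\}$ also relies on the lattice condition rather than column-strictness alone, so the ``routine verification'' you defer is precisely where that case is decided and would need to be written out for a complete proof.
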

	
	\begin{proof}
		This follows from \cite[(3.5)]{BessenrodtBowmanPagetMF22}.
	\end{proof}
	
	\subsubsection*{Multiplicity-free products of Schur functions}
	
	The following classification is due to Stembridge \cite[Theorem~3.1]{StembridgeMultiplicity-free01} and corresponds via the isomorphism in Theorem~\ref{Theorem frobenius} to a classification of the irreducible induced-multiplicity-free characters of $S_k\times S_l$. We use the terms for particular partitions from Definition~\ref{Defn nice partitions} in the statement.
	
	\begin{theorem}\label{Theorem Stembridge}
		Let $\mu$ and $\nu$ be non-empty partitions. The product $s_{\mu}s_{\nu}$ is multiplicity-free if and only if one of the following holds:
		\begin{enumerate}[label=\textnormal{(\roman*)}]
			\item $\mu$ or $\nu$ is linear,
			\item $\mu$ is $2$-rectangular and $\nu$ is a fat hook (or vice versa),
			\item $\mu$ is rectangular and $\nu$ is near rectangular (or vice versa),
			\item $\mu$ and $\nu$ are rectangular. 
		\end{enumerate} 
	\end{theorem}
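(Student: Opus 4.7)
The plan is to handle sufficiency and necessity of the four conditions separately, in each case via explicit analysis of the Littlewood--Richardson rule (Theorem~\ref{Theorem LR rule}) applied to semistandard $\lambda/\mu$-tableaux of weight $\nu$ with a latticed reading word. Throughout I would exploit the symmetries $c(\mu,\nu;\lambda)=c(\nu,\mu;\lambda)=c(\mu',\nu';\lambda')$ to cut the case analysis roughly in half.

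For sufficiency, case (i) is immediate from Young's or Pieri's rule (Theorems~\ref{Theorem Y rule} and \ref{Theorem P rule}). For case (iv) with $\mu=\nu=(a^b)$ the claim is Proposition~\ref{Prop LR rectangles}(i); for general rectangular $\mu=(a^b)$ and $\nu=(c^d)$, a short direct argument shows that the entries $i$ of $\nu$ must be laid out row by row in a rigid left-to-right fashion forced by the combination of the lattice condition and semistandardness, so at most one LR tableau survives. For case (iii) with $\mu=(a^b)$ rectangular and $\nu$ near rectangular, the rectangular bulk of $\nu$ is placed rigidly by the same argument; the ``near'' part (the single extra row or column of $\nu$) is then constrained on at most one extra row or column of $\lambda/\mu$, again leaving at most one filling. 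Case (ii), the $2$-rectangular versus fat hook situation, is the most delicate: since a $2$-rectangular $\mu$ has at most two rows or at most two columns, any LR tableau of shape $\lambda/\mu$ lives essentially in a strip of width or height $2$, and one can decompose any candidate filling into two rectangular sub-blocks matching the two ``plateaus'' of the fat hook $\nu=(a^b,c^d)$, each block again being rigid.

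For necessity, I would assume $(\mu,\nu)$ falls outside (i)--(iv) and exhibit an explicit $\lambda$ with $c(\mu,\nu;\lambda)\geq 2$. The key reduction is that if $\mu$ and $\nu$ share a sufficiently long top row or left column in common, these can be peeled off without changing the set of LR fillings, up to shifting $\lambda$. This reduces the analysis to a short list of ``minimal'' obstruction pairs, essentially two hooks, a hook and a fat hook, or two small fat hooks, for which Lemma~\ref{Lemma LR hooks} (combined with analogous direct computations on fat hooks) produces two distinct semistandard lattice fillings of a common skew shape. Lifting these obstructions back by re-padding with the shared rows and columns then yields the required multiplicity at least two in the original product.

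The main obstacle will be the combinatorial bookkeeping in case (ii) of sufficiency, where one must track both the lattice word and semistandardness through a non-trivial skew shape, and in ensuring completeness of the list of minimal obstructions in necessity: every pair outside (i)--(iv) must be shown to reduce, after stripping shared rows and columns, to one of the recorded obstructions. I would bootstrap by verifying all pairs of partitions of size at most about $10$ by direct LR computation, which both identifies the minimal obstructions explicitly and confirms that the peeling reduction covers all remaining cases.
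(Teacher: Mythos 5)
First, a point of orientation: the paper does not prove this statement at all. It is Stembridge's classification, quoted with a citation, and the only ingredient of Stembridge's argument reproduced in the paper is the monotonicity lemma (Lemma~\ref{Lemma Stembridge observation} and Corollary~\ref{Corollary Stembridge order}). So your proposal has to be measured against Stembridge's original proof, and as a substitute for it there are two genuine gaps.

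For necessity, your reduction step is stated the wrong way round and, as stated, is false. You claim that rows or columns common to $\mu$ and $\nu$ ``can be peeled off without changing the set of LR fillings, up to shifting $\lambda$''. Adding or removing rows and columns does change Littlewood--Richardson coefficients in general: the paper's Example~\ref{Example multiorder}(i) records $c((3^2),(2,1);(4,3,2))=1$ while $c((4,3,1),(2,1);(5,3,2,1))=2$. What is available is only the one-sided inequality of Lemma~\ref{Lemma Stembridge observation}/Corollary~\ref{Corollary Stembridge order}, which pads one of the two partitions (together with $\lambda$) at a time, not a shared row of both. That inequality does support the intended strategy, since multiplicity at least two for a small seed pair propagates upward; but then the entire weight of the necessity proof falls on showing that \emph{every} pair outside (i)--(iv) dominates, in the order $\to$ of Definition~\ref{Definition multifunction} applied in each coordinate, one of finitely many explicit seed pairs with a multiplicity of two. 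That is a structural statement about the complement of the classes linear, rectangular, near rectangular, fat hook and $2$-rectangular, and your plan to ``confirm'' it by checking all pairs of size at most $10$ cannot establish it: a finite computation says nothing about the infinitely many larger pairs. Identifying the minimal obstructions and proving this covering claim is precisely the nontrivial content of Stembridge's necessity argument, and it is missing from the proposal.

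On the sufficiency side, case (i) is indeed immediate from Theorems~\ref{Theorem Y rule} and \ref{Theorem P rule}, and case (iv) is covered by results of the type of Proposition~\ref{Prop LR rectangles}. But for (iii) and especially (ii) you assert rigidity rather than prove it. For $\mu$ $2$-rectangular with two rows, the natural move is to count tableaux of shape $\lambda/\nu$ with weight $\mu$, i.e.\ two-letter lattice fillings; such counts are not automatically at most one for a general skew shape, so the fat-hook structure of $\nu$ must enter in an essential way, and the proposed ``decomposition into two rigid rectangular sub-blocks matching the plateaus of $\nu$'' neither locates where a second filling could arise nor explains why it cannot. Likewise, ``the rectangular bulk of $\nu$ is placed rigidly'' in case (iii) is exactly the point at issue once the extra row or column of a near rectangle is present. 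As it stands the proposal is a plausible programme, but both halves stop short of actual arguments; a self-contained proof would essentially require redoing Stembridge's analysis rather than the sketch above.
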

	
	While Theorem~\ref{Theorem Stembridge} can be applied only to products of two symmetric functions, corresponding to Young subgroups with two direct factors, the analogous classification for other Young subgroups is even simpler and implies the following result.
	
	\begin{lemma}\label{Lemma sanity lemma}
		Let $G\leq S_n$ have at least three orbits. Then $G$ is not multiplicity-free.
	\end{lemma}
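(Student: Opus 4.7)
My plan is to reduce the claim to a statement about products of Schur functions via transitivity of induction and the Frobenius characteristic map. If $G\leq S_n$ has orbits $O_1,\ldots,O_r$ with $r\geq 3$ and $k_i=|O_i|\geq 1$, then $G$ is contained in $\mathrm{Sym}(O_1)\times\cdots\times\mathrm{Sym}(O_r)$, which is $S_n$-conjugate to the Young subgroup $Y=S_{k_1}\times\cdots\times S_{k_r}$. By the transitivity of induction (noted in the introduction), it suffices to show $Y$ is not multiplicity-free. Applying Theorem~\ref{Theorem frobenius} iteratively, the induction to $S_n$ of an irreducible character $\chi^{\mu^{(1)}}\boxtimes\cdots\boxtimes\chi^{\mu^{(r)}}$ of $Y$ corresponds to $s_{\mu^{(1)}}\cdots s_{\mu^{(r)}}\in\Lambda^{+}$, with each factor non-trivial since every $k_i\geq 1$. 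The lemma thus reduces to showing that no product of $r\geq 3$ non-trivial Schur functions is multiplicity-free, which is Stembridge's Remark~3.5(a) already invoked in the introduction.

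For a self-contained argument, I would first reduce to $r=3$. Writing $s_{\mu^{(3)}}\cdots s_{\mu^{(r)}}=\sum_\alpha n_\alpha s_\alpha$ in the Schur basis with each $n_\alpha$ a non-negative integer, any $n_\alpha\geq 2$ forces a constituent of multiplicity $\geq 2$ in the full product since $s_{\mu^{(1)}}s_{\mu^{(2)}}s_\alpha$ is a nonzero element of $\Lambda^{+}$; otherwise I would pick $\alpha$ with $n_\alpha=1$ and pass to the triple product $s_{\mu^{(1)}}s_{\mu^{(2)}}s_\alpha$. Applying the same observation symmetrically, if $s_\mu s_\nu s_\lambda$ with all factors non-empty is multiplicity-free then each of the three pairwise products $s_\mu s_\nu$, $s_\mu s_\lambda$, $s_\nu s_\lambda$ must be multiplicity-free, and so each pair lies in one of the four families of Theorem~\ref{Theorem Stembridge}.

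The main obstacle is the finite but slightly fiddly case check ruling out every such admissible triple. The archetypal obstruction is $s_{(1)}^3=s_{(3)}+2s_{(2,1)}+s_{(1^3)}$; the general pattern is to find a single partition lying in two of the three pairwise products, which then forces multiplicity $\geq 2$ in the triple. For mixed families, Young's and Pieri's rules (Theorems~\ref{Theorem Y rule} and~\ref{Theorem P rule}) suffice when some factor is linear, and the remaining combinations of families in Theorem~\ref{Theorem Stembridge} are dispatched by elementary Littlewood--Richardson computations. In practice, direct citation of Stembridge's Remark~3.5(a) bypasses this case work entirely.
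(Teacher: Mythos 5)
Your proposal is correct and follows essentially the same route as the paper: embed $G$ in a Young subgroup with at least three factors, use transitivity of induction to reduce to that Young subgroup, and conclude by citing Stembridge's Remark~3.5(a) that a product of three non-trivial Schur functions is never multiplicity-free. The additional self-contained sketch is optional and not needed once that remark is invoked, which is exactly how the paper argues.
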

	
	\begin{proof}
		Since $G$ is embedded in a Young subgroup of the form $S_k\times S_l\times S_m$, without loss of generality assume that $G=S_k\times S_l\times S_m$. The result then follows from \cite[Remark~3.5(a)]{StembridgeMultiplicity-free01}.
	\end{proof}
	
	Let us also state the key tool in Stembridge.
	
	\begin{lemma}\label{Lemma Stembridge observation}
		Let $\lambda, \mu$ and $\nu$ be partitions and $r$ a non-negative integer. Then we have:
		\begin{enumerate}[label=\textnormal{(\roman*)}]
			\item $c(\mu+(1^r), \nu;\lambda+(1^r))\geq c(\mu,\nu;\lambda)$,
			\item $c(\mu\sqcup(r), \nu;\lambda\sqcup(r))\geq c(\mu,\nu;\lambda)$.
		\end{enumerate}
	\end{lemma}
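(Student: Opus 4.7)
The plan is to prove (i) by an explicit injection on the Littlewood--Richardson tableaux enumerated via Theorem~\ref{Theorem LR rule}, and to deduce (ii) from (i) by conjugating all three partitions, using $c(\mu,\nu;\lambda)=c(\mu',\nu';\lambda')$ together with the identity $(\sigma\sqcup(r))'=\sigma'+(1^r)$, which is immediate from the paper's definition $\lambda\sqcup\mu=(\lambda'+\mu')'$ applied to $\sigma$ and $(r)$ (using $(r)'=(1^r)$).

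For (i), given a semistandard $\lambda/\mu$-tableau $T$ with weight $\nu$ and latticed reading word, I would define $T'$ of shape $(\lambda+(1^r))/(\mu+(1^r))$ by noting that this new skew shape is obtained from $Y_{\lambda/\mu}$ simply by shifting the first $r$ rows one column to the right, and then copying the entries of $T$ to the shifted positions. The weight is clearly $\nu$ again. Semistandardness needs a short check only in the columns; the one interesting case is a column $c'$ of $T'$ containing entries from both a row $i\leq r$ (with value $T(i,c'-1)$) and a row $j>r$ (with value $T(j,c')$). Here $\lambda_i\geq \lambda_j\geq c'$ forces $(i,c')\in Y_{\lambda/\mu}$, so $T(i,c'-1)\leq T(i,c')<T(j,c')$ by the semistandardness of $T$.

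The delicate step is showing the reading word of $T'$ remains latticed. I would write the reading word of $T$ as $\cdots B^{\leq r}_c B^{>r}_c B^{\leq r}_{c-1} B^{>r}_{c-1}\cdots$, where $B^{\leq r}_c$ (resp.\ $B^{>r}_c$) lists top to bottom the entries in rows $\leq r$ (resp.\ $>r$) of column $c$ of $T$. A direct computation then shows the reading word of $T'$ is obtained by swapping, for each $c$, the adjacent blocks $B^{>r}_c$ and $B^{\leq r}_{c-1}$, and these swaps can be performed sequentially in decreasing order of $c$, each time between currently adjacent blocks. Each block swap further decomposes into adjacent element swaps of pairs $\bigl(T(p,c),T(q,c-1)\bigr)$ with $p>r\geq q$; the same argument as above gives $T(q,c-1)<T(p,c)$, so every element swap is of the form $(a,b)\mapsto(b,a)$ with $a>b$. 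The proof then reduces to the following elementary lemma: if $w=u\,a\,b\,v$ is latticed and $a>b$, then $u\,b\,a\,v$ is also latticed. To see this it suffices to check the lattice condition at the new prefix $ub$, and the only non-trivial case is level $b-1$, at which the required inequality (count of $b-1$ in $u$ is at least count of $b$ in $u$ plus one) is precisely the lattice condition of $w$ at the prefix $u\,a\,b$ at level $b-1$; this is a genuine condition because $a>b$ forces $a\neq b-1$.

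The place where I expect things to be most delicate is the reading word argument: the naive guess that the reading word is preserved under the shift fails on tiny examples (e.g.\ $\mu=\emptyset$, $\lambda=(2,2)$, $\nu=(2,2)$), so both the block-swap decomposition of the reading word transformation and the element-wise inequality $T(q,c-1)<T(p,c)$ are essential, and the elementary lemma on adjacent $(a,b)\mapsto(b,a)$ swaps in latticed words is the combinatorial core that makes the argument go through.
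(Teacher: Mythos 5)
Your argument is correct. Note, however, that the paper does not actually prove this lemma: its proof is the single line ``See \cite[Lemma~3.2]{StembridgeMultiplicity-free01}'', so what you have written is a self-contained replacement for a citation rather than a variant of an argument given in the paper. Your route --- realising the shape $(\lambda+(1^r))/(\mu+(1^r))$ as $Y_{\lambda/\mu}$ with the first $r$ rows shifted one step right, transporting the filling, checking semistandardness at the unique row-$r$/row-$(r+1)$ junction via $T(i,c'-1)\leq T(i,c')<T(j,c')$, and then controlling the reading word by decomposing its change into adjacent block swaps $B^{>r}_c B^{\leq r}_{c-1}\mapsto B^{\leq r}_{c-1}B^{>r}_c$, each a product of adjacent transpositions $(a,b)\mapsto(b,a)$ with $a>b$ --- is sound; in particular your elementary lemma that such a swap preserves the lattice property is verified correctly (the only prefix needing attention is $ub$ at level $b-1$, and the hypothesis $a>b$ ensures $a\neq b-1$, so the condition is inherited from the prefix $uab$ of the original word). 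The deduction of (ii) from (i) via $c(\mu,\nu;\lambda)=c(\mu',\nu';\lambda')$ and $(\sigma\sqcup(r))'=\sigma'+(1^r)$ is exactly right. Two small cosmetic points: you should state explicitly that the shift map $T\mapsto T'$ is injective (it obviously is, being reversible), which is what converts the construction into the claimed inequality; and it is worth remarking that the case $c(\mu,\nu;\lambda)=0$ is vacuous, so one may assume $\mu\subseteq\lambda$, whence $\mu+(1^r)\subseteq\lambda+(1^r)$ and Theorem~\ref{Theorem LR rule} applies on both sides. With those trivial additions, your proof could stand in place of the citation; it is in the same combinatorial spirit as Stembridge's original argument, but the careful bookkeeping of how the column reading word changes (which, as your $\mu=\o$, $\lambda=\nu=(2,2)$ example shows, is genuinely necessary) is the substantive content you have supplied.
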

	
	\begin{proof}
		See \cite[Lemma~3.2]{StembridgeMultiplicity-free01}.
	\end{proof}
	
	Inductively, we obtain the same inequalities when $(1^r)$ and $(r)$ are replaced with an arbitrary partition. This motivates the following definition.
	
	\begin{definition}\label{Definition multifunction}
		Let $f=(f_1,f_2,\dots,f_l)$ be a sequence of functions of the form $\lambda\mapsto \lambda + \alpha$ or $\lambda\mapsto\lambda\sqcup\alpha$ from the set of partitions to itself. We refer to $f$ as a \textit{multifunction} and write $f(\lambda)=f_l\left(  f_{l-1}\dots\left(  f_1 \left( \lambda\right) \right) \dots\right) $. Moreover, for two partitions $\lambda$ and $\mu$ we write $\mu\to\lambda$ if there is a multifunction $f$ such that $f(\mu)=\lambda$.
	\end{definition}
	
	\begin{corollary}\label{Corollary Stembridge order}
		Let $\lambda,\mu$ and $\nu$ be partitions and $f$ a multifunction. Then $c(f(\mu), \nu;f(\lambda))\geq c(\mu,\nu;\lambda)$.
	\end{corollary}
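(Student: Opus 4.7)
The plan is to prove this by induction on the length $l$ of the multifunction $f = (f_1, \ldots, f_l)$. The base case $l = 0$ is trivial since $f$ is the identity. For the inductive step, writing $g = (f_1, \ldots, f_{l-1})$, the induction hypothesis gives $c(g(\mu),\nu;g(\lambda)) \geq c(\mu,\nu;\lambda)$, so it suffices to prove the one-step inequality $c(h(\mu'),\nu;h(\lambda')) \geq c(\mu',\nu;\lambda')$ for any single function $h$ of the form $\lambda \mapsto \lambda + \alpha$ or $\lambda \mapsto \lambda \sqcup \alpha$.

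The remaining issue is that Lemma~\ref{Lemma Stembridge observation} only treats the special cases $\alpha = (1^r)$ and $\alpha = (r)$, so I would need to further reduce the one-step inequality to these cases. When $h(\lambda) = \lambda \sqcup \alpha$ with $\alpha = (\alpha_1,\ldots,\alpha_s)$, the operation decomposes as the composition of $\lambda \mapsto \lambda \sqcup (\alpha_i)$ for $i = 1,\ldots,s$, and iterating Lemma~\ref{Lemma Stembridge observation}(ii) gives the result. When $h(\lambda) = \lambda + \alpha$, I would decompose via columns: adding $(1^r)$ to $\lambda$ increments each of its first $r$ parts by $1$, so performing the operations $\lambda \mapsto \lambda + (1^{\alpha'_j})$ for $j = 1, \ldots, \alpha_1$ in succession increases the $i$-th part of $\lambda$ by $|\{j : \alpha'_j \geq i\}| = \alpha_i$, matching $\lambda + \alpha$. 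Iterating Lemma~\ref{Lemma Stembridge observation}(i) then yields the inequality.

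Since both reductions amount to writing a single operation as a composition of the two elementary operations in Lemma~\ref{Lemma Stembridge observation} and stringing together the inequalities, there is no real obstacle here. The only point requiring minor care is the column-by-column decomposition of $\lambda \mapsto \lambda + \alpha$, and once that identity is observed the corollary follows directly.
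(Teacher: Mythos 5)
Your proof is correct and takes essentially the same route as the paper: the paper's proof is simply ``Iterate Lemma~\ref{Lemma Stembridge observation}'', having already noted before Definition~\ref{Definition multifunction} that iterating that lemma extends the inequalities from $(1^r)$ and $(r)$ to arbitrary partitions. Your part-by-part decomposition of $\lambda\mapsto\lambda\sqcup\alpha$ and column-by-column decomposition of $\lambda\mapsto\lambda+\alpha$ (via $\alpha_i=|\{j:\alpha'_j\geq i\}|$) just makes that iteration explicit, and the outer induction on the length of $f$ is the same stringing-together of one-step inequalities.
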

	
	\begin{proof}
		Iterate Lemma~\ref{Lemma Stembridge observation}.
	\end{proof}
	
	\begin{example}\label{Example multiorder}
		Let us look at two examples of partitions $\lambda$ and $\mu$ such that $\mu\to\lambda$.
		\begin{enumerate}[label=\textnormal{(\roman*)}]
			\item Suppose that $\mu$ is a rectangular partition. Then for any partition $\lambda$ such that $\mu\subseteq\lambda$ we have $\mu\to\lambda$. Indeed, writing $l=\ell(\mu)$ we have $\lambda=(\mu+\alpha)\sqcup \beta$ where $\alpha=(\lambda_1-\mu_1,\lambda_2-\mu_2,\dots,\lambda_l-\mu_l)$ and $\beta=(\lambda_{l+1},\lambda_{l+2},\dots)$. For instance, if $\mu=(3^2)$ and $\lambda=(4,3,1)$, then $\lambda=(\mu+(1))\sqcup(1)$ and Corollary~\ref{Corollary Stembridge order} shows that $c((4,3,1),(2,1);(5,3,2,1))\geq c((3^2),(2,1);(4,3,2))$. In fact, the left-hand side is $2$, while the right-hand side is $1$.
			
			\item If $\mu=(2,1)$ and $\lambda$ is a hook or an almost rectangular partition, then $\mu\to\lambda$. In the first case $\lambda=(a+1,1^b)$ and we can write $\lambda=(\mu+(a-1))\sqcup (1^{b-1})$. In the second case either $\lambda=((a+1)^{b},a)$ and we can write $\lambda = (\mu+((a-1)^2))\sqcup ((a+1)^{b-1})$ or, up to conjugation, $\lambda=(a+1,a^b)$ and we can write $\lambda=(\mu+((a-1)^2))\sqcup(a^{b-1})$. For instance, $(3,2^2)=((2,1)+(1^2))\sqcup(2)$, and consequently $c((3,2^2),(3^2);(5,4,2^2))\geq c((2,1),(3^2);(4,3,2))$. In fact, both sides equal $1$.  
		\end{enumerate}
	\end{example}
	
	For a later reference let us mention the `injectivity' of multifunctions.
	
	\begin{lemma}\label{Lemma injectivity}
		Let $\mu$ and $\nu$ be partitions and let $f$ be a multifunction such that $f(\mu)=f(\nu)$. Then $\mu=\nu$.
	\end{lemma}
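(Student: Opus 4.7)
The plan is to prove the lemma by induction on the length $l$ of the multifunction $f=(f_1,\dots,f_l)$. The base case $l=0$ is the tautology that the identity is injective. For the inductive step, writing $g=f_l$ and $f'=(f_1,\dots,f_{l-1})$, one has $f=g\circ f'$, so it suffices to show that every single operation of either of the two types $g:\lambda\mapsto \lambda+\alpha$ or $g:\lambda\mapsto \lambda\sqcup\alpha$ is injective; then composing injections and applying the inductive hypothesis to $f'$ gives $\mu=\nu$.

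The first case is immediate: if $\lambda+\alpha=\mu+\alpha$ then comparing the $i$-th parts (using the convention of trailing zeros from \S\ref{Sec Par}) yields $\lambda_i+\alpha_i=\mu_i+\alpha_i$, hence $\lambda_i=\mu_i$ for every $i$.

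For the second case I would use the description of $\lambda\sqcup\alpha$ recalled in \S\ref{Sec Par}: its multiset of parts is the multiset union of the parts of $\lambda$ and of $\alpha$. Assuming $\lambda\sqcup\alpha=\mu\sqcup\alpha$ as partitions, their multisets of parts coincide, so removing the parts of $\alpha$ (with multiplicity) from both sides shows that $\lambda$ and $\mu$ have the same multiset of parts. Since a partition is determined by its multiset of parts, $\lambda=\mu$.

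There is no real obstacle here; the only point to watch is being unambiguous about the $\sqcup$ case, since $\sqcup$ is defined in \S\ref{Sec Par} via conjugation of the componentwise sum rather than directly via multiset union, and one should briefly note that these two descriptions agree before cancelling $\alpha$.
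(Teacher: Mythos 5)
Your proposal is correct and matches the paper's own (one-line) proof, which likewise reduces the claim to the injectivity of each elementary operation $\lambda\mapsto\lambda+\alpha$ and $\lambda\mapsto\lambda\sqcup\alpha$; your multiset-cancellation argument for the $\sqcup$ case uses exactly the description of $\lambda\sqcup\alpha$ already recalled in \S\ref{Sec Par}. The extra detail (induction on the length of $f$, verifying both cases explicitly) is just a fuller write-up of the same argument.
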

	
	\begin{proof}
		For any partition $\alpha$ the functions $\lambda\mapsto\lambda+\alpha$ and $\lambda\mapsto\lambda\sqcup\alpha$ are injective.
	\end{proof}
	
	\subsection{Plethysms and characters of imprimitive wreath products}\label{Sec plethysms}
	
	For subgroups $M\leq S_m$ and $H\leq S_h$ let $M\wr H$ be the wreath product embedded imprimitively in $S_{mh}$. To describe elements, multiplication and characters of $M\wr H$ we use the conventions from \cite[\S4]{JamesKerberSymmetric81}. In particular, we denote the elements of $M\wr H$ as $(\List{g}{h};\sigma)$ with $(\List{g}{h})\in M^h$ and $\sigma\in H$, and the multiplication in $M\wr H$ is given by $(\List{g}{h};\sigma)(\List{g'}{h};\tau)=(g_1g'_{\sigma^{-1}(1)}, g_2g'_{\sigma^{-1}(2)},\dots, g_hg'_{\sigma^{-1}(h)};\sigma\tau)$. In the case $H=S_h$, the irreducible characters are constructed as follows.  
	
	\begin{definition}\label{Definition elementary irr reps}
		Given characters $\rho$ and $\kappa$ of $M$ and $S_h$, respectively, we define a character $\charwrnb{\rho}{\kappa}{h}$ of $M\wr S_h$ by \[(\List{g}{h};\sigma)\mapsto\kappa(\sigma)\prod_C \rho(g_C),\]
		where the product is over the cycles $C$ of $\sigma$ and for cycle $C=(i_1 \; i_2\; \dots \; i_t)$, $g_C$ denotes $g_{i_t}g_{i_{t-1}}\dots g_{i_1}$.
		If $\rho$ and $\kappa$ are irreducible, we call this character an \textit{elementary irreducible character}.   
	\end{definition}
	
	\begin{theorem}\label{Theorem characters of wreath products}
		Let $\List{\rho}{t}$ be the irreducible characters of $M$. There is a bijection between the set of $t$-tuples of partitions $(\lambda^{[1]},\lambda^{[2]},\dots,\lambda^{[t]})$ of total size $h$ and the irreducible characters of $M\wr S_h$ defined as follows: a $t$-tuple $(\lambda^{[1]},\lambda^{[2]},\dots,\lambda^{[t]})$ of partitions with sizes $(h_1,h_2,\dots,h_t)$ is sent to
		\[
		\left( \left( \charwrnb{\rho_1}{\chi^{\lambda^{[1]}}}{h_1}\right) \boxtimes\left( \charwrnb{\rho_2}{\chi^{\lambda^{[2]}}}{h_2}\right) \boxtimes\dots\boxtimes \left( \charwrnb{\rho_t}{\chi^{\lambda^{[t]}}}{h_t}\right)\right)  \Ind^{M\wr S_h}.
		\]
	\end{theorem}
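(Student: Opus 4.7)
The plan is to prove this standard classification via Clifford theory applied to the normal subgroup $M^h\triangleleft M\wr S_h$, whose quotient is $S_h$ acting on $M^h$ by coordinate permutation.

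First I would enumerate the irreducible characters of the base group $M^h$: they are the outer tensor products $\rho_{i_1}\boxtimes\cdots\boxtimes\rho_{i_h}$ with each $i_j\in\{1,\dots,t\}$. The $S_h$-orbits on these characters are indexed by the multiplicity sequences $(h_1,\dots,h_t)$ with $\sum_i h_i=h$, where $h_i$ counts the number of indices $j$ with $i_j=i$. A canonical orbit representative is $\theta:=\rho_1^{\boxtimes h_1}\boxtimes\cdots\boxtimes\rho_t^{\boxtimes h_t}$, whose stabiliser in $S_h$ is the Young subgroup $S_{h_1}\times\cdots\times S_{h_t}$. Hence its inertia subgroup in $M\wr S_h$ is the direct product $I:=(M\wr S_{h_1})\times\cdots\times(M\wr S_{h_t})$, embedded so that the $i$th factor acts on the $i$th block of coordinates.

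Next I would show that the character $\charwrnb{\rho_i}{\chi^{\lambda^{[i]}}}{h_i}$ defined in Definition~\ref{Definition elementary irr reps} is an irreducible character of $M\wr S_{h_i}$ whose restriction to $M^{h_i}$ is a multiple of $\rho_i^{\boxtimes h_i}$. The restriction claim follows by plugging $\sigma=1$ into the formula of Definition~\ref{Definition elementary irr reps}, which gives $\chi^{\lambda^{[i]}}(1)\cdot\rho_i^{\boxtimes h_i}$. Irreducibility follows from a direct inner product calculation: the definition makes $\charwrnb{\rho_i}{\psi}{h_i}$ into a ring homomorphism in $\psi$ (a character of $S_{h_i}$) in the sense that it preserves inner products, because for conjugacy class data one can separate the contribution of each cycle of $\sigma$ using that $\rho_i$ is irreducible for $M$, reducing $\langle\charwrnb{\rho_i}{\psi}{h_i},\charwrnb{\rho_i}{\psi}{h_i}\rangle$ to $\langle\psi,\psi\rangle$ in $S_{h_i}$. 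Consequently the outer tensor product $\Theta:=\bigboxtimes_{i=1}^{t}\charwrnb{\rho_i}{\chi^{\lambda^{[i]}}}{h_i}$ is an irreducible character of $I$ lying above $\theta$, and as $\lambda^{[i]}$ varies over partitions of $h_i$, we obtain exactly $\prod_i p(h_i)$ such irreducible extensions. Since all irreducible characters of $I$ above $\theta$ arise from extending $\theta$ by an irreducible character of $I/M^h\cong S_{h_1}\times\cdots\times S_{h_t}$ (using that $\rho_i^{\boxtimes h_i}$ extends to $M\wr S_{h_i}$, so the obstruction cocycle is trivial), this accounts for every Clifford extension.

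Finally I would invoke the Clifford correspondence: inducing the irreducible characters of the inertia subgroup $I$ lying above $\theta$ up to $M\wr S_h$ produces a complete list of irreducible characters of $M\wr S_h$ lying above the $S_h$-orbit of $\theta$, with no repetitions. Taking the union over all orbits (equivalently, over all compositions $(h_1,\dots,h_t)$ of $h$) yields the bijection of the theorem, with the irreducible character attached to $(\lambda^{[1]},\dots,\lambda^{[t]})$ being $\Theta\Ind^{M\wr S_h}$. I expect the main obstacle to be the verification of irreducibility and of the extension classification in the middle paragraph; the rest is a direct application of Clifford theory. As a consistency check, the total count $\sum_{(h_1,\dots,h_t)}\prod_i p(h_i)$ equals the number of $t$-tuples of partitions of total size $h$, which matches the number of conjugacy classes of $M\wr S_h$ by a standard cycle-type argument, confirming that the bijection is surjective onto all irreducible characters.
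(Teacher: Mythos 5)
Your Clifford-theoretic argument is correct: the orbit and inertia-group analysis, the norm computation showing $\left\langle \charwrnb{\rho}{\kappa}{h},\charwrnb{\rho}{\kappa}{h}\right\rangle=\left\langle\kappa,\kappa\right\rangle$ (the sum over the base group factorises over the cycles of $\sigma$ and each cycle contributes $1$ because $\rho$ is irreducible), the Gallagher-type extension step, and the Clifford correspondence assemble exactly as you describe. The paper itself offers no proof — it cites \cite[Theorem~4.4.3]{JamesKerberSymmetric81} — and the proof given there is precisely this little-group (Wigner--Mackey/Clifford) argument, so you have reconstructed the standard proof behind the citation.
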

	
	\begin{proof}
		See \cite[Theorem~4.4.3]{JamesKerberSymmetric81}.
	\end{proof}
	
	We use the isomorphism $\xi$ from Theorem~\ref{Theorem frobenius} to define plethysms in $\Lambda^{+}$; for two characters $\rho$ and $\kappa$ of $S_m$ and $S_h$, respectively, we define the \textit{plethysm} $\xi(\kappa)\circ \xi(\rho)$ as $\xi\left(\left( \charwrnb{\rho}{\kappa}{h}\right) \Ind^{S_{mh}} \right)$. In fact, the operation $\circ$ can be extended to a binary operation on $\Lambda$ (see \cite[p. 447]{StanleyEnumerativeII99}). However, for our purposes our definition is sufficient.
	
	Given partitions $\lambda,\mu$ and $\nu$, the \textit{plethysm coefficient} $p(\mu, \nu;\lambda)$ is defined as $\left\langle s_{\nu}\circ s_{\mu}, s_{\lambda} \right\rangle $. It is zero unless $|\lambda|=|\mu||\nu|$. From the definition if $m=|\mu|$ and $h=|\nu|$, then $\left(\charwr{\chi^{\mu}}{\chi^{\nu}}{h} \right)\Ind^{S_{mh}} = \sum_{\lambda\vdash mh}p(\mu,\nu;\lambda)\chi^{\lambda}$. Note that in our notation the order of `$\mu$' and `$\nu$' in the plethysm coefficient is consistent with the elementary irreducible characters, but, unavoidably, goes the opposite way to the plethysms of Schur functions. 
	
	Recall the notation $\lambda^{\prime r}$ from \S\ref{Sec Par} for the $r$-fold conjugate of the partition $\lambda$. The following are basic properties of plethysms we need.
	
	\begin{lemma}\label{Lemma plethysms and char properties}
		Let $\lambda, \mu, \bar{\mu}$ and $\nu$ be partitions with $|\mu|=|\bar{\mu}|$. Writing $m=|\mu|$, we have:
		\begin{enumerate}[label=\textnormal{(\roman*)}]
			\item $\omega(s_{\nu}\circ s_{\mu}) = s_{\nu^{\prime m}}\circ s_{\mu'}$,
			\item $p(\mu, \nu;\lambda)=p(\mu', \nu^{\prime m};\lambda')$,
			\item $s_{\mu}^2 = s_{(2)}\circ s_{\mu} + s_{(1^2)}\circ s_{\mu}$,
			\item if $|\nu|=2$, then $s_{\nu}\circ (s_{\mu} + s_{\bar{\mu}}) = s_{\nu}\circ s_{\mu} + s_{\mu}s_{\bar{\mu}} + s_{\nu}\circ s_{\bar{\mu}}$.
		\end{enumerate}
	\end{lemma}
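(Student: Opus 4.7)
The plan is to translate each identity into character-theoretic language via $\xi$ and work with elementary irreducible characters of the wreath product $S_m \wr S_h$, as introduced in Definition~\ref{Definition elementary irr reps}. Throughout, I use that $\omega = \xi \circ (\sgn \cdot -) \circ \xi^{-1}$ (since $\chi^{\lambda'} = \sgn\cdot \chi^{\lambda}$) and that tensoring with a linear character commutes with induction.

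For (i), I apply $\omega$ by tensoring the induced character $\bigl[(\chi^{\mu})^{\widetilde{\times h}}\chi^{\nu}\bigr]\ind^{S_{mh}}$ with $\sgn_{S_{mh}}$ and pulling the sign inside the induction. The crucial computation is that the restriction of $\sgn_{S_{mh}}$ to $S_m \wr S_h$ equals the elementary character $(\sgn_{S_m})^{\widetilde{\times h}}\, \sgn_{S_h}^{m}$: a transposition of two length-$m$ blocks is a product of $m$ transpositions in $S_{mh}$, hence has sign $(-1)^m$. A short check from the formula in Definition~\ref{Definition elementary irr reps} shows that the product of two elementary characters with compatible data is again an elementary character, namely
\[
\bigl((\sgn_{S_m})^{\widetilde{\times h}}\sgn_{S_h}^{m}\bigr)\cdot \bigl((\chi^{\mu})^{\widetilde{\times h}}\chi^{\nu}\bigr) \;=\; (\chi^{\mu'})^{\widetilde{\times h}}\chi^{\nu^{\prime m}},
\]
and applying $\xi$ to the induced character then yields $s_{\nu^{\prime m}}\circ s_{\mu'}$. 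Part (ii) is immediate from (i) by taking the inner product with $s_\lambda$ and using that $\omega$ is an isometry, as it permutes the orthonormal Schur basis.

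For (iii), I exploit transitivity of induction through the chain $S_m \times S_m \leq S_m \wr S_2 \leq S_{2m}$, writing $\chi^{\mu}\cdot\chi^{\mu} = \bigl[(\chi^{\mu}\boxtimes\chi^{\mu})\ind^{S_m \wr S_2}\bigr]\ind^{S_{2m}}$. Frobenius reciprocity plus Theorem~\ref{Theorem characters of wreath products} determine the intermediate induction: the only irreducible characters of $S_m \wr S_2$ whose restriction to $S_m \times S_m$ contains $\chi^{\mu}\boxtimes\chi^{\mu}$ are $(\chi^{\mu})^{\widetilde{\times 2}}\chi^{(2)}$ and $(\chi^{\mu})^{\widetilde{\times 2}}\chi^{(1^2)}$, each with multiplicity one (the third family of irreducibles, induced from $\chi^{\alpha}\boxtimes\chi^{\beta}$ with $\alpha\ne\beta$, contributes nothing). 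Inducing to $S_{2m}$ and applying $\xi$ gives the identity.

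For (iv), the natural extension of the elementary character notation by bilinearity in its first slot satisfies
\[
(\chi^{\mu}+\chi^{\bar{\mu}})^{\widetilde{\times 2}}\chi^{\nu} \;=\; (\chi^{\mu})^{\widetilde{\times 2}}\chi^{\nu} + (\chi^{\bar{\mu}})^{\widetilde{\times 2}}\chi^{\nu} + (\chi^{\mu}\boxtimes\chi^{\bar{\mu}})\ind^{S_m \wr S_2},
\]
which I check by evaluating both sides on a general element $(g_1,g_2;\sigma)\in S_m \wr S_2$ with cases $\sigma=e$ and $\sigma=(1\,2)$, using the formula for induction from an index-two subgroup. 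Inducing to $S_{2m}$ and applying $\xi$ yields (iv). The main obstacle is the bookkeeping in (i), where one must correctly identify the restriction of the sign character in terms of elementary characters; once this is done, (ii) is automatic, and (iii)-(iv) follow by routine applications of Frobenius reciprocity and direct character evaluations.
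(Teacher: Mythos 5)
Your proof is correct, but it takes a different route from the paper: the paper disposes of this lemma in one line by calling the identities routine symmetric-function manipulations and citing Macdonald (Exercises I.8.1 and I.8.2), whereas you derive all four parts inside the representation theory of $S_m\wr S_h$ via the Frobenius characteristic. Your key computations are sound: the restriction of $\sgn_{S_{mh}}$ to $S_m\wr S_h$ is indeed the elementary character built from $\sgn_{S_m}$ and $\sgn_{S_h}^{\,m}$ (a block swap is a product of $m$ transpositions), its product with $\charwrnb{\chi^{\mu}}{\chi^{\nu}}{h}$ is $\charwrnb{\chi^{\mu'}}{\chi^{\nu^{\prime m}}}{h}$ (this is exactly the first formula of Lemma~\ref{Lemma tensoring plethysms}, which you reprove directly from Definition~\ref{Definition elementary irr reps}, so there is no circularity), and pulling the sign through the induction gives (i); (ii) then follows since $\omega$ permutes the Schur basis. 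For (iii), your Frobenius-reciprocity argument identifying $\left( \chi^{\mu}\boxtimes\chi^{\mu}\right)\Ind^{S_m\wr S_2}$ as the sum of the two elementary characters is complete once you note the degree count (or that no other irreducible of $S_m\wr S_2$ restricts to contain $\chi^{\mu}\boxtimes\chi^{\mu}$), which you do; and the pointwise verification in (iv), split into $\sigma=e$ and $\sigma=(1\,2)$ with the induced character vanishing off the base group, checks out. What your approach buys is a self-contained argument phrased entirely in the language the paper itself uses to define plethysm (induction from wreath products), at the cost of more bookkeeping; the paper's citation of Macdonald is shorter but leaves the verification to standard symmetric-function identities proved there by different means.
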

	
	\begin{proof}
		The identities follow from routine manipulations with symmetric functions; see \cite[Exercises~I.8.1 and I.8.2]{MacdonaldPolynomials95}.
	\end{proof}
	
	We also frequently use the following formulae in combination with later results; see Example~\ref{Example index two}.
	
	\begin{lemma}\label{Lemma tensoring plethysms}
		Let $m$ and $h$ be two positive integers with $h\geq 2$. If $\alpha,\beta$ and $\gamma$ are characters of $S_m$, $\delta$ is a character of $S_{h-1}$ and $\epsilon$ and $\zeta$ are characters of $S_h$, then
		\[\left( \charwrnb{\alpha}{\epsilon}{h}\right) \times\left( \charwrnb{\beta}{\zeta}{h}\right)  = \charwrdb{\alpha\times\beta}{\epsilon\times\zeta}{h}\]
		and
		\begin{align*}
		&\left(\alpha\boxtimes\left( \charwrnb{\beta}{\delta}{h-1}\right)  \right)\Ind^{S_m\wr S_h}\times \left(\charwrnb{\gamma}{\epsilon}{h} \right)=\\ 
		&\left( \left( \alpha\times\gamma\right) \boxtimes\left( \charwrdb{\beta\times \gamma}{\delta\times \epsilon\res_{S_{h-1}}}{h-1}\right) \right) \Ind^{S_m\wr S_h}.
		\end{align*}
	\end{lemma}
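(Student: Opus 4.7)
The plan is to prove the two identities separately, the first by direct computation from Definition~\ref{Definition elementary irr reps} and the second by combining Frobenius reciprocity with a careful restriction computation followed by an application of the first identity.

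For the first identity, both sides are class functions on $M\wr S_h$. I would evaluate the left-hand side at a general element $(\List{g}{h};\sigma)$ using Definition~\ref{Definition elementary irr reps}, obtaining
\[
\epsilon(\sigma)\prod_C \alpha(g_C)\cdot\zeta(\sigma)\prod_C \beta(g_C) = (\epsilon\times\zeta)(\sigma)\prod_C (\alpha\times\beta)(g_C),
\]
where $C$ runs over the cycles of $\sigma$. Since the quantity $g_C\in S_m$ depends only on the cycle and the tuple of $g_i$'s (not on the split of $\alpha$ versus $\beta$), this last expression is exactly $\charwrdb{\alpha\times\beta}{\epsilon\times\zeta}{h}$ evaluated at $(\List{g}{h};\sigma)$.

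For the second identity, I would first invoke the standard projection formula: for any subgroup $K\leq H$ and characters $\phi$ of $K$ and $\psi$ of $H$, one has $(\phi\ind^H)\times\psi = (\phi\times\psi\res_K)\ind^H$. Applied to $K=S_m\times S_m\wr S_{h-1}\leq S_m\wr S_h=H$, this reduces the problem to computing $\left(\charwrnb{\gamma}{\epsilon}{h}\right)\res_{K}$ and then recognising its product with $\alpha\boxtimes\charwrnb{\beta}{\delta}{h-1}$ as the desired outer tensor product. The key observation for the restriction is that under the natural embedding $K\hookrightarrow H$, an element of $K$ has the form $(g_1,\List{g}{h};\sigma')$ where $\sigma'$ fixes $1$ and acts as $\sigma\in S_{h-1}$ on $\{2,\dots,h\}$; hence $\sigma'$ has cycles $(1)$ together with the cycles of $\sigma$. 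Plugging this into Definition~\ref{Definition elementary irr reps} cleanly factors out $\gamma(g_1)$ and $\epsilon\res_{S_{h-1}}(\sigma)$, giving
\[
\left(\charwrnb{\gamma}{\epsilon}{h}\right)\res_K = \gamma\boxtimes\left(\charwrnb{\gamma}{\epsilon\res_{S_{h-1}}}{h-1}\right).
\]

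Taking the outer tensor product with $\alpha\boxtimes\charwrnb{\beta}{\delta}{h-1}$ and using that outer tensor products factor through ordinary products on each component, the $S_m$-factor yields $\alpha\times\gamma$ while the $S_m\wr S_{h-1}$-factor yields $\left(\charwrnb{\beta}{\delta}{h-1}\right)\times\left(\charwrnb{\gamma}{\epsilon\res_{S_{h-1}}}{h-1}\right)$; the first identity applied with $h$ replaced by $h-1$ then collapses this factor to $\charwrdb{\beta\times\gamma}{\delta\times\epsilon\res_{S_{h-1}}}{h-1}$. Applying $\ind^{S_m\wr S_h}$ gives the claimed expression.

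Neither part presents a real obstacle: both are routine book-keeping with the wreath-product conventions of Definition~\ref{Definition elementary irr reps}. The only point requiring a little care is confirming that in the restriction step the embedding $S_m\times S_m\wr S_{h-1}\hookrightarrow S_m\wr S_h$ is the one consistent with the later use of $\ind^{S_m\wr S_h}$ in the lemma statement, i.e.\ that the distinguished $S_m$-factor corresponds to position $1$ and the remaining $S_m\wr S_{h-1}$ acts on positions $2,\dots,h$; this matches the usual convention in \cite{JamesKerberSymmetric81} being used throughout the paper.
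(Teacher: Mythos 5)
Your proposal is correct: the pointwise evaluation via Definition~\ref{Definition elementary irr reps} gives the first identity, and the projection formula $(\phi\ind^H)\times\psi=(\phi\times\psi\res_K)\ind^H$ combined with the restriction computation $\left(\charwrnb{\gamma}{\epsilon}{h}\right)\res_{S_m\times S_m\wr S_{h-1}}=\gamma\boxtimes\left(\charwrnb{\gamma}{\epsilon\res_{S_{h-1}}}{h-1}\right)$ and the first identity (with $h-1$ in place of $h$) yields the second. This is exactly the "routine manipulation with characters" the paper leaves unspelled, so you have simply filled in the intended argument.
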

	
	\begin{proof}
		The first formula is obvious. The second one follows from routine manipulations with characters.
	\end{proof}
	
	While a combinatorial rule for computing an arbitrary plethysm coefficient is missing, there are particular plethysms that are well-understood. To describe some of them we say a partition $\lambda$ is \textit{even} if all parts of $\lambda$ are even and write $\mathcal{E}(2h)$ for the set of even partitions of $2h$.
	
	Given a non-empty partition $\alpha$ with pairwise distinct parts, the \textit{shift symmetric partition} $ss[\alpha]$ is a partition of size $2|\alpha|$ such that for all $i\leq \ell(\alpha)$ we have $ss[\alpha]_i=\alpha_i + i, ss[\alpha]'_i=\alpha_i+i-1$ and implicitly $ss[\alpha]_{\ell(\alpha)+1}\leq\ell(\alpha)$. We write $\mathcal{S}(2h)$ for the set of shift symmetric partitions of $2h$.
	
	\begin{example}\label{Ex shift}
		The shift symmetric partitions $ss[(h)], ss[(h-1,1)]$ and $ss[(h-2,2)]$ are $(h+1,1^{h-1}), (h,3,1^{h-3})$ and $(h-1,4,2,1^{h-5})$, respectively (with $h\geq 1, h\geq 3$ and $h\geq 5$, respectively). 
	\end{example}
	
	The following are well-known rules for computing particular plethysms; see \cite[Exercise~I.8.6]{MacdonaldPolynomials95}.
	
	\begin{lemma}\label{Lemma even and shift rules}
		For all positive integers $h$ we have:
		\begin{enumerate}[label=\textnormal{(\roman*)}]
			\item $s_{(h)}\circ s_{(2)} = \sum_{\lambda\in\mathcal{E}(2h)}s_{\lambda}$,
			\item $s_{(h)}\circ s_{(1^2)} = \sum_{\lambda\in\mathcal{E}(2h)}s_{\lambda'}$,
			\item $s_{(1^h)}\circ s_{(2)} = \sum_{\lambda\in\mathcal{S}(2h)}s_{\lambda}$,
			\item $s_{(1^h)}\circ s_{(1^2)} = \sum_{\lambda\in\mathcal{S}(2h)}s_{\lambda'}$.
		\end{enumerate}
	\end{lemma}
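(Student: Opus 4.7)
The plan is to reduce the four identities to just two by invoking the involution $\omega$, then to establish the remaining two via generating functions combined with classical Littlewood-type identities.

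For the reduction, apply Lemma~\ref{Lemma plethysms and char properties}(i) with $\mu=(2)$ so $m=2$ and $(h)^{\prime\prime}=(h)$, $(1^h)^{\prime\prime}=(1^h)$. This gives $\omega\bigl(s_{(h)}\circ s_{(2)}\bigr)=s_{(h)}\circ s_{(1^2)}$ and $\omega\bigl(s_{(1^h)}\circ s_{(2)}\bigr)=s_{(1^h)}\circ s_{(1^2)}$. Since $\omega(s_\lambda)=s_{\lambda'}$, identity (ii) follows immediately from (i), and (iv) follows from (iii). So it suffices to prove (i) and (iii).

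For (i) and (iii), use that $s_{(h)}=h_h$ and $s_{(1^h)}=e_h$, with the well-known generating functions $\sum_h h_h(y)=\prod_k(1-y_k)^{-1}$ and $\sum_h e_h(y)=\prod_k(1+y_k)$. Plethysm with $s_{(2)}=h_2(x)=\sum_{i\le j}x_ix_j$ substitutes the monomials of $h_2$ for the $y_k$ variables, yielding
\begin{align*}
\sum_{h\ge 0}s_{(h)}\circ s_{(2)}&=\prod_{i\le j}\frac{1}{1-x_ix_j},\\
\sum_{h\ge 0}s_{(1^h)}\circ s_{(2)}&=\prod_{i\le j}(1+x_ix_j).
\end{align*}
The first product is one of Littlewood's classical identities, equal to $\sum_{\lambda\text{ even}}s_\lambda$; extracting the degree-$2h$ part gives (i). For the second product, I would invoke the analogous Littlewood-type identity $\prod_{i\le j}(1+x_ix_j)=\sum_{\lambda\in\mathcal{S}}s_\lambda$, where $\mathcal{S}=\bigcup_{h\ge 0}\mathcal{S}(2h)$ is the set of all shift symmetric partitions; the degree-$2h$ component gives (iii).

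The main obstacle is the shift symmetric identity for $\prod_{i\le j}(1+x_ix_j)$, which is less familiar than the even-partition case. I would establish it either by a Jacobi--Trudi determinantal manipulation, or by recognising that shift symmetric partitions $ss[\alpha]$ are precisely those whose Durfee-square decomposition is controlled by a strict partition $\alpha$ in a way that dualises the even-partition constraint in Littlewood's first identity. A clean alternative to the whole generating function approach is a character-theoretic proof of (i): the class function $s_{(h)}\circ s_{(2)}$ corresponds to the permutation character of $S_{2h}$ on cosets of $S_2\wr S_h$ (equivalently, on perfect matchings of $2h$ points), and Thrall's classical decomposition of this character into $\sum_{\lambda\text{ even}}\chi^\lambda$ yields (i) directly; however, a purely character-theoretic derivation of (iii) would still require the sign-twisted analogue, which is best handled via the generating function route above.
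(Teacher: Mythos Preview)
Your argument is correct. The paper does not give a proof of this lemma at all: it simply cites \cite[Exercise~I.8.6]{MacdonaldPolynomials95}, so there is no ``paper's approach'' to compare against beyond that reference. Your reduction of (ii) and (iv) to (i) and (iii) via $\omega$ is exactly right, and the generating-function derivation of (i) from Littlewood's identity $\prod_{i\le j}(1-x_ix_j)^{-1}=\sum_{\lambda\text{ even}}s_\lambda$ is the standard route.

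The identity you flag as the ``main obstacle'', namely $\prod_{i\le j}(1+x_ix_j)=\sum_{\lambda\in\mathcal{S}}s_\lambda$, is not a gap: it is also one of Littlewood's classical identities, recorded for instance in \cite[I.5, Example~9]{MacdonaldPolynomials95}, where the partitions in $\mathcal{S}$ are described in Frobenius notation as those of the form $(a_1,\dots,a_d\mid a_1-1,\dots,a_d-1)$, which is precisely the paper's definition of shift symmetric partitions $ss[\alpha]$ with $\alpha=(a_1,\dots,a_d)$. So you may simply cite it alongside the even-partition identity rather than reproving it from scratch. Your suggested alternative via Thrall's decomposition of the perfect-matching permutation character is also valid for (i) but, as you note, does not extend cleanly to (iii).
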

	
	\subsubsection*{Domino rule}
	
	Lemma~\ref{Lemma even and shift rules} covers all plethysms $s_{\nu}\circ s_{\mu}$ with $|\mu|=2$ and $\nu$ linear. We now discuss a result due to Carr\'{e}--Leclerc \cite{CarreSplitting95} which applies for $|\nu|=2$ (and arbitrary $\mu$). By a \textit{horizontal domino} we mean a set of two horizontally adjacent boxes in a Young diagram, that is a set of the form $\left\lbrace (i,j), (i,j+1) \right\rbrace $. Similarly, we refer to sets of two vertically adjacent boxes in a Young diagram as \textit{vertical dominoes}. Define a \textit{domino $\mu$-diagram} to be a set of dominoes which partition the Young diagram of $(2\mu_1,2\mu_1,2\mu_2,2\mu_2,\dots,2\mu_{\ell(\mu)},2\mu_{\ell(\mu)})$. It is a fact that the number of horizontal dominoes of any domino $\mu$-diagram is even. If after dividing this number by $2$ we end up with an even number, we call such a diagram \textit{even}, otherwise we call it \textit{odd}.
	
	A \textit{domino $\mu$-tableau} is a map $T:\Delta\to \mathbb{N}$ where $\Delta$ is a domino $\mu$-diagram. We may omit $\mu$- when $\mu$ is implicit or irrelevant. An example of two domino $\mu$-tableaux with $\mu=(2,2,1)$ can be seen in Figure~\ref{Figure domino}. After extending the definition of even and odd domino diagrams to domino tableaux we see that the first domino tableau from Figure~\ref{Figure domino} is odd, while the second is even.
	
	\begin{figure}[h!]
		\begin{tikzpicture}[x=0.5cm, y=0.5cm]
		\node[h2domino] at (1,0.5){$4$};
		\node[h2domino] at (1,1.5){$3$};
		\node[h2domino] at (3,2.5){$4$};
		\node[h2domino] at (3,3.5){$3$};
		\node[h2domino] at (3,4.5){$2$};
		\node[h2domino] at (3,5.5){$1$};
		\node[v2domino] at (0.5,3){$2$};
		\node[v2domino] at (1.5,3){$2$};
		\node[v2domino] at (0.5,5){$1$};
		\node[v2domino] at (1.5,5){$1$};
		\end{tikzpicture}
		\qquad
		\begin{tikzpicture}[x=0.5cm, y=0.5cm]
		\node[v2domino] at (0.5,1){$5$};
		\node[v2domino] at (1.5,1){$5$};
		\node[v2domino] at (0.5,3){$3$};
		\node[v2domino] at (3.5,3){$6$};
		\node[v2domino] at (2.5,5){$2$};
		\node[v2domino] at (3.5,5){$2$};
		\node[h2domino] at (2,2.5){$4$};
		\node[h2domino] at (2,3.5){$3$};
		\node[h2domino] at (1,4.5){$3$};
		\node[h2domino] at (1,5.5){$1$};
		\end{tikzpicture}
		\caption{Two domino $\mu$-tableaux with $\mu=(2,2,1)$. There are $6$  horizontal dominoes in the first domino diagram and $4$ horizontal dominoes in the other one. After dividing by $2$ we get numbers $3$ and $2$, respectively. Thus the first domino tableau is odd and the second is even.}
		\label{Figure domino}
	\end{figure}
	
	We naturally extend the definition of a weight to domino tableaux; a domino tableau $T$ has \textit{weight} $W(T)=(T_1,T_2,\dots)$ where $T_i$ is the number of dominoes labelled $i$ in $T$. Similarly, we say that a domino tableau is \textit{semistandard} if for any two dominoes sharing a vertical edge, their entries are non-decreasing rightwards and for any two dominoes sharing a horizontal edge, their entries are increasing downwards.
	
	According to the definition, the first domino tableau from Figure~\ref{Figure domino} is semistandard with weight $(3,3,2,2)$, while the second one has weight equal to $(1,2,3,1,2,1)$ and it is not semistandard. We can make the second one into a semistandard domino tableau by replacing $3$ in the second row with $2$. Finally, there is also an analogue of a reading word (see Remark~\ref{Remark reading word} for a comment on our reading order). 
	
	\begin{definition}\label{Definition reading domino}
		The \textit{reading word} $R(T)$ of a domino tableau $T$ is the word of positive integers obtained by first moving numbers inside horizontal dominoes to their left boxes and then reading columns of the partition $(2\mu_1,2\mu_1,2\mu_2,2\mu_2,\dots,2\mu_{\ell(\mu)},2\mu_{\ell(\mu)})$ downwards, starting with the rightmost column and moving left.    
	\end{definition}
	
	\begin{example}\label{Example reading domino}
		The reading words of the domino tableaux from Figure~\ref{Figure domino} are $1234121234$ and $2623451335$.
	\end{example}
	
	\begin{example}\label{Example top filling}
		Given a partition $\mu$ and a sequence of non-negative integers $\varphi=(\varphi_1,\varphi_2,\dots)$ such that $\varphi_i\leq \mu_i$ for all $i$, we construct a domino $\mu$-diagram $\Delta_{\varphi}$ by firstly partitioning the Young diagram of the partition $(2\mu_1,2\mu_1,2\mu_2,2\mu_2,\dots,2\mu_{\ell(\mu)},2\mu_{\ell(\mu)})$ into sets of $2\times 2$ boxes and then halving each set such that in rows $2i-1$ and $2i$ the first $\varphi_i$ sets are halved vertically into two vertical dominoes and the rest are halved horizontally into two horizontal dominoes. For each domino $d$ of $\Delta_{\varphi}$ we let $T_{\varphi}(d)$ be  one plus the number of dominoes `above' $d$. Formally, if $d$ contains a box $(i,j)$, then $T_{\varphi}(d)$ counts the number of dominoes which contain a box $(i',j)$ with $i'\leq i$. See the first domino tableau in Figure~\ref{Figure domino} for this construction with $\mu=(2,2,1)$ and $\varphi=(1,1,0)$.
		
		From the definition of $T_{\varphi}$ we see it is semistandard and it is even if and only if $|\mu|-\sum_{i\geq 1}\varphi_i$ is even. If $\varphi$ is a partition, then $R(T_{\varphi})$ is a concatenation of words of the form $12\dots i12\dots i\dots j$ (obtained by reading a particular pair of columns). Thus it is latticed. 
	\end{example}
	
	\begin{theorem}[Domino rule]\label{Theorem domino rule}
		The plethysm coefficient $p(\mu,(2);\lambda)$ equals the number of the semistandard even domino $\mu$-tableaux $T$ with weight $\lambda$ and a latticed reading word. Replacing the word `even' with `odd' we obtain the coefficient $p(\mu,(1^2);\lambda)$.  
	\end{theorem}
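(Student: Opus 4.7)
\emph{Plan.} I would combine Lemma~\ref{Lemma plethysms and char properties}(iii) with a sign-twisted bijective interpretation. Writing $s_\mu^2 = s_{(2)} \circ s_\mu + s_{(1^2)} \circ s_\mu$ and taking the inner product with $s_\lambda$ gives
\[ c(\mu,\mu;\lambda) \;=\; p(\mu,(2);\lambda) + p(\mu,(1^2);\lambda). \]
By the Littlewood--Richardson rule (Theorem~\ref{Theorem LR rule}), the left-hand side counts semistandard $\lambda/\mu$-tableaux of weight $\mu$ with latticed reading word. So it suffices to produce a bijection $\Phi$ from these LR tableaux to semistandard domino $\mu$-tableaux of weight $\lambda$ with latticed reading word, and then to verify that the even/odd split on the domino side matches the $(2)$/$(1^2)$ split.

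To construct $\Phi$, I would interpret a domino $\mu$-tableau on $Y_{(2\mu_1,2\mu_1,\ldots,2\mu_{\ell(\mu)},2\mu_{\ell(\mu)})}$ as encoding a pair $(U,S)$, where $U$ is the canonical row-filling of $Y_\mu$ (row $i$ labelled $i$) and $S$ is a filling of some $\lambda/\mu$ whose content and lattice conditions match those of the domino tableau. A vertical domino would contribute a single label to both $U$ and $S$ in a manner consistent with reading columns downwards, while each horizontal domino together with the neighbour sharing a vertical edge would encode a `swap' between the two rows of a fixed $\mu_i$-block; this is why the count of horizontal dominoes is always even. The map should be invertible by reversing the column-by-column encoding, and both semistandardness and the lattice property should transfer because the reading word of Definition~\ref{Definition reading domino} restricts along each pair of columns to the reading word of Definition~\ref{Definition reading word} for the associated skew tableau.

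For the parity split, I would use the companion identity
\[ s_{(2)} \circ s_\mu \,-\, s_{(1^2)} \circ s_\mu \;=\; s_\mu(x_1^2,x_2^2,\ldots), \]
coming from the standard plethystic relation $s_{(2)} - s_{(1^2)} = p_2$, and establish
\[ s_\mu(x_1^2,x_2^2,\ldots) \;=\; \sum_\lambda \Big( \sum_T (-1)^{h(T)} \Big) s_\lambda, \]
with $T$ ranging over semistandard domino $\mu$-tableaux of weight $\lambda$ with latticed reading word and $h(T)$ equal to half the number of horizontal dominoes of $T$. Adding and subtracting this from the LR count then recovers the two stated formulas. The signed identity can be checked directly on the linear cases $\mu=(m)$ and $\mu=(1^m)$ via Lemma~\ref{Lemma even and shift rules}, and bootstrapped to general $\mu$ either by the Stanton--White bijection between domino tableaux on $Y_{2\mu}$ and pairs of semistandard tableaux of shape $\mu$ (with spin matching $h(T)$) or by an induction on $|\mu|$ using Pieri-type moves. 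The routine part of the argument is tracking semistandardness and the lattice condition through $\Phi$; the substantive difficulty is the parity split itself, namely showing that the horizontal-domino count faithfully detects rather than merely permutes sign-sensitive contributions, which is where a careful spin-preserving bijection is unavoidable.
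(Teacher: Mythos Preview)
The paper does not prove this theorem from first principles; it simply cites \cite[Corollary~5.5]{CarreSplitting95} and notes that the reading-word convention used here is the reverse of Carr\'e--Leclerc's. You are therefore attempting to reprove a result that the paper treats as a black box.

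Your overall strategy is sound in outline: the identity $c(\mu,\mu;\lambda)=p(\mu,(2);\lambda)+p(\mu,(1^2);\lambda)$ follows from Lemma~\ref{Lemma plethysms and char properties}(iii), and $s_{(2)}\circ s_\mu - s_{(1^2)}\circ s_\mu = p_2\circ s_\mu$ is indeed the right tool for the parity split. But both of the substantive steps are only gestured at, and neither is close to a proof.

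First, the bijection $\Phi$ is not actually defined. Saying that a vertical domino ``contributes a single label to both $U$ and $S$'' and that horizontal dominoes ``encode a swap'' does not determine a map, and invoking Stanton--White does not close the gap: that bijection sends a domino tableau on the shape $(2\mu_1,2\mu_1,\ldots)$ to a \emph{pair} of semistandard tableaux of shape $\mu$ with combined weight $\lambda$, not to a single Littlewood--Richardson $\lambda/\mu$-tableau of weight $\mu$. Passing from the former to the latter requires a further rectification-type bijection, and showing that the latticed-reading-word condition on the domino side corresponds to the Littlewood--Richardson condition on the skew side is precisely the content of Carr\'e--Leclerc's paper.

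Second, the signed identity you display for $s_\mu(x_1^2,x_2^2,\ldots)$ is the thing to be proved, not something you can ``bootstrap'' from linear $\mu$. The cases $\mu=(m)$ and $\mu=(1^m)$ do not determine the general case via any induction you have specified, and ``Pieri-type moves'' on domino tableaux are unavailable without first developing domino insertion and showing it is compatible with spin---which again is the machinery of Carr\'e--Leclerc (or Shimozono--White).

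In short, your plan correctly identifies the decomposition, but the hard combinatorics that turns it into a proof is exactly what the cited paper supplies, and you have not supplied an alternative.
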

	
	\begin{proof}
		After replacing reading words with their reverses and changing `prefix' to `suffix' in Definition~\ref{Definition latticed word} the theorem becomes \cite[Corollary~5.5]{CarreSplitting95}.
	\end{proof}
	
	\begin{example}\label{Example domino rule}
		Let $\mu=(2,1)$. Using the semistandard domino tableaux with a latticed reading word from Figure~\ref{Figure (2,1) example} we conclude that $s_{(2)}\circ s_{(2,1)}=s_{(4,2)}+s_{(3,2,1)} + s_{(3,1^3)} + s_{(2^3)}$ and $s_{(1^2)}\circ s_{(2,1)}=s_{(4,1^2)}+s_{(3^2)} + s_{(3,2,1)} + s_{(2^2,1^2)}$.
	\end{example}
	
	\begin{figure}[h!]
		\begin{tikzpicture}[x=0.5cm, y=0.5cm]
		\node[v2domino] at (0.5,3){$1$};
		\node[v2domino] at (1.5,3){$1$};
		\node[v2domino] at (2.5,3){$1$};
		\node[v2domino] at (3.5,3){$1$};
		\node[v2domino] at (0.5,1){$2$};
		\node[v2domino] at (1.5,1){$2$};
		\end{tikzpicture}
		\qquad
		\begin{tikzpicture}[x=0.5cm, y=0.5cm]
		\node[h2domino] at (1,1.5){$2$};
		\node[h2domino] at (1,0.5){$3$};
		\node[h2domino] at (3,3.5){$1$};
		\node[h2domino] at (3,2.5){$2$};
		\node[v2domino] at (0.5,3){$1$};
		\node[v2domino] at (1.5,3){$1$};
		\end{tikzpicture}
		\qquad
		\begin{tikzpicture}[x=0.5cm, y=0.5cm]
		\node[h2domino] at (1,1.5){$3$};
		\node[h2domino] at (1,0.5){$4$};
		\node[h2domino] at (3,3.5){$1$};
		\node[h2domino] at (3,2.5){$2$};
		\node[v2domino] at (0.5,3){$1$};
		\node[v2domino] at (1.5,3){$1$};
		\end{tikzpicture}
		\qquad
		\begin{tikzpicture}[x=0.5cm, y=0.5cm]
		\node[h2domino] at (1,3.5){$1$};
		\node[h2domino] at (1,2.5){$2$};
		\node[h2domino] at (3,3.5){$1$};
		\node[h2domino] at (3,2.5){$2$};
		\node[v2domino] at (0.5,1){$3$};
		\node[v2domino] at (1.5,1){$3$};
		\end{tikzpicture}
		\vskip 5pt
		\begin{tikzpicture}[x=0.5cm, y=0.5cm]
		\node[v2domino] at (0.5,3){$1$};
		\node[v2domino] at (1.5,3){$1$};
		\node[v2domino] at (2.5,3){$1$};
		\node[v2domino] at (3.5,3){$1$};
		\node[h2domino] at (1,0.5){$3$};
		\node[h2domino] at (1,1.5){$2$};
		\end{tikzpicture}
		\qquad
		\begin{tikzpicture}[x=0.5cm, y=0.5cm]
		\node[v2domino] at (1.5,1){$2$};
		\node[v2domino] at (0.5,1){$2$};
		\node[h2domino] at (3,3.5){$1$};
		\node[h2domino] at (3,2.5){$2$};
		\node[v2domino] at (0.5,3){$1$};
		\node[v2domino] at (1.5,3){$1$};
		\end{tikzpicture}
		\qquad
		\begin{tikzpicture}[x=0.5cm, y=0.5cm]
		\node[v2domino] at (1.5,1){$3$};
		\node[v2domino] at (0.5,1){$2$};
		\node[h2domino] at (3,3.5){$1$};
		\node[h2domino] at (3,2.5){$2$};
		\node[v2domino] at (0.5,3){$1$};
		\node[v2domino] at (1.5,3){$1$};
		\end{tikzpicture}
		\qquad
		\begin{tikzpicture}[x=0.5cm, y=0.5cm]
		\node[h2domino] at (1,3.5){$1$};
		\node[h2domino] at (1,2.5){$2$};
		\node[h2domino] at (3,3.5){$1$};
		\node[h2domino] at (3,2.5){$2$};
		\node[h2domino] at (1,0.5){$4$};
		\node[h2domino] at (1,1.5){$3$};
		\end{tikzpicture}
		\caption{The semistandard domino $\mu$-tableaux with $\mu=(2,1)$ and a latticed reading word from Example~\ref{Example domino rule}. The first line contains the even domino tableaux, while the second contains the odd domino tableaux.}
		\label{Figure (2,1) example}
	\end{figure} 
	
	Similarly to the Littlewood--Richardson rule, we obtain a simpler version in the case our underlying partition $\mu$ is rectangular.
	
	\begin{proposition}\label{Prop domino rectangles}
		Let $a$ and $b$ be positive integers and let $\mu=(a^b)$. The plethysm coefficient $p(\mu,(2);\lambda)$ is zero unless $\lambda$ is $(a,b)$-birectangular and $\lambda_1+\lambda_2+\dots+\lambda_b$ is even, in which case $p(\mu,(2);\lambda)=1$. We obtain the analogous result for $p(\mu,(1^2);\lambda)$ after replacing `even' with `odd'. 
	\end{proposition}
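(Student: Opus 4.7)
The plan is to use the identity $s_\mu^2=s_{(2)}\circ s_\mu+s_{(1^2)}\circ s_\mu$ from Lemma~\ref{Lemma plethysms and char properties}(iii). By Proposition~\ref{Prop LR rectangles}(i), the coefficient $c(\mu,\mu;\lambda)$ of $s_\lambda$ in $s_\mu^2$ is $1$ when $\lambda$ is $(a,b)$-birectangular and $0$ otherwise, so both $p(\mu,(2);\lambda)$ and $p(\mu,(1^2);\lambda)$ vanish for non-birectangular $\lambda$, and it remains to decide, in the birectangular case, which of the two equals $1$.

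For this I would invoke the domino rule (Theorem~\ref{Theorem domino rule}): since $p(\mu,(2);\lambda)+p(\mu,(1^2);\lambda)=1$ counts all semistandard domino $\mu$-tableaux of weight $\lambda$ with latticed reading word, there is a unique such tableau and the proposition reduces to computing its parity. I would exhibit this tableau as the top-filling $T_\varphi$ from Example~\ref{Example top filling}, taking $\varphi=(\lambda_1-a,\lambda_2-a,\dots,\lambda_b-a)$. The birectangular identity $\lambda_i+\lambda_{2b+1-i}=2a$ together with monotonicity of $\lambda$ forces $a\leq\lambda_i\leq 2a$ for $i\leq b$, so $\varphi$ is a partition with parts in $\{0,1,\dots,a\}$; Example~\ref{Example top filling} then ensures that $T_\varphi$ is semistandard with latticed reading word.

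The main obstacle is to verify that the weight of $T_\varphi$ equals $\lambda$. Setting $r_k=\max\{j:\varphi_j\geq k\}$ for each column-pair $k\in\{1,\dots,a\}$, a careful reading of the labelling rule in Example~\ref{Example top filling} shows that column-pair $k$ contributes $+2$ to each label in $\{1,\dots,r_k\}$ (from its $r_k$ pairs of vertical dominoes) and $+1$ to each label in $\{r_k+1,\dots,2b-r_k\}$ (from the tops and bottoms of the horizontal dominoes in row-pairs $r_k+1,\dots,b$, whose labels form two arithmetic progressions of common difference $2$ starting at $r_k+1$ and $r_k+2$). Summing over $k$ and using the identity $\#\{k:r_k\geq l\}=\varphi_l$ for $l\leq b$, the weight entries simplify to $w_l=a+\varphi_l$ for $l\leq b$ and $w_l=a-\varphi_{2b+1-l}$ for $b<l\leq 2b$; by the choice of $\varphi$ and the birectangularity of $\lambda$ these coincide with $\lambda_l$.

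Finally, row-pair $i$ contains exactly $2(a-\varphi_i)$ horizontal dominoes, so $T_\varphi$ has $2(ab-|\varphi|)$ horizontal dominoes in total; halving gives $ab-|\varphi|=2ab-(\lambda_1+\dots+\lambda_b)$, which has the same parity as $\lambda_1+\dots+\lambda_b$. Thus $T_\varphi$ is even if and only if $\lambda_1+\dots+\lambda_b$ is even, yielding the stated formula for $p(\mu,(2);\lambda)$, and the statement for $p(\mu,(1^2);\lambda)$ follows immediately from $p(\mu,(2);\lambda)+p(\mu,(1^2);\lambda)=1$.
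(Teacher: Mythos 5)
Your proposal is correct and follows essentially the same route as the paper: reduce to the birectangular case via $s_{\mu}^2=s_{(2)}\circ s_{\mu}+s_{(1^2)}\circ s_{\mu}$ and Proposition~\ref{Prop LR rectangles}(i), then exhibit the tableau $T_{\varphi}$ with $\varphi=(\lambda_1-a,\dots,\lambda_b-a)$ from Example~\ref{Example top filling} and read off its parity from the count of horizontal dominoes. Your column-pair tally of the weight is just a repackaging of the paper's per-column observation pairing the labels $i$ and $2b+1-i$, and your extra remark that the tableau is unique, while true, is not needed.
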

	
	\begin{proof}
		From Lemma~\ref{Lemma plethysms and char properties}(iii), we have $c(\mu,\mu;\lambda) = p(\mu,(2);\lambda) + p(\mu,(1^2);\lambda)$. By Proposition~\ref{Prop LR rectangles}(i), the left-hand side is one if $\lambda$ is $(a,b)$-birectangular and zero otherwise. Thus it is sufficient to show that for $\lambda$ an $(a,b)$-birectangular partition $p(\mu,(2);\lambda)\geq 1$ if $\lambda_1+\lambda_2+\dots+\lambda_b$ is even and $p(\mu,(1^2);\lambda)\geq 1$ otherwise.
		
		By Theorem~\ref{Theorem domino rule}, we need to present an appropriate semistandard domino $\mu$-tableaux. Given an $(a,b)$-birectangular partition $\lambda$, let $\varphi=(\lambda_1-a,\lambda_2-a,\dots, \lambda_b-a)$. Since $a\geq\lambda_1-a\geq \lambda_2-a\geq\dots\geq\lambda_b-a\geq 0$, we see that $\varphi$ is a partition and $\varphi\subseteq(a^b)$. Thus the domino $\mu$-tableau $T_{\varphi}$ from Example~\ref{Example top filling} is a well-defined semistandard domino tableau with a latticed reading word. An example of $T_{\varphi}$ is in Figure~\ref{Figure rectangle domino}.
		
		\begin{figure}[h!]
			\begin{tikzpicture}[x=0.5cm, y=0.5cm]
			\node[v2domino] at (0.5,3){$1$};
			\node[v2domino] at (1.5,3){$1$};
			\node[v2domino] at (2.5,3){$1$};
			\node[v2domino] at (3.5,3){$1$};
			\node[v2domino] at (0.5,1){$2$};
			\node[v2domino] at (1.5,1){$2$};
			\node[h2domino] at (5,3.5){$1$};
			\node[h2domino] at (5,2.5){$2$};
			\node[h2domino] at (3,1.5){$2$};
			\node[h2domino] at (3,0.5){$3$};
			\node[h2domino] at (5,1.5){$3$};
			\node[h2domino] at (5,0.5){$4$};
			\end{tikzpicture}
			\caption{A construction of $T_{\varphi}$ for $a=3, b=2$ and $\lambda=(5,4,2,1)$ (and implicitly $\varphi=(2,1)$).}
			\label{Figure rectangle domino}
		\end{figure}
		
		For $1\leq i\leq b$ observe that each column of $((2a)^{2b})$ either intersects one vertical domino with $i$ and no domino with $2b+1-i$ \emph{or} one horizontal domino with $i$ and one horizontal domino with $2b+1-i$. Consequently, given our choice of $\varphi$, there are $2a-\lambda_i$ horizontal dominoes with $i$ and $2a-\lambda_i$ horizontal dominoes with $2b+1-i$. Thus there are $\lambda_i$ dominoes with $i$ and $2a-\lambda_i=\lambda_{2b+1-i}$ dominoes with $2b+1-i$ (using that $\lambda$ is $(a,b)$-birectangular). Since the entries of $T_{\varphi}$ are at most $2b$ and $\ell(\lambda)\leq 2b$, we conclude that $W(T_{\varphi})=\lambda$. Finally, as mentioned in Example~\ref{Example top filling}, $T_{\varphi}$ is even if and only if $|\mu|-|\varphi|=2ab-(\lambda_1+\lambda_2+\dots+\lambda_b)$ is even, or equivalently, if and only if $\lambda_1+\lambda_2+\dots+\lambda_b$ is even, as required. 
	\end{proof}
	
	\subsubsection*{Multiplicity-free plethysms of Schur functions}
	
	The following classification obtained by Bessenrodt, Bowman and Paget  \cite[Theorem~1.1.]{BessenrodtBowmanPagetMF22} yields, using Theorem~\ref{Theorem frobenius}, the elementary irreducible induced-multiplicity-free characters of $S_m\wr S_h$. We omit the cases $m=1$ and $h=1$ when $S_1\wr S_h=S_h$ and $S_m\wr S_1=S_m$.
	
	\begin{theorem}\label{Theorem MF plethysms}
		Let $n=mh$ with $m,h\geq 2$. For partitions $\mu\vdash m$ and $\nu\vdash h$ the plethysm $s_{\nu}\circ s_{\mu}$ is multiplicity-free if and only if one of the following happens:
		\begin{enumerate}[label=\textnormal{(\roman*)}]
			\item $h=2$ and $\mu$ is rectangular, almost rectangular or a hook, 
			\item $m=2$ and $\nu$ is linear,
			\item $n=mh\leq 18$ and the pair $(\mu, \nu)$ belongs to an explicitly defined family of $49$ members.
		\end{enumerate} 
	\end{theorem}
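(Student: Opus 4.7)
The plan is to prove the two directions separately, with sufficiency reducing to direct evaluation using the machinery already developed in the preliminaries, and necessity handled by case-by-case constructions of duplicate constituents.

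For sufficiency, case (ii) with $m=2$ and $\nu$ linear follows immediately from Lemma~\ref{Lemma even and shift rules}, whose right-hand sides are manifestly multiplicity-free (sums of distinct Schur functions indexed by even partitions or shift symmetric partitions). In case (i) when $\mu$ is rectangular, multiplicity-freeness is exactly Proposition~\ref{Prop domino rectangles}. When $\mu$ is a hook, I would use $s_\mu^2 = s_{(2)}\circ s_\mu + s_{(1^2)}\circ s_\mu$ (Lemma~\ref{Lemma plethysms and char properties}(iii)) combined with Lemma~\ref{Lemma LR hooks}, which shows $c(\mu,\mu;\lambda)\leq 1$ for every $\lambda$ (the three regimes $\lambda_1+\lambda_2\in\{2a+2,2a+3,2a+4\}$ each contribute at most one, and the exceptional hook-shape $\lambda=(2a+1,1^{2b+1})$ is disjoint from the generic ones); since the plethysms have nonnegative Schur coefficients summing to at most $1$, both are multiplicity-free. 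For almost rectangular $\mu$ the analogous argument proceeds through the domino rule (Theorem~\ref{Theorem domino rule}): one shows that the semistandard plus latticed-reading-word constraints force at most one valid domino $\mu$-tableau per weight. Case (iii) is a finite enumeration verified by direct computation of the $49$ pairs.

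For necessity, I split by cases on $(m,h)$. Using the $\omega$-symmetry from Lemma~\ref{Lemma plethysms and char properties}(i),(ii), the problem is reduced up to simultaneously conjugating $\mu$ and $\nu^{\prime m}$. When $h=2$ and $\mu$ is neither rectangular, almost rectangular, nor a hook, the strategy is to exhibit an explicit $\lambda$ together with two distinct semistandard even (respectively odd) domino $\mu$-tableaux with latticed reading words and common weight $\lambda$; the key observation is that a partition $\mu$ failing all three shapes has enough ``internal freedom'' (two distinct inner parts away from the boundary) to admit a local exchange of a pair of dominoes producing a second valid tableau. When $m=2$ and $\nu$ is nonlinear, a parallel domino-tableaux argument (applied after $\omega$) produces multiplicities in $s_\nu\circ s_{(2)}$ or $s_\nu\circ s_{(1^2)}$. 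For the heart of the theorem, $m,h\geq 3$ and $n>18$, I would first handle the base case $|\nu|=2$ using Lemma~\ref{Lemma plethysms and char properties}(iv): writing $\mu=\alpha+\beta$ or $\mu=\alpha\sqcup\beta$ with $|\alpha|,|\beta|\geq 1$ and applying the identity to $s_{\nu}\circ(s_\alpha+s_\beta)$ together with Corollary~\ref{Corollary Stembridge order} transfers Stembridge-style multiplicities in products to multiplicities in plethysms. For $|\nu|\geq 3$ one then lifts by repeatedly restricting one copy of $S_m$ inside $S_m\wr S_h$ and invoking Lemma~\ref{Lemma tensoring plethysms}, so that a multiplicity in $s_{\bar\nu}\circ s_\mu$ for some $\bar\nu\subseteq\nu$ of size $2$ (after accounting for the extra $s_\mu$ factor appearing from the branching) forces a multiplicity in $s_\nu\circ s_\mu$, except possibly in the finite window $n\leq 18$ covered by case (iii).

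The main obstacle will be this last reduction: plethysm coefficients enjoy no clean monotonicity analogous to Corollary~\ref{Corollary Stembridge order}, so transferring multiplicities from $(\bar\nu,\mu)$ to $(\nu,\mu)$ requires a careful bookkeeping that separates contributions of $s_{\bar\nu}\circ s_\mu$ from cross terms introduced by the induction/restriction machinery. I would expect the combinatorial argument to split further depending on whether $\mu$ is itself rectangular, a hook, or has a more general shape, with the rectangular subcase needing the most care since $s_{(2)}\circ s_{\mu}$ and $s_{(1^2)}\circ s_{\mu}$ are each multiplicity-free (by Proposition~\ref{Prop domino rectangles}) and therefore the sought multiplicity must arise purely from the overlap predicted by Lemma~\ref{Lemma plethysms and char properties}(iv).
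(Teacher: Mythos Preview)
The paper does not prove this theorem; it is quoted as \cite[Theorem~1.1]{BessenrodtBowmanPagetMF22} and used as a black box throughout. So there is no in-paper proof to compare against.

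That said, your sketch contains a genuine error in the sufficiency direction for hooks. You claim that Lemma~\ref{Lemma LR hooks} shows $c(\mu,\mu;\lambda)\leq 1$ for every $\lambda$ when $\mu=(a+1,1^b)$ is a hook, and then conclude that both plethysms are multiplicity-free since their coefficients are nonnegative and sum to $c(\mu,\mu;\lambda)$. But this is false: by Theorem~\ref{Theorem Stembridge}, the product $s_\mu^2$ is \emph{not} multiplicity-free for a hook $\mu$ (a hook is neither linear, nor rectangular, nor $2$-rectangular, nor a fat hook in the sense of Definition~\ref{Defn nice partitions}), so there exist $\lambda$ with $c(\mu,\mu;\lambda)\geq 2$. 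Lemma~\ref{Lemma LR hooks} only computes the coefficient in the regimes $\lambda_1+\lambda_2=2a+2$ and $2a+4$; the regime $\lambda_1+\lambda_2=2a+3$ is where the multiplicities live, and the lemma is silent there. The correct sufficiency argument must show directly that each of $p(\mu,(2);\lambda)$ and $p(\mu,(1^2);\lambda)$ is at most $1$, which cannot be read off from $c(\mu,\mu;\lambda)$ alone. The same caveat applies to almost rectangular $\mu$: Lemma~\ref{Lemma find two} explicitly uses that $c(\mu,\mu;\lambda)\geq 2$ can occur, so again $s_\mu^2$ is not multiplicity-free and your reduction fails.

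Your necessity outline is too vague to assess. In particular, the proposed lift from $|\nu|=2$ to $|\nu|\geq 3$ via branching and Lemma~\ref{Lemma tensoring plethysms} does not produce a clean inequality on plethysm coefficients; the actual proof in Bessenrodt--Bowman--Paget uses a substantially different and more intricate semistandard-tableau machinery. If you want to reconstruct a proof, you should consult that paper directly rather than attempt to assemble one from the tools in the present preliminaries.
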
 
	
	A simple consequence of Theorem~\ref{Theorem MF plethysms} is as follows.
	
	\begin{lemma}\label{Lemma find two}
		If $\mu$ is an almost rectangular partition and $\lambda$ is any partition such that $c(\mu,\mu;\lambda) \geq 2$, then $p(\mu,(2);\lambda) = p(\mu,(1^2);\lambda) = 1$.
	\end{lemma}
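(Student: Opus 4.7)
The proof should be essentially immediate from the results already established in the excerpt, combining two ingredients: the decomposition $s_\mu^2 = s_{(2)}\circ s_\mu + s_{(1^2)}\circ s_\mu$ from Lemma~\ref{Lemma plethysms and char properties}(iii), and the multiplicity-freeness of both plethysms $s_{(2)}\circ s_\mu$ and $s_{(1^2)}\circ s_\mu$ provided by Theorem~\ref{Theorem MF plethysms}(i) (case $h=2$, $\mu$ almost rectangular).

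Concretely, the plan is as follows. First, take the inner product of both sides of the identity in Lemma~\ref{Lemma plethysms and char properties}(iii) with $s_\lambda$ to obtain the numerical relation
\[
c(\mu,\mu;\lambda) = p(\mu,(2);\lambda) + p(\mu,(1^2);\lambda).
\]
Next, invoke Theorem~\ref{Theorem MF plethysms}(i): since $\mu$ is almost rectangular, both $s_{(2)}\circ s_\mu$ and $s_{(1^2)}\circ s_\mu$ are multiplicity-free, so each of $p(\mu,(2);\lambda)$ and $p(\mu,(1^2);\lambda)$ is at most $1$. Finally, combine these two observations: the hypothesis $c(\mu,\mu;\lambda)\geq 2$ forces the sum of two integers each bounded above by $1$ to be at least $2$, hence both must equal exactly $1$.

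There is no real obstacle here — the statement is essentially a packaging of the two prior results, and the proof is a two-line computation. The only thing to double-check is that Theorem~\ref{Theorem MF plethysms}(i) genuinely covers both plethysms in question, but this is immediate since the characterisation there applies to any partition $\nu$ of $2$ (both $(2)$ and $(1^2)$), with the shape hypothesis placed only on $\mu$.
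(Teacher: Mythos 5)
Your proposal is correct and matches the paper's proof exactly: both use $c(\mu,\mu;\lambda)=p(\mu,(2);\lambda)+p(\mu,(1^2);\lambda)$ from Lemma~\ref{Lemma plethysms and char properties}(iii) together with the bound $p(\mu,(2);\lambda),p(\mu,(1^2);\lambda)\leq 1$ from Theorem~\ref{Theorem MF plethysms}(i), forcing both to equal $1$. Nothing further is needed.
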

	
	\begin{proof}
		By Theorem~\ref{Theorem MF plethysms}, we know that $p(\mu,(2);\lambda), p(\mu,(1^2);\lambda) \leq 1$. Using Lemma~\ref{Lemma plethysms and char properties}(iii), we obtain $2\leq c(\mu,\mu;\lambda)=p(\mu,(2);\lambda) + p(\mu,(1^2);\lambda)\leq 2$ forcing equalities everywhere.
	\end{proof}
	
	We end the subsection with an analogue of Lemma~\ref{Lemma Stembridge observation} for plethysms.
	
	\begin{theorem}\label{Theorem plethysms observation}
		Let $r$ be a non-negative integer and let $\lambda,\mu$ and $\nu$ be partitions. Writing $|\nu|=h$, we have:
		\begin{enumerate}[label=\textnormal{(\roman*)}]
			\item $p(\mu+(1^r), \nu;\lambda+(h^r))\geq p(\mu,\nu;\lambda)$,
			\item $p(\mu\sqcup(r), \nu^{\prime r};\lambda\sqcup(r^h))\geq p(\mu,\nu;\lambda)$.
		\end{enumerate}
	\end{theorem}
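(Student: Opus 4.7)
I would prove (i) first and derive (ii) as a corollary via conjugation. For the reduction, apply Lemma~\ref{Lemma plethysms and char properties}(ii) to the left-hand side of (ii) with $|\mu \sqcup (r)| = m + r$; using the partition identities $(\mu \sqcup (r))' = \mu' + (1^r)$, $(\lambda \sqcup (r^h))' = \lambda' + (h^r)$, and $(\nu^{\prime r})^{\prime(m+r)} = \nu^{\prime m + 2r} = \nu^{\prime m}$, one obtains $p(\mu \sqcup (r), \nu^{\prime r}; \lambda \sqcup (r^h)) = p(\mu' + (1^r), \nu^{\prime m}; \lambda' + (h^r))$, which is $\geq p(\mu', \nu^{\prime m}; \lambda') = p(\mu, \nu; \lambda)$ by (i) applied to $(\mu', \nu^{\prime m}, \lambda')$ and a final invocation of Lemma~\ref{Lemma plethysms and char properties}(ii).

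For (i), I would pass to the polynomial representation theory of $GL_N$ for $N$ sufficiently large (any $N \geq \ell(\lambda) + h$ suffices): the plethysm coefficient becomes the multiplicity $p(\mu, \nu; \lambda) = \dim \operatorname{Hom}_{GL_N}(V_\lambda, S^\nu(V_\mu))$, where $V_\mu$ is the polynomial irreducible of highest weight $\mu$ and $S^\nu$ is the Schur functor. The starting observation is that tensoring by the determinant representation is compatible with Schur functors: since $\det_{GL_N}$ is one-dimensional, $S^\nu(V_\mu \otimes \det_{GL_N}) = S^\nu(V_\mu) \otimes \det_{GL_N}^{\otimes h}$, and combined with $V_\mu \otimes \det_{GL_N} = V_{\mu + (1^N)}$ this produces the stable equality $p(\mu + (1^N), \nu; \lambda + (h^N)) = p(\mu, \nu; \lambda)$.

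For general $r \leq N$, restrict to the Levi subgroup $GL_r \times GL_{N-r}$. Pieri's rule (Theorem~\ref{Theorem P rule}), interpreted at the level of $GL_N$-representations, shows that $\det_{GL_r} \boxtimes V_\mu$ is a direct summand of multiplicity $1$ of $V_{\mu + (1^r)}|_{GL_r \times GL_{N-r}}$ (via $c((1^r), \mu; \mu + (1^r)) = 1$). Applying the Schur functor $S^\nu$, which preserves direct summands of its argument, yields $V_{(h^r)} \boxtimes S^\nu(V_\mu)$ as a direct summand of $S^\nu(V_{\mu + (1^r)})|_{GL_r \times GL_{N-r}}$; in particular this restriction contains $V_{(h^r)} \boxtimes V_\lambda$ with multiplicity at least $p(\mu, \nu; \lambda)$.

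The hard part will be extracting from this Levi-level bound a lower bound on the multiplicity of $V_{\lambda + (h^r)}$ in $S^\nu(V_{\mu + (1^r)})$ itself, since the Levi multiplicity of $V_{(h^r)} \boxtimes V_\lambda$ could a priori leak among several summands $V_\gamma$ for which $c((h^r), \lambda; \gamma) \geq 1$. I would resolve this via a dominance-order peeling argument, using that $\lambda + (h^r)$ is the unique dominance-maximal partition appearing in the Littlewood--Richardson product $s_{(h^r)} s_\lambda$ with $c((h^r), \lambda; \lambda + (h^r)) = 1$, and processing the summands $V_\gamma$ in decreasing order of dominance to isolate the contribution at $\gamma = \lambda + (h^r)$. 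This resolution is essentially the content of Brion's monotonicity theorem for plethysm.
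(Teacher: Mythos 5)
Your reduction of (ii) to (i) via Lemma~\ref{Lemma plethysms and char properties}(ii) and the identities $(\mu\sqcup(r))'=\mu'+(1^r)$, $(\lambda\sqcup(r^h))'=\lambda'+(h^r)$, $(\nu^{\prime r})^{\prime(m+r)}=\nu^{\prime m}$ is correct and is exactly the paper's argument. For (i), however, the paper gives no proof at all: it quotes \cite[Theorem~1.2]{deBoeckPagetWildonPlethysms21}. Your attempted proof of (i) has a genuine gap at precisely the step you flag as the hard part. The Levi computation is sound as far as it goes: $V_{(h^r)}\boxtimes S^\nu(V_\mu)$ is indeed a $\mathrm{GL}_r\times\mathrm{GL}_{N-r}$-direct summand of $S^\nu(V_{\mu+(1^r)})$ restricted to the Levi, and this yields $\sum_\gamma p(\mu+(1^r),\nu;\gamma)\,c((h^r),\lambda;\gamma)\geq p(\mu,\nu;\lambda)$. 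But this inequality is strictly weaker than the claim, and the proposed dominance-order peeling does not extract the single term $\gamma=\lambda+(h^r)$: although $\lambda+(h^r)$ is the unique dominance-maximal $\gamma$ with $c((h^r),\lambda;\gamma)>0$, nothing in the count prevents all of the multiplicity from being carried by strictly smaller $\gamma$, and there is no induction hypothesis or upper bound on those contributions against which to peel. The leakage risk is real, not just formal: a $\mathrm{GL}_N$-highest weight vector of weight $\lambda+(h^r)$ restricts to a Levi-highest weight vector of weight $((\lambda_1+h,\dots,\lambda_r+h),(\lambda_{r+1},\lambda_{r+2},\dots))$, not $((h^r),\lambda)$, so the copies of $V_{(h^r)}\boxtimes V_\lambda$ you locate inside your summand are not canonically attached to constituents $V_{\lambda+(h^r)}$; producing such an attachment is the entire content of the theorem.

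The closing appeal to ``Brion's monotonicity theorem'' does not repair this. Brion's results concern the row direction $\mu\mapsto\mu+(k)$, $\lambda\mapsto\lambda+(kh)$, which even after conjugating covers only the $r=1$ case of (i)/(ii) and does not iterate to give a column of height $r\geq 2$; monotonicity under $\mu\mapsto\mu+(1^r)$, $\lambda\mapsto\lambda+(h^r)$ for arbitrary $r$ is precisely \cite[Theorem~1.2]{deBoeckPagetWildonPlethysms21}, i.e.\ the statement being proved, so invoking it at that point is circular rather than a resolution. The known proofs construct, from a basis of highest weight vectors of weight $\lambda$ in $S^\nu(V_\mu)$, explicit linearly independent highest weight vectors of weight $\lambda+(h^r)$ in $S^\nu(V_{\mu+(1^r)})$, and that construction is not recovered by the numerical Levi-branching bound. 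The honest fix is to do what the paper does: cite \cite[Theorem~1.2]{deBoeckPagetWildonPlethysms21} for (i) and keep your (correct) conjugation argument for (ii).
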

	
	\begin{proof}
		See \cite[Theorem~1.2]{deBoeckPagetWildonPlethysms21} for (i). By Lemma~\ref{Lemma plethysms and char properties}(ii) we obtain (ii).
	\end{proof}
	
	After iterating the result we can replace $(1^r)$ in (i) with any partition $\alpha$ as long as $(h^r)$ is replaced with $h\alpha=(h\alpha_1,h\alpha_2,\dots)$. Similarly, in the second part $(r)$ can be replaced with any partition $\alpha$ as long as $(r^h)$ is replaced with $\alpha^h=(\alpha_1^h,\alpha_2^h,\dots)$ and $\nu^{\prime r}$ by $\nu^{\prime |\alpha|}$. This motivates the following definitions.
	
	\begin{definition}\label{Definition l-multifunction}
		Let $f=(f_1,f_2,\dots,f_t)$ be a multifunction.
		\begin{enumerate}[label=\textnormal{(\roman*)}]
			\item For a non-negative integer $h$ we define $f^h$ to be the multifunction $(f_1^h,f_2^h,\dots,f_t^h)$ where $f_i^h$ stands for the $h$-fold composition of $f_i$.
			\item For a partition $\nu$ we define $\nu^{\prime f}$ as $\nu^{\prime r}$, where $r=|f'(\o)|$ and $f'$ is the multifunction obtained from $f$ by omitting the functions $f_i$ of the form $\lambda\to\lambda+\alpha$, keeping those of the form $\lambda\to\lambda\sqcup\alpha$.
		\end{enumerate}		
	\end{definition}
	
	\begin{corollary}\label{Corollary plethysms observation}
		Let $\lambda,\mu$ and $\nu$ be partitions and $f$ a multifunction. If $|\nu|=h$, then $p(f(\mu),\nu^{\prime f};f^h(\lambda))\geq p(\mu,\nu;\lambda)$.
	\end{corollary}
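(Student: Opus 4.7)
My plan is to prove the corollary by induction on the length $t$ of the multifunction $f=(f_1,f_2,\dots,f_t)$, using the strengthened form of Theorem~\ref{Theorem plethysms observation} stated in the paragraph following it, namely that $(1^r)$ in part (i) can be replaced by an arbitrary partition $\alpha$ (with $(h^r)$ correspondingly replaced by $h\alpha$), and $(r)$ in part (ii) can be replaced by an arbitrary partition $\alpha$ (with $(r^h)$ replaced by $\alpha^h$ and $\nu^{\prime r}$ by $\nu^{\prime |\alpha|}$).

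The base case $t=0$ is trivial: $f$ and $f^h$ are the identity, and $\nu^{\prime f}=\nu$ since $f'(\o)=\o$. For the inductive step, assume the result for multifunctions of length $t-1$ and write $g=(f_1,\dots,f_{t-1})$, so that $f(\mu)=f_t(g(\mu))$ and $f^h(\lambda)=f_t^h(g^h(\lambda))$. Applying the inductive hypothesis to $g$ yields $p(g(\mu),\nu^{\prime g};g^h(\lambda))\geq p(\mu,\nu;\lambda)$, and it remains to compare this with $p(f(\mu),\nu^{\prime f};f^h(\lambda))$ via a single application of the extended Theorem~\ref{Theorem plethysms observation}.

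I split into two cases according to the type of $f_t$. If $f_t$ is of the form $\lambda\mapsto\lambda+\alpha_t$, then $f'=g'$ so $\nu^{\prime f}=\nu^{\prime g}$, and $f_t^h$ acts as $\lambda\mapsto\lambda+h\alpha_t$; part (i) of the extended theorem, applied to the triple $(g(\mu),\nu^{\prime g},g^h(\lambda))$ with the partition $\alpha_t$, gives the required inequality. If instead $f_t$ is of the form $\lambda\mapsto\lambda\sqcup\alpha_t$, write $s=|\alpha_t|$; then $|f'(\o)|=|g'(\o)|+s$, so $\nu^{\prime f}=(\nu^{\prime g})^{\prime s}$, and $f_t^h$ acts as $\lambda\mapsto\lambda\sqcup\alpha_t^h$; part (ii) of the extended theorem, applied with the partition $\alpha_t$, gives $p(g(\mu)\sqcup\alpha_t,(\nu^{\prime g})^{\prime s};g^h(\lambda)\sqcup\alpha_t^h)\geq p(g(\mu),\nu^{\prime g};g^h(\lambda))$, which is exactly the desired inequality. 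In both cases, chaining with the inductive hypothesis completes the step.

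The only real obstacle is bookkeeping: I need to check that the definitions in Definition~\ref{Definition l-multifunction} interact correctly with the two cases of the extended theorem. Specifically, one must verify that $f^h(\lambda)$ (where each $f_i$ is replaced by its $h$-fold composition) coincides with the output produced by iteratively applying the extended theorem, and that the twist $\nu\mapsto\nu^{\prime f}$ accumulates conjugations by $|f'(\o)|$ and not by something larger (since only the $\sqcup$-steps contribute). Once this is checked, the induction carries through directly with no further calculation.
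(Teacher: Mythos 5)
Your proof is correct and takes essentially the same route as the paper, which simply says to iterate Theorem~\ref{Theorem plethysms observation} (together with its stated extension to arbitrary partitions $\alpha$); your induction on the length of $f$ just makes the bookkeeping for $f^h$ and $\nu^{\prime f}$ explicit, and those verifications are accurate.
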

	
	\begin{proof}
		Iterate Theorem~\ref{Theorem plethysms observation}.
	\end{proof} 
	
	\subsection{Primitive subgroups and the number of involutions}\label{Sec primitive}
	
	A simple way to prove that a subgroup $G$ of a symmetric group is not multiplicity-free is by observing that it is sufficiently small, and thus all the induced characters of $G$ are too large. To obtain a concrete bound, let $a_n$ denote the number of involutions of $S_n$. We use the formula $a_n=\sum_{\lambda\vdash n}\deg(\chi^{\lambda})$, which is the Frobenius--Schur count of involutions, see for instance \cite[Corollary~7.13.9]{StanleyEnumerativeII99} for a proof.
	
	\begin{lemma}\label{Lemma order bound - inv}
		If $G\leq S_n$ is multiplicity-free, then $|G|\geq n!/a_n$.
	\end{lemma}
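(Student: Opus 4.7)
The plan is to use that the degree of a multiplicity-free character is bounded by the sum of the degrees of all irreducible characters of the ambient group, combined with the Frobenius--Schur formula for $a_n$.

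More concretely, since $G$ is multiplicity-free, by definition there exists an irreducible character $\rho$ of $G$ such that $\rho\ind^{S_n}$ is multiplicity-free. Write $\rho\ind^{S_n}=\sum_{\lambda\in S}\chi^{\lambda}$ for some set $S$ of partitions of $n$. Evaluating at the identity we obtain
\[
\frac{n!\,\rho(1)}{|G|}=\rho\ind^{S_n}(1)=\sum_{\lambda\in S}\deg(\chi^{\lambda})\leq \sum_{\lambda\vdash n}\deg(\chi^{\lambda})=a_n,
\]
where the last equality is the Frobenius--Schur identity already cited. Since $\rho(1)\geq 1$, rearranging yields $|G|\geq n!/a_n$, as required.

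The only subtlety is noting that the definition of a multiplicity-free subgroup may be taken (as the paper records just before Theorem~\ref{Theorem main}) to require the witnessing character $\rho$ to be irreducible, so that $\rho\ind^{S_n}$ is a \emph{sum} of distinct irreducibles rather than merely a character with multiplicities at most one in some expansion. There is no serious obstacle here.
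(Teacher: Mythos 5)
Your proof is correct and follows essentially the same route as the paper: bound the degree of the multiplicity-free induced character $\rho\ind^{S_n}$ by $\sum_{\lambda\vdash n}\deg(\chi^{\lambda})=a_n$ via the Frobenius--Schur count, then rearrange $n!\deg(\rho)/|G|\leq a_n$ using $\deg(\rho)\geq 1$. The only cosmetic difference is that the paper does not insist on $\rho$ being irreducible, which changes nothing.
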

	
	\begin{proof}
		Let $\rho$ be an induced-multiplicity-free character of $G$. Then the degree of $\rho\ind^{S_n}$ is at most $a_n$, that is, $n!\deg(\rho)/|G|\leq a_n$. Rearranging this inequality and using $\deg(\rho)\geq 1$ yield the result.  
	\end{proof}
	
	To apply this result we bound $a_n$. For our purposes we establish only two straightforward bounds, which follow from a well-known recursion formula, avoiding a detailed analysis of the sequence $(a_n)$, such as in \cite[pp.~62--64]{KnuthArtofCPVol398}. 
	
	\begin{lemma}\label{Lemma involutions}
		Let $n$ be a non-negative integer.
		\begin{enumerate}[label=\textnormal{(\roman*)}]
			\item If $n\geq 2$, then $a_n = a_{n-1} + (n-1)a_{n-2}$.
			\item If $n$ is odd, then $a_n \leq n a_{n-1}$.
			\item If $n\geq 7$, then $a_n< n!/\left(2\lfloor n/2 \rfloor ! \right) $.
			\item If $n\geq 11$, then $a_n< n!/(2^{n-1})$. 
		\end{enumerate}
	\end{lemma}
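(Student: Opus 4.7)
For (i) I would give the standard bijective argument: partition the involutions of $S_n$ according to the image of $n$. If $n$ is fixed, the restriction to $\{1,\dots,n-1\}$ is an arbitrary involution, contributing $a_{n-1}$. Otherwise $n$ is swapped with some $j\in\{1,\dots,n-1\}$ (giving $n-1$ choices for $j$) and the restriction to $\{1,\dots,n\}\setminus\{n,j\}$ is an arbitrary involution, contributing $(n-1)a_{n-2}$. Adding these gives the recursion.

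For (ii), starting from (i) it suffices to show $a_{n-2}\leq a_{n-1}$, since then $a_n=a_{n-1}+(n-1)a_{n-2}\leq a_{n-1}+(n-1)a_{n-1}=na_{n-1}$. The inequality $a_{n-2}\leq a_{n-1}$ is immediate from the injection sending an involution of $S_{n-2}$ to the involution of $S_{n-1}$ which fixes $n-1$; alternatively, the recursion itself forces $a_n\geq a_{n-1}$ for all $n\geq 1$.

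For (iii) and (iv) I would use induction via (i). In each case one checks two consecutive base cases by direct numerical computation (for instance $n=7,8$ for (iii) and $n=11,12$ for (iv), using the tabulated values of $a_n$), and then propagates by (i). Concretely, for (iii) in the even step $n=2k\geq 8$ the induction hypothesis applied to $n-1$ and $n-2$ (both with floor $k-1$) gives the strict bound
\[
a_n < \frac{(n-1)!}{2(k-1)!}+(n-1)\cdot\frac{(n-2)!}{2(k-1)!}=\frac{(n-1)!}{(k-1)!}=\frac{n!}{2k!},
\]
and for $n=2k+1\geq 9$ one gets analogously $a_n<(k+1)(n-1)!/(2k!)$, which is strictly less than $(2k+1)(n-1)!/(2k!)=n!/(2k!)$ whenever $k\geq 1$. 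For (iv) with $n\geq 13$ the same approach produces
\[
a_n < \frac{(n-1)!}{2^{n-2}}+(n-1)\cdot\frac{(n-2)!}{2^{n-3}}=\frac{3\,(n-1)!}{2^{n-2}},
\]
and this is strictly less than $n!/2^{n-1}$ precisely when $n>6$.

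The only thing that requires any care is making sure the induction steps are strict (so that the bound is preserved with a $<$, not a $\leq$); the even case in (iii) is the tight one, and there strictness of the hypothesis on both $a_{n-1}$ and $a_{n-2}$ is exactly what is needed. Everything else reduces to routine arithmetic and the two small numerical base checks in each of (iii) and (iv).
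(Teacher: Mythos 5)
Your proposal is correct and follows essentially the same route as the paper: the same recursive decomposition for (i), and induction via (i) with base cases $n=7,8$ and $n=11,12$ for (iii) and (iv), the only cosmetic deviations being that you prove (ii) from monotonicity $a_{n-2}\leq a_{n-1}$ rather than the paper's fixed-point observation, and you run the odd step of (iii) directly from the recursion instead of via (ii). One small indexing point: the even induction step should start at $n\geq 10$ (at $n=8$ it would invoke the hypothesis at $n-2=6$, where the bound actually fails, since $a_6=76>6!/(2\cdot 3!)=60$), but this is harmless because you check $n=8$ directly as a base case.
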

	
	\begin{proof}
		In the first two parts we say `remaining permutation' to mean a permutation restricted to the set obvious from the context. In the last two parts the base cases of inductions on $n$ were verified using {\sc Magma} \cite{BosmaCannonPlayoustMagma97}.  
		\begin{enumerate}[label=\textnormal{(\roman*)}]
			\item An involution of $S_n$ either fixes $n$ and the remaining permutation is an involution of $S_{n-1}$ or it swaps $n$ with some $i\leq n-1$ and the remaining permutation is an involution of $S_{n-2}$.
			\item If $n$ is odd, then every involution in $S_n$ has a fixed point and the remaining permutation is an involution of $S_{n-1}$.
			\item We induct on $n$. The values $n= 7,8$ are checked directly. For even $n$ such that $n\geq 10$ by the induction hypothesis with $n-1$ and $n-2$ and (i)
			\begin{align*}
			a_n &= a_{n-1} + (n-1)a_{n-2} < \frac{(n-1)!}{2\left( \frac{n}{2} -1\right) !} +\frac{(n-1)!}{2\left( \frac{n}{2} -1\right) !}\\ &= \frac{2(n-1)!}{2\left( \frac{n}{2} -1\right) !} = \frac{n!}{2\lfloor \frac{n}{2} \rfloor !}. 
			\end{align*}
			Now if $n\geq 9$ is odd, then by (ii) and the induction hypothesis $a_n\leq na_{n-1}< n!/(2\lfloor (n-1)/2 \rfloor !) = n!/(2\lfloor n/2 \rfloor !)$.
			\item The inequality can be checked using direct calculations for $n=11,12$. The result now follows by induction on $n$ since by (i), we get for $n\geq 13$
			\[
			a_n = a_{n-1} + (n-1)a_{n-2} < \frac{3}{2^{n-2}}(n-1)! < \frac{n!}{2^{n-1}}.\qedhere
			\]
		\end{enumerate}
	\end{proof}
	
	\begin{corollary}\label{Cor Lemma order bound - exp}
		If $G\leq S_n$ is multiplicity-free and $n\geq 7$, then $|G|> 2\lfloor n/2\rfloor !$. Moreover if $n\geq 11$, then $|G|> 2^{n-1}$.
	\end{corollary}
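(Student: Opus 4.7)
The plan is to combine Lemma~\ref{Lemma order bound - inv} with the two upper bounds on $a_n$ from Lemma~\ref{Lemma involutions}(iii) and (iv); no additional ingredients are needed.

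For the first assertion I assume $n\geq 7$. By Lemma~\ref{Lemma order bound - inv} the hypothesis that $G$ is multiplicity-free yields $|G|\geq n!/a_n$, and Lemma~\ref{Lemma involutions}(iii) gives the strict inequality $a_n < n!/(2\lfloor n/2\rfloor!)$. Dividing $n!$ by a strictly smaller quantity produces a strictly larger quotient, so
\[|G|\geq \frac{n!}{a_n} > \frac{n!}{n!/(2\lfloor n/2\rfloor!)} = 2\lfloor n/2\rfloor!,\]
as required.

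For the second assertion the argument is identical, but now under the stronger hypothesis $n\geq 11$ we invoke Lemma~\ref{Lemma involutions}(iv) in place of (iii) to obtain $a_n < n!/2^{n-1}$, and hence
\[|G|\geq \frac{n!}{a_n} > \frac{n!}{n!/2^{n-1}} = 2^{n-1}.\]

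There is essentially no obstacle here: the corollary is purely a bookkeeping consequence of the two lemmas just proved, so the only thing to be careful about is preserving the strictness of the inequality, which is automatic because the bounds on $a_n$ in Lemma~\ref{Lemma involutions}(iii) and (iv) are themselves strict.
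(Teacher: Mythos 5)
Your proposal is correct and coincides with the paper's own (one-line) proof: combine Lemma~\ref{Lemma order bound - inv} with Lemma~\ref{Lemma involutions}(iii) and (iv), noting that the strictness of the bounds on $a_n$ carries over to $|G|$. You have simply written out the division step that the paper leaves implicit.
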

	
	\begin{proof}
		Combine Lemma~\ref{Lemma order bound - inv} and Lemma~\ref{Lemma involutions}(iii) and (iv).
	\end{proof}
	
	The second lower bound is particularly useful when $G$ is a primitive subgroup of $S_n$. This is because of a result by Mar\'{o}ti \cite[Corollary~1.4]{MarotiPrimitive02}, which we present in a shorter form, omitting to list the groups in the exceptional family, which we call $\mathcal{F}$.
	
	\begin{theorem}\label{Theorem primitive groups small}
		Let $G$ be a proper primitive subgroup of $S_n$ not equal to $A_n$. Then the order of $G$ is less or equal to $2^{n-1}$ \emph{or} $n\leq 24$ and $G$ belongs to a small family $\mathcal{F}$ with $24$ members.
	\end{theorem}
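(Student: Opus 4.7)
The statement is Mar\'{o}ti's theorem, so in the paper I would simply refer to \cite[Corollary~1.4]{MarotiPrimitive02}; what follows is how I would prove it from scratch. The natural first move is to apply the O'Nan--Scott theorem to a primitive subgroup $G\leq S_n$ with $G\neq A_n,S_n$. This partitions the possibilities into the familiar types: (a) affine groups, where $n=p^d$ for a prime $p$ and $G\leq \mathrm{AGL}_d(\mathbb{F}_p)$; (b) almost simple groups, where the socle $T$ is a non-abelian simple group acting primitively of degree $n$; (c) diagonal type, where the socle is $T^k$ with $T$ non-abelian simple and $n=|T|^{k-1}$; (d) product (wreath) type, where $G$ embeds into a wreath product $H\wr S_k$ acting on $m^k=n$; (e) twisted wreath type. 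The plan is to bound $|G|$ against $2^{n-1}$ in each case, collecting the finitely many small exceptions into the list $\mathcal{F}$.

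The easy cases are (c)--(e). In diagonal type one has $|G|\leq |T|^k\cdot k!\cdot |\mathrm{Out}(T)|$ while $n=|T|^{k-1}\geq 60^{k-1}$, and a quick calculation shows $|G|<2^{n-1}$ for all $k\geq 2$ once $n$ exceeds a small bound (with a couple of small $T$, small $k$ exceptions to record in $\mathcal{F}$). In product type one can inductively reduce: if $H\leq S_m$ is primitive of degree $m$ and the result holds for $H$, then $|H\wr S_k|\leq (2^{m-1})^k\cdot k!$, which is comfortably below $2^{m^k-1}=2^{n-1}$ for $m,k\geq 2$ unless $m,k$ are tiny. Twisted wreath type is analogous and contributes nothing new. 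The affine case is essentially an elementary estimate: $|G|\leq p^d\cdot |\mathrm{GL}_d(\mathbb{F}_p)|\leq p^d\cdot p^{d^2}$, and one checks $p^{d^2+d}\leq 2^{p^d-1}$ holds outside a finite list of $(p,d)$ by comparing exponential and polynomial growth in $d$; the failing pairs contribute the affine members of $\mathcal{F}$.

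The main obstacle is the almost simple case, where both directions of the bound --- $|G|$ and $n$ --- depend on the simple group $T$ in a delicate way. Here one cannot avoid invoking the classification of finite simple groups. The plan is to go type by type through the CFSG list. For alternating socles $T=A_m$ acting non-naturally one uses Bochert-type lower bounds on the degree of a faithful non-standard primitive action of $A_m$, which grows super-exponentially in $m$, so the inequality is easy. For classical groups $T=\mathrm{PSL}_d(q),\mathrm{PSp}_{2d}(q),\dots$ one uses the known lower bounds on the minimal degree $n$ of a faithful permutation representation (work of Cooperstein, Kleidman--Liebeck) together with the standard bounds $|T|<q^{d^2+\mathrm{const}}$, and compares them; the small-rank, small-$q$ cases must be handled individually and populate most of $\mathcal{F}$. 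Sporadic socles are a finite check via the ATLAS. The hard bookkeeping step is verifying one has found \emph{all} exceptional $(G,n)$ with $n\leq 24$; I would do this by computer, cross-referencing with the known primitive group library, so that the final family $\mathcal{F}$ has exactly the $24$ members claimed.
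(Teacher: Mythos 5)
The paper does not prove this statement itself: it is quoted directly from Mar\'{o}ti \cite[Corollary~1.4]{MarotiPrimitive02}, which is precisely your first move, so your approach coincides with the paper's. The additional O'Nan--Scott/CFSG sketch goes beyond anything the paper attempts and is only a plan (note, for instance, that non-standard primitive actions of alternating socles $A_m$ can have degree as small as $\binom{m}{2}$, not super-exponential in $m$, so that case still needs an explicit estimate), but none of this is required once the citation is given.
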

	
	\begin{remark}\label{Remark primitive bound}
		The theorem uses the classification of finite simple groups and improves an earlier upper bound $4^n$ by Praeger and Saxl \cite{PraegerSaxlPrimitiveOrder80} used in \cite{SaxlMultiplicity-free81}.
	\end{remark}
	
	\begin{corollary}\label{Corollary primitive groups}
		The symmetric group $S_n$ does not have a proper multiplicity-free primitive subgroup $G$ different from $A_n$ unless $n\leq 12$.
	\end{corollary}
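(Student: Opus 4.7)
The plan is to combine the two tools established just before the corollary. Suppose $G$ is a proper primitive subgroup of $S_n$ with $G\neq A_n$ which is multiplicity-free, and assume $n\geq 13$. Corollary~\ref{Cor Lemma order bound - exp} gives $|G|>2^{n-1}$, while Theorem~\ref{Theorem primitive groups small} says that either $|G|\leq 2^{n-1}$ or $n\leq 24$ and $G$ belongs to the exceptional family $\mathcal{F}$ of $24$ groups. The first alternative is immediately incompatible with the bound from Corollary~\ref{Cor Lemma order bound - exp}, so we are reduced to a finite case analysis of the groups in $\mathcal{F}$ of degree $13\leq n\leq 24$.

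For this remaining step, I would go through the (short) list of members of $\mathcal{F}$ with degree in the range $[13,24]$ and, for each such $G\leq S_n$, verify that no irreducible character $\rho$ of $G$ has $\rho\ind^{S_n}$ multiplicity-free. The most efficient route is a direct computation in {\sc Magma} along the lines of \cite{BosmaCannonPlayoustMagma97}: for each candidate $G$, compute the character table of $G$, induce each irreducible character to $S_n$, decompose into irreducible characters of $S_n$ and check whether some multiplicity exceeds $1$. Because $n\leq 24$, these computations are entirely feasible, and the assumption $G\neq A_n$ together with $G$ being proper rules out the only two genuinely multiplicity-free primitive subgroups.

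The main obstacle is purely bookkeeping: we need confidence that the enumeration of primitive groups in $\mathcal{F}$ with $13\leq n\leq 24$ is complete, and that no such group slips through the multiplicity-free test. Both issues are handled by pointing at \cite{MarotiPrimitive02} for the explicit list (only a handful of degrees in $[13,24]$ contribute, corresponding to natural actions of almost simple groups such as $\mathrm{M}_n$, $\mathrm{PSL}_d(q)$, $\mathrm{P}\Gamma\mathrm{L}_d(q)$ and affine groups) and by the finite character computation just described. No further theoretical input is needed, and one could also short-cut several candidates by noting that their orders still fail the stronger bound $|G|\geq n!/a_n$ from Lemma~\ref{Lemma order bound - inv} once $a_n$ is computed explicitly, avoiding the induction step entirely for most members of $\mathcal{F}$.
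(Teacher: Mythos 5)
Your proposal is correct and takes essentially the same route as the paper: combine Corollary~\ref{Cor Lemma order bound - exp} with Theorem~\ref{Theorem primitive groups small}, then dispose of the exceptional family $\mathcal{F}$ with $13\leq n\leq 24$ by a finite {\sc Magma} check. The only difference is that the paper's finite check uses just the order bound $|G|<n!/a_n$ from Lemma~\ref{Lemma order bound - inv} (your suggested short-cut), which in fact suffices for \emph{every} member of $\mathcal{F}$ with $n\geq 13$, so the heavier character-induction computation you propose as the main route is not needed.
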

	
	\begin{proof}
		Using direct computations, it is easily verified that for any group $G$ from $\mathcal{F}$ in Theorem~\ref{Theorem primitive groups small} with $n\geq 13$ we have $|G|< n!/a_n$. This has been verified using {\sc Magma}. The result then follows from Corollary~\ref{Cor Lemma order bound - exp}, Theorem~\ref{Theorem primitive groups small} and Lemma~\ref{Lemma order bound - inv}. 
	\end{proof}
	
	\begin{remark}\label{Remark primitive groups}
		If $n\leq 12$, we can narrow down the list of potential proper multiplicity-free primitive subgroups $G\neq A_n$ using the inequality $|G|\geq n!/a_n$ from Lemma~\ref{Lemma order bound - inv}. We obtain the following list (verified by {\sc Magma}):
		\begin{enumerate}[label=\textnormal{(\roman*)}]
			\item $M_{11}$ with its action on $12$ points and $M_{12}$ for $n=12$,
			\item $M_{11}$ for $n=11$,
			\item $S_6$ acting primitively on the cosets of $S_3\wr S_2$, $\mathrm{PGL}_2(\mathbb{F}_9), M_{10}$ and $\mathrm{P}\Gamma\mathrm{L}_2(\mathbb{F}_9)$ for $n=10$,
			\item $\mathrm{A}\Gamma\mathrm{L}_1(\mathbb{F}_9), \mathrm{ASL}_2(\mathbb{F}_3), \mathrm{AGL}_2(\mathbb{F}_3), \mathrm{PSL}_2(\mathbb{F}_8)$ and $\mathrm{P}\Gamma\mathrm{L}_2(\mathbb{F}_8)$ for $n=9$,
			\item $\mathrm{AGL}_1(\mathbb{F}_8), \mathrm{A}\Gamma\mathrm{L}_1(\mathbb{F}_8), \mathrm{ASL}_3(\mathbb{F}_2), \mathrm{PSL}_2(\mathbb{F}_7)$ and $\mathrm{PGL}_2(\mathbb{F}_7)$ for $n=8$,
			\item $\mathrm{AGL}_1(\mathbb{F}_7)$ and $\mathrm{PSL}_3(\mathbb{F}_2)$ for $n=7$,
			\item $\mathrm{PSL}_2(\mathbb{F}_5)$ and $\mathrm{PGL}_2(\mathbb{F}_5)$ for $n=6$,
			\item $C_5, D_{10}$ and $\mathrm{AGL}_1(\mathbb{F}_5)$ for $n=5$.
		\end{enumerate}  
	\end{remark} 
	
	\subsection{Subgroups of direct and wreath products}\label{Sec subgroups}
	
	Let us start this section by recalling standard results about subgroups of index $2$ and their characters.
	
	\begin{lemma}\label{Lemma index two subgps}
		Let $G$ be a finite group. There is a bijection between the non-trivial real linear characters of $G$ and the index $2$ subgroups of $G$ given by $\eta\mapsto \ker \eta$.
	\end{lemma}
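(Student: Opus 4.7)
The plan is to verify the claimed bijection directly by constructing an inverse map and showing both compositions are the identity. First I would observe that any linear character $\eta$ of a finite group $G$ takes values in the roots of unity in $\mathbb{C}^*$, so a \emph{real} linear character must take values in the finite real subgroup $\{1,-1\}$. Hence the non-trivial real linear characters of $G$ are precisely the surjective homomorphisms $G\to\{1,-1\}$.

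Next I would check that the map $\eta\mapsto \ker\eta$ is well-defined with image contained in the set of index $2$ subgroups: if $\eta$ is non-trivial with image in $\{1,-1\}$, then the image is all of $\{1,-1\}$, and the first isomorphism theorem gives $G/\ker\eta\cong \{1,-1\}$, which has order $2$. For the inverse direction, given any index $2$ subgroup $H\leq G$, I would use that $H$ is automatically normal (a standard argument via left and right cosets) and that $G/H\cong C_2$; composing the quotient map with the unique non-trivial character of $C_2$ yields a non-trivial real linear character $\eta_H$ with $\ker\eta_H=H$.

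Finally I would verify that these assignments are mutually inverse. Starting from $H$ and forming $\ker\eta_H$ recovers $H$ by construction. Starting from $\eta$ with $H=\ker\eta$, both $\eta$ and $\eta_H$ factor as non-trivial homomorphisms through $G/H\cong C_2$; since $C_2$ has a unique non-trivial character, $\eta=\eta_H$. This establishes the bijection. There is no serious obstacle here: the argument is entirely routine and the only point requiring any care is noting that values of linear characters of finite groups are roots of unity, so that reality of $\eta$ forces the image to lie in $\{\pm 1\}$.
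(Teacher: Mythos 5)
Your proof is correct. The paper does not write out an argument for this lemma at all — it simply refers to the discussion of index two subgroups in James--Liebeck, \S 20 — and your direct verification (real linear characters are exactly the surjections onto $\{\pm 1\}$, index $2$ subgroups are normal with quotient $C_2$, and the two constructions are mutually inverse because $C_2$ has a unique non-trivial character) is precisely the standard argument that reference supplies, so there is nothing further to flag.
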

	
	\begin{lemma}\label{Lemma index two}
		Let $G$ be a finite group and $\rho$ an irreducible character of $G$. Fix a non-trivial real linear character $\eta$ of $G$ and let $N=\ker \eta$. 
		\begin{enumerate}[label=\textnormal{(\roman*)}]
			\item If $\rho\neq \rho\times\eta$, then $\rho\res_N$ is irreducible.
			\item If $\rho= \rho\times\eta$, then $\rho\res_N$ is a sum of two distinct irreducible characters.
			\item The characters arising from (i) and (ii) applied to all irreducible characters of $G$, up to tensoring with $\eta$, form the full list of irreducible characters of $N$ (without repetitions),
			\item Let $\kappa$ be an irreducible constituent of $\rho\res_N$. If $\rho\neq \rho\times\eta$, then $\kappa\ind^G=\rho + \rho\times\eta$, otherwise $\kappa\ind^G=\rho$.
		\end{enumerate}
	\end{lemma}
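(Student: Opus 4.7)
The statement is the standard Clifford-theoretic description of irreducible characters of an index-two subgroup, so the plan is to deduce everything from Frobenius reciprocity together with the single identity $\mathbbm{1}_N\ind^G = \mathbbm{1}_G + \eta$, which follows from Lemma~\ref{Lemma index two subgps} (the quotient $G/N$ has exactly two linear characters, $\mathbbm{1}$ and $\eta$). Applying the projection formula then yields the key identity
\[
\rho\res_N\ind^G \;=\; \rho\cdot(\mathbbm{1}_N\ind^G) \;=\; \rho + \rho\times\eta.
\]

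For parts (i) and (ii), I would compute $\langle \rho\res_N, \rho\res_N\rangle_N$ by Frobenius reciprocity as $\langle \rho\res_N\ind^G, \rho\rangle_G = \langle \rho + \rho\times\eta, \rho\rangle_G$. This inner product equals $1$ when $\rho \neq \rho\times\eta$, forcing $\rho\res_N$ to be irreducible as in (i). When $\rho = \rho\times\eta$, the inner product is $2$, so $\rho\res_N$ is a sum of irreducible constituents whose squared multiplicities sum to $2$; the only possibility is a sum of two distinct irreducibles, giving (ii). (In the latter case, comparing degrees in $(\rho\res_N)\ind^G = 2\rho$ shows both constituents have degree $\deg(\rho)/2$.)

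For part (iii), I would argue that every irreducible character of $N$ appears as a constituent of $\rho\res_N$ for some irreducible $\rho$ of $G$ (by Frobenius reciprocity applied to $\kappa\ind^G$, which is nonzero and hence contains some irreducible $\rho$ of $G$). Thus it suffices to check there are no hidden repetitions beyond the obvious identification $\rho\res_N = (\rho\times\eta)\res_N$. If two irreducibles $\rho_1, \rho_2$ of $G$ with $\rho_2 \notin \{\rho_1, \rho_1\times\eta\}$ shared a constituent $\kappa$ in their restrictions, Frobenius reciprocity would give $\rho_2$ as a constituent of $\kappa\ind^G$, which is contained in $\rho_1\res_N\ind^G = \rho_1 + \rho_1\times\eta$, a contradiction. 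A degree count (the produced characters have squared degrees summing to $|N|$, using the two cases from (i), (ii)) then confirms the list is complete. The main subtlety here is just being careful with the two-to-one identification in the first case versus the two-from-one splitting in the second.

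For part (iv), in the case $\rho \neq \rho\times\eta$ we already have $\kappa = \rho\res_N$, so the identity $\kappa\ind^G = \rho + \rho\times\eta$ is exactly the displayed identity from the first paragraph. In the case $\rho = \rho\times\eta$, write $\rho\res_N = \kappa + \kappa'$ from (ii). Frobenius reciprocity gives $\langle \kappa\ind^G, \rho\rangle_G = \langle \kappa, \rho\res_N\rangle_N = 1$, and since $\deg(\kappa\ind^G) = 2\deg(\kappa) = \deg(\rho)$, we must have $\kappa\ind^G = \rho$. The only real step to be careful about throughout is the distinction between the two cases $\rho = \rho\times\eta$ and $\rho \neq \rho\times\eta$; once this dichotomy is set up, each assertion reduces to a short reciprocity or degree-count calculation.
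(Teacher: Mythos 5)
Your proof is correct in substance, but it takes a genuinely different route from the paper: the paper does not prove (i)--(iii) at all, instead citing the standard discussion of index-two subgroups in James--Liebeck \cite[\S20]{JamesLiebeckRepTheory01} and then invoking Frobenius reciprocity only for (iv). You give a self-contained argument built on the single identity $(\rho\res_N)\ind^G=\rho+\rho\times\eta$ (the projection formula applied to $\mathbbm{1}_N\ind^G=\mathbbm{1}_G+\eta$), from which (i)--(ii) follow by computing $\left\langle \rho\res_N,\rho\res_N\right\rangle_N$, and (iii)--(iv) by the containment $\kappa\ind^G\leq \rho\res_N\ind^G$. What your approach buys is independence from the reference and a uniform mechanism (one push--pull identity plus reciprocity) for all four parts; what the paper's approach buys is brevity, since the result is textbook Clifford theory for index two.

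One small step deserves tightening. In (ii) you assert parenthetically that comparing degrees in $(\rho\res_N)\ind^G=2\rho$ gives $\deg\kappa=\deg\kappa'=\deg(\rho)/2$; that comparison only yields $\deg\kappa+\deg\kappa'=\deg\rho$, and your proof of (iv) in the case $\rho=\rho\times\eta$ then leans on the unproved halving via $\deg(\kappa\ind^G)=2\deg\kappa=\deg\rho$. The fix is immediate and uses a containment you already exploit in (iii): since $\kappa\ind^G$ is a subcharacter of $(\kappa+\kappa')\ind^G=(\rho\res_N)\ind^G=2\rho$, its only possible irreducible constituent is $\rho$, and Frobenius reciprocity gives $\left\langle \kappa\ind^G,\rho\right\rangle_G=\left\langle \kappa,\rho\res_N\right\rangle_N=1$, so $\kappa\ind^G=\rho$ directly (and the degree halving then follows, rather than being an input). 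With that adjustment the argument is complete.
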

	
	\begin{proof}[Proof of Lemma~\ref{Lemma index two subgps} and Lemma~\ref{Lemma index two}]
		See the discussion about index two subgroups in \cite[\S20]{JamesLiebeckRepTheory01} for Lemma~\ref{Lemma index two subgps} and Lemma~\ref{Lemma index two}(i)--(iii). Frobenius reciprocity then establishes (iv).
	\end{proof}
	
	For groups $N\leq G$ with $|G:N|=2$ and an irreducible character $\rho$ of $G$ we write $\rho_N$ for an arbitrary irreducible constituent of $\rho\res_N$. Using Lemma~\ref{Lemma index two} $\rho_N$ is not uniquely defined if $\rho$ is as in (ii). However, $\rho_N\ind^G$ is well-defined according to (iv) which is sufficient for our purposes. Let us emphasise that in this paper $\rho_N$ is \emph{not} the restriction $\rho\res_N$. With our new notation introduced, we have the following useful corollary.
	
	\begin{corollary}\label{Cor index two}
		Suppose that $G\leq S_n$. Let $\eta$ be a non-trivial real linear character of $G$ and let $N=\ker \eta$.
		\begin{enumerate}[label=\textnormal{(\roman*)}]
			\item If $\rho$ is an irreducible character of $G$ such that $\rho_N$ is an induced-multiplicity-free character, then so are $\rho$ and $\rho\times\eta$.
			\item If $N$ is multiplicity-free, then there is an irreducible character $\rho$ of $G$ such that $\rho$ and $\rho\times\eta$ are induced-multiplicity-free.
		\end{enumerate}
	\end{corollary}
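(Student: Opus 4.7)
The plan is to deduce both parts directly from Lemma~\ref{Lemma index two}(iv) combined with the transitivity of induction, so no significant obstacle is expected.

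For part (i), the key observation is that $\rho_N\ind^{S_n}=(\rho_N\ind^G)\ind^{S_n}$ by transitivity of induction. According to Lemma~\ref{Lemma index two}(iv), the induced character $\rho_N\ind^G$ is either $\rho+\rho\times\eta$ (when $\rho\neq\rho\times\eta$) or $\rho$ (when $\rho=\rho\times\eta$); in either situation both $\rho$ and $\rho\times\eta$ appear (possibly as a single summand) as irreducible constituents of $\rho_N\ind^G$. Inducing up to $S_n$, we conclude that $\rho\ind^{S_n}$ and $(\rho\times\eta)\ind^{S_n}$ are both summands of the multiplicity-free character $\rho_N\ind^{S_n}$. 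Since every summand of a multiplicity-free character is itself multiplicity-free, both $\rho$ and $\rho\times\eta$ are induced-multiplicity-free.

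For part (ii), assume $N$ is multiplicity-free, so there exists an irreducible character $\kappa$ of $N$ with $\kappa\ind^{S_n}$ multiplicity-free. By Lemma~\ref{Lemma index two}(iii), every irreducible character of $N$ arises (up to tensoring with $\eta$, which does not affect $N$) as some $\rho_N$ for an irreducible character $\rho$ of $G$; in particular, we may pick $\rho$ so that $\kappa=\rho_N$. Applying part (i) to this $\rho$ then shows that $\rho$ and $\rho\times\eta$ are both induced-multiplicity-free, completing the proof.
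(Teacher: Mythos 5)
Your proof is correct and follows essentially the same route as the paper, which simply notes that Lemma~\ref{Lemma index two}(iv) implies (i) and that (i) implies (ii); your write-up just spells out the transitivity-of-induction step and the use of Lemma~\ref{Lemma index two}(iii) to realise a given irreducible character of $N$ as some $\rho_N$.
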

	
	\begin{proof}
		Lemma~\ref{Lemma index two}(iv) implies (i), which in turn implies (ii). 
	\end{proof}
	
	\begin{example}\label{Example index two}
		We can use formulae from Lemma~\ref{Lemma tensoring plethysms} to apply the last two statements in practice.
		\begin{enumerate}[label=\textnormal{(\roman*)}]
			\item Let $\eta=\sgn\boxtimes\left(\charwrnb{\mathbbm{1}}{\sgn}{5} \right) $ be a character of $G=S_3\times S_2\wr S_5$. Write $N=\ker \eta$ and $\rho=\chi^{(1^3)}\boxtimes\left( \charwr{\chi^{(2)}}{\chi^{(5)}}{5}\right) $. Corollary~\ref{Cor index two} says that if $\rho_N$ is induced-multiplicity-free, so are $\rho$ and $\rho\times\eta = \chi^{(3)}\boxtimes\left( \charwr{\chi^{(2)}}{\chi^{(1^5)}}{5}\right) $.
			\item Let $\eta=\charwrnb{\sgn}{\mathbbm{1}}{3}$ be a character of $G=S_{10}\wr S_3$. Then $N=\ker\eta=\left( S_{10}\wr S_3\right) \cap A_{30}$ and for $\rho=\left( \chi^{(6,3,1)}\boxtimes \left(\charwr{\chi^{(4^2,2)}}{\chi^{(2)}}{2} \right)\right)\Ind^G$, using Lemma~\ref{Lemma index two}(iv), we compute $\rho_N \ind^G$ to be
			\[\left( \chi^{(6,3,1)}\boxtimes \left(\charwr{\chi^{(4^2,2)}}{\chi^{(2)}}{2} \right)\right)\Ind^G + \left( \chi^{(3,2^2,1^3)}\boxtimes \left(\charwr{\chi^{(3^2,2^2)}}{\chi^{(2)}}{2} \right)\right) \Ind^G.\] 
		\end{enumerate}
	\end{example}
	
	We now consider subgroups of direct products. Let $\List{G}{t}$ be groups. For $1\leq i\leq t$ we denote by $\pi_i:G_1\times G_2\times \dots\times G_t\to G_i$ the natural projection. A subgroup $G$ of $G_1\times G_2\times \dots\times G_t$ is called a \textit{subdirect subgroup} if all restrictions $\pi_i|_{G}:G\to G_i$ are surjective. Note that for any $G\leq G_1\times G_2\times \dots\times G_t$ there are unique $H_1\leq G_1, H_2\leq G_2,\dots, H_t\leq G_t$ such that $G$ is a subdirect subgroup of $H_1\times H_2\times \dots\times H_t$. Namely, we take $H_i=\pi_i(G)$. For $t=2$ subdirect subgroups are easily characterised.
	
	\begin{proposition}\label{Prop subdirect subgps}
		Let $K$ and $L$ be finite groups and $G$ a subdirect subgroup of $K\times L$. There exist a group $H$ and surjections $\alpha: K\to H$ and $\beta:L\to H$ such that $G = \left\lbrace (x,y)\in K\times L : \alpha(x) = \beta(y) \right\rbrace $. Moreover, $|G||H|=|K||L|$.
	\end{proposition}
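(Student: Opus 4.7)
The approach is Goursat's lemma. The plan is to define two normal subgroups $N_K \trianglelefteq K$ and $N_L \trianglelefteq L$ whose ``identity slice'' of $G$ isolates them, show that $G$ induces an isomorphism $K/N_K \cong L/N_L$, and then take $H$ to be this common quotient.

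First I would set $N_K = \{x \in K : (x, e_L) \in G\}$ and $N_L = \{y \in L : (e_K, y) \in G\}$, both obviously subgroups of $K$ and $L$ respectively. To see $N_K \trianglelefteq K$, given any $k \in K$ we use subdirectness to find $y \in L$ with $(k, y) \in G$; then for $n \in N_K$ we get $(k, y)(n, e_L)(k, y)^{-1} = (knk^{-1}, e_L) \in G$, so $knk^{-1} \in N_K$. Symmetrically $N_L \trianglelefteq L$.

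Next, I would define $\phi : K \to L/N_L$ by $\phi(x) = y N_L$, where $y$ is any element with $(x, y) \in G$ (which exists by subdirectness). This is well defined since if $(x, y), (x, y') \in G$, then $(e_K, y^{-1}y') \in G$, hence $y^{-1}y' \in N_L$. It is a homomorphism because $(x, y), (x', y') \in G$ gives $(xx', yy') \in G$. Its kernel is exactly $N_K$: if $\phi(x) = N_L$ then some $y \in N_L$ has $(x, y) \in G$, and $(e_K, y) \in G$ implies $(x, e_L) \in G$, so $x \in N_K$; the converse is immediate. Moreover $\phi$ is surjective since every $y \in L$ lies in the image by subdirectness. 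Hence $\phi$ descends to an isomorphism $\bar\phi : K/N_K \xrightarrow{\sim} L/N_L$.

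I would then take $H := L/N_L$, let $\beta : L \to H$ be the quotient map, and let $\alpha : K \to H$ be the composition $K \twoheadrightarrow K/N_K \xrightarrow{\bar\phi} H$. By construction $(x, y) \in G$ if and only if $\phi(x) = y N_L$, which is the same as $\alpha(x) = \beta(y)$. Both $\alpha$ and $\beta$ are surjective. Finally, the map $G \to H$ sending $(x, y) \mapsto \alpha(x) = \beta(y)$ is surjective with kernel $N_K \times N_L$, giving $|G| = |N_K| \cdot |N_L| \cdot |H|$; combining this with $|K| = |N_K| \cdot |H|$ and $|L| = |N_L| \cdot |H|$ yields $|K||L| = |N_K| \cdot |N_L| \cdot |H|^2 = |G| \cdot |H|$.

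The argument is essentially routine once the setup is right; the only genuine point is showing $\phi$ is well defined and its kernel is $N_K$, which is where the subdirect hypothesis and the definitions of $N_K, N_L$ need to be used in tandem. There is no real obstacle beyond keeping track of the two sides symmetrically.
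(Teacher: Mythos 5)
Your proof is correct and takes essentially the same route as the paper's: both are Goursat-style arguments that quotient by the "slice" kernels and identify $K/N_K$ with $L/N_L$ (the paper just works asymmetrically, taking $H=K/N_K$ and defining $\beta$ through the isomorphism $L\cong G/M$). The only cosmetic difference is the final count, where the paper counts fibres over each $y\in L$ directly rather than using the kernel $N_K\times N_L$ of the map $G\to H$.
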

	
	\begin{proof}
		Write $\pi_K$ and $\pi_L$ for the projections from $K\times L$ to $K$ and $L$, respectively. Consider the kernel $M$ of $\pi_L|_G$ and let $N=\pi_K(M)$. Since $\pi_K|_G$ is surjective, $N$ is a normal subgroup of $K$. Let $H=K/N$ and $\alpha: K\to H$ be the surjective quotient map. We define $\beta:L\to H$ to be the composition of the isomorphism $L\to G/M$ arising from $\pi_L|_G$ and the surjection $G/M\to K/N=H$ arising from $\pi_K|_G$. Alternatively, $\beta(y)=\left\lbrace x\in K: (x,y)\in G \right\rbrace$ for $y\in L$. Thus $(x,y)\in K\times L$ lies in $G$ if and only if $xN=\beta(y)$ which can be written as $\alpha(x)=\beta(y)$, as required. The equality $|G||H|=|K||L|$ follows as for each $y\in L$ there are $|N|=|K|/|H|$ elements $x\in K$ such that $(x,y)\in G$.
	\end{proof}
	
	We deduce two important consequences of Proposition~\ref{Prop subdirect subgps} by taking either one or both of $K$ and $L$ to be a symmetric or an alternating group, and using the facts that alternating groups do not have any subgroups of index $2$ and apart from $A_4$ are simple.
	
	\begin{corollary}\label{Corollary subdirect subgps}
		Let $k\geq 5$ be an integer and let $L$ be a finite group.
		\begin{enumerate}[label=\textnormal{(\roman*)}]
			\item If $G$ is a proper subdirect subgroup of $S_k\times L$, then either $G$ has index $2$ in $S_k\times L$ or $G\cong L$ and $L$ surjects onto $S_k$.
			\item If $G$ is a proper subdirect subgroup of $A_k\times L$, then $G\cong L$ and $L$ surjects onto $A_k$.
		\end{enumerate} 
	\end{corollary}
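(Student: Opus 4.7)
The plan is to apply Proposition~\ref{Prop subdirect subgps} directly to both parts and then exploit the normal subgroup structure of $S_k$ and $A_k$ for $k\geq 5$: the only proper non-trivial normal subgroup of $S_k$ is $A_k$, and $A_k$ itself is simple. In each case, Proposition~\ref{Prop subdirect subgps} produces a common quotient $H$ of $K$ (which is either $S_k$ or $A_k$) and $L$ via surjections $\alpha:K\to H$ and $\beta:L\to H$ with $G=\{(x,y) : \alpha(x)=\beta(y)\}$. The key opening observation is that $H$ must be non-trivial, since otherwise the defining equation would be vacuous and $G$ would equal $K\times L$, contradicting properness of $G$.

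For part (i), since $H$ is a non-trivial quotient of $S_k$, the kernel of $\alpha$ is either $\{e\}$ or $A_k$, so $H$ is isomorphic to either $S_k$ or $C_2$. In the latter case the formula $|G||H|=|S_k||L|$ from Proposition~\ref{Prop subdirect subgps} immediately gives $|S_k\times L:G|=2$. In the former case $\alpha$ is an isomorphism, and composing $\alpha^{-1}$ with $\beta$ produces a surjection $L\to S_k$. Moreover, the projection $\pi_L|_G$ is surjective by the subdirect property, and its kernel consists of pairs $(x,e)\in G$, i.e. pairs with $\alpha(x)=\beta(e)=e_H$; since $\alpha$ is injective this kernel is trivial, yielding $G\cong L$ via $\pi_L|_G$.

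Part (ii) is immediate by the same method: since $A_k$ is simple for $k\geq 5$, the kernel of $\alpha$ must be trivial, leaving only the analogue of the first case above, and the previous paragraph delivers both $L\twoheadrightarrow A_k$ and $G\cong L$. There is no real obstacle in either argument; the only point requiring slight care is identifying the kernel of $\pi_L|_G$ with $\ker\alpha\times\{e\}$ (which follows directly from the defining equation of $G$ by setting $y=e$), and noting that properness of $G$ forces $H$ to be non-trivial.
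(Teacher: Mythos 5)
Your proof is correct and follows essentially the same route as the paper: both apply Proposition~\ref{Prop subdirect subgps}, run through the possible kernels of $\alpha$ using the normal subgroup structure of $S_k$ (respectively the simplicity of $A_k$) for $k\geq 5$, use the formula $|G||H|=|K||L|$ to get the index-$2$ case, and in the isomorphism case identify $G\cong L$ via the projection to $L$. The only cosmetic difference is that you spell out the kernel of $\pi_L|_G$ explicitly, which the paper leaves implicit.
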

	
	\begin{proof}
		Let $G$ be a proper subdirect product of $S_k\times L$. Since $k\geq 5$, there are $3$ normal subgroups of $S_n$, namely $S_n,A_n$ and the trivial subgroup. This leaves us with three choices of $\alpha:S_k\to H$ in Proposition~\ref{Prop subdirect subgps} applied to $G$. Either $\alpha$ is the trivial map and then $G=S_k\times L$, a contradiction. Or $\alpha$ is the sign character and the `moreover' part of Proposition~\ref{Prop subdirect subgps} shows that $G$ has index $2$ in $S_k\times L$. Or finally, $\alpha$ is an isomorphism. Then the projection $G\to L$ is also an isomorphism and $\beta:L\to S_k$ is the desired surjection. This establishes (i). The argument for (ii) is analogous.
	\end{proof}
	
	\begin{corollary}\label{Corollary MF subdirect subgpa}
		Let $k$ and $l$ be positive integers such that $k+l\geq 9$.
		\begin{enumerate}[label=\textnormal{(\roman*)}]
			\item If $G$ is a proper multiplicity-free subdirect subgroup of $S_k\times S_l$, then $G=\left(S_k\times S_l \right) \cap A_{k+l}$.
			\item The groups $A_k\times S_l$ and $ A_k\times A_l$ have no proper multiplicity-free subdirect subgroup.
		\end{enumerate}
	\end{corollary}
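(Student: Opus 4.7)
The plan is to reduce both statements to Corollary~\ref{Corollary subdirect subgps}, with a single residual case settled by the order bound from Lemma~\ref{Lemma order bound - inv}.

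For (i), assume without loss of generality that $l\geq k$, so $l\geq 5$ from $k+l\geq 9$, and note that $k=1$ is vacuous since $S_1\times S_l=S_l$ admits no proper subdirect subgroup. Viewing $G$ as a subdirect subgroup of $S_l\times S_k$, Corollary~\ref{Corollary subdirect subgps}(i) leaves two possibilities. If $G$ has index $2$ in $S_k\times S_l$, inspecting the three non-trivial real linear characters (with kernels $A_k\times S_l$, $S_k\times A_l$, $(S_k\times S_l)\cap A_{k+l}$) shows that only the last kernel is subdirect, since the first two have a non-surjective projection onto one factor; this yields the desired $G=(S_k\times S_l)\cap A_{k+l}$. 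Otherwise $G\cong S_k$ with $S_k$ surjecting onto $S_l$; comparing orders forces $k=l\geq 5$, so $|G|=k!$ inside $S_{2k}$ with $2k\geq 10$, and Lemma~\ref{Lemma order bound - inv} together with Lemma~\ref{Lemma involutions}(iii) gives the contradiction $|G|\geq (2k)!/a_{2k}>2\cdot k!>k!=|G|$.

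For (ii), treat $A_k\times S_l$ and $A_k\times A_l$ uniformly, using Proposition~\ref{Prop subdirect subgps} for small $k$ and Corollary~\ref{Corollary subdirect subgps}(ii) for $k\geq 5$. When $k\leq 2$ the alternating factor is trivial and the product reduces to a single $S_l$ or $A_l$, which has no proper subdirect subgroup. When $k\in\{3,4\}$, the non-trivial quotients of $A_k$ (namely $C_3$, and additionally $A_4$ when $k=4$) do not appear among the quotients of $S_l$ or $A_l$ for $l\geq 5$ (forced by $k+l\geq 9$), so the common quotient $H$ in Proposition~\ref{Prop subdirect subgps} must be trivial and $G$ equals the whole product. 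For $k\geq 5$, the corollary supplies a surjection onto the simple group $A_k$ from the opposite factor: in the $A_k\times S_l$ case, the normal-subgroup structure of $S_l$ combined with the order of $A_k$ rules this out; in the $A_k\times A_l$ case, simplicity forces $k=l$, and $G$ is the diagonal $A_k$ of order $k!/2$ in $A_k\times A_k$, which the same order bound excludes via $|G|\geq (2k)!/a_{2k}>2\cdot k!>k!/2=|G|$.

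The delicate point is the sharpness of the order bound: the coarser estimate $|G|>2^{n-1}$ of Corollary~\ref{Cor Lemma order bound - exp} already fails for the diagonal subgroups once $k\geq 8$. It is precisely Lemma~\ref{Lemma involutions}(iii) that supplies the needed $n!/a_n>2\cdot k!$ for $n=2k\geq 10$, comfortably exceeding both $|G|=k!$ in (i) and $|G|=k!/2$ in (ii) and thus closing the final case of each part.
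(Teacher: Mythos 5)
Your proof is correct and takes essentially the same route as the paper: reduce to Proposition~\ref{Prop subdirect subgps} and Corollary~\ref{Corollary subdirect subgps}, identify $(S_k\times S_l)\cap A_{k+l}$ as the only subdirect index-two subgroup via the real linear characters, and kill the ``$G$ isomorphic to one factor'' case with the involution-count bound (Lemma~\ref{Lemma order bound - inv} together with Lemma~\ref{Lemma involutions}(iii), which is exactly the first bound in Corollary~\ref{Cor Lemma order bound - exp}). The only difference is organizational: the paper makes the larger index come first (allowing $S_k\times A_l$) so that the corollary always applies, whereas you handle the small alternating factor in (ii) directly through common quotients for $k\leq 4$ --- both are fine.
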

	
	\begin{proof}
		Allowing $S_k\times A_l$ in (ii), without loss of generality assume that $k\geq l$, meaning that $k\geq 5$. Let $K\times L$ be $S_k\times A_l$ or one of the three direct products in the statement and suppose that $G$ is its proper multiplicity-free subdirect subgroup. By Corollary~\ref{Corollary subdirect subgps}, either $K=S_k$ and $G$ has index $2$ in $K\times L$ or $G\cong L$. In the latter case $|G|\leq\lfloor n/2\rfloor!$, where $n=k+l$, contradicting Corollary~\ref{Cor Lemma order bound - exp}. In the former case it is easy to see, using, for instance, Lemma~\ref{Lemma index two subgps}, that $S_k\times S_l$ has at most one subdirect index $2$ subgroup, namely $\left(S_k\times S_l \right) \cap A_{k+l}$, while $S_k\times A_l$ has none.
	\end{proof} 
	
	Let us now move to subgroups of wreath products. For the wreath product $M\wr S_h$ we have the short exact sequence
	\[1\to \underbrace{M\times M\times \dots\times M}_{h \text{ times}}\to M\wr S_h \xrightarrow{\alpha} S_h\to 1. \]
	Given a subgroup $G\leq M\wr S_h$ we obtain the short exact sequence
	\begin{equation}\label{Eq ses}
	1\to G_B\to G \to \alpha(G)\to 1,
	\end{equation}
	where $G_B=G\cap \left( M\times M\times\dots\times M\right) $. Let us define $\List{G}{h}$ to be the images of $G_B$ under the $h$ projections to $M$.
	Therefore $G_B$ is a subdirect subgroup of $G_1\times G_2\times\dots\times G_h$.
	
	\begin{lemma}\label{Lemma conjugation lemma}
		Let $m$ and $h$ be positive integers. For a transitive subgroup $G$ of $S_m\wr S_h\leq S_{mh}$ the groups $\List{G}{h}$ are conjugate in $S_m$.
	\end{lemma}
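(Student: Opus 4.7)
The plan is to exploit the fact that since $G$ is transitive on the $mh$ points, the image $\alpha(G)\leq S_h$ (where $\alpha$ is the map from the short exact sequence \eqref{Eq ses}) must be transitive on the $h$ blocks of the imprimitivity partition. Indeed, a $G$-orbit on points projects to a $G$-orbit on blocks, so a single orbit on points forces a single orbit on blocks. The key observation is then that the base subgroup $G_B$ is normal in $G$, since it is the intersection of $G$ with the base $M\times M\times\dots\times M$, which is normal in $M\wr S_h$.

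Next I would show that conjugation by a suitably chosen element of $G$ witnesses the conjugacy of any two $G_i$ and $G_j$ in $S_m$. Given $i$ and $j$, by transitivity of $\alpha(G)$ pick $g=(g_1,g_2,\dots,g_h;\sigma)\in G$ with $\sigma(i)=j$. Using the multiplication rule in the wreath product, for any $x=(x_1,\dots,x_h;e)\in G_B$ a direct computation gives
\[gxg^{-1}=(g_1 x_{\sigma^{-1}(1)}g_1^{-1},\dots,g_h x_{\sigma^{-1}(h)}g_h^{-1};e).\]
By normality of $G_B$ in $G$, this element lies in $G_B$, so its $j$-th coordinate $g_j x_i g_j^{-1}$ lies in $G_j$. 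As $x$ ranges over $G_B$, the entry $x_i$ ranges over all of $G_i$, giving $g_j G_i g_j^{-1}\subseteq G_j$. Applying the same argument to $g^{-1}$ (whose permutation sends $j$ back to $i$) yields the reverse inclusion, so $g_j G_i g_j^{-1}=G_j$, establishing conjugacy in $M=S_m$.

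There is no real obstacle here; the only points requiring mild care are verifying the formula for $g^{-1}$ and checking that $G_B$ is normal in $G$, both of which follow immediately from the wreath-product multiplication rule and the normality of the base group inside $M\wr S_h$. The rest is bookkeeping with indices.
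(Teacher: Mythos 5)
Your proposal is correct and follows essentially the same route as the paper: transitivity supplies an element $g=(g_1,\dots,g_h;\sigma)\in G$ with $\sigma(i)=j$, and normality of $G_B$ in $G$ (i.e.\ $G_B={}^{g}G_B$) together with the wreath-product conjugation formula gives $G_j={}^{g_j}G_i$. The paper merely states this equality directly, while you spell out the coordinate computation and the two inclusions.
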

	
	\begin{proof}
		Since $G$ is transitive, for any $i,j\leq h$ there is $g=(\List{g}{h}; \sigma)\in G$ such that $\sigma(i) =j$. Since $G_B=\prescript{g}{}{G_B}$, we get $G_j = \prescript{g_j}{}{G_i}$, as required.
	\end{proof}
	
	We can adapt this proof for $h=2$ to develop a concept of `subdirect products' for $S_m\wr S_2$.
	
	\begin{lemma}\label{Lemma wreath subgps}
		Let $m$ be a positive integer. If $G\leq S_m\wr S_2\leq S_{2m}$ is a transitive subgroup, then, up to conjugation in $S_m$, there is a unique $M\leq S_m$ such that, up to conjugation in $S_m\wr S_2$, we have $G\leq M\wr S_2$ and $G_B$ is a subdirect subgroup of $M\times M$.
	\end{lemma}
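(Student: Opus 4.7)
My plan is to conjugate $G$ within $S_m \wr S_2$ in two successive steps: first to equalise the projections of $G_B$ onto the two $S_m$ factors, and second to push a representative of the non-trivial coset of $G/G_B$ inside $M \wr S_2$. Since $G$ is transitive on $2m$ points while $S_m \times S_m$ preserves the two imprimitivity blocks, $G$ strictly contains $G_B$; moreover $G_B$ is the kernel of the natural surjection $G \to S_2$ coming from \eqref{Eq ses} and hence is normal in $G$. The projections $G_1, G_2 \leq S_m$ of $G_B$ are conjugate in $S_m$ by Lemma~\ref{Lemma conjugation lemma}; writing $G_2 = gG_1g^{-1}$, conjugating $G$ by $(e, g^{-1}; 1)$ equalises them. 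Letting $M$ denote the common projection, $G_B$ is by construction subdirect in $M \times M$.

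For the second step I pick any $(x, y; \tau) \in G$ with $\tau$ the swap. Since $G_B \triangleleft G$, conjugation by $(x, y; \tau)$ restricts to an automorphism of $G_B$, and a direct calculation using the multiplication rule gives $(x, y; \tau)(a, b; 1)(x, y; \tau)^{-1} = (xbx^{-1}, yay^{-1}; 1)$. Reading off first coordinates yields $xMx^{-1} = M$, hence $x \in N_{S_m}(M)$; similarly $y \in N_{S_m}(M)$. I then conjugate $G$ by $(x^{-1}, e; 1)$: because $x$ normalises $M$, the new $G_B$ still has both projections equal to $M$, and the representative becomes $(e, yx; \tau)$. Crucially, $(x, y; \tau)^2 = (xy, yx; 1) \in G_B$ forces $yx \in G_2 = M$, so the new representative lies in $M \wr S_2$. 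Every element of the non-trivial coset is then $(e, yx; \tau)(a, b; 1) = (b, yxa; \tau)$ with $a, b \in M$, which is in $M \wr S_2$; combined with $G_B \leq M \times M$ this yields $G \leq M \wr S_2$.

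For uniqueness, suppose $M$ and $M'$ both satisfy the conclusion. Each must be the common projection of some $S_m \wr S_2$-conjugate of $G_B$. However, conjugation by a general element $(a, b; \sigma) \in S_m \wr S_2$ only conjugates the two coordinate projections of $G_B$ in $S_m$ (swapping them if $\sigma \neq e$); in either case the $S_m$-conjugacy class of $G_1$ is preserved. Therefore both $M$ and $M'$ are $S_m$-conjugate to $G_1$, hence to each other.

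The main obstacle is the second step. After equalising the projections of $G_B$, there is no a priori reason why an arbitrary representative $(x, y; \tau)$ of the non-trivial coset should lie in $M \wr S_2$, and the further conjugations that preserve the equalised projections are constrained (their first two coordinates must normalise $M$). The decisive input is the hidden relation $yx \in M$ coming from $(x, y; \tau)^2 \in G_B$, which precisely enables the conjugation by $(x^{-1}, e; 1)$ to send the representative into $M \wr S_2$ while preserving the subdirect structure on $G_B$.
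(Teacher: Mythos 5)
Your proof is correct and uses essentially the same ingredients as the paper's: transitivity to produce an element $(x,y;\tau)$ with the swap, the fact that its square lies in $G_B$ (so $yx\in M$), and the conjugacy of the two projections from Lemma~\ref{Lemma conjugation lemma}, together with the observation that the common projection is determined up to $S_m$-conjugacy. The only difference is organisational: the paper performs a single conjugation by $(g_1,e;\tau)$, whereas you split it into two base-group conjugations and justify the second via the normaliser condition $x,y\in N_{S_m}(M)$, which is a harmless variant of the same computation.
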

	
	\begin{proof}
		Since every conjugate of $G_B$ in $S_m\wr S_2$ is a subdirect subgroup of a unique direct product, namely the corresponding conjugate of $G_1\times G_2,$ we see that $M$ must be, up to conjugation in $S_m$, equal to $G_1$. We verify that this is a valid choice of $M$.
		
		Write $\tau\in S_2$ for the transposition. By transitivity, there is an element $g=(g_1,g_2;\tau)\in G$ and $G=G_B\cup gG_B$. Consequently, $g^2\in G_B$, and hence $g_1g_2\in G_1$. As in the proof of Lemma~\ref{Lemma conjugation lemma}, we have $\prescript{g_1}{}{G_2}=G_1$. Letting $h=(g_1,e;\tau)\in S_m\wr S_2$, where $e$ denotes the identity of $S_m$, we conclude that $\prescript{h}{}{G_B}$ is a subdirect subgroup of $\prescript{g_1}{}{G_2}\times G_1=G_1\times G_1$ and $\prescript{h}{}{G}=\prescript{h}{}{G_B}\cup \prescript{h}{}{g}\:\prescript{h}{}{G_B}=\prescript{h}{}{G_B}\cup (g_1g_2,e;\tau)\prescript{h}{}{G_B}\leq G_1\wr S_2$.
	\end{proof}
	
	In the setting of Lemma~\ref{Lemma wreath subgps}, we say that $G$ is a \textit{subwreath subgroup} of the group $M\wr S_2$. With this notation we establish an analogue of Corollary~\ref{Corollary MF subdirect subgpa} for wreath products $S_m\wr S_2$ and $A_m\wr S_2$. In the statement we use the notation $T_{m,h}$ introduced in Definition~\ref{Defn important subgroups} for $\ker\left(\charwrnb{\sgn}{\psi}{h} \right)\leq S_m\wr S_h$, where $\psi =\sgn$ if $m$ is even and $\psi = \mathbbm{1}$ otherwise. We also need the group $\ker\left(\charwrnb{\sgn}{(\psi\times \sgn)}{h} \right)$, which is just $(S_m\wr S_h)\cap A_{mh}$.
	
	\begin{corollary}\label{Cor subwreath lemma}
		Suppose that $m\geq 5$.
		\begin{enumerate}[label=\textnormal{(\roman*)}]
			\item If $G$ is a proper transitive multiplicity-free subwreath product of the group $S_m\wr S_2\leq S_{2m}$, then $G$ is either $\left( S_m\wr S_2\right) \cap A_{2m}$ or $T_{m,2}$.
			\item There is no proper transitive multiplicity-free subwreath product of the group $A_m\wr S_2\leq S_{2m}$.
		\end{enumerate}
	\end{corollary}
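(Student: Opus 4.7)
The plan is to combine the short exact sequence~\eqref{Eq ses} with Corollary~\ref{Corollary subdirect subgps} and the order bound of Corollary~\ref{Cor Lemma order bound - exp}. Writing $M$ for $S_m$ in (i) and for $A_m$ in (ii), I would first note that transitivity of $G$ on $\{1,\dots,2m\}$ forces the image of $G$ in $S_2$ in~\eqref{Eq ses} to be all of $S_2$, so $|G : G_B| = 2$. The case $G_B = M \times M$ gives $|G| = |M \wr S_2|$, contradicting properness, so $G_B$ is a proper subdirect subgroup of $M \times M$. Corollary~\ref{Corollary subdirect subgps} then leaves two options: either $G_B \cong M$ via a projection, or (only possible when $M = S_m$) $G_B$ has index $2$ in $M \times M$. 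In the first option, $|G| = 2|M| \leq 2\,m!$, which violates the strict lower bound $|G| > 2\lfloor n/2\rfloor! = 2\,m!$ of Corollary~\ref{Cor Lemma order bound - exp} (valid for $n = 2m \geq 10 \geq 7$). This already handles (ii) entirely, since in that setting only the first option can occur.

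For (i), the remaining case is that $G_B$ is an index-$2$ subdirect subgroup of $S_m \times S_m$. By Proposition~\ref{Prop subdirect subgps}, such a $G_B$ corresponds to a pair of surjections $S_m \to H$ with $|H| = 2$, each necessarily the sign character, and hence $G_B = (S_m \times S_m) \cap A_{2m}$. The group $G$ is then a transitive index-$2$ subgroup of $S_m \wr S_2$, so by Lemma~\ref{Lemma index two subgps} it is the kernel of one of the three non-trivial real linear characters of $S_m \wr S_2$, namely $\charwrnb{\mathbbm{1}}{\sgn}{2}$, $\charwrnb{\sgn}{\mathbbm{1}}{2}$ and $\charwrnb{\sgn}{\sgn}{2}$. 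The first has kernel $S_m \times S_m$, which is not transitive, while a direct parity computation -- noting that the block-swap in $S_m \wr S_2 \hookrightarrow S_{2m}$ contributes the sign $(-1)^m$ -- identifies the remaining two kernels with $(S_m \wr S_2) \cap A_{2m}$ and $T_{m,2}$, in an order that depends on the parity of $m$. The hard part is really just this final parity bookkeeping; everything else is a direct appeal to the cited results.
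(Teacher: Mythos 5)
Your argument is correct and follows essentially the same route as the paper's proof: apply Corollary~\ref{Corollary subdirect subgps} to the base group $G_B$, eliminate the $G_B\cong M$ cases with the order bound of Corollary~\ref{Cor Lemma order bound - exp} (which disposes of (ii) entirely), and in the index-$2$ case use Lemma~\ref{Lemma index two subgps} to list the three index-$2$ subgroups of $S_m\wr S_2$ and discard $S_m\times S_m$ by transitivity. The extra details you supply (that transitivity forces $|G:G_B|=2$, the identification $G_B=(S_m\times S_m)\cap A_{2m}$, and the parity bookkeeping matching the two kernels with $(S_m\wr S_2)\cap A_{2m}$ and $T_{m,2}$) are all accurate, just slightly more explicit than the paper's version.
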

	
	\begin{proof}
		Suppose that $G$ is a proper transitive multiplicity-free subwreath product of $S_m\wr S_2$ or $A_m\wr S_2$. Since $G_B$ is a proper subdirect subgroup of either $S_m\times S_m$ or $A_m \times A_m$, we use Corollary~\ref{Corollary subdirect subgps} to deduce that $G_B$ is either an index $2$ subdirect subgroup of $S_m\times S_m $ or it has size $m!$ or $m!/2$. In the former case $G$ has index $2$ in $S_m\wr S_2$. Using Lemma~\ref{Lemma index two subgps}, we recover $\left( S_m\wr S_2\right) \cap A_{2m}, T_{m,2}$ and $S_m\times S_m$, however, only the first two are transitive. In the latter cases $|G|\leq 2m!$, which cannot happen according to Corollary~\ref{Cor Lemma order bound - exp} with $n=2m\geq 10$. 
	\end{proof}
	
\section{Induced-multiplicity-free characters of products of a symmetric and a transitive group}\label{Sec S_k times L}

We start this section by examining induced-multiplicity-free characters of $S_k\times S_m\wr S_h$ which have the form $\chi^{\lambda}\boxtimes \left( \charwr{\chi^{\mu}}{\chi^{\nu}}{h}\right) $, or equivalently, multiplicity-free symmetric functions $s_{\lambda}(s_{\nu}\circ s_{\mu})$. Clearly, if $s_{\lambda}(s_{\nu}\circ s_{\mu})$ is multiplicity-free, so is the plethysm $s_{\nu}\circ s_{\mu}$. Hence the choices of $\mu$ and $\nu$ are restricted by Theorem~\ref{Theorem MF plethysms}. We cover the three cases from Theorem~\ref{Theorem MF plethysms} individually. The notation $\mu\to\lambda$ introduced in Definition~\ref{Definition multifunction} is used throughout the section.  

\begin{proposition}\label{Prop sk times sporadic plethysm}
	Let $(\mu, \nu)$ be a pair of partitions belonging to the family of $49$ members in Theorem~\ref{Theorem MF plethysms}(iii). Then there is no non-empty partition $\lambda$ such that $s_{\lambda}\left(s_{\nu}\circ s_{\mu} \right) $ is multiplicity-free.
\end{proposition}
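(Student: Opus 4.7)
My plan is to exploit the explicit and finite nature of the exceptional family in Theorem~\ref{Theorem MF plethysms}(iii): there are only $49$ pairs $(\mu,\nu)$, all with $mh\le 18$, so each plethysm $s_\nu\circ s_\mu$ can in principle be computed directly.

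The key step is to locate, for each pair $(\mu,\nu)$, two distinct constituents $s_{\alpha_1}$ and $s_{\alpha_2}$ of $s_\nu\circ s_\mu$ such that $\alpha_1$ and $\alpha_2$ differ by a single box move; equivalently, both are contained in a common partition $\gamma$ of size $mh+1$, where $\gamma=\alpha_1\vee\alpha_2$ is the coordinate-wise maximum. This is a finite verification in each of the $49$ cases, often readable from Proposition~\ref{Prop domino rectangles} when $\mu$ is rectangular or from Lemma~\ref{Lemma LR hooks} when $\mu$ is a hook, and by direct inspection of the plethysm otherwise.

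Having identified such a pair, the proposition reduces to the following combinatorial claim: for every non-empty partition $\lambda$, there exists a partition $\delta$ of size $mh+|\lambda|$ which is simultaneously a constituent of $s_\lambda s_{\alpha_1}$ and of $s_\lambda s_{\alpha_2}$. Since $s_\lambda(s_\nu\circ s_\mu) = \sum_\alpha p(\mu,\nu;\alpha)\, s_\lambda s_\alpha$ with $p(\mu,\nu;\alpha)\in\{0,1\}$ by Theorem~\ref{Theorem MF plethysms}, such a $\delta$ immediately occurs with multiplicity at least $2$ in the full expansion, giving the desired non-multiplicity-freeness. For $\lambda=(1)$ one takes $\delta=\gamma$ and applies Pieri's rule (Theorem~\ref{Theorem P rule}), consistent with Lemma~\ref{Lemma Pieri application}. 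For general $\lambda$, I expect $\delta$ to be constructible by `balancing' the single extra box of $\gamma/\alpha_i$ against a carefully chosen modification of $\gamma+\lambda$, with the exact form depending on whether $\lambda_1>\lambda_2$ and on the rows $r_1<r_2$ in which $\alpha_1,\alpha_2$ differ from $\gamma$: natural candidates include $\delta=\gamma+\lambda-e_1$ when $\lambda_1>\lambda_2$ and $\delta=\gamma\sqcup\lambda^-$ (with $\lambda^-$ obtained from $\lambda$ by removing one box from its final row) when $\lambda_1=\lambda_2$, the Littlewood--Richardson fillings being exhibited explicitly.

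The main obstacle is the uniform construction of $\delta$ and the verification of the latticed reading word condition in the resulting Littlewood--Richardson fillings. Small examples such as $\gamma=(3,2)$, $\lambda=(3,3)$ show that the naive candidates above can fail precisely when $\lambda$ is close to a square, forcing a more tailored choice of $\delta$ (for this example, $\delta=(4,3,2,1)$ works for both $\alpha_i$). Once this combinatorial lemma is established in full generality, the enumeration over the $49$ sporadic pairs in the first step is a routine finite verification, readily assisted by computer algebra.
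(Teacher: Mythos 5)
There is a genuine gap, and it sits exactly where you flag it yourself: the ``combinatorial lemma'' that for every non-empty $\lambda$ some $\delta$ is a common constituent of $s_{\lambda}s_{\alpha_1}$ and $s_{\lambda}s_{\alpha_2}$ is never proved. Your candidate constructions ($\delta=\gamma+\lambda-e_1$, $\delta=\gamma\sqcup\lambda^-$) are admitted to fail for near-square $\lambda$, and ``a more tailored choice of $\delta$'' plus case-by-case Littlewood--Richardson fillings is precisely the hard part, not a routine afterthought. As written, the argument only establishes the statement for $\lambda=(1)$ (where $\delta=\gamma$ works by Pieri), so the proposal is incomplete.

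The good news is that the lemma you want is true and follows at once from machinery already available, with no explicit fillings: since $\lambda=\bigl((1)+(\lambda_1-1)\bigr)\sqcup(\lambda_2,\lambda_3,\dots)$, there is a multifunction $f$ (in the sense of Definition~\ref{Definition multifunction}) with $f((1))=\lambda$, i.e.\ $(1)\to\lambda$ for every non-empty $\lambda$. Applying Corollary~\ref{Corollary Stembridge order} with this $f$ to the inequality $c\bigl((1),\alpha_i;\gamma\bigr)=1$ (Pieri, since $\gamma/\alpha_i$ is a single box) gives $c\bigl(\lambda,\alpha_i;f(\gamma)\bigr)\geq 1$ for $i=1,2$, so $\delta=f(\gamma)$ is the common constituent you need, uniformly in $\lambda$. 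Equivalently, and this is the paper's route: non-multiplicity-freeness of $s_{(1)}(s_{\nu}\circ s_{\mu})$ transfers to $s_{\lambda}(s_{\nu}\circ s_{\mu})$ directly via $\langle s_{\lambda}(s_{\nu}\circ s_{\mu}),s_{f(\delta)}\rangle=\sum_{\gamma}p(\mu,\nu;\gamma)\,c\bigl(f((1)),\gamma;f(\delta)\bigr)\geq\sum_{\gamma}p(\mu,\nu;\gamma)\,c\bigl((1),\gamma;\delta\bigr)\geq 2$, so the whole problem reduces to checking the $49$ sporadic pairs at $\lambda=(1)$ by computer (your step one, which coincides with the paper's {\sc Magma} verification). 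With that repair your proof becomes essentially the paper's; without it, the general-$\lambda$ step is missing.
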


\begin{proof}
	Any non-empty partition $\lambda$ satisfies $(1)\to\lambda$. Hence, using Corollary~\ref{Corollary Stembridge order}, we only need to show that $s_{(1)}\left(s_{\nu}\circ s_{\mu} \right)$ is not multiplicity-free. This has been verified using {\sc Magma}.
\end{proof}

For the next case we use the following observation.

\begin{lemma}\label{Lemma product with sm wr 2s}
	Let $\alpha,\beta,\lambda$ and $\mu$ be partitions such that $\alpha\to\lambda$ and $\beta\to\mu$. If $s_{\alpha}\left(s_{\nu}\circ s_{\beta} \right) $ is not multiplicity-free for $\nu\in\left\lbrace (2),(1^2)\right\rbrace $, then also $s_{\lambda}\left(s_{\nu}\circ s_{\mu} \right) $ is not multiplicity-free for $\nu\in\left\lbrace (2),(1^2)\right\rbrace $.
\end{lemma}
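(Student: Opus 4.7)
The plan is to transport the failure of multiplicity-freeness from $s_\alpha(s_\nu\circ s_\beta)$ to $s_\lambda(s_{\tilde\nu}\circ s_\mu)$ for a suitable $\tilde\nu\in\{(2),(1^2)\}$. The engine is the combination of the monotonicity Corollaries~\ref{Corollary Stembridge order} and \ref{Corollary plethysms observation} with the injectivity of multifunctions (Lemma~\ref{Lemma injectivity}). Fix multifunctions $f$ and $g$ with $f(\alpha)=\lambda$ and $g(\beta)=\mu$, and set $\tilde\nu := \nu^{\prime g}$ and $\tilde\kappa := f(g^2(\kappa))$, where $\kappa$ will come from the hypothesis. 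Since $\{(2),(1^2)\}$ is closed under conjugation and $\nu^{\prime g}$ equals $\nu$ or $\nu'$, we have $\tilde\nu\in\{(2),(1^2)\}$.

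Suppose $s_\alpha(s_\nu\circ s_\beta)$ is not multiplicity-free, so some $\kappa$ satisfies
\[\sum_\rho p(\beta,\nu;\rho)\,c(\alpha,\rho;\kappa) = \langle s_\alpha(s_\nu\circ s_\beta),s_\kappa\rangle \geq 2.\]
For each contributing $\rho$, I would first apply Corollary~\ref{Corollary plethysms observation} to $g$ with $h=|\nu|=2$ to obtain $p(\mu,\tilde\nu;g^2(\rho))\geq p(\beta,\nu;\rho)$. For the Littlewood--Richardson factor, I would use the symmetry $c(\alpha,\rho;\kappa)=c(\rho,\alpha;\kappa)$ to move $\rho$ into the first argument, apply Corollary~\ref{Corollary Stembridge order} with the multifunction $g^2$ to get $c(g^2(\rho),\alpha;g^2(\kappa))\geq c(\rho,\alpha;\kappa)$, restore the symmetry, and apply Corollary~\ref{Corollary Stembridge order} once more with $f$ to conclude
\[c(\lambda,g^2(\rho);\tilde\kappa) \geq c(\alpha,g^2(\rho);g^2(\kappa)) \geq c(\alpha,\rho;\kappa).\]

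Finally, by Lemma~\ref{Lemma injectivity}, distinct $\rho$'s yield distinct $\sigma := g^2(\rho)$, so restricting the sum over $\sigma$ to the image of $g^2$ gives a legitimate lower bound:
\[\langle s_\lambda(s_{\tilde\nu}\circ s_\mu),s_{\tilde\kappa}\rangle = \sum_\sigma p(\mu,\tilde\nu;\sigma)\,c(\lambda,\sigma;\tilde\kappa) \geq \sum_\rho p(\beta,\nu;\rho)\,c(\alpha,\rho;\kappa) \geq 2,\]
forcing $s_\lambda(s_{\tilde\nu}\circ s_\mu)$ to be not multiplicity-free, as required.

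The main subtlety is the order of applying $f$ and $g^2$ inside the Littlewood--Richardson coefficient: one must land on first-slot $\lambda=f(\alpha)$ and second-slot $g^2(\rho)$ with target $\tilde\kappa=f(g^2(\kappa))$, so the $g^2$ step (via symmetry) has to precede the $f$ step; the reversed order would leave $f(\alpha)$ sitting in a slot that the subsequent $g^2$ would then modify. Injectivity of $g^2$ is essential in the final display, since otherwise several $\rho$-terms might collapse into one $\sigma$-term and destroy the factor of two.
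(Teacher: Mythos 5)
Your proposal is correct and follows essentially the same route as the paper: fix multifunctions $f$ and $g$, use Corollary~\ref{Corollary plethysms observation} on the plethysm coefficients, Corollary~\ref{Corollary Stembridge order} twice (with the LR symmetry) on the Littlewood--Richardson coefficients, and Lemma~\ref{Lemma injectivity} to keep the terms from collapsing, landing on the constituent $s_{f(g^2(\kappa))}$ of $s_{\lambda}(s_{\nu^{\prime g}}\circ s_{\mu})$ with multiplicity at least $2$. Your explicit handling of the slot order via $c(\mu,\nu;\lambda)=c(\nu,\mu;\lambda)$ is exactly the step the paper leaves implicit.
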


\begin{proof}
	Pick a partition $\delta_{\nu}$ such that $\left\langle s_{\alpha}\left( s_{\nu}\circ s_{\beta}\right),s_{\delta_{\nu}}  \right\rangle \geq 2$ and multifunctions $f$ and $g$ such that $f(\alpha)=\lambda$ and $g(\beta)=\mu$. Using Corollary~\ref{Corollary Stembridge order} (twice), Lemma~\ref{Lemma injectivity} and Corollary~\ref{Corollary plethysms observation} we obtain
	\begin{align*}
	2&\leq\left\langle s_{\alpha}\left( s_{\nu}\circ s_{\beta}\right),s_{\delta_{\nu}}  \right\rangle =  \sum_{\gamma} p(\beta,\nu;\gamma) c(\alpha,\gamma;\delta_{\nu})\\
	&\leq \sum_{\gamma} p(\mu,\nu^{\prime g};g^2(\gamma)) c(\alpha,\gamma;\delta_{\nu})\leq \sum_{\gamma} p(\mu,\nu^{\prime g};g^2(\gamma)) c(\alpha,g^2(\gamma);g^2(\delta_{\nu}))\\
	&\leq \sum_{\gamma} p(\mu,\nu^{\prime g};g^2(\gamma))c(\lambda,g^2(\gamma);f(g^2(\delta_{\nu})))\\
	&\leq \sum_{\bar{\gamma}} p(\mu,\nu^{\prime g};\bar{\gamma})c(\lambda,\bar{\gamma};f(g^2(\delta_{\nu})))=\left\langle s_{\lambda}\left( s_{\nu^{\prime g}}\circ s_{\mu}\right),s_{f(g^2(\delta_{\nu}))}  \right\rangle.
	\end{align*}
	Since $(2)^{\prime g}$ and $(1^2)^{\prime g}$ are in some order $(2)$ and $(1^2)$, we have proved the statement.
\end{proof}

\begin{proposition}\label{Prop product with sm wr s2}
	Let $k$ and $m$ be positive integers with $m\geq 2$ and $\lambda,\mu$ and $\nu$ partitions of $k,m$ and $2$, respectively. The symmetric function $s_{\lambda}\left(s_{\nu}\circ s_{\mu} \right)$ is multiplicity-free if and only if $\lambda=(1^k)$ and $\mu=(m)$, \emph{or} $\lambda=(k)$ and $\mu=(1^m)$, \emph{or} $k=1$ and $\mu$ is rectangular, \emph{or} $m=2$, $\lambda$ is rectangular and $\nu=(1^2)$.
\end{proposition}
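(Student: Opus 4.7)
For \emph{($\Leftarrow$)}, the plan is to verify each of the four cases directly, combining the plethysm formulas of Lemma~\ref{Lemma even and shift rules} and Proposition~\ref{Prop domino rectangles} with the Pieri application Lemma~\ref{Lemma Pieri application} and Stembridge's classification. In Cases~A and~B, the constituents of $s_\nu\circ s_{(m)}$ are the two-row partitions $(2m-i,i)$ with $i$ of a fixed parity, so any two distinct constituents differ by at least $2$ in each of the first two parts; Lemma~\ref{Lemma Pieri application} applied with $\lambda=(1^k)$ then gives multiplicity-freeness, and Case~B follows from Case~A by applying $\omega$. For Case~C with $k=1$ and $\mu=(a^b)$, the constituents of $s_\nu\circ s_{(a^b)}$ are the $(a,b)$-birectangular partitions with $\lambda_1+\dots+\lambda_b$ of a fixed parity; using $\alpha_i+\alpha_{2b+1-i}=2a$ together with this parity constraint shows that any two distinct constituents $\alpha,\beta$ satisfy $|\alpha-\beta|\geq 4$, so Lemma~\ref{Lemma Pieri application}(i) with $k=1$ applies. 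In Case~D, both $s_{(1^2)}\circ s_{(2)}=s_{(3,1)}$ and $s_{(1^2)}\circ s_{(1^2)}=s_{(2,1^2)}$ are single near rectangular Schur functions by Proposition~\ref{Prop domino rectangles}, so Theorem~\ref{Theorem Stembridge}(iii) yields multiplicity-freeness whenever $\lambda$ is rectangular.

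For \emph{($\Rightarrow$)}, suppose $s_\lambda(s_\nu\circ s_\mu)$ is multiplicity-free, so $s_\nu\circ s_\mu$ is too. Theorem~\ref{Theorem MF plethysms} (applied with $h=2$) combined with Proposition~\ref{Prop sk times sporadic plethysm} forces $\mu$ to be rectangular, almost rectangular, or a hook. The rest of the argument reduces each subcase to a base-case non-multiplicity-freeness computation via Lemma~\ref{Lemma product with sm wr 2s} applied to small witnesses $(\alpha,\beta)$ with $\alpha\to\lambda$ and $\beta\to\mu$. Concretely, if $\mu$ is a hook or almost rectangular but not rectangular, we take $(\alpha,\beta)=((1),(2,1))$ and use the explicit decompositions of $s_\nu\circ s_{(2,1)}$ from Example~\ref{Example domino rule} combined with the branching rule to exhibit a repeated constituent (such as $s_{(4,2,1)}$) for each $\nu\in\{(2),(1^2)\}$. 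If $\mu$ is properly rectangular and $\lambda$ is not a column, we take $(\alpha,\beta)=((2),(2,2))$; Propositions~\ref{Prop LR rectangles}(i) and \ref{Prop domino rectangles} describe $s_\nu\circ s_{(2,2)}$, and Young's rule then reveals the repeats $s_{(4,4,2)}$ and $s_{(4,3,2,1)}$ for $\nu=(2)$ and $\nu=(1^2)$ respectively. If $\mu=(m)$ and $\lambda$ is not a column, we take $(\alpha,\beta)=((2),(2))$ for $\nu=(2)$ (producing the repeat $s_{(4,2)}$) and $(\alpha,\beta)=((2),(3))$ for $\nu=(1^2)$ with $m\geq 3$ (producing $s_{(5,3)}$).

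The remaining endpoint, $\mu=(2)$ with $\nu=(1^2)$, cannot be addressed through Lemma~\ref{Lemma product with sm wr 2s} because $s_{(1^2)}\circ s_{(2)}=s_{(3,1)}$ reduces the product to $s_\lambda s_{(3,1)}$; Theorem~\ref{Theorem Stembridge}(iii) then shows this is multiplicity-free precisely when $\lambda$ is rectangular, giving Case~D. Finally, the configurations where $\lambda$ is a column and $\mu$ is either properly rectangular or equal to $(1^m)$ are reduced via the involution $\omega$ to the corresponding cases already handled with $\lambda$ a row.

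The main obstacle is that Lemma~\ref{Lemma product with sm wr 2s} transports non-multiplicity-freeness across the pair $\nu\in\{(2),(1^2)\}$ only simultaneously, so each base-case witness $(\alpha,\beta)$ must produce a repeated Schur constituent for \emph{both} values of $\nu$ at once. This rules out a single uniform obstruction and is what forces the case split above; in particular, for $\mu=(2)$ the plethysm $s_{(1^2)}\circ s_{(2)}$ collapses to a single Schur function, leaving no obstruction on the $\nu=(1^2)$ side and allowing Case~D to survive as a genuine multiplicity-free family.
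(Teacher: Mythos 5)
Your overall route is the same as the paper's: the ``if'' direction via Lemma~\ref{Lemma Pieri application} combined with Proposition~\ref{Prop domino rectangles} (and Theorem~\ref{Theorem Stembridge}(iii) for $m=2$), and the ``only if'' direction by reducing to small witnesses through Lemma~\ref{Lemma product with sm wr 2s} and Example~\ref{Example multiorder}; those parts are sound and match the paper almost verbatim. The genuine gap is in your treatment of $\mu=(m)$ with $\nu=(2)$. You yourself state (correctly) that Lemma~\ref{Lemma product with sm wr 2s} only transports non-multiplicity-freeness when the witness $(\alpha,\beta)$ exhibits a repeated constituent for \emph{both} $\nu=(2)$ and $\nu=(1^2)$, but your chosen witness $((2),(2))$ does not: $s_{(2)}\left(s_{(1^2)}\circ s_{(2)}\right)=s_{(2)}s_{(3,1)}$ is multiplicity-free by Theorem~\ref{Theorem Stembridge}(i), so the hypothesis of the lemma fails and the transport to $s_{\lambda}\left(s_{(2)}\circ s_{(m)}\right)$ is not justified as written. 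For $m\geq 3$ the repair is immediate: the witness $((2),(3))$ you already use for $\nu=(1^2)$ also gives the repeat $s_{(6,2)}$ for $\nu=(2)$, so one application of the lemma with $\beta=(3)$ settles both $\nu$ at once (this is exactly the paper's choice). For $m=2$, $\nu=(2)$, however, the obstruction is intrinsic: no two-sided witness can exist, precisely because $s_{\lambda}\left(s_{(1^2)}\circ s_{(2)}\right)$ really is multiplicity-free for rectangular $\lambda$. Here you must keep $\mu=(2)$ fixed and argue only in the $\lambda$-variable, as the paper does: from $\left\langle s_{(2)}\left(s_{(2)}\circ s_{(2)}\right),s_{(4,2)}\right\rangle=2$ and a multifunction $f$ with $f((2))=\lambda$, Corollary~\ref{Corollary Stembridge order} applied term by term to $s_{(2)}\circ s_{(2)}=s_{(4)}+s_{(2^2)}$ gives $\left\langle s_{\lambda}\left(s_{(2)}\circ s_{(2)}\right),s_{f((4,2))}\right\rangle\geq 2$. (Alternatively one can note that a multifunction consisting only of maps $\alpha\mapsto\alpha+\gamma$ never conjugates $\nu$ in Corollary~\ref{Corollary plethysms observation}, so a one-sided version of Lemma~\ref{Lemma product with sm wr 2s} is valid along $(2)\mapsto(2)+(m-2)$; but some such argument is needed and is missing from your write-up.)

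A smaller point: your closing $\omega$-reduction is phrased only for ``$\lambda$ a column'', which as literally stated omits configurations such as $\mu=(1^m)$ with $\lambda$ neither a row nor a column (e.g.\ $\lambda=(2,1)$, $\mu=(1^3)$). These are of course covered by applying $\omega$ to your ``$\mu=(m)$ and $\lambda$ not a column'' case, since $\omega$ sends that case to ``$\mu=(1^m)$ and $\lambda$ not a row'', so this is a phrasing slip rather than a mathematical problem, but the reduction should be stated in that symmetric form to make the case analysis exhaustive.
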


\begin{proof}
	Using Theorem~\ref{Theorem MF plethysms} and Proposition~\ref{Prop sk times sporadic plethysm}, if $s_{\lambda}\left(s_{\nu}\circ s_{\mu} \right)$ is multiplicity-free, then $\mu$ is rectangular, almost rectangular or a hook. We prove the `only if' direction using Lemma~\ref{Lemma product with sm wr 2s} and Example~\ref{Example multiorder} which allow us to use particular $\lambda$ and $\mu$ (which are given by $\alpha$, respectively, $\beta$ in Lemma~\ref{Lemma product with sm wr 2s}) to show that $s_{\lambda}\left(s_{\nu}\circ s_{\mu} \right)$ is not multiplicity-free for more general $\lambda$ and $\mu$. We need to show that $s_{\lambda}\left(s_{\nu}\circ s_{\mu} \right)$ is not multiplicity-free when: 
	
	\begin{enumerate}[label=\textnormal{(\roman*)}]
		\item $\mu$ is almost rectangular or a hook. It is sufficient to take $\lambda=(1)$ and $\mu=(2,1)$. From Example~\ref{Example domino rule} and the inductive branching rule we compute $\left\langle s_{(1)}\left( s_{(2)}\circ s_{(2,1)}\right),s_{(4,2,1)}  \right\rangle=\left\langle s_{(1)}\left( s_{(1^2)}\circ s_{(2,1)}\right),s_{(4,2,1)}  \right\rangle= 2$, as desired.
		
		\item $\mu$ is properly rectangular and $k\geq 2$. It is sufficient to let $\lambda=(2)$ and $\mu=(2^2)$ as applying $\omega$ gives us $\lambda=(1^2)$. Direct calculations using Proposition~\ref{Prop domino rectangles} give $\left\langle s_{(2)}\left( s_{(2)}\circ s_{(2^2)}\right),s_{(4^2,2)}  \right\rangle= \left\langle s_{(2)}\left( s_{(1^2)}\circ s_{(2^2)}\right),s_{(4,3,2,1)}  \right\rangle = 2$.
		
		\item Up to $\omega$, the partition $\mu$ is a row partition with $m\geq 3$ and $\lambda$ is \emph{not} a column partition. It is sufficient to take $\lambda=(2)$ and $\mu=(3)$, for which we get $\left\langle s_{(2)}\left( s_{(2)}\circ s_{(3)}\right),s_{(6,2)}  \right\rangle=\left\langle s_{(2)}\left( s_{(1^2)}\circ s_{(3)}\right),s_{(5,3)}  \right\rangle = 2$.
		
		\item $\nu=(2)$ and, up to $\omega$, $\mu=(2)$ and $\lambda$ is \emph{not} a column partition. Using Lemma~\ref{Lemma Stembridge observation} it is sufficient to let $\lambda=(2)$. Since $\left\langle s_{(2)}\left( s_{(2)}\circ s_{(2)}\right),s_{(4,2)}  \right\rangle = 2$, we are done.
		
		\item The partition $\nu=(1^2)$, $m=2$ and $\lambda$ is not rectangular. Up to $\omega$, $s_{\nu}\circ s_{\mu}=s_{(1^2)}\circ s_{(2)}=s_{(3,1)}$, and thus $s_{\lambda}\left(s_{\nu}\circ s_{\mu} \right)$ is not multiplicity-free by Theorem~\ref{Theorem Stembridge}. 
	\end{enumerate}  
	
	Let us now move to the `if' direction. We need to check that $s_{\lambda}\left(s_{\nu}\circ s_{\mu} \right)$ is multiplicity-free when:
	
	\begin{enumerate}[label=\textnormal{(\roman*)}]
		\item $\lambda=(1^k)$ and $\mu=(m)$. We fix $\nu\vdash 2$ and verify that Lemma~\ref{Lemma Pieri application}(ii) applies. Theorem~\ref{Theorem MF plethysms}(i) shows that $s_{\nu}\circ s_{(m)}$ is multiplicity-free. If $\gamma\neq\delta$ are partitions such that $p((m),\nu;\gamma)= p((m),\nu;\delta)=1$, then by Proposition~\ref{Prop domino rectangles} (and Remark~\ref{Remark birectangular}) the parts $\gamma_1$ and $\delta_1$ are distinct but their parities agree. Thus $|\gamma_1- \delta_1|\geq 2$, as needed. The case $\lambda=(k)$ and $\mu=(1^m)$ is then obtained by applying $\omega$. 
		
		\item $k=1$ and $\mu$ is rectangular, say $(a^b)$. We fix $\nu\vdash 2$. Since $s_{\nu}\circ s_{(a^b)}$ is multiplicity-free, we can check Lemma~\ref{Lemma Pieri application}(i) with $k=1$. Pick partitions $\gamma\neq\delta$ with $p((a^b),\nu;\gamma) = p((a^b),\nu;\delta)=1$. By Proposition~\ref{Prop domino rectangles} (and Remark~\ref{Remark birectangular}) we have $\ell(\gamma),\ell(\delta)\leq 2b$ and for all $1\leq i\leq b$ also $|\gamma_i-\delta_i| = |\gamma_{2b+1-i} - \delta_{2b+1-i}|$. Consequently, $\sum_{i\leq b} |\gamma_i-\delta_i| = \sum_{i> b} |\gamma_i-\delta_i|$. Proposition~\ref{Prop domino rectangles} implies that these sums are even, and thus at least $2$ since $\gamma\neq \delta$. This gives $|\gamma-\delta|\geq 4$, as needed.
		
		\item $m=2$, $\lambda$ is rectangular and $\nu=(1^2)$. We can use Theorem~\ref{Theorem Stembridge}(iii) and the computation from (v) of the `only if' part. \qedhere  
	\end{enumerate} 
\end{proof}

Similarly, we obtain results for symmetric functions of the form $s_{\lambda}\left(s_{\nu}\circ s_{\mu} \right)$ with $\mu\vdash 2$. We assume $\mu=(2)$ as one can apply $\omega$ to get to $\mu=(1^2)$. Note that the case (i) in the proof of Proposition~\ref{Prop product with s2 wr sm} also follows from \cite[Lemma~2]{InglisRichardsonSaxlModel90}.

\begin{proposition}\label{Prop product with s2 wr sm}
	Let $k$ and $h$ be positive integers with $h\geq 3$ and $\lambda$ and $\nu$ partitions of $k$ and $h$, respectively. The symmetric function $s_{\lambda}\left(s_{\nu}\circ s_{(2)} \right)$ is multiplicity-free if and only $\lambda=(1^k)$ and \emph{either} $\nu=(h)$ \emph{or} $k=1$ and $\nu=(1^h)$ \emph{or} $\nu=(1^3)$ \emph{or} $\nu=(1^4)$.
\end{proposition}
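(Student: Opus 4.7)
All the proposed multiplicity-free pairs $(\lambda,\nu)$ have $\lambda=(1^k)$, so the central tool is Lemma~\ref{Lemma Pieri application} applied to $x=s_\nu\circ s_{(2)}$. The overall structure mirrors the proof of Proposition~\ref{Prop product with sm wr s2}, using reductions supplied by Corollary~\ref{Corollary Stembridge order}.

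\textbf{Only if direction.} Suppose $s_\lambda(s_\nu\circ s_{(2)})$ is multiplicity-free. First, Theorem~\ref{Theorem MF plethysms}(ii) forces $\nu$ to be linear: otherwise $s_\nu\circ s_{(2)}$ already has a repeated constituent, and multiplication by any Schur function preserves non-multiplicity-freeness (LR rule). Second, $\lambda$ must be a column. If $\lambda_1\geq 2$, then $(2)\subseteq\lambda$ and $(2)\to\lambda$ by Example~\ref{Example multiorder}(i), so it suffices to show $s_{(2)}(s_\nu\circ s_{(2)})$ is not multiplicity-free for $\nu\in\{(h),(1^h)\}$ and then transfer via Corollary~\ref{Corollary Stembridge order} (the same argument as in Lemma~\ref{Lemma product with sm wr 2s}). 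For $\nu=(h)$, the even partitions $(2h)$ and $(2h-2,2)$ are both constituents of $s_{(h)}\circ s_{(2)}$ and both produce $s_{(2h,2)}$ after multiplication by $s_{(2)}$ (Young's rule); for $\nu=(1^h)$, the shift symmetric partitions $ss[(h)]=(h+1,1^{h-1})$ and $ss[(h-1,1)]=(h,3,1^{h-3})$ both produce $s_{(h+1,3,1^{h-2})}$. Third, for $\lambda=(1^k)$ with $k\geq 2$ and $\nu=(1^h)$ with $h\geq 5$, consider the shift symmetric partitions $\alpha=ss[(h-1,1)]=(h,3,1^{h-3})$ and $\alpha'=ss[(h-2,2)]=(h-1,4,2,1^{h-5})$: they satisfy $|\alpha-\alpha'|=4$ and $|\alpha_i-\alpha'_i|\leq 1$ for every $i$, so Lemma~\ref{Lemma Pieri application} rules out multiplicity-freeness at $k=2$; the case $k>2$ follows from the multifunction $\mu\mapsto\mu\sqcup(1^{k-2})$ taking $(1^2)$ to $(1^k)$ and the same transfer.

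\textbf{If direction.} In each allowed case I verify Lemma~\ref{Lemma Pieri application} with $x=s_\nu\circ s_{(2)}$, which is multiplicity-free by Theorem~\ref{Theorem MF plethysms}(ii). For $\nu=(h)$, the constituents of $x$ are even partitions by Lemma~\ref{Lemma even and shift rules}(i), and any two distinct ones agree in parity entrywise, so some coordinate differs by at least $2$; condition (ii) holds for all $k$. For $\nu\in\{(1^3),(1^4)\}$, a direct check gives $\mathcal{S}(6)=\{(4,1^2),(3,3)\}$ and $\mathcal{S}(8)=\{(5,1^3),(4,3,1)\}$, and in each case the two shift symmetric partitions differ in the second part by $2$, so again condition (ii) holds for all $k$. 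For $k=1$ with $\nu=(1^h)$, condition (i) of Lemma~\ref{Lemma Pieri application} demands $|\alpha-\alpha'|>2$ for every pair of distinct constituents of $x$; by parity this is $|\alpha-\alpha'|\geq 4$.

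\textbf{Main obstacle.} The delicate step is the combinatorial bound underlying the last case: any two distinct $\alpha,\alpha'\in\mathcal{S}(2h)$ satisfy $|\alpha-\alpha'|\geq 4$. My plan is to prove this via the bijection between strict partitions $\beta\vdash h$ and $\alpha=ss[\beta]\vdash 2h$ by establishing the doubling identity $|ss[\beta]-ss[\beta']|=2|\beta-\beta'|$: intuitively, a unit modification in a row of $\beta$ forces paired modifications in both a row and a column on the diagonal of $ss[\beta]$, hence contributes $\pm 2$ to $|ss[\beta]|$ per unit change. Combined with $|\beta-\beta'|\geq 2$ for distinct partitions of the same size, this yields the required $\geq 4$ lower bound. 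The main effort lies in the bookkeeping needed when $\ell(\beta)\neq\ell(\beta')$, where the relationship $\alpha_i=\beta_i+i$ only applies in the Durfee square and the remaining rows of $\alpha$ must be recovered from the column lengths $ss[\beta]^T_i=\beta_i+i-1$.
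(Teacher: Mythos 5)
Your argument has a genuine gap at its very first step. You claim that Theorem~\ref{Theorem MF plethysms}(ii) forces $\nu$ to be linear ``because otherwise $s_{\nu}\circ s_{(2)}$ already has a repeated constituent''. That is false: Theorem~\ref{Theorem MF plethysms}(iii) allows sporadic multiplicity-free plethysms with $mh\leq 18$, and some of these have $\mu=(2)$ and $\nu$ non-linear. For instance $s_{(2,1)}\circ s_{(2)}=s_{(5,1)}+s_{(4,2)}+s_{(3,2,1)}$ is multiplicity-free, so with $h=3$ (which is in the range of the proposition) your reduction to linear $\nu$ does not go through. The paper disposes of exactly these cases by Proposition~\ref{Prop sk times sporadic plethysm}, which checks (computationally) that $s_{(1)}\left(s_{\nu}\circ s_{\mu}\right)$ is not multiplicity-free for each of the $49$ sporadic pairs and then transfers to arbitrary non-empty $\lambda$ via Corollary~\ref{Corollary Stembridge order}; you must either invoke that proposition or verify the relevant sporadic pairs with $\mu\in\{(2),(1^2)\}$ by hand (e.g.\ for $\nu=(2,1)$ one sees $s_{(5,2)}$ and $s_{(4,2,1)}$ each occur twice in $s_{(1)}\left(s_{(2,1)}\circ s_{(2)}\right)$). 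The rest of your ``only if'' direction (the witnesses $(2h,2)$ and $(h+1,3,1^{h-2})$ for $\lambda=(2)$, and the pair $ss[(h-1,1)]$, $ss[(h-2,2)]$ for $\lambda=(1^2)$, with transfer by multifunctions) coincides with the paper's proof and is correct.

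On your self-declared ``main obstacle'': the doubling identity $|ss[\beta]-ss[\beta']|=2|\beta-\beta'|$ is in fact true, and the bookkeeping you worry about works out. Since $|\lambda-\mu|$ is the size of the symmetric difference of the Young diagrams, split that symmetric difference along the main diagonal: for each $i$, the cells of $Y_{ss[\beta]}$ on or above the diagonal in row $i$ form the interval of columns $i,\dots,\beta_i+i$ (so $\beta_i+1$ cells), and the cells strictly below the diagonal in column $i$ form the interval of rows $i+1,\dots,\beta_i+i-1$ (so $\beta_i-1$ cells); comparing with $\beta'$ these nested intervals contribute $|\beta_i-\beta'_i|+|\beta_i-\beta'_i|$ when $i\leq\min(\ell(\beta),\ell(\beta'))$ and $(\beta_i+1)+(\beta_i-1)=2\beta_i=2|\beta_i-\beta'_i|$ when only one of the two has an $i$th part, giving the identity. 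Combined with $|\beta-\beta'|\geq 2$ for distinct equal-size partitions this yields $|ss[\beta]-ss[\beta']|\geq 4$, which is a slightly stronger (and exact) statement than the paper's direct estimate: the paper instead bounds $|\delta-\gamma|\geq 2C$ when $C=\sum_{i\leq l}|\beta_i-\beta'_i|\geq 2$ and handles $C=1$ separately with the bound $\geq 3$, which already suffices for Lemma~\ref{Lemma Pieri application}(i) with $k=1$. So your route for this step is sound and arguably cleaner, but as written it is a plan rather than a proof; the diagonal-splitting argument above completes it.
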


\begin{proof}
	We start with the `only if' direction. Theorem~\ref{Theorem MF plethysms} and Proposition~\ref{Prop sk times sporadic plethysm} imply that if $s_{\lambda}\left(s_{\nu}\circ s_{(2)} \right)$ is multiplicity-free, then $\nu$ is linear.
	We claim that then also $\lambda=(1^k)$. Otherwise, $(2)\to\lambda$ and by Corollary~\ref{Corollary Stembridge order} we only need to show that $s_{\lambda}\left(s_{\nu}\circ s_{(2)} \right)$ is not multiplicity-free for $\lambda=(2)$. Since $(2h)$ and $(2h-2,2)$ are even and $(h+1,1^{h-1})$ and $(h,3,1^{h-3})$ are shift symmetric, we conclude using Lemma~\ref{Lemma even and shift rules} that $\left\langle s_{(2)}\left( s_{(h)}\circ s_{(2)}\right),s_{(2h,2)}  \right\rangle\geq 2$ and $\left\langle s_{(2)}\left( s_{(1^h)}\circ s_{(2)}\right),s_{(h+1,3,1^{h-2})}  \right\rangle \geq 2$, as needed.
	
	It remains to show that $s_{\lambda}\left(s_{\nu}\circ s_{(2)} \right)$ is not multiplicity-free when $\lambda=(1^k)$ and $\nu=(1^h)$ with $k\geq 2$ and $h\geq 5$. As before, it is sufficient to take $k=2$. Since $h\geq 5$, the partitions $(h-1,4,2,1^{h-5})$ and $(h,3,1^{h-3})$ are shift symmetric; thus $\left\langle s_{(1^2)}\left( s_{(1^h)}\circ s_{(2)}\right),s_{(h,4,2,1^{h-4})}  \right\rangle \geq 2$.
	
	For the `if' direction we need to verify that $s_{\lambda}\left(s_{\nu}\circ s_{(2)} \right)$ is multiplicity-free for $\lambda=(1^k)$ and four choices of $k$ and $\nu$.
	
	\begin{enumerate}[label=\textnormal{(\roman*)}]
		\item For $\nu=(h)$, by Lemma~\ref{Lemma even and shift rules}(i), the plethysm $s_{(h)}\circ s_{(2)}$ is multiplicity-free and its constituents are labelled by even partitions of $2h$. For two such partitions $\gamma\neq\delta$ there is $i\geq 1$ such that $|\gamma_i-\delta_i|\geq 2$ and Lemma~\ref{Lemma Pieri application}(ii) applies.
		
		\item For $k=1$ and $\nu=(1^h)$ we apply Lemma~\ref{Lemma Pieri application}(i). By Lemma~\ref{Lemma even and shift rules}(iii), $s_{(1^h)}\circ s_{(2)}$ is multiplicity-free, and if $\gamma\neq\delta$ are two partitions labelling constituents of $s_{(1^h)}\circ s_{(2)}$, there are distinct-parts partitions $\alpha\neq\beta$ of size $h$ such that $\gamma=ss[\alpha]$ and $\delta=ss[\beta]$. Without loss of generality $l=\ell(\alpha)\leq\ell(\beta)$. Let $1\leq C=\sum_{i\leq l}|\alpha_i-\beta_i|$. From the definition of shift symmetric partitions, the same equality holds when $\alpha$ and $\beta$ are replaced with $\gamma$ and $\delta$ or with $\gamma'$ and $\delta'$. From the same definition also $\gamma_l,\delta_l\geq l$; thus $|\delta-\gamma|\geq \sum_{i\leq l} |\delta_i-\gamma_i| + \sum_{i\leq l}|\delta'_i-\gamma'_i|=2C$. Hence we are done if $C>1$.
		
		If $C=1$, we have $\ell(\beta)=l+1$ and $\beta_{l+1}=1$. Then $|\delta-\gamma|\geq \sum_{i\leq l}|\delta_i-\gamma_i| + |\delta_{l+1}-\gamma_{l+1}| =1 +|\delta_{l+1}-\gamma_{l+1}|\geq 3$, as required, since from the definition $\delta_{l+1}-\gamma_{l+1}=l+2-\gamma_{l+1}\geq 2$.
		
		\item For $\nu=(1^3)$ we compute $s_{(1^3)}\circ s_{(2)}=s_{(4,1^2)}+s_{(3,3)}$ and apply Lemma~\ref{Lemma Pieri application}(ii).
		
		\item For $\nu=(1^4)$ we compute $s_{(1^4)}\circ s_{(2)}=s_{(5,1^3)}+s_{(4,3,1)}$ and apply Lemma~\ref{Lemma Pieri application}(ii). \qedhere
	\end{enumerate}	
\end{proof}

To obtain the final ingredient for our main theorem of this section we replace wreath products with primitive subgroups.

\begin{proposition}\label{Prop sk times primitive}
	Let $k$ and $l$ be positive integers with $k+l\geq 18$ and let $\lambda$ be a partition of $k$. Suppose that $L$ is a proper primitive subgroup of $S_l$ not equal to $A_l$ and $\rho$ is an irreducible character of $L$. The character $\chi^{\lambda}\boxtimes\rho$ of $S_k\times L$ is induced-multiplicity-free if and only if one of the following happens:
	\begin{enumerate}[label=\textnormal{(\roman*)}]
		\item $L=\mathrm{P}\Gamma\mathrm{L}_2(\mathbb{F}_8)$, $\rho$ is the inflated character of any of the three linear characters of the quotient $\Gal(\mathbb{F}_8/\mathbb{F}_2)\cong C_3$ of $\mathrm{P}\Gamma\mathrm{L}_2(\mathbb{F}_8)$ and $\lambda$ is linear,
		\item $L=\mathrm{ASL}_3(\mathbb{F}_2)$, $\rho$ is the inflated character of any of the two conjugate Weil characters of degree $3$ of $\mathrm{SL}_3(\mathbb{F}_2)$ and $\lambda$ is linear,
		\item $L=\mathrm{PGL}_2(\mathbb{F}_5)$, $\rho$ is the trivial character and $\lambda$ is rectangular, column-near rectangular or a column-unbalanced fat hook, \emph{or} $\rho$ is the restriction of the sign of $S_6$ and $\lambda$ is rectangular, row-near rectangular or a row-unbalanced fat hook,
		\item $L=\mathrm{AGL}_1(\mathbb{F}_5)$, $\rho$ is one of the two linear real characters of $L$ and $\lambda$ is rectangular, \emph{or} $\rho$ belongs to the pair of complex conjugate linear characters of $L$ and $\lambda$ is linear or $2$-rectangular.
	\end{enumerate}
\end{proposition}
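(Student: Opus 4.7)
The plan is to proceed in three stages: (1) reduce the possibilities for $L$, (2) identify the possible $\rho$, (3) for each surviving pair $(L,\rho)$, determine the admissible $\lambda$.

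For stage (1), I would first observe that if $(\chi^{\lambda}\boxtimes\rho)\ind^{S_{k+l}}$ is multiplicity-free, then $\rho\ind^{S_l}$ must itself be multiplicity-free; for if some $\chi^{\mu}$ appeared in $\rho\ind^{S_l}$ with multiplicity $\geq 2$, then by transitivity of induction $(\chi^{\lambda}\boxtimes\rho)\ind^{S_{k+l}} = \chi^{\lambda}\cdot(\rho\ind^{S_l})$ contains $\chi^{\lambda}\cdot\chi^{\mu}$ (a nonzero genuine character) with every irreducible constituent of multiplicity $\geq 2$. Hence $L$ is a proper multiplicity-free primitive subgroup of $S_l$ different from $A_l$, and since $k+l\geq 18\geq 13$ we have $l\leq 12$ by Corollary~\ref{Corollary primitive groups}. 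Thus $L$ lies in the finite list of Remark~\ref{Remark primitive groups}.

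For stage (2), for each $L$ in that list I would use MAGMA to enumerate the irreducible characters $\rho$ of $L$ such that $\rho\ind^{S_l}$ is multiplicity-free and compute the decomposition $\rho\ind^{S_l}=\sum_{\mu\in T(L,\rho)}\chi^{\mu}$. The output of this enumeration should be exactly the characters listed in parts (i)--(iv) together with some additional $(L,\rho)$ coming from groups in Remark~\ref{Remark primitive groups} that are not present in (i)--(iv); these surplus pairs will be eliminated in stage (3) by showing that no $\lambda$ is admissible.

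For stage (3), I would use the identity
\[
(\chi^{\lambda}\boxtimes\rho)\ind^{S_{k+l}}=\sum_{\mu\in T(L,\rho)}\chi^{\lambda}\cdot\chi^{\mu},
\]
so that multiplicity-freeness is equivalent to two conditions: (a) each product $s_{\lambda}s_{\mu}$ with $\mu\in T(L,\rho)$ is multiplicity-free, and (b) for distinct $\mu,\nu\in T(L,\rho)$, the products $s_{\lambda}s_{\mu}$ and $s_{\lambda}s_{\nu}$ share no Schur constituent. Condition (a) constrains the shape of $\lambda$ via Theorem~\ref{Theorem Stembridge} (giving the linear/rectangular/near-rectangular/fat-hook dichotomy visible in (i)--(iv)). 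For condition (b), whenever two constituents $\mu\neq\nu$ of $T(L,\rho)$ satisfy $|\mu-\nu|>2k$, Lemma~\ref{Lemma rotate linear} applied with $l$ replaced by $k$ gives the required non-overlap for free; since $k+l\geq 18$ and $l\leq 9$ in (i)--(iii), the bound $k\geq 9$ is typically comfortable. In the remaining close cases I would invoke Lemma~\ref{Lemma contains two partitions}(i) directly, together with the Littlewood--Richardson rule, to rule out common constituents, restricting $\lambda$ to exactly the families stated.

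The main obstacle will be the detailed combinatorics of condition (b) in the cases $L=\mathrm{PGL}_2(\mathbb{F}_5)$ and $L=\mathrm{AGL}_1(\mathbb{F}_5)$ with $l\in\{5,6\}$, where $T(L,\rho)$ can contain several partitions of moderate size $l$ that are close together, so Lemma~\ref{Lemma rotate linear} does not immediately apply and the fine shape constraints (rectangular versus row- or column-unbalanced fat hook, and the distinction between real and complex-conjugate linear characters forcing different conjugation symmetry in $\lambda$) must be extracted carefully from the pattern of overlaps in $s_{\lambda}s_{\mu}\cap s_{\lambda}s_{\nu}$. Ruling out the surplus pairs from stage (2) requires exhibiting, for each such pair, two constituents $\mu\neq\nu$ of $\rho\ind^{S_l}$ whose overlap in $s_{\lambda}s_{\mu}+s_{\lambda}s_{\nu}$ cannot be avoided under the degree constraint $k+l\geq 18$.
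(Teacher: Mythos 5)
Your skeleton (restrict $L$ via Corollary~\ref{Corollary primitive groups} and Remark~\ref{Remark primitive groups}, then study $\chi^{\lambda}\cdot(\rho\ind^{S_l})$ through your conditions (a) and (b)) agrees with the paper's broad outline, and stage (1) and the equivalence of multiplicity-freeness with (a)+(b) are correct. However, your main tool for condition (b) is misapplied. In $s_{\lambda}s_{\mu}+s_{\lambda}s_{\nu}$ the two equal-size partitions are $\mu,\nu\vdash l$ and the common factor $s_{\lambda}$ has degree $k$, so Lemma~\ref{Lemma rotate linear} requires $|\mu-\nu|>2k$; since $|\mu-\nu|\leq 2l\leq 18$ while $k\geq 18-l\geq l$ in all of (i)--(iv), this hypothesis \emph{never} holds --- large $k$ works against you, not for you (the lemma is designed for two products with \emph{different} large factors, as in the $A_k\times L$ analysis, not for two products sharing $s_{\lambda}$). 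Likewise Lemma~\ref{Lemma contains two partitions}(i) is vacuous here: the containment condition reads $|\mu-\nu|\leq 2(k+l)-2l=2k$, which is always satisfied. So the genuine content of condition (b) --- e.g.\ for $L=\mathrm{PGL}_2(\mathbb{F}_5)$, that for rectangular $\lambda$ a common constituent of $s_{\lambda}s_{(6)}$ and $s_{\lambda}s_{(2^3)}$ would need $\mu/\lambda$ to have at most two and at least three rows simultaneously, and that for a fat hook $(a^b,c^d)$ with $a-c\geq 2$, $c\geq 2$ one exhibits the explicit common constituent $(a+2,a^{b-1},c+2,c^{d-1},2)$ --- is not supplied by the tools you name.

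More fundamentally, you have no mechanism for passing from finitely many computations to a statement about \emph{all} $\lambda$ of unbounded size. The paper's key device is the monotonicity of Littlewood--Richardson coefficients under multifunctions (Lemma~\ref{Lemma Stembridge observation}, Corollary~\ref{Corollary Stembridge order}, Example~\ref{Example multiorder}(i)): if $\chi^{(t)}\boxtimes\rho$ and $\chi^{(1^t)}\boxtimes\rho$ both fail to be induced-multiplicity-free for some $t$ with $(t-1)^2+l\leq 17$, then every $\lambda$ with $|\lambda|=k\geq 18-l$ satisfies $(t)\to\lambda$ or $(1^t)\to\lambda$, hence $\chi^{\lambda}\boxtimes\rho$ also fails. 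This is what eliminates, uniformly in $\lambda$ and $k$, the surplus pairs that your stage (2) filter leaves over (for instance the trivial character of $M_{12}$, whose induction to $S_{12}$ \emph{is} multiplicity-free, is killed by showing $(6),(1^6),(2^2)\not\to\lambda$), and it is also what cuts $\lambda$ from rectangular down to linear in case (i) via a single computation at $\lambda=(2,2)$. Your proposal to "exhibit two constituents whose overlap cannot be avoided" is not an argument uniform in $\lambda$ without such a transfer principle, so as it stands the elimination of the surplus pairs and the sharp determination of the admissible $\lambda$ are both open gaps.
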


\begin{proof}
	Most of the work is in proving the `only if' direction. We start by narrowing down the list of potential multiplicity-free groups $S_k\times L$ (with $L$ as in the statement) and their irreducible induced-multiplicity-free characters. Throughout we use Corollary~\ref{Corollary Stembridge order} and Example~\ref{Example multiorder}(i) to show that certain characters are not induced-multiplicity-free.
	
	If $\chi^{\lambda}\boxtimes\rho$ is induced-multiplicity-free, then so  is $\rho$ and Corollary~\ref{Corollary primitive groups} shows $l\leq 12$ and Remark~\ref{Remark primitive groups} lists potential groups $L$. Given $\rho$, observe that if $t$ is an integer such that $(t-1)^2 + l\leq 17$ and $\chi^{(t)}\boxtimes \rho$ and $\chi^{(1^t)}\boxtimes \rho$ are not induced-multiplicity-free, then neither is $\chi^{\lambda}\boxtimes\rho$ for any partition $\lambda$. Indeed, since $n\geq 18$ (where $n=k+l$) we have $|\lambda|>(t-1)^2$, and thus $\lambda_1\geq t$ or $\ell(\lambda)\geq t$, meaning that $(t)\to \lambda$ or $(1^t)\to\lambda$, as needed.
	
	Using this observation, one can computationally eliminate all choices of $L$ and $\rho$ apart from those in (i)--(iv), together with $L=M_{12}$ and $\rho$ the trivial character of $M_{12}$ (this has been done using {\sc Magma}). In the last case we have
	\[ \rho\ind^{S_{12}} = \chi^{(12)} + \chi^{(6^2)} + \chi^{(6,2^3)} + \chi^{(4^3)} + \chi^{(4^2,1^4)} + \chi^{(3^4)} + \chi^{(2^6)} + \chi^{(1^{12})}. \]
	Writing $x_M$ for the corresponding symmetric function of degree $12$, one computes that $\left\langle s_{(6)}x_M,s_{(12,6)}\right\rangle, \left\langle s_{(1^6)}x_M,s_{(2^6,1^6)}\right\rangle, \left\langle  s_{(2^2)}x_M,s_{(6,4^2,2)}\right\rangle \geq 2$ using the constituents $s_{(12)}$ and $s_{(6^2)}$, $s_{(1^{12})}$ and $s_{(2^6)}$, and finally $s_{(6,2^3)}$ and $s_{(4^3)}$ of $x_M$. Hence $(2^2),(6),(1^6)\not\to\lambda$, meaning that $\lambda$ is linear and $k\leq 5$. This contradicts $n\geq 18$. Now it only remains to find for which $\lambda$ the character $\chi^{\lambda}\boxtimes\rho$ is induced-multiplicity-free, where $\rho$ (and $L$) are from (i)--(iv). 
	
	\begin{enumerate}[label=\textnormal{(\roman*)}]
		\item For any of the three choices of $\rho$ computations show there is a non-rectangular partition which labels a constituent of $\rho\ind^{S_9}$. Thus $\lambda$ must be rectangular by Theorem~\ref{Theorem Stembridge}. Direct computations with $\lambda=(2,2)$ (verified by {\sc Magma}) then force $\lambda$ to be linear. For $\lambda=(1^k)$ and any of the three possible $\rho$, direct calculations and Lemma~\ref{Lemma Pieri application}(ii) show that we obtain an induced-multiplicity-free character. The case $\lambda=(k)$ follows by applying $\omega$.
		\item Since the induced character $\rho\ind^{S_8}$ is in both cases $\chi^{(4,2,1^2)}$, we can apply Theorem~\ref{Theorem Stembridge}.
		\item Using $\omega$ if necessary, assume $\rho$ is the trivial character, in which case $\rho\ind^{S_6}=\chi^{(6)} + \chi^{(2^3)}$. By Theorem~\ref{Theorem Stembridge}, $s_{\lambda}s_{(6)}$ and $s_{\lambda}s_{(2^3)}$ are multiplicity-free if and only if $\lambda$ is rectangular or a fat hook. Fix such $\lambda$ and suppose there is a common constituent $s_{\mu}$.
		
		If $\lambda$ is rectangular, then, finding $\mu$ from the Littlewood--Richardson rule, the skew partition $\mu/\lambda$ has at most two rows as $s_{\mu}$ is a constituent of $s_{\lambda}s_{(6)}$, and has at least three rows as $s_{\mu}$ is a constituent of $s_{\lambda}s_{(2^3)}$. Hence no such $\mu$ exists and $\lambda$ can be rectangular. If $\lambda$ is a fat hook $(a^b,c^d)$ and $a-c\geq 2$ and $c\geq 2$, then we can take $\mu=(a+2,a^{b-1},c+2,c^{d-1},2)$, which is a constituent of $s_{\lambda}s_{(6)}$ by Young's rule and of $s_{\lambda}s_{(2^3)}$ using the $\mu/\lambda$-tableau with reading word $112233$. Finally, for $a-c=1$ or $c=1$, that is for $\lambda$ column-near rectangular or a column-unbalanced fat hook, by Young's rule, $\mu/\lambda$ has at most three rows one of which has length at most $1$ since $s_{\mu}$ is a constituent of $s_{\lambda}s_{(6)}$. One can easily see that there is no such semistandard $\mu/\lambda$-tableau with weight $(2^3)$ and a latticed reading word, establishing that $s_{\mu}$ cannot be simultaneously a constituent of $s_{\lambda}s_{(2^3)}$. Thus (iii) is established. 
		
		\item When $\rho$ is one of the complex linear characters, then $\rho\ind^{S_5}=\chi^{(3,1^2)}$ and Theorem~\ref{Theorem Stembridge} yields the result. For the other two characters, working up to $\omega$, we have $\rho\ind^{S_5}=\chi^{(5)}+\chi^{(2^2,1)}$. The latter summand, together with Theorem~\ref{Theorem Stembridge}, forces $\lambda$ to be rectangular. Such $\lambda$ give induced-multiplicity-free characters by the argument used for rectangular $\lambda$ in (iii). \qedhere  
	\end{enumerate}     
\end{proof}

We can now combine all the previous results and obtain the main theorem of this section.

\begin{theorem}\label{Theorem direct product k geq 2}
	Suppose that $k$ and $l$ are positive integers such that $k\geq 2$ and $k+l\geq 18$. Let $L < S_l$ be a subgroup not equal to $A_l$. For a partition $\lambda$ of $k$ and an irreducible character $\rho$ of $L$ the character $\chi^{\lambda}\boxtimes\rho$ is induced-multiplicity-free if and only if it belongs to the following list (where $\mu,\nu\vdash 2$ and $m\geq 2$ and $h\geq 3$ are arbitrary unless specified differently):
	\begin{enumerate}[label=\textnormal{(\roman*)}]
		\item $L=S_m\wr S_2$, $\rho = \charwr{\chi^{(m)}}{\chi^{\nu}}{2}$ and $\lambda=(1^k)$, \emph{or} $\rho= \charwr{\chi^{(1^m)}}{\chi^{\nu}}{2}$ and $\lambda=(k)$,
		\item $L=S_2\wr S_2$, $\rho = \charwr{\chi^{\mu}}{\chi^{(1^2)}}{2}$ and $\lambda$ is rectangular,
		\item $L=S_2\wr S_h$, $\rho= \charwr{\chi^{(2)}}{\chi^{(h)}}{h}$ and $\lambda=(1^k)$, \emph{or } $\rho = \charwr{\chi^{(1^2)}}{\chi^{(h)}}{h}$ and $\lambda=(k)$,
		\item $L=S_2\wr S_h$ with $h\in\left\lbrace 3,4\right\rbrace $, $\rho =\charwr{\chi^{(2)}}{\chi^{(1^h)}}{h}$ and $\lambda=(1^k)$, \emph{or} $\rho = \charwr{\chi^{(1^2)}}{\chi^{(1^h)}}{h}$ and $\lambda=(k)$,
		\item $L$ is a primitive subgroup and $L,\rho$ and $\lambda$ belong to the list in Proposition~\ref{Prop sk times primitive}.
	\end{enumerate}
\end{theorem}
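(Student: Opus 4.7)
The strategy is to split the \emph{only if} direction by the structure of $L$ and then verify the \emph{if} direction case-by-case via Propositions~\ref{Prop sk times sporadic plethysm}, \ref{Prop product with sm wr s2}, \ref{Prop product with s2 wr sm} and \ref{Prop sk times primitive}. Suppose $\chi^{\lambda}\boxtimes \rho$ is induced-multiplicity-free. Since $S_k$ has a single orbit on its $k$ points, if $L$ had more than one orbit then $S_k\times L$ would have at least three orbits, contradicting Lemma~\ref{Lemma sanity lemma}; hence $L$ is transitive. If $L$ is primitive then $L$ is a proper primitive subgroup of $S_l$ different from $A_l$, and Proposition~\ref{Prop sk times primitive} applies directly, giving case~(v).

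Assume now that $L$ is transitive and imprimitive, so $L\leq S_m\wr S_h$ for some $m,h\geq 2$ with $mh=l$, with respect to a non-trivial block system preserved by~$L$. By the transitivity of induction, $(\chi^{\lambda}\boxtimes \rho)\ind^{S_n} = (\chi^{\lambda}\boxtimes (\rho\ind^{S_m\wr S_h}))\ind^{S_n}$, so for every irreducible constituent $\tau$ of $\rho\ind^{S_m\wr S_h}$ the character $\chi^{\lambda}\boxtimes \tau$ is induced-multiplicity-free. The key claim is that each such $\tau$ is \emph{elementary}, i.e., of the form $\charwr{\chi^{\mu}}{\chi^{\nu}}{h}$ for some $\mu\vdash m$ and $\nu\vdash h$. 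Granted this, the Frobenius characteristic of $\chi^{\lambda}\boxtimes \tau$ is $s_{\lambda}(s_{\nu}\circ s_{\mu})$, and its multiplicity-freeness is exactly the setting of Theorem~\ref{Theorem MF plethysms} combined with Propositions~\ref{Prop sk times sporadic plethysm}, \ref{Prop product with sm wr s2} and \ref{Prop product with s2 wr sm}. These results classify the permissible triples $(\lambda,\mu,\nu)$ and produce precisely the configurations in (i)--(iv).

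To justify elementarity I will argue by contradiction. If the tuple of partitions attached to $\tau$ by Theorem~\ref{Theorem characters of wreath products} has at least two non-empty entries, then the Frobenius characteristic of $\tau$ factors as a product of at least two non-trivial plethysms $s_{\lambda^{[i]}}\circ s_{\mu^{[i]}}$ with pairwise distinct $\mu^{[i]}\vdash m$, each of degree $\geq m\geq 2$. Multiplying by $s_{\lambda}$ produces a product of at least three non-trivial factors of degree $\geq 2$ in $\Lambda^{+}$. The plan is to use Corollaries~\ref{Corollary Stembridge order} and \ref{Corollary plethysms observation} to reduce to a small minimal case for $\lambda$, the $\lambda^{[i]}$ and the $\mu^{[i]}$, and then to exhibit a Schur function appearing with coefficient at least two by direct computation, mirroring the style of Remark~3.5(a) of~\cite{StembridgeMultiplicity-free01} in the richer plethysm-product setting. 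This reduction-and-computation is the main technical obstacle.

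Finally, to upgrade the conclusion from ``$\tau$ is of the listed form'' to $L=S_m\wr S_h$ and $\rho=\tau$, I will argue that if $L$ were a proper subgroup then $\rho\ind^{S_m\wr S_h}$ would contain two distinct irreducible constituents $\tau_1\neq \tau_2$, both elementary and both belonging to the list from~(i)--(iv); a direct check in each of these configurations shows that the Schur expansions of $s_{\lambda}(s_{\nu_1}\circ s_{\mu_1})$ and $s_{\lambda}(s_{\nu_2}\circ s_{\mu_2})$ must then share a common summand, contradicting multiplicity-freeness of the total induced character. The \emph{if} direction is a direct verification that each configuration in (i)--(v) is induced-multiplicity-free, obtained by quoting the same four propositions.
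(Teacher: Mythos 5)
Your outline coincides with the paper's proof (split into non-transitive/primitive/imprimitive via Lemma~\ref{Lemma sanity lemma} and Proposition~\ref{Prop sk times primitive}, reduce to elementary characters of $S_m\wr S_h$, classify via Propositions~\ref{Prop sk times sporadic plethysm}, \ref{Prop product with sm wr s2}, \ref{Prop product with s2 wr sm}, then rule out proper $L<S_m\wr S_h$), but two pivotal steps are left as plans rather than proofs. The first is the elementarity of the constituents $\tau$ of $\rho\ind^{S_m\wr S_h}$, which you flag as ``the main technical obstacle'' and propose to attack by a bespoke reduction-and-computation in the spirit of Stembridge's Remark~3.5(a). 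As written this is a genuine gap, and moreover the heavy route is unnecessary: if the tuple attached to $\tau$ by Theorem~\ref{Theorem characters of wreath products} has $t\geq 2$ non-empty entries, then $\tau$ is induced from $S_m\wr S_{h_1}\times\dots\times S_m\wr S_{h_t}$, so by transitivity of induction the group $S_k\times S_m\wr S_{h_1}\times\dots\times S_m\wr S_{h_t}$, which has at least three orbits, would be multiplicity-free, contradicting Lemma~\ref{Lemma sanity lemma} --- the very lemma you already invoked to make $L$ transitive. (Equivalently, in $\Lambda^{+}$ your product of at least three factors of positive degree dominates a product of three non-trivial Schur functions, so it cannot be multiplicity-free; no reduction to a minimal case is needed.)

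The second gap is the final step excluding proper subgroups $L<S_m\wr S_h$. Your assertion that $\rho\ind^{S_m\wr S_h}$ ``would contain two distinct irreducible constituents'' needs justification: one must first note that a repeated constituent would already destroy multiplicity-freeness, and that $\rho\ind^{S_m\wr S_h}$ cannot be a single irreducible, since its degree is at least $|S_m\wr S_h:L|\geq 2$ while every character appearing in (i)--(iv) has degree one; this degree-one observation is also what converts ``no two distinct listed constituents can coexist'' into the conclusion $L=S_m\wr S_h$. More importantly, the ``direct check in each of these configurations'' is exactly the non-trivial content and you do not perform it: for two constituents with the same $\mu$ one uses $s_{\lambda}\left(s_{(2)}\circ s_{\mu}+s_{(1^2)}\circ s_{\mu}\right)=s_{\lambda}s_{\mu}^2$ and Lemma~\ref{Lemma sanity lemma}, and in the remaining cases (which occur only for $m=h=2$ and for $m=2$, $h\in\{3,4\}$) one must exhibit explicit common constituents, e.g.\ $s_{(4,2,1)}$, $s_{(5,2^2,1)}$, $s_{(3,1^2)}$ and $s_{(2^2,1)}$ in small cases, and propagate them to all $\lambda$ with Corollary~\ref{Corollary Stembridge order}. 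Until these verifications (and the elementarity argument above) are supplied, the ``only if'' direction is not established.
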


\begin{proof}
	Suppose $S_k\times L$ is multiplicity-free, then by Lemma~\ref{Lemma sanity lemma} the group $L$ must be a transitive subgroup. If it is primitive, we use Proposition~\ref{Prop sk times primitive}. If it is imprimitive transitive, there are $m,h\geq 2$ such that $L\leq S_m\wr S_h$. Therefore $S_k\times S_m\wr S_h$ has an irreducible induced-multiplicity-free character $\chi^{\lambda}\boxtimes\rho'$. By Lemma~\ref{Lemma sanity lemma}, we know $\rho'$ is an elementary irreducible character of $S_m\wr S_h$ defined in Definition~\ref{Definition elementary irr reps}. Combining Theorem~\ref{Theorem MF plethysms} and Proposition~\ref{Prop sk times sporadic plethysm}, we conclude that $m=2$ or $h=2$. In the case $L=S_m\wr S_h$ we obtain the characters from Proposition~\ref{Prop product with sm wr s2} and Proposition~\ref{Prop product with s2 wr sm} (with the aid of $\omega$ in the latter), yielding (i)--(iv).
	
	It remains to show that $L$ cannot be a proper subgroup of $S_m\wr S_h$. Since all the characters $\rho$ of the wreath products in (i)--(iv) have degree $1$, it is sufficient to show that for any $\lambda\vdash k$ all the induced-multiplicity-free characters of $S_k\times S_m\wr S_h$ of the form $\chi^{\lambda}\boxtimes \rho'$ have $\rho'$ irreducible. This is clear if $m=2$ and $h\geq 5$ from (iii). From (iii) and (iv), for $h=3$ and $h=4$ we only need to take, up to $\omega$, $\lambda=(1^k)$ and, in the language of symmetric functions, we need to show that $s_{(1^k)}(s_{(h)}\circ s_{(2)})$ and $s_{(1^k)}(s_{(1^h)}\circ s_{(2)})$ share a constituent. For $k=1$ and $h=3$ we take $s_{(4,2,1)}$ and for $k=2$ and $h=4$ an appropriate constituent is $s_{(5,2^2,1)}$. We are done by Corollary~\ref{Corollary Stembridge order}.
	
	We are left with $h=2$. Note that $s_{\lambda}(s_{(2)}\circ s_{\mu} + s_{(1^2)}\circ s_{\mu})$ is never multiplicity-free since, by Lemma~\ref{Lemma plethysms and char properties}(iii), it equals $s_{\lambda}s_{\mu}^2$ and Lemma~\ref{Lemma sanity lemma} applies. Based on (i), we therefore need $m=h=2$. Using (i) and (ii) we are left, up to $\omega$, with two options: either $\lambda$ is rectangular and we need to show that $s_{\lambda}(s_{(1^2)}\circ s_{(2)})$ and $s_{\lambda}(s_{(1^2)}\circ s_{(1^2)})$ share a constituent or $\lambda$ is a column partition and we need to show that $s_{\lambda}(s_{(2)}\circ s_{(2)})$ and $s_{\lambda}(s_{(1^2)}\circ s_{(1^2)})$ share a constituent. In both cases we can take $\lambda=(1)$ and use $s_{(3,1^2)}$, respectively, $s_{(2^2,1)}$ as the common constituent. 
\end{proof}

While we assume $k\geq 2$ in Theorem~\ref{Theorem direct product k geq 2}, the first paragraph of its proof can be adapted for $k=1$, as we now show.

\begin{proposition}\label{Prop direct product k eq 1}
	Suppose that $l\geq 13$ is a positive integer and let $n=l+1$. Let $L < S_l$ be a subgroup not equal to $A_l$ and embed $L$ in $S_n$ by the natural inclusion of $S_{n-1}$ in $S_n$. If $L$ is multiplicity-free, then $l$ is even \emph{and} $L\leq S_m\wr S_2$ or $L\leq S_2\wr S_h$. Moreover,
	\begin{enumerate}[label=\textnormal{(\roman*)}]
		\item if $L=S_m\wr S_2$, then the irreducible induced-multiplicity-free characters of $L$ are $\charwr{\chi^{\mu}}{\chi^{\nu}}{2}$ with $\mu$ rectangular and $\nu\vdash 2$;
		\item if $L=S_2\wr S_h$, then the irreducible induced-multiplicity-free characters of $L$ are $\charwr{\chi^{\mu}}{\chi^{\nu}}{h}$ with $\mu$ and $\nu$ linear.
	\end{enumerate}
\end{proposition}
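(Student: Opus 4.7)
The plan is to imitate the first paragraph of the proof of Theorem~\ref{Theorem direct product k geq 2}, now specialised to $k=1$: here $S_k \times L$ collapses to $L$, the outer tensor factor $\chi^\lambda$ becomes $\chi^{(1)}$, and the symmetric function of interest is simply $s_{(1)}(s_\nu \circ s_\mu)$. I first observe that $L$ must be transitive on $\{1, \ldots, l\}$, since otherwise, together with the fixed point $l+1$, it has at least three orbits in $S_{l+1}$, and Lemma~\ref{Lemma sanity lemma} rules it out.

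Next I would rule out $L$ being primitive. For $l \geq 17$ this follows directly from Proposition~\ref{Prop sk times primitive} applied with $k=1$ and $\lambda=(1)$, since the groups $L$ listed there all satisfy $l \leq 9$. For $13 \leq l \leq 16$, combining Lemma~\ref{Lemma order bound - inv} with Lemma~\ref{Lemma involutions}(iv) applied to $n=l+1\geq 14$ gives $|L| > 2^l$, while Mar\'oti's bound Theorem~\ref{Theorem primitive groups small} forces $|L| \leq 2^{l-1}$ unless $L$ belongs to the small exceptional family $\mathcal{F}$; the remaining members of $\mathcal{F}$ with $l$ in this range can be excluded by a short {\sc Magma} computation verifying that, embedded naturally in $S_{l+1}$, they admit no induced-multiplicity-free irreducible character.

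Hence $L$ is imprimitive transitive, so $L \leq S_m \wr S_h$ for some $m,h \geq 2$; in particular $S_m \wr S_h$ is itself multiplicity-free in $S_{l+1}$ and admits an irreducible induced-multiplicity-free character $\rho'$. If $\rho'$ were not elementary, Theorem~\ref{Theorem characters of wreath products} would write it as induced from a proper subgroup of the form $H = \prod_i S_m \wr S_{h_i}$ with at least two factors; such $H$ has at least three orbits on $\{1, \ldots, l+1\}$, so by Lemma~\ref{Lemma sanity lemma} together with transitivity of induction, this contradicts induced-multiplicity-freeness. Thus $\rho' = \charwr{\chi^\mu}{\chi^\nu}{h}$, and $\rho'\ind^{S_{l+1}}$ corresponds to $s_{(1)}(s_\nu \circ s_\mu)$. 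Multiplicity-freeness of the latter requires $s_\nu \circ s_\mu$ itself to be multiplicity-free; Theorem~\ref{Theorem MF plethysms} combined with Proposition~\ref{Prop sk times sporadic plethysm} then forces $m=2$ or $h=2$, and since $l=mh$, this also shows $l$ is even.

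For (i) and (ii), where $L$ equals $S_m \wr S_2$ or $S_2 \wr S_h$ outright, the classification of irreducible induced-multiplicity-free characters is obtained by reading off Proposition~\ref{Prop product with sm wr s2} and Proposition~\ref{Prop product with s2 wr sm} at $k=1$: in the former, the four listed cases collapse to ``$\mu$ rectangular, $\nu \vdash 2$'', and in the latter, combined with $\omega$ to treat $\mu=(1^2)$ in parallel with $\mu=(2)$, they collapse to ``$\mu$ and $\nu$ both linear''. The main obstacle is the primitive case in the range $13 \leq l \leq 16$: the hypothesis $k+l \geq 18$ of Proposition~\ref{Prop sk times primitive} just fails there, so a case-by-case computational check of Mar\'oti's exceptional family is needed; the rest of the argument is a direct transcription of the $k=1$ specialisation.
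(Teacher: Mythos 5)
Your argument is correct, and for the imprimitive part it is exactly the paper's proof: non-transitivity and non-elementary characters are excluded by Lemma~\ref{Lemma sanity lemma} (using the fixed point to create a third orbit), the plethysm $s_{\nu}\circ s_{\mu}$ is forced to be multiplicity-free with $(\mu,\nu)$ non-sporadic via Theorem~\ref{Theorem MF plethysms} and Proposition~\ref{Prop sk times sporadic plethysm}, giving $m=2$ or $h=2$ and hence $l$ even, and (i), (ii) are read off from Propositions~\ref{Prop product with sm wr s2} and \ref{Prop product with s2 wr sm} at $k=1$ (with $\omega$ handling $\mu=(1^2)$), all correctly. The only place you diverge is the exclusion of primitive $L$: the paper simply invokes Corollary~\ref{Corollary primitive groups}, which applies for every $l\geq 13$ once one notes that an induced-multiplicity-free character of $L$ in $S_{l+1}$ restricts the problem down the chain $L\leq S_l\leq S_{l+1}$, i.e.\ $L$ is already a multiplicity-free subgroup of $S_l$; no new computation is needed, since the members of Mar\'oti's family $\mathcal{F}$ with $n\geq 13$ were already eliminated there by the bound $|G|<n!/a_n$. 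Your route -- Proposition~\ref{Prop sk times primitive} with $k=1$ for $l\geq 17$, and for $13\leq l\leq 16$ the bound $|L|>2^l$ from Lemma~\ref{Lemma order bound - inv} and Lemma~\ref{Lemma involutions}(iv) combined with Theorem~\ref{Theorem primitive groups small} plus a fresh {\sc Magma} check of $\mathcal{F}$ -- is valid (and the proposed computation would indeed succeed, precisely because those groups fail the order bound in $S_l$), but it is heavier than necessary; the case split at $l=17$ and the extra computation can be replaced by the single citation of Corollary~\ref{Corollary primitive groups}.
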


\begin{proof}
	The subgroup $L\leq S_l$ cannot be non-transitive or primitive using Lemma~\ref{Lemma sanity lemma} and Corollary~\ref{Corollary primitive groups}. Thus $L\leq S_m\wr S_h$ for $m,h\geq 2$. Since any irreducible induced-multiplicity-free character of $S_m\wr S_h$, embedded in $S_{mh+1}$, is elementary irreducible by Lemma~\ref{Lemma sanity lemma}, the rest of the statement follows from Theorem~\ref{Theorem MF plethysms}, Proposition~\ref{Prop sk times sporadic plethysm}, Proposition~\ref{Prop product with sm wr s2} and Proposition~\ref{Prop product with s2 wr sm}.
\end{proof}

\section{Consequences of Theorem~\ref{Theorem direct product k geq 2}}\label{Sec consequences}

Given Theorem~\ref{Theorem direct product k geq 2}, we can deduce a more general result where $S_k$ is replaced with an arbitrary subgroup of $S_k$. To prove this generalised classification, we use the following technical lemma to find the irreducible induced-multiplicity-free characters of $A_k\times\mathrm{PGL}_2(\mathbb{F}_5)$. In the lemma and the rest of the paper we write $x_P$ for the symmetric function $s_{(6)}+s_{(2^3)}$ corresponding to the trivial character of $\mathrm{PGL}_2(\mathbb{F}_5)$ induced to $S_6$.

\begin{lemma}\label{Lemma ak times pgl(2,5)}
	Let $x_P=s_{(6)}+s_{(2^3)}$ and let $\lambda$ be a hook or an almost rectangular partition of size at least $58$ which is not self-conjugate. The symmetric function $x_P(s_{\lambda} + s_{\lambda'})$ is \emph{not} multiplicity-free if and only if $\lambda$ is (up to conjugation) of the form $(a+1,a^{a-1})$ or $(b+2,1^b)$. 
\end{lemma}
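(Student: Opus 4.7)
Since $\lambda$ is a hook or almost rectangular, Proposition~\ref{Prop sk times primitive}(iii) shows $x_P s_\lambda$ and $x_P s_{\lambda'}$ are both multiplicity-free: every hook is a column-unbalanced fat hook, and each of the three forms of almost rectangular partitions, $(a+1, a^{b-1})$, $(a^b, 1)$, and $(a^{b-1}, a-1)$, is either column-near rectangular or column-unbalanced fat hook. Therefore $x_P(s_\lambda + s_{\lambda'})$ is multiplicity-free if and only if $x_P s_\lambda$ and $x_P s_{\lambda'}$ share no common Schur constituent, which is the criterion I would check.

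For the ``not multiplicity-free'' direction, I would exhibit explicit common constituents in $s_{(6)} s_\lambda$ and $s_{(6)} s_{\lambda'}$ via Young's rule. For $\lambda = (a+1, a^{a-1})$ (so $a \geq 8$ to ensure size at least $58$), the choice $\mu = (a+1, a^{a-1}, 6)$ yields $\mu/\lambda$ equal to a single new row of six boxes, while $\mu/\lambda' = \mu/(a^a, 1)$ consists of the single box $(1, a+1)$ together with five boxes in row $a+1$ at columns $2$ through $6$; both are horizontal strips. For $\lambda = (b+2, 1^b)$ (so $b \geq 28$), the choice $\mu = (b+2, 6, 1^{b+1})$ works analogously with $\lambda' = (b+1, 1^{b+1})$.

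For the ``multiplicity-free'' direction, I would split by the value of $|\lambda - \lambda'|$. Whenever $|\lambda - \lambda'| > 12$, Lemma~\ref{Lemma rotate linear} applied with $x = y = x_P$ of degree $l = 6$ gives the conclusion immediately. A direct computation of $|\lambda - \lambda'|$ (equal to $2|a-b|$ for a hook $(a+1, 1^b)$, and equal to $2$, $2b(a-b)+2$, $2a(b-a)-2$, or $2\min(a,b)|a-b|-2$ for the various almost rectangular forms depending on the case) shows that, under $|\lambda| \geq 58$, this inequality fails only for the two exceptional families in the statement and for hooks $(a+1, 1^b)$ with $|a - b| \in \{2, 3, 4, 5, 6\}$.

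To treat these remaining hook cases I would use a length argument. Constituents of $s_{(6)} s_\lambda$ have length in $\{b+1, b+2\}$ by Young's rule, while constituents of $s_{(2^3)} s_\lambda$ have length in $\{b+1, b+2, b+3, b+4\}$ because each column of a Littlewood--Richardson tableau with content $(2^3)$ has at most three boxes. The analogous bounds hold for $\lambda'$ with $a \leftrightarrow b$. For $|a - b| \geq 4$ the corresponding four pairwise length intervals are disjoint, ruling out common constituents. For $|a - b| \in \{2, 3\}$ overlaps occur only at a small set of lengths, and a direct enumeration---exploiting that the lattice condition forces the rightmost label of row $1$ in the skew shape to be $1$, and that on the $\mu/\lambda'$ side column $1$ must contain three new boxes labeled $1, 2, 3$, leaving only three of the six content labels for rows $1$ and $2$---shows that no actual common constituent exists. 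This final case analysis is the main technical obstacle in the proof.
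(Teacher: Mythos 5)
Most of your outline tracks the paper's own proof: both summands are multiplicity-free (Proposition~\ref{Prop sk times primitive}(iii), exactly as you argue), so the question is whether $x_Ps_{\lambda}$ and $x_Ps_{\lambda'}$ share a constituent; Lemma~\ref{Lemma rotate linear} with $l=6$ handles everything with $|\lambda-\lambda'|>12$, and your explicit computations of $|\lambda-\lambda'|$ do reduce the almost rectangular case to the family $(a+1,a^{a-1})$ and the hook case to $|a-b|\leq 6$. Your Young's-rule witnesses for the exceptional families are a nice simplification of the paper's Littlewood--Richardson tableaux, except for a degree slip in the hook family: $(b+2,6,1^{b+1})$ has size $2b+9\neq |\lambda|+6$; the intended witness is $(b+2,6,1^{b})$, which is indeed a constituent of both $s_{(6)}s_{\lambda}$ and $s_{(6)}s_{\lambda'}$. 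The length argument disposing of hooks with $|a-b|\geq 4$ is also the paper's.

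The genuine gap is the hook case $|a-b|\in\{2,3\}$, which you only sketch, and the structural facts you propose to enumerate with are not correct. Writing $\lambda=(a+1,1^b)$ with $a>b$ and $d=a-b$, any common constituent $s_{\nu}$ satisfies $\ell(\lambda')\leq\ell(\nu)\leq\ell(\lambda)+3$, so for $d=2$ the skew shape $\nu/\lambda'$ gains at most one box in column $1$ and for $d=3$ it gains none; there is never a column of ``three new boxes labelled $1,2,3$'' on the $\lambda'$ side, so the enumeration as you set it up does not start, and this case is precisely where the paper has to work. The paper's argument for $d=2$ (with $d=3$ similar) runs: since $\ell(\nu)\geq\ell(\lambda')>\ell(\lambda)+1$, the constituent must come from $s_{(2^3)}s_{\lambda}$; if it also came from $s_{(2^3)}s_{\lambda'}$ then $\nu_1\leq\lambda'_1+2=\lambda_1\leq\nu_1$ forces $\nu_1=\lambda_1$, and then $c(\lambda,(2^3);\nu)\neq 0$ forces $\nu'_2\geq 4$ while $c(\lambda',(2^3);\nu)\neq 0$ forces $\nu'_2\leq 3$, a contradiction; hence $\nu$ is a constituent of $s_{(6)}s_{\lambda'}$, Young's rule confines $\nu$ to the short list $(b+3+i,1+j,1^{b+1})$ with $i+j=4$ or $(b+3+i,1+j,1^{b+2})$ with $i+j=3$, and each of these has $c(\lambda,(2^3);\nu)=0$. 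You would need to supply an argument of this kind (or a correctly set-up enumeration) for $d\in\{2,3\}$ before your proof is complete.
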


\begin{proof}
	Let us start with an almost rectangular $\lambda$. Let $\mu$ be a rectangular partition obtained from $\lambda$ by increasing or decreasing one of its parts by one. Hence $|\mu|\geq 57$. If $\mu$ is not a square partition, then by Lemma~\ref{Lemma difference of rectangles} applied with $l=7$, we have $|\mu-\mu'|>14 $; thus $|\lambda-\lambda'|>12$. Then Lemma~\ref{Lemma rotate linear} applied with $l=6$, $x=y=x_P$ and partitions $\lambda$ and $\lambda'$ shows that $x_P(s_{\lambda} + s_{\lambda'})$ is multiplicity-free. In the case that $\mu$ is a square partition, then, up to conjugation, $\lambda$ has the form $(a+1,a^{a-1})$ and we claim that $x_Ps_{\lambda}$ and $x_Ps_{\lambda'}$ share a constituent $s_{\nu}$ with $\nu=(a+2,a+1,a^{a-2},2^2)$. Indeed, this follows from the Littlewood--Richardson rule stated in Theorem~\ref{Theorem LR rule} using the $\nu/\lambda$-tableau with reading word $112323$ and the $\nu/\lambda'$-tableau with reading word $112233$. Thus we obtain $(a+1,a^{a-1})$ in the statement.
	
	Now, let $\lambda=(a+1,1^b)$ with $a>b$ and write $d=a-b$. If $d=1$, we see that $s_{\nu}$ with $\nu=(b+3,3,2,1^b)$ is a common constituent of $x_Ps_{\lambda}$ and $x_Ps_{\lambda'}$ using the $\nu/\lambda$-tableau with reading word $121323$ and the $\nu/\lambda'$-tableau with reading word $112233$, yielding $\lambda=(b+2,1^b)$ in the statement. If $d\geq 4$, then all constituents $s_{\nu}$ of $x_Ps_{\lambda}$ satisfy $\ell(\nu)\leq \ell(\lambda)+3<\ell(\lambda')$; thus $s_{\nu}$ cannot be a constituent of $x_Ps_{\lambda'}$. It remains to show that the symmetric function is multiplicity-free when $d=2$ or $d=3$. We give full details for the case $d=2$; the case $d=3$ is very similar.
	
	If $s_{\nu}$ is a common constituent of $x_Ps_{\lambda}$ and $x_Ps_{\lambda'}$, then $\ell(\nu)\geq \ell(\lambda')=b+3$. Thus $c(\lambda,(2^3);\nu)=1$. If also $c(\lambda',(2^3);\nu)=1$, then $\nu_1\leq \lambda'_1+2=\lambda_1$, but clearly $\lambda_1\leq \nu_1$ which forces an equality. Consequently, $c(\lambda,(2^3);\nu)=1$ forces $\nu'_2\geq 4$ and $c(\lambda',(2^3);\nu)=1$ forces $\nu'_2\leq 3$, a contradiction. Therefore $c(\lambda,(2^3);\nu)=1$ and $c(\lambda',(6);\nu)=1$. By Young's rule we conclude that $\nu=(b+3+i,1+j,1^{b+1})$ with $i+j=4$ or $\nu=(b+3+i,1+j,1^{b+2})$ with $i+j=3$. One easily checks that $c(\lambda,(2^3);\nu)=0$ in all cases, a contradiction.    
\end{proof}

We are now ready to generalise Theorem~\ref{Theorem direct product k geq 2}. When talking about characters of the alternating group $A_k$ we use the notation $\chi^{\lambda}_{A_k}$ introduced after Lemma~\ref{Lemma index two} for an arbitrary constituent of the character $\chi^{\lambda}\res_{A_k}$. 

\begin{theorem}\label{Theorem direct product arbitrary}
	Let $k,l\geq 2$ be such that $k+l\geq 64$ and $K\leq S_k$ and $L\leq S_l$ be such that $L\neq S_l, A_l$. An irreducible character $\kappa\boxtimes\rho$ of $K\times L$ is induced-multiplicity-free if and only if $K,L,\kappa$ and $\rho$ are given by the following list:
	\begin{enumerate}[label=\textnormal{(\roman*)}]
		\item $K=S_k$ and $L$ and the characters are given by Theorem~\ref{Theorem direct product k geq 2},
		\item $K=A_k$, $L=S_2\wr S_2$ and $\kappa\boxtimes\rho=\chi^{\lambda}_{A_k}\boxtimes\left( \charwr{\chi^{\mu}}{\chi^{(1^2)}}{2}\right)$ with $\lambda$ rectangular and $\mu\vdash 2$,
		\item $K=A_k$, $\kappa=\chi^{\lambda}_{A_k}$ and $L, \rho$ and $\lambda$ are given by Proposition~\ref{Prop sk times primitive} with an exception when $L=\mathrm{PGL}_2(\mathbb{F}_5)$, $\rho$ is the restriction of the sign or the trivial character of $S_6$ and $\lambda$ is, up to conjugation, of the form $(a+1,a^{a-1})$ or $(b+2,1^b)$. 
	\end{enumerate}  
\end{theorem}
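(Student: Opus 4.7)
My plan is to reduce to Theorem~\ref{Theorem direct product k geq 2} via transitivity of induction, and then use Lemma~\ref{Lemma ak times pgl(2,5)} as the key technical input to handle the subtle $K=A_k$ case when $L=\mathrm{PGL}_2(\mathbb{F}_5)$. Writing $\kappa\ind^{S_k}=\sum_\lambda m_\lambda\chi^\lambda$, transitivity gives
\[
(\kappa\boxtimes\rho)\ind^{S_n}\;=\;\sum_\lambda m_\lambda\,(\chi^\lambda\boxtimes\rho)\ind^{S_n}.
\]
Since $\rho\ind^{S_l}\neq 0$, multiplicity-freeness forces $m_\lambda\le 1$, each contributing triple $(L,\rho,\lambda)$ to appear in Theorem~\ref{Theorem direct product k geq 2}, and the various $(\chi^\lambda\boxtimes\rho)\ind^{S_n}$ to be pairwise disjoint. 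This restricts $L$ to the families in Theorem~\ref{Theorem direct product k geq 2}(i)--(v).

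Next I split by the shape of the admissible $\lambda$'s. In Theorem~\ref{Theorem direct product k geq 2}(i), (iii), (iv) a fixed $\rho$ admits only one linear $\lambda\in\{(k),(1^k)\}$, making $\kappa\ind^{S_k}$ a single linear character and forcing $K=S_k$ by Frobenius reciprocity, which gives case (i) of the theorem. In the remaining parts the support of $\kappa\ind^{S_k}$ can additionally be a pair $\chi^\lambda+\chi^{\lambda'}$ of equal-degree conjugates; the equality $[S_k{:}K]\deg\kappa=2\deg\chi^\lambda$ combined with Lemma~\ref{Lemma index two} and the uniqueness of the index-$2$ subgroup of $S_k$ for $k\ge 5$ forces $K=A_k$ and $\kappa=\chi^\lambda_{A_k}$. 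For $L=S_2\wr S_2$ with rectangular $\lambda$ I check that $(s_\lambda+s_{\lambda'})(s_{(1^2)}\circ s_\mu)$ is multiplicity-free by using that the plethysm has degree $4$, together with Lemma~\ref{Lemma difference of rectangles} and Lemma~\ref{Lemma rotate linear}, giving case (ii). For primitive $L$ I further restrict to $\lambda$ such that both $\lambda$ and $\lambda'$ lie in the list of Proposition~\ref{Prop sk times primitive} for the given $\rho$; a direct case analysis (tracing conjugation through the definitions of column-near rectangular and column-unbalanced fat hook) shows these shapes are either rectangular, hook, or almost rectangular.

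Finally, I verify multiplicity-freeness of $(s_\lambda+s_{\lambda'})\cdot\xi(\rho\ind^{S_l})$ in each remaining case. For $L\in\{\mathrm{P}\Gamma\mathrm{L}_2(\mathbb{F}_8),\mathrm{ASL}_3(\mathbb{F}_2),\mathrm{AGL}_1(\mathbb{F}_5)\}$, the small degree of $\rho\ind^{S_l}$ combined with Lemma~\ref{Lemma rotate linear} and Lemma~\ref{Lemma difference of rectangles} (applied with $l$ equal to that degree and $|\lambda|\ge 58$) suffices. For $L=\mathrm{PGL}_2(\mathbb{F}_5)$ and $\rho$ trivial or sign, rectangular $\lambda$ satisfy $|\lambda-\lambda'|>12$ by Lemma~\ref{Lemma difference of rectangles} so Lemma~\ref{Lemma rotate linear} with $l=6$ delivers multiplicity-freeness; the remaining hook and almost rectangular admissible $\lambda$ fall exactly into the hypothesis of Lemma~\ref{Lemma ak times pgl(2,5)}, which identifies the failures as precisely $(a+1,a^{a-1})$ and $(b+2,1^b)$ up to conjugation and produces the exception stated in case (iii). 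The sign case reduces to the trivial case by applying $\omega$ throughout. The main obstacle is Lemma~\ref{Lemma ak times pgl(2,5)}, whose Littlewood--Richardson analysis pins down the exception families; a secondary subtlety is the explicit characterization (via conjugate-closure of the Proposition~\ref{Prop sk times primitive} list) that the simultaneously-admissible pairs $\{\lambda,\lambda'\}$ for a common $\rho$ cleanly match the hook and almost rectangular shapes covered by Lemma~\ref{Lemma ak times pgl(2,5)}.
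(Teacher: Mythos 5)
Your verification steps (the degree-$4$ and degree-$6$ computations via Lemma~\ref{Lemma difference of rectangles} and Lemma~\ref{Lemma rotate linear}, the reduction for $\mathrm{PGL}_2(\mathbb{F}_5)$ to hooks and almost rectangular partitions, and the appeal to Lemma~\ref{Lemma ak times pgl(2,5)}) match the paper's proof, but there is a genuine gap at the structural step where you pin down $K$. For $L=S_2\wr S_2$ or $L$ primitive you assert that the support of $\kappa\ind^{S_k}$ is a single $\chi^{\lambda}$ or a conjugate pair $\chi^{\lambda}+\chi^{\lambda'}$, and that the identity $|S_k:K|\deg\kappa=2\deg\chi^{\lambda}$ together with Lemma~\ref{Lemma index two} and uniqueness of the index-two subgroup ``forces $K=A_k$''. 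Neither half is justified. First, nothing in your reduction restricts the support to a conjugate pair: for a fixed admissible $\rho$ the list of admissible $\lambda$ is a whole family (all rectangles, say), and a priori $\kappa\ind^{S_k}$ could contain two non-conjugate admissible partitions whose products with the low-degree symmetric function $\xi(\rho\ind^{S_l})$ are disjoint. Second, and more seriously, knowing that $\kappa\ind^{S_k}$ has one or two irreducible constituents does not bound $|S_k:K|$: the degree equation only determines the product $|S_k:K|\deg\kappa$, and proper subgroups of symmetric groups do possess irreducible characters that induce irreducibly --- this very paper uses $\mathrm{AGL}_1(\mathbb{F}_5)\leq S_5$, whose complex linear characters induce to the irreducible $\chi^{(3,1^2)}$ (see the proof of Proposition~\ref{Prop sk times primitive}). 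So the conclusion $K\in\{S_k,A_k\}$, on which everything afterwards rests, is not established by your argument.

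The missing idea is the paper's symmetric application of Theorem~\ref{Theorem direct product k geq 2}: since $\kappa\boxtimes\rho$ is induced-multiplicity-free, so is $\kappa\boxtimes\rho\ind^{S_l}$, hence each $\kappa\boxtimes\chi^{\nu}$ with $\chi^{\nu}$ a constituent of $\rho\ind^{S_l}$ is induced-multiplicity-free as a character of $K\times S_l$; if $K\neq S_k,A_k$, applying Theorem~\ref{Theorem direct product k geq 2} with the roles of the two factors swapped places $K$ itself on that theorem's list of groups. A wreath product $K$ of type (i), (iii) or (iv) would then (by your own first-paragraph argument, with roles swapped) force $L=S_l$, which is excluded, so $K$ would have to be $S_2\wr S_2$ or one of the primitive groups, giving $k\leq 9$; since in the remaining cases also $l\leq 9$, this contradicts $k+l\geq 64$. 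Hence $K=S_k$ or $K=A_k$, and only for $K=A_k$ does Lemma~\ref{Lemma index two} legitimately give $\kappa=\chi^{\lambda}_{A_k}$ with $\left(\chi^{\lambda}_{A_k}\boxtimes\rho\right)\ind^{S_k\times L}$ equal to $\chi^{\lambda}\boxtimes\rho+\chi^{\lambda'}\boxtimes\rho$ (or to $\chi^{\lambda}\boxtimes\rho$ when $\lambda=\lambda'$, a square case your write-up also skips over). With this step repaired, the remainder of your proposal coincides with the paper's proof.
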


\begin{proof}
	Suppose that $\kappa\boxtimes\rho$ is induced-multiplicity-free. Then so are $\kappa\ind^{S_k}\boxtimes\rho$ and $\kappa\boxtimes\rho\ind^{S_l}$, and thus our choices for $K,L,\kappa$ and $\rho$ are narrowed down by Theorem~\ref{Theorem direct product k geq 2}. Firstly, suppose that $L=S_m\wr S_h$ with precisely one of $m$ or $h$ equal to two. In Theorem~\ref{Theorem direct product k geq 2}(i), (iii) and (iv) we see that for each character $\rho$ of $L$, there is only one choice of $\lambda$ which also forces $\lambda$ to be linear. Thus $\kappa\ind^{S_k}$ must have degree $1$, which means $K=S_k$.
	
	In the remaining cases $L$ is $S_2\wr S_2$ or a primitive subgroup of $S_l$ (distinct from $S_l$ and $A_l$). From the previous paragraph (with the roles of $K$ and $L$ swapped), $K$ must be of the same form as $L$ (with $K=S_k$ and $K=A_k$ allowed). This forces $K=S_k$ or $K=A_k$, as otherwise $k,l\leq 9$, and thus $k+l\leq 18$, a contradiction. Thus it remains to obtain (ii) and (iii) by examining groups $A_k\times L$ with $L$ as above. Write $\kappa=\chi^{\lambda}_{A_k}$. From Lemma~\ref{Lemma index two}(iv), the character $\chi^{\lambda}_{A_k}\boxtimes \rho$ induced to $S_k\times L$ equals $\chi^{\lambda}\boxtimes \rho + \chi^{\lambda'}\boxtimes \rho$ if $\lambda\neq\lambda'$ and $\chi^{\lambda}\boxtimes \rho$ otherwise.
	
	We start with $L=S_2\wr S_2$. By Corollary~\ref{Cor index two}(i) and Theorem~\ref{Theorem direct product k geq 2}, we see that $\lambda$ must be rectangular and $\rho$ must equal $\charwr{\chi^{\mu}}{\chi^{(1^2)}}{2}$ with $\mu\vdash 2$ for $\chi^{\lambda}_{A_k}\boxtimes \rho$ to be induced-multiplicity-free. If $\lambda$ is square, then clearly the character is induced-multiplicity-free. Otherwise, we can apply Lemma~\ref{Lemma rotate linear} with $\lambda$ and $\lambda'$ and Lemma~\ref{Lemma difference of rectangles} (both with $l=4$, which applies as $|\lambda|\geq 60>20$) to obtain that $\left( \chi^{\lambda}_{A_k}\boxtimes \rho\right)\ind^{S_k\times L}=\chi^{\lambda}\boxtimes \rho + \chi^{\lambda'}\boxtimes \rho$ is induced-multiplicity-free.
	
	In the case that $L$ is primitive and $\lambda$ is a properly rectangular partition, we use the same argument. If $\lambda$ is linear, we can apply Lemma~\ref{Lemma rotate linear} directly. Thus it only remains to examine the case where $L=\mathrm{PGL}_2(\mathbb{F}_5)$ and, up to tensoring with the restricted sign of $S_{k+6}$, $\rho$ is the trivial character of $L$ and $\lambda$ is either column-near rectangular or a column-unbalanced fat hook. The same must hold for $\lambda'$ which forces $\lambda$ to be a hook or an almost rectangular partition. The result then follows from Lemma~\ref{Lemma ak times pgl(2,5)}.    
\end{proof}

An important observation to make in Theorem~\ref{Theorem direct product arbitrary} is that $K$ must be $S_k$ or $A_k$. In other words, for $n\geq 64$ we cannot have a multiplicity-free subgroup $K\times L\leq S_k\times S_l\leq S_n$ with $K\neq S_k,A_k$ and $L\neq S_l,A_l$.

Next on the list, we can classify all irreducible induced-multiplicity-free characters of $S_m\wr S_h$.

\begin{proposition}\label{Prop MF wreath products}
	Let $m,h\geq 2$ be such that $mh\geq 64$. The irreducible induced-multiplicity-free characters $\rho$ of $S_m\wr S_h$ are given by the following list:
	\begin{enumerate}[label=\textnormal{(\roman*)}]
		\item the elementary irreducible characters in Theorem~\ref{Theorem MF plethysms},
		\item $h=2$ and $\rho=\left( \chi^{\mu}\boxtimes \chi^{\nu}\right) \Ind^{S_m\wr S_2}$ such that $\mu$ and $\nu$ are partitions of $m$ listed in Theorem~\ref{Theorem Stembridge},
		\item $h=3$ and $\rho=\left(\chi^{\lambda}\boxtimes \left( \charwr{\chi^{\mu}}{\chi^{\nu}}{2}\right)  \right)\Ind^{S_m\wr S_3} $ such that $\lambda$ and $\mu$ are in some order $(m)$ and $(1^m)$ and $\nu\vdash 2$,
		\item $m=2$ and $\rho=\left(\chi^{\lambda}\boxtimes\left( \charwr{\chi^{\mu}}{\chi^{(h-1)}}{h-1} \right) \right) \Ind^{S_2\wr S_h} $ with $\lambda$ and $\mu$ in some order equal to $(2)$ and $(1^2)$.
	\end{enumerate}
\end{proposition}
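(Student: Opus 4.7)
The plan is to parametrise the irreducible characters of $S_m\wr S_h$ via Theorem~\ref{Theorem characters of wreath products} by a tuple $(\lambda^{[1]},\ldots,\lambda^{[t]})$ of non-empty partitions together with distinct $\mu_1,\ldots,\mu_t\vdash m$ satisfying $\sum_i |\lambda^{[i]}|=h$, so that the character is $\rho=(\rho_1\boxtimes\cdots\boxtimes\rho_t)\Ind^{S_m\wr S_h}$ with $\rho_i=\charwrnb{\chi^{\mu_i}}{\chi^{\lambda^{[i]}}}{|\lambda^{[i]}|}$, and then to split on $t$. If $t\geq 3$, transitivity of induction realises $\rho\ind^{S_{mh}}$ as an induction from the Young subgroup $S_{m|\lambda^{[1]}|}\times\cdots\times S_{m|\lambda^{[t]}|}$, which has at least three orbits on $\{1,\ldots,mh\}$, and Lemma~\ref{Lemma sanity lemma} precludes multiplicity-freeness. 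If $t=1$ then $\rho$ is elementary irreducible and Theorem~\ref{Theorem MF plethysms} directly yields family (i).

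The substantive case is $t=2$. Using transitivity and the Frobenius correspondence, $\rho\ind^{S_{mh}}$ has image $(s_{\lambda^{[1]}}\circ s_{\mu_1})(s_{\lambda^{[2]}}\circ s_{\mu_2})\in\Lambda^+$, so that multiplicity-freeness of $\rho$ is equivalent to multiplicity-freeness of this product. Since each factor must itself be multiplicity-free, Theorem~\ref{Theorem MF plethysms} sharply restricts each pair $(\lambda^{[i]},\mu_i)$, and I split into subcases. For $h=2$ both $\lambda^{[i]}=(1)$ and the product collapses to $s_{\mu_1}s_{\mu_2}$, so Theorem~\ref{Theorem Stembridge} yields family (ii). For $h=3$ we have $\{|\lambda^{[1]}|,|\lambda^{[2]}|\}=\{1,2\}$ and the product has the form $s_{\mu_1}(s_{\nu}\circ s_{\mu_2})$ with $|\nu|=2$; Proposition~\ref{Prop product with sm wr s2}, combined with $\mu_1\neq\mu_2$ and $|\mu_1|=|\mu_2|=m$, forces $\{\mu_1,\mu_2\}=\{(m),(1^m)\}$, giving family (iii).

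For $m=2$ and $h\geq 4$, Theorem~\ref{Theorem MF plethysms}(ii) forces each $\lambda^{[i]}$ linear; if some $|\lambda^{[i]}|=1$, say $|\lambda^{[1]}|=1$, then the first factor collapses to $s_{\mu_1}$ and after applying $\omega$ if necessary to renormalise the second plethysm into the form $s_{\lambda^{[2]}}\circ s_{(2)}$, Proposition~\ref{Prop product with s2 wr sm} applies with $\lambda=\mu_1$ of size $2$. The hypothesis $mh\geq 64$ excludes the sporadic options $\lambda^{[2]}\in\{(1^3),(1^4)\}$, leaving only $\lambda^{[2]}=(h-1)$ and yielding family (iv). The main obstacle is the two remaining configurations, namely $m=2$ with both $|\lambda^{[i]}|\geq 2$, and $m\geq 3$ with $h\geq 4$ (in which Theorem~\ref{Theorem MF plethysms} further forces $|\lambda^{[1]}|=|\lambda^{[2]}|=2$, so $h=4$ and $m\geq 16$); here I must show that the product of two non-trivial plethysms always carries a repeated Schur constituent. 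My plan is to exhibit explicit repeated constituents in minimal instances using the descriptions of $s_{(h_i)}\circ s_{(2)}$ and $s_{(1^{h_i})}\circ s_{(2)}$ furnished by Lemma~\ref{Lemma even and shift rules} and of $s_{(2)}\circ s_{\mu_i}$ and $s_{(1^2)}\circ s_{\mu_i}$ for rectangular $\mu_i$ furnished by Proposition~\ref{Prop domino rectangles}, and then to propagate the failure to arbitrary admissible $(\mu_i,\lambda^{[i]})$ via the monotonicity supplied by Corollary~\ref{Corollary Stembridge order} and Corollary~\ref{Corollary plethysms observation}.
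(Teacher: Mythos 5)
Your skeleton (parametrise $\rho$ via Theorem~\ref{Theorem characters of wreath products}, eliminate $t\geq 3$ with Lemma~\ref{Lemma sanity lemma}, read off $t=1$ from Theorem~\ref{Theorem MF plethysms}, and translate $t=2$ into multiplicity-freeness of $(s_{\lambda^{[1]}}\circ s_{\mu_1})(s_{\lambda^{[2]}}\circ s_{\mu_2})$) is sound, and your handling of the subcases $h=2$, $h=3$ and $m=2$ with one block of size one correctly recovers families (ii)--(iv) from Theorem~\ref{Theorem Stembridge}, Proposition~\ref{Prop product with sm wr s2} and Proposition~\ref{Prop product with s2 wr sm}. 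The genuine gap is exactly where you stop: the two configurations with both blocks of size at least two ($m=2$ with $|\lambda^{[1]}|,|\lambda^{[2]}|\geq 2$, and $m\geq 16$, $h=4$, $|\lambda^{[1]}|=|\lambda^{[2]}|=2$) are precisely the cases the cited product classifications do not cover, since both factors are now plethysms rather than one Schur function times a plethysm, and you offer only a plan. That plan is not ready to execute: Corollary~\ref{Corollary Stembridge order} and Corollary~\ref{Corollary plethysms observation} each control a single Littlewood--Richardson or plethysm coefficient, and to propagate a repeated constituent through a product of \emph{two} plethysms you would need a coupling statement in the spirit of Lemma~\ref{Lemma product with sm wr 2s} but with the outer Schur factor replaced by a second plethysm, so that the two multifunctions act compatibly on the common target partition; no such lemma exists in the paper and you neither state nor prove one.

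The gap can be closed much more cheaply than your plan suggests, and this is essentially what the paper does. If the product is multiplicity-free then each factor is, and for \emph{every} constituent $s_{\alpha}$ of $s_{\lambda^{[1]}}\circ s_{\mu_1}$ the product $s_{\alpha}(s_{\lambda^{[2]}}\circ s_{\mu_2})$ is multiplicity-free; since $|\alpha|=mh_1\geq 4$ and $m\geq 16$ (respectively $h_2\geq 16$ after relabelling), Proposition~\ref{Prop product with sm wr s2} (respectively Proposition~\ref{Prop product with s2 wr sm}, after applying $\omega$ if needed) allows at most one linear shape for $\alpha$, whereas the multiplicity-free first factor has induced degree greater than one and hence at least two distinct constituents, a contradiction. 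The paper packages this at the group level: from the $t=2$ situation it deduces that $S_m\wr S_{h_1}\times S_m\wr S_{h_2}$ is multiplicity-free and invokes Theorem~\ref{Theorem direct product arbitrary}, whose classification forces the first factor to be a full symmetric or alternating group, i.e. $h_1=1$, after which Theorem~\ref{Theorem direct product k geq 2}(i) and (iii) yield families (iii) and (iv) uniformly; your route re-derives the easy parts of this by hand but stalls at the one point where new content is required.
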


\begin{proof}
	Let $(\lambda^{[1]},\lambda^{[2]},\dots,\lambda^{[t]})$ be the sequence of partitions of total size $h$ corresponding to $\rho$ via Theorem~\ref{Theorem characters of wreath products}. By Lemma~\ref{Lemma sanity lemma} at most two of the partitions $\lambda^{[i]}$ are non-empty. If only one is non-empty, then $\rho$ is an elementary irreducible character and we are in (i). Now assume that precisely two $\lambda^{[i]}$ are non-zero and let $h_1$ and $h_2$ be their sizes. Then $S_m\wr S_{h_1} \times S_m\wr S_{h_2} $ is multiplicity-free.
	
	Either $h_1=h_2=1$ and we are in (ii). Or without loss of generality $h_2>1$ and Theorem~\ref{Theorem direct product arbitrary} with $L=S_m\wr S_{h_2}$ applies and forces $h_1=1$. From Theorem~\ref{Theorem direct product k geq 2}(i) and (iii) we obtain (iii) and (iv), respectively. No contribution comes from Theorem~\ref{Theorem direct product k geq 2}(ii) and (iv) as in these cases $mh\leq 10$. 
\end{proof}

We can also apply Theorem~\ref{Theorem direct product arbitrary} to narrow down the list of potential multiplicity-free subgroups inside $S_m\wr S_2$.

\begin{proposition}\label{Prop Sm wr S2 subgroups}
	Let $m\geq 32$. If $G\ < S_m\wr S_2\leq S_{2m}$ is a transitive multiplicity-free subgroup, then it is $A_m\wr S_2,\left( S_m\wr S_2\right) \cap A_{2m}$ or $T_{m,2}$.
\end{proposition}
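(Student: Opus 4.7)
The plan is to apply Lemma~\ref{Lemma wreath subgps} to reduce to the case where, up to conjugation in $S_m$, the subgroup $G$ is a subwreath subgroup of $M\wr S_2$ for a uniquely determined transitive $M\leq S_m$. The conclusion then splits into three cases according to $M$. If $M=S_m$, Corollary~\ref{Cor subwreath lemma}(i) directly yields $G\in\{(S_m\wr S_2)\cap A_{2m},T_{m,2}\}$. If $M=A_m$, Corollary~\ref{Cor subwreath lemma}(ii) together with $G$ being multiplicity-free forces $G=A_m\wr S_2$. The bulk of the work is to derive a contradiction in the remaining case that $M\neq S_m,A_m$ is a proper transitive subgroup.

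In this remaining case, since $G\leq M\wr S_2$, the group $M\wr S_2$ is itself multiplicity-free. Applying Corollary~\ref{Cor Lemma order bound - exp} with $n=2m\geq 64$ gives $|M|>2^{m-1}$. If $M$ is primitive, this contradicts the bound $|M|\leq 2^{m-1}$ from Theorem~\ref{Theorem primitive groups small}, which applies since $m\geq 32>24$. Therefore $M$ must be imprimitive transitive, so after a further conjugation in $S_m$ we have $M\leq S_a\wr S_b$ for some integers $a,b\geq 2$ with $ab=m$. Associativity of imprimitive wreath products identifies $(S_a\wr S_b)\wr S_2$ with $S_a\wr(S_b\wr S_2)$, giving the chain $M\wr S_2\leq (S_a\wr S_b)\wr S_2\leq S_a\wr S_{2b}$ with index $[S_a\wr S_{2b}:(S_a\wr S_b)\wr S_2]=[S_{2b}:S_b\wr S_2]=\binom{2b}{b}/2$.

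The key step is to show that $(S_a\wr S_b)\wr S_2$ is not multiplicity-free; since multiplicity-freeness propagates upwards through subgroup inclusions, this contradicts the assumption that $M\wr S_2$ is multiplicity-free. Suppose for contradiction that $\rho$ is an irreducible induced-multiplicity-free character of $(S_a\wr S_b)\wr S_2$. By the transitivity of induction applied to the chain $(S_a\wr S_b)\wr S_2\leq S_a\wr S_{2b}\leq S_{2m}$, every irreducible constituent of $\rho\ind^{S_a\wr S_{2b}}$ must be an induced-multiplicity-free character of $S_a\wr S_{2b}$ appearing with multiplicity one. Hence $\dim\rho\cdot\binom{2b}{b}/2\leq\Sigma$, where $\Sigma$ denotes the sum of the degrees of all irreducible induced-multiplicity-free characters of $S_a\wr S_{2b}$.

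Applying Proposition~\ref{Prop MF wreath products} to $S_a\wr S_{2b}$ (valid since $2ab\geq 64$, $a\geq 2$ and $2b\geq 4$), I would enumerate the resulting short list. When $a\geq 3$, none of parts (i)--(iv) of the proposition applies, because the plethysm conditions from Theorem~\ref{Theorem MF plethysms}(i),(ii) require $2b=2$ or $a=2$, and parts (ii),(iii),(iv) of Proposition~\ref{Prop MF wreath products} require $2b\in\{2,3\}$ or $a=2$. So $\Sigma=0$, forcing $\dim\rho<1$, a contradiction. When $a=2$, the list consists of the four elementary irreducibles $\charwr{\chi^\mu}{\chi^\nu}{2b}$ with $\mu\vdash 2$ and $\nu\in\{(2b),(1^{2b})\}$ (each of degree $1$) together with the two characters of degree $2b$ from Proposition~\ref{Prop MF wreath products}(iv), giving $\Sigma=4+4b$. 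Since $m=2b\geq 32$ gives $b\geq 16$, the elementary inequality $\binom{2b}{b}/2>4+4b$ (easy for $b\geq 4$) again forces $\dim\rho<1$, the desired contradiction. The main technical hurdle I anticipate is the careful identification of the short list of multiplicity-free characters of $S_a\wr S_{2b}$ together with their degrees, and the bookkeeping of the wreath-product indices; the binomial inequality itself is elementary.
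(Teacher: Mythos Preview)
Your proof is correct and takes a somewhat different route from the paper's in the imprimitive case, though both ultimately rely on Proposition~\ref{Prop MF wreath products}.

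For ruling out primitive $M\neq S_m,A_m$, the paper invokes Corollary~\ref{Corollary primitive groups} after asserting (parenthetically) that $M$ is necessarily multiplicity-free as a subgroup of $S_m$. Your approach via the order bound $|M\wr S_2|>2^{2m-1}$ from Corollary~\ref{Cor Lemma order bound - exp} followed by Theorem~\ref{Theorem primitive groups small} sidesteps that justification entirely and is slightly cleaner.

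For imprimitive $M\leq S_a\wr S_b$, the paper works inside $M\wr S_2$: it uses Theorem~\ref{Theorem direct product arbitrary} to force an elementary irreducible induced-multiplicity-free character of $M\wr S_2$ with $\rho$ itself elementary, then Theorem~\ref{Theorem MF plethysms} to reduce to $a=2$ or $b=2$, handles $b=2$ via $(S_a\wr S_2)\wr S_2\leq S_a\wr S_4$ and Proposition~\ref{Prop MF wreath products}, and for $a=2$ exhibits explicit non-rectangular, non-hook constituents of $s_{(h')}\circ s_{(2)}$ and $s_{(1^{h'})}\circ s_{(2)}$ to contradict Theorem~\ref{Theorem MF plethysms}. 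Your degree-counting argument is more uniform: inducing up to $S_a\wr S_{2b}$ and comparing the total available degree $\Sigma$ from Proposition~\ref{Prop MF wreath products} against the index $\binom{2b}{b}/2$ dispatches both sub-cases at once. The trade-off is that the paper's analysis reveals precisely which plethysms fail, while yours yields only the contradiction; for this proposition your shortcut is entirely adequate.

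One minor point: you assert $M$ is transitive in $S_m$ without justification. This holds because $G$ is transitive on $\{1,\ldots,2m\}$ and preserves the two blocks, so the block stabiliser $G_B$ already acts transitively on each block, whence $M=G_1$ is transitive.
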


\begin{proof}
	Firstly assume that $G=M\wr S_2$ for some transitive and (necessarily) multiplicity-free subgroup $M$ of $S_m$. We show that $M$ must be primitive, and thus $S_m$ or $A_m$ by Corollary~\ref{Corollary primitive groups}. Suppose for contradiction that $M$ is imprimitive, and by passing to a larger group if necessary, write $M=S_{m'}\wr S_{h'}$ with $m',h'\geq 2$. Since $M\times M$ is not multiplicity-free by Theorem~\ref{Theorem direct product arbitrary}, there is an elementary irreducible induced-multiplicity-free character of $M\wr S_2$, say $\charwrnb{\rho}{\chi^{\nu}}{2}$ with $\nu\vdash 2$.
	
	We claim that $\rho$ must be an elementary irreducible character of $M$. Otherwise for some positive integers $h_1$ and $h_2$ the group $\left( S_{m'}\wr S_{h_1} \times S_{m'}\wr S_{h_2} \right) \wr S_2$ is multiplicity-free. But this group lies inside $\left(S_{m'}\wr S_{h_1} \right)\wr S_2 \times \left(S_{m'}\wr S_{h_2} \right)\wr S_2$ which is not multiplicity-free by Theorem~\ref{Theorem direct product arbitrary}. Hence we can write $\rho=\charwr{\chi^{\lambda}}{\chi^{\mu}}{h'}$.
	
	Now, Theorem~\ref{Theorem MF plethysms} implies that $m'=2$ or $h'=2$. We eliminate $h'=2$ immediately since $\left( S_{m'}\wr S_{2}\right) \wr S_2 \cong S_{m'}\wr \left( S_{2} \wr S_2\right) \leq S_{m'}\wr S_4$ and $S_{m'}\wr S_4$ is not multiplicity-free by Proposition~\ref{Prop MF wreath products}.
	
	For $m'=2$, using Theorem~\ref{Theorem MF plethysms} and $\omega$, we need to show that $s_{\nu}\circ\left(s_{(h')}\circ s_{(2)} \right) $ and $s_{\nu}\circ\left(s_{(1^{h^{\prime}})}\circ s_{(2)} \right) $ are not multiplicity-free for $\nu\vdash 2$. By Lemma~\ref{Lemma even and shift rules}, we get constituents $s_{(2h'-2,2)}$ of $s_{(h')}\circ s_{(2)}$ and $s_{(h',3,1^{h^{\prime}-3})}$ of $s_{(1^{h^{\prime}})}\circ s_{(2)}$. Since $h'\geq 16\geq 4$, neither of $(2h'-2,2)$ and $(h',3,1^{h^{\prime}-3})$ is rectangular, almost rectangular or a hook; thus $s_{\nu}\circ\left(s_{(h')}\circ s_{(2)} \right) $ and $s_{\nu}\circ\left(s_{(1^{h^{\prime}})}\circ s_{(2)} \right) $ are not multiplicity-free by Theorem~\ref{Theorem MF plethysms}. Hence $M$ is primitive.
	
	For a general transitive subgroup $G$, using Lemma~\ref{Lemma wreath subgps}, we know that $G$ is a subwreath subgroup of $M\wr S_2$ for some $M\leq S_m$. We have shown above that $M$ is $S_m$ or $A_m$; thus the result follows from Corollary~\ref{Cor subwreath lemma}.
\end{proof}

To show that other imprimitive subgroups are not multiplicity-free we would like to have similar results for other wreath products $S_m\wr S_h$. In particular, according to Proposition~\ref{Prop MF wreath products}, we are interested in the cases when $m=2$ or $h=3$. Such results are part of Proposition~\ref{Prop possible MF subgroups} and use the following lemmas, the first two of which are needed for the case $m=2$.

\begin{lemma}\label{Lemma transitive size}
	Let $n\geq 8$. If $G < S_n$ is a transitive subgroup distinct from $A_n$, then $|S_n:G|>2n+4$.
\end{lemma}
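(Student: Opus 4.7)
The argument splits on whether $G$ is primitive or imprimitive, paralleling the approach of Corollary~\ref{Corollary primitive groups}.

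First suppose $G$ is primitive. By Theorem~\ref{Theorem primitive groups small} either $|G|\leq 2^{n-1}$, or $n\leq 24$ and $G$ lies in the explicit family $\mathcal{F}$ of $24$ groups. In the former case a short induction on $n$ yields $n!/2^{n-1}>2n+4$ for $n\geq 8$: the base case reads $8!/2^{7}=315>20$, and in the step the factor $(n+1)/2$ gained on the left-hand side beats the factor $(2n+6)/(2n+4)$ gained on the right, since $(n+1)(2n+4)/2=(n+1)(n+2)>2n+6$ for $n\geq 2$. In the latter case one checks $|G|(2n+4)<n!$ individually for each of the $24$ groups in $\mathcal{F}$, a finite computation of the same flavour as in Corollary~\ref{Corollary primitive groups} and carried out in {\sc Magma}.

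Now suppose $G$ is imprimitive, so $G\leq S_m\wr S_k$ for some $m,k\geq 2$ with $mk=n$ and hence $|S_n:G|\geq n!/((m!)^k k!)$. The aim is to show that this lower bound exceeds $2n+4$ for every admissible factorization. For even $n$ the hardest factorization is $(m,k)=(n/2,2)$, producing the bound $\binom{n}{n/2}/2$; a direct induction settles this, with base $\binom{8}{4}/2=35>20$ and the binomial ratio $\binom{n+2}{(n+2)/2}/\binom{n}{n/2}=4(n+1)/(n+2)\geq 3$ comfortably dominating the linear ratio $(2(n+2)+4)/(2n+4)<2$. For odd composite $n$ the hardest factorization is $(m,k)=(n/p,p)$ with $p$ the smallest prime divisor of $n$; the base $n=9$ already gives $9!/(6^{3}\cdot 6)=280>22$, and the bound grows extremely rapidly for larger $n$. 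Prime $n$ admits no imprimitive transitive subgroups, so there is nothing to check there.

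The only nontrivial technical point, and the place I expect the main obstacle, is justifying that the factorizations displayed above really are the hardest among all $(m,k)$ with $mk=n$ and $m,k\geq 2$. This comes down to the inequality $(m!)^k k!\leq((n/2)!)^2\cdot 2$ (respectively $((n/p)!)^p\cdot p!$) for every alternative factorization, which one proves by a factor-by-factor comparison: the $n$ factors of $(m!)^k$ are each at most $n/k\leq n/3$, whereas $((n/2)!)^2$ contains factors up to $n/2$, and the residual $k!/2$ is absorbed by a standard Stirling-type estimate. Combining this comparison with the primitive and imprimitive bounds above then yields $|S_n:G|>2n+4$ for every $n\geq 8$.
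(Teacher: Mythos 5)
Your primitive case is exactly the paper's argument: Theorem~\ref{Theorem primitive groups small} plus a {\sc Magma} check of the exceptional family $\mathcal{F}$ (only the members with $n\geq 8$ need checking), and $n!/2^{n-1}>2n+4$ for $n\geq 8$. The divergence is in the imprimitive case, and this is where your proposal has a real hole. You reduce to the claim that $(m!)^k\,k!$ is maximised, over factorizations $mk=n$ with $m,k\geq 2$, by $(m,k)=(n/2,2)$ (resp.\ $(n/p,p)$ for odd composite $n$), and you yourself flag this as the main obstacle. The claim is true, but your justification does not work as stated: a factor-by-factor comparison of the $n$ factors of $(m!)^k$ (each at most $n/k$) against those of $((n/2)!)^2$ fails at the tail, since $((n/2)!)^2$ also contains the small factors $1,1,2,2,\dots$ twice, and the extra $k!/2$ is not ``absorbed'' without an actual estimate (one genuinely needs something like $(n/2)!/(m!)^{k/2}$ being a large multinomial coefficient). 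Likewise ``the bound grows extremely rapidly for larger $n$'' in the odd composite case is an assertion, not an argument. All of this is fixable, but it is work you have not done.

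The paper avoids the optimization entirely: for an imprimitive transitive $G$ one may assume $G=S_m\wr S_h$ with $m,h\geq 2$, and then one bounds the index $|S_n:S_m\wr S_h|=n!/\bigl(h!(m!)^h\bigr)$ from below \emph{uniformly in the factorization} --- by $(n-1)(n-2)/2$ when $m>2$ and by $(n-1)(n-3)$ when $m=2$ --- and both quadratics exceed $2n+4$ for $n\geq 8$. In other words, there is no need to identify the largest imprimitive maximal subgroup; a crude count of block systems already beats the linear bound $2n+4$ with room to spare. If you want to salvage your route, either prove the extremal-factorization inequality properly (via multinomial coefficients, say), or replace it with such a uniform lower bound on the number of block systems, which is both shorter and what the paper does.
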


\begin{proof}
	Theorem~\ref{Theorem primitive groups small} proves the result for all primitive $G$ apart from the $20$ members of $\mathcal{F}$ with $n\geq 8$ for which one uses direct calculations (which have been verified by {\sc Magma}). If $G$ is imprimitive, without loss of generality let $G=S_m\wr S_h$ with $m,h\geq 2$. Then we have $|S_n:G|= (mh)!/(h!(m!)^h)$. If $m>2$, we can bound this below by $(mh-1)(mh-2)/2$, while for $m=2$, we get a lower bound $(mh-1)(mh-3)$. Since $(n-1)(n-3)\geq (n-1)(n-2)/2>2n+4$ for $n\geq 8$ we are done.
\end{proof}

\begin{lemma}\label{Lemma submodules in char two}
	Suppose that $h\geq 5$. The (non-zero) submodules of the natural permutation $\mathbb{F}_2S_h$-module $\mathbb{F}_2^h$ have dimensions $1,h-1$ and $h$. Moreover, this remains true when $S_h$ is replaced with $A_h$. 
\end{lemma}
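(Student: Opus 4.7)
The plan is to pin down two obvious submodules of $V=\mathbb{F}_2^h$: the one-dimensional $U_1=\langle\mathbf{1}\rangle$ (where $\mathbf{1}=e_1+\cdots+e_h$) and the $(h-1)$-dimensional $U_{h-1}=\{v\in V:\sum_i v_i=0\}$. Both are visibly $S_h$-invariant, hence also $A_h$-invariant, and they are distinct since $h-1\geq 2$. The main step is to establish the dichotomy: any non-zero submodule $U\leq V$ either equals $U_1$ or contains $U_{h-1}$. Once this is granted, the classification follows immediately: if $U\supseteq U_{h-1}$ and $U\neq U_{h-1}$, pick $w\in U$ with $\sum_i w_i=1$; then $w+U_{h-1}$ is the set of all vectors with coordinate sum one, so $U=V$. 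Hence the non-zero submodules are exactly $U_1$, $U_{h-1}$ and $V$, of dimensions $1$, $h-1$ and $h$.

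To prove the dichotomy, assume $U\neq U_1$. Then $U$ contains a vector $v\notin\{0,\mathbf{1}\}$, so there exist indices $i,j$ with $v_i=1$ and $v_j=0$. The goal is to produce a weight-two vector $e_a+e_b\in U$: once achieved, $2$-transitivity of the acting group (which holds for $A_h$ whenever $h\geq 4$, so certainly for $h\geq 5$) forces all such $e_a+e_b$ into $U$, and these span $U_{h-1}$. For $S_h$ one simply takes the transposition $\tau=(i,j)$, which only swaps coordinates $i$ and $j$, giving $\tau v+v=e_i+e_j\in U$. For $A_h$ one instead chooses some $k\notin\{i,j\}$ (available as $h\geq 3$) and considers the $3$-cycle $\sigma=(i,j,k)\in A_h$. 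A direct computation from $(\sigma v)_l=v_{\sigma^{-1}(l)}$ yields
\[
\sigma v+v=(v_k+1)\,e_i+e_j+v_k\,e_k,
\]
which equals $e_i+e_j$ when $v_k=0$ and $e_j+e_k$ when $v_k=1$; in either case a weight-two vector sits inside $U$.

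The only slight obstacle is the $A_h$-case, where no transposition is available and a priori $\sigma v+v$ for a $3$-cycle could carry weight three. The short computation above is precisely what rules this out: working in characteristic $2$ collapses one of the three contributing coordinates regardless of $v_k$, producing a weight-two vector uniformly. Everything after the extraction of such a vector uses only $2$-transitivity, so the $S_h$ and $A_h$ arguments merge into one from that point onward, confirming the same list $\{1,h-1,h\}$ of submodule dimensions in both cases.
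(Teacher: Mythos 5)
Your proof is correct, but it takes a genuinely different route from the paper. The paper appeals to known modular representation theory: it quotes James's description of the $\mathbb{F}_2$-permutation module (uniserial with factors $\mathbb{F}_2, D^{(h-1,1)}, \mathbb{F}_2$ for even $h$, and $\mathbb{F}_2\oplus D^{(h-1,1)}$ for odd $h$) and then a theorem of Benson guaranteeing that $D^{(h-1,1)}$ stays irreducible on restriction to $A_h$; the dimension count $1,h-1,h$ then falls out of the composition series. You instead argue directly with vectors: identify $\langle\mathbf{1}\rangle$ and the sum-zero hyperplane, and show any submodule other than $\langle\mathbf{1}\rangle$ contains a weight-two vector — via $\tau v+v=e_i+e_j$ for a transposition in the $S_h$ case, and via the characteristic-$2$ collapse $\sigma v+v=(v_k+1)e_i+e_j+v_ke_k$ for a $3$-cycle in the $A_h$ case — after which $2$-transitivity of $A_h$ ($h\geq 4$) sweeps up all weight-two vectors, which span the hyperplane; the codimension-one step then finishes the classification. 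Your computations check out (in particular the $3$-cycle identity is correct, and the passage from one weight-two vector to all of them only needs the action on unordered pairs). What your approach buys is a self-contained, elementary proof with no citation to the modular theory of symmetric groups and no parity case split, and it even covers $h=4$; what the paper's approach buys is brevity given the references and the additional structural information (the explicit composition series and the identification of the heart as $D^{(h-1,1)}$), which is more in the spirit of how the lemma is used downstream.
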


\begin{proof}
	By \cite[Example~5.1]{JamesSymmetric78}, for even $h$ the module $\mathbb{F}_2^h$ is uniserial with factors $\mathbb{F}_2, D^{(h-1,1)}$ and $\mathbb{F}_2$ from top to bottom. On the other hand for odd $h$ we have the decomposition $\mathbb{F}_2^h\cong \mathbb{F}_2 \oplus D^{(h-1,1)}$. The result follows as $D^{(h-1,1)}$ has dimension $h-2$, respectively, $h-1$ for even, respectively, odd $h$. The result remains true when we restrict to $A_h$ since $D^{(h-1,1)}$ (and clearly $\mathbb{F}_2$) remain irreducible when restricted to $A_h$ by \cite[Theorem~1.1]{BensonSpinModules88}.
\end{proof}

We combine the last two lemmas using an argument from \cite[p. 342]{SaxlMultiplicity-free81}.

\begin{lemma}\label{Lemma small index of S2 wr Sm}
	Suppose that $h\geq 8$. Then the only proper transitive subgroups of $S_2\wr S_h$ of index at most $2h+4$ are $S_2\wr A_h, \left(S_2\wr S_h \right)\cap A_{2h}, T_{2,h}$ and $T_{2,h}\cap A_{2h}$.
\end{lemma}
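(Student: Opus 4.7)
The approach is to exploit the short exact sequence $1\to (S_2)^h\to S_2\wr S_h\xrightarrow{\alpha} S_h\to 1$, identifying the base $(S_2)^h$ with the natural permutation $\mathbb{F}_2S_h$-module $\mathbb{F}_2^h$. For $G\leq S_2\wr S_h$ set $G_B=G\cap (S_2)^h$; then $G_B$ is an $\mathbb{F}_2\alpha(G)$-submodule of $\mathbb{F}_2^h$ and $|G|=|G_B|\cdot|\alpha(G)|$. Since $G$ is transitive on $2h$ points, $\alpha(G)$ is a transitive subgroup of $S_h$. Combining the hypothesis $|G|\geq 2^h h!/(2h+4)$ with $|G_B|\leq 2^h$ gives $|S_h:\alpha(G)|\leq 2h+4$, so Lemma~\ref{Lemma transitive size} (applicable as $h\geq 8$) forces $\alpha(G)\in\{S_h,A_h\}$.

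Next I would bound $G_B$ from below. Since $|\alpha(G)|\geq h!/2$, we get $|G_B|\geq 2^{h+1}/(2h+4)>2$ for $h\geq 8$, and Lemma~\ref{Lemma submodules in char two} then forces $|G_B|\in\{2^{h-1},2^h\}$. The unique $(h-1)$-dimensional submodule is the sum-zero subspace $S=\{v\in\mathbb{F}_2^h:\sum_i v_i=0\}$. If $G_B=\mathbb{F}_2^h$, then $G=\alpha^{-1}(\alpha(G))$, yielding $S_2\wr S_h$ (not proper) or $S_2\wr A_h$. If $G_B=S$, then the sum map $s\colon\mathbb{F}_2^h\to\mathbb{F}_2$ induces an isomorphism $\mathbb{F}_2^h/S\cong\mathbb{F}_2$; using $S_h$-invariance of $s$ (so that $s(g+\sigma g')=s(g)+s(g')$) one checks that $\phi(\sigma):=s(g)$ for any $(g;\sigma)\in G$ is a well-defined homomorphism $\phi\colon\alpha(G)\to\mathbb{F}_2$, and $G=\{(g;\sigma):\sigma\in\alpha(G),\,s(g)=\phi(\sigma)\}$ is recovered from $\phi$. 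Since $A_h$ is simple for $h\geq 5$, only $\phi=0$ occurs when $\alpha(G)=A_h$, while for $\alpha(G)=S_h$ both $\phi=0$ and $\phi=\mathrm{sgn}$ arise.

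To identify the resulting four proper subgroups with those in the statement, I would compute that $(g;\sigma)\in S_2\wr S_h$, viewed as a permutation of $2h$ points, has sign $(-1)^{\sum_i g_i}$: the contribution of $\sigma$ vanishes because $m=2$ is even, so each $k$-cycle of $\sigma$ acts on $2k$ points as a product of two $k$-cycles, contributing a square to the sign. Combined with the definition of $T_{2,h}$ as the kernel of $\mathrm{sgn}^{\widetilde{\times h}}\mathrm{sgn}$, this identifies the four subgroups as $S_2\wr A_h$, $(S_2\wr S_h)\cap A_{2h}$, $T_{2,h}$ and $T_{2,h}\cap A_{2h}$, the last corresponding to $\alpha(G)=A_h$ with $G_B=S$. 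Transitivity on $2h$ points is then immediate: $\alpha(G)\supseteq A_h$ permutes the $h$ blocks transitively, and the element $(1,1,0,\ldots,0)\in S\leq G_B$ swaps the two points of block $1$ while fixing the others. The only subtle step is distinguishing the two extensions with $\alpha(G)=S_h$ and $G_B=S$: they are isomorphic as abstract groups but embed differently into $S_{2h}$, and $\phi$ is precisely the invariant separating them.
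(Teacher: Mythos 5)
Your argument is correct and follows essentially the same route as the paper: the exact sequence $1\to G_B\to G\to\alpha(G)\to 1$, Lemma~\ref{Lemma transitive size} to force $\alpha(G)\in\{S_h,A_h\}$, and Lemma~\ref{Lemma submodules in char two} to force $\dim G_B\in\{h-1,h\}$, after which the paper simply cites Lemma~\ref{Lemma index two subgps} for the index-two subgroups where you instead identify them by hand via the homomorphism $\phi$ and the sign computation (a perfectly fine, slightly more explicit finish that also records transitivity of the four groups). One small slip: the estimate $|G_B|\geq 2^{h+1}/(2h+4)$ does not follow from $|\alpha(G)|\geq h!/2$ (a lower bound on the denominator of $|G|/|\alpha(G)|$ goes the wrong way); the correct bound, using $|\alpha(G)|\leq h!$, is $|G_B|\geq 2^{h}/(2h+4)>2$ for $h\geq 8$, which still rules out dimensions $0$ and $1$, so the proof is unaffected.
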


\begin{proof}
	Let $G$ be such a subgroup. Recall the notation and the short exact sequence \eqref{Eq ses}
	\begin{equation*}
	1\to G_B\to G \to \alpha(G)\to 1.
	\end{equation*}
	By our assumptions, $\alpha(G)$ must be a transitive subgroup of $S_h$ of index at most $2h+4$. By Lemma~\ref{Lemma transitive size} either $\alpha(G)=S_h$ or $\alpha(G)=A_h$.
	
	Write $P$ for the abelian group $\underbrace{S_2\times S_2\times\dots\times S_2}_{h \text{ times}}\leq S_2\wr S_h$. The group $S_h$ acts on $P$ by place permutations which makes $P$ into an $\mathbb{F}_2S_h$-module isomorphic to the natural permutation $\mathbb{F}_2S_h$-module $\mathbb{F}_2^h$. By restricting from $S_h$ to $\alpha(G)$, we see that $G_B$ is an $\mathbb{F}_2\alpha(G)$-submodule of $P\res_{\alpha(G)}\cong \mathbb{F}_2^h\res_{\alpha(G)}$.
	
	Using Lemma~\ref{Lemma submodules in char two}, we conclude that $G_B$ has dimension $0,1,h-1$ or $h$. Since, as groups, $|P:G_B|\leq 2h+4$, the only possible dimensions are $h-1$ and $h$, and in turn $|P:G_B|\leq 2$. Consequently, $G$ has index at most $2$ in either $S_2\wr S_h$ or $S_2\wr A_h$. Lemma~\ref{Lemma index two subgps} yields our four desired groups. 
\end{proof}

Finally, we move to $S_m\wr S_3$.

\begin{lemma}\label{Lemma small index of Sm wr S3}
	Let $m\geq 5$. There are no transitive subgroups of $S_m\wr S_3$ of index $3$ or $6$.
\end{lemma}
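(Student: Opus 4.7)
The plan is to exploit the short exact sequence
\[1 \to G_B \to G \to \alpha(G) \to 1\]
from \eqref{Eq ses} applied to a hypothetical transitive subgroup $G \leq S_m \wr S_3$ of index $3$ or $6$. Transitivity of $G$ on $3m$ points forces $\alpha(G)$ to act transitively on the three blocks, so $\alpha(G) \in \{A_3, S_3\}$, and the factorisation $[S_m \wr S_3 : G] = [S_m^3 : G_B] \cdot [S_3 : \alpha(G)]$ (combined with integrality) forces $[S_m^3 : G_B] \in \{3, 6\}$. Lemma~\ref{Lemma conjugation lemma} further requires the three projections $G_1, G_2, G_3$ of $G_B$ to be conjugate in $S_m$. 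The task therefore reduces to showing that no $G_B \leq S_m^3$ of index $3$ or $6$ has all three coordinate projections conjugate in $S_m$.

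The index-$3$ case is immediate from composition factors: acting on the three cosets yields a homomorphism $S_m^3 \to S_3$ with transitive image (hence order divisible by $3$), yet for $m \geq 5$ every composition factor of $S_m^3$ lies in $\{A_m, C_2\}$, and no group of order $3$ or $6$ has composition factors of this shape. For the index-$6$ case I would run the same coset-action argument to produce a transitive subgroup of $S_6$ that is a quotient of $S_m^3$, hence of order at most $720$ and divisible by $6$. When $m \geq 7$ the estimate $|A_m| > 720$ prevents $A_m$ from appearing as a composition factor of the quotient, forcing it to be a $2$-group; this contradicts $6$ dividing the order of the image.

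The genuinely delicate situations are $m = 5$ and $m = 6$, where such quotients do exist, and I expect this to be the main obstacle. Here I would enumerate the possible quotients of $S_m^3$ of order at most $720$ whose composition factors lie in $\{A_m, C_2\}$ by exploiting that the image of each embedding $\iota_j \colon S_m \hookrightarrow S_m^3$ is a quotient of $S_m$ (so one of $\{1, C_2, S_m\}$), and that the images of distinct $\iota_j$ centralise each other; once some $\iota_j$ has image $S_m$, the others are forced into $Z(S_m) = 1$. This pins down $G_B$ as $\pi_i^{-1}(K)$ for some $i$ and some subgroup $K \leq S_m$ of index $6$ (explicitly, $\mathrm{AGL}_1(\mathbb{F}_5) \leq S_5$ or a transitive copy of $S_5$ in $S_6$). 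But then $\pi_i(G_B) = K \subsetneq S_m = \pi_j(G_B)$ for $j \neq i$, so the three projections are not conjugate in $S_m$, contradicting Lemma~\ref{Lemma conjugation lemma} and completing the proof.
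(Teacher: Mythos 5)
Your proposal is correct in outline and reaches the right conclusion, but it takes a genuinely different route from the paper. The paper uses Lemma~\ref{Lemma conjugation lemma} \emph{first}: since $G_1,G_2,G_3$ are conjugate, the three projection indices are equal, and as their product divides $|S_m^3:G_B|\mid 6$ they are all $1$, so $G_B$ is a subdirect subgroup of $S_m\times S_m\times S_m$; two applications of Corollary~\ref{Corollary subdirect subgps}(i) then force $|S_m^3:G_B|\in\{1,2,4\}$, whence $\alpha(G)$ has index $3$ or $6$ in $S_3$ and cannot be transitive. This is uniform in $m$ and needs no coset actions or small cases. You instead use transitivity first to get $[S_m^3:G_B]\in\{3,6\}$ and then kill such subgroups of $S_m^3$ by coset-action/composition-factor arguments, invoking the conjugacy of the projections only at the very end and only for $m\in\{5,6\}$. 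Your index-$3$ argument and your $m\geq 7$ argument are fine; what your approach buys is that it shows directly that $S_m^3$ has no subgroup of index $3$, and no subgroup of index $6$ except coordinate preimages when $m\in\{5,6\}$, without needing the Goursat-type Corollary~\ref{Corollary subdirect subgps} — at the price of a case analysis the paper avoids entirely.

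One step in your $m\in\{5,6\}$ analysis is stated too strongly and needs repair. From the fact that the images $T_j$ of the three coordinate embeddings in the coset-action image $T\leq S_6$ pairwise centralise each other, it does \emph{not} follow abstractly that once some $T_j\cong S_m$ the others lie in $Z(S_m)=1$: the quotient $S_m^3\twoheadrightarrow S_m\times S_m$ is a counterexample to that reasoning. What saves you is the ambient bound $T\leq S_6$, $|T|\leq 720$. For $m=6$, $|T_j|=720$ forces $T_j=T$, so the other images lie in $Z(S_6)=1$. For $m=5$ you must rule out a second factor mapping onto $S_5$ (order count: $120\cdot 120/|T_1\cap T_2|\geq 120^2>720$ since $T_1\cap T_2\leq Z(T_1)=1$) and check that $C_{S_6}(H)=1$ for \emph{both} conjugacy classes of subgroups $H\cong S_5$ of $S_6$ (the point stabiliser and the transitive copy), which excludes a residual $C_2$-image as well. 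With that done, $T\cong S_6$, respectively a transitive $S_5$, the kernel of the coset action is exactly the product of the other two coordinate factors, and $G_B=\pi_i^{-1}(K)$ with $[S_m:K]=6$ (so $K\cong S_5$ or $\mathrm{AGL}_1(\mathbb{F}_5)$), whose projections $K,S_m,S_m$ are not conjugate — contradicting Lemma~\ref{Lemma conjugation lemma} as you intended.
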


\begin{proof}
	Suppose for contradiction that $G$ is such a subgroup and recall the short exact sequence \eqref{Eq ses}
	\begin{equation*}
	1\to G_B\to G \to \alpha(G)\to 1.
	\end{equation*}
	By Lemma~\ref{Lemma conjugation lemma}, we know that $G_B$ is a subdirect subgroup of $G_1\times G_2\times G_3$ with $G_1,G_2$ and $G_3$ conjugate in $S_m$. In particular $|G_1|=|G_2|=|G_3|$. Consequently, since $|S_m\times S_m\times S_m:G_B|$ divides $6$, we conclude that $G_1=G_2=G_3=S_m$. This can be restated as: $G_B$ is a subdirect subgroup of $S_m\times H$ with $H$ a subdirect subgroup of $S_m\times S_m$. Two applications of Corollary~\ref{Corollary subdirect subgps}(i) imply that $G_B$ has index $1,2$ or $4$ in $S_m\times S_m\times S_m$; thus $\alpha(G)$ has index $3$ or $6$ in $S_3$, which cannot happen as it must be transitive.
\end{proof}

We end the section by combining all previous results into a small list of potential multiplicity-free subgroups. In the statement we divide these groups into several families (i)--(ix) which are organised similarly as in Theorem~\ref{Theorem main} with the exception that the groups from Theorem~\ref{Theorem main}(iii), (iv) with $k=1$ and (x) now belong to families (v) and (vi) embedded in $S_{2m+1}$ and $S_{2h+1}$, respectively. As in Theorem~\ref{Theorem main}, the family (ix) consists of subgroups of index $2$ in $S_k\times S_2\wr S_h$ for some $h\in\left\lbrace 2,3,4 \right\rbrace $ and the families (i)--(viii) are of the form $H, H_1,H_2,\dots,H_t$ where all $H_i$ are subgroups of $H$ of index $2$ or $4$. Moreover, for all these $H$ we have already found all their irreducible induced-multiplicity-free characters. 

\begin{proposition}\label{Prop possible MF subgroups}
	Let $n\geq 65$. Suppose that $G\leq S_n$ is a multiplicity-free subgroup. Then $G$ belongs to the list:
	\begin{enumerate}[label=\textnormal{(\roman*)}]
		\item $S_n$ and $A_n$,
		\item $S_k\times S_l, (S_k\times S_l)\cap A_{k+l}, A_k\times S_l$ and $A_k\times A_l$,
		\item $S_k\times S_m\wr S_2, \left( S_k\times S_m\wr S_2\right) \cap A_{k+2m}$ and $T_{k,m,2}$,
		\item $S_k\times S_2\wr S_h$ and $\left( S_k\times S_2\wr S_h\right)\cap A_{k+2h} $,
		\item  $S_m\wr S_2, \left( S_m\wr S_2\right) \cap A_{2m}, A_m\wr S_2$ and $T_{m,2}$ embedded in $S_{2m}$ or $S_{2m+1}$,
		\item $S_2\wr S_h, S_2\wr A_h, (S_2\wr S_h)\cap A_{2h}, T_{2,h}$ and $T_{2,h}\cap A_{2h}$ embedded in $S_{2h}$ or $S_{2h+1}$,
		\item $S_m\wr S_3, \left( S_m\wr S_3\right) \cap A_{3m}, S_m\wr A_3$ and $T_{m,3}$,
		\item $S_k\times L$,$A_k\times L$ and $(S_k\times L)\cap A_n$ where $L$ is one of $\mathrm{P}\Gamma\mathrm{L}_2(\mathbb{F}_8), \mathrm{ASL}_3(\mathbb{F}_2)$, $\mathrm{PGL}_2(\mathbb{F}_5)$ and $\mathrm{AGL}_1(\mathbb{F}_5)$,
		\item $A_k\times S_2\wr S_2, N_k$ and $T_{k,2,h}$ with $h\in \left\lbrace 3,4 \right\rbrace$,
	\end{enumerate}
	where $l\geq 1$, $k,m\geq 2$ and $h\geq 3$.
\end{proposition}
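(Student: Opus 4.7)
My plan is a structural case analysis on $G$, invoking the preceding classifications case by case. First, by Lemma~\ref{Lemma sanity lemma}, $G$ has at most two orbits. In the intransitive case, $G$ is contained in a Young subgroup $S_k\times S_l$ with $k+l=n\geq 65$: if both $k,l\geq 2$, then Theorem~\ref{Theorem direct product arbitrary} deposits $G$ in families~(ii), (iii), (iv), (viii), or~(ix); if $G$ has a fixed point, $G\leq S_{n-1}$ and Proposition~\ref{Prop direct product k eq 1} yields either $G\in\{S_{n-1},A_{n-1}\}$ (contributing to~(ii) via $S_{n-1}\times S_1$) or an embedding of a subgroup of $S_m\wr S_2$ or $S_2\wr S_h$ via the natural inclusion $S_{n-1}\hookrightarrow S_n$, accounting for the $S_{2m+1}$ and $S_{2h+1}$ entries in~(v) and~(vi).

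Next, if $G$ is transitive and primitive, Corollary~\ref{Corollary primitive groups} immediately yields $G\in\{S_n,A_n\}$, since $n\geq 65>12$. Otherwise $G\leq S_m\wr S_h$ imprimitively for some $m,h\geq 2$ with $n=mh$; since multiplicity-freeness passes to super-groups, $S_m\wr S_h$ itself is multiplicity-free and Proposition~\ref{Prop MF wreath products} forces $h\in\{2,3\}$ or $m=2$. For $h=2$ (hence $m\geq 32$), Proposition~\ref{Prop Sm wr S2 subgroups} finishes, producing family~(v). In the remaining cases I work with the short exact sequence $1\to G_B\to G\to\alpha(G)\to 1$ from~\eqref{Eq ses}. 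For $h=3$, $\alpha(G)\in\{S_3,A_3\}$ is automatic, and combining Lemma~\ref{Lemma conjugation lemma} with iterated applications of Corollary~\ref{Corollary subdirect subgps}(i) plus the inequality $|G|>2\lfloor n/2\rfloor!$ coming from Lemma~\ref{Lemma order bound - inv} and Lemma~\ref{Lemma involutions}(iii) forces each subdirect factor $G_i$ of $G_B$ to equal $S_m$ and then $|S_m^3:G_B|\leq 4$; enumeration via Lemma~\ref{Lemma index two subgps}, excluding the non-existent index-$3,6$ subgroups by Lemma~\ref{Lemma small index of Sm wr S3}, yields family~(vii). For $m=2$, Lemma~\ref{Lemma submodules in char two} pins $\dim G_B\in\{0,1,h-1,h\}$ whenever $\alpha(G)\in\{S_h,A_h\}$; the two small values are ruled out by the same size bound, leaving $|S_2\wr S_h:G|\leq 4$ and Lemma~\ref{Lemma small index of S2 wr Sm} delivers family~(vi).

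The main obstacle is ruling out proper transitive $\alpha(G)<A_h$ in the $m=2$ case (together with the analogous issue of forcing each $G_i=S_m$ in the $h=3$ case), because counting via Corollary~\ref{Cor Lemma order bound - exp} alone is too weak once $\alpha(G)$ is imprimitive. Primitive proper $\alpha(G)\notin\{S_h,A_h\}$ are eliminated using Theorem~\ref{Theorem primitive groups small}: the bound $|\alpha(G)|\leq 2^{h-1}$ (the exceptional family being vacuous for $h\geq 32$) combined with $|G_B|\leq 2^h$ gives $|G|\leq 2^{n-1}$, contradicting Corollary~\ref{Cor Lemma order bound - exp}. For imprimitive $\alpha(G)\leq S_{m'}\wr S_{h'}$ with $m',h'\geq 2$ and $m'h'=h$, $G$ sits inside the nested wreath product $(S_2\wr S_{m'})\wr S_{h'}\leq S_{2m'}\wr S_{h'}$, so the latter must itself be multiplicity-free, and Proposition~\ref{Prop MF wreath products} pins $h'\in\{2,3\}$. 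The subcase $h'=2$ collapses because $(S_2\wr S_{m'})\wr S_2$ does not appear on the short list of Proposition~\ref{Prop Sm wr S2 subgroups}; the subcase $h'=3$ and the parallel obstacle at $h=3$ (excluding, for example, $A_m\wr S_3$ or diagonal-type embeddings $\Delta S_m\rtimes S_3$) require a finer case analysis combining Corollary~\ref{Corollary subdirect subgps} with the size lower bound from Lemma~\ref{Lemma order bound - inv} to bound $|G_B|$ away from the small-diagonal regime.
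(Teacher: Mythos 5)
Your skeleton (intransitive / primitive / imprimitive, fed by the earlier classifications) is the paper's, but two steps fail as proposed. The decisive missing idea in the transitive imprimitive case is the index bound coming from character degrees: if $G\leq H\leq S_n$ is multiplicity-free, inducing an irreducible induced-multiplicity-free character of $G$ to $H$ gives an induced-multiplicity-free character of $H$ of degree at least $|H:G|$. By Proposition~\ref{Prop MF wreath products} every induced-multiplicity-free character of $S_m\wr S_3$ has degree $3$ or $6$ (there are four irreducible ones, all of degree $3$, and at most two can be summed), so $|S_m\wr S_3:G|\in\{1,2,3,6\}$, and Lemma~\ref{Lemma small index of Sm wr S3} then leaves index $1$ or $2$, i.e.\ family (vii); the analogous bound $|S_2\wr S_h:G|\leq 2h+4$ is what feeds Lemma~\ref{Lemma small index of S2 wr Sm}. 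Your substitute --- Lemma~\ref{Lemma conjugation lemma}, Corollary~\ref{Corollary subdirect subgps} and the order bounds of Lemma~\ref{Lemma order bound - inv} and Corollary~\ref{Cor Lemma order bound - exp} --- cannot force the subdirect factors $G_i$ to equal $S_m$, nor give $|S_m\times S_m\times S_m:G_B|\leq 4$: the group $A_m\wr S_3$, of index $8$, has order $6(m!/2)^3$, which for $m\geq 22$ exceeds $2^{n-1}$, $2\lfloor n/2\rfloor!$ and even $n!/a_n$ by an astronomical margin (at $m=22$ it is about $10^{63}$ against roughly $10^{44}$ for $n!/a_n$), so no counting bound of this kind distinguishes it from $S_m\wr S_3$ itself. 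The same defect affects the subcase you leave open ($m=2$ with $\alpha(G)$ imprimitive and $h'=3$): there too it is the degree--index bound, not a size bound, that closes the case, so the ``finer case analysis combining Corollary~\ref{Corollary subdirect subgps} with the size lower bound'' you defer to cannot succeed as described.

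The intransitive case with $k,l\geq 2$ is also not covered by the single appeal to Theorem~\ref{Theorem direct product arbitrary}: that theorem classifies characters of genuine direct products $K\times L$ with $L\neq S_l,A_l$, whereas most groups in (ii)--(iv), (viii), (ix) --- $(S_k\times S_l)\cap A_{k+l}$, $T_{k,m,2}$, $\left( S_k\times S_m\wr S_2\right)\cap A_{k+2m}$, $\left( S_k\times S_2\wr S_h\right)\cap A_{k+2h}$, $N_k$, $T_{k,2,h}$, $(S_k\times L)\cap A_n$ --- are proper subdirect subgroups, and all other subdirect subgroups must be excluded. What is needed (and what the paper does) is: pass to $K=\pi_1(G)$, $L=\pi_2(G)$; when both are symmetric or alternating invoke Corollary~\ref{Corollary MF subdirect subgpa} to get (ii); otherwise use Theorem~\ref{Theorem direct product arbitrary} only to pin down $K\times L$, then bound $|K\times L:G|\leq 2$ (or force $G=K\times L$ when $K=A_k$) via the degrees of the induced-multiplicity-free characters of $K\times L$ together with Corollary~\ref{Corollary subdirect subgps}, and finally run through the index-two kernels with Lemma~\ref{Lemma index two subgps}, discarding those, such as $\ker\left(\sgn\boxtimes\left(\charwrnb{\mathbbm{1}}{\sgn}{2}\right)\right)$ for $m\geq 3$, whose candidate characters fail to be induced-multiplicity-free by Corollary~\ref{Cor index two}(ii). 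None of this appears in your intransitive paragraph, so as written it neither produces the subdirect members of the list nor rules the remaining subdirect subgroups out.
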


\begin{proof}
	In the proof $m\geq 2$, $h\geq 3$ and $k,l\geq 1$.
	According to Corollary~\ref{Corollary primitive groups}, group $G$ either belongs to (i) or is non-transitive or imprimitive. If it is non-transitive, write $K\times L\leq S_k\times S_l$ (with $n=k+l$) for a multiplicity-free group such that $G$ is a subdirect subgroup of it. If $K$ is $S_k$ or $A_k$ and $L$ is $S_l$ or $A_l$, Corollary~\ref{Corollary MF subdirect subgpa} implies (ii) (since (ii) does not change if $k=1$ is allowed). Now suppose this is not the case.
	
	If $k=1$, implicitly $L\neq S_l,A_l$, and by Proposition~\ref{Prop direct product k eq 1}, the group $G$ must be a subgroup of $S_m\wr S_2$ or $S_2\wr S_h$ embedded in $S_{2m+1}$, respectively, $S_{2h+1}$. By  Lemma~\ref{Lemma sanity lemma}, it must be transitive in $S_{2m}$, respectively, $S_{2h}$. We can use Proposition~\ref{Prop Sm wr S2 subgroups} for $S_m\wr S_2$ to get groups in (v) embedded in $S_{2m+1}$. For $G\leq S_2\wr S_h$, according to Proposition~\ref{Prop direct product k eq 1}(ii), there are only $4$ irreducible induced-multiplicity-free characters of $S_2\wr S_h$ and all of them have degree $1$. Hence we only need to consider subgroups of $S_2\wr S_h$ of index at most $4$. By Lemma~\ref{Lemma small index of S2 wr Sm} we get groups in (vi) embedded in $S_{2h+1}$.
	
	Now assume that $k,l\geq 2$. Theorem~\ref{Theorem direct product arbitrary} applies and lists all the possible choices of $K\times L$. In particular, $K$ is $S_k$ or $A_k$. We claim that $|K\times L:G|\leq 2$ for $K=S_k$ and $G=K\times L$ for $K=A_k$. If $L=S_m\wr S_2$ with $m\geq 3$, we have $K=S_k$. We conclude $|K\times L:G|\leq 2$ from the fact that the induced-multiplicity-free characters of $K\times L$ have degree at most $2$. This is because by Theorem~\ref{Theorem direct product k geq 2}(i) there are only four such characters which are irreducible, all have degree $1$ and we cannot add more than two of them together without creating a non-induced-multiplicity-free character since the sum of the induced characters corresponding to $s_{\lambda}(s_{(2)}\circ s_{\mu})$ and $s_{\lambda}(s_{(1^2)}\circ s_{\mu})$ corresponds to $s_{\lambda}s_{\mu}^2$, which is not multiplicity-free by Lemma~\ref{Lemma sanity lemma}. For $L=S_2\wr S_h$ with $h\geq 5$, we reach the same conclusion since, by Theorem~\ref{Theorem direct product k geq 2}(iii), there are only two irreducible induced-multiplicity-free characters of $K\times L$ and they are both of degree $1$. In the remaining cases $L\leq S_l$ with $l\leq 9$. Thus $L$ cannot surject to $S_k$ or $A_k$ and Lemma~\ref{Corollary subdirect subgps} proves the claim.
	
	We can now use Lemma~\ref{Lemma index two subgps} (and character tables of $\mathrm{P}\Gamma\mathrm{L}_2(\mathbb{F}_8), \mathrm{ASL}_3(\mathbb{F}_2)$, $\mathrm{PGL}_2(\mathbb{F}_5)$ and $\mathrm{AGL}_1(\mathbb{F}_5)$) to obtain index $2$ subdirect subgroups of $S_k\times L$. Note, the corresponding real linear character $\eta=\eta_1\boxtimes\eta_2$ must be such that $\eta_1$ and $\eta_2$ are non-trivial as $G$ is a subdirect subgroup of $S_k\times L$. Using Corollary~\ref{Cor index two}(ii) (and characters from Theorem~\ref{Theorem direct product k geq 2}(i)--(iv)), we eliminate $\eta =\sgn\boxtimes \left( \charwrnb{\mathbbm{1}}{\sgn}{2}\right)$ for $m\geq 3$ (with $L=S_m\wr S_2$), $\eta =\sgn\boxtimes \left( \charwrnb{\sgn}{\sgn}{h}\right) $ for $h\geq 5$ and $\eta =\sgn\boxtimes \left( \charwrnb{\mathbbm{1}}{\sgn}{h}\right) $ for $h\geq 3$ (both with $L=S_2\wr S_h$). This leaves us with (iii), (iv), (viii) and (ix).
	
	It remains to consider $G$ transitive but imprimitive. Using Proposition~\ref{Prop MF wreath products}, $G$ is a subgroup of $S_m\wr S_2$, $S_2\wr S_h$ or  $S_m\wr S_3$. If $G\leq S_m\wr S_2$, we apply Proposition~\ref{Prop Sm wr S2 subgroups} to get groups in (v) embedded in $S_{2m}$. Looking at the degrees of irreducible characters of $S_2\wr S_h$ in Proposition~\ref{Prop MF wreath products}, if $G\leq S_2\wr S_h$, then $|S_2\wr S_h: G|\leq 2h +4$. Lemma~\ref{Lemma small index of S2 wr Sm} then gives groups in (vi) embedded in $S_{2h}$. Finally, for $G\leq S_m\wr S_3$ observe that any induced-multiplicity-free character of $S_m\wr S_3$ has degree $3$ or $6$. Indeed, there are four of them which are irreducible, all have degree $3$ and we cannot add more than two of them together without creating a non-induced-multiplicity-free character since the sum of the induced characters corresponding to $s_{\lambda}(s_{(2)}\circ s_{\mu})$ and $s_{\lambda}(s_{(1^2)}\circ s_{\mu})$ corresponds to $s_{\lambda}s_{\mu}^2$, which is not multiplicity-free by Lemma~\ref{Lemma sanity lemma}. Therefore $G$ has index $1,2,3$ or $6$ in $S_m\wr S_3$. Lemma~\ref{Lemma small index of Sm wr S3} reduces these options to $1$ and $2$, which yields (vii) according to Lemma~\ref{Lemma index two subgps}.  
\end{proof}
	
\section{Small index subgroups}\label{Sec SIS I}
The subsections of this section are all apart from the final one titled by the first groups $H$ in the families in Proposition~\ref{Prop possible MF subgroups}(iii)--(vii) with (v) excluded. Each subsection starts by recalling the irreducible induced-multiplicity-free characters of $H$ as well as the other groups in the corresponding family. The goal is then to find all the irreducible induced-multiplicity-free characters of these other groups. The final (rather technical) subsection then focuses on families (viii) and (ix).

One should keep in mind Lemma~\ref{Lemma tensoring plethysms}, Lemma~\ref{Lemma index two} and Corollary~\ref{Cor index two} since computations as in Example~\ref{Example index two} are commonly used. Unless specified otherwise, $k,m\geq 2$ and $h\geq 3$ throughout. 

\subsection{$S_k\times S_m\wr S_2$}\label{Section k,m,2}

We consider the groups $G_+=\ker\left( \sgn\boxtimes \left( \charwrnb{\sgn}{\mathbbm{1}}{2}\right) \right) $ and $G_-=\ker\left(  \sgn\boxtimes \left( \charwrnb{\sgn}{\sgn}{2}\right)\right)  $ which equal $\left( S_k\times S_m\wr S_2\right) \cap A_{k+2m}$ and $T_{k,m,2}$ in some order: if $m$ is even, $G_+=\left( S_k\times S_m\wr S_2\right) \cap A_{k+2m}$ and $G_-=T_{k,m,2}$, while for odd $m$ it is the other way around.

By Theorem~\ref{Theorem direct product k geq 2}, for $n\geq 18$ the irreducible induced-multiplicity-free characters of $S_k\times S_m\wr S_2$ are $\chi^{(1^k)}\boxtimes \left( \charwr{\chi^{(m)}}{\chi^{\nu}}{2}\right) $ and $\chi^{(k)}\boxtimes \left( \charwr{\chi^{(1^m)}}{\chi^{\nu}}{2}\right) $ with $\nu\vdash 2$, and in the case $m=2$ also $\chi^{\lambda}\boxtimes \left( \charwr{\chi^{\mu}}{\chi^{(1^2)}}{2}\right) $ with $\mu\vdash 2$ and $\lambda\vdash k$ rectangular. Therefore the irreducible characters of $G_{\pm}$ which can be induced-multiplicity-free are $\left( \chi^{(1^k)}\boxtimes \left( \charwr{\chi^{(m)}}{\chi^{\nu}}{2}\right) \right)_{G_{\pm}}$ with $\nu\vdash 2$, and for $m=2$ also $\left( \chi^{\lambda}\boxtimes \left( \charwr{\chi^{(2)}}{\chi^{(1^2)}}{2}\right)\right) _{G_+}$ with $\lambda$ rectangular.

The first character is induced-multiplicity-free if and only if the symmetric function $s_{(1^k)}(s_{\nu}\circ s_{(m)})+s_{(k)}(s_{\bar{\nu}}\circ s_{(1^m)})$, with $\bar{\nu}=\nu$ for $G_+$ and $\bar{\nu}=\nu'$ for $G_-$, is multiplicity-free. Similarly for the second character, we need to find when $s_{\lambda}\left(s_{(1^2)}\circ s_{(2)} \right) + s_{\lambda'}\left(s_{(1^2)}\circ s_{(1^2)} \right)$ is multiplicity-free.

\begin{lemma}\label{Lemma hook and a box}
	Let $k,m\geq 2$ and $\nu,\bar{\nu}\vdash 2$. The symmetric function $s_{(1^k)}(s_{\nu}\circ s_{(m)})+s_{(k)}(s_{\bar{\nu}}\circ s_{(1^m)})$ is \emph{not} multiplicity-free if and only if $k,\nu,\bar{\nu}$ lie in one of these four cases:
	\begin{enumerate}[label=\textnormal{(\roman*)}]
		\item $\nu=(2), \bar{\nu}=(2)^{\prime m}$ and $2m-4\leq k\leq 2m+1$,
		\item $\nu=(2), \bar{\nu}=(1^2)^{\prime m}$ and $2m-3\leq k\leq 2m$,
		\item $\nu=(1^2), \bar{\nu}=(2)^{\prime m}$ and $2m-3\leq k\leq 2m$,
		\item $\nu=(1^2), \bar{\nu}=(1^2)^{\prime m}$ and $2m-3\leq k\leq 2m$.
	\end{enumerate}
\end{lemma}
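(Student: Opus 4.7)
The plan is to reduce the statement to a small finite enumeration of near-hook constituents. First, each summand is individually multiplicity-free by Proposition~\ref{Prop product with sm wr s2} (applied with $k=1$, then augmented by Pieri/Young), so the sum fails to be multiplicity-free precisely when the two summands share a common constituent $s_\lambda$.

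Next, I would describe the possible shapes of such a common $\lambda$. By Proposition~\ref{Prop domino rectangles} with $\mu=(m)$, the constituents of $s_{\nu}\circ s_{(m)}$ are exactly the $s_{(2m-j,j)}$ for $0\leq j\leq m$ of a prescribed parity of $j$ (even if $\nu=(2)$, odd if $\nu=(1^2)$), and dually by Lemma~\ref{Lemma plethysms and char properties}(i) we have $s_{\bar{\nu}}\circ s_{(1^m)}=\omega\bigl(s_{\nu_{0}}\circ s_{(m)}\bigr)$ for some $\nu_{0}\in\{(2),(1^2)\}$ determined by $\bar{\nu}$ and the parity of $m$. Hence every constituent of the first summand has $\lambda_i\leq 1$ for $i\geq 3$ by Pieri's rule, and every constituent of the second summand has $\lambda_2\leq 2$ by the dual statement. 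Ruling out row partitions (impossible for $k\geq 2$), any common constituent $\lambda$ must take one of the two near-hook shapes $(a,1^c)$ or $(a,2,1^c)$.

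For each of the four cases I would then enumerate the constituents of each near-hook shape in each summand and check which pairs coincide. Because $j$ runs in an arithmetic progression of step $2$ and the bounds on $\lambda$ are tight, at most two values of $j$ contribute to each shape in each summand, so the matching reduces to inspecting a handful of equalities between explicit partitions. For instance, in case (i) one has $\nu_{0}=(2)$; the hook constituents of the first summand (from $j=0$) are $(2m,1^{k})$ and $(2m+1,1^{k-1})$, and the hook constituents of the second are obtained by conjugation as $(k+1,1^{2m-1})$ and $(k,1^{2m})$, yielding common hooks exactly for $k\in\{2m-1,2m,2m+1\}$. The constituents of shape $(a,2,1^c)$ (arising from $j\in\{1,2\}$ in the first summand and the analogous values in the second) contribute $k\in\{2m-4,2m-3,2m-2\}$, and the union recovers the interval $2m-4\leq k\leq 2m+1$ in (i). Cases (ii) and (iii) are exchanged by $\omega$, so only one of them needs to be computed, and case (iv) is handled by the same method with the opposite parities throughout.

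The main subtlety is the parity entanglement encoded in the superscripts $(2)^{\prime m}$ and $(1^2)^{\prime m}$: these are chosen precisely so that $\nu_{0}$ equals $(2)$ in cases (i) and (iii) and $(1^2)$ in cases (ii) and (iv), uniformly in the parity of $m$. Once this identification is made, the remainder of the argument is a routine, finite matching of explicit near-hook partitions, and the upper/lower limits of the stated intervals correspond to the extremal pairs $(a,1^c)\leftrightarrow(a',1^{c'})$ and $(a,2,1^c)\leftrightarrow(a',2,1^{c'})$ in each case.
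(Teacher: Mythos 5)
Your proposal is correct and takes essentially the same route as the paper: since both summands are multiplicity-free, failure reduces to a shared constituent, whose shape is forced by the two-row/two-column structure of the plethysms together with Pieri's and Young's rules to be a hook $(a,1^c)$ or a partition $(a,2,1^c)$, after which a finite matching of explicit partitions (organized in the paper as tables indexed by $\lambda$, in your write-up case by case with the $\omega$-symmetry exchanging (ii) and (iii)) produces the stated ranges of $k$. Your sample computation in case (i) agrees with the paper's tables, and your identification $\nu_0=\bar{\nu}^{\prime m}$ handles the parity of $m$ exactly as the paper does by conjugating $\bar{\nu}$ for odd $m$.
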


\begin{proof}
	Both summands are multiplicity-free by Proposition~\ref{Prop product with sm wr s2}. Any partition $\lambda$ such that $p((m),\nu;\lambda)=1$ for some $\nu\vdash 2$ is $(m,1)$-birectangular by Proposition~\ref{Prop domino rectangles}, and thus it satisfies $\ell(\lambda)\leq 2$ by Remark~\ref{Remark birectangular}. Thus, by Pieri's rule, if $s_{\lambda}$ is a constituent of $s_{(1^k)}(s_{\nu}\circ s_{(m)})$ for some $\nu\vdash 2$, then $\lambda=(a,b,1^c)$ with $a + b\in\left\lbrace 2m, 2m+1, 2m+2\right\rbrace $. Applying $\omega$ shows that whenever $s_{\lambda}$ is a constituent of $s_{(k)}(s_{\bar{\nu}}\circ s_{(1^m)})$ for some $\bar{\nu}\vdash 2$, then $\lambda'$ is of the form $(a,b,1^c)$ with $a + b\in\left\lbrace 2m, 2m+1, 2m+2\right\rbrace $. Thus any common constituent $s_{\lambda}$ must be labelled by a hook $(u+1,1^v)$ with $u,v\in \left\lbrace 2m-2,2m-1,2m \right\rbrace $ or by a partition of the form $(u+1,2,1^{v-1})$ with $u,v\in \left\lbrace 2m-3,2m-2,2m-1 \right\rbrace $.
	
	Now, for each of these choices of $\lambda$ we compute for which $k,\nu$ and $\bar{\nu}$ the symmetric functions $s_{(1^k)}(s_{\nu}\circ s_{(m)})$ and $s_{(k)}(s_{\bar{\nu}}\circ s_{(1^m)})$ share a constituent $s_{\lambda}$. Note that the latter has $s_{\lambda}$ as a constituent if and only if $s_{(1^k)}(s_{\bar{\nu}^{\prime m}}\circ s_{(m)})$ has $s_{\lambda'}$ as a constituent. We demonstrate the computations for $\lambda=(u+1,1^v)$ with $u=2m-1$ and $v=2m-2$.
	
	We have $k=|\lambda|-2m=2m-2$. Since $\lambda_1+\lambda_2=2m+1$, we can obtain $\lambda$ by adding a vertical strip of size $k$ to two possible $(m,1)$-birectangular partitions, namely $(2m)$ and $(2m-1,1)$. This means that $\nu$ can be arbitrary by Proposition~\ref{Prop domino rectangles}. On the other hand $\lambda'_1 + \lambda'_2=2m$; hence $\lambda'$ can be obtained by adding a vertical strip of size $k$ to only one $(m,1)$-birectangular partition which is $(2m-1,1)$. By Proposition~\ref{Prop domino rectangles}, we have $\bar{\nu}^{\prime m}=(1^2)$, that is $\bar{\nu}=(1^2)$ if $m$ is even and $\bar{{\nu}}=(2)$ if $m$ is odd.
	
	For even $m$ the values of $k,\nu$ and $\bar{\nu}$ are presented in Table~\ref{Table hook} for $\lambda=(u+1,1^v)$ and in Table~\ref{Table hook and a box} for $\lambda=(u+1,2,1^{v-1})$. One obtains the values for odd $m$ by conjugating $\bar{\nu}$. The result then follows.\qedhere
	
	\begin{table}[h!]
		\centering
		\begin{tabular}{c|ccc}
			\specialrule{0.1em}{0em}{0em}
			\diagbox[width=1.7cm,  height=0.7cm]{$u$}{$v$}&$2m-2$&$2m-1$&$2m$\\
			\hline
			$2m-2$&$2m-3,(1^2),(1^2)$&$2m-2,(1^2),\text{any}$&$2m-1,(1^2),(2)$\\
			$2m-1$&$2m-2,\text{any}, (1^2)$&$2m-1, \text{any},\text{any}$&$2m,\text{any},(2)$\\
			$2m$&$2m-1,(2), (1^2)$&$2m,(2), \text{any}$&$2m+1,(2),(2)$\\
			\specialrule{0.1em}{0em}{0em}
		\end{tabular}
		\vspace{4pt}
		\caption{For even $m$ and $\lambda=(u+1,1^v)$, these are the possible values of $k,\nu$ and $\bar{\nu}$ for which $s_{\lambda}$ is a common constituent of $s_{(1^k)}(s_{\nu}\circ s_{(m)})$ and $s_{(k)}(s_{\bar{\nu}}\circ s_{(1^m)})$.}
		\label{Table hook}
	\end{table}
	
	\begin{table}[h!]
		\centering
		\begin{tabular}{c|ccc}
			\specialrule{0.1em}{0em}{0em}
			\diagbox[width=1.7cm,  height=0.7cm]{$u$}{$v$}&$2m-3$&$2m-2$&$2m-1$\\
			\hline
			$2m-3$&$2m-4,(2),(2)$&$2m-3,(2),\text{any}$&$2m-2,(2),(1^2)$\\
			$2m-2$&$2m-3,\text{any},(2)$&$2m-2, \text{any},\text{any}$&$2m-1,\text{any},(1^2)$\\
			$2m-1$&$2m-2,(1^2),(2)$&$2m-1,(1^2), \text{any}$&$2m,(1^2),(1^2)$\\
			\specialrule{0.1em}{0em}{0em}
		\end{tabular}
		\vspace{4pt}
		\caption{For even $m$ and $\lambda=(u+1,2,1^{v-1})$, these are the possible values of $k,\nu$ and $\bar{\nu}$ for which $s_{\lambda}$ is a common constituent of $s_{(1^k)}(s_{\nu}\circ s_{(m)})$ and $s_{(k)}(s_{\bar{\nu}}\circ s_{(1^m)})$.}
		\label{Table hook and a box}
	\end{table}
\end{proof}

\begin{lemma}\label{Lemma m=2 index 2}
	Let $\lambda$ be a rectangular partition of size at least $21$. The symmetric function $s_{\lambda}\left(s_{(1^2)}\circ s_{(2)} \right) + s_{\lambda'}\left(s_{(1^2)}\circ s_{(1^2)} \right)$ is multiplicity-free if and only $\lambda$ is non-square.
\end{lemma}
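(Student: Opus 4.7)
The plan is to reduce the statement to direct applications of results already established. By Lemma~\ref{Lemma even and shift rules}, the only shift symmetric partition of $4$ is $ss[(2)]=(3,1)$, so $s_{(1^2)}\circ s_{(2)}=s_{(3,1)}$ and, applying $\omega$, $s_{(1^2)}\circ s_{(1^2)}=s_{(2,1,1)}$. Thus the expression in question equals
\[ s_{\lambda}s_{(3,1)}+s_{\lambda'}s_{(2,1,1)}.\]
Since $(3,1)$ is a hook (equivalently, a row-near rectangular partition) and $\lambda$ is rectangular, Theorem~\ref{Theorem Stembridge}(iii) guarantees that each of the two summands is, on its own, multiplicity-free. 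So the only question is whether the two summands share a constituent.

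For the \emph{non-square} case, I would apply Lemma~\ref{Lemma rotate linear} with $\mu=\lambda$, $\nu=\lambda'$, $x=s_{(3,1)}$, $y=s_{(2,1,1)}$, and $l=4$. This requires $|\lambda-\lambda'|>8$. But Lemma~\ref{Lemma difference of rectangles} applied with $l=4$ gives exactly $|\lambda-\lambda'|>8$ whenever $|\lambda|>l^{2}+l=20$, i.e.\ whenever $|\lambda|\geq 21$. This handles the `if' direction.

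For the \emph{square} case $\lambda=\lambda'=(a^{a})$ (with $a\geq 5$), I would use the identity
\[ s_{(3,1)}+s_{(2,1,1)}=s_{(2)}s_{(1^{2})}, \]
which follows immediately from Pieri's rule (Theorem~\ref{Theorem P rule}) since the two vertical strips of size $2$ that can be added to $(2)$ are precisely $(3,1)/(2)$ and $(2,1,1)/(2)$. Consequently,
\[ s_{\lambda}s_{(3,1)}+s_{\lambda'}s_{(2,1,1)} = s_{(a^{a})}\,s_{(2)}\,s_{(1^{2})}, \]
which is the image under $\xi$ of the character of $S_{a^{2}}\times S_{2}\times S_{2}$ induced to $S_{a^{2}+4}$. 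Since this Young subgroup has three orbits, Lemma~\ref{Lemma sanity lemma} (equivalently, \cite[Remark~3.5(a)]{StembridgeMultiplicity-free01}) forces the product to fail to be multiplicity-free, completing the `only if' direction.

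There is no real obstacle: the two key moves are the rewriting $s_{(3,1)}+s_{(2,1,1)}=s_{(2)}s_{(1^{2})}$ (which collapses the square case to a triple product, where Lemma~\ref{Lemma sanity lemma} bites) and the size bound coming from Lemma~\ref{Lemma difference of rectangles} with $l=4$ (which matches precisely the hypothesis $|\lambda|\geq 21$ in the statement).
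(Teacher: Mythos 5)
Your proof is correct. The non-square (`if') direction is exactly the paper's argument: both summands are multiplicity-free, and Lemma~\ref{Lemma difference of rectangles} with $l=4$ (which is precisely where the bound $|\lambda|\geq 21$ comes from) feeds into Lemma~\ref{Lemma rotate linear}. Where you diverge is the square case. The paper keeps the two summands $s_{(a^a)}s_{(3,1)}$ and $s_{(a^a)}s_{(2,1^2)}$ separate and exhibits an explicit common constituent, $s_{(a+2,a+1,a^{a-2},1)}$, via Littlewood--Richardson tableaux with reading words $1121$ and $1123$; you instead exploit $\lambda=\lambda'$ to collapse the whole sum via the Pieri identity $s_{(3,1)}+s_{(2,1^2)}=s_{(2)}s_{(1^2)}$ into the triple product $s_{(a^a)}s_{(2)}s_{(1^2)}$, which fails to be multiplicity-free by Lemma~\ref{Lemma sanity lemma} (equivalently Stembridge's Remark~3.5(a)). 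Both are valid; your reduction is shorter and needs no tableau construction, while the paper's explicit-constituent computation is the template it reuses in nearby results (e.g.\ Lemma~\ref{Lemma ak times pgl(2,5)}, Proposition~\ref{Prop Prop Sk times small hard class}) where the two summands are genuinely distinct and no such collapse is available. One cosmetic point: ``hook (equivalently, row-near rectangular)'' is loose as a general equivalence, but what Theorem~\ref{Theorem Stembridge}(iii) actually needs is that $(3,1)$ is near rectangular, which holds, so the citation is fine (the paper cites Proposition~\ref{Prop product with sm wr s2} for the same fact).
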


\begin{proof}
	Since both summands are multiplicity-free by Proposition~\ref{Prop product with sm wr s2}, if $\lambda$ is a non-square rectangular partition, we let $l=4$ and use Lemma~\ref{Lemma rotate linear} with $\mu=\lambda, \nu=\lambda'$ and $x$ and $y$ equal to our two plethysms together with Lemma~\ref{Lemma difference of rectangles} to get that our sum is multiplicity-free. On the other hand if $\lambda$ is a square, say $(a^a)$, then it follows from the Littlewood--Richardson rule stated in Theorem~\ref{Theorem LR rule} that $s_{(a+2,a+1,a^{a-2},1)}$ is a constituent of both $s_{\lambda}\left(s_{(1^2)}\circ s_{(2)} \right)=s_{(a^a)}s_{(3,1)}$ and $s_{\lambda'}\left(s_{(1^2)}\circ s_{(1^2)} \right)=s_{(a^a)}s_{(2,1^2)}$ (the reading words of the required $(a+2,a+1,a^{a-2},1)/(a^a)$-tableaux are $1121$, respectively, $1123$).
\end{proof}

\begin{corollary}\label{Cor Sk times Sm sr S2 class}
	Let $k,m\geq 2$ be such that $k+2m\geq 25$. 
	\begin{enumerate}[label=\textnormal{(\roman*)}]
		\item The irreducible induced-multiplicity-free characters of the group $G=( S_k\times S_m\wr S_2)\cap A_{k+2m}$ are $\left( \chi^{(1^k)}\boxtimes \left( \charwr{\chi^{(m)}}{\chi^{(2)}}{2}\right)\right) _{G}$, provided $k\notin\left\lbrace2m-4,2m-3,\dots,2m+1 \right\rbrace $, $\left( \chi^{(1^k)}\boxtimes \left( \charwr{\chi^{(m)}}{\chi^{(1^2)}}{2}\right)\right) _{G}$, provided $k\notin\left\lbrace 2m-3,2m-2,2m-1,2m \right\rbrace $, and for $m=2$ also the characters $\left( \chi^{\lambda}\boxtimes \left( \charwr{\chi^{(2)}}{\chi^{(1^2)}}{2}\right)\right) _{G}$ with $\lambda$ a non-square rectangular partition.
		\item The irreducible induced-multiplicity-free characters of $G=T_{k,m,2}$ are $\left( \chi^{(1^k)}\boxtimes \left( \charwr{\chi^{(m)}}{\chi^{\nu}}{2}\right)\right) _{G}$ with $\nu\vdash 2$ arbitrary, provided $k\notin\left\lbrace 2m-3,2m-2,2m-1,2m \right\rbrace $. Otherwise, there is no such character. 
	\end{enumerate}
\end{corollary}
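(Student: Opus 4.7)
The plan is to apply Corollary~\ref{Cor index two}(i) to reduce the classification to identifying the irreducible characters $\rho$ of $G := S_k\times S_m\wr S_2$ for which $\rho + \rho\times\eta$ is induced-multiplicity-free, where $\eta$ is the linear character cutting out the chosen index-two subgroup $G_\pm$. Theorem~\ref{Theorem direct product k geq 2} restricts the candidates for $\rho$ to family~(A), the characters $\chi^{(1^k)}\boxtimes \charwr{\chi^{(m)}}{\chi^{\nu}}{2}$ for $\nu\vdash 2$ together with their $\eta$-tensors, and (only for $m=2$) family~(B), the characters $\chi^{\lambda}\boxtimes \charwr{\chi^{\mu}}{\chi^{(1^2)}}{2}$ with $\lambda$ rectangular and $\mu\vdash 2$. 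None of these $\rho$ satisfy $\rho = \rho\times\eta$, so Lemma~\ref{Lemma index two}(iv) gives $\rho_N\ind^G = \rho + \rho\times\eta$ in each case.

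For family~(A), Lemma~\ref{Lemma tensoring plethysms} yields $\rho\times\eta = \chi^{(k)}\boxtimes \charwr{\chi^{(1^m)}}{\chi^{\bar\nu}}{2}$ with $\bar\nu = \nu$ for $G_+$ and $\bar\nu = \nu'$ for $G_-$, and I would then apply Lemma~\ref{Lemma hook and a box} directly to the symmetric-function translation $s_{(1^k)}(s_\nu\circ s_{(m)}) + s_{(k)}(s_{\bar\nu}\circ s_{(1^m)})$. The key combinatorial observation is that when $G$ is $(S_k\times S_m\wr S_2)\cap A_{k+2m}$, the pair $(\nu,\bar\nu)$ always satisfies $\bar\nu = (2)^{\prime m}$ for $\nu = (2)$ and $\bar\nu = (1^2)^{\prime m}$ for $\nu = (1^2)$, uniformly in the parity of $m$; so cases~(i) and~(iv) of Lemma~\ref{Lemma hook and a box} apply and deliver the ranges $k\notin\{2m-4,\dots,2m+1\}$ and $k\notin\{2m-3,\dots,2m\}$, respectively. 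When $G = T_{k,m,2}$ the opposite pairing holds, placing the two choices of $\nu$ into cases~(ii) and~(iii), each of which contributes the same range $k\notin\{2m-3,\dots,2m\}$.

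For family~(B), Lemma~\ref{Lemma tensoring plethysms} gives $\rho\times\eta = \chi^{\lambda'}\boxtimes \charwr{\chi^{\mu'}}{\chi^{(1^2)}}{2}$ for $G_+$ and $\chi^{\lambda'}\boxtimes \charwr{\chi^{\mu'}}{\chi^{(2)}}{2}$ for $G_-$. In the $G_+$ case both summands are already induced-multiplicity-free, and Lemma~\ref{Lemma m=2 index 2} applied to $s_\lambda(s_{(1^2)}\circ s_\mu) + s_{\lambda'}(s_{(1^2)}\circ s_{\mu'})$ shows the sum is multiplicity-free exactly when $\lambda$ is non-square rectangular; the two choices of $\mu\vdash 2$ produce the same irreducible characters of $G_+$ after the swap $\lambda\leftrightarrow\lambda'$, which is why the corollary lists only $\mu=(2)$. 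In the $G_-$ case the tensor $\rho\times\eta$ has outer character $\chi^{(2)}$, and inspection of Theorem~\ref{Theorem direct product k geq 2}(i)--(ii) forces $\lambda$ to be linear for $\rho\times\eta$ to be induced-multiplicity-free; but then $\rho_G$ already coincides with a family~(A) character, so family~(B) contributes nothing new to $T_{k,m,2}$.

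The principal technical obstacle is the parity bookkeeping in the family~(A) step: the uniform, parity-independent answer stated in the corollary must be extracted from the parity-dependent formula $\bar\nu\in\{\nu,\nu'\}$ combined with the asymmetric four cases of Lemma~\ref{Lemma hook and a box} expressed via $(2)^{\prime m}$ and $(1^2)^{\prime m}$. Once the pairing is matched correctly on both sides and the redundancy of family~(B) inside $T_{k,2,2}$ is verified, the rest of the argument is a direct application of the two key lemmas.
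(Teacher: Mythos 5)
Your proposal is correct and takes essentially the same route as the paper: the candidates are narrowed via Theorem~\ref{Theorem direct product k geq 2} and the index-two machinery, and the classification then follows from Lemma~\ref{Lemma hook and a box} (cases (i) and (iv) for $(S_k\times S_m\wr S_2)\cap A_{k+2m}$, cases (ii) and (iii) for $T_{k,m,2}$, which is exactly your pairing $\bar\nu=\nu^{\prime m}$ versus $\bar\nu=\nu^{\prime\, (m+1)}$) together with Lemma~\ref{Lemma m=2 index 2} for the extra $m=2$ family. Your additional check that the rectangular $m=2$ family is redundant for $T_{k,2,2}$ is precisely what the paper's discussion preceding the lemmas asserts when it lists that family only for $G_+$.
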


\begin{proof}
	We obtain the characters in (i) from Lemma~\ref{Lemma hook and a box}(i) and (iv), that is the parts of Lemma~\ref{Lemma hook and a box} when $\nu$ and $\bar{\nu}$ agree precisely when $m$ is even, and Lemma~\ref{Lemma m=2 index 2}. The characters in (ii) then come from Lemma~\ref{Lemma hook and a box}(ii) and (iii).
\end{proof}

\subsection{$S_m\wr S_3$}

We now move to groups $\left( S_m\wr S_3\right) \cap A_{3m}=\ker \left( \charwrnb{\sgn}{\sgn}{3}\right) $, $T_{m,3}=\ker \left(\charwrnb{\sgn}{\mathbbm{1}}{3}\right) $ and $S_m\wr A_3=\ker \left( \charwrnb{\mathbbm{1}}{\sgn}{3}\right) $. For $n\geq 64$, according to Proposition~\ref{Prop MF wreath products}, the irreducible induced-multiplicity-free characters of $S_m\wr S_3$ are $ \left(\chi^{\lambda}\boxtimes \left( \charwr{\chi^{\mu}}{\chi^{\nu}}{2}\right)  \right)\Ind^{S_m\wr S_3}$ such that $\lambda$ and $\mu$ are in some order $(m)$ and $(1^m)$ and $\nu\vdash 2$. Thus we are left with two potential choices of irreducible induced-multiplicity-free characters for each of our three groups.

\begin{proposition}\label{Prop Sm wr S3 class}
	Let $m\geq 22$.
	\begin{enumerate}[label=\textnormal{(\roman*)}]
		\item Let $G$ be $\left( S_m\wr S_3\right) \cap A_{3m}$ or $T_{m,3}$. The irreducible induced-multiplicity-free characters of $G$ are $\left(\left(\chi^{(1^m)}\boxtimes \left( \charwr{\chi^{(m)}}{\chi^{\nu}}{2}\right)  \right)\Ind^{S_m\wr S_3}\right) _G $ with $\nu\vdash 2$.
		\item The group $S_m\wr A_3$ is not multiplicity-free.
	\end{enumerate}
\end{proposition}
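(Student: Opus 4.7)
The plan is to enumerate the four irreducible induced-multiplicity-free characters of $S_m\wr S_3$ supplied by Proposition~\ref{Prop MF wreath products}(iii) and test which of their restrictions to each of the three index-two subgroups $\ker\eta$ remain induced-multiplicity-free. Write
\[\rho_{\alpha,\beta,\nu}=\left(\chi^{\alpha}\boxtimes \left( \charwr{\chi^{\beta}}{\chi^{\nu}}{2}\right)  \right)\Ind^{S_m\wr S_3}\]
for $\{\alpha,\beta\}=\{(m),(1^m)\}$ and $\nu\vdash 2$; by Theorem~\ref{Theorem characters of wreath products} these are four pairwise distinct irreducibles. First I would use Lemma~\ref{Lemma tensoring plethysms} to compute, for each of the three non-trivial real linear characters $\charwrnb{\sgn}{\sgn}{3}$, $\charwrnb{\sgn}{\mathbbm{1}}{3}$ and $\charwrnb{\mathbbm{1}}{\sgn}{3}$, how tensoring with them permutes these four characters. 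In each case the permutation is fixed-point-free and pairs the four characters into two orbits of size two; hence $\rho\neq\rho\times\eta$ always, and Lemma~\ref{Lemma index two}(i) together with Corollary~\ref{Cor index two}(i) and Proposition~\ref{Prop MF wreath products} then show that the only candidate irreducible induced-multiplicity-free characters of $G=\ker\eta$ are the two restrictions of these paired $\rho_{\alpha,\beta,\nu}$.

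For part (i), with $G$ equal to $(S_m\wr S_3)\cap A_{3m}$ or $T_{m,3}$, the character $\eta$ is $\charwrnb{\sgn}{\mathbbm{1}}{3}$ or $\charwrnb{\sgn}{\sgn}{3}$ in an order depending on the parity of $m$; either pairing swaps the outer partitions $(m)$ and $(1^m)$. By Lemma~\ref{Lemma index two}(iv) the symmetric function attached to the $S_{3m}$-induction of $\left(\rho_{(1^m),(m),\nu}\right)_G$ is then
\[s_{(1^m)}(s_{\nu}\circ s_{(m)})+s_{(m)}(s_{\bar\nu}\circ s_{(1^m)})\]
for an explicit $\bar\nu\vdash 2$ determined by $\eta$ and $m\bmod 2$. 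This is precisely the setup of Lemma~\ref{Lemma hook and a box} with the parameter $k$ of that lemma equal to $m$. Since $m\geq 22$ we have $m<2m-4$, which lies strictly below all four exceptional ranges listed there, regardless of which case $(\nu,\bar\nu)$ falls into. Thus each candidate is induced-multiplicity-free, yielding the two characters claimed in (i).

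For part (ii), with $G=S_m\wr A_3=\ker\charwrnb{\mathbbm{1}}{\sgn}{3}$, the Lemma~\ref{Lemma tensoring plethysms} calculation shows the pairing fixes the outer partition $\alpha$ and swaps $\nu\leftrightarrow\nu'$. Consequently the $S_{3m}$-induction of each candidate restriction corresponds to
\[s_{\alpha}\bigl((s_{(2)}+s_{(1^2)})\circ s_{\beta}\bigr)=s_{\alpha}\,s_{\beta}^{\,2}\]
by Lemma~\ref{Lemma plethysms and char properties}(iii), with $\{\alpha,\beta\}=\{(m),(1^m)\}$. Each of $s_{(1^m)}s_{(m)}^{2}$ and $s_{(m)}s_{(1^m)}^{2}$ is the symmetric function of a character induced from the Young subgroup $S_m\times S_m\times S_m$, which has three orbits on $\{1,\dots,3m\}$, and so is not multiplicity-free by Lemma~\ref{Lemma sanity lemma}. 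Hence no irreducible character of $S_m\wr A_3$ is induced-multiplicity-free, proving (ii). The main delicate point of the argument is the parity-dependent bookkeeping of $\bar\nu$ in part (i), which Lemma~\ref{Lemma hook and a box} is engineered to absorb uniformly.
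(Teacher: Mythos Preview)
Your proof is correct and follows essentially the same route as the paper: restrict the four characters from Proposition~\ref{Prop MF wreath products}(iii) to each index-two subgroup via Lemma~\ref{Lemma index two}(iv), then for $(S_m\wr S_3)\cap A_{3m}$ and $T_{m,3}$ invoke Lemma~\ref{Lemma hook and a box} with $k=m<2m-4$, while for $S_m\wr A_3$ observe that the resulting symmetric function collapses to $s_\alpha s_\beta^2$ via Lemma~\ref{Lemma plethysms and char properties}(iii) and apply Lemma~\ref{Lemma sanity lemma}. You are somewhat more explicit than the paper about the pairing calculation via Lemma~\ref{Lemma tensoring plethysms} and about why no candidates are missed, but the substance is identical.
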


\begin{proof}
	As the characters in (i) are the only possible choices of irreducible induced-multiplicity-free characters, it remains to show that for $\nu,\bar{\nu}\vdash 2$ the symmetric function $s_{(1^m)}(s_{\nu}\circ s_{(m)})+s_{(m)}(s_{\bar{\nu}}\circ s_{(1^m)})$ is multiplicity-free. This follows from Lemma~\ref{Lemma hook and a box} with $k=m<2m-4$.
	
	To obtain (ii) we need to show that for partitions $\lambda$ and $\mu$ given by $(m)$ and $(1^m)$ in some order, the symmetric function $s_{\lambda}(s_{(2)}\circ s_{\mu}) + s_{\lambda}(s_{(1^2)}\circ s_{\mu}) $ is not multiplicity-free. This is clear since it equals $s_{\lambda}s_{\mu}^2$ by Lemma~\ref{Lemma plethysms and char properties}(iii), and thus we can apply Lemma~\ref{Lemma sanity lemma}. 
\end{proof}

\subsection{$S_k\times S_2\wr S_h$}

This time we examine the characters of $( S_k\times S_2\wr S_h) \cap A_{k+2h} = \ker \left( \sgn\boxtimes \left( \charwrnb{\sgn}{\mathbbm{1}}{h}\right) \right) $. Assuming $n\geq 18$, according to Theorem~\ref{Theorem direct product k geq 2}, up to tensoring with the sign, the irreducible induced-multiplicity-free characters of $S_k\times S_2\wr S_h$ are $\chi^{(1^k)}\boxtimes \left( \charwr{\chi^{(2)}}{\chi^{(h)}}{h}\right) $ and for $h\in\left\lbrace 3,4 \right\rbrace$ also $\chi^{(1^k)}\boxtimes \left( \charwr{\chi^{(2)}}{\chi^{(1^h)}}{h}\right) $. In the next two lemmas, using symmetric functions, we investigate whether restrictions of these characters are induced-multiplicity-free.

\begin{lemma}\label{Lemma even and strip}
	Let $k$ and $h$ be positive integers. The symmetric function $s_{(1^k)}(s_{(h)}\circ s_{(2)})+s_{(k)}(s_{(h)}\circ s_{(1^2)})$ is multiplicity-free if and only if $k\geq 2h+2$.
\end{lemma}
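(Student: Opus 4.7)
The plan is to observe that both summands $s_{(1^k)}(s_{(h)}\circ s_{(2)})$ and $s_{(k)}(s_{(h)}\circ s_{(1^2)})$ are individually multiplicity-free by Proposition~\ref{Prop product with s2 wr sm} (applied with $\nu=(h)$, together with its $\omega$-image). Hence the sum is multiplicity-free if and only if the two summands share no constituent. For the direction $k\geq 2h+2$, I apply Lemma~\ref{Lemma rotate linear} with $\mu=(1^k)$, $\nu=(k)$, and the plethysms $x=s_{(h)}\circ s_{(2)}$, $y=s_{(h)}\circ s_{(1^2)}$, both of degree $l=2h$. Since $|(1^k)-(k)|=2(k-1)$, the hypothesis $|\mu-\nu|>2l$ is equivalent to $k\geq 2h+2$, yielding multiplicity-freeness.

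For the converse range $1\leq k\leq 2h+1$ I must exhibit a common constituent. By Lemma~\ref{Lemma even and shift rules}(i) combined with Pieri's rule, $s_\lambda$ is a constituent of $s_{(1^k)}(s_{(h)}\circ s_{(2)})$ if and only if there exist an even partition $\mu$ of $2h$ and a vertical strip $V$ of size $k$ with $\lambda=\mu+V$. Since $\omega$ interchanges the two summands, $s_\lambda$ is a common constituent precisely when both $\lambda$ and $\lambda'$ admit such a decomposition. For $k\in\{2h-1,2h,2h+1\}$ this is delivered by the hooks $(2h,1^{2h-1})$ (self-conjugate), $(2h,1^{2h})$ together with its conjugate $(2h+1,1^{2h-1})$, and $(2h+1,1^{2h})$ (self-conjugate); each decomposes via $\mu=(2h)$. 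For small $k$ uniform shapes suffice: $\lambda=(h+1,h)$ for $k=1$ (using $\mu=(h,h)$ when $h$ is even and $\mu=(h+1,h-1)$ when $h$ is odd, paired with $\lambda'=(2^h,1)=(2^h)+\text{box}$), $\lambda=(h+1,h,1)$ for $k=2$, and $\lambda=(h+2,h-1,1^3)$ for $k=4$, together with analogous decompositions of their conjugates.

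The main obstacle lies in the range $3\leq k\leq 2h-2$: self-conjugate common constituents frequently fail to exist because size and odd-row parities are incompatible (for instance for $h=3$, $k=1$ the unique self-conjugate partition of size $7$ is the hook $(4,1^3)$, with three odd parts rather than one), so one must instead exhibit a genuine conjugate pair $\{\lambda,\lambda'\}$ whose two members decompose via possibly distinct even partitions (as in $\lambda=(4,3)=(4,2)+\text{box}$ paired with $\lambda'=(2^3,1)=(2^3)+\text{box}$ for $h=3$, $k=1$). The construction must be tuned to the parities of $h$ and $k$, and each case is verified by direct application of Pieri's rule to both $\lambda$ and $\lambda'$.
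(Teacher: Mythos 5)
Your reduction is set up correctly: both summands are multiplicity-free, $\omega$ swaps them, and by Lemma~\ref{Lemma even and shift rules}(i) together with Pieri's rule a common constituent exists precisely when some $\lambda$ is such that both $\lambda$ and $\lambda'$ are obtained from even partitions of $2h$ by adding a vertical strip of size $k$. Your proof of the sufficiency of $k\geq 2h+2$ via Lemma~\ref{Lemma rotate linear} with $\mu=(1^k)$, $\nu=(k)$, $l=2h$ and $|\mu-\nu|=2(k-1)$ is correct, and is in fact a slightly different (and tidier) argument than the paper's, which instead bounds $\ell(\lambda)$ from below by $k$ and from above by $2h+1$.

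The genuine gap is in the converse. To prove the lemma you must exhibit a common constituent for \emph{every} $k$ with $1\leq k\leq 2h+1$, but you only construct one for $k\in\{1,2,4\}$ and $k\in\{2h-1,2h,2h+1\}$; for the remaining range (in particular $k=3$ and $5\leq k\leq 2h-2$, which for large $h$ is most of the interval) you state that ``the construction must be tuned to the parities of $h$ and $k$'' and that ``each case is verified by direct application of Pieri's rule'', without actually producing the partitions. That tuning is exactly where the content of the proof lies: the paper resolves it by an explicit uniform construction, giving in Table~\ref{Table even and strip} partitions $\lambda,\mu,\nu$ (with $\mu$ and $\nu'$ even and $\lambda$ obtained from $\mu$ by a vertical strip and from $\nu$ by a horizontal strip of size $k$) in each congruence class of $r=2h-k$ modulo $4$, together with the boundary cases $r\in\{-1,0,1,2\}$; for instance, for $r\equiv 0\pmod 4$, $r\neq 0$, one takes $\lambda=(h+k/2,\,h-k/2,\,1^k)$ built from $\mu=(h+k/2,h-k/2)$ and $\nu=(h-k/2,h-k/2,1^k)$. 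Until you supply such a general family (or an equivalent argument covering all residues of $k$ and both parities of $h$), the ``only if'' direction is not proved; your proposal correctly identifies the obstacle but does not overcome it.
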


\begin{proof}
	Both summands are multiplicity-free by Proposition~\ref{Prop product with s2 wr sm}. Suppose that $s_{\lambda}$ is a common constituent of both summands. Since it is a constituent of the first one, we have $(1^k)\subseteq \lambda$, that is $\ell(\lambda)\geq k$. On the other hand, partitions indexing constituents of $s_{(h)}\circ s_{(1^2)}$ have length at most $2h$; thus $\ell(\lambda)\leq 2h+1$. This gives $k\leq 2h+1$, establishing the `if' direction.
	
	Now suppose that $k\leq 2h+1$ and let $r=2h-k\geq -1$. By Lemma~\ref{Lemma even and shift rules}, to prove the `only if' direction we want $\lambda$ such that there are partitions $\mu$ and $\nu$ such that $\mu$ and $\nu'$ are even and $\lambda$ is obtained by adding a vertical strip of size $k$ to $\mu$ and a horizontal strip of size $k$ to $\nu$. The desired partitions $\lambda, \mu$ and $\nu$, depending on $r$ modulo $4$, are presented in Table~\ref{Table even and strip}. \qedhere
	
	\begin{table}[h!]
		\centering
		\begin{tabular}{cc}
			\toprule
			$r$&$\lambda,\mu,\nu$\\
			\midrule
			&$(h+(k-1)/2, h-(k-1)/2,1^k)$,\\
			$r\equiv -1$ (mod $4$), $r\neq -1$&$(h+(k-1)/2, h-(k-1)/2)$,\\
			&$(h-(k-1)/2, h-(k-1)/2,1^{k-1})$\\
			\midrule
			&$(2h+1,1^{2h})$,\\
			$r=-1$&$(2h)$,\\
			&$(1^{2h})$\\
			\midrule
			&$(h+k/2,h-k/2,1^k)$,\\
			$r\equiv 0$ (mod $4$), $r\neq 0$&$(h+k/2,h-k/2)$,\\
			&$(h-k/2,h-k/2,1^k)$\\
			\midrule
			&$(2h,1^{2h})$,\\
			$r=0$&$(2h)$,\\
			&$(1^{2h})$\\
			\midrule
			&$(h+(k-3)/2, h-(k+1)/2,2,1^{k})$,\\
			$r\equiv 1$ (mod $4$), $r\neq 1$&$(h+(k-3)/2, h-(k+1)/2,2)$,\\
			&$(h-(k+1)/2, h-(k+1)/2,1^{k+1})$\\
			\midrule
			&$(2h, 1^{2h-1})$,\\
			$r=1$&$(2h)$,\\
			&$(1^{2h})$\\
			\midrule
			&$(h+k/2-1, h-k/2,2,1^{k-1})$,\\
			$r\equiv 2$ (mod $4$), $r\neq 2$&$(h+k/2-1, h-k/2-1,2)$,\\
			&$(h-k/2,h-k/2,1^k)$\\
			\midrule
			&$(2h-1,2,1^{2h-3})$,\\
			$r=2$&$(2h-2,2)$,\\
			&$(2^2,1^{2h-4})$\\
			\bottomrule
		\end{tabular}
		\vspace{4pt}
		\caption{Table of partitions $\lambda,\mu$ and $\nu$ such that $\lambda$ can be obtained by adding a vertical strip of size $k$ to $\mu$ and a horizontal strip of size $k$ to $\nu$ and, moreover, $\mu$ and $\nu'$ are even partitions of $2h$. The number $r$ equals $2h-k\geq -1$.}
		\label{Table even and strip}
	\end{table}
\end{proof}

\begin{lemma}\label{Lemma shift small}
	Suppose that $h\in\left\lbrace 3,4 \right\rbrace $ and $k\geq 10$. The symmetric function $s_{(1^k)}(s_{(1^h)}\circ s_{(2)})+s_{(k)}(s_{(1^h)}\circ s_{(1^2)})$ is multiplicity-free.
\end{lemma}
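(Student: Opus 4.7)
The plan is to apply Lemma~\ref{Lemma rotate linear} with $\mu=(1^k)$, $\nu=(k)$, $x=s_{(1^h)}\circ s_{(2)}$ and $y=s_{(1^h)}\circ s_{(1^2)}$. Then $|\mu|=|\nu|=k$, and $x,y\in\Lambda^{+}$ are both homogeneous of degree $l=2h$, so the setup of the lemma is in place.

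First I would verify that both $s_{(1^k)}x$ and $s_{(k)}y$ are multiplicity-free. The former is covered directly by Proposition~\ref{Prop product with s2 wr sm}, since the choices $\nu=(1^3)$ and $\nu=(1^4)$ are listed there as admissible when $\lambda=(1^k)$. For the latter I would use the involution $\omega$: by Lemma~\ref{Lemma plethysms and char properties}(i), applied with $m=|\mu|=2$, we have $\omega(s_{(1^h)}\circ s_{(2)}) = s_{(1^h)^{\prime 2}}\circ s_{(1^2)} = s_{(1^h)}\circ s_{(1^2)}$. Since $\omega$ is a ring homomorphism permuting the Schur basis, it preserves multiplicity-freeness, so $s_{(k)}y = \omega(s_{(1^k)}x)$ is multiplicity-free as well.

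Next I would check the numerical hypothesis $|\mu-\nu|>2l$ of Lemma~\ref{Lemma rotate linear}. Writing out the parts, $|\mu-\nu| = |1-k| + (k-1)\cdot 1 = 2(k-1)$, and $2l=4h$, so the required inequality becomes $k>2h+1$. For $h=3$ this reads $k\geq 8$ and for $h=4$ it reads $k\geq 10$, both implied by the hypothesis $k\geq 10$. Lemma~\ref{Lemma rotate linear} then yields that $s_{(1^k)}x + s_{(k)}y$ is multiplicity-free, as required.

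There is no serious obstacle in this plan; the only conceptual step is to realise that after one application of $\omega$ the second summand takes the shape required by Lemma~\ref{Lemma rotate linear} with exactly matching degree $2h$, and that the ensuing distance condition $|\mu-\nu|>2l$ is tight for $h=4$ precisely at the threshold $k=10$ imposed in the statement.
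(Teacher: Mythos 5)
Your proposal is correct and follows essentially the same route as the paper: both summands are multiplicity-free by Proposition~\ref{Prop product with s2 wr sm} (the second via $\omega$, as the paper also intends), and Lemma~\ref{Lemma rotate linear} applies with $l=2h$ since $|(1^k)-(k)|=2k-2>4h$ for $k\geq 10$. The only difference is that you make the $\omega$ step and the arithmetic explicit, which the paper leaves implicit.
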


\begin{proof}
	As both summands are multiplicity-free by Proposition~\ref{Prop product with s2 wr sm}, we can apply Lemma~\ref{Lemma rotate linear} with $x$ and $y$ equal to the relevant plethysms, $l=2h\in\left\lbrace 6,8\right\rbrace $, $\lambda=(1^k)$ and $\mu=(k)$ (meaning that $|\lambda-\mu|=2k-2>2l$).
\end{proof}

\begin{corollary}\label{Cor Sk times S2 wr Sm class}
	Let $k\geq 2$ and $h\geq 3$ be such that $k+2h\geq 18$. The irreducible induced-multiplicity-free characters of $G=(S_k\times S_2\wr S_h)\cap A_{k+2h}$ are $\left(\chi^{(1^k)}\boxtimes \left( \charwr{\chi^{(2)}}{\chi^{(h)}}{h}\right)  \right)_G $, provided that we have $k\geq 2h+2$, and for $h\in\left\lbrace 3,4 \right\rbrace$ also $\left(\chi^{(1^k)}\boxtimes \left( \charwr{\chi^{(2)}}{\chi^{(1^h)}}{h}\right)  \right)_G $.
\end{corollary}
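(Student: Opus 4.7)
The plan is to reduce the problem to the already-established Lemma~\ref{Lemma even and strip} and Lemma~\ref{Lemma shift small} via the index-two machinery of Corollary~\ref{Cor index two} and the tensor-product formulae in Lemma~\ref{Lemma tensoring plethysms}. Write $\eta = \sgn\boxtimes\left(\charwrnb{\sgn}{\mathbbm{1}}{h}\right)$, so that $G = \ker\eta$. By Corollary~\ref{Cor index two}(i), every irreducible induced-multiplicity-free character of $G$ has the form $\rho_G$ for some irreducible induced-multiplicity-free character $\rho$ of $S_k\times S_2\wr S_h$. Theorem~\ref{Theorem direct product k geq 2}(iii) and (iv) lists all such $\rho$ explicitly.

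Next, I would use Lemma~\ref{Lemma tensoring plethysms} to show that tensoring with $\eta$ partitions this list into $\eta$-orbits of size two:
\[
\chi^{(1^k)}\boxtimes\left(\charwr{\chi^{(2)}}{\chi^{(h)}}{h}\right)\;\longleftrightarrow\;\chi^{(k)}\boxtimes\left(\charwr{\chi^{(1^2)}}{\chi^{(h)}}{h}\right),
\]
and, for $h\in\{3,4\}$,
\[
\chi^{(1^k)}\boxtimes\left(\charwr{\chi^{(2)}}{\chi^{(1^h)}}{h}\right)\;\longleftrightarrow\;\chi^{(k)}\boxtimes\left(\charwr{\chi^{(1^2)}}{\chi^{(1^h)}}{h}\right).
\]
Since $\rho\neq\rho\times\eta$ in every orbit, Lemma~\ref{Lemma index two}(i) and (iv) yield that $\rho_G$ is irreducible and $\rho_G\ind^{S_k\times S_2\wr S_h}=\rho+\rho\times\eta$, with the two members of an orbit producing the same character of $G$. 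Hence $\rho_G$ is induced-multiplicity-free if and only if $\rho\ind^{S_n}+(\rho\times\eta)\ind^{S_n}$ is multiplicity-free.

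Finally, I would translate these two sums through the Frobenius characteristic map $\xi$. The first orbit corresponds to
\[
s_{(1^k)}\bigl(s_{(h)}\circ s_{(2)}\bigr) + s_{(k)}\bigl(s_{(h)}\circ s_{(1^2)}\bigr),
\]
whose multiplicity-freeness is characterised by Lemma~\ref{Lemma even and strip} as $k\geq 2h+2$. The second orbit corresponds to
\[
s_{(1^k)}\bigl(s_{(1^h)}\circ s_{(2)}\bigr) + s_{(k)}\bigl(s_{(1^h)}\circ s_{(1^2)}\bigr),
\]
which for $h\in\{3,4\}$ is multiplicity-free by Lemma~\ref{Lemma shift small}, whose hypothesis $k\geq 10$ is automatic from $k+2h\geq 18$ (giving $k\geq 12$ when $h=3$ and $k\geq 10$ when $h=4$). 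Combining these conclusions gives exactly the characters listed in the statement.

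The proof is essentially bookkeeping rather than new mathematics: all of the combinatorial work lives in Lemmas~\ref{Lemma even and strip} and \ref{Lemma shift small}, and the remaining steps are straightforward applications of the index-two lemmas. The only point requiring care is to verify that the $\eta$-pairing identified via Lemma~\ref{Lemma tensoring plethysms} exhausts all candidate characters coming from Theorem~\ref{Theorem direct product k geq 2}(iii) and (iv), so that no potential $\rho_G$ is overlooked.
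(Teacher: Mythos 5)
Your proposal is correct and follows essentially the same route as the paper: the paper's setup paragraph performs the same reduction via Theorem~\ref{Theorem direct product k geq 2}(iii)--(iv) and the index-two machinery (the $\eta$-pairing you describe), and its proof of the corollary is simply ``combine Lemma~\ref{Lemma even and strip} and Lemma~\ref{Lemma shift small}''. Your write-up just makes the bookkeeping explicit, including the correct observation that $k+2h\geq 18$ guarantees $k\geq 10$ in Lemma~\ref{Lemma shift small}.
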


\begin{proof}
	Combine Lemma~\ref{Lemma even and strip} and Lemma~\ref{Lemma shift small}.
\end{proof}

\subsection{$S_2\wr S_h$}\label{Sec S2 wr Sh}

We consider groups $S_2\wr A_h=\ker \left( \charwrnb{\mathbbm{1}}{\sgn}{h}\right), (S_2\wr S_h)\cap A_{2h}=\ker \left( \charwrnb{\sgn}{\mathbbm{1}}{h}\right), T_{2,h}=\ker \left( \charwrnb{\sgn}{\sgn}{h}\right) $ and $T_{2,h}\cap A_{2h}$. By Proposition~\ref{Prop MF wreath products}, for $n\geq 64$, the irreducible induced-multiplicity-free characters of $S_2\wr S_h$ are $\charwr{\chi^{\mu}}{\chi^{\nu}}{h}$ with $\mu$ and $\nu$ linear, and $\left( \chi^{\lambda}\boxtimes \left( \charwr{\chi^{\mu}}{\chi^{(h-1)}}{h-1}\right)\right)  \Ind^{S_2\wr S_h}$ with $\lambda$ and $\mu$ equal to $(2)$ and $(1^2)$ in some order. It is clear that if $G$ is one of the three index two subgroups above, then $\left( \left( \chi^{\lambda}\boxtimes \left( \charwr{\chi^{\mu}}{\chi^{(h-1)}}{h-1}\right)\right)  \Ind^{S_2\wr S_h}\right)_G$ can be induced-multiplicity-free only if $G=(S_2\wr S_h)\cap A_{2h}$. We now apply Lemma~\ref{Lemma even and shift rules} to obtain the following five crucial lemmas. The first three appear with a different proof in \cite[\S2.4]{WildonMultiplicity-free09} and \cite[Proposition~2.12, Theorem~2.16 and Theorem~2.17]{GodsilMeagherMultiplicity-free10}.

\begin{lemma}\label{Lemma double even}
	Suppose $h$ is a positive integer. The symmetric function $s_{(h)}\circ s_{(2)} + s_{(h)}\circ s_{(1^2)}$ is multiplicity-free if and only if $h$ is odd.
\end{lemma}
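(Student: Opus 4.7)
The plan is to write both plethysms explicitly using Lemma~\ref{Lemma even and shift rules}(i) and (ii), which give
\[s_{(h)}\circ s_{(2)} = \sum_{\lambda\in\mathcal{E}(2h)}s_{\lambda} \qquad\text{and}\qquad s_{(h)}\circ s_{(1^2)} = \sum_{\lambda\in\mathcal{E}(2h)}s_{\lambda'}.\]
Each of these is multiplicity-free on its own, so the sum fails to be multiplicity-free precisely when there is a common constituent, i.e.\ a partition $\mu\vdash 2h$ such that both $\mu$ and $\mu'$ are even.

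I would then analyse the structure of such $\mu$. Suppose $\mu$ has all parts even. Since $\mu_j$ is even, we have $\mu_j\geq 2k-1$ if and only if $\mu_j\geq 2k$, and hence $\mu'_{2k-1}=\mu'_{2k}$ for every $k\geq 1$. If additionally $\mu'$ has all parts even, then writing $\mu'_{2k-1}=\mu'_{2k}=2\beta_k$ yields
\[|\mu|=|\mu'|=\sum_{k\geq 1}(\mu'_{2k-1}+\mu'_{2k})=4\sum_{k\geq 1}\beta_k,\]
so $|\mu|$ must be divisible by $4$. In particular, when $h$ is odd we have $|\mu|=2h\not\equiv 0\pmod{4}$, so no such $\mu$ exists and the sum is multiplicity-free.

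Conversely, when $h$ is even I would exhibit a common constituent: take $\mu=(2^h)$. Then $\mu$ is even and $\mu'=(h^h)$, which is also even since $h$ is even; hence $s_{\mu}$ appears in both $s_{(h)}\circ s_{(2)}$ and $s_{(h)}\circ s_{(1^2)}$, so the sum is not multiplicity-free. The main step is the short parity/divisibility argument in the previous paragraph; once that is in place, the example $\mu=(2^h)$ closes the other direction with no effort.
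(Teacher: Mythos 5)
Your argument is correct and follows essentially the same route as the paper: both reduce the question to finding a partition $\mu$ with $\mu$ and $\mu'$ both even, deduce from the paired structure of $\mu'$ that $4\mid 2h$, and exhibit a common constituent (the paper uses $(h^2)$, whose conjugate is your $(2^h)$) when $h$ is even. The only slip is that $(2^h)'=(h,h)=(h^2)$, not $(h^h)$; this does not affect the proof, since $(h,h)$ is even exactly when $h$ is even.
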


\begin{proof}
	If $h$ is even, both of the summands have a constituent labelled by the partition $(h^2)$ since it is even and so is its conjugate. Conversely, any common constituent would have to be labelled by a partition of the form $(2\lambda_1,2\lambda_1, 2\lambda_2,2\lambda_2,\dots)$ which has size divisible by $4$. Thus for it to exist we need $4\mid 2h$, that is $h$ must be even. As both summands are multiplicity-free, this finishes the proof.
\end{proof}

\begin{lemma}\label{Lemma even and shift}
	Let $h\geq 25$. The symmetric functions $s_{(h)}\circ s_{(2)} + s_{(1^h)}\circ s_{(2)}$ and $s_{(h)}\circ s_{(1^2)} + s_{(1^h)}\circ s_{(1^2)}$ are \emph{not} multiplicity-free.
\end{lemma}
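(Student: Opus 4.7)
The plan is to first use $\omega$ to collapse the two statements into one, then rewrite each of the two plethysms combinatorially via Lemma~\ref{Lemma even and shift rules}, and finally exhibit an explicit common constituent for every $h\geq 25$.

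Concretely, Lemma~\ref{Lemma plethysms and char properties}(i) with $|\mu|=2$ sends $s_{(h)}\circ s_{(2)} + s_{(1^h)}\circ s_{(2)}$ to $s_{(h)}\circ s_{(1^2)} + s_{(1^h)}\circ s_{(1^2)}$; since $\omega$ is an isometry permuting the Schur basis, it preserves multiplicity-freeness, so it suffices to treat the first sum. By Lemma~\ref{Lemma even and shift rules}(i) and (iii), this equals $\sum_{\lambda\in\mathcal{E}(2h)}s_\lambda + \sum_{\lambda\in\mathcal{S}(2h)}s_\lambda$, and each summand is already multiplicity-free. Hence the sum fails to be multiplicity-free exactly when $\mathcal{E}(2h)\cap\mathcal{S}(2h)\neq\emptyset$, and the task reduces to exhibiting, for each $h\geq 25$, a partition $\mu\vdash 2h$ that is simultaneously even and shift symmetric.

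I will give such $\mu = ss[\alpha]$ by case analysis on $h\bmod 4$, as follows:
\begin{itemize}
\item $h\equiv 1\pmod 4$, $h\geq 5$: $\alpha=((h+1)/2,(h-1)/2)$, giving $\mu=((h+3)/2,(h+3)/2,2^{(h-3)/2})$;
\item $h\equiv 2\pmod 4$, $h\geq 6$: $\alpha=(h/2,h/2-1,1)$, giving $\mu=(h/2+1,h/2+1,4,2^{h/2-3})$;
\item $h\equiv 3\pmod 4$, $h\geq 15$: $\alpha=((h-5)/2,(h-7)/2,3,2,1)$, giving $\mu=((h-3)/2,(h-3)/2,6^3,2^{(h-15)/2})$;
\item $h\equiv 0\pmod 4$, $h\geq 28$: $\alpha=((h-14)/2,(h-16)/2,5,4,3,2,1)$, giving $\mu=((h-12)/2,(h-12)/2,8^5,2^{(h-28)/2})$.
\end{itemize}
In each case the identities $\mu_i=\alpha_i+i$ and $\mu_i'=\alpha_i+i-1$ for $i\leq\ell(\alpha)$ are straightforward to read off, and together with $\mu_{\ell(\alpha)+1}\leq\ell(\alpha)$ they show $\mu=ss[\alpha]\in\mathcal{S}(2h)$; evenness of $\mu$ then follows immediately from the residue of $h$ modulo $4$. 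The four lower bounds $5,6,15,28$ collectively cover every $h\geq 25$: the first three cases handle $h=25,26,27$ individually, and for $h\geq 28$ the appropriate case always applies.

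The main obstacle is the lack of a uniform construction: enforcing evenness on $ss[\alpha]$ pins down the parts of $\alpha$ into consecutive pairs $\alpha_{2j-1}=\alpha_{2j}+1$ (with a trailing $1$ when $\ell(\alpha)$ is odd), so any fixed $\ell(\alpha)$ only realises $h$ in a single residue class modulo $4$. Consequently four distinct templates are required, and $h=25$ is in fact close to the true threshold, since no even shift symmetric partition of $2h$ exists for $h\in\{7,8,11,12,16,20,24\}$.
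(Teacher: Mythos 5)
Your proof is correct and follows essentially the same route as the paper: reduce to one sum via $\omega$, decompose via Lemma~\ref{Lemma even and shift rules}, and exhibit an even shift symmetric partition of $2h$ by cases on $h\bmod 4$ — indeed your four choices of $\alpha$ coincide exactly with the ones the paper obtains (there packaged as $\alpha=(\beta_1,\beta_1-1,\dots,\beta_t-1)$ for a partition $\beta$ with distinct odd parts). The only difference is presentational: the paper proves evenness of $ss[\alpha]$ once in general from the structure of $\beta$, while you verify it directly on each explicit $\mu$; both verifications are sound.
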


\begin{proof}
	We only consider the first symmetric function as using $\omega$ recovers the second one. Suppose that $\beta=(\beta_1,\beta_2,\dots,\beta_t)$ is a partition with pairwise distinct odd parts. Then $\alpha=(\beta_1,\beta_1-1,\beta_2,\beta_2-1,\dots,\beta_t-1)$ has also pairwise distinct parts and we can define $\lambda=ss[\alpha]$. From the definition $\lambda_i$ is even for all $i\leq \ell(\alpha)$ and $\lambda_{\ell(\alpha)+1}\leq \ell(\alpha)\leq 2t$. Thus we conclude that $\lambda$ is even once we show that $\lambda'_{2j-1}=\lambda'_{2j}$ for any $j\leq t$. This is clear from the definition except in the case when $\beta_t=1$ and $j=t$. In such a case $\lambda'_{2t-1}=2t-1$ and $\lambda_{2t-1}=2t$, forcing $\lambda'_{2t}=2t-1$, as needed.
	
	Now, it is sufficient to show that for given $h\geq 25$ there is such a partition $\beta$ for which $|\alpha|=2|\beta|-t$ equals $h$ since then $s_{\lambda}$ appears in our symmetric function with multiplicity $2$. To obtain such $\beta$, take $t\in\left\lbrace 1,2,3,4\right\rbrace$ such that $h\equiv t$ (mod $4$), and let $\beta$ be $((h+1)/2), (h/2,1), ((h-5)/2,3,1)$ or $(h/2-7,5,3,1)$ according to $t$. Then $|\beta|=(h+t)/2$ and $|\alpha|=h$.
\end{proof}

The next lemma is analogous and we omit repeated details.

\begin{lemma}\label{Lemma conjugate even and shift}
	Let $h\geq 33$. The symmetric functions $s_{(h)}\circ s_{(1^2)} + s_{(1^h)}\circ s_{(2)}$ and $s_{(h)}\circ s_{(2)} + s_{(1^h)}\circ s_{(1^2)}$ are \emph{not} multiplicity-free.
\end{lemma}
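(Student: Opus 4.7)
The plan is to reduce the two cases to one via the involution $\omega$, and then, for each $h \geq 33$, to exhibit an explicit partition $\mu$ contributing with multiplicity one to \emph{both} summands.

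First I would apply $\omega$: by Lemma~\ref{Lemma plethysms and char properties}(i) we have $\omega\!\left(s_{(h)}\circ s_{(1^2)} + s_{(1^h)}\circ s_{(2)}\right) = s_{(h)}\circ s_{(2)} + s_{(1^h)}\circ s_{(1^2)}$, so the two sums are simultaneously multiplicity-free or not, and it suffices to handle the first. By Lemma~\ref{Lemma even and shift rules}(ii) and (iii) the first sum fails to be multiplicity-free precisely when some partition $\mu \vdash 2h$ is shift symmetric and has an even conjugate $\mu'$, so the task reduces to producing such a $\mu$.

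Next I would set up the following construction. Given a strict partition $\beta = (\beta_1, \dots, \beta_t)$ with positive odd parts, put
\[
\alpha = (\beta_1 + 1,\, \beta_1,\, \beta_2 + 1,\, \beta_2,\, \dots,\, \beta_t + 1,\, \beta_t).
\]
Since $\beta_k - \beta_{k+1} \geq 2$, the sequence $\alpha$ is a partition with $2t$ distinct parts, so $\mu := ss[\alpha]$ is well defined with $|\mu| = 4|\beta| + 2t$. Evenness of $\mu'$ then splits into two parts. For $i \leq 2t$ the formula $\mu'_i = \alpha_i + i - 1$ gives $\beta_k + 2k - 1$ in either parity of $i$, which is even because $\beta_k$ is odd. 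For $i > 2t$ the defining inequality $\mu_{2t+1} \leq 2t$ yields $\mu'_i = |\{j \leq 2t : \mu_j \geq i\}|$; and the construction forces $\mu_{2k-1} = \mu_{2k} = \beta_k + 2k$, so this set is a union of pairs and thus has even cardinality.

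Finally, I would verify that a suitable $\beta$ exists for every $h \geq 33$ by analysing $h$ modulo $4$. From $h = 2|\beta| + t$ together with $|\beta| \equiv t \pmod 2$ (a sum of $t$ odd numbers), taking $t = 1, 2, 3, 4$ realises the residues $h \equiv 3, 2, 1, 0 \pmod 4$ respectively, with minimum values $h = 3, 10, 21, 36$ (using $|\beta| \geq 1 + 3 + \dots + (2t-1) = t^2$ for a strict odd partition with $t$ parts). All four thresholds are at most $36$, so every $h \geq 33$ is covered, and a concrete $\beta$ for each $h$ can be produced by adjusting the largest part of $(2t-1, \dots, 3, 1)$. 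The main technical point will be verifying the tail condition $\mu'_i$ even for $i > 2t$; the pairing $\mu_{2k-1} = \mu_{2k}$ built into the construction resolves this automatically, which is the key payoff of pairing $\beta_k$ with $\beta_k + 1$.
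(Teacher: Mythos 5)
Your proposal is correct and follows essentially the same route as the paper: reduce to one sum via $\omega$, use Lemma~\ref{Lemma even and shift rules} to translate failure of multiplicity-freeness into the existence of a shift symmetric partition of $2h$ with even conjugate, construct it from a strict partition $\beta$ with odd parts by pairing $\beta_k+1$ with $\beta_k$ (which is exactly the paper's construction from distinct even parts $\beta_k$ paired with $\beta_k-1$, after replacing each $\beta_k$ by $\beta_k+1$), and finish by a case analysis on $h$ modulo $4$ with $t\in\{1,2,3,4\}$. The only differences are cosmetic: you spell out the verification that $\mu'$ is even (using $\mu_{2k-1}=\mu_{2k}$), which the paper omits as a repeated argument, and your terse threshold sentence is valid because the least $h\equiv 0\pmod{4}$ with $h\geq 33$ is $36$.
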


\begin{proof}
	Applying $\omega$ if necessary, we consider only the first symmetric function. Let $\beta=(\beta_1,\beta_2,\dots,\beta_t)$ be a partition with pairwise distinct even parts. Let $\alpha=(\beta_1,\beta_1-1,\beta_2,\beta_2-1,\dots,\beta_t-1)$ and $\lambda=ss[\alpha]$. We see that $\lambda'$ is even, and therefore we are done as long as we can take $\beta$ for which $|\alpha|=h$. Letting $t\in\left\lbrace 1,2,3,4\right\rbrace$ to be such that $h\equiv -t$ (mod $4$), we can take $\beta$ to be one of $((h+1)/2), (h/2-1,2), ((h-9)/2,4,2)$ or $(h/2-10,6,4,2)$. 
\end{proof}

\begin{lemma}\label{Lemma double shift}
	Suppose $h$ is a positive integer. The symmetric function $s_{(1^h)}\circ s_{(2)} + s_{(1^h)}\circ s_{(1^2)}$ is multiplicity-free. 
\end{lemma}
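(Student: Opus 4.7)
The plan is to apply Lemma~\ref{Lemma even and shift rules}(iii) and (iv), which express
\[
s_{(1^h)}\circ s_{(2)} = \sum_{\lambda\in\mathcal{S}(2h)} s_{\lambda}, \qquad s_{(1^h)}\circ s_{(1^2)} = \sum_{\lambda\in\mathcal{S}(2h)} s_{\lambda'}.
\]
Each of these two sums is already multiplicity-free. Hence the total sum fails to be multiplicity-free exactly when some shift symmetric partition $\lambda\in\mathcal{S}(2h)$ coincides with the conjugate $\mu'$ of another (possibly equal) shift symmetric partition $\mu\in\mathcal{S}(2h)$. So the task reduces to proving that no shift symmetric partition is the conjugate of a shift symmetric partition.

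To rule this out, I would extract a simple invariant from the definition of $ss[\alpha]$. By the definition recalled just before Lemma~\ref{Lemma even and shift rules}, for any non-empty partition $\alpha$ with pairwise distinct parts we have $ss[\alpha]_1=\alpha_1+1$ and $ss[\alpha]'_1=\alpha_1+1-1=\alpha_1$; in particular,
\[
ss[\alpha]_1 - ss[\alpha]'_1 = 1.
\]
Thus every $\lambda\in\mathcal{S}(2h)$ satisfies $\lambda_1-\lambda'_1=1$. Now suppose for contradiction that $\lambda=\mu'$ with $\lambda,\mu\in\mathcal{S}(2h)$. Then $\lambda_1-\lambda'_1 = \mu'_1-\mu_1 = -(\mu_1-\mu'_1) = -1$, contradicting $\lambda_1-\lambda'_1=1$.

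There is no real obstacle here; the only thing to be slightly careful about is that the required identity uses just the first row and column of the shift symmetric partition, which is always non-empty since $h\geq 1$, so the formulae $ss[\alpha]_1=\alpha_1+1$ and $ss[\alpha]'_1=\alpha_1$ from Example~\ref{Ex shift} are always available. Combined with the fact that both underlying plethysms are already multiplicity-free, this completes the proof.
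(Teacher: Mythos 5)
Your proof is correct and is essentially the paper's own argument: both summands are multiplicity-free by Lemma~\ref{Lemma even and shift rules}, and a common constituent would force a partition $\lambda$ with both $\lambda$ and $\lambda'$ shift symmetric, which is impossible since every shift symmetric partition satisfies $\lambda_1-\lambda'_1=1$. (Only a cosmetic slip: the identities $ss[\alpha]_1=\alpha_1+1$, $ss[\alpha]'_1=\alpha_1$ come from the definition of shift symmetric partitions rather than from Example~\ref{Ex shift}.)
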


\begin{proof}
	As both summands are multiplicity-free we need to check that there is no shift symmetric partition $\lambda$ such that $\lambda'$ is also shift symmetric. This is clear since such $\lambda$ would satisfy $\lambda_1-\lambda'_1=1$ and $\lambda'_1-\lambda_1=1$, a contradiction.
\end{proof}

\begin{lemma}\label{Lemma even and two}
	Let $h\geq 2$. The symmetric function $s_{(1^2)}(s_{(h-1)}\circ s_{(2)}) + s_{(2)}(s_{(h-1)}\circ s_{(1^2)})$ is \emph{not} multiplicity-free.
\end{lemma}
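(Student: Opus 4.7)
The plan is to exhibit an explicit common constituent. Using Lemma~\ref{Lemma even and shift rules}(i)--(ii), I will expand
\[ s_{(h-1)}\circ s_{(2)}=\sum_{\mu\in\mathcal{E}(2h-2)} s_\mu, \qquad s_{(h-1)}\circ s_{(1^2)}=\sum_{\mu\in\mathcal{E}(2h-2)} s_{\mu'}, \]
so that, by Pieri's rule (Theorem~\ref{Theorem P rule}) and Young's rule (Theorem~\ref{Theorem Y rule}) applied to the outer factors $s_{(1^2)}$ and $s_{(2)}$, the constituents of the first summand are the $s_\lambda$ with $\lambda\vdash 2h$ obtained from some even $\mu$ by adding a vertical strip of size $2$, and the constituents of the second summand are those obtained from some such $\mu$ (now via $\mu'$) by adding a horizontal strip of size $2$. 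The task is then reduced to producing a single $\lambda\vdash 2h$ that arises in both ways.

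Next I would characterise each set of constituents by a parity condition on the parts of $\lambda$. Because $\mu$ has even parts and $\lambda_i-\mu_i\in\{0,1\}$ for a vertical strip, the candidate $\mu$ in the first case is forced: set $\mu_i=\lambda_i-1$ when $\lambda_i$ is odd and $\mu_i=\lambda_i$ when $\lambda_i$ is even (treating trailing zeros as even). A short check shows that this $\mu$ is automatically a partition of $2h-2$, is even, and that $\lambda/\mu$ is automatically a vertical strip; the total size of the strip equals the number of odd parts of $\lambda$, which must therefore equal $2$. Conjugating the argument (equivalently, applying $\omega$) shows that $s_\lambda$ is a constituent of $s_{(2)}(s_{(h-1)}\circ s_{(1^2)})$ if and only if $\lambda'$ has exactly two odd parts. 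Hence I only need $\lambda\vdash 2h$ with both $\lambda$ and $\lambda'$ having exactly two odd parts.

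Finally, I would produce $\lambda$ by splitting on the parity of $h$. For even $h$ take $\lambda=(h+1,h-1)$, whose conjugate is $(2^{h-1},1^2)$; here $h+1$ and $h-1$ are the two odd parts of $\lambda$, while the two trailing $1$s are the only odd parts of $\lambda'$. For odd $h$ take $\lambda=(h,h-1,1)$, whose conjugate is $(3,2^{h-2},1)$; the odd parts of $\lambda$ are $h$ and $1$, and those of $\lambda'$ are $3$ and $1$. In each case $s_\lambda$ occurs in both summands, so it appears with coefficient at least $2$ in their sum. The main point to watch is the parity split: the interaction between evenness of $\mu$ and the size $2h$ of $\lambda$ forces one to treat even and odd $h$ separately, and no single closed-form choice of $\lambda$ handles all $h\geq 2$ simultaneously. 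Once the correct $\lambda$ is identified, the remaining verifications are direct parity and containment checks.
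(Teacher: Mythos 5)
Your proof is correct. The characterisation you derive is right: since a vertical strip adds at most one box per row and the parts of $\mu$ are even, the even partition $\mu$ with $\mu\subseteq\lambda$ and $\lambda/\mu$ a vertical strip of size $2$ exists (and is unique) exactly when $\lambda$ has two odd parts, and applying $\omega$ (using Lemma~\ref{Lemma plethysms and char properties}(i)) transfers this to the second summand via $\lambda'$. Your witnesses check out: for even $h$, $\lambda=(h+1,h-1)$ and $\lambda'=(2^{h-1},1^2)$ each have exactly two odd parts, and for odd $h\geq 3$, $\lambda=(h,h-1,1)$ and $\lambda'=(3,2^{h-2},1)$ do as well, so $s_\lambda$ occurs in both summands and the sum is not multiplicity-free. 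The route differs from the paper's in packaging rather than in substance: the paper disposes of this lemma in one line by specialising Lemma~\ref{Lemma even and strip} (with $k=2$ and $h$ replaced by $h-1$), whose proof handles arbitrary $k$ via the case table (Table~\ref{Table even and strip}) of triples $(\lambda,\mu,\nu)$ indexed by $2h-k$ modulo $4$; you instead give a self-contained argument for the $k=2$ case, with a cleaner criterion (exactly two odd parts in both $\lambda$ and $\lambda'$) and different explicit witnesses. The paper's approach buys uniformity, since Lemma~\ref{Lemma even and strip} is needed elsewhere (e.g.\ for Corollary~\ref{Cor Sk times S2 wr Sm class} and Lemma~\ref{Lemma 1+double even}); yours buys a shorter, more transparent verification for this particular statement, and in fact it also shows each summand is multiplicity-free and characterises all common constituents, which is more than the lemma requires.
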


\begin{proof}
	Apply Lemma~\ref{Lemma even and strip} with $k=2$ and $h$ replaced with $h-1$.
\end{proof}

\begin{corollary}\label{Cor S2 wr Sm class}
	Let $h\geq 33$.
	\begin{enumerate}[label=\textnormal{(\roman*)}]
		\item The irreducible induced-multiplicity-free characters of $G=(S_2\wr S_h)\cap A_{2h}$ are $\left( \charwr{\chi^{(2)}}{\chi^{(1^h)}}{h}\right)_G$ and for odd $h$ also $\left( \charwr{\chi^{(2)}}{\chi^{(h)}}{h}\right)_G$.
		\item The groups $S_2\wr A_h, T_{2,h}$ and $T_{2,h}\cap A_{2h}$ are not multiplicity-free.
	\end{enumerate}
\end{corollary}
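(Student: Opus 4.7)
The plan is to combine Corollary~\ref{Cor index two}(i) with Proposition~\ref{Prop MF wreath products} to reduce each subgroup to a finite check of pair-sums, and then translate every such check into the multiplicity-freeness of a sum of symmetric functions handled by Lemmas~\ref{Lemma double even}--\ref{Lemma even and two}. Proposition~\ref{Prop MF wreath products} lists six irreducible induced-multiplicity-free characters of $S_2\wr S_h$: the four elementary ones $\charwrnb{\chi^\mu}{\chi^\nu}{h}$ with $\mu\in\{(2),(1^2)\}$ and $\nu\in\{(h),(1^h)\}$, and the two non-elementary ones $\left( \chi^\lambda\boxtimes(\charwr{\chi^\mu}{\chi^{(h-1)}}{h-1})\right) \Ind^{S_2\wr S_h}$ with $\{\lambda,\mu\}=\{(2),(1^2)\}$. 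Using the two formulas of Lemma~\ref{Lemma tensoring plethysms} together with the restrictions $\chi^{(h)}\res_{S_{h-1}}=\chi^{(h-1)}$ and $\chi^{(1^h)}\res_{S_{h-1}}=\chi^{(1^{h-1})}$, I would record how each of the three non-trivial linear characters $\charwrnb{\sgn}{\mathbbm{1}}{h}$, $\charwrnb{\mathbbm{1}}{\sgn}{h}$ and $\charwrnb{\sgn}{\sgn}{h}$ acts by tensoring: it conjugates $\mu$ (and also $\lambda$ in the non-elementary case), conjugates $\nu$ (and swaps $\chi^{(h-1)}\leftrightarrow\chi^{(1^{h-1})}$ in the non-elementary case), or combines both, respectively. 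Since no character among the six is fixed by any of these $\eta$ for $h\geq 33$, Lemma~\ref{Lemma index two}(ii) never applies, and Lemma~\ref{Lemma index two}(iv) shows that each potentially induced-multiplicity-free irreducible character of a subgroup arises from an $\eta$-orbit pair $\{\rho,\rho\times\eta\}$ with \emph{both} members in the six-character list.

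For $G=(S_2\wr S_h)\cap A_{2h}$ with $\eta=\charwrnb{\sgn}{\mathbbm{1}}{h}$, the three surviving pairs translate via $\xi$ into the symmetric function sums $s_{(h)}\circ s_{(2)}+s_{(h)}\circ s_{(1^2)}$, $s_{(1^h)}\circ s_{(2)}+s_{(1^h)}\circ s_{(1^2)}$ and $s_{(2)}(s_{(h-1)}\circ s_{(1^2)})+s_{(1^2)}(s_{(h-1)}\circ s_{(2)})$. Lemmas~\ref{Lemma double even}, \ref{Lemma double shift} and \ref{Lemma even and two} then produce exactly the list in (i). For $G=S_2\wr A_h$ and $G=T_{2,h}$, the $\eta$-twist turns $\chi^{(h-1)}$ into $\chi^{(1^{h-1})}$, so both non-elementary characters leave the induced-multiplicity-free list under $\eta$ and are excluded immediately by Corollary~\ref{Cor index two}(i). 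The two remaining elementary pairs deliver the sums $s_{(h)}\circ s_\mu+s_{(1^h)}\circ s_\mu$ (for $\mu\in\{(2),(1^2)\}$) covered by Lemma~\ref{Lemma even and shift}, and the sums $s_{(h)}\circ s_{(2)}+s_{(1^h)}\circ s_{(1^2)}$ and $s_{(h)}\circ s_{(1^2)}+s_{(1^h)}\circ s_{(2)}$ covered by Lemma~\ref{Lemma conjugate even and shift}, none of which is multiplicity-free. Finally, for $G=T_{2,h}\cap A_{2h}$ the observation from the introduction that multiplicity-freeness transfers upwards along inclusions yields the contrapositive: $T_{2,h}\cap A_{2h}\leq T_{2,h}$ and $T_{2,h}$ is not multiplicity-free, hence $T_{2,h}\cap A_{2h}$ is not multiplicity-free either.

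The only really delicate point is the second computation above: tensoring the non-elementary characters by the three linear characters $\eta$ via Lemma~\ref{Lemma tensoring plethysms} and tracking which tensor products remain in the six-character list. Once that bookkeeping is completed, the five lemmas of \S\ref{Sec S2 wr Sh} slot in directly and every case closes immediately.
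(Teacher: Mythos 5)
Your proposal is correct and takes essentially the same route as the paper: restrict the six induced-multiplicity-free characters of $S_2\wr S_h$ given by Proposition~\ref{Prop MF wreath products}, pair them under the relevant $\eta$-twist computed via Lemma~\ref{Lemma tensoring plethysms}, settle each pair-sum with Lemmas~\ref{Lemma double even}--\ref{Lemma even and two}, and handle $T_{2,h}\cap A_{2h}$ by containment in $T_{2,h}$. The only difference is that you make explicit the $\eta$-orbit bookkeeping for the two non-elementary characters, which the paper dispenses with as ``clear'' in the preamble to \S\ref{Sec S2 wr Sh}.
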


\begin{proof}
	We obtain (i) from Lemma~\ref{Lemma double even}, Lemma~\ref{Lemma double shift} and Lemma~\ref{Lemma even and two}. The groups $S_2\wr A_h$ and $T_{2,h}$ are not multiplicity-free because of Lemma~\ref{Lemma even and shift} and Lemma~\ref{Lemma conjugate even and shift}, respectively. Finally, $T_{2,h}\cap A_{2h}$ is contained in $T_{2,h}$; thus it is not multiplicity-free.
\end{proof}

\subsection{$S_2\wr S_h$ embedded in $S_{2h+1}$}

We now consider the same subgroups of $S_2\wr S_h$ as in \S\ref{Sec S2 wr Sh} but this time embedded in $S_{2h+1}$. Clearly, if $\rho$ is an induced-multiplicity-free character of one of these subgroups $G$, then $\rho\ind^{S_{2h}}$ is multiplicity-free. Thus, by Corollary~\ref{Cor S2 wr Sm class}, the only possible multiplicity-free subgroup is $G=(S_2\wr S_h)\cap A_{2h}$ and the only possible choices of an induced-multiplicity-free character of $G$ are $\left( \charwr{\chi^{(2)}}{\chi^{\nu}}{h}\right)_G$ with $\nu$ linear.

As in the previous section, we use Lemma~\ref{Lemma even and shift rules}. The next lemma appears with a different proof in \cite[\S2.4]{WildonMultiplicity-free09} and \cite[Proposition~3.6]{GodsilMeagherMultiplicity-free10}.

\begin{lemma}\label{Lemma 1+double even}
	Let $h$ be a positive integer. The symmetric function $s_{(1)}(s_{(h)}\circ s_{(2)} + s_{(h)}\circ s_{(1^2)})$ is \emph{not} multiplicity-free.
\end{lemma}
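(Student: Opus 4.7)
The plan is to use Lemma~\ref{Lemma even and shift rules}(i), (ii) to rewrite the symmetric function in question as
\[
s_{(1)}(s_{(h)}\circ s_{(2)} + s_{(h)}\circ s_{(1^2)}) = s_{(1)} \sum_{\lambda\in\mathcal{E}(2h)}\bigl(s_\lambda + s_{\lambda'}\bigr),
\]
and then to exhibit a single partition $\nu$ of $2h+1$ that occurs with multiplicity at least $2$ on the right-hand side. The natural candidate is $\nu = (h+1, h)$.

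By Young's rule (Theorem~\ref{Theorem Y rule}), $s_\nu$ is a constituent of $s_{(1)}s_\mu$ exactly when $\mu$ is obtained from $\nu$ by deleting one of its two corners, i.e.\ $\mu \in \{(h,h),\,(h+1, h-1)\}$, with multiplicity one in each case. Hence the coefficient of $s_\nu$ in the product above equals the combined multiplicity of $s_{(h,h)}$ and $s_{(h+1, h-1)}$ in $\sum_{\lambda\in\mathcal{E}(2h)}(s_\lambda + s_{\lambda'})$, and I only need to show this combined multiplicity is at least two.

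The next step is a parity split. If $h$ is even, then $(h,h)\in\mathcal{E}(2h)$, contributing one copy of $s_{(h,h)}$ as $s_\lambda$ with $\lambda = (h,h)$; additionally $(2^h)\in\mathcal{E}(2h)$ has conjugate $(h,h)$, contributing another copy of $s_{(h,h)}$ as $s_{\lambda'}$ with $\lambda = (2^h)$. If $h$ is odd, then $(h+1, h-1)$ has all parts even and thus lies in $\mathcal{E}(2h)$, contributing $s_{(h+1, h-1)}$ once; while $(2^h)$ still lies in $\mathcal{E}(2h)$ with conjugate $(h,h)$, contributing $s_{(h,h)}$ once as well. In both cases we obtain a combined multiplicity of at least $2$, so $s_{(h+1, h)}$ appears with multiplicity at least $2$ in the product, as desired.

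There is no real obstacle in this argument; the only thing to be careful about is that for the small values $h = 1, 2$ the partitions $(h,h)$, $(h+1, h-1)$ and $(2^h)$ partially coincide, but the two relevant contributions still come from two distinct summands (either from the two factors $s_\lambda + s_{\lambda'}$ of a self-conjugate $\lambda$ when $h=2$, or from the two even partitions $(2)$ and $(2,0) = (2)$ viewed once directly and once via conjugation when $h=1$), so the count of two is preserved uniformly in $h\geq 1$.
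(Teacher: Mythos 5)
Your argument is correct. Where the paper disposes of this lemma in one line by citing Lemma~\ref{Lemma even and strip} with $k=1$ (whose proof, in turn, handles all $k\leq 2h+1$ via a case analysis on $2h-k$ modulo $4$ with a table of explicit witnesses), you give a self-contained direct argument: expand both plethysms by Lemma~\ref{Lemma even and shift rules}(i),(ii), and show via Young's rule that the single uniform candidate $\nu=(h+1,h)$ has multiplicity at least $2$, since its two one-box-removals $(h,h)$ and $(h+1,h-1)$ pick up at least two contributions from $\sum_{\lambda\in\mathcal{E}(2h)}(s_{\lambda}+s_{\lambda'})$ — for even $h$ from $\lambda=(h,h)$ and $\lambda=(2^h)$ (coinciding, but still contributing twice, when $h=2$), and for odd $h$ from $\lambda=(h+1,h-1)$ and $\lambda=(2^h)$. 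Your witnesses differ from the paper's (for $k=1$ the table produces $(h,h,1)$ for even $h$ and $(h-1,h-1,2,1)$ for odd $h$), and since you only need a lower bound of $2$, the coincidences at $h=1,2$ are harmless, as you note. What your route buys is a shorter, uniform proof of exactly this statement; what the paper's route buys is the full if-and-only-if statement of Lemma~\ref{Lemma even and strip} for arbitrary $k$, which it needs elsewhere (e.g.\ Corollary~\ref{Cor Sk times S2 wr Sm class} and Lemma~\ref{Lemma even and two}), so the one-line deduction comes essentially for free there.
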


\begin{proof}
	Apply Lemma~\ref{Lemma even and strip} with $k=1$.
\end{proof}

\begin{lemma}\label{Lemma 1+double shift}
	Let $h$ be a positive integer. The symmetric function $s_{(1)}(s_{(1^h)}\circ s_{(2)} + s_{(1^h)}\circ s_{(1^2)})$ is not multiplicity-free.
\end{lemma}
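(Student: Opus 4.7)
The plan is to exhibit a partition $\nu$ of $2h+1$ whose corresponding Schur function appears with multiplicity at least two in $s_{(1)}(s_{(1^h)}\circ s_{(2)} + s_{(1^h)}\circ s_{(1^2)})$. Unlike Lemma~\ref{Lemma 1+double even}, this statement does not fall out of Lemma~\ref{Lemma even and strip} (which only applies to the even partition side), so I will argue directly from the shift symmetric rule, Lemma~\ref{Lemma even and shift rules}(iii) and (iv), which yields $s_{(1^h)}\circ s_{(2)} = \sum_{\alpha} s_{ss[\alpha]}$ and $s_{(1^h)}\circ s_{(1^2)} = \sum_{\alpha} s_{ss[\alpha]'}$, where $\alpha$ runs over partitions of $h$ with pairwise distinct parts.

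The key observation is that the single distinct-parts partition $\alpha = (h)$ produces a shift symmetric partition $ss[(h)]$ that is itself a hook, and whose conjugate is again a hook. Specifically, by Example~\ref{Ex shift} we have $ss[(h)] = (h+1, 1^{h-1})$, and thus $ss[(h)]' = (h, 1^h)$. I will take as the witness the hook $\nu = (h+1, 1^h)$, which has size $2h+1$ and can be obtained from $ss[(h)]$ by adding one box to the bottom of the first column, and from $ss[(h)]'$ by adding one box to the end of the first row. By Pieri's rule (Theorem~\ref{Theorem P rule}), this gives
\[
\left\langle s_{(1)} s_{ss[(h)]}, s_{\nu} \right\rangle = 1 \quad \text{and} \quad \left\langle s_{(1)} s_{ss[(h)]'}, s_{\nu} \right\rangle = 1.
\]
Combined with the decompositions from Lemma~\ref{Lemma even and shift rules}, this yields $\left\langle s_{(1)}(s_{(1^h)}\circ s_{(2)} + s_{(1^h)}\circ s_{(1^2)}), s_{\nu} \right\rangle \geq 2$, as desired.

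There is no real obstacle once the correct witness is spotted: the computation is entirely mechanical and works for every $h \geq 1$. The only mild subtlety is choosing $\alpha = (h)$ rather than any other distinct-parts partition, motivated by the fact that $ss[(h)]$ is the unique shift symmetric partition whose conjugate is again shift symmetric (both being hooks that differ by a single box at position $(h+1, 1)$ versus $(1, h+1)$), making the hook $(h+1, 1^h)$ the natural minimal common extension.
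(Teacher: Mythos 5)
Your proof is correct and is essentially the paper's own argument: both exhibit the hook $(h+1,1^h)$ as a common constituent, obtained by adding one box (Pieri/branching rule) to $ss[(h)]=(h+1,1^{h-1})$ and to its conjugate $(h,1^h)$, giving multiplicity at least two. One harmless aside is off: by Lemma~\ref{Lemma double shift} no shift symmetric partition has a shift symmetric conjugate, so $(h,1^h)=ss[(h)]'$ is \emph{not} itself shift symmetric (it is merely the conjugate of one, which is what the argument actually uses), but this remark plays no role in your proof.
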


\begin{proof}
	The Schur function labelled by $(h+1,1^h)$ appears with multiplicity two by the induction branching rule, since it can be obtained by adding a box to $ss[(h)] = (h+1,1^{h-1})$ as well as to its conjugate $(h,1^h)$.
\end{proof}

\begin{corollary}\label{Cor S1 times S2 wr Sm class}
	Let $h\geq 33$. The groups $S_2\wr A_h, (S_2\wr S_h)\cap A_{2h}, T_{2,h}$ and $T_{2,h}\cap A_{2h}$ embedded in $S_{2h+1}$ are not multiplicity-free.
\end{corollary}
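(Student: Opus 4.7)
The plan splits into two parts. First, a transitivity-of-induction observation reduces the work. For any $G\leq S_{2h}\leq S_{2h+1}$ and any character $\rho$ of $G$, if $\rho\ind^{S_{2h}}$ has $\chi^{\mu}$ as a constituent of multiplicity at least $2$, then by the branching rule (Theorem~\ref{Theorem branching rules}(i)) every $\chi^{\lambda}$ with $\mu\subseteq\lambda$ and $|\lambda|=2h+1$ appears with multiplicity at least $2$ in $\rho\ind^{S_{2h+1}}=(\rho\ind^{S_{2h}})\ind^{S_{2h+1}}$. Contrapositively, if $\rho$ is induced-multiplicity-free in $S_{2h+1}$, then $\rho\ind^{S_{2h}}$ is multiplicity-free. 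Combined with Corollary~\ref{Cor S2 wr Sm class}(ii), this rules out $S_2\wr A_h$, $T_{2,h}$ and $T_{2,h}\cap A_{2h}$ at once, and by Corollary~\ref{Cor S2 wr Sm class}(i) restricts the candidate irreducible characters of $G=(S_2\wr S_h)\cap A_{2h}$ to $\bigl(\charwr{\chi^{(2)}}{\chi^{\nu}}{h}\bigr)_G$ with $\nu\in\{(h),(1^h)\}$.

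For these two surviving candidates, the approach is to compute $\bigl(\charwr{\chi^{(2)}}{\chi^{\nu}}{h}\bigr)_G\ind^{S_{2h+1}}$ explicitly on the symmetric-function side and then appeal to the lemmas already proved in this subsection. Write $\eta=\charwrnb{\sgn}{\mathbbm{1}}{h}$, so $G=\ker\eta\leq S_2\wr S_h$. Lemma~\ref{Lemma tensoring plethysms} gives $\charwr{\chi^{(2)}}{\chi^{\nu}}{h}\times\eta=\charwr{\chi^{(1^2)}}{\chi^{\nu}}{h}$, which is distinct from $\charwr{\chi^{(2)}}{\chi^{\nu}}{h}$, so Lemma~\ref{Lemma index two}(iv) yields
\[
\bigl(\charwr{\chi^{(2)}}{\chi^{\nu}}{h}\bigr)_G\ind^{S_2\wr S_h}
=\charwr{\chi^{(2)}}{\chi^{\nu}}{h}+\charwr{\chi^{(1^2)}}{\chi^{\nu}}{h}.
\]
Inducing one further step from $S_{2h}$ to $S_{2h+1}$ corresponds under $\xi$ to multiplication by $s_{(1)}$. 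Hence the induced character in question has image
\[
s_{(1)}\bigl(s_{\nu}\circ s_{(2)}+s_{\nu}\circ s_{(1^2)}\bigr)
\]
in $\Lambda$.

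For $\nu=(h)$ this fails to be multiplicity-free by Lemma~\ref{Lemma 1+double even}, and for $\nu=(1^h)$ it fails to be multiplicity-free by Lemma~\ref{Lemma 1+double shift}, so neither surviving candidate is induced-multiplicity-free. The only delicate step is the correct bookkeeping with the extra factor $s_{(1)}$ coming from the inclusion $S_{2h}\hookrightarrow S_{2h+1}$; no new combinatorial input is required, since Lemma~\ref{Lemma 1+double even} and Lemma~\ref{Lemma 1+double shift} were proved precisely to handle this situation.
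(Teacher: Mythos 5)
Your proposal is correct and follows essentially the same route as the paper: reduce via transitivity of induction and Corollary~\ref{Cor S2 wr Sm class} to the two candidate characters $\left(\charwr{\chi^{(2)}}{\chi^{\nu}}{h}\right)_G$ of $(S_2\wr S_h)\cap A_{2h}$ with $\nu$ linear, identify their induction with $s_{(1)}\left(s_{\nu}\circ s_{(2)}+s_{\nu}\circ s_{(1^2)}\right)$, and kill them with Lemma~\ref{Lemma 1+double even} and Lemma~\ref{Lemma 1+double shift}. The only difference is that you spell out the index-two bookkeeping (Lemma~\ref{Lemma tensoring plethysms} and Lemma~\ref{Lemma index two}(iv)) that the paper leaves implicit, and that is done correctly.
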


\begin{proof}
	This follows from Corollary~\ref{Cor S2 wr Sm class}, Lemma~\ref{Lemma 1+double even} and Lemma~\ref{Lemma 1+double shift}.
\end{proof}

\subsection{Sporadic cases}\label{Section sporadic}

We now consider groups $(S_k\times \mathrm{AGL}_1(\mathbb{F}_5))\cap A_{k+5}$ and $(S_k\times \mathrm{PGL}_2(\mathbb{F}_5))\cap A_{k+6}$, which are kernels of the sign characters restricted to $S_k\times \mathrm{AGL}_1(\mathbb{F}_5)$ and $S_k\times \mathrm{PGL}_2(\mathbb{F}_5)$, respectively, as well as the groups $N_k=\ker\left( \sgn\boxtimes \left( \charwrnb{\mathbbm{1}}{\sgn}{2}\right)\right) \leq S_k\times S_2\wr S_2$ and $T_{k,2,h}=\ker\left( \sgn\boxtimes \left( \charwrnb{\sgn}{\sgn}{h}\right)\right)\leq S_k\times S_2\wr S_h$ for $h\in \left\lbrace 3,4 \right\rbrace$. These groups together with groups from Proposition~\ref{Prop sk times primitive} and Theorem~\ref{Theorem direct product arbitrary}(ii) and (iii) cover both families in Proposition~\ref{Prop possible MF subgroups}(viii) and (ix) since $(S_k\times \mathrm{P}\Gamma\mathrm{L}_2(\mathbb{F}_8))\cap A_{k+9}=A_k\times \mathrm{P}\Gamma\mathrm{L}_2(\mathbb{F}_8)$ and $(S_k\times \mathrm{ASL}_3(\mathbb{F}_2))\cap A_{k+8} = A_k\times \mathrm{ASL}_3(\mathbb{F}_2)$.

Write $\psi_R$, respectively, $\psi_C$ for the trivial character, respectively, one of the linear complex conjugate characters of $\mathrm{AGL}_1(\mathbb{F}_5)$. Note that when we tensor $\psi_R$, respectively, $\psi_C$ with the restriction of the sign of $S_5$, we obtain the other real, respectively, complex linear character. For $n\geq 18$, using Theorem~\ref{Theorem direct product k geq 2} (and Proposition~\ref{Prop sk times primitive}), the possible irreducible induced-multiplicity-free characters of $(S_k\times \mathrm{AGL}_1(\mathbb{F}_5))\cap A_{k+5}, N_k$ and $T_{k,2,h}$ with $h\in \left\lbrace 3,4\right\rbrace $ are those listed in the following proposition once the word `non-square' is removed.

\begin{proposition}\label{Prop Sk times small easy class}
	Let $n\geq 36$. The irreducible induced-multiplicity-free characters of
	\begin{enumerate}[label=\textnormal{(\roman*)}]
		\item $G=(S_k\times \mathrm{AGL}_1(\mathbb{F}_5))\cap A_{k+5}$ are $(\chi^{\lambda}\boxtimes \psi_R)_{G}$ with $\lambda$ non-square rectangular and $(\chi^{\lambda}\boxtimes \psi_C)_{G}$ with $\lambda$ linear or $2$-rectangular;
		\item $G=N_k$ are $\left( \chi^{\lambda}\boxtimes \left( \charwr{\chi^{\mu}}{\chi^{(1^2)}}{2}\right) \right) _{G}$ such that $\lambda=(k)$ and $\mu=(2)$ \emph{or} $\lambda=(1^k)$ and $\mu=(1^2)$;
		\item $G=T_{k,2,h}$ with $h\in\left\lbrace 3,4\right\rbrace $ are $\left( \chi^{(1^k)}\boxtimes \left( \charwr{\chi^{(2)}}{\chi^{\nu}}{h}\right) \right) _{G}$ with $\nu$ linear.
	\end{enumerate}
\end{proposition}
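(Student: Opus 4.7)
My plan is to prove all three parts by a uniform three-step strategy exploiting the index-two inclusion $G\leq H$. In each family, let $H$ denote the containing group (so $H = S_k\times\mathrm{AGL}_1(\mathbb{F}_5)$, $S_k\times S_2\wr S_2$, or $S_k\times S_2\wr S_h$) and let $\eta$ be the non-trivial linear character of $H$ with kernel $G$. By Corollary~\ref{Cor index two}(i) together with Theorem~\ref{Theorem direct product k geq 2} and Proposition~\ref{Prop sk times primitive}, every irreducible induced-multiplicity-free character of $G$ is of the form $\rho_G$ for some irreducible induced-multiplicity-free character $\rho$ of $H$ from the known lists. A short check using Lemma~\ref{Lemma tensoring plethysms} shows that $\rho\neq\rho\times\eta$ in all relevant situations (because the ``$\nu$-component'' of $\eta$ is non-trivial), so $\rho_G$ is irreducible and Lemma~\ref{Lemma index two}(iv) delivers the key identity $\rho_G\ind^{S_n} = (\rho + \rho\times\eta)\ind^{S_n}$; the task reduces to testing multiplicity-freeness of this sum for each $\langle\eta\rangle$-orbit.

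First I would split the candidate orbits into those lying entirely in the admissible list for $H$ and those where only $\rho$ lies in it. In the latter case $(\rho\times\eta)\ind^{S_n}$ already has a repeated constituent, so the sum fails; this disposes of every orbit in (ii) with $\lambda$ non-linear rectangular (since tensoring by $\eta$ sends $(\lambda,\mu,\nu)\mapsto(\lambda',\mu,\nu')$, landing outside the Theorem~\ref{Theorem direct product k geq 2}(i)--(ii) list). For the remaining valid orbits in (ii) and (iii), the induced character evaluates via Lemma~\ref{Lemma tensoring plethysms} and induction in stages to a symmetric function of the shape $s_{(1^k)}f + s_{(k)}g$, where $f$ and $g$ are each multiplicity-free by Proposition~\ref{Prop product with sm wr s2} or Proposition~\ref{Prop product with s2 wr sm}. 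Since $|(1^k)-(k)| = 2(k-1)$ comfortably exceeds twice the degree of the plethysm factor for $k\geq 32$, Lemma~\ref{Lemma rotate linear} closes the argument and produces exactly the two characters stated in (ii) and (iii).

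Part (i) needs one extra twist because $\mathrm{AGL}_1(\mathbb{F}_5)/C_5\cong C_4$ and $\sgn|_L$ is the unique non-trivial real linear character of this quotient, so tensoring by $\eta$ swaps the two real linear characters $\psi_R,\psi_R'$ of $L$ and the two complex conjugate linear characters $\psi_C,\overline{\psi_C}$. For real $\psi_R$, the induced character is $s_\lambda(s_{(5)}+s_{(2^2,1)}) + s_{\lambda'}(s_{(1^5)}+s_{(3,2)})$; Lemma~\ref{Lemma rotate linear} combined with Lemma~\ref{Lemma difference of rectangles} (both with $l=5$) handles every non-square rectangular $\lambda$ since $|\lambda|=k\geq 31>30$. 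For $\lambda=(a^a)$ square (forcing $a\geq 6$) my plan is to exhibit the explicit common constituent $\mu=(a+3,a^{a-1},2)$: by Young's rule $\mu/\lambda$ is a horizontal strip of size $5$ so $s_\mu$ contributes to $s_\lambda s_{(5)}$, and the shape $\mu/\lambda$ admits a unique latticed semistandard filling of content $(3,2)$ with reading word $1,1,1,2,2$, so $s_\mu$ also contributes to $s_\lambda s_{(3,2)}$. For complex $\psi_C$ the induced character becomes $(s_\lambda + s_{\lambda'})s_{(3,1^2)}$, and Lemma~\ref{Lemma rotate linear} with $l=5$ delivers multiplicity-freeness once $|\lambda-\lambda'|>10$: a direct calculation gives $|\lambda-\lambda'|\geq 2k-8$ for $\lambda$ $2$-rectangular and $|\lambda-\lambda'|=2(k-1)$ for $\lambda$ linear, both satisfying the bound in our range.

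The most delicate step is the explicit common-constituent construction in the square-rectangular subcase of (i), which cannot be reduced to Lemma~\ref{Lemma rotate linear} and requires the small Littlewood--Richardson computation described above. The main bookkeeping hazard is the overlap between Theorem~\ref{Theorem direct product k geq 2}(i) and (ii) in part (ii) when $\lambda$ is linear (hence rectangular), so I would enumerate orbits carefully to ensure that the two characters listed in (ii) are each accounted for exactly once.
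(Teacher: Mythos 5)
Your proposal is correct and follows essentially the same route as the paper: reduce via the index-two restriction and the known lists for the overgroup (Corollary~\ref{Cor index two}, Theorem~\ref{Theorem direct product k geq 2}, Proposition~\ref{Prop sk times primitive}), settle all positive cases with Lemma~\ref{Lemma rotate linear} (plus Lemma~\ref{Lemma difference of rectangles} with $l=5$ for non-square rectangular $\lambda$), and kill the square case in (i) by exhibiting an explicit common constituent. The only cosmetic difference is your choice of witness: you find $s_{(a+3,a^{a-1},2)}$ shared by $s_{\lambda}s_{(5)}$ and $s_{\lambda'}s_{(3,2)}$, whereas the paper uses $s_{(a+2,a+1,a^{a-2},2)}$ shared by $s_{\lambda}s_{(2^2,1)}$ and $s_{\lambda'}s_{(3,2)}$; both verifications are equally routine.
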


\begin{proof}
	As mentioned we need to show that the character $(\chi^{\lambda}\boxtimes \psi_R)_{G}$ with $G=(S_k\times \mathrm{AGL}_1(\mathbb{F}_5))\cap A_{k+5}$ and $\lambda$ square is not induced-multiplicity-free, while all the characters in the statement are induced-multiplicity-free. Suppose first that $\lambda=(a^a)$. The character $\psi_R\ind^{S_5}$ decomposes as $\chi^{(5)} + \chi^{(2^2,1)}$. Thus $\left( (\chi^{\lambda}\boxtimes \psi_R)_{G}\right) \Ind^{S_n}$ corresponds to $(s_{(5)}+s_{(2^2,1)})s_{\lambda}+(s_{(1^5)}+s_{(3,2)})s_{\lambda'}$. Since both $s_{\lambda}s_{(2^2,1)}$ and $s_{\lambda'}s_{(3,2)}$ have a constituent labelled by $(a+2,a+1,a^{a-2},2)$, using the $(a+2,a+1,a^{a-2},2)/(a^a)$-tableaux with reading words $11232$ and $11221$, respectively, the induced character is not multiplicity-free, as required.
	
	The remaining characters induced to $S_n$ also correspond to symmetric functions of the form $xs_{\lambda}+ys_{\lambda'}$ with both summands multiplicity-free and $x$ and $y$ being of homogeneous degree $4,5,6$ or $8$. If $\lambda$ is linear, we obtain that the sum is multiplicity-free directly from Lemma~\ref{Lemma rotate linear}, while for $\lambda$ non-square rectangular (appearing only in (i)) we firstly use Lemma~\ref{Lemma difference of rectangles} with $l=5$ and then Lemma~\ref{Lemma rotate linear} to deduce the sum is multiplicity-free.
\end{proof}

We are left with the group $G=(S_k\times \mathrm{PGL}_2(\mathbb{F}_5))\cap A_{k+6}$. In the remainder of the section write $\mathbbm{1}$ for the trivial character of $\mathrm{PGL}_2(\mathbb{F}_5)$. Proposition~\ref{Prop sk times primitive}(iii) shows that for $n\geq 18$ the potential irreducible induced-multiplicity-free characters of $G$ are $(\chi^{\lambda}\boxtimes \mathbbm{1})_{G}$ with $\lambda$ rectangular, column-near rectangular or a column-unbalanced fat hook. Inspired by the above proof, we start by establishing results about column-near rectangular partitions and column-unbalanced fat hooks analogous to Lemma~\ref{Lemma difference of rectangles}.  

\begin{lemma}\label{Lemma unbalanced fat hooks}
	Let $\lambda=(a^b,1^d)$ be a column-unbalanced fat hook of size at least $58$. If $|\lambda-\lambda'|\leq 12$, then either $a=b$ and $d\leq 6$ \emph{or} $b=1$ and $|d+1-a|\leq 6$.
\end{lemma}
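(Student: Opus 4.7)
The approach is a direct case analysis on the relationship between $a$, $b$ and $d$, guided by the explicit form of the conjugate $\lambda'=(b+d,b^{a-1})$ (obtained by transposing the Young diagram of $\lambda=(a^b,1^d)$). From the definition of a column-unbalanced fat hook one has $a\geq 2$, $b\geq 1$ and $d\geq 1$. With $\lambda'$ in hand, the quantity $|\lambda-\lambda'|=\sum_i|\lambda_i-\lambda'_i|$ can be read off position by position, and I would split the argument into the three cases $a=b$, $b=1$ and $a>b\geq 2$.

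The two easy cases recover the two alternatives in the conclusion. When $a=b$, only position $1$ (contributing $d$) and the tail positions $a+1,\ldots,a+d$ (contributing $1$ each) differ, giving $|\lambda-\lambda'|=2d$, so the bound becomes $d\leq 6$. When $b=1$, the partition $\lambda=(a,1^d)$ is a hook with $\lambda'=(d+1,1^{a-1})$, and an analogous computation yields $|\lambda-\lambda'|=2|d+1-a|$, so the bound becomes $|d+1-a|\leq 6$. In both cases the hypothesis $|\lambda|\geq 58$ plays no role.

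The main content is the third case $a>b\geq 2$, where the goal is to show that $|\lambda-\lambda'|\leq 12$ forces $|\lambda|<58$, contradicting the hypothesis and ruling out this case altogether. Setting $x=b-1\geq 1$ and $y=a-b\geq 1$, a position-by-position count yields
\[|\lambda-\lambda'|=\begin{cases}2xy+2(d-y)&\text{if }d\geq y,\\ 2(x+1)y-2d&\text{if }d<y,\end{cases}\]
so $|\lambda-\lambda'|\leq 12$ forces $xy+(d-y)\leq 6$ in the first subcase (with $d-y\geq 0$) and $(x+1)y\leq d+6\leq y+5$ in the second (using $d\leq y-1$). In particular $xy\leq 6$ in the first subcase and $xy\leq 5$ in the second, leaving only finitely many admissible pairs $(x,y)$.

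The final step is then a routine finite check: bound $|\lambda|=(x+1)(x+1+y)+d$ above, using $d\leq y+6-xy$ or $d\leq y-1$ respectively, for each admissible pair $(x,y)$. The main obstacle is simply to organise this tabulation cleanly; the maximum is attained at $(x,y,d)=(6,1,1)$ in the first subcase, corresponding to $\lambda=(8^7,1)$, for which one checks $|\lambda-\lambda'|=|8-8|+6\cdot|8-7|+|1-7|=12$ and $|\lambda|=57$. Since $57<58$, this excludes the case $a>b\geq 2$ entirely and completes the proof.
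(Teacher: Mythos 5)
Your case split is not exhaustive, and this is a genuine gap. From the definition, a column-unbalanced fat hook $(a^b,1^d)$ only requires $a\geq 2$, $b\geq 1$, $d\geq 1$; there is no relation forced between $a$ and $b$, so the range $2\leq a<b$ occurs (take $\lambda'=(b+d,b^{a-1})$ with $a-1\geq 1$ rows of the short length, e.g.\ $\lambda=(2^{10},1^3)$). Your three cases $a=b$, $b=1$, $a>b\geq 2$ omit exactly this range, and it cannot be waved away: in it neither alternative of the conclusion holds, so one must derive a contradiction, and the contradiction genuinely needs the hypothesis $|\lambda|\geq 58$. For instance $\lambda=(2^3,1)$ has $\lambda'=(4,3)$ and $|\lambda-\lambda'|=6\leq 12$, yet $a\neq b$ and $b\neq 1$; it is excluded only by its size. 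The missing argument is short and of the same flavour as the rest of your proof: for $b\geq a$ one computes $|\lambda-\lambda'|=2d+2a(b-a)$, so if $a<b$ then $a(b-a)\leq 6-d\leq 5$, whence $ab\leq 30$ and $|\lambda|=ab+d\leq 31<58$. This is essentially how the paper handles the whole regime $b\geq a$ at once (via its lemma on $|\mu-\mu'|$ for non-square rectangles $\mu=(a^b)$), recovering $a=b$ and $d\leq 6$ as the only survivor there.

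The parts you do treat are correct: the formulas $|\lambda-\lambda'|=2d$ (for $a=b$), $2|d+1-a|$ (for $b=1$), and the two-subcase formula $2xy+2(d-y)$, respectively $2(x+1)y-2d$, for $a>b\geq 2$ all check out, as does your finite enumeration with extremal shape $(8^7,1)$ of size $57$ (the same boundary configuration the paper isolates, where it bounds $a\leq 7$ and then forces $d\geq 16$ instead of tabulating). So the route is a slightly more brute-force variant of the paper's argument; once you add the $2\leq a<b$ case as above, the proof is complete.
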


\begin{proof}
	Suppose firstly that $b\geq a$, and let $\mu=(a^b)$. Then $12\geq|\lambda-\lambda'|=2d+|\mu-\mu'|$ and clearly $d\leq 6$. Lemma~\ref{Lemma difference of rectangles} shows that either $|\lambda|=|\mu|+d\leq (6-d)^2 + 6\leq 31$, which is impossible, or $\mu$ is a square partition, as required.
	
	Now suppose that $b<a$. If $b=1$, we get the required condition $|d+1-a|\leq 6$; thus assume that $b\geq 2$. Then $12\geq|\lambda-\lambda'|\geq 2\sum_{i=2}^{b}|\lambda_i-\lambda'_i|\geq 2(b-1)(a-b)$. We conclude that $a\leq 8$ with equality if and only if $b=2$ and $d=6$ \emph{or} $b=7$ and $d=1$, which is impossible as $|\lambda|\leq 57$ in both cases. Thus $a\leq 7$, and consequently $d=|\lambda|-ab\geq 58-42\geq 16$. This is again impossible as $12\geq|\lambda-\lambda'|\geq 2|\lambda_1-\lambda'_1|=2|d+b-a|\geq 2(16+1-7)=20$. 
\end{proof}

\begin{lemma}\label{Lemma near rectangles}
	Let $\lambda=((a+1)^b,a^d)$ be a column-near rectangular partition of size at least $56$. If $|\lambda-\lambda'|\leq 12$, then either $a=b+d$ and $b\leq 6$ or $a+1=b+d$ and $d\leq 7$.
\end{lemma}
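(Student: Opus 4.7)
The plan is to first observe that $\lambda' = ((b+d)^a, b)$ from the definition of $\lambda = ((a+1)^b, a^d)$, and then split the analysis according to the value of $b+d - a$, since this quantity controls the shape of the symmetric difference between $\lambda$ and $\lambda'$. The two cases highlighted in the statement ($a = b+d$ and $a+1 = b+d$) are the only possibilities, and the remaining cases $b + d \leq a - 1$ and $b + d \geq a + 2$ must be shown to be incompatible with $|\lambda| \geq 56$ under $|\lambda - \lambda'| \leq 12$.

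The easy direction is to handle the two cases in the statement directly by a row-by-row comparison. When $b+d = a$, one checks that $|\lambda - \lambda'| = 2b$, immediately giving $b \leq 6$. When $b+d = a+1$, the analogous computation yields $|\lambda - \lambda'| = 2(d-1)$, so $d \leq 7$.

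For the first excluded case $b + d \leq a - 1$, set $e = a - b - d \geq 1$. A careful row-by-row sum gives
\[ |\lambda - \lambda'| = 2e(b+d) + 2b. \]
Combined with $|\lambda - \lambda'| \leq 12$ this forces $e(b+d) + b \leq 6$, and hence $b + d \leq 5$. Writing $|\lambda| = (b+d)a + b = (b+d)^2 + e(b+d) + b$, we obtain $|\lambda| \leq (b+d)^2 + 6 \leq 31 < 56$, a contradiction.

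The main obstacle is the final case $b+d \geq a+2$, which I would split further into the subcases $b \leq a$ and $b > a$ because the behaviour of the row $i = a+1$ differs between them. Setting $e = b+d-a-1 \geq 1$, in the subcase $b \leq a$ a direct comparison gives $|\lambda - \lambda'| = 2ae + 2(a-b)$, forcing $a \leq 6$ and $ae \leq 6$; bounding $|\lambda| = a^2 + a(1+e) + b \leq 36 + 12 + 6 = 54$ contradicts $|\lambda| \geq 56$. The subcase $b > a$ is more delicate: a row-by-row accounting produces the extra awkward term $(b-a-1)(a+2)$, yielding
\[ |\lambda - \lambda'| = ae + (b-a-1)(a+2) + da. \]
From $|\lambda - \lambda'| \leq 12$ one still gets $a \leq 6$, and then the term $(b-a-1)(a+2)$ forces $b$ to be essentially $a+1$; a short finite check over $a \in \{1,\dots,6\}$ shows that the only surviving parameters are $a=6,\, b=7,\, d=e=1$, which give $\lambda = (7^7,6)$ of size $55 < 56$. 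This closes the case and completes the proof.
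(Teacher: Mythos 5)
Your proof is correct and takes essentially the same route as the paper: the same trichotomy on $b+d$ versus $a$, row-by-row evaluation of $|\lambda-\lambda'|$ (using $\lambda'=((b+d)^a,b)$), and a size bound contradicting $|\lambda|\geq 56$ in the two extreme cases. The only differences are cosmetic: where you compute exact values (and split $b+d\geq a+2$ into $b\leq a$ and $b>a$, reaching the sharp bound $55$), the paper instead invokes Lemma~\ref{Lemma difference of rectangles} for $b+d<a$ and the single cruder estimate $|\lambda-\lambda'|\geq 2(b+d-a-1)a$ for $b+d>a+1$.
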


\begin{proof}
	Let $c=b+d$, and suppose firstly that $c<a$. If $\mu=(a^c)$, we obtain $12\geq|\lambda-\lambda'|=2b+|\mu-\mu'|$; thus $b\leq 6$. By Lemma~\ref{Lemma difference of rectangles} we also conclude that $|\lambda|=|\mu|+b\leq (6-b)^2+6\leq 31$, a contradiction.  If $c=a$, then $|\lambda-\lambda'|=2b$ which recovers $b\leq 6$. Similarly, for $c=a+1$ we obtain $d\leq 7$.
	
	Finally, suppose that $c>a+1$. Then $12\geq|\lambda-\lambda'|\geq 2\sum_{i=1}^{a} |\lambda_i-\lambda'_i| \geq 2(c-a-1)a$. Thus $c\leq 8$, and consequently $|\lambda|< c(a+1)\leq 56$, a contradiction.
\end{proof}

The next lemma concerns partitions $\lambda$ in Lemma~\ref{Lemma unbalanced fat hooks} and Lemma~\ref{Lemma near rectangles} for which $|\lambda-\lambda'|\leq 12$, meaning that Lemma~\ref{Lemma rotate linear} with $l=6$ does not apply to them. We recall the symmetric function $x_P=s_{(6)} +s_{(2^3)}$ which corresponds to $\mathbbm{1}\ind^{S_6}$, and write $x_P(\lambda)$ for $x_Ps_{\lambda}+\omega(x_P)s_{\lambda'}$.

\begin{lemma}\label{Lemma technical proof}
	Let $a$ be a positive integer and $\lambda$ be a partition of size at least $11$.
	\begin{enumerate}[label=\textnormal{(\roman*)}]
		\item If $\lambda=(a^a,1^i)$ with $1\leq i\leq 6$, then $x_P(\lambda)$ is multiplicity-free if and only if $i=3$.
		\item If $\lambda=(a,1^{a+i-1})$ with $-6\leq i\leq 6$ (and $a>1-i$), then $x_P(\lambda)$ is multiplicity-free if and only if $i\leq -2$.
		\item If $\lambda=((a+1)^i,a^{a-i})$ with $1\leq i\leq 6$ (and $a>i$), then $x_P(\lambda)$ is multiplicity-free if and only if $i=3$.
		\item If $\lambda=((a+1)^{a+1-i},a^i)$ with $1\leq i\leq 7$ (and $a\geq i$), then $x_P(\lambda)$ is multiplicity-free if and only if $i\geq 4$.
	\end{enumerate} 
\end{lemma}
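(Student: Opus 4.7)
Write
\begin{equation*}
x_P(\lambda) = \bigl(s_{(6)} + s_{(2^3)}\bigr) s_\lambda + \bigl(s_{(1^6)} + s_{(3^2)}\bigr) s_{\lambda'},
\end{equation*}
using $\omega(x_P) = s_{(1^6)} + s_{(3^2)}$. In each of (i)--(iv) both $\lambda$ and $\lambda'$ are hooks or almost rectangular partitions, so by Theorem~\ref{Theorem Stembridge} each of the four Schur-function products is multiplicity-free; a quick check (along the same lines as in Lemma~\ref{Lemma ak times pgl(2,5)}) shows that $x_P s_\lambda$ and $\omega(x_P) s_{\lambda'}$ are each multiplicity-free. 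Thus $x_P(\lambda)$ fails to be multiplicity-free if and only if some $s_\mu$ is a constituent of both summands.

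Any such $\mu$ must contain both $\lambda$ and $\lambda'$, so $\ell(\mu) \ge \max(\ell(\lambda), \lambda_1)$ and $\mu_1 \ge \max(\lambda_1, \ell(\lambda))$. The plan is to combine this containment with the Littlewood--Richardson rule, applied through Young's rule to the contributions from $s_{(6)}$ and $s_{(1^6)}$ and through direct LR-tableau analysis to the contributions from $s_{(2^3)}$ and $s_{(3^2)}$, to enumerate the candidates $\mu$ in the narrow window where $\lambda \cup \lambda' \subseteq \mu$. I would then cross-check whether any candidate occurs in both the ``left'' half ($x_P s_\lambda$) and the ``right'' half ($\omega(x_P) s_{\lambda'}$).

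For the ``not multiplicity-free'' values of $i$ listed in the statement, the plan is to exhibit a common constituent $s_\mu$ explicitly together with the two LR tableaux witnessing $s_\mu \subseteq x_P s_\lambda$ and $s_\mu \subseteq \omega(x_P) s_{\lambda'}$; the required reading words will be short variants of $112233$, $112323$, $121323$ and similar patterns already used in the proof of Lemma~\ref{Lemma ak times pgl(2,5)}. For the ``multiplicity-free'' values the combined row- and column-length bounds on $\mu$ from the two halves produce incompatible constraints ruling out every candidate; a useful recurring observation is that a vertical strip contributed by $s_{(1^6)}$ can lengthen $\lambda'$ by at most six rows, while a horizontal strip contributed by $s_{(6)}$ can widen $\lambda$ by at most six columns, so mismatches between these limits and the requirement $\ell(\mu), \mu_1 \ge \max(\ell(\lambda),\lambda_1)$ close off most cases immediately.

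The main obstacle is combinatorial bookkeeping rather than any single technical difficulty: four cases, each with several values of $i$, and for each pair of source summands a handful of candidate $\mu$ to verify or eliminate. The parameter $a$ plays a passive role once $|\lambda| \ge 11$ provides enough room for the constructions, so the verification can be carried out for each offset $i$ uniformly in $a$. The most delicate subcases are (iv) with $i \in \{3, 4\}$ and (ii) with $i$ near $-2$, where the boundary between multiplicity-free and not multiplicity-free rests on a single candidate $\mu$ and forces one to track the parity and placement of the strip contributions carefully.
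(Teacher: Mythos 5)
Your overall strategy coincides with the paper's: since both summands $x_Ps_\lambda$ and $\omega(x_P)s_{\lambda'}$ are multiplicity-free, $x_P(\lambda)$ fails to be multiplicity-free exactly when they share a constituent; the non-multiplicity-free values of $i$ are then handled by exhibiting an explicit common constituent $s_\mu$ with two Littlewood--Richardson tableaux, and the multiplicity-free values by combining $\lambda,\lambda'\subseteq\mu$ with Young's rule and the LR rule to eliminate every candidate. For instance, for $\lambda=(a^a,1^3)$ the paper argues exactly along your lines: $\mu_1\ge\lambda'_1=a+3$ rules out the $s_{(2^3)}$ contribution, Young's rule forces $\mu=(a+3+u,a^{a-1},1+v,1^{2+w})$ with $u+v+w=3$ and $w\le 1$, and imposing the same shape on $\mu'$ gives $v=0$ and $u\le 1$, a contradiction.

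Two points need repair. First, your justification that the four individual products are multiplicity-free is wrong as stated: for most of these families neither $\lambda$ nor $\lambda'$ is a hook or an almost rectangular partition (e.g.\ $(a^a,1^2)$ is neither). They are fat hooks, respectively column-near rectangular partitions, so Theorem~\ref{Theorem Stembridge}(ii)--(iii) still applies to $s_{(2^3)}s_\lambda$ and $s_{(3^2)}s_{\lambda'}$; and the fact that $s_{(6)}s_\lambda$ and $s_{(2^3)}s_\lambda$ share no constituent for precisely these $\lambda$ is not a quick check in the style of Lemma~\ref{Lemma ak times pgl(2,5)} but is exactly Proposition~\ref{Prop sk times primitive}(iii), which you should cite rather than re-derive. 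Second, as written this is a plan rather than a proof: the entire content of the lemma lies in the deferred bookkeeping --- an explicit $\mu$ with two witnessing reading words for each of the twenty-odd non-multiplicity-free cases (the paper's Table~\ref{Table technical}), uniform in $a$, plus a separate containment/Young's rule elimination for each multiplicity-free value --- and nothing in the proposal actually certifies the boundary values, e.g.\ that $i=3$ succeeds in (i) and (iii) or that $i\ge 4$ succeeds in (iv). The method would work, but the verification still has to be carried out.
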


\begin{proof}
	Note that $|\lambda|\geq 11$ forces $a\geq 3$ in all four cases. For each $\lambda$ such that $x_P(\lambda)$ is claimed not to be multiplicity-free we present a partition $\mu$ which labels constituents of $x_Ps_{\lambda}$ and $\omega(x_P)s_{\lambda'}$ in Table~\ref{Table technical}, proving that $x_P(\lambda)$ is not multiplicity-free.
	
	\begin{table}[h!]
		\centering
		\begin{tabular}{ccccc}
			\toprule
			$\lambda$&$i$&$\mu$&$r_1$&$r_2$\\
			\midrule
			&$1$&$((a+2)^2,a^{a-2},2,1)$&$121233$&$121212$\\
			&$2$&$((a+2)^3,a^{a-3},1^2)$&$123123$&$121212$\\
			$(a^a,1^i)$&$4$&$(a+5,a^{a-1},1^5)$&$111111$&$123456$\\
			&$5$&$(a+6,a^{a-1},1^5)$&$111111$&$123456$\\
			&$6$&$(a+6,a^{a-1},1^6)$&$111111$&$123456$\\
			\midrule
			&$-1$&$(a+1,3,2,1^{a-2})$&$121323$&$112122$\\
			&$0$&$(a+1,3^2,1^{a-2})$&$123123$&$112122$\\
			&$1$&$(a+2,3,2,1^{a-1})$&$112233$&$121212$\\
			$(a,1^{a+j-1})$&$2$&$(a+2,3^2,1^{a-1})$&$112323$&$121212$\\
			&$3$&$(a+4,2,1^{a+2})$&$111111$&$123456$\\
			&$4$&$(a+5,1^{a+4})$&$111111$&$123456$\\
			&$5$&$(a+5,2,1^{a+3})$&$111111$&$123456$\\
			&$6$&$(a+6,1^{a+5})$&$111111$&$123456$\\
			\midrule
			&$1$&$((a+2)^2,a^{a-2},2,1)$&$121323$&$121212$\\
			&$2$&$(a+3,a+1,a^{a-2},2^2)$&$112323$&$111222$\\
			$((a+1)^i,a^{a-i})$&$4$&$((a+1)^5,a^{a-5},5)$&$111111$&$123456$\\
			&$5$&$((a+1)^6,a^{a-6},5)$&$111111$&$123456$\\
			&$6$&$((a+1)^6,a^{a-6},6)$&$111111$&$123456$\\
			\midrule
			&$1$&$(a+3,a+2,(a+1)^{a-2},a,2,1)$&$112323$&$112212$\\
			$((a+1)^{a+1-i},a^i)$&$2$&$((a+2)^2,(a+1)^{a-1},2)$&$121233$&$121122$\\
			&$3$&$((a+2)^2,(a+1)^{a-2},a,2)$&$121233$&$121122$\\
			\bottomrule
		\end{tabular}
		\vspace{4pt}
		\caption{Given a partition $\lambda$, the table shows a partition $\mu$ such that $s_{\mu}$ is a constituent of $x_Ps_{\lambda}$ and $\omega(x_P)s_{\lambda'}$. It also presents the reading words $r_1$ and $r_2$ of a $\mu/\lambda$-tableau, respectively, a $\mu/\lambda'$-tableau required by Theorem~\ref{Theorem LR rule} to show that $s_{\mu}$ is a constituent of $x_Ps_{\lambda}$, respectively, $\omega(x_P)s_{\lambda'}$.}
		\label{Table technical}
	\end{table}
	
	We now prove that if $\lambda=(a^a,1^3)$, then $x_P(\lambda)$ is multiplicity-free. Suppose for contradiction that there is a partition $\mu$ of size $a^2+9$ such that $s_{\mu}$ is constituent of the multiplicity-free symmetric functions $s_{\lambda}(s_{(6)}+s_{(2^3)})$ and $s_{\lambda'}(s_{(1^6)}+s_{(3^2)})$. From $\lambda'_1=a+3$ we obtain $\mu_1\geq a+3$. Therefore $s_{\mu}$ cannot be a constituent of $s_{\lambda}s_{(2^3)}$; thus it is a constituent of $s_{\lambda}s_{(6)}$. Hence it is of the form $(a+3+u,a^{(a-1)},1+v,1^{2+w})$ with $u+v+w=3$ and $w\leq 1$. Since the same must be true for $\mu'$, we conclude that $v=0$ and $u\leq 1$, a contradiction. For the remaining $\lambda$ the argument that $x_P(\lambda)$ is multiplicity-free is similar.
\end{proof}

We now establish the desired classification for $(S_k\times \mathrm{PGL}_2(\mathbb{F}_5))\cap A_{k+6}$. Unfortunately, due to the many cases in Lemma~\ref{Lemma technical proof}, it does not have a particularly unified form.

\begin{proposition}\label{Prop Prop Sk times small hard class}
	Let $k\geq 58$. The irreducible induced-multiplicity-free characters of $G=(S_k\times \mathrm{PGL}_2(\mathbb{F}_5))\cap A_{k+6}$ are $\left( \chi^{\lambda}\boxtimes \mathbbm{1}\right) _G$ with $\lambda$ non-square rectangular, column-near rectangular or a column-unbalanced fat hook with the exceptions given by $\lambda=(a^a,1^i),((a+1)^i,a^{a-i})$ with $i\in \left\lbrace 1,2,4,5,6 \right\rbrace $, $\lambda=(a,1^{a+j-1})$ with $-1\leq j\leq 6$ and $\lambda=((a+1)^{a+1-l},a^l)$ with $1\leq l\leq 3$ (for some positive integer $a$).
\end{proposition}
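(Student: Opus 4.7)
The plan is to translate the question into one about symmetric functions and then grind it through the technical lemmas already developed. First, I would observe that by Corollary~\ref{Cor index two}(i) combined with Proposition~\ref{Prop sk times primitive}(iii), the character $(\chi^\lambda\boxtimes\mathbbm{1})_G$ can be induced-multiplicity-free only if $\lambda$ lies in one of the three families appearing in the statement (the same three families govern $\rho=\chi^\lambda\boxtimes\mathbbm{1}$ and its twist $\rho\times\eta=\chi^{\lambda'}\boxtimes(\sgn\res_{\mathrm{PGL}_2(\mathbb{F}_5)})$, after swapping the row/column qualifiers by conjugating). Then by Lemma~\ref{Lemma index two}(iv), $(\chi^\lambda\boxtimes\mathbbm{1})_G$ induced to $S_{k+6}$ corresponds under the Frobenius characteristic to
\[
x_P(\lambda) := x_P s_\lambda + \omega(x_P) s_{\lambda'},
\]
so the task reduces to classifying when $x_P(\lambda)$ is multiplicity-free.

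Both summands of $x_P(\lambda)$ are individually multiplicity-free by Proposition~\ref{Prop sk times primitive}(iii). The central reduction is then Lemma~\ref{Lemma rotate linear} with $l=6$: if $|\lambda-\lambda'|>12$ then $x_P(\lambda)$ is automatically multiplicity-free. The three ``difference'' lemmas then cut down the residual cases. For rectangular $\lambda$, Lemma~\ref{Lemma difference of rectangles} with $l=6$ combined with $k\geq 58>42$ forces $\lambda=(a^a)$. For column-near rectangular $\lambda=((a+1)^b,a^d)$, Lemma~\ref{Lemma near rectangles} leaves the subfamilies $a=b+d,\;b\leq 6$ and $a+1=b+d,\;d\leq 7$. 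For column-unbalanced fat hook $\lambda=(b^c,1^e)$, Lemma~\ref{Lemma unbalanced fat hooks} leaves the square-part subfamily $b=c,\;e\leq 6$ and the hook subfamily $c=1,\;|e+1-b|\leq 6$.

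For each of these four residual subfamilies I would invoke the corresponding part of Lemma~\ref{Lemma technical proof}: part~(i) handles $(a^a,1^i)$, part~(ii) handles $(a,1^{a+j-1})$, part~(iii) handles $((a+1)^i,a^{a-i})$, and part~(iv) handles $((a+1)^{a+1-l},a^l)$. In each part the partitions for which $x_P(\lambda)$ fails to be multiplicity-free match precisely the exception list in the proposition, while the ``good'' values ($i=3$ in (i) and (iii), $j\leq -2$ in (ii), and $i\geq 4$ in (iv)) supply the multiplicity-free characters. The remaining square-rectangular case $\lambda=(a^a)$, excluded from the list by the ``non-square'' qualifier but not covered by any part of Lemma~\ref{Lemma technical proof}, needs a separate direct check: the partition $\mu=((a+1)^3,a^{a-3},1^3)$ is a common constituent of $s_\lambda s_{(2^3)}$ (via the semistandard $\mu/\lambda$-tableau with latticed reading word $123123$) and of $s_\lambda s_{(1^6)}$ (since $\mu/\lambda$ is a vertical strip of size~$6$), so $x_P(\lambda)$ is not multiplicity-free.

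The main obstacle is really bookkeeping: one must identify the ranges in Lemma~\ref{Lemma technical proof} with the exception list, keep track of the self-conjugate partitions $(a^a)$, $(a,1^{a-1})$ and $((a+1)^a,a)$ where Lemma~\ref{Lemma rotate linear} cannot be used and multiplicity-freeness must be decided directly, and verify that the size hypotheses of all the invoked lemmas ($k\geq 58$, $|\lambda|\geq 58$, $|\lambda|\geq 56$ and $|\lambda|\geq 11$) are comfortably met under $k\geq 58$. Since Lemma~\ref{Lemma technical proof} together with the explicit argument for $(a^a)$ handles each self-conjugate case and the bounds are never tight, this combinatorial matching is the only remaining work.
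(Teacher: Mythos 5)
Your proposal is correct and follows essentially the same route as the paper: reduce to the three families via Proposition~\ref{Prop sk times primitive}(iii), pass to $x_P(\lambda)=x_Ps_{\lambda}+\omega(x_P)s_{\lambda'}$, dispose of the cases with $|\lambda-\lambda'|>12$ using Lemma~\ref{Lemma rotate linear} together with Lemmas~\ref{Lemma difference of rectangles}, \ref{Lemma unbalanced fat hooks} and \ref{Lemma near rectangles}, and settle the residual families with Lemma~\ref{Lemma technical proof}(i)--(iv). The only deviation is your witness for square $\lambda=(a^a)$, namely $\mu=((a+1)^3,a^{a-3},1^3)$ common to $s_{\lambda}s_{(2^3)}$ and $s_{\lambda}s_{(1^6)}$, which checks out and is an equally valid alternative to the paper's choice $(a+2,a+1,a^{a-2},2,1)$.
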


\begin{proof}
	From Proposition~\ref{Prop sk times primitive}(iii) we know that $\lambda$ must be rectangular, column-near rectangular or a column-unbalanced fat hook. Recall that the character $\left( \left( \chi^{\lambda}\boxtimes \mathbbm{1}\right) _G\right) \Ind^{S_k+6}$ corresponds to $x_P(\lambda)=x_Ps_{\lambda}+\omega(x_P)s_{\lambda'}$. If $\lambda$ is non-square rectangular, we can apply Lemma~\ref{Lemma difference of rectangles} and Lemma~\ref{Lemma rotate linear}. If $\lambda$ is a square partition, say $(a^a)$, we claim that $s_{\mu}$ is a common constituent of $x_Ps_{\lambda}$ and $\omega(x_P)s_{\lambda'}$ for $\mu=(a+2,a+1,a^{a-2},2,1)$. Indeed, this follows from the Littlewood--Richardson rule stated in Theorem~\ref{Theorem LR rule} using the $\mu/\lambda$-tableaux with reading words $112323$ and $112212$.
	
	Now suppose that $\lambda=(a^b,1^d)$ is a column-unbalanced fat hook. We can apply Lemma~\ref{Lemma unbalanced fat hooks} and Lemma~\ref{Lemma rotate linear} to conclude that either $a=b$ and $d\leq 6$ \emph{or} $b=1$ and $|d+1-a|\leq 6$. We can use Lemma~\ref{Lemma technical proof}(i) and (ii) to get two of our exceptional families (namely the first and third). Finally, for $\lambda$ column-near rectangular we proceed similarly using Lemma~\ref{Lemma near rectangles} instead of Lemma~\ref{Lemma unbalanced fat hooks}. The remaining exceptions then arise from Lemma~\ref{Lemma technical proof}(iii) and (iv). 
\end{proof}  

\section{Rectangular and almost rectangular partitions}\label{Sec rectangles}

The main objects of this section are $(a,b)$-birectangular partitions, that is the partitions $\lambda$ of size $2ab$ such that for all $1\leq i\leq 2b$ we have $\lambda_i+ \lambda_{2b+1-i}=2a$ as defined in Definition~\ref{Definition birectangular}(i). Recall from Remark~\ref{Remark birectangular} that any $(a,b)$-birectangular partition has length at most $2b$. In fact, the condition $|\lambda|=2ab$ in the definition of $(a,b)$-birectangular partitions can be replaced with $\ell(\lambda)\leq 2b$. Using Proposition~\ref{Prop LR rectangles}(i), one can alternatively define $(a,b)$-birectangular partitions as labels of constituents of $s_{(a^b)}^2$, and consequently we see that the conjugates of $(a,b)$-birectangular partitions are $(b,a)$-birectangular. With this in mind, let us prove the combinatorial result from the introduction, which we restate below for the reader's convenience.

\setcounter{section}{1}\setcounter{theorem}{2}
\begin{proposition}
	Let $a>b$ be positive integers and write $d=a-b$. There is a partition $\lambda$ which is both $(a,b)$-birectangular and $(b,a)$-birectangular if and only if $d|a$. Moreover, in such a case there is a unique such $\lambda$ given by $((2b)^{2d}, (2b-2d)^{2d}, \dots, (2d)^{2d})$.
\end{proposition}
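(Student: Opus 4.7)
The plan is as follows. Unpacking the definitions, $\lambda$ is $(a,b)$-birectangular means $\lambda_i + \lambda_{2b+1-i} = 2a$ for all $1 \leq i \leq 2b$, and $\lambda$ is $(b,a)$-birectangular means $\lambda_i + \lambda_{2a+1-i} = 2b$ for all $1 \leq i \leq 2a$. Recall from Remark~\ref{Remark birectangular} that the first condition forces $\ell(\lambda) \leq 2b$; since $a > b$, this gives $\ell(\lambda) < 2a$, so $\lambda_i = 0$ whenever $i > 2b$.

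First I would pin down the extreme parts of $\lambda$. For $i \in [2b+1, 2a]$ we have $\lambda_i = 0$, and the $(b,a)$-condition forces $\lambda_{2a+1-i} = 2b$; as $2a+1-i$ ranges over $[1, 2d]$, this yields $\lambda_j = 2b$ for $1 \leq j \leq 2d$. Feeding this into the $(a,b)$-condition with $i \in [1,2d]$ then gives $\lambda_{2b+1-i} = 2a - 2b = 2d$, i.e.\ $\lambda_j = 2d$ for $2b-2d+1 \leq j \leq 2b$. In particular the range $[2b-2d+1, 2b]$ must be nonempty, so $b \geq d$.

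Next I would extract the key recurrence. For any $j \in [1, 2b]$, set $i = 2b+1-j$; the $(a,b)$-relation reads $\lambda_j = 2a - \lambda_i$, and the $(b,a)$-relation applied to the same $i$ (which lies in $[1,2b] \subseteq [1,2a]$) reads $\lambda_{2a+1-i} = 2b - \lambda_i$. Subtracting gives
\[
\lambda_{j+2d} = \lambda_j - 2d \qquad \text{for } 1 \leq j \leq 2b.
\]
Iterating this within each of the $2d$ arithmetic progressions $j, j+2d, j+4d, \ldots$ starting at $j \in [1,2d]$, and using the boundary values $\lambda_j = 2b$ at the start and $\lambda_j = 2d$ at the end (followed by $\lambda_j = 0$ for $j > 2b$), we see that $2b$ must be reachable from $2d$ in steps of $2d$; equivalently $d \mid b$, which is the same as $d \mid a$. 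Moreover the recurrence together with the boundary values determines $\lambda$ completely, yielding the stated expression $((2b)^{2d}, (2b-2d)^{2d}, \ldots, (2d)^{2d})$.

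For the converse I would simply verify that, when $d \mid a$ (so $a = kd$, $b = (k-1)d$ with $k \geq 2$), the partition $\lambda = ((2b)^{2d}, (2b-2d)^{2d}, \ldots, (2d)^{2d})$ satisfies both birectangularity conditions. This is a direct block-by-block computation: if $i$ lies in the $p$-th block, then $2b+1-i$ lies in the $(k-p)$-th block and $2a+1-i$ lies in the $(k-p+1)$-th block (or beyond the partition, which is handled by the edge cases already computed above), and the two symmetry identities reduce to the arithmetic identities $(2b - 2d(p-1)) + (2b - 2d(k-p-1)) = 2a$ and $(2b - 2d(p-1)) + (2b - 2d(k-p)) = 2b$. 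The main (mild) obstacle is bookkeeping at the boundaries of the blocks and checking that no two forced values conflict; once the recurrence is in hand this is straightforward.
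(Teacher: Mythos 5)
Your proof is correct and follows essentially the same route as the paper's: the forced values $\lambda_j=2b$ for $j\leq 2d$, the recurrence $\lambda_{j+2d}=\lambda_j-2d$ obtained by subtracting the two birectangularity relations, and the bound $\ell(\lambda)\leq 2b$ together force $d\mid a$ and determine $\lambda$ uniquely. The only cosmetic difference is in the converse, where you verify both conditions block-by-block, whereas the paper checks that the explicit partition is $(a,b)$-birectangular and self-conjugate and invokes the fact that conjugates of $(a,b)$-birectangular partitions are $(b,a)$-birectangular.
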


\begin{proof}
	Suppose that such a partition $\lambda$ exists. We claim that for all $i\leq 2b$ we have $\lambda_{i+2d} = \lambda_i -2d$. Indeed, we have $\lambda_i + \lambda_{2b+1-i}=2a$, and since $2b+1-i\leq 2a$ and $\lambda$ is $(b,a)$-birectangular, we also have $\lambda_{2b+1-i} + \lambda_{2a-2b+i} =2b$. Subtracting these equalities yields the claimed equality.
	
	Let us now observe that $\lambda_i=2b$ for $1\leq i\leq 2d$. Indeed, since $\lambda$ is $(b,a)$-birectangular $\lambda_i = 2b-\lambda_{2a+1-i}$. Moreover $\ell(\lambda)\leq 2b< 2a+1-i$ as $\lambda$ is $(a,b)$-birectangular; thus $\lambda_{2a+1-i}=0$, and in turn $\lambda_i=2b$, as needed. Now, to show the `only if' part pick an integer $q$ such that $2b<2qd+1< 2a$. Using $\lambda_1=2b$, $\ell(\lambda)\leq 2b$ and $\lambda_{i+2d} = \lambda_i -2d$ with $i=1,1+2d,\dots, 1+2(q-1)d\leq 2b$ yields $0=\lambda_{2qd+1} = \lambda_1 - 2qd=2b-2qd$. Thus $d|b$, or equivalently, $d|b+d$ which is $d|a$.
	
	In the case $d|a$, we still know that $\lambda_i=2b$ for $1\leq i\leq 2d$ and $\lambda_{i+2d} = \lambda_i -2d$ for all $i\leq 2b$. Thus a simple induction on a non-negative integer $j\leq b/d=a/d -1$ shows that for all $1\leq i\leq 2d$ we have $\lambda_{i+2jd}=2b-2jd$. Omitting the final $2b$ zeros, we conclude that $\lambda=((2b)^{2d}, (2b-2d)^{2d}, \dots, (2d)^{2d})$. It is easy to see that this partition is $(a,b)$-birectangular and self-conjugate; thus it satisfies the required constraints. 
\end{proof}
\setcounter{section}{6}\setcounter{theorem}{0}

For the next result let us also recall $\left\lbrace a,b\right\rbrace $-birectangular partitions from Definition~\ref{Definition birectangular}(ii), which label constituents of $s_{(a^b)}s_{(b^a)}$. For $a>b$ they are the partitions $\lambda$ of size $2ab$ such that for all $1\leq i\leq b$ we have $\lambda_i + \lambda_{a+b+1-i}=a+b$ and for $b+1\leq i\leq a$ we have $\lambda_i=b$. 

\begin{proposition}\label{Prop conjugate set ab-birectangles}
	Let $a>b$ be positive integers and write $d=a-b$. There is a partition $\lambda$ of $2ab$ which is both $(a,b)$-birectangular and $\left\lbrace a,b\right\rbrace $-birectangular if and only if $d|a$. Moreover, in such a case there is a unique such $\lambda$ given by $((a+b)^d, (a+b-d)^d, \dots,(b+3d)^d, (b+2d)^d, b^d, (b-d)^d,\dots, d^d)$.
\end{proposition}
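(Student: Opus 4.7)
I would mirror the strategy of the preceding Proposition~\ref{Prop conjugate (a,b)-birectangles}: extract a linear recursion on the parts from the overlap of the two birectangularity conditions, and then read off a divisibility constraint. Suppose $\lambda$ of size $2ab$ is both $(a,b)$-birectangular and $\{a,b\}$-birectangular. The first condition forces $\ell(\lambda)\le 2b$ (see Remark~\ref{Remark birectangular}), while $\lambda_a = b > 0$ from the second condition gives $\ell(\lambda)\ge a$; hence $a\le 2b$, equivalently $d\le b$. Moreover, any index $j$ with $2b < j \le a+b$ satisfies $\lambda_j = 0$, and applying $\lambda_j + \lambda_{a+b+1-j} = a+b$ shows, as $a+b+1-j$ ranges over $\{1,\dots,d\}$, that
\[ \lambda_1 = \lambda_2 = \cdots = \lambda_d = a+b. \]

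The next step is to establish the recurrence
\[ \lambda_{j+d} = \lambda_j - d \qquad (1\le j\le b-d). \]
For this I would set $j' = 2b+1-j$, which lies in $[b+d+1, 2b]\subseteq [a+1, a+b]$, and combine the $(a,b)$-birectangular identity $\lambda_j + \lambda_{j'} = 2a$ with the $\{a,b\}$-birectangular identity $\lambda_{j+d}+\lambda_{j'} = a+b$ (using $a+b+1-j' = j+d$); subtraction yields the recurrence.

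To force $d\mid a$, write $b = qd+r$ with $0\le r<d$, and argue by contradiction that $r\ge 1$ is impossible. Since $1\le r\le d$, the earlier step gives $\lambda_r = a+b$; iterating the recurrence $q$ times produces $\lambda_b = \lambda_r - qd = a+b-qd = a+r$. On the other hand, the $(a,b)$-birectangular relation at $i=b$ together with $\lambda_{b+1}=b$ (which is valid because $b+1\le a$) gives $\lambda_b = 2a-b = a+d$. Comparing these two expressions yields $r=d$, contradicting $r<d$; hence $d\mid b$ and equivalently $d\mid a$.

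For the converse and uniqueness, when $d\mid a$ the first $b$ parts of $\lambda$ are uniquely determined by $\lambda_1=\cdots=\lambda_d=a+b$ together with the recurrence; then $\lambda_{b+1}=\cdots=\lambda_a = b$, and the remaining parts are forced by the reflection $\lambda_{a+s} = a+b - \lambda_{b+1-s}$ for $1\le s\le b-d$ (beyond which the parts vanish, matching $\ell(\lambda)\le 2b$). Unrolling produces exactly
\[ \lambda = \bigl((a+b)^d, (a+b-d)^d, \dots, (b+2d)^d, b^d, (b-d)^d, \dots, d^d\bigr), \]
and I would finish by a direct block-by-block check that this partition satisfies both birectangularity conditions. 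The main obstacle I anticipate is the bookkeeping in the overlap region near the indices $a$, $b$, and $2b$, where the two conditions both apply; the remedy is to track the image $2b+1-j$ of the recurrence and verify that it always lands inside $[a+1, a+b]$, which is exactly what allows the $\{a,b\}$-identity to be invoked to close the loop.
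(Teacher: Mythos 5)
Your proposal is correct and takes essentially the same route as the paper: both subtract the $(a,b)$- and $\{a,b\}$-birectangularity identities to get the shift recurrence $\lambda_{j+d}=\lambda_j-d$, use the boundary data ($\ell(\lambda)\leq 2b$, $\lambda_{b+1}=b$, and the forced top/bottom blocks) to extract $d\mid b$ (equivalently $d\mid a$), and then reconstruct the unique partition via the reflection identities. The only difference is cosmetic — you anchor the iteration at $\lambda_1=\dots=\lambda_d=a+b$ and compare with $\lambda_b=2a-b$, whereas the paper iterates from $\lambda_{b+1}=b$ past index $2b$ — so nothing further is needed.
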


\begin{proof}
	Suppose that such $\lambda$  exists. For $1\leq i\leq b-d$ or $b+1\leq i\leq 2b$ we have $\lambda_i + \lambda_{2b+1-i}=2a$ and $\lambda_{2b+1-i} + \lambda_{a-b+i}=a+b$. In turn $\lambda_{i+d} = \lambda_i -d$. Similarly to the proof of Proposition~\ref{Prop conjugate (a,b)-birectangles}, this equality combined with $\lambda_{b+1}=b$ (which holds for $\left\lbrace a,b\right\rbrace $-birectangular partitions) and $\ell(\lambda)\leq 2b$ gives $d\mid b$, or equivalently, $d\mid a$.
	
	Since, in fact, $\lambda_i=b$ for all $b+1\leq i\leq a$, for $d\mid a$ we conclude that the final $b$ parts $(\lambda_{b+1},\lambda_{b+2},\dots,\lambda_{2b})$ equal $(b^d, (b-d)^d,\dots, d^d)$. Using the equalities $\lambda_i=a+b-\lambda_{a+b+1-i}$ for $1\leq i\leq b$ we conclude that $\lambda=((a+b)^d, (a+b-d)^d, \dots,(b+3d)^d, (b+2d)^d, b^d, (b-d)^d,\dots, d^d)$. It is easily seen that this partition is $(a,b)$-birectangular and $\left\lbrace a,b\right\rbrace $-birectangular, finishing the proof.
\end{proof}

We can adapt the last two proofs to obtain a stronger version of the main part of Proposition~\ref{Prop conjugate (a,b)-birectangles}. 

\begin{proposition}\label{Prop conjugate birectangles and boxes}
	Let $t$ be a non-negative integer. Suppose that $a>b$ are positive integers and write $d=a-b$ and $0\leq r\leq d-1$ for the remainder of $a$ modulo $d$. There is an $(a,b)$-birectangular partition $\mu$ and a $(b,a)$-rectangular partition $\nu$ such that $|\mu-\nu|\leq 2t$ if and only if $t\geq 2r(d-r)$.
\end{proposition}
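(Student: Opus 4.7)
The plan is to extend the argument from Proposition~\ref{Prop conjugate (a,b)-birectangles} by carrying a discrepancy term $\delta_j := \mu_j - \nu_j$. Combining the two birectangularity relations $\mu_i + \mu_{2b+1-i} = 2a$ and $\nu_i + \nu_{2a+1-i} = 2b$ via this substitution, one derives the key identity
\[
\nu_{j+2d} - \nu_j \;=\; -2d + \delta_j + \delta_{2b+1-j} \qquad \text{for } j \in [1, 2b].
\]
When $r=0$ this reduces, via $\delta\equiv 0$, to the recursion driving the proof of Proposition~\ref{Prop conjugate (a,b)-birectangles}; in general the $\delta$-terms record the cost of passing between $(a,b)$- and $(b,a)$-birectangularity, and will be forced to contribute at least $4r(d-r)$ to $|\mu-\nu|$.

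For the lower bound I would partition $[1,2b]$ by residue modulo $2d$. Setting $q = \lfloor a/d\rfloor$ (so $a=qd+r$ and $b=(q-1)d+r$) and $F := \bigcup_{l=0}^{q-1}[2dl+1, 2dl+2r]$, the residues in $[1,2r]$ contribute $q$ points each to $[1,2b]$ while those in $[2r+1, 2d]$ contribute $q-1$. Telescoping the identity along a class $c\in[1,2r]$ and summing over $c$, the key observation is that the forward indices $\{c+2dl\}$ and the backward indices $\{2b+1-c-2dl\}$ swept out as $(c,l)$ ranges over $[1,2r]\times[0,q-1]$ both equal $F$ with each index appearing once in each list (via $c\mapsto 2r+1-c$, $l\mapsto q-1-l$), so every $\delta$-term is counted twice. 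Combined with the boundary value $\nu_{c+2dq} = 2b-\nu_{2r+1-c}$ coming from the $(b,a)$-symmetry, this yields
\[
\sum_{j\in F}\delta_j \;=\; 2r(d-r) + 4rb - Q', \qquad Q' := \sum_{c=1}^{2r}\nu_c \leq 4rb,
\]
and hence $\sum_{j\in F}\delta_j \geq 2r(d-r)$. A parallel telescoping on residues $c\in[2r+1, 2d]$ produces an analogous formula for $\sum_{j\in F^c}\delta_j$.

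With $D = \sum_{j\leq 2b}|\delta_j|$ and $P = \sum_{j>2b}\nu_j$, so that $|\mu-\nu|=D+P$ and $\sum_{j\leq 2b}\delta_j=P$, the triangle inequality $D \geq |\sum_{j\in F}\delta_j| + |\sum_{j\in F^c}\delta_j|$ completes the bound via a case split. If $\sum_{j\in F}\delta_j > P$ then $D \geq 2\sum_{j\in F}\delta_j - P$, giving $|\mu-\nu| \geq 2\sum_{j\in F}\delta_j \geq 4r(d-r)$. Otherwise $\sum_{j\in F}\delta_j \leq P$ combined with $\sum_{j\in F}\delta_j \geq 2r(d-r)$ forces $P \geq 2r(d-r)$, and the easier estimate $D\geq P$ yields $|\mu-\nu| \geq 2P \geq 4r(d-r)$. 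Hence $t \geq 2r(d-r)$.

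For the converse direction I would exhibit explicit $\mu^*, \nu^*$ attaining $|\mu^*-\nu^*| = 4r(d-r)$ by saturating the sign conditions extracted from the lower bound: $\delta_j \geq 0$ on $F$, $\delta_j\leq 0$ on $F^c$, and $\nu_c = 2b$ for $c\in[1,2r]$. When $q=1$ (equivalently $2b<a$, with $r=b$ and $d-r=a-2b$), the pair $\mu^* = (a^{2b})$ and $\nu^* = ((2b)^a, 0^a)$ gives $|\mu^*-\nu^*| = 4b(a-2b) = 4r(d-r)$ by a direct count. When $q\geq 2$ I would take $\nu^* = ((2b)^{2d}, \rho)$ with $\rho$ a suitable non-increasing palindromic sequence of length $2b-2d$ summing to $b(2b-2d)$, and define $\mu^*$ via the $(a,b)$-birectangularity relation so that it agrees with $\nu^*$ outside the small region dictated by the sign pattern. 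The main obstacle is choosing $\rho$ so that $\mu^*$ is genuinely non-increasing, which requires matching $\rho$ to the staircase shape from Proposition~\ref{Prop conjugate (a,b)-birectangles}; once arranged, $|\mu^*-\nu^*|=4r(d-r)$ follows from a direct count.
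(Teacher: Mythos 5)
Your necessity argument is sound and is essentially the paper's own: the relation $\nu_{j+2d}-\nu_j=-2d+\delta_j+\delta_{2b+1-j}$ is the same key identity obtained by combining the two birectangularity symmetries (the paper writes it for $\mu$, with correction terms $\varrho_{2b+1-i}+\varrho_{i+2d}$), and telescoping it with step $2d$ along residue classes is the same device. I checked the identity, the fact that the forward indices $c+2dl$ and the backward indices $2b+1-c-2dl$ each sweep out $F$ exactly once, the resulting evaluation $\sum_{j\in F}\delta_j=2r(d-r)+4rb-Q'$ with $Q'\le 4rb$ (every part of a $(b,a)$-birectangular partition is at most $2b$), and both branches of your case split using $\sum_{j\le 2b}\delta_j=P$ and $|\mu-\nu|=D+P$; the only difference from the paper is bookkeeping (the paper keeps the exact per-class sums $2d-2r$ and $-2r$ and uses that each index occurs at most twice among its sets $\mathcal{S}_j$, whereas you aggregate over $F$ and invoke the global mass balance), so this half is fine.

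The gap is the sufficiency direction for $q\ge 2$, which in particular contains the entire case $r=0$. You exhibit and verify a pair only for $q=1$ (your $\mu^*=(a^{2b})$, $\nu^*=((2b)^{a})$ is correct); for $q\ge 2$ you only say you \emph{would} take $\nu^*=((2b)^{2d},\rho)$ for a ``suitable'' $\rho$, and you yourself flag as unresolved the main point, namely choosing $\rho$ so that the resulting $\mu^*$ is a genuine partition and both partitions are birectangular with $|\mu^*-\nu^*|=4r(d-r)$. Since this is exactly the half of the proposition that demands a construction, the ``if'' implication is not proved as written. The missing step is short: take the staircase $\mu=(((2q-1)d+r)^{2r},((2q-2)d+r)^{2(d-r)},((2q-3)d+r)^{2r},\dots,(d+r)^{2r})$, which has length $2b$ and is $(a,b)$-birectangular because the block pattern is palindromic with opposite block values summing to $2a$, and let $\nu$ agree with $\mu$ from position $2d+1$ onwards while its first $2d$ parts all equal $(2q-2)d+2r=2b$; then $\nu$ is a partition, it is $(b,a)$-birectangular (for $i\le 2d$ the partner part is zero, for $i>2d$ use $\mu_i=\mu_{i-2d}-2d$), and $|\mu-\nu|=2r(d-r)+2(d-r)r=4r(d-r)$, with the $r=0$ case degenerating to the self-conjugate partition of Proposition~\ref{Prop conjugate (a,b)-birectangles}. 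Until such a pair is written down and checked, your proposal establishes only the ``only if'' direction.
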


\begin{proof}
	Suppose that $\mu$ and $\nu$ are such partitions. For any positive integer $i$ define $\varrho_i=\mu_i-\nu_i$. Extend $\mu=(\mu_1,\mu_2,\dots)$ to a sequence $(\mu_{1-2d}, \mu_{2-2d}, \dots)$ by letting $\mu_i=2a$ for $1-2d\leq i\leq 0$. Since $\ell(\mu)\leq 2b$, the equality $\mu_i + \mu_{2b+1-i}=2a$ holds for all $1-2d\leq i\leq 2a$. We claim that for any $1-2d\leq i\leq 2b$ we also have 
	\begin{equation}\label{Eq 2d move}
	\mu_i = \mu_{i+2d} + 2d - (\varrho_{2b+1 - i} + \varrho_{i+2d}).
	\end{equation}
	Indeed, using the inequalities $1\leq 2b+1-i\leq 2a$ and $1\leq i+2d$, we can write $\mu_i = \left( \mu_i + \mu_{2b+1-i}\right)  - (\mu_{2b+1-i} -\nu_{2b+1-i}) - (\nu_{2b+1-i} + \nu_{i+2d}) - (\mu_{i+2d} - \nu_{i+2d}) + \mu_{i+2d} = 2a - \varrho_{2b+1 - i} - 2b - \varrho_{i+2d} + \mu_{i+2d}$, which is the required equation.
	
	Write $a=qd+r$ for some integer $q$ (hence $b=(q-1)d + r$). For $1-2d\leq j\leq 2r-2d$ we have $j+2qd\leq 2b<j+2(q+1)d$, and therefore we can add the equalities (\ref{Eq 2d move}) with $i=j, j+2d, \dots, j+2qd$ to get $\mu_j = \mu_{j+2(q+1)d} + 2(q+1)d - \sum_{e\in \mathcal{S}_j} \varrho_e= 2(q+1)d - \sum_{e\in \mathcal{S}_j} \varrho_e$, where $\mathcal{S}_j$ denotes the multiset $\left\lbrace 2b+1-i, i+2d : i=j, j+2d, \dots, j+2qd\right\rbrace $ of positive integers. Since $j$ is non-positive we have $\mu_j=2a$; thus $\sum_{e\in \mathcal{S}_j} \varrho_e=2d-2r$. 
	
	For $2r-2d<j\leq 0$ we have $j+2(q-1)d\leq 2b<j+2qd$, and in a similar manner we obtain $2a=\mu_j = \mu_{j+2qd} + 2qd - \sum_{e\in \mathcal{S}_j} \varrho_e=2qd - \sum_{e\in \mathcal{S}_j} \varrho_e$, where  $\mathcal{S}_j$ is the multiset $\left\lbrace 2b+1-i,i+2d : i=j, j+2d, \dots, j+2(q-1)d\right\rbrace $ of positive integers. Rearranging this equation yields $\sum_{e\in \mathcal{S}_j} \varrho_e=-2r$.
	
	Note the elements `$2b+1-i$' of sets $S_j$ with $1-2d\leq j\leq 0$ are pairwise distinct and the same holds for the elements `$i+2d$'. Thus any positive integer has multiplicity at most two in the union of all the $S_j$ over $1-2d\leq j\leq 0$. Using the triangle inequality, we can therefore write
	\begin{align*}
	4t &\geq 2|\mu-\nu|=2\sum_{e\geq 1} |\varrho_e| \geq\sum_{j=1-2d}^{0} \sum_{e\in \mathcal{S}_j}|\varrho_e|\\ &\geq\sum_{j=1-2d}^{2r-2d} \left| \sum_{e\in \mathcal{S}_j}\varrho_e\right|  + \sum_{j=1+2r-2d}^{0} \left| \sum_{e\in \mathcal{S}_j}\varrho_e\right| \\
	&=2r\times 2(d-r) + 2(d-r)\times 2r\\
	&= 8r(d-r),
	\end{align*}
	which gives the desired condition $t\geq 2r(d-r)$.
	
	This condition is indeed sufficient as the partition $\mu = (((2q-1)d + r)^{2r}, ((2q-2)d + r)^{2(d-r)}, ((2q-3)d + r)^{2r}, ((2q-4)d + r)^{2(d-r)},\dots, (d+r)^{2r})$ (of length $2rq +2(d-r)(q-1)=2b$) is $(a,b)$-rectangular, the partition $\nu$ obtained from $\mu$ by replacing its first $2d$ parts with $((2q-2)d+2r)^{2d}$ is $(b,a)$-rectangular and $|\mu-\nu| = \sum_{i= 1}^{2r}(\mu_i-\nu_i) + \sum_{i= 2r+1}^{2d}(\nu_i-\mu_i)=4r(d-r)$.
\end{proof}

We rephrase this into a more practical result.

\begin{corollary}\label{Cor rectangles}
	Let $t$ be a non-negative integer. Suppose that $a>b$ are positive integers. Write $d=a-b$ and $0\leq r\leq d-1$ for the remainder of $a$ modulo $d$.
	\begin{enumerate}[label=\textnormal{(\roman*)}]
		\item There is a partition $\lambda$ of $2ab+t$ such that for some $(a,b)$-birectangular partition $\mu$ and some $(b,a)$-birectangular partition $\nu$ we have $\mu, \nu\subseteq \lambda$ if and only if $t\geq 2r(d-r)$.
		\item Suppose further that $t\leq 2ab$. There is a partition $\lambda$ of $2ab-t$ such that for some $(a,b)$-birectangular partition $\mu$ and some $(b,a)$-birectangular partition $\nu$ we have $\lambda\subseteq \mu, \nu$ if and only if $t\geq 2r(d-r)$.
	\end{enumerate}
\end{corollary}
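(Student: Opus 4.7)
The plan is to deduce the corollary directly from the previous proposition together with Lemma~\ref{Lemma contains two partitions}. Both parts reduce to the same numerical criterion, namely the existence of an $(a,b)$-birectangular $\mu$ and a $(b,a)$-birectangular $\nu$ with $|\mu-\nu|\leq 2t$, and that criterion has already been established in Proposition~\ref{Prop conjugate birectangles and boxes}.

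For part (i), I would fix $\mu$ and $\nu$ as above, both of size $2ab$, and ask for a partition $\lambda$ of $2ab+t$ with $\mu,\nu\subseteq\lambda$. By Lemma~\ref{Lemma contains two partitions}(i), such a $\lambda$ exists if and only if $|\mu-\nu|\leq 2(2ab+t)-|\mu|-|\nu|=2t$. Hence a $\lambda$ with the desired property exists for \emph{some} choice of $\mu$ and $\nu$ if and only if $\min|\mu-\nu|\leq 2t$, where the minimum is over all $(a,b)$-birectangular $\mu$ and $(b,a)$-birectangular $\nu$. Proposition~\ref{Prop conjugate birectangles and boxes} computes this minimum to be $2r(d-r)$ (rescaling: the proposition's $t$ is half of ours), so the condition becomes $t\geq 2r(d-r)$, as claimed.

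Part (ii) proceeds in the same manner, using Lemma~\ref{Lemma contains two partitions}(ii) in place of part (i): for fixed $\mu$ and $\nu$ of size $2ab$, there is a partition $\lambda$ of $2ab-t$ with $\lambda\subseteq\mu,\nu$ if and only if $|\mu-\nu|\leq |\mu|+|\nu|-2(2ab-t)=2t$. The assumption $t\leq 2ab$ ensures $2ab-t\geq 0$ so that $\lambda$ is a legitimate partition. The criterion again reduces to $\min|\mu-\nu|\leq 2t$, and Proposition~\ref{Prop conjugate birectangles and boxes} finishes the argument.

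There is no real obstacle here, since the heavy combinatorial work is carried out in Proposition~\ref{Prop conjugate birectangles and boxes}; the only thing to check is the bookkeeping that the condition in Lemma~\ref{Lemma contains two partitions} specialises, in both (i) and (ii), to exactly $|\mu-\nu|\leq 2t$. Thus the corollary is essentially a translation of Proposition~\ref{Prop conjugate birectangles and boxes} through the containment criterion.
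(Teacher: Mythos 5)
Your argument is exactly the paper's proof: the paper simply combines Lemma~\ref{Lemma contains two partitions} with Proposition~\ref{Prop conjugate birectangles and boxes}, and your bookkeeping that both containment conditions specialise to $|\mu-\nu|\leq 2t$ is the whole point. One small correction: no rescaling is involved, since the Proposition's $t$ is literally the same as the corollary's $t$ (existence of a pair with $|\mu-\nu|\leq 2t$ iff $t\geq 2r(d-r)$), and the minimum of $|\mu-\nu|$ over such pairs is in fact $4r(d-r)$, not $2r(d-r)$ --- your parenthetical as written would give the condition $t\geq r(d-r)$, so just quote the Proposition directly instead of passing through the minimum.
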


\begin{proof}
	Use Lemma~\ref{Lemma contains two partitions} and Proposition~\ref{Prop conjugate birectangles and boxes}.
\end{proof}

We end this section by proving a result similar to Lemma~\ref{Lemma Stembridge observation}, however, this time we focus solely on almost rectangular partitions. In the proof we need to distinguish between almost rectangular partitions of the form $(a^{b-1},a-1)$ and $(a^b,1)$, and almost rectangular partitions of the form $(a+1,a^{b-1})$. To avoid repeating the same argument, we omit some of the details for the latter form.  

\begin{lemma}\label{Lemma constructions of partitions}
	Let $a,b,a'$ and $b'$ be positive integers such that $a<a'$ and $b+1<b'$. Suppose that $\mu$ and $\nu$ are almost rectangular partitions which belong to the list:
	\begin{enumerate}[label=\textnormal{(\roman*)}]
		\item $\mu=(a^{b-1},a-1)$ and $\nu=((a')^{b'-1},a'-1)$,
		\item $\mu=(a^b,1)$ and $\nu=((a')^{b'},1)$,
		\item $\mu=(a+1,a^{b-1})$ and $\nu=(a'+1,(a')^{b'-1})$.
	\end{enumerate}
	If $\alpha$ is a partition with $\alpha_1\leq 2a$, then there is a partition $\beta$ such that $c(\mu,\mu;\alpha)\leq c(\nu,\nu;\beta)$. Moreover, if $a'-a=b'-b$, $a+a'=2b'$ and $\alpha$ is self-conjugate, then $\beta$ can be chosen to be self-conjugate.
\end{lemma}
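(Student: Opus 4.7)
The plan is to apply Corollary~\ref{Corollary Stembridge order} twice, using a multifunction $f$ with $f(\mu)=\nu$ to promote each copy of $\mu$ in $c(\mu,\mu;\alpha)$ to $\nu$, and then to set $\beta := f(f(\alpha))$.

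\textbf{Constructing $f$.} Write $d = a'-a$ and $e = b'-b$. In case (i), I define $f = (f_1, f_2)$ by $f_1 : \lambda \mapsto \lambda + (d^{b})$ and $f_2 : \lambda \mapsto \lambda \sqcup ((a')^{e})$. A direct computation gives $f_1(\mu) = ((a')^{b-1}, a'-1)$ and $f_2(f_1(\mu)) = \nu$. Cases (ii) and (iii) are handled by entirely analogous multifunctions: in (ii), $f_1$ adds $d$ to the first $b$ parts while leaving the trailing $1$ untouched, and $f_2$ inserts $e$ parts of size $a'$ just before that trailing $1$; in (iii), $f_1$ adds $d$ to all $b$ parts of $\mu = (a+1, a^{b-1})$ (giving $(a'+1, (a')^{b-1})$) and $f_2$ inserts $e$ further parts of size $a'$ beneath the leading $a'+1$. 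The hypothesis $\alpha_1 \le 2a$ ensures $f(\alpha)_1 \le 2a'$, so the same style of multifunction can be iterated in the second step.

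\textbf{Main argument.} Given such $f$, Corollary~\ref{Corollary Stembridge order} yields $c(\nu,\mu; f(\alpha)) \ge c(\mu,\mu;\alpha)$. Using the symmetry $c(\nu,\mu;\lambda) = c(\mu,\nu;\lambda)$ of Littlewood--Richardson coefficients, this reads $c(\mu,\nu;f(\alpha)) \ge c(\mu,\mu;\alpha)$; a second application of Corollary~\ref{Corollary Stembridge order} then gives $c(\nu,\nu; f(f(\alpha))) \ge c(\mu,\nu;f(\alpha))$. Chaining these inequalities establishes the claim with $\beta = f(f(\alpha))$.

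\textbf{The self-conjugate case.} The extra hypotheses $a'-a = b'-b$ and $a+a' = 2b'$ force $d=2e$ (where $e := (a'-a)/2$) and $a = b+e$, $a' = b+3e$, $b' = b+2e$, so $\mu$ and $\nu$ share the same `width excess' $e$ over a square. The candidate $\beta = f(f(\alpha))$ from the previous step is typically \emph{not} self-conjugate, even when $\alpha$ is; a direct computation with $b = 2$, $e = 1$, $\alpha = (4,3,2,1)$ confirms this. I would therefore construct a different $\beta$ by attaching to $\alpha$ a conjugation-invariant `collar' whose shape reflects $\nu/\mu$ together with its transpose $(\nu/\mu)'$, and then produce an LR-tableau witness for $c(\nu,\nu;\beta)$ by extending a witness for $c(\mu,\mu;\alpha)$ symmetrically. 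The point is that the identity $a+a'=2b'$ is precisely what makes the collar fit the partition bounds, and should also preserve the latticed property of the extended reading word. Verifying these combinatorial facts is the main obstacle of the proof, and I expect it to split into a short case analysis depending on whether $c(\mu,\mu;\alpha)$ equals $1$ or $2$, invoking Lemma~\ref{Lemma find two} in the latter case.
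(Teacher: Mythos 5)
Your argument for the existence statement is correct and is a genuinely different (and slicker) route than the paper's: in each of the three cases the multifunction $f=(\lambda\mapsto\lambda+((a'-a)^{b}),\ \lambda\mapsto\lambda\sqcup((a')^{b'-b}))$ does satisfy $f(\mu)=\nu$, and two applications of Corollary~\ref{Corollary Stembridge order} (with the symmetry $c(\nu,\mu;\cdot)=c(\mu,\nu;\cdot)$ in between) give $c(\nu,\nu;f(f(\alpha)))\geq c(\mu,\mu;\alpha)$; this part does not even use $\alpha_1\leq 2a$. The paper instead proves the whole lemma by one explicit construction, taking $\beta=\bigl(\alpha\sqcup((2a)^{b'-b})\bigr)+\bigl((a'-a)^{2b'}\bigr)$ and exhibiting an injective map from semistandard latticed $\alpha/\mu$-tableaux of weight $\mu$ to semistandard latticed $\beta/\nu$-tableaux of weight $\nu$; the point of that choice is that this $\beta$ is automatically self-conjugate when $a'-a=b'-b$, $a+a'=2b'$ and $\alpha=\alpha'$, and that $\alpha_1\leq 2a$ is exactly what makes the construction (in particular the case $\mu=(a+1,a^{b-1})$, where one needs the box $(b+1,1)$ of the tableau to contain a $1$) go through.

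The genuine gap is the \emph{moreover} part, which is the content actually needed downstream (it is what lets Lemma~\ref{Lemma almost rectangles} propagate self-conjugate partitions $\lambda$ with $c(\mu,\mu;\lambda)\geq 2$ to larger parameters). You correctly observe that $f(f(\alpha))$ is in general not self-conjugate, but your replacement is only a sketch: the ``conjugation-invariant collar'' is not specified as a partition, and the two facts carrying all the weight --- that the extended filling is semistandard of weight $\nu$ and that its reading word stays latticed (equivalently, that the extension is injective on LR witnesses) --- are exactly what you defer as ``the main obstacle''. Note also that the collar cannot be produced by Corollary~\ref{Corollary Stembridge order} alone, since adding a symmetric frame to $\alpha$ does not correspond to applying a single multifunction simultaneously to $\mu$ and $\alpha$ with $f(\mu)=\nu$; this is why the paper resorts to a hand-made tableau argument. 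Finally, the proposed case split on $c(\mu,\mu;\alpha)\in\{1,2\}$ via Lemma~\ref{Lemma find two} is not needed and does not help here: that lemma concerns plethysm coefficients, while what must be verified is a tableau-level injection, which the paper handles uniformly. As it stands, the self-conjugate case is unproved.
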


\begin{proof}
	We can assume that $c(\mu,\mu;\alpha)\neq 0$. Thus $\ell(\alpha)\leq 2\ell(\mu)\leq 2b+2\leq b+b'$. Let $\gamma=\alpha\sqcup ( (2a)^{b'-b})=((2a)^{b'-b}, \alpha_1,\alpha_2,\dots)$ and $\beta = \gamma+((a'-a)^{2b'})=((a+a')^{b'-b}, \alpha_1+a'-a,\alpha_2+a'-a,\dots,\alpha_{b+b'}+a'-a)$. We show that this $\beta$ satisfies the required conditions. Firstly, note the moreover part follows immediately from the definition of $\beta$. To prove $c(\mu,\mu;\alpha)\leq c(\nu,\nu;\beta)$, given a semistandard $\alpha/\mu$-tableau $t$ with weight $\mu$ and a latticed reading word we need to construct semistandard $\beta/\nu$-tableau $t''$ with weight $\nu$ and a latticed reading word in such a way that $t\mapsto t''$ is injective.
	
	We firstly consider the cases (i) and (ii). An example of the construction of $t''$ (and of an intermediate tableau $t'$) is given in Figure~\ref{Figure constructions}.
	
	\begin{figure}[h!]
		\begin{tikzpicture}[x=0.5cm, y=0.5cm]
		\begin{pgfonlayer}{nodelayer}
		\node [style=none] (0) at (-11, 7) {};
		\node [style=none] (1) at (-9, 7) {};
		\node [style=none] (2) at (-9, 5) {};
		\node [style=none] (3) at (-10, 5) {};
		\node [style=none] (4) at (-10, 4) {};
		\node [style=none] (5) at (-12, 4) {};
		\node [style=none] (6) at (-12, 5) {};
		\node [style=none] (7) at (-11, 5) {};
		\node [style=none] (8) at (-12, 3) {};
		\node [style=none] (9) at (-13, 3) {};
		\node [style=none] (10) at (-13, 2) {};
		\node [style=none] (11) at (-14, 2) {};
		\node [style=none] (14) at (-15, 4) {};
		\node [style=none] (15) at (-8, 2) {};
		\node [style=none] (18) at (-7, 0) {};
		\node [style=none] (19) at (-6, 0) {};
		\node [style=none] (20) at (-6, 1) {};
		\node [style=none] (21) at (-5, 1) {};
		\node [style=none] (22) at (-5, 2) {};
		\node [style=none] (23) at (-5, 3) {};
		\node [style=none] (24) at (-3, 2) {};
		\node [style=none] (25) at (-4, 3) {};
		\node [style=none] (26) at (-2, 5) {};
		\node [style=none] (27) at (-2, 3) {};
		\node [style=none] (28) at (-4, 7) {};
		\node [style=none] (29) at (0, 7) {};
		\node [style=none] (30) at (0, 5) {};
		\node [style=none] (31) at (-3, 3) {};
		\node [style=none] (32) at (3, 2) {};
		\node [style=none] (33) at (3, -1) {};
		\node [style=none] (35) at (4, 0) {};
		\node [style=none] (36) at (5, 0) {};
		\node [style=none] (37) at (5, 1) {};
		\node [style=none] (38) at (6, 1) {};
		\node [style=none] (39) at (6, 2) {};
		\node [style=none] (40) at (6, 3) {};
		\node [style=none] (41) at (8, 2) {};
		\node [style=none] (42) at (7, 3) {};
		\node [style=none] (43) at (9, 5) {};
		\node [style=none] (44) at (9, 3) {};
		\node [style=none] (45) at (7, 7) {};
		\node [style=none] (46) at (11, 7) {};
		\node [style=none] (47) at (11, 5) {};
		\node [style=none] (48) at (8, 3) {};
		\node [style=none] (49) at (1, 2) {};
		\node [style=none] (50) at (1, -3) {};
		\node [style=none] (51) at (3, -3) {};
		\node [style=none] (52) at (-10, 7) {};
		\node [style=none] (53) at (-9, 6) {};
		\node [style=none] (54) at (-11, 6) {};
		\node [style=none] (55) at (-11, 4) {};
		\node [style=none] (56) at (-13, 4) {};
		\node [style=none] (57) at (-14, 4) {};
		\node [style=none] (58) at (-15, 3) {};
		\node [style=none] (59) at (-15, 2) {};
		\node [style=none] (60) at (-7, 2) {};
		\node [style=none] (61) at (-6, 2) {};
		\node [style=none] (62) at (-8, 1) {};
		\node [style=none] (63) at (-8, 0) {};
		\node [style=none] (64) at (-4, 2) {};
		\node [style=none] (65) at (-4, 4) {};
		\node [style=none] (66) at (-2, 4) {};
		\node [style=none] (67) at (-4, 5) {};
		\node [style=none] (68) at (-4, 6) {};
		\node [style=none] (69) at (-3, 7) {};
		\node [style=none] (70) at (-2, 7) {};
		\node [style=none] (71) at (-1, 7) {};
		\node [style=none] (72) at (0, 6) {};
		\node [style=none] (73) at (-1, 5) {};
		\node [style=none] (74) at (8, 7) {};
		\node [style=none] (75) at (9, 7) {};
		\node [style=none] (76) at (10, 7) {};
		\node [style=none] (77) at (11, 6) {};
		\node [style=none] (78) at (10, 5) {};
		\node [style=none] (79) at (9, 4) {};
		\node [style=none] (80) at (7, 2) {};
		\node [style=none] (81) at (3, -2) {};
		\node [style=none] (82) at (2, -3) {};
		\node [style=none] (83) at (1, -2) {};
		\node [style=none] (84) at (1, -1) {};
		\node [style=none] (85) at (1, 0) {};
		\node [style=none] (86) at (1, 1) {};
		\node [style=none] (87) at (2, 2) {};
		\node [style=none] (88) at (4, 2) {};
		\node [style=none] (89) at (5, 2) {};
		\node [style=none] (90) at (7, 4) {};
		\node [style=none] (91) at (7, 5) {};
		\node [style=none] (92) at (7, 6) {};
		\node [style=none] (93) at (-10.5, 6.5) {$1$};
		\node [style=none] (94) at (-9.5, 6.5) {$1$};
		\node [style=none] (95) at (-9.5, 5.5) {$2$};
		\node [style=none] (96) at (-10.5, 5.5) {$2$};
		\node [style=none] (97) at (-10.5, 4.5) {$3$};
		\node [style=none] (98) at (-11.5, 4.5) {$1$};
		\node [style=none] (99) at (-14.5, 2.5) {$2$};
		\node [style=none] (101) at (-12.5, 3.5) {$3$};
		\node [style=none] (102) at (-14.5, 3.5) {$1$};
		\node [style=none] (103) at (-13.5, 3.5) {$2$};
		\node [style=none] (105) at (-3.5, 6.5) {$1$};
		\node [style=none] (106) at (-2.5, 6.5) {$1$};
		\node [style=none] (107) at (-1.5, 6.5) {$1$};
		\node [style=none] (108) at (-0.5, 6.5) {$1$};
		\node [style=none] (109) at (-0.5, 5.5) {$2$};
		\node [style=none] (110) at (-1.5, 5.5) {$2$};
		\node [style=none] (111) at (-2.5, 5.5) {$2$};
		\node [style=none] (112) at (-3.5, 5.5) {$2$};
		\node [style=none] (113) at (-2.5, 4.5) {$3$};
		\node [style=none] (114) at (-3.5, 4.5) {$3$};
		\node [style=none] (115) at (-2.5, 3.5) {$4$};
		\node [style=none] (116) at (-3.5, 3.5) {$4$};
		\node [style=none] (117) at (-3.5, 2.5) {$5$};
		\node [style=none] (118) at (-4.5, 2.5) {$3$};
		\node [style=none] (119) at (-5.5, 1.5) {$5$};
		\node [style=none] (120) at (-6.5, 1.5) {$4$};
		\node [style=none] (121) at (-6.5, 0.5) {$5$};
		\node [style=none] (122) at (-7.5, 1.5) {$3$};
		\node [style=none] (123) at (-7.5, 0.5) {$4$};
		\node [style=none] (125) at (7.5, 6.5) {$1$};
		\node [style=none] (126) at (8.5, 6.5) {$1$};
		\node [style=none] (127) at (9.5, 6.5) {$1$};
		\node [style=none] (128) at (10.5, 6.5) {$1$};
		\node [style=none] (129) at (10.5, 5.5) {$2$};
		\node [style=none] (130) at (9.5, 5.5) {$2$};
		\node [style=none] (131) at (8.5, 5.5) {$2$};
		\node [style=none] (132) at (7.5, 5.5) {$2$};
		\node [style=none] (133) at (7.5, 4.5) {$3$};
		\node [style=none] (134) at (8.5, 4.5) {$3$};
		\node [style=none] (135) at (8.5, 3.5) {$4$};
		\node [style=none] (136) at (7.5, 3.5) {$4$};
		\node [style=none] (137) at (7.5, 2.5) {$5$};
		\node [style=none] (138) at (6.5, 2.5) {$3$};
		\node [style=none] (139) at (5.5, 1.5) {$5$};
		\node [style=none] (140) at (4.5, 1.5) {$4$};
		\node [style=none] (141) at (4.5, 0.5) {$5$};
		\node [style=none] (142) at (3.5, 1.5) {$3$};
		\node [style=none] (143) at (3.5, 0.5) {$4$};
		\node [style=none] (145) at (2.5, 1.5) {$1$};
		\node [style=none] (146) at (1.5, 1.5) {$1$};
		\node [style=none] (147) at (2.5, 0.5) {$2$};
		\node [style=none] (148) at (1.5, 0.5) {$2$};
		\node [style=none] (149) at (2.5, -0.5) {$3$};
		\node [style=none] (150) at (1.5, -0.5) {$3$};
		\node [style=none] (151) at (2.5, -1.5) {$4$};
		\node [style=none] (152) at (1.5, -1.5) {$4$};
		\node [style=none] (153) at (2.5, -2.5) {$5$};
		\node [style=none] (154) at (1.5, -2.5) {$5$};
		\node [style=none] (155) at (-8.5, 3.5) {$\mapsto$};
		\node [style=none] (156) at (0.5, 3.5) {$\mapsto$};
		\node [style=none] (157) at (3, 0) {};
		\node [style=none] (158) at (-13.5, 2.5) {$3$};
		\node [style=none] (159) at (-9, -4) {};
		\node [style=none] (160) at (-11, -4) {};
		\node [style=none] (161) at (-9, -6) {};
		\node [style=none] (162) at (-10, -6) {};
		\node [style=none] (163) at (-10, -8) {};
		\node [style=none] (164) at (-11, -7) {};
		\node [style=none] (165) at (-14, -7) {};
		\node [style=none] (166) at (-13, -8) {};
		\node [style=none] (167) at (-14, -8) {};
		\node [style=none] (168) at (-13, -10) {};
		\node [style=none] (169) at (-15, -10) {};
		\node [style=none] (170) at (-15, -8) {};
		\node [style=none] (171) at (-10, -4) {};
		\node [style=none] (172) at (-9, -5) {};
		\node [style=none] (173) at (-11, -5) {};
		\node [style=none] (174) at (-11, -6) {};
		\node [style=none] (175) at (-10, -7) {};
		\node [style=none] (176) at (-11, -8) {};
		\node [style=none] (177) at (-12, -8) {};
		\node [style=none] (178) at (-12, -7) {};
		\node [style=none] (179) at (-13, -7) {};
		\node [style=none] (180) at (-13, -9) {};
		\node [style=none] (181) at (-14, -10) {};
		\node [style=none] (182) at (-15, -9) {};
		\node [style=none] (183) at (-10.5, -4.5) {$1$};
		\node [style=none] (184) at (-9.5, -4.5) {$1$};
		\node [style=none] (185) at (-10.5, -5.5) {$2$};
		\node [style=none] (186) at (-9.5, -5.5) {$2$};
		\node [style=none] (187) at (-10.5, -6.5) {$3$};
		\node [style=none] (188) at (-10.5, -7.5) {$4$};
		\node [style=none] (189) at (-11.5, -7.5) {$3$};
		\node [style=none] (190) at (-12.5, -7.5) {$1$};
		\node [style=none] (191) at (-13.5, -7.5) {$1$};
		\node [style=none] (192) at (-13.5, -8.5) {$2$};
		\node [style=none] (193) at (-13.5, -9.5) {$3$};
		\node [style=none] (194) at (-14.5, -8.5) {$2$};
		\node [style=none] (195) at (-14.5, -9.5) {$3$};
		\node [style=none] (196) at (-8.5, -7.5) {$\mapsto$};
		\node [style=none] (198) at (-4, -6) {};
		\node [style=none] (199) at (-2, -8) {};
		\node [style=none] (200) at (-3, -8) {};
		\node [style=none] (201) at (-3, -10) {};
		\node [style=none] (202) at (-4, -9) {};
		\node [style=none] (203) at (-7, -9) {};
		\node [style=none] (204) at (-6, -10) {};
		\node [style=none] (205) at (-7, -10) {};
		\node [style=none] (206) at (-6, -12) {};
		\node [style=none] (207) at (-8, -12) {};
		\node [style=none] (208) at (-8, -10) {};
		\node [style=none] (210) at (-2, -7) {};
		\node [style=none] (211) at (-4, -7) {};
		\node [style=none] (212) at (-4, -8) {};
		\node [style=none] (213) at (-3, -9) {};
		\node [style=none] (214) at (-4, -10) {};
		\node [style=none] (215) at (-5, -10) {};
		\node [style=none] (216) at (-5, -9) {};
		\node [style=none] (217) at (-6, -9) {};
		\node [style=none] (218) at (-6, -11) {};
		\node [style=none] (219) at (-7, -12) {};
		\node [style=none] (220) at (-8, -11) {};
		\node [style=none] (221) at (-3.5, -6.5) {$3$};
		\node [style=none] (222) at (-2.5, -6.5) {$3$};
		\node [style=none] (223) at (-3.5, -7.5) {$4$};
		\node [style=none] (224) at (-2.5, -7.5) {$4$};
		\node [style=none] (225) at (-3.5, -8.5) {$5$};
		\node [style=none] (226) at (-3.5, -9.5) {$6$};
		\node [style=none] (227) at (-4.5, -9.5) {$5$};
		\node [style=none] (228) at (-5.5, -9.5) {$3$};
		\node [style=none] (229) at (-6.5, -9.5) {$3$};
		\node [style=none] (230) at (-6.5, -10.5) {$4$};
		\node [style=none] (231) at (-6.5, -11.5) {$5$};
		\node [style=none] (232) at (-7.5, -10.5) {$4$};
		\node [style=none] (233) at (-7.5, -11.5) {$5$};
		\node [style=none] (234) at (-2, -6) {};
		\node [style=none] (235) at (-4, -4) {};
		\node [style=none] (236) at (0, -4) {};
		\node [style=none] (237) at (0, -6) {};
		\node [style=none] (238) at (-3, -4) {};
		\node [style=none] (239) at (-2, -4) {};
		\node [style=none] (240) at (-1, -4) {};
		\node [style=none] (241) at (0, -5) {};
		\node [style=none] (242) at (-1, -6) {};
		\node [style=none] (243) at (-4, -5) {};
		\node [style=none] (244) at (-3.5, -4.5) {$1$};
		\node [style=none] (245) at (-2.5, -4.5) {$1$};
		\node [style=none] (246) at (-1.5, -4.5) {$1$};
		\node [style=none] (247) at (-0.5, -4.5) {$1$};
		\node [style=none] (248) at (-0.5, -5.5) {$2$};
		\node [style=none] (249) at (-1.5, -5.5) {$2$};
		\node [style=none] (250) at (-2.5, -5.5) {$2$};
		\node [style=none] (251) at (-3.5, -5.5) {$2$};
		\node [style=none] (252) at (0.5, -7.5) {$\mapsto$};
		\node [style=none] (253) at (7, -6) {};
		\node [style=none] (254) at (9, -8) {};
		\node [style=none] (255) at (8, -8) {};
		\node [style=none] (256) at (8, -10) {};
		\node [style=none] (257) at (7, -9) {};
		\node [style=none] (258) at (4, -9) {};
		\node [style=none] (259) at (5, -10) {};
		\node [style=none] (261) at (5, -12) {};
		\node [style=none] (262) at (3, -12) {};
		\node [style=none] (264) at (9, -7) {};
		\node [style=none] (265) at (7, -7) {};
		\node [style=none] (266) at (7, -8) {};
		\node [style=none] (267) at (8, -9) {};
		\node [style=none] (268) at (7, -10) {};
		\node [style=none] (269) at (6, -10) {};
		\node [style=none] (270) at (6, -9) {};
		\node [style=none] (271) at (5, -9) {};
		\node [style=none] (272) at (5, -11) {};
		\node [style=none] (273) at (4, -12) {};
		\node [style=none] (275) at (7.5, -6.5) {$3$};
		\node [style=none] (276) at (8.5, -6.5) {$3$};
		\node [style=none] (277) at (7.5, -7.5) {$4$};
		\node [style=none] (278) at (8.5, -7.5) {$4$};
		\node [style=none] (279) at (7.5, -8.5) {$5$};
		\node [style=none] (280) at (7.5, -9.5) {$6$};
		\node [style=none] (281) at (6.5, -9.5) {$5$};
		\node [style=none] (282) at (5.5, -9.5) {$3$};
		\node [style=none] (283) at (4.5, -9.5) {$3$};
		\node [style=none] (284) at (4.5, -10.5) {$4$};
		\node [style=none] (285) at (4.5, -11.5) {$5$};
		\node [style=none] (286) at (3.5, -10.5) {$4$};
		\node [style=none] (287) at (3.5, -11.5) {$5$};
		\node [style=none] (288) at (9, -6) {};
		\node [style=none] (289) at (7, -4) {};
		\node [style=none] (290) at (11, -4) {};
		\node [style=none] (291) at (11, -6) {};
		\node [style=none] (292) at (8, -4) {};
		\node [style=none] (293) at (9, -4) {};
		\node [style=none] (294) at (10, -4) {};
		\node [style=none] (295) at (11, -5) {};
		\node [style=none] (296) at (10, -6) {};
		\node [style=none] (297) at (7, -5) {};
		\node [style=none] (298) at (7.5, -4.5) {$1$};
		\node [style=none] (299) at (8.5, -4.5) {$1$};
		\node [style=none] (300) at (9.5, -4.5) {$1$};
		\node [style=none] (301) at (10.5, -4.5) {$1$};
		\node [style=none] (302) at (10.5, -5.5) {$2$};
		\node [style=none] (303) at (9.5, -5.5) {$2$};
		\node [style=none] (304) at (8.5, -5.5) {$2$};
		\node [style=none] (305) at (7.5, -5.5) {$2$};
		\node [style=none] (306) at (2, -9) {};
		\node [style=none] (307) at (1, -10) {};
		\node [style=none] (308) at (3, -14) {};
		\node [style=none] (309) at (1, -14) {};
		\node [style=none] (310) at (2, -10) {};
		\node [style=none] (311) at (3, -9) {};
		\node [style=none] (312) at (1, -11) {};
		\node [style=none] (313) at (1, -12) {};
		\node [style=none] (314) at (1, -13) {};
		\node [style=none] (315) at (2, -14) {};
		\node [style=none] (316) at (3, -13) {};
		\node [style=none] (317) at (2.5, -12.5) {$4$};
		\node [style=none] (318) at (2.5, -13.5) {$5$};
		\node [style=none] (319) at (1.5, -13.5) {$5$};
		\node [style=none] (320) at (1.5, -12.5) {$4$};
		\node [style=none] (321) at (1.5, -11.5) {$3$};
		\node [style=none] (322) at (2.5, -11.5) {$3$};
		\node [style=none] (323) at (2.5, -10.5) {$2$};
		\node [style=none] (324) at (1.5, -10.5) {$2$};
		\node [style=none] (325) at (2.5, -9.5) {$1$};
		\node [style=none] (326) at (3.5, -9.5) {$1$};
		\end{pgfonlayer}
		\begin{pgfonlayer}{edgelayer}
		\draw [style=Border edge] (0.center) to (1.center);
		\draw [style=Border edge] (1.center) to (2.center);
		\draw [style=Border edge] (2.center) to (3.center);
		\draw [style=Border edge] (3.center) to (4.center);
		\draw [style=Border edge] (4.center) to (5.center);
		\draw [style=Border edge] (5.center) to (8.center);
		\draw [style=Border edge] (8.center) to (9.center);
		\draw [style=Border edge] (9.center) to (10.center);
		\draw [style=Border edge] (14.center) to (5.center);
		\draw [style=Border edge] (5.center) to (6.center);
		\draw [style=Border edge] (6.center) to (7.center);
		\draw [style=Border edge] (7.center) to (0.center);
		\draw [style=Border edge] (28.center) to (29.center);
		\draw [style=Border edge] (29.center) to (30.center);
		\draw [style=Border edge] (30.center) to (26.center);
		\draw [style=Border edge] (26.center) to (27.center);
		\draw [style=Border edge] (27.center) to (31.center);
		\draw [style=Border edge] (31.center) to (24.center);
		\draw [style=Border edge] (24.center) to (22.center);
		\draw [style=Border edge] (22.center) to (21.center);
		\draw [style=Border edge] (21.center) to (20.center);
		\draw [style=Border edge] (20.center) to (19.center);
		\draw [style=Border edge] (15.center) to (22.center);
		\draw [style=Border edge] (22.center) to (23.center);
		\draw [style=Border edge] (23.center) to (25.center);
		\draw [style=Border edge] (25.center) to (28.center);
		\draw [style=Border edge] (45.center) to (46.center);
		\draw [style=Border edge] (46.center) to (47.center);
		\draw [style=Border edge] (47.center) to (43.center);
		\draw [style=Border edge] (43.center) to (44.center);
		\draw [style=Border edge] (44.center) to (48.center);
		\draw [style=Border edge] (48.center) to (41.center);
		\draw [style=Border edge] (41.center) to (39.center);
		\draw [style=Border edge] (39.center) to (38.center);
		\draw [style=Border edge] (38.center) to (37.center);
		\draw [style=Border edge] (37.center) to (36.center);
		\draw [style=Border edge] (39.center) to (40.center);
		\draw [style=Border edge] (40.center) to (42.center);
		\draw [style=Border edge] (42.center) to (45.center);
		\draw [style=Border edge] (39.center) to (49.center);
		\draw [style=Border edge] (49.center) to (50.center);
		\draw [style=Border edge] (50.center) to (51.center);
		\draw (52.center) to (3.center);
		\draw (54.center) to (53.center);
		\draw (7.center) to (3.center);
		\draw (7.center) to (55.center);
		\draw (56.center) to (9.center);
		\draw (57.center) to (11.center);
		\draw (58.center) to (9.center);
		\draw (71.center) to (73.center);
		\draw (70.center) to (26.center);
		\draw (69.center) to (31.center);
		\draw (25.center) to (64.center);
		\draw (61.center) to (20.center);
		\draw (60.center) to (18.center);
		\draw (62.center) to (20.center);
		\draw (25.center) to (31.center);
		\draw (65.center) to (66.center);
		\draw (67.center) to (26.center);
		\draw (68.center) to (72.center);
		\draw (76.center) to (78.center);
		\draw (75.center) to (43.center);
		\draw (74.center) to (48.center);
		\draw (42.center) to (80.center);
		\draw (89.center) to (37.center);
		\draw (88.center) to (35.center);
		\draw (87.center) to (82.center);
		\draw (83.center) to (81.center);
		\draw (84.center) to (33.center);
		\draw (86.center) to (37.center);
		\draw (42.center) to (48.center);
		\draw (90.center) to (79.center);
		\draw (91.center) to (43.center);
		\draw (92.center) to (77.center);
		\draw [style=Border edge] (10.center) to (59.center);
		\draw [style=Border edge] (59.center) to (14.center);
		\draw [style=Border edge] (19.center) to (63.center);
		\draw [style=Border edge] (63.center) to (15.center);
		\draw [style=Border edge] (36.center) to (157.center);
		\draw [style=Border edge] (157.center) to (51.center);
		\draw (32.center) to (157.center);
		\draw (85.center) to (157.center);
		\draw [style=Border edge] (160.center) to (159.center);
		\draw [style=Border edge] (159.center) to (161.center);
		\draw [style=Border edge] (161.center) to (162.center);
		\draw [style=Border edge] (162.center) to (163.center);
		\draw [style=Border edge] (163.center) to (166.center);
		\draw [style=Border edge] (166.center) to (168.center);
		\draw [style=Border edge] (168.center) to (169.center);
		\draw [style=Border edge] (169.center) to (170.center);
		\draw [style=Border edge] (170.center) to (167.center);
		\draw [style=Border edge] (167.center) to (165.center);
		\draw [style=Border edge] (165.center) to (164.center);
		\draw [style=Border edge] (164.center) to (160.center);
		\draw (171.center) to (162.center);
		\draw (173.center) to (172.center);
		\draw (174.center) to (162.center);
		\draw (164.center) to (175.center);
		\draw (164.center) to (176.center);
		\draw (178.center) to (177.center);
		\draw (179.center) to (166.center);
		\draw (167.center) to (166.center);
		\draw (167.center) to (181.center);
		\draw (182.center) to (180.center);
		\draw [style=Border edge] (199.center) to (200.center);
		\draw [style=Border edge] (200.center) to (201.center);
		\draw [style=Border edge] (201.center) to (204.center);
		\draw [style=Border edge] (204.center) to (206.center);
		\draw [style=Border edge] (206.center) to (207.center);
		\draw [style=Border edge] (207.center) to (208.center);
		\draw [style=Border edge] (208.center) to (205.center);
		\draw [style=Border edge] (205.center) to (203.center);
		\draw [style=Border edge] (203.center) to (202.center);
		\draw (211.center) to (210.center);
		\draw (212.center) to (200.center);
		\draw (202.center) to (213.center);
		\draw (202.center) to (214.center);
		\draw (216.center) to (215.center);
		\draw (217.center) to (204.center);
		\draw (205.center) to (204.center);
		\draw (205.center) to (219.center);
		\draw (220.center) to (218.center);
		\draw [style=Border edge] (202.center) to (235.center);
		\draw [style=Border edge] (235.center) to (236.center);
		\draw [style=Border edge] (236.center) to (237.center);
		\draw [style=Border edge] (237.center) to (234.center);
		\draw [style=Border edge] (234.center) to (199.center);
		\draw (238.center) to (200.center);
		\draw (239.center) to (234.center);
		\draw (240.center) to (242.center);
		\draw (243.center) to (241.center);
		\draw (198.center) to (234.center);
		\draw [style=Border edge] (254.center) to (255.center);
		\draw [style=Border edge] (255.center) to (256.center);
		\draw [style=Border edge] (256.center) to (259.center);
		\draw [style=Border edge] (259.center) to (261.center);
		\draw (265.center) to (264.center);
		\draw (266.center) to (255.center);
		\draw (257.center) to (267.center);
		\draw (257.center) to (268.center);
		\draw (270.center) to (269.center);
		\draw (271.center) to (259.center);
		\draw [style=Border edge] (257.center) to (289.center);
		\draw [style=Border edge] (289.center) to (290.center);
		\draw [style=Border edge] (290.center) to (291.center);
		\draw [style=Border edge] (291.center) to (288.center);
		\draw [style=Border edge] (288.center) to (254.center);
		\draw (292.center) to (255.center);
		\draw (293.center) to (288.center);
		\draw (294.center) to (296.center);
		\draw (297.center) to (295.center);
		\draw (253.center) to (288.center);
		\draw [style=Border edge] (257.center) to (306.center);
		\draw [style=Border edge] (306.center) to (310.center);
		\draw [style=Border edge] (310.center) to (307.center);
		\draw [style=Border edge] (307.center) to (309.center);
		\draw [style=Border edge] (309.center) to (308.center);
		\draw [style=Border edge] (308.center) to (262.center);
		\draw [style=Border edge] (262.center) to (261.center);
		\draw (311.center) to (262.center);
		\draw (310.center) to (315.center);
		\draw (314.center) to (316.center);
		\draw (313.center) to (262.center);
		\draw (312.center) to (272.center);
		\draw (258.center) to (273.center);
		\draw (310.center) to (259.center);
		\end{pgfonlayer}
		\end{tikzpicture}
		\caption{Two examples of the transformation $t\mapsto t'\mapsto t''$ with $a=4,b=3,a'=6$ and $b'=5$. On the first line $\mu=(4^2,3)$ with $\alpha=(6^2,5,3,2)$, while on the other line $\mu=(4^3,1)$ and $\alpha=(6^2,5^2,2^2)$.}
		\label{Figure constructions}
	\end{figure}
	
	Let $\lambda=\mu\sqcup (a^{b'-b})$ which is either $(a^{b'-1},a-1)$ or $(a^{b'},1)$. Let $t'$ be a $\gamma/\lambda$-tableau obtained from $t$ by increasing all its entries by $b'-b$, moving all boxes downwards by $b'-b$ places and adding $b'-b$ boxes filled with entries $1,2,\dots,b'-b$ from top to bottom on top of each of the columns $a+1,a+2,\dots, 2a$. Clearly, the weight of $t'$ is $\lambda$ and $t'$ is semistandard as $t$ is semistandard. Its reading word $r(t')$ is obtained from $r(t)$ by increasing all its letters by $b'-b$ and then inserting $a$ copies of consecutive letters $1,2,\dots, b'-b$. Since $t$ is semistandard, for any $i\leq a$ there are at most $i-1$ copies of $b'-b+1$ before the $i$th appearance of $b'-b$ in $r(t')$. Therefore $r(t')$ is latticed.
	
	We obtain $\beta/\nu$-tableau $t''$ by moving all boxes of $t'$ rightwards by $a'-a$ places and for each $1\leq i\leq b'$ adding $a'-a$ boxes with $i$ on the left of row $b'+i$ so that the underlying skew partition of $t''$ is $\beta/\nu$. The weight of $t''$ is then $\nu$. If $\mu=(a^{b-1},a-1)$, the tableau $t''$ is clearly semistandard. This is also the case for $\mu=(a^b,1)$ since, from the construction of $t'$, the entry in the box $(b'+2,1)$ of $t'$, if it exists, is at least $b'-b+1\geq 2$. Finally, the reading word $r(t'')$ is obtained from $r(t')$ by adding letters $1,2,\dots,b'$ in this order $(a'-a)$-times, and as $r(t')$ is latticed so is $r(t'')$. The maps $t\mapsto t'$ and $t'\mapsto t''$ are clearly injective, establishing the result.
	
	For (iii) the construction is similar with a slight modification in the map $t\to t'$. See Figure~\ref{Figure construction} for an example.

	\begin{figure}[h!]
		\begin{tikzpicture}[x=0.5cm, y=0.5cm]
		\begin{pgfonlayer}{nodelayer}
		\node [style=none] (0) at (-12, 8) {};
		\node [style=none] (1) at (-10, 8) {};
		\node [style=none] (2) at (-10, 7) {};
		\node [style=none] (3) at (-11, 7) {};
		\node [style=none] (4) at (-11, 6) {};
		\node [style=none] (5) at (-12, 6) {};
		\node [style=none] (6) at (-13, 7) {};
		\node [style=none] (7) at (-13, 6) {};
		\node [style=none] (8) at (-13, 5) {};
		\node [style=none] (9) at (-12, 5) {};
		\node [style=none] (10) at (-13, 4) {};
		\node [style=none] (11) at (-15, 4) {};
		\node [style=none] (12) at (-17, 5) {};
		\node [style=none] (13) at (-17, 2) {};
		\node [style=none] (14) at (-15, 2) {};
		\node [style=none] (15) at (-12, 7) {};
		\node [style=none] (16) at (-11, 8) {};
		\node [style=none] (17) at (-14, 5) {};
		\node [style=none] (18) at (-14, 4) {};
		\node [style=none] (19) at (-15, 5) {};
		\node [style=none] (20) at (-16, 5) {};
		\node [style=none] (21) at (-17, 4) {};
		\node [style=none] (22) at (-17, 3) {};
		\node [style=none] (23) at (-16, 2) {};
		\node [style=none] (24) at (-15, 3) {};
		\node [style=none] (25) at (-10.5, 7.5) {$1$};
		\node [style=none] (26) at (-11.5, 7.5) {$1$};
		\node [style=none] (27) at (-11.5, 6.5) {$2$};
		\node [style=none] (28) at (-12.5, 6.5) {$2$};
		\node [style=none] (29) at (-12.5, 5.5) {$3$};
		\node [style=none] (30) at (-13.5, 4.5) {$3$};
		\node [style=none] (31) at (-14.5, 4.5) {$1$};
		\node [style=none] (32) at (-15.5, 4.5) {$1$};
		\node [style=none] (33) at (-16.5, 4.5) {$1$};
		\node [style=none] (34) at (-16.5, 3.5) {$2$};
		\node [style=none] (35) at (-15.5, 3.5) {$2$};
		\node [style=none] (36) at (-15.5, 2.5) {$3$};
		\node [style=none] (37) at (-16.5, 2.5) {$3$};
		\node [style=none] (38) at (-9.5, 4.5) {$\mapsto$};
		\node [style=none] (40) at (-2, 6) {};
		\node [style=none] (41) at (-2, 5) {};
		\node [style=none] (42) at (-3, 5) {};
		\node [style=none] (43) at (-3, 4) {};
		\node [style=none] (44) at (-4, 4) {};
		\node [style=none] (45) at (-5, 5) {};
		\node [style=none] (46) at (-5, 4) {};
		\node [style=none] (47) at (-5, 3) {};
		\node [style=none] (48) at (-4, 3) {};
		\node [style=none] (49) at (-5, 2) {};
		\node [style=none] (50) at (-7, 2) {};
		\node [style=none] (51) at (-9, 3) {};
		\node [style=none] (52) at (-9, 0) {};
		\node [style=none] (53) at (-7, 0) {};
		\node [style=none] (56) at (-6, 3) {};
		\node [style=none] (57) at (-6, 2) {};
		\node [style=none] (58) at (-7, 3) {};
		\node [style=none] (59) at (-8, 3) {};
		\node [style=none] (60) at (-9, 2) {};
		\node [style=none] (61) at (-9, 1) {};
		\node [style=none] (62) at (-8, 0) {};
		\node [style=none] (63) at (-7, 1) {};
		\node [style=none] (64) at (-2.5, 5.5) {$3$};
		\node [style=none] (65) at (-3.5, 5.5) {$3$};
		\node [style=none] (66) at (-3.5, 4.5) {$4$};
		\node [style=none] (67) at (-4.5, 4.5) {$4$};
		\node [style=none] (68) at (-4.5, 3.5) {$5$};
		\node [style=none] (69) at (-5.5, 2.5) {$5$};
		\node [style=none] (70) at (-6.5, 2.5) {$3$};
		\node [style=none] (71) at (-7.5, 2.5) {$3$};
		\node [style=none] (72) at (-8.5, 2.5) {$1$};
		\node [style=none] (73) at (-8.5, 1.5) {$4$};
		\node [style=none] (74) at (-7.5, 1.5) {$4$};
		\node [style=none] (75) at (-7.5, 0.5) {$5$};
		\node [style=none] (76) at (-8.5, 0.5) {$5$};
		\node [style=none] (77) at (-4, 8) {};
		\node [style=none] (80) at (-4, 7) {};
		\node [style=none] (81) at (-3, 8) {};
		\node [style=none] (82) at (-2, 8) {};
		\node [style=none] (85) at (-1, 6) {};
		\node [style=none] (86) at (-4.5, 5.5) {$2$};
		\node [style=none] (87) at (-4.5, 6.5) {$1$};
		\node [style=none] (88) at (-1.5, 7.5) {$1$};
		\node [style=none] (89) at (-1.5, 6.5) {$2$};
		\node [style=none] (90) at (-2.5, 6.5) {$2$};
		\node [style=none] (91) at (-2.5, 7.5) {$1$};
		\node [style=none] (92) at (-3.5, 7.5) {$1$};
		\node [style=none] (93) at (-3.5, 6.5) {$2$};
		\node [style=none] (94) at (-0.5, 4.5) {$\mapsto$};
		\node [style=none] (95) at (-1, 6) {};
		\node [style=none] (96) at (-1, 8) {};
		\node [style=none] (98) at (9, 6) {};
		\node [style=none] (99) at (9, 5) {};
		\node [style=none] (100) at (8, 5) {};
		\node [style=none] (101) at (8, 4) {};
		\node [style=none] (102) at (7, 4) {};
		\node [style=none] (103) at (6, 5) {};
		\node [style=none] (104) at (6, 4) {};
		\node [style=none] (105) at (6, 3) {};
		\node [style=none] (106) at (7, 3) {};
		\node [style=none] (107) at (6, 2) {};
		\node [style=none] (108) at (4, 2) {};
		\node [style=none] (109) at (2, 3) {};
		\node [style=none] (110) at (2, 0) {};
		\node [style=none] (111) at (4, 0) {};
		\node [style=none] (113) at (5, 3) {};
		\node [style=none] (114) at (5, 2) {};
		\node [style=none] (115) at (4, 3) {};
		\node [style=none] (116) at (3, 3) {};
		\node [style=none] (119) at (3, 0) {};
		\node [style=none] (120) at (4, 1) {};
		\node [style=none] (121) at (8.5, 5.5) {$3$};
		\node [style=none] (122) at (7.5, 5.5) {$3$};
		\node [style=none] (123) at (7.5, 4.5) {$4$};
		\node [style=none] (124) at (6.5, 4.5) {$4$};
		\node [style=none] (125) at (6.5, 3.5) {$5$};
		\node [style=none] (126) at (5.5, 2.5) {$5$};
		\node [style=none] (127) at (4.5, 2.5) {$3$};
		\node [style=none] (128) at (3.5, 2.5) {$3$};
		\node [style=none] (129) at (2.5, 2.5) {$1$};
		\node [style=none] (130) at (2.5, 1.5) {$4$};
		\node [style=none] (131) at (3.5, 1.5) {$4$};
		\node [style=none] (132) at (3.5, 0.5) {$5$};
		\node [style=none] (133) at (2.5, 0.5) {$5$};
		\node [style=none] (134) at (7, 8) {};
		\node [style=none] (137) at (7, 7) {};
		\node [style=none] (138) at (8, 8) {};
		\node [style=none] (139) at (9, 8) {};
		\node [style=none] (141) at (10, 6) {};
		\node [style=none] (142) at (6.5, 5.5) {$2$};
		\node [style=none] (143) at (6.5, 6.5) {$1$};
		\node [style=none] (144) at (9.5, 7.5) {$1$};
		\node [style=none] (145) at (9.5, 6.5) {$2$};
		\node [style=none] (146) at (8.5, 6.5) {$2$};
		\node [style=none] (147) at (8.5, 7.5) {$1$};
		\node [style=none] (148) at (7.5, 7.5) {$1$};
		\node [style=none] (149) at (7.5, 6.5) {$2$};
		\node [style=none] (150) at (10, 6) {};
		\node [style=none] (151) at (10, 8) {};
		\node [style=none] (152) at (1, 3) {};
		\node [style=none] (153) at (0, 3) {};
		\node [style=none] (154) at (0, 2) {};
		\node [style=none] (155) at (0, 1) {};
		\node [style=none] (156) at (0, 0) {};
		\node [style=none] (157) at (0, -1) {};
		\node [style=none] (158) at (0, -2) {};
		\node [style=none] (159) at (1, -2) {};
		\node [style=none] (160) at (2, -2) {};
		\node [style=none] (161) at (2, -1) {};
		\node [style=none] (162) at (0.5, 2.5) {$1$};
		\node [style=none] (163) at (1.5, 2.5) {$1$};
		\node [style=none] (164) at (1.5, 1.5) {$2$};
		\node [style=none] (165) at (0.5, 1.5) {$2$};
		\node [style=none] (166) at (0.5, 0.5) {$3$};
		\node [style=none] (167) at (1.5, 0.5) {$3$};
		\node [style=none] (168) at (1.5, -0.5) {$4$};
		\node [style=none] (169) at (0.5, -0.5) {$4$};
		\node [style=none] (170) at (0.5, -1.5) {$5$};
		\node [style=none] (171) at (1.5, -1.5) {$5$};
		\node [style=none] (172) at (-5, 6) {};
		\node [style=none] (173) at (-5, 7) {};
		\node [style=none] (174) at (-1, 7) {};
		\node [style=none] (175) at (6, 6) {};
		\node [style=none] (176) at (6, 7) {};
		\node [style=none] (177) at (10, 7) {};
		\end{pgfonlayer}
		\begin{pgfonlayer}{edgelayer}
		\draw [style=Border edge] (0.center) to (1.center);
		\draw [style=Border edge] (1.center) to (2.center);
		\draw [style=Border edge] (2.center) to (3.center);
		\draw [style=Border edge] (3.center) to (4.center);
		\draw [style=Border edge] (4.center) to (5.center);
		\draw [style=Border edge] (5.center) to (9.center);
		\draw [style=Border edge] (9.center) to (8.center);
		\draw [style=Border edge] (8.center) to (10.center);
		\draw [style=Border edge] (10.center) to (11.center);
		\draw [style=Border edge] (11.center) to (14.center);
		\draw [style=Border edge] (14.center) to (13.center);
		\draw [style=Border edge] (13.center) to (12.center);
		\draw [style=Border edge] (12.center) to (8.center);
		\draw [style=Border edge] (8.center) to (6.center);
		\draw [style=Border edge] (6.center) to (15.center);
		\draw [style=Border edge] (15.center) to (0.center);
		\draw (16.center) to (3.center);
		\draw (15.center) to (5.center);
		\draw (17.center) to (18.center);
		\draw (19.center) to (11.center);
		\draw (20.center) to (23.center);
		\draw (22.center) to (24.center);
		\draw (21.center) to (11.center);
		\draw (7.center) to (5.center);
		\draw (15.center) to (3.center);
		\draw [style=Border edge] (41.center) to (42.center);
		\draw [style=Border edge] (42.center) to (43.center);
		\draw [style=Border edge] (43.center) to (44.center);
		\draw [style=Border edge] (44.center) to (48.center);
		\draw [style=Border edge] (48.center) to (47.center);
		\draw [style=Border edge] (47.center) to (49.center);
		\draw [style=Border edge] (49.center) to (50.center);
		\draw [style=Border edge] (50.center) to (53.center);
		\draw [style=Border edge] (53.center) to (52.center);
		\draw [style=Border edge] (52.center) to (51.center);
		\draw [style=Border edge] (51.center) to (47.center);
		\draw (56.center) to (57.center);
		\draw (58.center) to (50.center);
		\draw (59.center) to (62.center);
		\draw (61.center) to (63.center);
		\draw (60.center) to (50.center);
		\draw (46.center) to (44.center);
		\draw [style=Border edge] (41.center) to (40.center);
		\draw (82.center) to (40.center);
		\draw (81.center) to (42.center);
		\draw [style=Border edge] (99.center) to (100.center);
		\draw [style=Border edge] (100.center) to (101.center);
		\draw [style=Border edge] (101.center) to (102.center);
		\draw [style=Border edge] (102.center) to (106.center);
		\draw [style=Border edge] (106.center) to (105.center);
		\draw [style=Border edge] (105.center) to (107.center);
		\draw [style=Border edge] (107.center) to (108.center);
		\draw [style=Border edge] (108.center) to (111.center);
		\draw [style=Border edge] (111.center) to (110.center);
		\draw (113.center) to (114.center);
		\draw (115.center) to (108.center);
		\draw (116.center) to (119.center);
		\draw (104.center) to (102.center);
		\draw [style=Border edge] (99.center) to (98.center);
		\draw (139.center) to (98.center);
		\draw (138.center) to (100.center);
		\draw [style=Border edge] (110.center) to (160.center);
		\draw [style=Border edge] (160.center) to (158.center);
		\draw [style=Border edge] (158.center) to (153.center);
		\draw [style=Border edge] (153.center) to (105.center);
		\draw (109.center) to (110.center);
		\draw (152.center) to (159.center);
		\draw (157.center) to (161.center);
		\draw (156.center) to (110.center);
		\draw (155.center) to (120.center);
		\draw (154.center) to (108.center);
		\draw [style=Border edge] (47.center) to (173.center);
		\draw [style=Border edge] (173.center) to (80.center);
		\draw [style=Border edge] (80.center) to (77.center);
		\draw [style=Border edge] (77.center) to (96.center);
		\draw [style=Border edge] (96.center) to (95.center);
		\draw [style=Border edge] (95.center) to (40.center);
		\draw (80.center) to (44.center);
		\draw (45.center) to (42.center);
		\draw (172.center) to (40.center);
		\draw (80.center) to (174.center);
		\draw [style=Border edge] (105.center) to (176.center);
		\draw [style=Border edge] (176.center) to (137.center);
		\draw [style=Border edge] (137.center) to (134.center);
		\draw [style=Border edge] (134.center) to (151.center);
		\draw [style=Border edge] (151.center) to (150.center);
		\draw [style=Border edge] (150.center) to (98.center);
		\draw (137.center) to (177.center);
		\draw (175.center) to (98.center);
		\draw (103.center) to (100.center);
		\draw (137.center) to (102.center);
		\end{pgfonlayer}
		\end{tikzpicture}
		\caption{The transformation $t\mapsto t'\mapsto t''$ with $a=4,b=3,a'=6$ and $b'=5$. In this example $\mu=(5,4^2)$ and $\alpha=(7,6,5,4,2^2)$.}
		\label{Figure construction}
	\end{figure}
	
	Observe firstly that the box $(b+1,1)$ of $t$ contains $1$. Indeed, since $\alpha_1\leq 2a$ and $t$ is semistandard, there is $1$ in some of the first $a$ columns of $t$. Using that $t$ is semistandard again, we conclude that the box $(b+1,1)$ contains $1$. We now modify the earlier construction $t\mapsto t'$ by non-increasing this $1$. That is, for $\lambda=\mu\sqcup (a^{b'-b}) = (a+1,a^{b'-1})$ define $\gamma/\lambda$-tableau $t'$ obtained from $t$ by increasing all its entries apart from $1$ in the box $(b+1,1)$ by $b'-b$, moving all boxes downwards by $b'-b$ places and adding $b'-b$ boxes filled with entries $1,2,\dots,b'-b$ from top to bottom on top of each of the columns $a+1,a+2,\dots, 2a$.
	
	Clearly, the weight of $t'$ is $\lambda$ and $t'$ is semistandard. Its reading word is latticed as long as its final letters from the first column do not violate this condition. But this cannot be the case as they are in increasing order. The construction of $t''$ from $t'$ is then the same as before.   
\end{proof}

\section{Small index subgroups of $S_m\wr S_2$ embedded in $S_{2m}$ and $S_{2m+1}$}\label{Sec SIS II}

In this section $m\geq 2$. We investigate irreducible induced-multiplicity-free characters of subgroups of $S_m\wr S_2$ in Proposition~\ref{Prop possible MF subgroups}(v), that is of $\left( S_m\wr S_2\right) \cap A_{2m}, A_m\wr S_2$ and $T_{m,2}$ embedded in $S_{2m}$ and $S_{2m+1}$. We reuse the notation $G_+=\ker\left( \charwrnb{\sgn}{\mathbbm{1}}{2}\right) $ and $G_-=\ker\left( \charwrnb{\sgn}{\sgn}{2}\right) $ from \S\ref{Section k,m,2} adapted to $k=0$. Thus $G_+=\left( S_m\wr S_2\right) \cap A_{2m}$ and $G_-=T_{m,2}$ for even $m$, while for odd $m$ we need to swap the indices of $G$. To understand and manipulate the irreducible characters of $A_m\wr S_2$ we use Theorems~\ref{Theorem characters of wreath products}, Lemma~\ref{Lemma plethysms and char properties}(iv) and the readily checked identity $\left( \charwrnb{\rho}{\chi^{\nu}}{2}\right)\Ind^{S_m\wr S_2} = \charwr{\rho\ind^{S_m}}{\chi^{\nu}}{2}$ for a character $\rho$ of $A_m$ and $\nu\vdash 2$.

As in \S\ref{Sec SIS I}, one should bear in mind the results used in Example~\ref{Example index two}.

\subsection{Groups embedded in $S_{2m+1}$}

For $n\geq 14$, by Proposition~\ref{Prop direct product k eq 1}(i), the irreducible induced-multiplicity-free characters of $S_m\wr S_2$ are $\charwr{\chi^{\mu}}{\chi^{\nu}}{2}$ with $\mu$ rectangular and $\nu\vdash 2$. We start with the group $A_m\wr S_2$.

\begin{proposition}\label{Prop Am wr S2 class}
	Let $m\geq 7$. The irreducible induced-multiplicity-free characters of $A_m\wr S_2$ embedded in $S_{2m+1}$ are $\charwr{\chi^{\mu}_{A_m}}{\chi^{\nu}}{2}$ with $\mu$ a square partition and $\nu\vdash 2$.
\end{proposition}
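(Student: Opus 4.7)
The plan is to enumerate the irreducible characters of $A_m\wr S_2$ via Theorem~\ref{Theorem characters of wreath products} and check each one for being induced-multiplicity-free in $S_{2m+1}$. Every such irreducible character is either an elementary irreducible character $\charwr{\kappa}{\chi^\nu}{2}$ with $\kappa$ an irreducible character of $A_m$ and $\nu\vdash 2$, or a non-elementary character $(\kappa_1\boxtimes\kappa_2)\ind^{A_m\wr S_2}$ arising from two distinct irreducible characters $\kappa_1,\kappa_2$ of $A_m$.

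First I would dispose of the non-elementary characters. By transitivity of induction through $A_m\times A_m\leq S_m\times S_m$, the character $(\kappa_1\boxtimes\kappa_2)\ind^{A_m\wr S_2}\ind^{S_{2m+1}}$ corresponds under the Frobenius characteristic map to $s_{(1)}\cdot(\kappa_1\ind^{S_m})\cdot(\kappa_2\ind^{S_m})$. Expanding the two factors in the Schur basis yields a non-negative sum of terms of the form $s_{(1)}s_\alpha s_\beta$ with $|\alpha|=|\beta|=m$, each of which is the Frobenius image of an irreducible character induced from the Young subgroup $S_1\times S_m\times S_m\leq S_{2m+1}$. This subgroup has three orbits, so it is not multiplicity-free by Lemma~\ref{Lemma sanity lemma}, and hence no such term is multiplicity-free. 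Since all coefficients are non-negative, the whole expression fails to be multiplicity-free.

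Next I would handle the elementary case. Every irreducible character $\kappa$ of $A_m$ has the form $\chi^\mu_{A_m}$ for some partition $\mu$ of $m$, and Lemma~\ref{Lemma index two}(iv) gives $\kappa\ind^{S_m}=\chi^\mu+\chi^{\mu'}$ if $\mu\neq\mu'$ and $\kappa\ind^{S_m}=\chi^\mu$ otherwise. Using the identity $\left(\charwrnb{\rho}{\chi^\nu}{2}\right)\Ind^{S_m\wr S_2}=\charwr{\rho\ind^{S_m}}{\chi^\nu}{2}$ recalled at the start of this section, together with Lemma~\ref{Lemma plethysms and char properties}(iv), in the non-self-conjugate case the induction $\charwr{\kappa}{\chi^\nu}{2}\ind^{S_{2m+1}}$ corresponds to $s_{(1)}\bigl(s_\nu\circ s_\mu+s_\mu s_{\mu'}+s_\nu\circ s_{\mu'}\bigr)$, whose middle summand $s_{(1)}s_\mu s_{\mu'}$ is not multiplicity-free by Lemma~\ref{Lemma sanity lemma} applied to $S_1\times S_m\times S_m$; non-negativity then rules out the full expression.

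In the remaining self-conjugate case, $\charwr{\kappa}{\chi^\nu}{2}\ind^{S_{2m+1}}$ corresponds to $s_{(1)}(s_\nu\circ s_\mu)$, which is multiplicity-free if and only if $\mu$ is rectangular by Proposition~\ref{Prop product with sm wr s2} applied with $k=1$. Combined with self-conjugacy, this forces $\mu$ to be a square partition, giving exactly the characters stated. The argument presents no serious obstacle, since Proposition~\ref{Prop product with sm wr s2}, Lemma~\ref{Lemma sanity lemma}, and Lemma~\ref{Lemma plethysms and char properties}(iv) together carry the combinatorial content; the only care needed is in correctly tracking the three cases determined by whether $\kappa$ is non-elementary, elementary with $\mu\neq\mu'$, or elementary with $\mu=\mu'$.
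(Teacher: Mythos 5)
Your proof is correct and follows essentially the same route as the paper: non-elementary characters and the non-self-conjugate elementary case are eliminated via Lemma~\ref{Lemma sanity lemma} (using the $s_\mu s_{\mu'}$ constituent from Lemma~\ref{Lemma plethysms and char properties}(iv)), and the self-conjugate case reduces to the $k=1$ classification of multiplicity-free $s_{(1)}(s_\nu\circ s_\mu)$. The only cosmetic difference is that you invoke Proposition~\ref{Prop product with sm wr s2} directly, whereas the paper cites Proposition~\ref{Prop direct product k eq 1}(i), which is itself deduced from that proposition.
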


\begin{proof}
	An irreducible induced-multiplicity-free character of $A_m\wr S_2$ must be elementary by Lemma~\ref{Lemma sanity lemma}, that is of the form $\charwr{\chi^{\mu}_{A_m}}{\chi^{\nu}}{2}$. If $\mu$ is self-conjugate, then $\left( \charwr{\chi^{\mu}_{A_m}}{\chi^{\nu}}{2}\right)\Ind^{S_m\wr S_2} = \charwr{\chi^{\mu}}{\chi^{\nu}}{2}$. By Proposition~\ref{Prop direct product k eq 1}(i), this is induced-multiplicity-free if and only if $\mu$ is a (self-conjugate) rectangular partition, that is a square partition.  Otherwise, by Lemma~\ref{Lemma plethysms and char properties}(iv), the character $\left( \charwr{\chi^{\mu}_{A_m}}{\chi^{\nu}}{2}\right)\Ind^{S_m\wr S_2}$ has $\left( \chi^{\mu}\boxtimes\chi^{\mu'}\right)\Ind^{S_m\wr S_2}$ as a constituent, which is not induced-multiplicity-free by Lemma~\ref{Lemma sanity lemma}, establishing the result.
\end{proof}

Moving to $G_{\pm}$ the potential irreducible induced-multiplicity-free characters are $\left( \charwr{\chi^{\mu}}{\chi^{\nu}}{2}\right)_{G_{\pm}}$ with $\mu$ rectangular and $\nu\vdash 2$. After inducing this character to $S_{2m+1}$, if $\mu$ is non-square, we obtain the character corresponding to $s_{(1)}\left( s_{\nu}\circ s_{\mu} + s_{\bar{\nu}}\circ s_{\mu'}\right) $ with $\bar{{\nu}}=\nu$ for $G_+$ and $\bar{{\nu}}=\nu'$ for $G_-$. If $\mu$ is square, we get $s_{(1)}\left( s_{\nu}\circ s_{\mu}\right) $ for $G_+$ and $s_{(1)}\left( s_{\nu}\circ s_{\mu} + s_{\nu'}\circ s_{\mu}\right)=s_{(1)}s_{\mu}^2$ for $G_-$. Thus the first is multiplicity-free, while the second is not by Lemma~\ref{Lemma sanity lemma}. For non-square $\mu$ we use the following lemma, which is the first one among our surprising results where a divisibility condition controls whether a certain symmetric function is multiplicity-free. 

\begin{lemma}\label{Lemma rectangle and one box}
	Let $\mu=(a^b)$ be a non-square rectangular partition and $\nu,\bar{\nu}\vdash 2$. The symmetric function $s_{(1)}\left( s_{\nu}\circ s_{\mu} + s_{\bar{\nu}}\circ s_{\mu'}\right) $ is multiplicity-free if and only if $a-b\nmid a$.
\end{lemma}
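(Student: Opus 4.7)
The plan is to reduce to a Pieri-type analysis via Lemma~\ref{Lemma Pieri application} with $k=1$, so that multiplicity-freeness of $s_{(1)}(s_\nu\circ s_\mu+s_{\bar\nu}\circ s_{\mu'})$ becomes equivalent to the following two properties of $X:=s_\nu\circ s_\mu+s_{\bar\nu}\circ s_{\mu'}$: (a) $X$ itself is multiplicity-free, and (b) any two distinct constituents $s_\gamma,s_\delta$ of $X$ satisfy $|\gamma-\delta|>2$ or have $|\gamma_i-\delta_i|\geq 2$ for some $i$. By Proposition~\ref{Prop product with sm wr s2}(ii) each summand is already multiplicity-free, and from its proof any two distinct constituents within the \emph{same} summand satisfy $|\gamma-\delta|\geq 4$, so only cross-summand pairs can cause trouble.

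The cross-summand pairs are exactly pairs $(\gamma,\delta)$ with $\gamma$ $(a,b)$-birectangular and $\delta$ $(b,a)$-birectangular (Proposition~\ref{Prop domino rectangles}), and the crucial input is Proposition~\ref{Prop conjugate birectangles and boxes}: writing $d=a-b$ and $0\leq r\leq d-1$ for the remainder of $a$ modulo $d$, one has $|\gamma-\delta|\geq 4r(d-r)$. In the case $a-b\nmid a$ we have $r\neq 0$, so $4r(d-r)\geq 4>2$, giving condition (b); Proposition~\ref{Prop conjugate (a,b)-birectangles} simultaneously guarantees that no partition is both $(a,b)$- and $(b,a)$-birectangular, giving (a). This settles the `if' direction.

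For the `only if' direction assume $d\mid a$, so by Proposition~\ref{Prop conjugate (a,b)-birectangles} the unique self-conjugate partition $\lambda_0=((2b)^{2d},(2b-2d)^{2d},\ldots,(2d)^{2d})$ is both $(a,b)$- and $(b,a)$-birectangular. I will produce, for each of the four choices of $(\nu,\bar\nu)$, two (possibly equal) constituents $s_\gamma$ of $s_\nu\circ s_\mu$ and $s_\delta$ of $s_{\bar\nu}\circ s_{\mu'}$ that force multiplicity at least two in $s_{(1)}X$. When the parities of $\sum_{i\leq b}(\lambda_0)_i$ and $\sum_{i\leq a}(\lambda_0)_i$ match $\nu$ and $\bar\nu$ respectively (via the parity condition in Proposition~\ref{Prop domino rectangles}), $\lambda_0$ itself lies in both summands, so any box-addition to $\lambda_0$ already yields a Schur function appearing in $s_{(1)}X$ with multiplicity two. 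For the remaining parity configurations, the plan is to take $\gamma$ and $\delta$ obtained from $\lambda_0$ by moving a single box within the first $b$ rows, respectively within the first $a$ rows, chosen so that (i) $\gamma$ remains $(a,b)$-birectangular with the prescribed parity, (ii) $\delta$ remains $(b,a)$-birectangular with the prescribed parity, and (iii) $\gamma$ and $\delta$ differ by a single box move, so that $s_{(1)}s_\gamma$ and $s_{(1)}s_\delta$ share a common Pieri successor.

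The main obstacle will be this last construction: carrying out the case analysis uniformly in all four parity configurations while maintaining birectangularity (which severely constrains which box moves are legal) and ensuring the two perturbations differ by exactly one transferred box. I expect that the structure of $\lambda_0$ as blocks of $2d$ equal rows provides enough flexibility to make box moves between adjacent blocks, and that pairing such a move near row $b$ with its conjugate move near row $a$ produces the required near-pair $(\gamma,\delta)$; the small example $d=1,b=2,a=3$ with $\gamma=(4,4,2,2)$ and $\delta=(4,4,3,1)$ (sharing the successor $(4,4,3,2)$) illustrates the shape of the construction in general.
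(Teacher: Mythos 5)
Your `if' direction is sound and is essentially the paper's own argument: within-summand constituents are separated by $|\gamma-\delta|\geq 4$ (proof of Proposition~\ref{Prop product with sm wr s2}), cross-summand pairs consist of an $(a,b)$-birectangular and a $(b,a)$-birectangular partition (Proposition~\ref{Prop domino rectangles}), and Proposition~\ref{Prop conjugate birectangles and boxes} (equivalently Corollary~\ref{Cor rectangles} with $t=1$) together with Proposition~\ref{Prop conjugate (a,b)-birectangles} gives both separation and multiplicity-freeness of the sum when $a-b\nmid a$; feeding this into Lemma~\ref{Lemma Pieri application} with $k=1$ works (modulo saying ``without loss of generality $a>b$'', which those propositions assume).

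The `only if' direction, however, has a genuine gap: the construction that carries the whole case $a-b\mid a$ is only announced, and as described it would not work. A single box move preserves $(a,b)$-birectangularity only if it exchanges a box between the partner rows $j$ and $2b+1-j$ (otherwise the defining equation $\lambda_j+\lambda_{2b+1-j}=2a$ breaks), and one of these rows is $\leq b$ while the other is $>b$; so a move ``within the first $b$ rows'' (respectively ``within the first $a$ rows'' for the $(b,a)$-structure, whose partner of $j$ is $2a+1-j$) is never legal -- indeed your own example $(4,4,2,2)\to(4,4,3,1)$ moves a box from row $4$ to row $3$. Moreover, since every part of $\lambda_0$ is even, the ``matching parity'' case is only $\nu=\bar{\nu}=(2)$, so three of the four configurations rest on the unproved construction; and the flexibility you invoke (``blocks of $2d$ equal rows'') disappears exactly when there is a single block, i.e.\ $a=2b$, where $\lambda_0=(a^a)$ is a square and each summand admits precisely one legal single-box perturbation. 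The paper treats $a=2b$ separately (via $(a+1,a^{a-1})$, its conjugate and $(a+1,a^{a-2},a-1,1)$, checking all four choices of $(\nu,\bar{\nu})$), and for $a\neq 2b$ it avoids your case analysis altogether: it exhibits one partition $\lambda$ (namely $\lambda_0$ with a box added in row $2d+1$) with $\left\langle s_{(1)}s_{\mu}^2,s_{\lambda}\right\rangle\geq 2$, and since $s_{(1)}(s_{(2)}\circ s_{\mu})$ and $s_{(1)}(s_{(1^2)}\circ s_{\mu})$ are each multiplicity-free, $s_{\lambda}$ must occur in both; repeating this for $\mu'$ shows $s_{\lambda}$ is a common constituent of the two summands for every $(\nu,\bar{\nu})$. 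Until you carry out your perturbation argument in all parity configurations, including the square case, the `only if' half is not proved.
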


\begin{proof}
	Without loss of generality assume $a>b$. By Proposition~\ref{Prop product with sm wr s2}, both $s_{(1)}\left( s_{\nu}\circ s_{\mu}\right) $ and $s_{(1)}\left( s_{\bar{\nu}}\circ s_{\mu'}\right) $ are multiplicity-free. If they share a constituent, so do $s_{(1)}s_{\mu}^2$ and $s_{(1)}s_{\mu'}^2$, and Corollary~\ref{Cor rectangles}(i) with $t=1$ shows that $1\geq 2r(d-r)$ where $d=a-b$ and $0\leq r<d$ is the remainder of $a$ modulo $d$. In turn $r=0$, or equivalently, $d\mid a$, as needed.
	
	It remains to show that if $d\mid a$, then the summands share a constituent. Firstly, suppose that  $a\neq 2b$. Recall the self-conjugate $(a,b)$-birectangular partition $\bar{\lambda}=((2b)^{2d}, (2b-2d)^{2d}, \dots, (2d)^{2d})$ from Proposition~\ref{Prop conjugate (a,b)-birectangles}. Let $\lambda$ be a partition obtained from $\bar{\lambda}$ by adding $1$ to the $(2d+1)$st part, that is $\lambda=((2b)^{2d},2b-2d+1,(2b-2d)^{2d-1},(2b-4d)^{2d},\dots, (2d)^{2d})$. The partition obtained from $\lambda$ by decreasing its $2(b-d)$th part, equal to $4d$, by $1$ is also $(a,b)$-birectangular since $\bar{\lambda}$ is. Hence $\left\langle s_{(1)}s_{\mu}^2,s_{\lambda} \right\rangle\geq 2 $. Analogously, one can show that $\left\langle s_{(1)}s_{\mu'}^2,s_{\lambda} \right\rangle=\left\langle s_{(1)}s_{\mu}^2,s_{\lambda'} \right\rangle \geq 2 $.
	
	Since $s_{(1)}s_{\mu}^2=s_{(1)}\left( s_{(2)}\circ s_{\mu} + s_{(1^2)}\circ s_{\mu}\right) $ has multiplicity-free summands, our $s_{\lambda}$ must be a constituent of both of the summands. We can repeat this for $\mu'$ in place of $\mu$ to deduce that both $s_{(1)}\left( s_{\nu}\circ s_{\mu}\right) $ and $s_{(1)}\left( s_{\bar{\nu}}\circ s_{\mu'}\right) $ have $s_{\lambda}$ as a constituent, as required.
	
	For $a=2b$, using Proposition~\ref{Prop domino rectangles}, we see that $s_{(a^a)}$ is a constituent of $s_{(2)}\circ s_{\mu}$ and $s_{(a+1,a^{a-2},a-1)}$ is a constituent of $s_{(1^2)}\circ s_{\mu}$. Hence $\lambda=(a+1,a^{a-1})$ labels a constituent of $s_{(1)}\left( s_{\nu}\circ s_{\mu}\right) $ for any $\nu\vdash 2$ and $\lambda'$ labels a constituent of $s_{(1)}\left( s_{(2)} \circ s_{\mu}\right) $. Moreover, the self-conjugate partition $\bar{\lambda}=(a+1,a^{a-2},a-1,1)$ labels a constituent of $s_{(1)}\left( s_{(1^2)} \circ s_{\mu}\right) $. Using Lemma~\ref{Lemma plethysms and char properties}(ii) with even $|\mu|$, we conclude that $s_{(1)}\left( s_{\nu}\circ s_{\mu}\right) $ and $s_{(1)}\left( s_{\bar{\nu}}\circ s_{\mu'}\right) $ share a constituent labelled by a partition given by Table~\ref{Table 1x2 rectangles}.
\end{proof}

\begin{table}[h!]
	\centering
	\begin{tabular}{c|cc}
		\specialrule{0.1em}{0em}{0em}
		\diagbox[width=1cm,  height=0.7cm]{$\nu$}{$\bar{\nu}$}&$(2)$&$(1^2)$\\
		\hline
		$(2)$&$\lambda$&$\lambda'$\\
		$(1^2)$&$\lambda$&$\bar{\lambda}$\\
		\specialrule{0.1em}{0em}{0em}
	\end{tabular}
	\vspace{4pt}
	\caption{Partitions labelling a common constituent of $s_{(1)}\left( s_{\nu}\circ s_{\mu}\right) $ and $s_{(1)}\left( s_{\bar{\nu}}\circ s_{\mu'}\right)$ where $\nu,\bar{\nu}\vdash 2$, $\mu=(a^{a/2})$, $\lambda=(a+1,a^{a-1})$ and $\bar{\lambda}=(a+1,a^{a-2},a-1,1)$.} 
	\label{Table 1x2 rectangles}
\end{table}

\begin{corollary}\label{Cor S1 times Sm wr S2 class}
	Let $m\geq 7$. The irreducible induced-multiplicity-free characters of
	\begin{enumerate}[label=\textnormal{(\roman*)}]
		\item $G=\left( S_m\wr S_2\right) \cap A_{2m}$ embedded in $S_{2m+1}$ are $\left( \charwr{\chi^{\mu}}{\chi^{\nu}}{2}\right)_G$ with $\nu\vdash 2$ and $\mu=(a^b)$ such that $a-b\nmid a$, with $a=b$ allowed only for even $m$;
		\item $G=T_{m,2}$ embedded in $S_{2m+1}$ are $\left( \charwr{\chi^{\mu}}{\chi^{\nu}}{2}\right)_G$ with $\nu\vdash 2$ and $\mu=(a^b)$ such that $a-b\nmid a$, with $a=b$ allowed only for odd $m$.
	\end{enumerate}
\end{corollary}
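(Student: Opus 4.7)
The plan is to imitate the set-up used for Corollary~\ref{Cor Sk times Sm sr S2 class} and Proposition~\ref{Prop Am wr S2 class}: reuse the abbreviations $G_+ = \ker\left(\charwrnb{\sgn}{\mathbbm{1}}{2}\right)$ and $G_- = \ker\left(\charwrnb{\sgn}{\sgn}{2}\right)$ for the two index-$2$ subgroups of $S_m\wr S_2$, noting that $(S_m\wr S_2)\cap A_{2m}$ equals $G_+$ when $m$ is even and $G_-$ when $m$ is odd (with the roles reversed for $T_{m,2}$). To list the candidates for an irreducible induced-multiplicity-free $\kappa$, observe that by transitivity of induction $\kappa\Ind^{S_{2m+1}}=(\kappa\Ind^{S_m\wr S_2})\Ind^{S_{2m+1}}$, so each irreducible constituent of $\kappa\Ind^{S_m\wr S_2}$ is itself induced-multiplicity-free in $S_{2m+1}$. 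By Proposition~\ref{Prop direct product k eq 1}(i) every such constituent is of the form $\charwr{\chi^{\mu}}{\chi^{\nu}}{2}$ with $\mu=(a^b)$ rectangular and $\nu\vdash 2$, and combined with Lemma~\ref{Lemma index two}(iv) this shows the candidates for $\kappa$ are exactly $\left(\charwr{\chi^{\mu}}{\chi^{\nu}}{2}\right)_{G_{\pm}}$ of this shape.

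Next I would compute, via Lemma~\ref{Lemma tensoring plethysms}, the tensor product of $\rho=\charwr{\chi^{\mu}}{\chi^{\nu}}{2}$ with the character $\eta$ cutting out $G_{\pm}$: for $G_+$ one obtains $\charwr{\chi^{\mu'}}{\chi^{\nu}}{2}$ and for $G_-$ one obtains $\charwr{\chi^{\mu'}}{\chi^{\nu'}}{2}$. Outside the case of $G_+$ with $\mu$ square (where $\rho=\rho\times\eta$), Lemma~\ref{Lemma index two}(iv) gives $\kappa\Ind^{S_m\wr S_2}=\rho+\rho\times\eta$, and inducing to $S_{2m+1}$ yields the symmetric function $s_{(1)}\left(s_{\nu}\circ s_{\mu}+s_{\bar{\nu}}\circ s_{\mu'}\right)$, with $\bar{\nu}=\nu$ for $G_+$ and $\bar{\nu}=\nu'$ for $G_-$. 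For non-square $\mu$ this is multiplicity-free if and only if $a-b\nmid a$ by Lemma~\ref{Lemma rectangle and one box}. For square $\mu$ in $G_+$ we instead have $\kappa\Ind^{S_m\wr S_2}=\rho$, so the induced character in $S_{2m+1}$ corresponds to $s_{(1)}(s_{\nu}\circ s_{\mu})$, which is multiplicity-free by Proposition~\ref{Prop product with sm wr s2}. For square $\mu$ in $G_-$, since $\nu\neq\nu'$ we still have $\rho\neq\rho\times\eta$, and the induced symmetric function $s_{(1)}\left(s_{\nu}\circ s_{\mu}+s_{\nu'}\circ s_{\mu}\right)$ simplifies via Lemma~\ref{Lemma plethysms and char properties}(iii) to $s_{(1)}s_{\mu}^2$, which fails to be multiplicity-free by Lemma~\ref{Lemma sanity lemma}.

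Finally, I would translate back to the groups in the statement using the parity identification above: $G_+=(S_m\wr S_2)\cap A_{2m}$ exactly when $m$ is even and $G_+=T_{m,2}$ exactly when $m$ is odd. This gives precisely the condition ``$a=b$ allowed only for even $m$'' in part~(i) and ``$a=b$ allowed only for odd $m$'' in part~(ii), while the common non-square condition $a-b\nmid a$ is identical for the two groups. The genuine obstacle is Lemma~\ref{Lemma rectangle and one box} itself, whose proof rests on the self-conjugate $(a,b)$-birectangular partition constructed in Proposition~\ref{Prop conjugate (a,b)-birectangles}; the present corollary is mostly a bookkeeping exercise assembling that lemma with Lemma~\ref{Lemma index two}(iv), Lemma~\ref{Lemma tensoring plethysms} and Lemma~\ref{Lemma sanity lemma}.
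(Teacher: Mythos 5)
Your proposal is correct and follows essentially the same route as the paper: it reduces to the candidates $\left(\charwr{\chi^{\mu}}{\chi^{\nu}}{2}\right)_{G_{\pm}}$ with $\mu$ rectangular via Proposition~\ref{Prop direct product k eq 1}(i) and the index-two machinery, then settles non-square $\mu$ by Lemma~\ref{Lemma rectangle and one box} and square $\mu$ by the dichotomy between $s_{(1)}(s_{\nu}\circ s_{\mu})$ for $G_+$ and $s_{(1)}s_{\mu}^2$ (not multiplicity-free by Lemma~\ref{Lemma sanity lemma}) for $G_-$, exactly as in \S\ref{Sec SIS II}. The parity bookkeeping identifying $G_{\pm}$ with $(S_m\wr S_2)\cap A_{2m}$ and $T_{m,2}$ is also handled as in the paper, so there is nothing to add.
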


\begin{proof}
	For $a=b$ we use the earlier discussion about square partitions. For $a\neq b$ we apply Lemma~\ref{Lemma rectangle and one box}.
\end{proof}

\subsection{Groups embedded in $S_{2m}$}\label{Sec n=2m}

Let $G$ be $\left( S_m\wr S_2\right) \cap A_{2m}$ or $T_{m,2}$. By an \textit{elementary irreducible character} of $G$ we mean an irreducible character of the form $\rho_G$ where $\rho$ is an elementary irreducible character of $S_m\wr S_2$. In this subsection we focus only on the elementary irreducible characters of $\left( S_m\wr S_2\right) \cap A_{2m}, T_{m,2}$ and $A_m\wr S_2$, which is a reasonable restriction as the non-elementary irreducible induced-multiplicity-free characters can be easily deduced from the irreducible induced-multiplicity-free characters of $S_m\times S_m$, $A_m\times A_m$ and $\left( S_m\wr S_2\right) \cap A_{2m}$.

Indeed, this is clear for $A_m\wr S_2$, and if $G$ is $\left( S_m\wr S_2\right) \cap A_{2m}$ or $T_{m,2}$, Mackey's Theorem states that for any distinct $\lambda, \mu\vdash m$ we have
\[
\left(\chi^{\lambda}\boxtimes \chi^{\mu} \right)\Ind^{S_m\wr S_2}\Res_G = \left(\chi^{\lambda}\boxtimes \chi^{\mu} \right)\Res_{(S_m\times S_m)\cap A_{2m}}\Ind^G.  
\]
Moreover, using Lemma~\ref{Lemma index two}(i) and Lemma~\ref{Lemma tensoring plethysms}, we see that unless $\lambda, \mu\in\left\lbrace \lambda', \mu'\right\rbrace $, the left-hand side is irreducible. Thus
\[
\left( \left(\chi^{\lambda}\boxtimes \chi^{\mu} \right)\Ind^{S_m\wr S_2}\right) _G = \left(\chi^{\lambda}\boxtimes \chi^{\mu} \right)_{(S_m\times S_m)\cap A_{2m}}\Ind^G.  
\] 
And if $\lambda, \mu\in\left\lbrace \lambda', \mu'\right\rbrace $, Lemma~\ref{Lemma index two}(iv) and Lemma~\ref{Lemma tensoring plethysms} show
\[
\left( \left(\chi^{\lambda}\boxtimes \chi^{\mu} \right)\Ind^{S_m\wr S_2}\right) _G\Ind^{S_m\wr S_2} = \left(\chi^{\lambda}\boxtimes \chi^{\mu} \right)\Ind^{S_m\wr S_2},
\]
and one can use Theorem~\ref{Theorem Stembridge} in this case.

For $n\geq 20$ the elementary induced-irreducible characters of $S_m\wr S_2$ are by Theorem~\ref{Theorem MF plethysms} given by $\charwr{\chi^{\mu}}{\chi^{\nu}}{2}$ with $\nu\vdash 2$ and $\mu$ rectangular, almost rectangular or a hook. Starting with the group $A_m\wr S_2$, we need to consider characters $\charwr{\chi^{\mu}_{A_m}}{\chi^{\nu}}{2}$ with $\mu$ and $\nu$ as above. It is clear that $\charwr{\chi^{\mu}_{A_m}}{\chi^{\nu}}{2}$ is multiplicity-free if $\mu$ is self-conjugate.

Otherwise, after applying the induction to $S_{2m}$, this induced character corresponds to $s_{\nu}\circ s_{\mu} + s_{\mu}s_{\mu'} + s_{\nu}\circ s_{\mu'}$ by Lemma~\ref{Lemma plethysms and char properties}(iv). Thus if this is multiplicity-free, we need $\mu$ to be rectangular by Theorem~\ref{Theorem Stembridge}. In such case we can use the following lemmas and the observation that $s_{\nu}\circ s_{\mu} + s_{\mu}s_{\mu'} + s_{\nu}\circ s_{\mu'}$ is multiplicity-free if and only if the sums of any two of the summands are multiplicity-free.

\begin{lemma}\label{Lemma rotate rectangles}
	Let $\mu=(a^b)$ be a non-square rectangular partition and $\nu,\bar{\nu}\vdash 2$. The symmetric function $s_{\nu}\circ s_{\mu} + s_{\bar{\nu}}\circ s_{\mu'}$ is \emph{not} multiplicity-free if and only if $\nu=\bar{\nu}=(2)$ and $a-b\mid a$.
\end{lemma}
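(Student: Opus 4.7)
Without loss of generality assume $a>b$ and set $d=a-b$. By Theorem~\ref{Theorem MF plethysms}(i), each of $s_{\nu}\circ s_{\mu}$ and $s_{\bar{\nu}}\circ s_{\mu'}$ is multiplicity-free (since both $\mu$ and $\mu'$ are rectangular), so the sum fails to be multiplicity-free if and only if the two summands share some constituent $s_{\lambda}$. By Proposition~\ref{Prop domino rectangles}, any constituent $s_{\lambda}$ of $s_{\nu}\circ s_{\mu}$ has $\lambda$ an $(a,b)$-birectangular partition, and any constituent of $s_{\bar{\nu}}\circ s_{\mu'}$ has $\lambda$ a $(b,a)$-birectangular partition. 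Hence a common constituent forces $\lambda$ to be simultaneously $(a,b)$- and $(b,a)$-birectangular.

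The plan is now to invoke Proposition~\ref{Prop conjugate (a,b)-birectangles}: such a partition $\lambda$ exists if and only if $d\mid a$, and in that case it is unique and equal to $\bar{\lambda}=((2b)^{2d},(2b-2d)^{2d},\dots,(2d)^{2d})$. Thus when $d\nmid a$ the two summands share no constituent and $s_{\nu}\circ s_{\mu}+s_{\bar{\nu}}\circ s_{\mu'}$ is multiplicity-free for every choice of $\nu,\bar{\nu}\vdash 2$. This settles the ``only if'' direction.

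For the ``if'' direction, assume $d\mid a$ and consider the unique candidate $\bar{\lambda}$. The key observation is that every part of $\bar{\lambda}$ is even, so both partial sums $\bar{\lambda}_1+\cdots+\bar{\lambda}_b$ and $\bar{\lambda}_1+\cdots+\bar{\lambda}_a$ are even. Applying Proposition~\ref{Prop domino rectangles} to $\mu=(a^b)$ shows that $s_{\bar{\lambda}}$ is a constituent of $s_{(2)}\circ s_{\mu}$ but not of $s_{(1^2)}\circ s_{\mu}$; applying it to $\mu'=(b^a)$ shows $s_{\bar{\lambda}}$ is a constituent of $s_{(2)}\circ s_{\mu'}$ but not of $s_{(1^2)}\circ s_{\mu'}$. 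Consequently the two summands share the constituent $s_{\bar{\lambda}}$ precisely when $\nu=\bar{\nu}=(2)$, giving the claimed characterisation.

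The argument is essentially an accounting exercise built on two already-established results (Propositions~\ref{Prop domino rectangles} and~\ref{Prop conjugate (a,b)-birectangles}), so there is no real obstacle; the only point requiring care is the parity observation that every part of the self-conjugate partition $\bar{\lambda}$ is even, which is what pins down the combination $\nu=\bar{\nu}=(2)$ as the unique failure.
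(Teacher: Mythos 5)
Your proposal is correct and follows essentially the same route as the paper's proof: both rely on Theorem~\ref{Theorem MF plethysms}(i) for multiplicity-freeness of the summands, Proposition~\ref{Prop domino rectangles} to force a common constituent to be both $(a,b)$- and $(b,a)$-birectangular, Proposition~\ref{Prop conjugate (a,b)-birectangles} to pin down the unique candidate and the divisibility condition, and the evenness of that partition's parts to conclude $\nu=\bar{\nu}=(2)$.
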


\begin{proof}
	Without loss of generality suppose $a>b$. Both summands are multiplicity-free by Theorem~\ref{Theorem MF plethysms}(i). If they share a constituent $s_{\lambda}$, we know that $\lambda$ is $(a,b)$-birectangular and $(b,a)$-birectangular by Proposition~\ref{Prop domino rectangles}. By Lemma~\ref{Prop conjugate (a,b)-birectangles}, in such a case $a-b\mid a$ and $\lambda=((2b)^{2d}, (2b-2d)^{2d}, \dots, (2d)^{2d})$, where $d=a-b$. Since $\lambda$ is an even partition, Proposition~\ref{Prop domino rectangles} implies that $s_{\lambda}$ is a constituent of $s_{\nu}\circ s_{\mu}$, respectively, $s_{\bar{\nu}}\circ s_{\mu'}$ if and only if $\nu=(2)$, respectively, $\bar{\nu}=(2)$. The result follows.
\end{proof}

\begin{lemma}\label{Lemma rotate rectangle}
	Let $\mu=(a^b)$ be a non-square rectangular partition and $\nu\vdash 2$. The symmetric function $s_{\nu}\circ s_{\mu} + s_{\mu} s_{\mu'}$ is \emph{not} multiplicity-free if and only if $a-b\mid a$ and
	\begin{align*}
	\nu=\begin{cases}
	(2) & \text{ if } 4\mid ab,\\
	(1^2) & \text{ otherwise.}
	\end{cases}
	\end{align*}
\end{lemma}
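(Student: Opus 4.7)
By Theorem~\ref{Theorem MF plethysms}(i) the plethysm $s_\nu\circ s_\mu$ is multiplicity-free, and by Proposition~\ref{Prop LR rectangles}(ii) so is the product $s_\mu s_{\mu'}$. Hence the sum fails to be multiplicity-free if and only if the two summands share a constituent $s_\lambda$. Applying $\omega$ swaps $\mu\leftrightarrow\mu'$ and replaces $\nu$ by $\nu^{\prime ab}$; since both the claimed divisibility condition $(a-b)\mid a$ and the predicate $4\mid ab$ are symmetric in $a$ and $b$, and since $(a-b)\mid a$ forces $ab$ to be even (otherwise $a-b$ would be even while dividing the odd integer $a$, hence $\nu^{\prime ab}=\nu$), we may assume $a>b$ throughout.

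By Proposition~\ref{Prop domino rectangles} any constituent of $s_\nu\circ s_\mu$ is $(a,b)$-birectangular, and by Proposition~\ref{Prop LR rectangles}(ii) any constituent of $s_\mu s_{\mu'}$ is $\{a,b\}$-birectangular. Proposition~\ref{Prop conjugate set ab-birectangles} then tells us that a partition which is both $(a,b)$- and $\{a,b\}$-birectangular exists if and only if $d:=a-b$ divides $a$, and in that case it is the unique partition
\[\lambda=((a+b)^d,(a+b-d)^d,\dots,(b+2d)^d,b^d,(b-d)^d,\dots,d^d).\]
Thus the sum is multiplicity-free whenever $d\nmid a$, and when $d\mid a$ it remains to decide in which of the two plethysms $s_{(2)}\circ s_\mu$ and $s_{(1^2)}\circ s_\mu$ this unique $s_\lambda$ occurs. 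By Lemma~\ref{Lemma plethysms and char properties}(iii) together with $c(\mu,\mu;\lambda)=1$ (Proposition~\ref{Prop LR rectangles}(i)), $s_\lambda$ occurs in exactly one of them.

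Now assume $d\mid a$ and set $q=a/d$, so that $b=(q-1)d$. The first $b$ parts of $\lambda$ fill the first $q-1$ blocks with values $a+b,\,a+b-d,\,\dots,\,b+2d$. A direct evaluation of this arithmetic progression yields
\[\sum_{i=1}^{b}\lambda_i = d\sum_{j=0}^{q-2}(a+b-jd)=d(q-1)(a+b)-\tfrac{d^2(q-1)(q-2)}{2}=\tfrac{3ab}{2},\]
which is an integer since $ab$ is even. By the parity clause of Proposition~\ref{Prop domino rectangles}, $s_\lambda$ is a constituent of $s_{(2)}\circ s_\mu$ precisely when $3ab/2$ is even, equivalently when $4\mid ab$; otherwise it is a constituent of $s_{(1^2)}\circ s_\mu$. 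This gives exactly the stated dichotomy on $\nu$.

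The only substantive calculation is the closed-form evaluation $\sum_{i=1}^b\lambda_i=3ab/2$ and the translation of its parity into the condition $4\mid ab$; once Propositions~\ref{Prop conjugate set ab-birectangles} and~\ref{Prop domino rectangles} are in hand there is no genuine obstacle.
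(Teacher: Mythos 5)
Your proposal is correct and follows essentially the same route as the paper's proof: both reduce to a shared constituent, invoke Proposition~\ref{Prop domino rectangles}, Proposition~\ref{Prop LR rectangles}(ii) and Proposition~\ref{Prop conjugate set ab-birectangles} to pin down the unique candidate $\lambda$ and the condition $a-b\mid a$, compute $\lambda_1+\dots+\lambda_b=3ab/2$, and read off $\nu$ from the parity criterion, handling $b>a$ via $\omega$. The only cosmetic difference is that you perform the $\omega$-reduction at the start (with the correct observation that $a-b\mid a$ forces $ab$ even, so $\nu^{\prime ab}=\nu$) rather than at the end as in the paper.
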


\begin{proof}
	Assume firstly that $a>b$. The summands are multiplicity-free by Theorem~\ref{Theorem MF plethysms}(i) and Theorem~\ref{Theorem Stembridge}(iv). If they have a common constituent $s_{\lambda}$, by Proposition~\ref{Prop LR rectangles}(ii) and Proposition~\ref{Prop domino rectangles} the partition $\lambda$ must be $(a,b)$-birectangular and $\left\lbrace a,b\right\rbrace $-birectangular. Proposition~\ref{Prop conjugate set ab-birectangles} then shows that $a-b\mid a$ and $\lambda=((a+b)^d, (a+b-d)^d, \dots, (b+3d)^d, (b+2d)^d, b^d, (b-d)^d,\dots, d^d)$ with $d=a-b$. Since $\lambda_1+\lambda_2+\dots + \lambda_b=(a+b)d +(a+b-d)d +\dots + (b+2d)d=b(a+2b+2d)/2=3ab/2$, Proposition~\ref{Prop domino rectangles} shows that $s_{\lambda}$ is a constituent of $s_{\nu}\circ s_{\mu}$ if and only if $\nu$ is as in the statement, as required. If $b>a$, we apply $\omega$ according to Lemma~\ref{Lemma plethysms and char properties}(i) and observe that $\nu^{\prime ab}$, defined in \S\ref{Sec Par}, equals $\nu$ if $a-b\mid a$.
\end{proof}

\begin{corollary}\label{Cor Am wr S2 class}
	Let $m\geq 10$. The elementary irreducible induced-multiplicity-free characters of $A_m\wr S_2$ are of the form $\charwr{\chi^{\mu}_{A_m}}{\chi^{\nu}}{2}$ where $\nu\vdash 2$ and $\mu$ is of the form $(a+1,1^a), (a^{a-1},a-1)$ or $(a^b)$ with $a-b\nmid a$ and if $4\mid m$, the choice $\nu=(1^2)$ and $\mu=(a^b)$ with $a-b\mid a$ is also allowed.
\end{corollary}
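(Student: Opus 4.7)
The plan is to reduce the question to a question about symmetric functions and then combine the preceding Lemmas~\ref{Lemma rotate rectangles} and \ref{Lemma rotate rectangle}. An elementary irreducible character of $A_m\wr S_2$ has the form $\charwr{\chi^{\mu}_{A_m}}{\chi^{\nu}}{2}$ with $\nu\vdash 2$, and using the identity $\left( \charwrnb{\rho}{\chi^{\nu}}{2}\right)\Ind^{S_m\wr S_2} = \charwr{\rho\ind^{S_m}}{\chi^{\nu}}{2}$ stated at the start of this section together with transitivity of induction, this character is induced-multiplicity-free (to $S_{2m}$) if and only if $\charwr{\chi^{\mu}_{A_m}\ind^{S_m}}{\chi^{\nu}}{2}$ is induced-multiplicity-free on $S_m\wr S_2$. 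In particular $\charwr{\chi^\mu}{\chi^\nu}{2}$ must itself be induced-multiplicity-free, forcing by Theorem~\ref{Theorem MF plethysms}(i) that $\mu$ is rectangular, almost rectangular or a hook.

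If $\mu$ is self-conjugate, then $\chi^\mu=\chi^\mu\times\sgn$ and Lemma~\ref{Lemma index two}(iv) gives $\chi^\mu_{A_m}\ind^{S_m}=\chi^\mu$, so the induced character in $S_m\wr S_2$ is simply $\charwr{\chi^\mu}{\chi^\nu}{2}$, which is induced-multiplicity-free by Theorem~\ref{Theorem MF plethysms}(i). A direct check of the three permitted shapes identifies the self-conjugate specimens as $(a^a)$, $(a+1,1^a)$ and $(a^{a-1},a-1)$, giving the first two entries of the list and the $a=b$ subcase of the third. If $\mu$ is not self-conjugate, Lemma~\ref{Lemma index two}(i) and (iv) give $\chi^\mu_{A_m}\ind^{S_m}=\chi^\mu+\chi^{\mu'}$, and applying the Frobenius characteristic together with Lemma~\ref{Lemma plethysms and char properties}(iv) (using $|\nu|=2$) reduces the question to whether
\[
X_{\mu,\nu} := s_\nu\circ s_\mu + s_\mu s_{\mu'} + s_\nu\circ s_{\mu'}
\]
is multiplicity-free. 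The middle summand being multiplicity-free forces, by Theorem~\ref{Theorem Stembridge}, that $\mu$ (hence $\mu'$) is rectangular, since if $\mu$ is almost rectangular or a hook then so is $\mu'$ of the same type and no case in Theorem~\ref{Theorem Stembridge} covers this. Thus $\mu=(a^b)$ is non-square rectangular.

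Since each summand of $X_{\mu,\nu}$ is individually multiplicity-free (by Theorem~\ref{Theorem MF plethysms}(i) and Theorem~\ref{Theorem Stembridge}(iv)), $X_{\mu,\nu}$ is multiplicity-free if and only if no two summands share a constituent. Lemma~\ref{Lemma rotate rectangles} governs the outer pair: $s_\nu\circ s_\mu$ and $s_\nu\circ s_{\mu'}$ share a constituent precisely when $\nu=(2)$ and $a-b\mid a$. Lemma~\ref{Lemma rotate rectangle}, applied once with $\mu$ and once with $\mu'$ (noting that $a-b\mid a$ is equivalent to $b-a\mid b$, and that $4\mid ab$ is symmetric in $a,b$ and equivalent to $4\mid m$), governs each mixed pair: both $s_\nu\circ s_\mu+s_\mu s_{\mu'}$ and $s_\nu\circ s_{\mu'}+s_\mu s_{\mu'}$ fail to be multiplicity-free precisely when $a-b\mid a$ and $\nu=(2)$ if $4\mid m$, else $\nu=(1^2)$. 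Combining: if $a-b\nmid a$ there is no obstruction for any $\nu$; if $a-b\mid a$ then $\nu=(2)$ is excluded by the outer-pair test, and $\nu=(1^2)$ survives precisely when $4\mid m$. This matches the statement exactly, with the $a=b$ case of $(a^b)$ (where $a-b=0\nmid a$) subsumed in the self-conjugate square partitions.

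The main obstacle is purely organisational: keeping track of the three overlap conditions and verifying the symmetry between Lemma~\ref{Lemma rotate rectangle} applied to $\mu$ and to $\mu'$. All the combinatorial heavy lifting has already been done in \S\ref{Sec rectangles} and in the two lemmas cited.
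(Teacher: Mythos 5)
Your proposal is correct and follows essentially the same route as the paper: the self-conjugate case is settled by Theorem~\ref{Theorem MF plethysms}(i), and the non-self-conjugate case is reduced via Lemma~\ref{Lemma plethysms and char properties}(iv) to the multiplicity-freeness of $s_\nu\circ s_\mu + s_\mu s_{\mu'} + s_\nu\circ s_{\mu'}$, with Theorem~\ref{Theorem Stembridge} forcing $\mu$ rectangular and the pairwise overlaps handled by Lemma~\ref{Lemma rotate rectangles} (with $\bar\nu=\nu$) and Lemma~\ref{Lemma rotate rectangle} applied to both $\mu$ and $\mu'$. This matches the paper's argument, including the observation that the divisibility and parity conditions are symmetric in $a$ and $b$.
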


\begin{proof}
	If $\mu$ is self-conjugate we obtain the first two choices of $\mu$ and the third choice with $a=b$. Otherwise, we apply Lemma~\ref{Lemma rotate rectangles} with $\bar{\nu}=\nu$ and Lemma~\ref{Lemma rotate rectangle} twice with $\mu$ and $\mu'$ as the rectangular partitions to obtain the remaining choices.
\end{proof}

We now move to $G_{\pm}$, or alternatively $\left( S_m\wr S_2\right) \cap A_{2m}$ and $T_{m,2}$. Recall that for $\mu$ not self-conjugate, the character $\left( \charwr{\chi^{\mu}}{\chi^{\nu}}{2}\right)_{G_{\pm}}$ is induced-multiplicity-free if and only if $s_{\nu}\circ s_{\mu} + s_{\bar{\nu}}\circ s_{\mu'}$ is multiplicity-free, where $\bar{\nu}=\nu$ for $G_+$ and $\bar{\nu}=\nu'$ for $G_-$. If $\mu$ is self-conjugate, we need to replace the symmetric function for $G_+$ with $s_{\nu}\circ s_{\mu}$, while for $G_-$ the symmetric function simplifies to $s_{\mu}^2$. We start by examining these symmetric functions for $\mu$ a hook.

\begin{lemma}\label{Lemma hooks}
	Let $a< b$ be positive integers, $\mu=(a+1,1^b)$ and $\nu,\bar{\nu}\vdash 2$. The symmetric function $s_{\nu}\circ s_{\mu} + s_{\bar{\nu}}\circ s_{\mu'}$ is \emph{not} multiplicity-free if and only if $b=a+1$ and $\nu=\bar{\nu}$.
\end{lemma}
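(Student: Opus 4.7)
Since $\mu=(a+1,1^b)$ and $\mu'=(b+1,1^a)$ are both hooks, Theorem~\ref{Theorem MF plethysms}(i) guarantees that each of $s_\nu\circ s_\mu$ and $s_{\bar\nu}\circ s_{\mu'}$ is multiplicity-free. Hence the sum fails to be multiplicity-free if and only if they share a common Schur constituent $s_\lambda$. Any such $\lambda$ is a constituent of both $s_\mu^2$ and $s_{\mu'}^2$, so $c(\mu,\mu;\lambda)\geq 1$ and $c(\mu,\mu;\lambda')=c(\mu',\mu';\lambda)\geq 1$.

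Applying Lemma~\ref{Lemma LR hooks} to $\mu$ (with hook parameters $(a,b)$) and to $\mu'$ (with parameters $(b,a)$) forces $\lambda_1+\lambda_2$ to lie in both $\{2a+2,2a+3,2a+4\}$ and $\{2b+2,2b+3,2b+4\}$. For $a<b$ these sets meet precisely when $b=a+1$, in which case $\lambda_1+\lambda_2=2a+4$ and the lemma gives $\lambda_2\geq 2\geq\lambda_3$. A short check rules out the exceptional hook cases $\lambda=(2a+1,1^{2b+1})$ and $\lambda=(2b+1,1^{2a+1})$ appearing in Lemma~\ref{Lemma LR hooks} as common constituents when $a<b$ (the first requires $\lambda'_1+\lambda'_2=2b+3\in\{2a+2,2a+3,2a+4\}$, forcing $b=a$, and the second is similar). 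This proves the necessity of $b=a+1$.

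Assume now $b=a+1$, so $|\mu|=2a+2$ is even. Lemma~\ref{Lemma plethysms and char properties}(ii) then gives $p(\mu',\bar\nu;\lambda)=p(\mu,\bar\nu;\lambda')$, converting the common-constituent condition into $p(\mu,\nu;\lambda)=p(\mu,\bar\nu;\lambda')=1$. Since $c(\mu,\mu;\lambda)=1$, exactly one of $p(\mu,(2);\lambda)$ and $p(\mu,(1^2);\lambda)$ equals $1$; call it the type $\epsilon(\lambda)\in\{(2),(1^2)\}$. The constraints $\lambda_1+\lambda_2=2a+4$ and $\lambda_2\geq 2\geq\lambda_3$ pass to $\lambda'$ (one computes $\lambda'_1+\lambda'_2=2a+4$), so $\epsilon(\lambda')$ is well-defined. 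The key claim is the compatibility identity $\epsilon(\lambda)=\epsilon(\lambda')$ for every valid $\lambda$; together with the reformulation above, this forces $\nu=\bar\nu$ whenever a common constituent exists.

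The plan for the key claim is to use the domino rule (Theorem~\ref{Theorem domino rule}) to set up an explicit parity-preserving correspondence between the unique semistandard latticed domino $\mu$-tableau of weight $\lambda$ and that of weight $\lambda'$, exploiting the rigid structure of the domino $\mu$-diagram $(2a+2,2a+2,2^{2a+2})$ alongside $\lambda_2\geq 2\geq\lambda_3$. For the converse, given $b=a+1$ and $\nu=\bar\nu$, it suffices to exhibit a self-conjugate Schur constituent of $s_\nu\circ s_\mu$ for each $\nu\in\{(2),(1^2)\}$, because a self-conjugate $s_\lambda$ in $s_\nu\circ s_\mu$ automatically lies in $s_\nu\circ s_{\mu'}=\omega(s_\nu\circ s_\mu)$. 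Two self-conjugate candidates meeting all constraints (for $a\geq 1$) are $\lambda_0=(a+2,a+2,2^a)$ and $\lambda_1=(a+3,a+1,2^{a-1},1^2)$; a short domino-rule computation verifies $\epsilon(\lambda_0)\neq\epsilon(\lambda_1)$, so the pair covers both values of $\nu$. The main obstacle will be the compatibility claim $\epsilon(\lambda)=\epsilon(\lambda')$: although the valid $\lambda$ are strongly constrained, assembling the parity-preserving pairing of domino $\mu$-tableaux requires a careful case analysis of how horizontal and vertical dominoes in the top $2\times(2a+2)$ band of the diagram interact with the long tail of length-$2$ rows produced by $\mu_2=\cdots=\mu_{b+1}=1$.
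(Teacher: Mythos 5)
Your reduction of the problem is sound and follows the paper's route: both summands are multiplicity-free, a common constituent $s_{\lambda}$ forces $c(\mu,\mu;\lambda),c(\mu',\mu';\lambda)\geq 1$, Lemma~\ref{Lemma LR hooks} applied to both hooks forces $b=a+1$, $\lambda_1+\lambda_2=2a+4$ and $\lambda_2\geq 2\geq\lambda_3$, and since $|\mu|=2a+2$ is even the problem becomes comparing the ``type'' $\epsilon(\lambda)\in\{(2),(1^2)\}$ of $\lambda$ with that of $\lambda'$. However, the heart of the lemma is precisely the compatibility claim $\epsilon(\lambda)=\epsilon(\lambda')$, and you only sketch a ``plan'' for it; nothing in your write-up actually establishes it. The paper does this by an explicit construction: writing $\lambda=(c,2a+4-c,2^{2a+2-d},1^{2d-2a-4})$ with $c=\lambda_1$, $d=\ell(\lambda)$, it produces the domino $\mu$-tableau $T_{\varphi}$ of Example~\ref{Example top filling} with $\varphi=(c-a-1,1^{2a-d+3})$, checks its weight is $\lambda$, and reads off that its parity is that of $a+d-c$; since this is symmetric in $c$ and $d$ modulo $2$ and $\lambda'$ corresponds to swapping $c$ and $d$, one gets $\epsilon(\lambda)=\epsilon(\lambda')=(2)^{\prime(a+c+d)}$. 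Without some such computation your argument is incomplete.

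More seriously, your plan for the converse direction fails. The two self-conjugate candidates $\lambda_0=(a+2,a+2,2^a)$ and $\lambda_1=(a+3,a+1,2^{a-1},1^2)$ do \emph{not} have opposite types: in the parametrization above they have $c=d=a+2$ and $c=d=a+3$ respectively, so both have type $(2)^{\prime a}$. Concretely, for $a=1$, $\mu=(2,1^2)$, both $(3,3,2)$ and $(4,2,1,1)$ are constituents of $s_{(1^2)}\circ s_{\mu}$ and of neither $s_{(2)}\circ s_{\mu}$ (the tableaux $T_{\varphi}$ with $\varphi=(1^3)$ and $\varphi=(2,1)$ each contain exactly two horizontal dominoes, hence are odd, and $c(\mu,\mu;\lambda)=1$ in both cases). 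In fact no self-conjugate witness can exist for the ``other'' value of $\nu$: a self-conjugate common constituent has $c=d$, so its type is always $(2)^{\prime a}$, i.e.\ for $\nu=(2)^{\prime(a+1)}$ the plethysm $s_{\nu}\circ s_{\mu}$ has no self-conjugate constituent at all. So for one of the two values of $\nu=\bar{\nu}$ you must exhibit a non-self-conjugate common constituent (one with $\lambda_1+\ell(\lambda)$ odd) and verify that $s_{\lambda}$ lies in both $s_{\nu}\circ s_{\mu}$ and $s_{\nu}\circ s_{\mu'}$; this is exactly what the paper's symmetric parity formula $(2)^{\prime(a+c+d)}$ delivers, since $c$ and $d$ range over $a+2\leq c,d\leq 2a+2$ and hence realize both parities of $c+d$.
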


\begin{proof}
	The summands are multiplicity-free by Theorem~\ref{Theorem MF plethysms}(i). Suppose that $s_{\lambda}$ is a common constituent of our summands. Then, by Lemma~\ref{Lemma plethysms and char properties}(iii), also $s_{\mu}^2$ and $s_{\mu'}^2$ share this constituent. By Lemma~\ref{Lemma LR hooks} we know that $\lambda_1+\lambda_2\in\left\lbrace 2a+2,2a+3,2a+4 \right\rbrace $ and $\lambda_1+\lambda_2\in\left\lbrace 2b+2,2b+3,2b+4 \right\rbrace $. Since $2a+4\leq 2b+2$, this forces $\lambda_1+\lambda_2=2a+4=2b+2$; thus $b=a+1$. Using Lemma~\ref{Lemma LR hooks} further, we see that $\lambda_2\geq 2\geq \lambda_3$; thus if we let $c=\lambda_1$ and $d=\ell(\lambda)$, we can write $\lambda=(c,2a+4-c,2^{2a+2-d},1^{2d-2a-4})$ and notice that $a+2\leq c,d\leq 2a+2$. Now, for any choice of $a+2\leq c,d\leq 2a+2$, the corresponding $\lambda$ satisfies $c(\mu,\mu;\lambda)=c(\mu',\mu';\lambda)=1$; thus it remains to determine for which $\nu, \bar{\nu}\vdash 2$ our summands $s_{\nu}\circ s_{\mu}$ and $s_{\bar{\nu}}\circ s_{\mu'}$ have $s_{\lambda}$ as a constituent.
	
	We show that $\nu=\bar{\nu}=(2)^{\prime (a+c+d)}$, which finishes the proof as $(2)^{\prime (a+c+d)}$ takes both values $(2)$ and $(1^2)$ as $c$ and $d$ vary. Consider the partition $\varphi=(c-a-1,1^{2a-d+3})\subseteq \mu$ and the semistandard domino $\mu$-tableau $T_{\varphi}$ from Example~\ref{Example top filling} displayed in Figure~\ref{Fig hooks}. We know its reading word is latticed and it is easily seen that its weight is $\lambda$. Finally, it is even if and only if $|\mu|-|\varphi|=a+d-c$ is even. Thus $p(\mu,\nu;\lambda)>0$ if and only if $\nu=(2)^{\prime (a+d-c)}=(2)^{\prime (a+c+d)}$.
	
	\begin{figure}[h]
		\begin{tikzpicture}[x=0.5cm, y=0.5cm]
		\begin{pgfonlayer}{nodelayer}
		\node [style=none] (0) at (-13, 0) {};
		\node [style=none] (1) at (-11, 0) {};
		\node [style=none] (2) at (-13, 13) {};
		\node [style=none] (3) at (-11, 11) {};
		\node [style=none] (4) at (-2, 13) {};
		\node [style=none] (5) at (-2, 11) {};
		\node [style=vdomino] (6) at (-12.5, 12) {$\scriptstyle 1$};
		\node [style=vdomino] (7) at (-11.5, 12) {$\scriptstyle 1$};
		\node [style=vdomino] (8) at (-9, 12) {$\scriptstyle 1$};
		\node [style=vdomino] (9) at (-8, 12) {$\scriptstyle 1$};
		\node [style=hdomino] (10) at (-6.5, 12.5) {$\scriptstyle 1$};
		\node [style=hdomino] (11) at (-6.5, 11.5) {$\scriptstyle 2$};
		\node [style=hdomino] (12) at (-3, 12.5) {$\scriptstyle 1$};
		\node [style=hdomino] (13) at (-3, 11.5) {$\scriptstyle 2$};
		\node [style=vdomino] (14) at (-12.5, 6.5) {$\scriptstyle e+4$};
		\node [style=vdomino] (15) at (-11.5, 6.5) {$\scriptstyle e+4$};
		\node [style=hdomino] (16) at (-12, 5) {$\scriptstyle e+5$};
		\node [style=hdomino] (17) at (-12, 4) {$\scriptstyle e+6$};
		\node [style=hdomino] (18) at (-12, 1.5) {$\scriptstyle d-1$};
		\node [style=hdomino] (19) at (-12, 0.5) {$\scriptstyle d$};
		\node [style=none] (20) at (-10.25, 12) {$\dots$};
		\node [style=none] (21) at (-4.75, 12.5) {$\dots$};
		\node [style=none] (22) at (-4.75, 11.5) {$\dots$};
		\node [style=none] (23) at (-5.5, 12) {};
		\node [style=none] (24) at (-4, 12) {};
		\node [style=none] (25) at (-12, 9) {};
		\node [style=none] (26) at (-12, 7.5) {};
		\node [style=none] (27) at (-12.5, 8.5) {$\vdots$};
		\node [style=none] (28) at (-11.5, 8.5) {$\vdots$};
		\node [style=none] (29) at (-12, 3) {$\vdots$};
		\node [style=vdomino] (30) at (-12.5, 10) {$\scriptstyle 2$};
		\node [style=vdomino] (31) at (-11.5, 10) {$\scriptstyle 2$};
		\node [style=none] (32) at (-13, 13.5) {};
		\node [style=none] (33) at (-7.5, 13.5) {};
		\node [style=none] (34) at (-2, 13.5) {};
		\node [style=none] (35) at (-10.25, 14) {$2c-2a-2$};
		\node [style=none] (36) at (-4.75, 14) {$2(2a+2-c)$};
		\node [style=none] (37) at (-13.5, 13) {};
		\node [style=none] (38) at (-13.5, 5.5) {};
		\node [style=none] (39) at (-13.5, 0) {};
		\node [style=Rotated none] (40) at (-14, 9.25) {$2(2a+4-d)$};
		\node [style=Rotated none] (41) at (-14, 2.75) {$2d-2a-4$};
		\end{pgfonlayer}
		\begin{pgfonlayer}{edgelayer}
		\draw [style=Border edge] (2.center) to (4.center);
		\draw [style=Border edge] (4.center) to (5.center);
		\draw [style=Border edge] (5.center) to (3.center);
		\draw [style=Border edge] (3.center) to (1.center);
		\draw [style=Border edge] (1.center) to (0.center);
		\draw [style=Border edge] (0.center) to (2.center);
		\draw (23.center) to (24.center);
		\draw (25.center) to (26.center);
		\draw [style=measuredots] (32.center) to (33.center);
		\draw [style=measuredots] (33.center) to (34.center);
		\draw [style=measuredots] (37.center) to (38.center);
		\draw [style=measuredots] (38.center) to (39.center);
		\end{pgfonlayer}
		\end{tikzpicture}
		\caption{The domino $\mu$-tableau $T_{\varphi}$ with $\mu=(a+1,1^{a+1}),\varphi=(c-a-1,1^{2a-d+3}) $ and $e=2a-d$. We see $2c-2a-2+2a+2-c=c$ dominoes with $1$ and $2a+2-c+2=2a+4-c$ dominoes with $2$. The weight of $T_{\varphi}$ is thus $\lambda=(c,2a+4-c,2^{2a+2-d},1^{2d-2a-4})$.}
		\label{Fig hooks}
	\end{figure}
	
	By switching the roles of $c$ and $d$ we obtain the partition $\lambda'$. Since $|\mu|=2a+2$ is even, we conclude that $p(\mu',\bar{\nu};\lambda)=p(\mu,\bar{\nu};\lambda')>0$ if and only if $\bar{\nu}=(2)^{\prime (a+c-d)}=(2)^{\prime (a+c+d)}$, as required.     
\end{proof} 

The series of the next three lemmas deals with almost rectangular partitions $\mu$.

\begin{lemma}\label{Lemma square and a box}
	Let $a\geq 2$ and $\mu=(a+1,a^{a-1})$. For any partitions $\nu,\bar{\nu}\vdash 2$ the symmetric function $s_{\nu}\circ s_{\mu} + s_{\bar{\nu}}\circ s_{\mu'}$ is \emph{not} multiplicity-free.
\end{lemma}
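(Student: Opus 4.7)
My plan is to produce a partition $\lambda$ of size $2(a^2+1)$ which is a constituent of $s_\nu \circ s_\mu + s_{\bar\nu} \circ s_{\mu'}$ with multiplicity at least two, uniformly in the pair $(\nu, \bar\nu) \in \{(2),(1^2)\}^2$. The crucial observation is that both $\mu = (a+1, a^{a-1})$ and $\mu' = (a^a, 1)$ are almost rectangular, so Lemma~\ref{Lemma find two} applies on both sides: if I can find a single $\lambda$ with $c(\mu, \mu; \lambda) \geq 2$ and $c(\mu', \mu'; \lambda) \geq 2$ simultaneously, then $p(\mu, \nu; \lambda) = p(\mu', \bar\nu; \lambda) = 1$ for every choice of $\nu, \bar\nu$, and so $s_\lambda$ is automatically a common constituent of $s_\nu \circ s_\mu$ and $s_{\bar\nu} \circ s_{\mu'}$ regardless of the pair.

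To locate such $\lambda$, I would start from $c((2,1),(2,1);(3,2,1)) = 2$, verified directly via the Littlewood--Richardson rule, and then lift it using multifunctions. Example~\ref{Example multiorder}(ii) provides a multifunction $f$ with $f((2,1)) = \mu$; applying Corollary~\ref{Corollary Stembridge order} twice, once on each slot (using the symmetry $c(\mu, \nu; \lambda) = c(\nu, \mu; \lambda)$ in between), yields $c(\mu, \mu; f^2((3,2,1))) \geq 2$. An analogous multifunction handles $\mu'$ starting from the same base identity.

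The main obstacle is that the two multifunctions generally produce different target partitions; worse, the natural candidate $f^2((3,2,1)) = (2a+1, 2a, a^{2a-4}, 1)$ has first part $2a+1$, whereas $c(\mu', \mu'; \lambda) = 0$ whenever $\lambda_1 > 2a$, since $\mu'_1 = a$ forces $\lambda_1 \leq 2\mu'_1$. I would therefore work inside the constrained box $\lambda_1 \leq 2a$ and $\ell(\lambda) \leq 2a$, and exhibit an explicit $\lambda$ together with two latticed semistandard $\lambda/\mu$-tableaux of weight $\mu$ and two latticed semistandard $\lambda/\mu'$-tableaux of weight $\mu'$. Constructing such tableaux while satisfying the combined row-weakly-increasing, column-strictly-increasing and lattice conditions for both shapes is the technical heart of the argument and the step I expect to be the hardest. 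As a fall-back, if no uniform $\lambda$ can be found, I would split into the four cases $(\nu, \bar\nu)$ separately and use the conjugation identity $p(\mu, \nu; \lambda) = p(\mu', \nu^{\prime (a^2+1)}; \lambda')$ from Lemma~\ref{Lemma plethysms and char properties}(ii) to reduce cross-pairs to same-pair instances on one of $\mu$ or $\mu'$.
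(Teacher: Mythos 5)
Your reduction is exactly the one the paper uses: both $\mu=(a+1,a^{a-1})$ and $\mu'=(a^a,1)$ are almost rectangular, so by Lemma~\ref{Lemma find two} it suffices to produce one partition $\lambda$ with $c(\mu,\mu;\lambda)\geq 2$ and $c(\mu',\mu';\lambda)\geq 2$, which then serves all four choices of $(\nu,\bar{\nu})$ at once. However, the proposal stops precisely where the actual content of the proof begins: you never exhibit such a $\lambda$, and you yourself flag the construction of the required tableaux as the ``technical heart'' that you have not carried out. The one concrete attempt you make — lifting $c((2,1),(2,1);(3,2,1))=2$ by multifunctions via Corollary~\ref{Corollary Stembridge order} — is correctly diagnosed as failing, since the lifted partition has first part $2a+1>2\mu'_1$, so it cannot witness $c(\mu',\mu';\cdot)\geq 2$. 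Your fall-back does not close the gap either: applying $\omega$ (equivalently Lemma~\ref{Lemma plethysms and char properties}(ii)) to the sum only permutes the four cases $(\nu,\bar{\nu})$ among themselves (exchanging the roles of $\mu$ and $\mu'$ and conjugating the $\nu$'s according to the parity of $a^2+1$); it never eliminates the need to display an explicit common constituent, so at least one case would still require the very construction you have postponed. Hence, as written, the lemma is not proved.

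For comparison, the paper completes this strategy by taking the self-conjugate-shaped candidate $\lambda=(2a,(2a-1)^{a-1},2,1^{a-1})$, which sits inside your constrained box ($\lambda_1=2a$, $\ell(\lambda)=2a$), and then exhibiting two semistandard latticed $\lambda/\mu$-tableaux of weight $\mu$ and two semistandard latticed $\lambda/\mu'$-tableaux of weight $\mu'$ (Figure~\ref{Figure square and a box}), giving $c(\mu,\mu;\lambda)\geq 2$ and $c(\mu',\mu';\lambda)\geq 2$ and hence the claim via Lemma~\ref{Lemma find two}. To turn your outline into a proof you would need to supply this partition (or another valid one) together with the four explicit tableaux and verify the semistandard and lattice conditions.
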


\begin{proof}
	Let $\lambda=(2a,(2a-1)^{a-1},2,1^{a-1})$. According to Lemma~\ref{Lemma find two}, it is sufficient to show that $c(\mu,\mu;\lambda)$ and $c(\mu',\mu';\lambda)$ are at least $2$. See Figure~\ref{Figure square and a box} for the required semistandard tableaux for the Littlewood--Richardson rule as stated in Theorem~\ref{Theorem LR rule}.
\end{proof}

\begin{figure}[h]
	\begin{tikzpicture}[x=0.5cm, y=0.5cm]
	\begin{pgfonlayer}{nodelayer}
	\node [style=none] (0) at (-11, 23) {};
	\node [style=none] (1) at (-5, 23) {};
	\node [style=none] (2) at (-5, 22) {};
	\node [style=none] (3) at (-6, 22) {};
	\node [style=none] (4) at (-6, 18) {};
	\node [style=none] (5) at (-11, 18) {};
	\node [style=none] (6) at (-1, 23) {};
	\node [style=none] (7) at (-1, 22) {};
	\node [style=none] (8) at (-2, 22) {};
	\node [style=none] (9) at (-2, 18) {};
	\node [style=none] (10) at (-9, 17) {};
	\node [style=none] (11) at (-9, 18) {};
	\node [style=none] (12) at (-10, 17) {};
	\node [style=none] (13) at (-10, 18) {};
	\node [style=none] (14) at (-11, 17) {};
	\node [style=none] (15) at (-11, 13) {};
	\node [style=none] (16) at (-10, 13) {};
	\node [style=none] (17) at (-11, 14) {};
	\node [style=none] (18) at (-10, 14) {};
	\node [style=none] (19) at (-4, 22) {};
	\node [style=none] (20) at (-4, 23) {};
	\node [style=none] (21) at (-2, 23) {};
	\node [style=none] (22) at (-5, 21) {};
	\node [style=none] (23) at (-6, 21) {};
	\node [style=none] (24) at (-5, 19) {};
	\node [style=none] (25) at (-6, 19) {};
	\node [style=none] (26) at (-5, 18) {};
	\node [style=none] (27) at (-3, 19) {};
	\node [style=none] (28) at (-3, 18) {};
	\node [style=none] (29) at (-2, 19) {};
	\node [style=none] (30) at (-4.5, 22.5) {};
	\node [style=none] (31) at (-4.5, 22.5) {$1$};
	\node [style=none] (32) at (-1.5, 22.5) {};
	\node [style=none] (33) at (-1.5, 22.5) {$1$};
	\node [style=none] (34) at (-2, 23) {};
	\node [style=none] (35) at (-3, 22) {};
	\node [style=none] (36) at (-3, 21) {};
	\node [style=none] (37) at (-2, 21) {};
	\node [style=none] (38) at (-5.5, 18.5) {$a$};
	\node [style=none] (39) at (-2.5, 18.5) {$a$};
	\node [style=none] (40) at (-5.5, 21.5) {$2$};
	\node [style=none] (41) at (-2.5, 21.5) {$2$};
	\node [style=none] (42) at (-3, 22.5) {$\dots$};
	\node [style=none] (44) at (-5.5, 20.25) {$\vdots$};
	\node [style=none] (45) at (-2.5, 20.25) {$\vdots$};
	\node [style=none] (46) at (-4, 21.5) {$\dots$};
	\node [style=none] (47) at (-4, 18.5) {$\dots$};
	\node [style=none] (48) at (-4, 20.25) {$\ddots$};
	\node [style=none] (49) at (-10.5, 17.5) {$1$};
	\node [style=none] (50) at (-9.5, 17.5) {$1$};
	\node [style=none] (51) at (-11, 16) {};
	\node [style=none] (52) at (-10, 16) {};
	\node [style=none] (53) at (-10.5, 16.5) {};
	\node [style=none] (54) at (-10.5, 16.5) {$2$};
	\node [style=none] (55) at (-10.5, 13.5) {$a$};
	\node [style=none] (56) at (-10.5, 15.25) {$\vdots$};
	\node [style=none] (57) at (0, 23) {};
	\node [style=none] (58) at (6, 23) {};
	\node [style=none] (59) at (6, 22) {};
	\node [style=none] (60) at (5, 22) {};
	\node [style=none] (61) at (5, 18) {};
	\node [style=none] (62) at (0, 18) {};
	\node [style=none] (63) at (0, 17) {};
	\node [style=none] (64) at (1, 17) {};
	\node [style=none] (65) at (2, 17) {};
	\node [style=none] (66) at (2, 18) {};
	\node [style=none] (67) at (1, 18) {};
	\node [style=none] (68) at (0, 16) {};
	\node [style=none] (69) at (1, 16) {};
	\node [style=none] (70) at (0, 13) {};
	\node [style=none] (71) at (1, 13) {};
	\node [style=none] (72) at (0, 14) {};
	\node [style=none] (73) at (1, 14) {};
	\node [style=none] (74) at (0.5, 17.5) {$1$};
	\node [style=none] (75) at (1.5, 17.5) {$a$};
	\node [style=none] (76) at (0.5, 16.5) {$2$};
	\node [style=none] (77) at (0.5, 13.5) {$a$};
	\node [style=none] (78) at (0.5, 15.25) {$\vdots$};
	\node [style=none] (79) at (7, 23) {};
	\node [style=none] (80) at (7, 22) {};
	\node [style=none] (81) at (7, 21) {};
	\node [style=none] (82) at (6, 21) {};
	\node [style=none] (83) at (5, 21) {};
	\node [style=none] (85) at (9, 22) {};
	\node [style=none] (86) at (10, 23) {};
	\node [style=none] (87) at (10, 22) {};
	\node [style=none] (90) at (9, 21) {};
	\node [style=none] (91) at (9, 19) {};
	\node [style=none] (93) at (7, 19) {};
	\node [style=none] (94) at (6, 19) {};
	\node [style=none] (95) at (5, 19) {};
	\node [style=none] (96) at (6, 18) {};
	\node [style=none] (97) at (7, 18) {};
	\node [style=none] (99) at (9, 18) {};
	\node [style=none] (100) at (6.5, 22.5) {$1$};
	\node [style=none] (103) at (5.5, 21.5) {$1$};
	\node [style=none] (104) at (6.5, 21.5) {$2$};
	\node [style=none] (107) at (5.5, 20.25) {$\vdots$};
	\node [style=none] (108) at (6.5, 20.25) {$\vdots$};
	\node [style=none] (113) at (6.5, 18.5) {$a$};
	\node [style=none] (114) at (5.5, 18.5) {$a^-$};
	\node [style=none] (115) at (10, 21) {};
	\node [style=none] (116) at (10, 19) {};
	\node [style=none] (117) at (10, 18) {};
	\node [style=none] (118) at (11, 22) {};
	\node [style=none] (119) at (11, 23) {};
	\node [style=none] (120) at (10.5, 22.5) {$1$};
	\node [style=none] (121) at (8.5, 22.5) {$\dots$};
	\node [style=none] (122) at (8, 21.5) {$\dots$};
	\node [style=none] (123) at (8, 20.25) {$\ddots$};
	\node [style=none] (124) at (9.5, 20.25) {$\vdots$};
	\node [style=none] (125) at (9.5, 21.5) {$2$};
	\node [style=none] (126) at (9.5, 18.5) {$a$};
	\node [style=none] (128) at (8, 18.5) {$\dots$};
	\node [style=none] (129) at (-11, 11) {};
	\node [style=none] (130) at (-6, 11) {};
	\node [style=none] (131) at (-6, 6) {};
	\node [style=none] (132) at (-10, 6) {};
	\node [style=none] (133) at (-10, 5) {};
	\node [style=none] (134) at (-11, 5) {};
	\node [style=none] (135) at (-5, 11) {};
	\node [style=none] (136) at (-2, 11) {};
	\node [style=none] (137) at (-1, 11) {};
	\node [style=none] (138) at (-1, 10) {};
	\node [style=none] (139) at (-6, 10) {};
	\node [style=none] (140) at (-2, 6) {};
	\node [style=none] (141) at (-3, 6) {};
	\node [style=none] (142) at (-5, 6) {};
	\node [style=none] (143) at (-2, 7) {};
	\node [style=none] (144) at (-2, 9) {};
	\node [style=none] (145) at (-6, 9) {};
	\node [style=none] (146) at (-6, 7) {};
	\node [style=none] (147) at (-3, 10) {};
	\node [style=none] (148) at (-9, 5) {};
	\node [style=none] (149) at (-9, 6) {};
	\node [style=none] (150) at (-11, 4) {};
	\node [style=none] (151) at (-10, 4) {};
	\node [style=none] (152) at (-10, 3) {};
	\node [style=none] (153) at (-11, 3) {};
	\node [style=none] (154) at (-11, 1) {};
	\node [style=none] (155) at (-10, 1) {};
	\node [style=none] (156) at (-10, 0) {};
	\node [style=none] (157) at (-11, 0) {};
	\node [style=none] (158) at (-5.5, 10.5) {$1$};
	\node [style=none] (159) at (-3.5, 10.5) {$\dots$};
	\node [style=none] (160) at (-1.5, 10.5) {$1$};
	\node [style=none] (161) at (-5.5, 9.5) {$2$};
	\node [style=none] (162) at (-4, 9.5) {$\dots$};
	\node [style=none] (163) at (-2.5, 9.5) {$2$};
	\node [style=none] (164) at (-5.5, 8.25) {$\vdots$};
	\node [style=none] (165) at (-4, 8.25) {$\ddots$};
	\node [style=none] (166) at (-2.5, 8.25) {$\vdots$};
	\node [style=none] (167) at (-5.5, 6.5) {$a$};
	\node [style=none] (168) at (-4, 6.5) {$\dots$};
	\node [style=none] (169) at (-2.5, 6.5) {$a$};
	\node [style=none] (170) at (-9.5, 5.5) {$2$};
	\node [style=none] (171) at (-10.5, 4.5) {$3$};
	\node [style=none] (172) at (-10.5, 3.5) {$4$};
	\node [style=none] (173) at (-10.5, 2.25) {$\vdots$};
	\node [style=none] (174) at (-10.5, 0.5) {$a^+$};
	\node [style=none] (175) at (0, 11) {};
	\node [style=none] (176) at (5, 11) {};
	\node [style=none] (177) at (6, 11) {};
	\node [style=none] (178) at (9, 11) {};
	\node [style=none] (179) at (10, 11) {};
	\node [style=none] (180) at (10, 10) {};
	\node [style=none] (181) at (8, 10) {};
	\node [style=none] (182) at (5, 10) {};
	\node [style=none] (183) at (5, 9) {};
	\node [style=none] (184) at (9, 9) {};
	\node [style=none] (185) at (9, 7) {};
	\node [style=none] (186) at (9, 6) {};
	\node [style=none] (187) at (8, 6) {};
	\node [style=none] (188) at (6, 6) {};
	\node [style=none] (189) at (5, 6) {};
	\node [style=none] (190) at (5, 7) {};
	\node [style=none] (191) at (2, 6) {};
	\node [style=none] (192) at (1, 6) {};
	\node [style=none] (193) at (2, 5) {};
	\node [style=none] (194) at (0, 5) {};
	\node [style=none] (195) at (0, 4) {};
	\node [style=none] (196) at (1, 4) {};
	\node [style=none] (197) at (1, 3) {};
	\node [style=none] (198) at (0, 3) {};
	\node [style=none] (199) at (0, 1) {};
	\node [style=none] (200) at (1, 1) {};
	\node [style=none] (201) at (1, 0) {};
	\node [style=none] (202) at (0, 0) {};
	\node [style=none] (203) at (0.5, 0.5) {$a$};
	\node [style=none] (204) at (0.5, 2.25) {$\vdots$};
	\node [style=none] (205) at (0.5, 3.5) {$3$};
	\node [style=none] (206) at (0.5, 4.5) {$2$};
	\node [style=none] (207) at (1.5, 5.5) {$a^+$};
	\node [style=none] (208) at (5.5, 6.5) {$a$};
	\node [style=none] (209) at (7, 6.5) {$\dots$};
	\node [style=none] (210) at (8.5, 6.5) {$a$};
	\node [style=none] (211) at (8.5, 8.25) {$\vdots$};
	\node [style=none] (212) at (7, 8.25) {$\ddots$};
	\node [style=none] (213) at (5.5, 8.25) {$\vdots$};
	\node [style=none] (214) at (5.5, 9.5) {$2$};
	\node [style=none] (215) at (7, 9.5) {$\dots$};
	\node [style=none] (216) at (8.5, 9.5) {$2$};
	\node [style=none] (217) at (9.5, 10.5) {$1$};
	\node [style=none] (218) at (7.5, 10.5) {$\dots$};
	\node [style=none] (219) at (5.5, 10.5) {$1$};
	\node [style=none] (220) at (1, 5) {};
	\node [style=none] (221) at (9, 10) {};
	\node [style=none] (222) at (-2, 10) {};
	\end{pgfonlayer}
	\begin{pgfonlayer}{edgelayer}
	\draw (13.center) to (12.center);
	\draw (14.center) to (12.center);
	\draw (17.center) to (18.center);
	\draw (20.center) to (19.center);
	\draw (19.center) to (2.center);
	\draw (2.center) to (22.center);
	\draw (22.center) to (23.center);
	\draw (25.center) to (24.center);
	\draw (24.center) to (26.center);
	\draw (19.center) to (8.center);
	\draw (34.center) to (8.center);
	\draw (35.center) to (36.center);
	\draw (36.center) to (37.center);
	\draw (27.center) to (29.center);
	\draw (27.center) to (28.center);
	\draw (22.center) to (24.center);
	\draw (22.center) to (36.center);
	\draw (36.center) to (27.center);
	\draw (27.center) to (24.center);
	\draw (51.center) to (52.center);
	\draw (63.center) to (64.center);
	\draw (64.center) to (67.center);
	\draw (69.center) to (68.center);
	\draw (72.center) to (73.center);
	\draw (86.center) to (87.center);
	\draw (87.center) to (59.center);
	\draw (79.center) to (80.center);
	\draw (83.center) to (90.center);
	\draw (85.center) to (99.center);
	\draw (95.center) to (91.center);
	\draw (93.center) to (97.center);
	\draw (94.center) to (96.center);
	\draw (59.center) to (82.center);
	\draw (82.center) to (94.center);
	\draw (80.center) to (93.center);
	\draw (91.center) to (116.center);
	\draw (90.center) to (115.center);
	\draw (145.center) to (144.center);
	\draw (146.center) to (143.center);
	\draw (135.center) to (142.center);
	\draw (147.center) to (141.center);
	\draw (150.center) to (151.center);
	\draw (153.center) to (152.center);
	\draw (154.center) to (155.center);
	\draw (181.center) to (187.center);
	\draw (177.center) to (188.center);
	\draw (183.center) to (184.center);
	\draw (190.center) to (185.center);
	\draw (195.center) to (196.center);
	\draw (198.center) to (197.center);
	\draw (199.center) to (200.center);
	\draw [style=Border edge] (62.center) to (70.center);
	\draw [style=Border edge] (70.center) to (71.center);
	\draw [style=Border edge] (71.center) to (64.center);
	\draw [style=Border edge] (64.center) to (65.center);
	\draw [style=Border edge] (65.center) to (66.center);
	\draw [style=Border edge] (61.center) to (117.center);
	\draw [style=Border edge] (117.center) to (87.center);
	\draw [style=Border edge] (87.center) to (118.center);
	\draw [style=Border edge] (118.center) to (119.center);
	\draw [style=Border edge] (58.center) to (119.center);
	\draw [style=Grey diagram] (57.center)
	to (58.center)
	to (59.center)
	to (60.center)
	to (61.center)
	to (62.center)
	to cycle;
	\draw [style=Border edge] (1.center) to (6.center);
	\draw [style=Border edge] (6.center) to (7.center);
	\draw [style=Border edge] (7.center) to (8.center);
	\draw [style=Border edge] (8.center) to (9.center);
	\draw [style=Border edge] (9.center) to (4.center);
	\draw [style=Border edge] (11.center) to (10.center);
	\draw [style=Border edge] (10.center) to (12.center);
	\draw [style=Border edge] (12.center) to (16.center);
	\draw [style=Border edge] (16.center) to (15.center);
	\draw [style=Border edge] (15.center) to (5.center);
	\draw [style=Grey diagram] (5.center)
	to (0.center)
	to (1.center)
	to (2.center)
	to (3.center)
	to (4.center)
	to cycle;
	\draw [style=Grey diagram] (189.center)
	to (192.center)
	to (220.center)
	to (194.center)
	to (175.center)
	to (176.center)
	to cycle;
	\draw [style=Border edge] (176.center) to (179.center);
	\draw [style=Border edge] (179.center) to (180.center);
	\draw [style=Border edge] (180.center) to (221.center);
	\draw [style=Border edge] (221.center) to (186.center);
	\draw [style=Border edge] (186.center) to (189.center);
	\draw [style=Border edge] (191.center) to (193.center);
	\draw [style=Border edge] (193.center) to (220.center);
	\draw [style=Border edge] (220.center) to (201.center);
	\draw [style=Border edge] (201.center) to (202.center);
	\draw [style=Border edge] (202.center) to (194.center);
	\draw (182.center) to (221.center);
	\draw (221.center) to (178.center);
	\draw (136.center) to (222.center);
	\draw (222.center) to (139.center);
	\draw [style=Grey diagram] (130.center)
	to (131.center)
	to (132.center)
	to (133.center)
	to (134.center)
	to (129.center)
	to cycle;
	\draw [style=Border edge] (130.center) to (137.center);
	\draw [style=Border edge] (137.center) to (138.center);
	\draw [style=Border edge] (138.center) to (222.center);
	\draw [style=Border edge] (222.center) to (140.center);
	\draw [style=Border edge] (140.center) to (131.center);
	\draw [style=Border edge] (149.center) to (148.center);
	\draw [style=Border edge] (148.center) to (133.center);
	\draw [style=Border edge] (133.center) to (156.center);
	\draw [style=Border edge] (156.center) to (157.center);
	\draw [style=Border edge] (157.center) to (134.center);
	\end{pgfonlayer}
	\end{tikzpicture}
	\caption{Write $a^{\pm}=a\pm 1$. The first line consists of two semistandard $\lambda/\mu$-tableaux with weight $\mu$, where $\lambda=(2a,(2a-1)^{a-1},2,1^{a-1})$ and $\mu=(a+1,a^{a-1})$. The second line consists of two semistandard $\lambda/\mu'$-tableaux with weight $\mu'$. All four tableaux have a latticed reading word, establishing $c(\mu,\mu;\lambda)\geq 2$ and $c(\mu',\mu';\lambda)\geq 2$.}
	\label{Figure square and a box}
\end{figure}

\begin{lemma}\label{Lemma two boxes}
	Let $a$ and $b$ be positive integers and $\nu\vdash 2$.
	\begin{enumerate}[label=\textnormal{(\roman*)}]
		\item Let $\mu$ be one of the partitions $(a+1,a^{b-1})$ and $(a^b,1)$. If $\lambda$ is a partition such that $p(\mu,\nu;\lambda)>0$, then there is an $(a,b)$-birectangular partition $\bar{\lambda}$ such that $\bar{\lambda}\subseteq \lambda$.
		\item Let $\mu=(a^{b-1},a-1)$. If $\lambda$ is a partition such that $p(\mu,\nu;\lambda)>0$, then there is an $(a,b)$-birectangular partition $\bar{\lambda}$ such that $\lambda\subseteq \bar{\lambda}$.  
	\end{enumerate}
\end{lemma}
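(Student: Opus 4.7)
The plan is to prove both parts using the Domino Rule (Theorem~\ref{Theorem domino rule}): the hypothesis $p(\mu,\nu;\lambda)>0$ yields a semistandard domino $\mu$-tableau $T$ of weight $\lambda$ with a latticed reading word, and the objective is to relate $T$ to a domino $(a^b)$-tableau, whose weight is $(a,b)$-birectangular by Proposition~\ref{Prop domino rectangles}. The doubled diagram of $\mu$ differs from $((2a)^{2b})$ by a single $2\times 2$ block of cells: added at the top-right for $\mu=(a+1,a^{b-1})$, added at the bottom-left for $\mu=(a^b,1)$, and missing from the bottom-right for $\mu=(a^{b-1},a-1)$.

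For case (i) with $\mu=(a+1,a^{b-1})$, I first classify the possible domino tilings of the extra $2\times 2$ block in $T$. There are three a priori options: two horizontal dominoes contained in the block, two vertical dominoes contained in the block, or a \emph{mixed} configuration where two horizontal dominoes straddle columns $2a$ and $2a+1$ and a vertical domino fills column $2a+2$. The mixed case is ruled out by combining semistandardness with the latticed reading word: the first letter of $R(T)$ comes from the vertical domino in column $2a+2$, so it must equal $1$ by latticeness, yet semistandardness (labels non-decreasing rightwards across vertical edges) forces that label to strictly exceed those of the straddling horizontal dominoes to its left, which themselves are at least~$1$. In the two surviving configurations analogous constraints pin down the extras' labels to $(1,2)$ or $(1,1)$. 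The conjugate sub-case $\mu=(a^b,1)$ is then handled by applying $\omega$ via Lemma~\ref{Lemma plethysms and char properties}(ii), which interchanges $(a,b)$- and $(b,a)$-birectangular partitions.

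I then build the required birectangular $\bar\lambda$ directly via the parameterisation in the proof of Proposition~\ref{Prop domino rectangles}: every $(a,b)$-birectangular partition arises from some partition $\varphi\subseteq (a^b)$ by $\bar\lambda_i=a+\varphi_i$ for $i\le b$ and $\bar\lambda_{2b+1-i}=a-\varphi_i$, so the containment $\bar\lambda\subseteq\lambda$ in case (i) reduces to finding $\varphi$ in the feasible box
\[\max(0,\,a-\lambda_{2b+1-i})\le \varphi_i\le \min(a,\,\lambda_i-a)\]
for each $i\le b$, which is non-empty precisely when $\lambda_i+\lambda_{2b+1-i}\ge 2a$ for every $i\le b$. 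Case (ii) follows the same template after extending $T$ to a domino $(a^b)$-tableau by adding two dominoes into the missing bottom-right block with labels chosen fresh (larger than all labels in $T$) to preserve semistandardness and leave the lattice condition intact; the analogous inequality needed is the reversed $\lambda_i+\lambda_{2b+1-i}\le 2a$, producing an $(a,b)$-birectangular $\bar\lambda\supseteq\lambda$.

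The hard part will be establishing these symmetric-pair inequalities from the existence of $T$. I expect the argument to be a controlled perturbation of the column-by-column counting underlying Proposition~\ref{Prop domino rectangles}: in the rectangular case each column of $((2a)^{2b})$ meets either one vertical domino contributing one label from the pair $\{i,2b+1-i\}$ or two horizontal dominoes contributing both labels, forcing the equality $\lambda_i+\lambda_{2b+1-i}=2a$. Since the $2\times 2$ perturbation disturbs only two columns and at most two extra dominoes, the pairwise sums can deviate from $2a$ by at most $2$; the direction of the deviation is pinned by the labels $(1,2)$ or $(1,1)$ of the extras (respectively by the fresh labels added in case (ii)), which yields the required one-sided inequality and completes the proof.
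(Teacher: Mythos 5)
Your reduction of part (i) to the pairwise inequalities $\lambda_i+\lambda_{2b+1-i}\ge 2a$ ($i\le b$) is fine (taking $\varphi_i=\max(0,a-\lambda_{2b+1-i})$ automatically gives a partition), and your exclusion of the mixed tiling of the extra $2\times2$ block can be made to work (though the correct reason is that the lower straddling horizontal domino has label at least $2$ and the vertical domino's label is at least that, not ``strict'' growth across a vertical edge, which is only weak). But the step you defer as ``the hard part'' is exactly the content of the lemma, and the perturbation strategy you sketch does not go through as stated. First, the column-pairing structure you invoke is not a fact about arbitrary semistandard latticed domino $(a^b)$-tableaux: in the paper, Proposition~\ref{Prop domino rectangles} obtains birectangularity of the weights from Proposition~\ref{Prop LR rectangles} (the Littlewood--Richardson side), and the column observation there concerns only the specially constructed tableaux $T_{\varphi}$. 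Second, even after pinning the block labels to $(1,2)$ or $(1,1)$, deleting those two dominoes deletes the first one or two letters of $R(T)$, and the remaining word need not be latticed (e.g.\ $121323$ is latticed but $1323$ is not), nor need the remaining weight be a partition, so you cannot simply fall back on the rectangular case. Third, in part (ii) inserting two dominoes with ``fresh'' labels larger than everything in $T$ destroys the lattice condition: the added dominoes sit in the rightmost two columns, so their labels are read near the beginning of the word, before the required smaller letters have occurred. So as written there is a genuine gap, and filling it combinatorially would need a substantially more careful argument than the one outlined.

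For comparison, the paper's proof avoids dominoes entirely and is a few lines of character theory: from $p(\mu,\nu;\lambda)>0$ it passes to $c(\mu,\mu;\lambda)>0$ via Lemma~\ref{Lemma plethysms and char properties}(iii); since $\chi^{\mu}$ is a constituent of $\chi^{(a^b)}\ind^{S_{ab+1}}$ in case (i) (resp.\ of $\chi^{(a^b)}\res_{S_{ab-1}}$ in case (ii)), transitivity of induction (resp.\ Mackey's theorem) shows that $\chi^{\lambda}$ is a constituent of $\left(\left(\chi^{(a^b)}\boxtimes\chi^{(a^b)}\right)\Ind^{S_{2ab}}\right)\Ind^{S_{2ab+2}}$ (resp.\ of its restriction to $S_{2ab-2}$), hence of $\chi^{\bar\lambda}\ind^{S_{2ab+2}}$ (resp.\ $\chi^{\bar\lambda}\res_{S_{2ab-2}}$) for some $(a,b)$-birectangular $\bar\lambda$, and Corollary~\ref{Cor branching rules} gives $\bar\lambda\subseteq\lambda$ (resp.\ $\lambda\subseteq\bar\lambda$). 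If you want to rescue your combinatorial route, you would essentially have to prove a lattice-preserving deletion/insertion statement for the boundary dominoes, which is precisely what this two-line branching argument sidesteps.
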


\begin{proof}
	In both parts $c(\mu,\mu;\lambda)>0$. Let $\bar{\mu}=(a^b)$ and $m=ab$.
	\begin{enumerate}[label=\textnormal{(\roman*)}]
		\item From the branching rules, the character $\left( \left( \chi^{\bar{\mu}}\boxtimes \chi^{\bar{\mu}}\right)\Ind^{S_{2m}} \right) \Ind^{S_{2m+2}}= \left(\chi^{\bar{\mu}}\ind^{S_{m+1}}\boxtimes \chi^{\bar{\mu}}\ind^{S_{m+1}} \right)\Ind^{S_{2m+2}}$ has $\left(\chi^{\mu}\boxtimes \chi^{\mu}\right)\Ind^{S_{2m+2}}$ as a constituent. Thus there is an $(a,b)$-birectangular partition $\bar{\lambda}$ such that $\chi^{\bar{\lambda}}\ind^{S_{2m+2}}$ has $\chi^{\lambda}$ as a constituent. The result follows from Corollary~\ref{Cor branching rules}(i).
		\item By Mackey's theorem, $\left( \left( \chi^{\bar{\mu}}\boxtimes \chi^{\bar{\mu}}\right) \Ind^{S_{2m}}\right) \Res_{S_{2m-2}}$ has a constituent $\left(\chi^{\bar{\mu}}\res_{S_{m-1}}\boxtimes \chi^{\bar{\mu}}\res_{S_{m-1}}\right)\Ind^{S_{2m-2}}=\left(\chi^{\mu}\boxtimes \chi^{\mu}\right)\Ind^{S_{2m-2}}$. Thus there is an $(a,b)$-birectangular partition $\bar{\lambda}$ such that $\chi^{\bar{\lambda}}\res_{S_{2m-2}}$ has $\chi^{\lambda}$ as a constituent. The result follows from Corollary~\ref{Cor branching rules}(ii). \qedhere
	\end{enumerate}
\end{proof}

We are now ready to prove the final lemma concerning almost rectangular $\mu$. In the proof we benefit from introducing three different blocks of boxes to depict some of the tableaux. They are described in Figure~\ref{Figure various boxes}.

\begin{figure}[!h]
	\begin{tikzpicture}[x=0.5cm, y=0.5cm]
	\begin{pgfonlayer}{nodelayer}
	\node [style=none] (10) at (0.5, 0) {};
	\node [style=none] (11) at (0.5, 1.75) {};
	\node [style=none] (12) at (-0.5, 1.75) {};
	\node [style=none] (13) at (-0.5, 1) {};
	\node [style=none] (14) at (-1.25, 1) {};
	\node [style=none] (15) at (-1.25, 0) {};
	\node [style=none] (16) at (-2.25, 0) {};
	\node [style=none] (17) at (-2.25, 1.75) {};
	\node [style=none] (18) at (-4, 1.75) {};
	\node [style=none] (19) at (-4, 0) {};
	\node [style=none] (20) at (-5, 0) {};
	\node [style=none] (21) at (-5.75, 0) {};
	\node [style=none] (22) at (-5, 0.75) {};
	\node [style=none] (23) at (-5.75, 0.75) {};
	\node [style=none] (24) at (-5.375, 0.375) {$\scriptstyle 1$};
	\node [style=none] (25) at (-3.125, 0.875) {$\bar{4}$};
	\node [style=none] (26) at (-0.125, 0.875) {$\bar{1}$};
	\node [style=none] (27) at (-0.875, 0.5) {$\downarrow$};
	\node [style=none] (28) at (-2.625, 0.875) {$\uparrow$};
	\node [style=none] (29) at (0.5, 2.75) {};
	\node [style=none] (30) at (0.5, 4.5) {};
	\node [style=none] (31) at (-0.5, 4.5) {};
	\node [style=none] (32) at (-0.5, 3.75) {};
	\node [style=none] (33) at (-1.25, 3.75) {};
	\node [style=none] (34) at (-1.25, 2.75) {};
	\node [style=none] (35) at (-2.25, 2.75) {};
	\node [style=none] (36) at (-2.25, 4.5) {};
	\node [style=none] (37) at (-4, 4.5) {};
	\node [style=none] (38) at (-4, 2.75) {};
	\node [style=none] (39) at (-5, 2.75) {};
	\node [style=none] (40) at (-5.75, 2.75) {};
	\node [style=none] (41) at (-5, 3.5) {};
	\node [style=none] (42) at (-5.75, 3.5) {};
	\end{pgfonlayer}
	\begin{pgfonlayer}{edgelayer}
	\draw (18.center) to (17.center);
	\draw (17.center) to (16.center);
	\draw (16.center) to (19.center);
	\draw (19.center) to (18.center);
	\draw (23.center) to (22.center);
	\draw (22.center) to (20.center);
	\draw (20.center) to (21.center);
	\draw (21.center) to (23.center);
	\draw (12.center) to (11.center);
	\draw (11.center) to (10.center);
	\draw (10.center) to (15.center);
	\draw (15.center) to (14.center);
	\draw (14.center) to (13.center);
	\draw (13.center) to (12.center);
	\draw (37.center) to (36.center);
	\draw (36.center) to (35.center);
	\draw (35.center) to (38.center);
	\draw (38.center) to (37.center);
	\draw (42.center) to (41.center);
	\draw (41.center) to (39.center);
	\draw (39.center) to (40.center);
	\draw (40.center) to (42.center);
	\draw (31.center) to (30.center);
	\draw (30.center) to (29.center);
	\draw (29.center) to (34.center);
	\draw (34.center) to (33.center);
	\draw (33.center) to (32.center);
	\draw (32.center) to (31.center);
	\end{pgfonlayer}
	\end{tikzpicture}
	\caption{Given a positive integer $d$, we introduce the three blocks on the first line to depict some sets of boxes of a Young diagram. The first one is the usual $1\times 1$ box, the second one is a square of $d\times d$ boxes and the final one is obtained from the second one by removing a box from the respective corner (here from the top left corner). Note that if $d=1$, then the final block is empty. To make the first block into a part of a tableau we need to place a number inside it. For the other two blocks, we place $\bar{v}$ inside them to denote that the boxes in row $i$ of these blocks (with $1\leq i\leq d$) contain $(v-1)d+i$. This can be modified by an upward arrow on the right which increases all entries in the rightmost column by $1$ and a downward arrow on the left which decreases all entries in the leftmost column by $1$. An example of these blocks made into parts of a tableau is on the second row. The entries of any column of the middle block are $3d+1,3d+2,\dots,4d$ downwards except the rightmost column which contains $3d+2,3d+3,\dots, 4d+1$ downwards. Similarly, the first column of the third block contains $1,2,\dots, d-1$ downwards.}
	\label{Figure various boxes}
\end{figure}

\begin{lemma}\label{Lemma almost rectangles}
	Let $a>b$ be positive integers and let $\mu$ be one of the partitions $(a+1,a^{b-1}), (a^b,1)$ and $(a^{b-1},a-1)$ of size at least $37$. Finally, let $\nu,\bar{\nu}\vdash 2$. The symmetric function $s_{\nu}\circ s_{\mu} + s_{\bar{\nu}}\circ s_{\mu'}$ is multiplicity-free if and only if $a-b\nmid a$ and $a>b+2$, \emph{or} $a,b,\mu,\nu$ and $\bar{\nu}$ satisfy one of the following:
	\begin{enumerate}[label=\textnormal{(\roman*)}]
		\item $a=2b$ and $\mu=(a+1,a^{b-1})$,
		\item $a=2b$, $\mu=(a^{b-1},a-1)$ and $\nu\neq \bar{\nu}$,
		\item $2a=3b$, $\mu=(a+1,a^{b-1})$ and $\nu\neq \bar{\nu}$.
	\end{enumerate}
\end{lemma}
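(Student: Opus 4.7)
Since each summand is multiplicity-free by Theorem~\ref{Theorem MF plethysms}(i), the task is to determine when $s_\nu\circ s_\mu$ and $s_{\bar\nu}\circ s_{\mu'}$ share a constituent $s_\lambda$. The first step is a rectangular reduction: Lemma~\ref{Lemma two boxes}, applied to $\mu$ and to $\mu'$ separately, produces an $(a,b)$-birectangular partition and a $(b,a)$-birectangular partition that are both contained in $\lambda$ (when $\mu$ is of the form $(a+1, a^{b-1})$ or $(a^b, 1)$, so $|\lambda| = 2ab + 2$) or both contain $\lambda$ (when $\mu = (a^{b-1}, a-1)$, so $|\lambda| = 2ab - 2$). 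Corollary~\ref{Cor rectangles} then forces $2 \geq 2r(d-r)$ where $d = a-b$ and $r$ is the remainder of $a$ modulo $d$, so $r(d-r) \leq 1$. This restricts to either $r = 0$ (equivalently $d \mid a$) or $(d, r) = (2, 1)$ (equivalently $a = b+2$ with $a$ odd). Absorbing the borderline values $a = b+1$ and $a = b+2$ with $a$ even into the case $d \mid a$, a common constituent can occur only when $d \mid a$ or $a \leq b+2$; this already proves that $s_\nu\circ s_\mu + s_{\bar\nu}\circ s_{\mu'}$ is multiplicity-free when $a-b \nmid a$ and $a > b+2$.

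The converse requires, for each remaining case, either constructing an explicit common $s_\lambda$ (for the non-MF cases) or ruling one out via a domino-parity argument (for MF cases (i), (ii), (iii)). For $a = b+1$ with $\mu$ of the form $(a+1, a^{a-2})$ or $(a^{a-1}, 1)$, a common constituent is provided by Lemma~\ref{Lemma square and a box} applied to $\mu$, respectively to $\mu' = (a, (a-1)^{a-1})$ which matches the hypothesis of that lemma with parameter $a-1$. I would handle $\mu = (a^{a-2}, a-1)$ with $a = b+1$, and all three types with $a = b+2$ and $a$ odd, by exhibiting explicit semistandard $\lambda/\mu$- and $\lambda/\mu'$-tableaux with latticed reading words and applying Lemma~\ref{Lemma find two} to pass from the LR multiplicity $\geq 2$ to the plethysm coefficients. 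For $d \mid a$ with $d \geq 3$, writing $a = kd$ and $b = (k-1)d$ with $k \geq 2$, I would split on $k$ and the type. In the non-MF subcases (Type 1 with $k \geq 4$, any Type 2, Type 3 with $k \geq 3$), I would display an explicit $\lambda$ of size $2ab \pm 2$ obtained by placing two extra or missing boxes near the self-conjugate birectangular $\bar\lambda = ((2b)^{2d}, \ldots, (2d)^{2d})$ from Proposition~\ref{Prop conjugate (a,b)-birectangles}, verifying both $c(\mu, \mu; \lambda) \geq 2$ and $c(\mu', \mu'; \lambda) \geq 2$; Lemma~\ref{Lemma find two} then produces a common constituent for every $\nu, \bar\nu$, and Lemma~\ref{Lemma constructions of partitions} propagates the verification from a small base case to all larger $k$. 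The exceptional cases (i), (ii), (iii) conversely use Proposition~\ref{Prop conjugate (a,b)-birectangles}, Proposition~\ref{Prop conjugate set ab-birectangles}, and Corollary~\ref{Cor rectangles} to pin down the few possible $\lambda$, and then apply Proposition~\ref{Prop domino rectangles} to decide which pairs $(\nu, \bar\nu)$ actually yield a common constituent.

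The hardest step will be the parity bookkeeping in the exceptional cases (i), (ii), (iii). For the finitely many surviving $\lambda$'s near $\bar\lambda$, one must track how the parity of $\lambda_1 + \cdots + \lambda_b$ governs $p(\mu, \nu; \lambda)$ versus $p(\mu', \bar\nu; \lambda)$; the statement predicts that these parities are always incompatible in (i) (hence MF for every $\nu, \bar\nu$) but match precisely when $\nu = \bar\nu$ in (ii) and (iii). Establishing this asymmetry requires a careful analysis of the domino tableaux built from the $T_\varphi$ construction of Example~\ref{Example top filling}, and it is what explains why the divisibility $a-b \mid a$ and the specific values $k \in \{2, 3\}$ play the roles they do.
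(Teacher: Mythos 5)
Your opening reduction is exactly the paper's: Lemma~\ref{Lemma two boxes} applied to $\mu$ and $\mu'$, then Corollary~\ref{Cor rectangles} with $t=2$ giving $r(d-r)\le 1$, which correctly proves the sum is multiplicity-free when $a-b\nmid a$ and $a>b+2$. The remainder, however, has concrete gaps. First, Lemma~\ref{Lemma square and a box} does not apply where you invoke it: it concerns $\mu=(c+1,c^{c-1})$, a \emph{square} plus one box (the excluded case $a=b$), whereas for $a=b+1$ your partitions $(a+1,a^{a-2})$ and $\mu'=(a,(a-1)^{a-1})$ sit on the non-square rectangles $(a^{a-1})$ and $((a-1)^{a})$; with parameter $a-1$ the lemma's partition is $(a,(a-1)^{a-2})$, not $(a,(a-1)^{a-1})$. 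The conclusion for $d=1$ is true, but your citation does not deliver it, and moreover the case $d=2$ with $a$ even (so $a=b+2$ and $d\mid a$) falls outside all of your announced cases ($d=1$; $a=b+2$ odd; $d\ge 3$). Second, the non-multiplicity-free constructions for the infinitely many remaining $(a,b)$ are only promised: one needs explicit self-conjugate ``good'' partitions $\lambda$ with $c(\mu,\mu;\lambda)\ge 2$ as base cases and then Lemma~\ref{Lemma constructions of partitions} to propagate in $k$; you invoke that propagation only for $d\ge3$ with no base data, and for the consecutive-odd family ($a=b+2$, $a$ odd) you offer no induction mechanism at all.

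Third, and most importantly, the exceptional cases (i)--(iii) --- precisely the part of the statement that produces the $\nu\neq\bar{\nu}$ conditions --- are what you defer, and the tool you propose cannot close them: Proposition~\ref{Prop domino rectangles} computes $p(\mu,\nu;\lambda)$ only for \emph{rectangular} $\mu$, so it cannot decide which pairs $(\nu,\bar{\nu})$ give a common constituent for the almost rectangular $\mu$ at hand. What is needed is: pin down the candidate $\lambda$ via first-row/length bounds coming from $c(\mu,\mu;\lambda)\ge1$ and $c(\mu',\mu';\lambda)\ge1$ (in case (i) these bounds clash, $\lambda_1\ge a+1$ versus $\lambda_1\le a$, so no $\lambda$ exists at all --- there is no parity argument there, contrary to your prediction); in cases (ii) and (iii) show the two surviving candidates satisfy $c(\mu,\mu;\lambda)\le1$, so that exhibiting a single explicit domino $\mu$-tableau of the right parity (the $T_{\varphi}$, $T_{\vartheta}$ of Example~\ref{Example top filling}) forces the opposite plethysm coefficient to vanish, and then transfer to $\mu'$ using the parity of $|\mu|$. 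The multiplicity-at-most-one step is essential --- without it, knowing $p(\mu,(1^2);\lambda)\ge1$ says nothing about $p(\mu,(2);\lambda)$ --- and it is absent from your plan. As written, the proposal establishes only the sufficiency of $a-b\nmid a$ and $a>b+2$; the characterization itself remains unproved.
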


\begin{proof}
	We know that $s_{\nu}\circ s_{\mu}$ and $s_{\bar{\nu}}\circ s_{\mu'}$ are multiplicity-free from Theorem~\ref{Theorem MF plethysms}(i). Suppose they share a constituent $s_{\lambda}$. In the case that $\mu$ is $(a+1,a^{b-1})$ or $(a^b,1)$, Lemma~\ref{Lemma two boxes}(i) applied with $a,b,\nu$ and $\mu$, respectively, $b,a,\bar{\nu}$ and $\mu'$ shows that $\lambda$ contains both an $(a,b)$-birectangular partition and a $(b,a)$-birectangular partition. Letting $d=a-b$ and $0\leq r\leq d-1$ be a remainder of $a$ modulo $d$, we can use Corollary~\ref{Cor rectangles}(i) with $t=2$ to obtain $1\geq r(d-r)$. Thus $r=0$, \emph{or} $d=2$ and $r=1$. Equivalently, $a-b\mid a$ or $b$ and $a$ are two consecutive odd numbers. If $\mu$ is $(a^{b-1},a-1)$ we reach the same conclusion by replacing both instances of (i) with (ii). Therefore $s_{\nu}\circ s_{\mu} + s_{\bar{\nu}}\circ s_{\mu'}$ is multiplicity-free when $a-b\nmid a$ and $b$ and $a$ are not two consecutive odd integers, which is easily seen to be equivalent to $a-b\nmid a$ and $a>b+2$.
	
	Now, let us consider the cases when $b$ and $a$ are consecutive odd integers or $a-b\mid a$. To conclude that $s_{\nu}\circ s_{\mu}$ and $s_{\bar{\nu}}\circ s_{\mu'}$ share a constituent independently of $\nu$ and $\bar{\nu}$ it is sufficient to find a self-conjugate partition $\lambda$ such that $c(\mu,\mu;\lambda)\geq 2$. Indeed, then also $c(\mu',\mu';\lambda)\geq 2$ and Lemma~\ref{Lemma find two} shows that $p(\mu,\nu;\lambda)=1$ and $p(\mu',\bar{\nu};\lambda)=1$. We refer to such $\lambda$ as a \textit{good partition} for $\mu$. Note that implicitly $\lambda_1=\ell(\lambda)\leq 2\ell(\mu)\leq 2a$ for any such $\lambda$.
	
	Firstly, consider the case when $a$ and $b$ are of the form $2k+1$ and $2k-1$. Fix one of the three forms of $\mu$ (i), (ii) or (iii) from the statement and write $\mu(k)$ for $\mu$ to emphasise the dependency on $k$. If $\alpha$ is a good partition for $\mu(k)$, we can apply Lemma~\ref{Lemma constructions of partitions} with $a=2k+1, b=2k-1, a'=2k+5, b'=2k+3$ and our $\alpha$ (thus $a'-a=4=b'-b, a+a'=4k+6=2b'$ and $\alpha_1\leq 2a$) to obtain a good partition $\beta$ for $\mu(k+2)$. Therefore, using induction on $k$, we only need good partitions $\lambda$ for $\mu(k)$ for two values of $k\leq 5$ of different parities to obtain good partitions for $\mu(k)$ for all $k\geq 4$ which covers all partitions $\mu(k)$ of size at least $37$. Such partitions $\lambda$ are presented in Table~\ref{Table oddxodd}.
	
	\begin{table}[h!]
		\centering
		\begin{tabular}{ccc}
			\toprule
			Form of $\mu$&$k$&$\lambda$\\
			\midrule
			$(a+1,a^{b-1})$&$4$&$(14^4,11,10^2,9,8,7,5,4^3)$\\
			&$5$&$(18^4,15,14^2,13,11^2,10,8^2,7,5,4^3)$\\
			\midrule
			$(a^b,1)$&$2$&$(7,6^2,5,4,3,1)$\\
			&$3$&$(11,10^2,9,7^2,6,4^2,3,1)$\\
			\midrule
			$(a^{b-1},a-1)$&$4$&$(14^3,13,11,10,9^2,8,6,5,4^2,3)$\\
			&$3$&$(10^3,9,7,6,5,4^2,3)$\\\bottomrule
		\end{tabular}
		\vspace{4pt}
		\caption{A table of self-conjugate partitions $\lambda$ such that $c(\mu,\mu;\lambda)\geq 2$ where $\mu$ is of the form given by the table with $a=2k+1$ and $b=2k-1$. The claims about $\lambda$ have been verified by {\sc Magma}.} 
		\label{Table oddxodd}
	\end{table}
	
	Secondly, consider the case when $d=a-b$ divides $a$, that is $a$ and $b$ are of the form $a=(k+1)d$ and $b=kd$ for some positive integer $k$. Again, fix the form of $\mu$ and write $\mu(k,d)$ for $\mu$. This time we use Lemma~\ref{Lemma constructions of partitions} with $a=(k+1)d, b=kd, a'=(k+3)d, b'=(k+2)d$ and a partition $\alpha$ which is good for $\mu(k,d)$ (thus $a'-a=2d=b'-b, a+a'=2(k+2)d=2b'$ and $\alpha_1\leq 2a$) to obtain a good partition $\beta$ for $\mu(k+2,d)$.
	
	Using the diagrammatic notation from Figure~\ref{Figure various boxes}, we present good partitions $\lambda$ for $\mu(k,d)$ together with tableaux proving $c(\mu,\mu;\lambda)\geq 2$ in Figure~\ref{Figure top} for $\mu(k,d)=(a+1,a^{b-1})$ and $k\in\left\lbrace 3,4\right\rbrace $, Figure~\ref{Figure bottom} for $\mu(k,d)=(a^b,1)$ and $k\in\left\lbrace 1,2\right\rbrace $ and Figure~\ref{Figure without} for $\mu(k,d)=(a^{b-1},a-1)$ and $k\in\left\lbrace 2,3\right\rbrace $. By induction on $k$ we therefore have good partitions for all $\mu(k,d)$ with the exceptions given by $\mu(k,d)=(a+1,a^{b-1})$ with $k\in\left\lbrace 1,2\right\rbrace $ and $\mu(k,d)=(a^{b-1},a-1)$ with $k=1$.
	
	\begin{figure}[!h]
		\begin{tikzpicture}[x=0.5cm, y=0.5cm]
		\begin{pgfonlayer}{nodelayer}
		\node [style=none] (49) at (-2.5, 11) {};
		\node [style=none] (50) at (2.75, 11) {};
		\node [style=none] (51) at (4.5, 11) {};
		\node [style=none] (52) at (6.25, 11) {};
		\node [style=none] (53) at (8, 11) {};
		\node [style=none] (54) at (8, 9.25) {};
		\node [style=none] (55) at (8, 7.5) {};
		\node [style=none] (56) at (6.25, 7.5) {};
		\node [style=none] (57) at (4.5, 7.5) {};
		\node [style=none] (58) at (5.25, 7.5) {};
		\node [style=none] (59) at (5.25, 6.75) {};
		\node [style=none] (60) at (4.5, 6.75) {};
		\node [style=none] (61) at (4.5, 5.75) {};
		\node [style=none] (62) at (4.5, 4) {};
		\node [style=none] (63) at (2.75, 4) {};
		\node [style=none] (64) at (2.75, 5.75) {};
		\node [style=none] (65) at (2.75, 7.5) {};
		\node [style=none] (66) at (2.75, 9.25) {};
		\node [style=none] (67) at (-1.75, 4) {};
		\node [style=none] (68) at (-1.75, 3.25) {};
		\node [style=none] (69) at (-2.5, 3.25) {};
		\node [style=none] (70) at (-0.75, 4) {};
		\node [style=none] (71) at (1, 4) {};
		\node [style=none] (72) at (1.75, 4) {};
		\node [style=none] (73) at (1.75, 3.25) {};
		\node [style=none] (74) at (1, 3.25) {};
		\node [style=none] (75) at (1, 2.25) {};
		\node [style=none] (76) at (1, 0.5) {};
		\node [style=none] (77) at (-0.75, 0.5) {};
		\node [style=none] (78) at (-2.5, 0.5) {};
		\node [style=none] (79) at (-2.5, 2.25) {};
		\node [style=none] (80) at (5.375, 10.125) {$\bar{1}$};
		\node [style=none] (81) at (7.125, 10.125) {$\bar{1}$};
		\node [style=none] (82) at (7.125, 8.375) {$\bar{2}$};
		\node [style=none] (83) at (5.375, 8.375) {$\bar{2}$};
		\node [style=none] (84) at (3.625, 8.375) {$\bar{2}$};
		\node [style=none] (85) at (4.875, 7.125) {$\scriptstyle d_2$};
		\node [style=none] (86) at (3.6, 6.625) {$\bar{3}$};
		\node [style=none] (87) at (3.625, 4.875) {$\bar{4}$};
		\node [style=none] (88) at (1.375, 3.625) {$\scriptstyle d_4$};
		\node [style=none] (89) at (0.125, 3.125) {$\bar{3}$};
		\node [style=none] (90) at (-1.375, 3.125) {$\bar{3}$};
		\node [style=none] (91) at (-1.625, 1.375) {$\bar{4}$};
		\node [style=none] (92) at (0.125, 1.375) {$\bar{4}$};
		\node [style=none] (93) at (3.625, 10.125) {$\bar{1}$};
		\node [style=none] (102) at (9, 11) {};
		\node [style=none] (103) at (14.25, 11) {};
		\node [style=none] (104) at (16, 11) {};
		\node [style=none] (105) at (17.75, 11) {};
		\node [style=none] (106) at (19.5, 11) {};
		\node [style=none] (107) at (19.5, 9.25) {};
		\node [style=none] (108) at (19.5, 7.5) {};
		\node [style=none] (109) at (17.75, 7.5) {};
		\node [style=none] (110) at (16, 7.5) {};
		\node [style=none] (111) at (16.75, 7.5) {};
		\node [style=none] (112) at (16.75, 6.75) {};
		\node [style=none] (113) at (16, 6.75) {};
		\node [style=none] (114) at (16, 5.75) {};
		\node [style=none] (115) at (16, 4) {};
		\node [style=none] (116) at (14.25, 4) {};
		\node [style=none] (117) at (14.25, 5.75) {};
		\node [style=none] (118) at (14.25, 7.5) {};
		\node [style=none] (119) at (14.25, 9.25) {};
		\node [style=none] (120) at (9.75, 4) {};
		\node [style=none] (121) at (9.75, 3.25) {};
		\node [style=none] (122) at (9, 3.25) {};
		\node [style=none] (123) at (10.75, 4) {};
		\node [style=none] (124) at (12.5, 4) {};
		\node [style=none] (125) at (13.25, 4) {};
		\node [style=none] (126) at (13.25, 3.25) {};
		\node [style=none] (127) at (12.5, 3.25) {};
		\node [style=none] (128) at (12.5, 2.25) {};
		\node [style=none] (129) at (12.5, 0.5) {};
		\node [style=none] (130) at (10.75, 0.5) {};
		\node [style=none] (131) at (9, 0.5) {};
		\node [style=none] (132) at (9, 2.25) {};
		\node [style=none] (133) at (16.875, 10.125) {$\bar{1}$};
		\node [style=none] (134) at (18.625, 10.125) {$\bar{1}$};
		\node [style=none] (135) at (18.625, 8.375) {$\bar{2}$};
		\node [style=none] (136) at (16.875, 8.375) {$\bar{2}$};
		\node [style=none] (137) at (15.125, 8.375) {$\bar{2}$};
		\node [style=none] (138) at (16.375, 7.125) {$\scriptstyle d_2$};
		\node [style=none] (139) at (15.1, 6.625) {$\bar{3}$};
		\node [style=none] (140) at (15.125, 4.875) {$\bar{4}$};
		\node [style=none] (141) at (12.875, 3.625) {$\scriptstyle c$};
		\node [style=none] (142) at (11.375, 3.125) {$\bar{3}$};
		\node [style=none] (143) at (10.125, 3.125) {$\bar{3}$};
		\node [style=none] (144) at (9.875, 1.375) {$\bar{4}$};
		\node [style=none] (145) at (11.625, 1.375) {$\bar{4}$};
		\node [style=none] (146) at (15.125, 10.125) {$\bar{1}$};
		\node [style=none] (147) at (12.125, 2.75) {$\uparrow$};
		\node [style=none] (148) at (12.125, 1.375) {$\uparrow$};
		\node [style=none] (149) at (12.125, 3.625) {$\scriptstyle d_2$};
		\node [style=none] (150) at (11.75, 4) {};
		\node [style=none] (151) at (11.75, 3.25) {};
		\end{pgfonlayer}
		\begin{pgfonlayer}{edgelayer}
		\draw (70.center) to (77.center);
		\draw (52.center) to (56.center);
		\draw (66.center) to (54.center);
		\draw (64.center) to (61.center);
		\draw (79.center) to (75.center);
		\draw (65.center) to (58.center);
		\draw (51.center) to (60.center);
		\draw [style=Border edge] (78.center) to (76.center);
		\draw [style=Border edge] (76.center) to (74.center);
		\draw [style=Border edge] (74.center) to (73.center);
		\draw [style=Border edge] (73.center) to (72.center);
		\draw [style=Border edge] (62.center) to (60.center);
		\draw [style=Border edge] (60.center) to (59.center);
		\draw [style=Border edge] (59.center) to (58.center);
		\draw [style=Border edge] (58.center) to (55.center);
		\draw [style=Border edge] (55.center) to (53.center);
		\draw (71.center) to (74.center);
		\draw (123.center) to (130.center);
		\draw (105.center) to (109.center);
		\draw (119.center) to (107.center);
		\draw (117.center) to (114.center);
		\draw (132.center) to (128.center);
		\draw (118.center) to (111.center);
		\draw (104.center) to (113.center);
		\draw [style=Border edge] (131.center) to (129.center);
		\draw [style=Border edge] (129.center) to (127.center);
		\draw [style=Border edge] (127.center) to (126.center);
		\draw [style=Border edge] (126.center) to (125.center);
		\draw [style=Border edge] (115.center) to (113.center);
		\draw [style=Border edge] (113.center) to (112.center);
		\draw [style=Border edge] (112.center) to (111.center);
		\draw [style=Border edge] (111.center) to (108.center);
		\draw [style=Border edge] (108.center) to (106.center);
		\draw (124.center) to (127.center);
		\draw [style=Border edge] (50.center) to (53.center);
		\draw [style=Border edge] (62.center) to (63.center);
		\draw [style=Border edge] (69.center) to (78.center);
		\draw [style=Grey diagram] (63.center)
		to (67.center)
		to (68.center)
		to (69.center)
		to (49.center)
		to (50.center)
		to cycle;
		\draw [style=Border edge] (103.center) to (106.center);
		\draw [style=Border edge] (115.center) to (116.center);
		\draw [style=Border edge] (122.center) to (131.center);
		\draw [style=Grey diagram] (102.center)
		to (103.center)
		to (116.center)
		to (120.center)
		to (121.center)
		to (122.center)
		to cycle;
		\draw (150.center) to (151.center);
		\draw (151.center) to (127.center);
		\end{pgfonlayer}
		\end{tikzpicture}
		
		\vspace*{0.5cm}
		\hspace*{-1.125cm}
		\begin{tikzpicture}[x=0.5cm, y=0.5cm]
		\begin{pgfonlayer}{nodelayer}
		\node [style=none] (88) at (11.9, 2.125) {$\scriptstyle d_5$};
		\node [style=none] (95) at (8, 13) {};
		\node [style=none] (96) at (15, 13) {};
		\node [style=none] (97) at (16.75, 13) {};
		\node [style=none] (98) at (18.5, 13) {};
		\node [style=none] (99) at (20.25, 13) {};
		\node [style=none] (100) at (22, 13) {};
		\node [style=none] (101) at (22, 11.25) {};
		\node [style=none] (102) at (22, 9.5) {};
		\node [style=none] (103) at (20.25, 9.5) {};
		\node [style=none] (104) at (18.5, 9.5) {};
		\node [style=none] (105) at (15, 9.5) {};
		\node [style=none] (106) at (15, 7.75) {};
		\node [style=none] (107) at (18.5, 7.75) {};
		\node [style=none] (108) at (18.5, 6) {};
		\node [style=none] (109) at (16.75, 6) {};
		\node [style=none] (110) at (15, 6) {};
		\node [style=none] (111) at (15, 4.25) {};
		\node [style=none] (112) at (15, 11.25) {};
		\node [style=none] (113) at (13.25, 4.25) {};
		\node [style=none] (114) at (11.5, 4.25) {};
		\node [style=none] (115) at (9.75, 4.25) {};
		\node [style=none] (116) at (8.75, 4.25) {};
		\node [style=none] (117) at (8.75, 3.5) {};
		\node [style=none] (118) at (8, 3.5) {};
		\node [style=none] (119) at (15, 2.5) {};
		\node [style=none] (120) at (13.25, 2.5) {};
		\node [style=none] (121) at (11.5, 2.5) {};
		\node [style=none] (122) at (8, 2.5) {};
		\node [style=none] (123) at (8, 0.75) {};
		\node [style=none] (124) at (11.5, 0.75) {};
		\node [style=none] (125) at (8, -1) {};
		\node [style=none] (126) at (9.75, -1) {};
		\node [style=none] (127) at (11.5, -1) {};
		\node [style=none] (128) at (12.25, 2.5) {};
		\node [style=none] (129) at (11.5, 1.75) {};
		\node [style=none] (130) at (12.25, 1.75) {};
		\node [style=none] (131) at (19.25, 9.5) {};
		\node [style=none] (132) at (18.5, 8.75) {};
		\node [style=none] (133) at (19.25, 8.75) {};
		\node [style=none] (134) at (15.875, 12.1) {$\bar{1}$};
		\node [style=none] (138) at (15.875, 10.375) {$\bar{2}$};
		\node [style=none] (139) at (17.625, 10.375) {$\bar{2}$};
		\node [style=none] (140) at (19.375, 10.375) {$\bar{2}$};
		\node [style=none] (141) at (21.125, 10.375) {$\bar{2}$};
		\node [style=none] (144) at (17.625, 8.625) {$\bar{3}$};
		\node [style=none] (145) at (15.875, 6.875) {$\bar{4}$};
		\node [style=none] (146) at (17.625, 6.875) {$\bar{4}$};
		\node [style=none] (147) at (9.1, 3.375) {$\bar{3}$};
		\node [style=none] (148) at (10.6, 3.375) {$\bar{3}$};
		\node [style=none] (149) at (12.3, 3.375) {$\bar{5}$};
		\node [style=none] (150) at (14.125, 3.375) {$\bar{5}$};
		\node [style=none] (151) at (8.875, 1.6) {$\bar{4}$};
		\node [style=none] (152) at (10.625, 1.625) {$\bar{4}$};
		\node [style=none] (153) at (8.875, -0.125) {$\bar{5}$};
		\node [style=none] (154) at (10.625, -0.125) {$\bar{5}$};
		\node [style=none] (156) at (17.625, 12.125) {$\bar{1}$};
		\node [style=none] (158) at (21.125, 12.125) {$\bar{1}$};
		\node [style=none] (159) at (19.375, 12.125) {$\bar{1}$};
		\node [style=none] (161) at (15.875, 8.625) {$\bar{3}$};
		\node [style=none] (162) at (18.875, 9.125) {$\scriptstyle d_2$};
		\node [style=none] (170) at (26.875, 3) {$\downarrow$};
		\node [style=none] (171) at (26.125, 3) {$\uparrow$};
		\node [style=none] (172) at (26.125, 1.625) {$\uparrow$};
		\node [style=none] (173) at (26.15, -0.125) {$\uparrow$};
		\node [style=none] (174) at (26.875, 3.875) {$\scriptstyle c$};
		\node [style=none] (175) at (26.125, 3.875) {$\scriptstyle d_2$};
		\node [style=none] (176) at (26.9, 2.125) {$\scriptstyle 5d$};
		\node [style=none] (177) at (23, 13) {};
		\node [style=none] (178) at (30, 13) {};
		\node [style=none] (179) at (31.75, 13) {};
		\node [style=none] (180) at (33.5, 13) {};
		\node [style=none] (181) at (35.25, 13) {};
		\node [style=none] (182) at (37, 13) {};
		\node [style=none] (183) at (37, 11.25) {};
		\node [style=none] (184) at (37, 9.5) {};
		\node [style=none] (185) at (35.25, 9.5) {};
		\node [style=none] (186) at (33.5, 9.5) {};
		\node [style=none] (187) at (30, 9.5) {};
		\node [style=none] (188) at (30, 7.75) {};
		\node [style=none] (189) at (33.5, 7.75) {};
		\node [style=none] (190) at (33.5, 6) {};
		\node [style=none] (191) at (31.75, 6) {};
		\node [style=none] (192) at (30, 6) {};
		\node [style=none] (193) at (30, 4.25) {};
		\node [style=none] (194) at (30, 11.25) {};
		\node [style=none] (195) at (28.25, 4.25) {};
		\node [style=none] (196) at (26.5, 4.25) {};
		\node [style=none] (197) at (24.75, 4.25) {};
		\node [style=none] (198) at (23.75, 4.25) {};
		\node [style=none] (199) at (23.75, 3.5) {};
		\node [style=none] (200) at (23, 3.5) {};
		\node [style=none] (201) at (30, 2.5) {};
		\node [style=none] (202) at (28.25, 2.5) {};
		\node [style=none] (203) at (26.5, 2.5) {};
		\node [style=none] (204) at (23, 2.5) {};
		\node [style=none] (205) at (23, 0.75) {};
		\node [style=none] (206) at (26.5, 0.75) {};
		\node [style=none] (207) at (23, -1) {};
		\node [style=none] (208) at (24.75, -1) {};
		\node [style=none] (209) at (26.5, -1) {};
		\node [style=none] (210) at (27.25, 2.5) {};
		\node [style=none] (211) at (26.5, 1.75) {};
		\node [style=none] (212) at (27.25, 1.75) {};
		\node [style=none] (213) at (34.25, 9.5) {};
		\node [style=none] (214) at (33.5, 8.75) {};
		\node [style=none] (215) at (34.25, 8.75) {};
		\node [style=none] (216) at (30.875, 12.1) {$\bar{1}$};
		\node [style=none] (217) at (30.875, 10.375) {$\bar{2}$};
		\node [style=none] (218) at (32.625, 10.375) {$\bar{2}$};
		\node [style=none] (219) at (34.375, 10.375) {$\bar{2}$};
		\node [style=none] (220) at (36.125, 10.375) {$\bar{2}$};
		\node [style=none] (221) at (32.625, 8.625) {$\bar{3}$};
		\node [style=none] (222) at (30.875, 6.875) {$\bar{4}$};
		\node [style=none] (223) at (32.625, 6.875) {$\bar{4}$};
		\node [style=none] (224) at (24.1, 3.375) {$\bar{3}$};
		\node [style=none] (225) at (25.35, 3.375) {$\bar{3}$};
		\node [style=none] (226) at (27.55, 3.375) {$\bar{5}$};
		\node [style=none] (227) at (29.125, 3.375) {$\bar{5}$};
		\node [style=none] (228) at (23.875, 1.6) {$\bar{4}$};
		\node [style=none] (229) at (25.625, 1.625) {$\bar{4}$};
		\node [style=none] (230) at (23.875, -0.125) {$\bar{5}$};
		\node [style=none] (231) at (25.625, -0.125) {$\bar{5}$};
		\node [style=none] (232) at (32.625, 12.125) {$\bar{1}$};
		\node [style=none] (233) at (36.125, 12.125) {$\bar{1}$};
		\node [style=none] (234) at (34.375, 12.125) {$\bar{1}$};
		\node [style=none] (235) at (30.875, 8.625) {$\bar{3}$};
		\node [style=none] (236) at (33.875, 9.125) {$\scriptstyle d_2$};
		\node [style=none] (237) at (25.75, 4.25) {};
		\node [style=none] (238) at (27.25, 4.25) {};
		\node [style=none] (239) at (26.5, 3.5) {};
		\node [style=none] (240) at (27.25, 3.5) {};
		\node [style=none] (241) at (25.75, 3.5) {};
		\end{pgfonlayer}
		\begin{pgfonlayer}{edgelayer}
		\draw (112.center) to (101.center);
		\draw (106.center) to (107.center);
		\draw (123.center) to (124.center);
		\draw (115.center) to (126.center);
		\draw (113.center) to (120.center);
		\draw (97.center) to (109.center);
		\draw (99.center) to (103.center);
		\draw (105.center) to (131.center);
		\draw (98.center) to (132.center);
		\draw (122.center) to (128.center);
		\draw (114.center) to (129.center);
		\draw [style=Border edge] (100.center) to (102.center);
		\draw [style=Border edge] (102.center) to (131.center);
		\draw [style=Border edge] (131.center) to (133.center);
		\draw [style=Border edge] (133.center) to (132.center);
		\draw [style=Border edge] (132.center) to (108.center);
		\draw [style=Border edge] (108.center) to (110.center);
		\draw [style=Border edge] (119.center) to (128.center);
		\draw [style=Border edge] (128.center) to (130.center);
		\draw [style=Border edge] (130.center) to (129.center);
		\draw [style=Border edge] (129.center) to (127.center);
		\draw [style=Border edge] (127.center) to (125.center);
		\draw (194.center) to (183.center);
		\draw (188.center) to (189.center);
		\draw (205.center) to (206.center);
		\draw (197.center) to (208.center);
		\draw (195.center) to (202.center);
		\draw (179.center) to (191.center);
		\draw (181.center) to (185.center);
		\draw (187.center) to (213.center);
		\draw (180.center) to (214.center);
		\draw (204.center) to (210.center);
		\draw (196.center) to (211.center);
		\draw [style=Border edge] (182.center) to (184.center);
		\draw [style=Border edge] (184.center) to (213.center);
		\draw [style=Border edge] (213.center) to (215.center);
		\draw [style=Border edge] (215.center) to (214.center);
		\draw [style=Border edge] (214.center) to (190.center);
		\draw [style=Border edge] (190.center) to (192.center);
		\draw [style=Border edge] (201.center) to (210.center);
		\draw [style=Border edge] (210.center) to (212.center);
		\draw [style=Border edge] (212.center) to (211.center);
		\draw [style=Border edge] (211.center) to (209.center);
		\draw [style=Border edge] (209.center) to (207.center);
		\draw [style=Border edge] (96.center) to (100.center);
		\draw [style=Border edge] (111.center) to (119.center);
		\draw [style=Border edge] (118.center) to (125.center);
		\draw [style=Grey diagram] (111.center)
		to (116.center)
		to (117.center)
		to (118.center)
		to (95.center)
		to (96.center)
		to cycle;
		\draw [style=Border edge] (178.center) to (182.center);
		\draw [style=Border edge] (193.center) to (201.center);
		\draw [style=Border edge] (200.center) to (207.center);
		\draw [style=Grey diagram] (200.center)
		to (177.center)
		to (178.center)
		to (193.center)
		to (198.center)
		to (199.center)
		to cycle;
		\draw (237.center) to (241.center);
		\draw (241.center) to (240.center);
		\draw (240.center) to (238.center);
		\end{pgfonlayer}
		\end{tikzpicture}
		\caption{For $d\in \mathbb{N}$ set $d_j=jd+1$ and $c=2d+2$. The top, respectively, bottom row contains two $\lambda/\mu'$-tableaux where $\lambda$ is a self-conjugate partition and $\mu=(a+1,a^{b-1})$ with $a=(k+1)d, b=kd$ and $k=3$, respectively, $k=4$ using the notation from Figure~\ref{Figure various boxes}. All the tableaux are semistandard with latticed reading words and weights equal to $\mu'$ proving $c(\mu',\mu';\lambda)\geq 2$, and in turn $c(\mu,\mu;\lambda)\geq 2$. Let us check this for the top right tableau. To see the tableau is semistandard, note that the neighbours of the two boxes with $d_2$ contain $2d$ (top), $2d+1$ (left), $2d+3$ (bottom) and $2d+2$ (right). Counting the boxes with $i\leq 2d, i=2d+1,2d+2$ and $i\geq 2d+3$, separately, yields the correct weight. After swapping the two neighbouring boxes with $d_2$ and $c$, the reading word becomes a concatenation of words of the form $i \:i+1 \dots j$ with $i$'s non-decreasing. This is latticed and so is the original reading word as the swap back preserves the latticed condition.}
		\label{Figure top}
	\end{figure} 
	
	\begin{figure}[!h]
		\begin{tikzpicture}[x=0.5cm, y=0.5cm]
		\begin{pgfonlayer}{nodelayer}
		\node [style=none] (0) at (-9, 1.25) {};
		\node [style=none] (1) at (-8.25, 1.25) {};
		\node [style=none] (2) at (-9, 0.5) {};
		\node [style=none] (3) at (-8.25, 0.5) {};
		\node [style=none] (5) at (-9, 2.25) {};
		\node [style=none] (6) at (-8.25, 2.25) {};
		\node [style=none] (7) at (-8.25, 3) {};
		\node [style=none] (8) at (-7.25, 1.25) {};
		\node [style=none] (9) at (-7.25, 3) {};
		\node [style=none] (10) at (-9, 4.75) {};
		\node [style=none] (11) at (-5.5, 4.75) {};
		\node [style=none] (12) at (-5.5, 3) {};
		\node [style=none] (13) at (-5.5, 1.25) {};
		\node [style=none] (14) at (-4.75, 4.75) {};
		\node [style=none] (15) at (-4.75, 4) {};
		\node [style=none] (16) at (-5.5, 4) {};
		\node [style=none] (17) at (-5.125, 4.375) {$\scriptstyle 1$};
		\node [style=none] (18) at (-6.375, 2.125) {$\bar{1}$};
		\node [style=none] (19) at (-7.875, 2.125) {$\bar{1}$};
		\node [style=none] (20) at (-0.625, 2.125) {$\uparrow$};
		\node [style=none] (41) at (-8.625, 0.875) {$\scriptstyle d_1$};
		\node [style=none] (48) at (-3.75, 1.25) {};
		\node [style=none] (49) at (-3, 1.25) {};
		\node [style=none] (50) at (-3.75, 0.5) {};
		\node [style=none] (51) at (-3, 0.5) {};
		\node [style=none] (52) at (-3.75, 2.25) {};
		\node [style=none] (53) at (-3, 2.25) {};
		\node [style=none] (54) at (-3, 3) {};
		\node [style=none] (55) at (-2, 1.25) {};
		\node [style=none] (56) at (-2, 3) {};
		\node [style=none] (57) at (-3.75, 4.75) {};
		\node [style=none] (58) at (-0.25, 4.75) {};
		\node [style=none] (59) at (-0.25, 3) {};
		\node [style=none] (60) at (-0.25, 1.25) {};
		\node [style=none] (61) at (0.5, 4.75) {};
		\node [style=none] (62) at (0.5, 4) {};
		\node [style=none] (63) at (-0.25, 4) {};
		\node [style=none] (64) at (0.125, 4.375) {$\scriptstyle 1$};
		\node [style=none] (65) at (-1.125, 2.125) {$\bar{1}$};
		\node [style=none] (66) at (-2.625, 2.1) {$\bar{1}$};
		\node [style=none] (68) at (-3.375, 0.875) {$\scriptstyle d$};
		\node [style=none] (69) at (-3.375, 1.75) {$\downarrow$};
		\end{pgfonlayer}
		\begin{pgfonlayer}{edgelayer}
		\draw (0.center) to (1.center);
		\draw (8.center) to (9.center);
		\draw [style=Border edge] (14.center) to (15.center);
		\draw [style=Border edge] (15.center) to (16.center);
		\draw [style=Border edge] (3.center) to (2.center);
		\draw [style=Border edge] (1.center) to (3.center);
		\draw (48.center) to (49.center);
		\draw (55.center) to (56.center);
		\draw [style=Border edge] (61.center) to (62.center);
		\draw [style=Border edge] (62.center) to (63.center);
		\draw [style=Border edge] (51.center) to (50.center);
		\draw [style=Border edge] (49.center) to (51.center);
		\draw [style=Border edge] (1.center) to (13.center);
		\draw [style=Border edge] (60.center) to (49.center);
		\draw [style=Border edge] (11.center) to (14.center);
		\draw [style=Border edge] (12.center) to (13.center);
		\draw [style=Border edge] (5.center) to (2.center);
		\draw [style=Grey diagram] (5.center)
		to (10.center)
		to (11.center)
		to (12.center)
		to (7.center)
		to (6.center)
		to cycle;
		\draw [style=Border edge] (58.center) to (61.center);
		\draw [style=Border edge] (59.center) to (60.center);
		\draw [style=Border edge] (52.center) to (50.center);
		\draw [style=Grey diagram] (57.center)
		to (58.center)
		to (59.center)
		to (54.center)
		to (53.center)
		to (52.center)
		to cycle;
		\end{pgfonlayer}
		\end{tikzpicture}
		
		\vspace*{0.5cm}
		\begin{tikzpicture}[x=0.5cm, y=0.5cm]
		\begin{pgfonlayer}{nodelayer}
		\node [style=none] (67) at (2.25, 6.75) {};
		\node [style=none] (68) at (9.25, 6.75) {};
		\node [style=none] (69) at (10, 6.75) {};
		\node [style=none] (70) at (10, 6) {};
		\node [style=none] (71) at (9.25, 6) {};
		\node [style=none] (72) at (9.25, 5) {};
		\node [style=none] (73) at (7.5, 3.25) {};
		\node [style=none] (74) at (5.75, 3.25) {};
		\node [style=none] (75) at (7.5, 5) {};
		\node [style=none] (76) at (7.5, 6.75) {};
		\node [style=none] (77) at (4, 3.25) {};
		\node [style=none] (78) at (9.25, 3.25) {};
		\node [style=none] (79) at (3, 3.25) {};
		\node [style=none] (80) at (3, 2.5) {};
		\node [style=none] (81) at (2.25, 2.5) {};
		\node [style=none] (82) at (2.25, 1.5) {};
		\node [style=none] (83) at (5.75, 1.5) {};
		\node [style=none] (84) at (5.75, -0.25) {};
		\node [style=none] (85) at (4, -0.25) {};
		\node [style=none] (86) at (2.25, -0.25) {};
		\node [style=none] (87) at (3, -0.25) {};
		\node [style=none] (88) at (3, -1) {};
		\node [style=none] (89) at (2.25, -1) {};
		\node [style=none] (90) at (3.375, 2.375) {$\bar{1}$};
		\node [style=none] (91) at (4.875, 2.375) {$\bar{1}$};
		\node [style=none] (92) at (3.125, 0.625) {$\bar{2}$};
		\node [style=none] (93) at (4.875, 0.625) {$\bar{2}$};
		\node [style=none] (94) at (2.625, -0.625) {$\scriptstyle d_2$};
		\node [style=none] (95) at (9.625, 6.375) {$\scriptstyle 1$};
		\node [style=none] (96) at (8.35, 5.875) {$\bar{1}$};
		\node [style=none] (97) at (8.375, 4.125) {$\bar{2}$};
		\node [style=none] (109) at (11, 6.75) {};
		\node [style=none] (110) at (18, 6.75) {};
		\node [style=none] (111) at (18.75, 6.75) {};
		\node [style=none] (112) at (18.75, 6) {};
		\node [style=none] (113) at (18, 6) {};
		\node [style=none] (114) at (18, 5) {};
		\node [style=none] (115) at (16.25, 3.25) {};
		\node [style=none] (116) at (14.5, 3.25) {};
		\node [style=none] (117) at (16.25, 5) {};
		\node [style=none] (118) at (16.25, 6.75) {};
		\node [style=none] (119) at (12.75, 3.25) {};
		\node [style=none] (120) at (18, 3.25) {};
		\node [style=none] (121) at (11.75, 3.25) {};
		\node [style=none] (122) at (11.75, 2.5) {};
		\node [style=none] (123) at (11, 2.5) {};
		\node [style=none] (124) at (11, 1.5) {};
		\node [style=none] (125) at (14.5, 1.5) {};
		\node [style=none] (126) at (14.5, -0.25) {};
		\node [style=none] (127) at (12.75, -0.25) {};
		\node [style=none] (128) at (11, -0.25) {};
		\node [style=none] (129) at (11.75, -0.25) {};
		\node [style=none] (130) at (11.75, -1) {};
		\node [style=none] (131) at (11, -1) {};
		\node [style=none] (132) at (12.125, 2.375) {$\bar{1}$};
		\node [style=none] (133) at (13.625, 2.375) {$\bar{1}$};
		\node [style=none] (134) at (11.875, 0.625) {$\bar{2}$};
		\node [style=none] (135) at (13.625, 0.625) {$\bar{2}$};
		\node [style=none] (136) at (11.375, -0.625) {$\scriptstyle 2d$};
		\node [style=none] (137) at (18.375, 6.375) {$\scriptstyle 1$};
		\node [style=none] (138) at (17.1, 5.875) {$\bar{1}$};
		\node [style=none] (139) at (17.125, 4.125) {$\bar{2}$};
		\node [style=none] (140) at (14.125, 2.375) {$\uparrow$};
		\node [style=none] (141) at (14.125, 0.625) {$\uparrow$};
		\node [style=none] (142) at (11.375, 2) {$\downarrow$};
		\node [style=none] (143) at (11.375, 0.625) {$\downarrow$};
		\end{pgfonlayer}
		\begin{pgfonlayer}{edgelayer}
		\draw (75.center) to (72.center);
		\draw (82.center) to (83.center);
		\draw (77.center) to (85.center);
		\draw (68.center) to (71.center);
		\draw (86.center) to (87.center);
		\draw [style=Border edge] (89.center) to (88.center);
		\draw [style=Border edge] (88.center) to (87.center);
		\draw [style=Border edge] (87.center) to (84.center);
		\draw [style=Border edge] (84.center) to (74.center);
		\draw [style=Border edge] (73.center) to (78.center);
		\draw [style=Border edge] (78.center) to (71.center);
		\draw [style=Border edge] (71.center) to (70.center);
		\draw [style=Border edge] (70.center) to (69.center);
		\draw (117.center) to (114.center);
		\draw (124.center) to (125.center);
		\draw (119.center) to (127.center);
		\draw (110.center) to (113.center);
		\draw (128.center) to (129.center);
		\draw [style=Border edge] (131.center) to (130.center);
		\draw [style=Border edge] (130.center) to (129.center);
		\draw [style=Border edge] (129.center) to (126.center);
		\draw [style=Border edge] (126.center) to (116.center);
		\draw [style=Border edge] (115.center) to (120.center);
		\draw [style=Border edge] (120.center) to (113.center);
		\draw [style=Border edge] (113.center) to (112.center);
		\draw [style=Border edge] (112.center) to (111.center);
		\draw [style=Border edge] (76.center) to (69.center);
		\draw [style=Border edge] (81.center) to (89.center);
		\draw [style=Grey diagram] (81.center)
		to (67.center)
		to (76.center)
		to (73.center)
		to (79.center)
		to (80.center)
		to cycle;
		\draw [style=Border edge] (118.center) to (111.center);
		\draw [style=Border edge] (123.center) to (131.center);
		\draw [style=Grey diagram] (109.center)
		to (118.center)
		to (115.center)
		to (121.center)
		to (122.center)
		to (123.center)
		to cycle;
		\end{pgfonlayer}
		\end{tikzpicture}
		\caption{Let $d\in \mathbb{N}$ and set $d_j=jd+1$. The figure shows two pairs of $\lambda/\mu$-tableaux establishing $c(\mu,\mu;\lambda)\geq 2$ with $\lambda$ self-conjugate and $\mu=\left( a^b,1\right) $ with $a=(k+1)d, b=kd$, where $k=1$ for the top pair and $k=2$ for the bottom pair.}
		\label{Figure bottom}
	\end{figure}
	
	\begin{figure}[!h]
		\begin{tikzpicture}[x=0.5cm, y=0.5cm]
		\begin{pgfonlayer}{nodelayer}
		\node [style=none] (29) at (-9, 7.5) {};
		\node [style=none] (30) at (-3.75, 7.5) {};
		\node [style=none] (31) at (-2, 7.5) {};
		\node [style=none] (32) at (-2, 5.75) {};
		\node [style=none] (33) at (-3.75, 5.75) {};
		\node [style=none] (34) at (-2, 4.75) {};
		\node [style=none] (35) at (-2.75, 4.75) {};
		\node [style=none] (36) at (-2.75, 4) {};
		\node [style=none] (37) at (-3.75, 4) {};
		\node [style=none] (38) at (-3.75, 4.75) {};
		\node [style=none] (39) at (-4.5, 4.75) {};
		\node [style=none] (40) at (-4.5, 4) {};
		\node [style=none] (41) at (-9, 4) {};
		\node [style=none] (42) at (-7.25, 4) {};
		\node [style=none] (43) at (-5.5, 4) {};
		\node [style=none] (44) at (-5.5, 2.25) {};
		\node [style=none] (45) at (-9, 2.25) {};
		\node [style=none] (46) at (-9, 0.5) {};
		\node [style=none] (47) at (-7.25, 0.5) {};
		\node [style=none] (48) at (-6.25, 0.5) {};
		\node [style=none] (49) at (-6.25, 1.25) {};
		\node [style=none] (50) at (-5.5, 1.25) {};
		\node [style=none] (51) at (-2.875, 6.625) {$\bar{1}$};
		\node [style=none] (52) at (-3.125, 4.875) {$\bar{2}$};
		\node [style=none] (53) at (-4.125, 4.375) {$\scriptstyle 2d$};
		\node [style=none] (54) at (-6.375, 3.125) {$\bar{1}$};
		\node [style=none] (55) at (-8.125, 3.125) {$\bar{1}$};
		\node [style=none] (56) at (-8.125, 1.375) {$\bar{2}$};
		\node [style=none] (57) at (-6.625, 1.375) {$\bar{2}$};
		\node [style=none] (65) at (-1, 7.5) {};
		\node [style=none] (66) at (4.25, 7.5) {};
		\node [style=none] (67) at (6, 7.5) {};
		\node [style=none] (68) at (6, 5.75) {};
		\node [style=none] (69) at (4.25, 5.75) {};
		\node [style=none] (70) at (6, 4.75) {};
		\node [style=none] (71) at (5.25, 4.75) {};
		\node [style=none] (72) at (5.25, 4) {};
		\node [style=none] (73) at (4.25, 4) {};
		\node [style=none] (74) at (4.25, 4.75) {};
		\node [style=none] (75) at (3.5, 4.75) {};
		\node [style=none] (76) at (3.5, 4) {};
		\node [style=none] (77) at (-1, 4) {};
		\node [style=none] (78) at (0.75, 4) {};
		\node [style=none] (79) at (2.5, 4) {};
		\node [style=none] (80) at (2.5, 2.25) {};
		\node [style=none] (81) at (-1, 2.25) {};
		\node [style=none] (82) at (-1, 0.5) {};
		\node [style=none] (83) at (0.75, 0.5) {};
		\node [style=none] (84) at (1.75, 0.5) {};
		\node [style=none] (85) at (1.75, 1.25) {};
		\node [style=none] (86) at (2.5, 1.25) {};
		\node [style=none] (87) at (5.125, 6.625) {$\bar{1}$};
		\node [style=none] (88) at (4.875, 4.875) {$\bar{2}$};
		\node [style=none] (89) at (3.875, 4.375) {$\scriptstyle 1$};
		\node [style=none] (90) at (1.625, 3.125) {$\bar{1}$};
		\node [style=none] (91) at (-0.125, 3.125) {$\bar{1}$};
		\node [style=none] (92) at (-0.125, 1.375) {$\bar{2}$};
		\node [style=none] (93) at (1.375, 1.375) {$\bar{2}$};
		\node [style=none] (94) at (2.15, 3.125) {$\uparrow$};
		\node [style=none] (95) at (2.125, 1.75) {$\uparrow$};
		\end{pgfonlayer}
		\begin{pgfonlayer}{edgelayer}
		\draw (33.center) to (32.center);
		\draw (45.center) to (44.center);
		\draw (42.center) to (47.center);
		\draw [style=Border edge] (31.center) to (34.center);
		\draw [style=Border edge] (34.center) to (35.center);
		\draw [style=Border edge] (35.center) to (36.center);
		\draw [style=Border edge] (43.center) to (50.center);
		\draw [style=Border edge] (50.center) to (49.center);
		\draw [style=Border edge] (49.center) to (48.center);
		\draw [style=Border edge] (48.center) to (46.center);
		\draw (38.center) to (37.center);
		\draw (69.center) to (68.center);
		\draw (81.center) to (80.center);
		\draw (78.center) to (83.center);
		\draw [style=Border edge] (67.center) to (70.center);
		\draw [style=Border edge] (70.center) to (71.center);
		\draw [style=Border edge] (71.center) to (72.center);
		\draw [style=Border edge] (79.center) to (86.center);
		\draw [style=Border edge] (86.center) to (85.center);
		\draw [style=Border edge] (85.center) to (84.center);
		\draw [style=Border edge] (84.center) to (82.center);
		\draw (74.center) to (73.center);
		\draw [style=Border edge] (41.center) to (46.center);
		\draw [style=Border edge] (40.center) to (36.center);
		\draw [style=Border edge] (30.center) to (31.center);
		\draw [style=Grey diagram] (40.center)
		to (41.center)
		to (29.center)
		to (30.center)
		to (38.center)
		to (39.center)
		to cycle;
		\draw [style=Border edge] (66.center) to (67.center);
		\draw [style=Border edge] (76.center) to (72.center);
		\draw [style=Border edge] (77.center) to (82.center);
		\draw [style=Grey diagram] (74.center)
		to (75.center)
		to (76.center)
		to (77.center)
		to (65.center)
		to (66.center)
		to cycle;
		\end{pgfonlayer}
		\end{tikzpicture}
		
		\vspace*{0.5cm}
		\begin{tikzpicture}[x=0.5cm, y=0.5cm]
		\begin{pgfonlayer}{nodelayer}
		\node [style=none] (42) at (-4.5, 9.5) {};
		\node [style=none] (43) at (2.5, 9.5) {};
		\node [style=none] (44) at (4.25, 9.5) {};
		\node [style=none] (45) at (6, 9.5) {};
		\node [style=none] (46) at (6, 7.75) {};
		\node [style=none] (47) at (6, 6.75) {};
		\node [style=none] (48) at (5.25, 6.75) {};
		\node [style=none] (49) at (5.25, 6) {};
		\node [style=none] (50) at (4.25, 6) {};
		\node [style=none] (51) at (2.5, 6) {};
		\node [style=none] (52) at (2.5, 7.75) {};
		\node [style=none] (53) at (2.5, 5) {};
		\node [style=none] (54) at (1.75, 5) {};
		\node [style=none] (55) at (1.75, 4.25) {};
		\node [style=none] (56) at (2.5, 4.25) {};
		\node [style=none] (57) at (2.5, 2.5) {};
		\node [style=none] (58) at (0.75, 2.5) {};
		\node [style=none] (59) at (0.75, 4.25) {};
		\node [style=none] (60) at (-1, 4.25) {};
		\node [style=none] (61) at (-2.75, 4.25) {};
		\node [style=none] (62) at (-4.5, 4.25) {};
		\node [style=none] (63) at (-4.5, 2.5) {};
		\node [style=none] (64) at (-4.5, 0.75) {};
		\node [style=none] (65) at (-4.5, -1) {};
		\node [style=none] (66) at (-2.75, -1) {};
		\node [style=none] (67) at (-1.75, -1) {};
		\node [style=none] (68) at (-1.75, -0.25) {};
		\node [style=none] (69) at (-1, -0.25) {};
		\node [style=none] (70) at (-1, 0.75) {};
		\node [style=none] (71) at (3.375, 8.625) {$\bar{1}$};
		\node [style=none] (72) at (5.125, 8.625) {$\bar{1}$};
		\node [style=none] (73) at (4.875, 6.875) {$\bar{2}$};
		\node [style=none] (74) at (3.375, 6.875) {$\bar{2}$};
		\node [style=none] (75) at (2.125, 4.625) {$\scriptstyle 2d$};
		\node [style=none] (76) at (1.625, 3.375) {$\bar{3}$};
		\node [style=none] (77) at (-0.125, 3.375) {$\bar{3}$};
		\node [style=none] (78) at (-1.875, 3.375) {$\bar{1}$};
		\node [style=none] (79) at (-3.625, 3.375) {$\bar{1}$};
		\node [style=none] (80) at (-3.625, 1.625) {$\bar{2}$};
		\node [style=none] (81) at (-3.625, -0.125) {$\bar{3}$};
		\node [style=none] (82) at (-2.125, -0.125) {$\bar{3}$};
		\node [style=none] (83) at (-1.875, 1.625) {$\bar{2}$};
		\node [style=none] (90) at (-1, 2.5) {};
		\node [style=none] (98) at (7, 9.5) {};
		\node [style=none] (99) at (14, 9.5) {};
		\node [style=none] (100) at (15.75, 9.5) {};
		\node [style=none] (101) at (17.5, 9.5) {};
		\node [style=none] (102) at (17.5, 7.75) {};
		\node [style=none] (103) at (17.5, 6.75) {};
		\node [style=none] (104) at (16.75, 6.75) {};
		\node [style=none] (105) at (16.75, 6) {};
		\node [style=none] (106) at (15.75, 6) {};
		\node [style=none] (107) at (14, 6) {};
		\node [style=none] (108) at (14, 7.75) {};
		\node [style=none] (109) at (14, 5) {};
		\node [style=none] (110) at (13.25, 5) {};
		\node [style=none] (111) at (13.25, 4.25) {};
		\node [style=none] (112) at (14, 4.25) {};
		\node [style=none] (113) at (14, 2.5) {};
		\node [style=none] (114) at (12.25, 2.5) {};
		\node [style=none] (115) at (12.25, 4.25) {};
		\node [style=none] (116) at (10.5, 4.25) {};
		\node [style=none] (117) at (8.75, 4.25) {};
		\node [style=none] (118) at (7, 4.25) {};
		\node [style=none] (119) at (7, 2.5) {};
		\node [style=none] (120) at (7, 0.75) {};
		\node [style=none] (121) at (7, -1) {};
		\node [style=none] (122) at (8.75, -1) {};
		\node [style=none] (123) at (9.75, -1) {};
		\node [style=none] (124) at (9.75, -0.25) {};
		\node [style=none] (125) at (10.5, -0.25) {};
		\node [style=none] (126) at (10.5, 0.75) {};
		\node [style=none] (127) at (14.875, 8.625) {$\bar{1}$};
		\node [style=none] (128) at (16.625, 8.625) {$\bar{1}$};
		\node [style=none] (129) at (16.375, 6.875) {$\bar{2}$};
		\node [style=none] (130) at (14.875, 6.875) {$\bar{2}$};
		\node [style=none] (131) at (13.625, 4.625) {$\scriptstyle 1$};
		\node [style=none] (132) at (13.125, 3.375) {$\bar{3}$};
		\node [style=none] (133) at (11.375, 3.375) {$\bar{3}$};
		\node [style=none] (134) at (9.625, 3.375) {$\bar{1}$};
		\node [style=none] (135) at (7.875, 3.375) {$\bar{1}$};
		\node [style=none] (136) at (7.875, 1.625) {$\bar{2}$};
		\node [style=none] (137) at (7.875, -0.125) {$\bar{3}$};
		\node [style=none] (138) at (9.375, -0.125) {$\bar{3}$};
		\node [style=none] (139) at (9.625, 1.625) {$\bar{2}$};
		\node [style=none] (141) at (10.125, 3.375) {$\uparrow$};
		\node [style=none] (142) at (10.125, 1.625) {$\uparrow$};
		\node [style=none] (143) at (10.125, 0.25) {$\uparrow$};
		\node [style=none] (144) at (10.875, 3.375) {$\downarrow$};
		\node [style=none] (145) at (10.5, 2.5) {};
		\end{pgfonlayer}
		\begin{pgfonlayer}{edgelayer}
		\draw (61.center) to (66.center);
		\draw (59.center) to (58.center);
		\draw (44.center) to (50.center);
		\draw (52.center) to (46.center);
		\draw (64.center) to (70.center);
		\draw (63.center) to (90.center);
		\draw (60.center) to (90.center);
		\draw (55.center) to (56.center);
		\draw [style=Border edge] (45.center) to (47.center);
		\draw [style=Border edge] (47.center) to (48.center);
		\draw [style=Border edge] (48.center) to (49.center);
		\draw [style=Border edge] (49.center) to (51.center);
		\draw [style=Border edge] (57.center) to (90.center);
		\draw [style=Border edge] (90.center) to (69.center);
		\draw [style=Border edge] (69.center) to (68.center);
		\draw [style=Border edge] (68.center) to (67.center);
		\draw [style=Border edge] (67.center) to (65.center);
		\draw (117.center) to (122.center);
		\draw (115.center) to (114.center);
		\draw (100.center) to (106.center);
		\draw (108.center) to (102.center);
		\draw (120.center) to (126.center);
		\draw (119.center) to (145.center);
		\draw (116.center) to (145.center);
		\draw (111.center) to (112.center);
		\draw [style=Border edge] (101.center) to (103.center);
		\draw [style=Border edge] (103.center) to (104.center);
		\draw [style=Border edge] (104.center) to (105.center);
		\draw [style=Border edge] (105.center) to (107.center);
		\draw [style=Border edge] (113.center) to (145.center);
		\draw [style=Border edge] (145.center) to (125.center);
		\draw [style=Border edge] (125.center) to (124.center);
		\draw [style=Border edge] (124.center) to (123.center);
		\draw [style=Border edge] (123.center) to (121.center);
		\draw [style=Border edge] (43.center) to (45.center);
		\draw [style=Border edge] (53.center) to (57.center);
		\draw [style=Border edge] (65.center) to (62.center);
		\draw [style=Grey diagram] (53.center)
		to (54.center)
		to (55.center)
		to (62.center)
		to (42.center)
		to (43.center)
		to cycle;
		\draw [style=Border edge] (99.center) to (101.center);
		\draw [style=Border edge] (109.center) to (113.center);
		\draw [style=Border edge] (118.center) to (121.center);
		\draw [style=Grey diagram] (110.center)
		to (111.center)
		to (118.center)
		to (98.center)
		to (99.center)
		to (109.center)
		to cycle;
		\end{pgfonlayer}
		\end{tikzpicture}
		\caption{Let $d\in \mathbb{N}$. Two pairs of $\lambda/\mu$-tableaux proving $c(\mu,\mu;\lambda)\geq 2$ with $\lambda$ self-conjugate and $\mu=\left( a^{b-1},a-1\right) $ with $a=(k+1)d, b=kd$, where $k=2$ for the top pair and $k=3$ for the bottom pair.}
		\label{Figure without}
	\end{figure}
	
	For the remaining cases suppose that $s_{\lambda}$ is a common constituent of $s_{\nu}\circ s_{\mu}$ and $s_{\bar{\nu}}\circ s_{\mu'}$, and thus
	\begin{align}\label{Eq common constituent}
	c(\mu,\mu;\lambda)=p(\mu,(2);\lambda) + p(\mu,(1^2);\lambda) &\geq 1,\\
	\label{Eq common constituent'}
	c(\mu',\mu';\lambda)=p(\mu',(2);\lambda) + p(\mu',(1^2);\lambda) &\geq 1.
	\end{align}
	Firstly, consider the case $k=1$, or equivalently, $a=2b$ and $\mu=(a+1,a^{b-1})$. We see from (\ref{Eq common constituent}) that $\lambda_1\geq \mu_1=a+1$ and from (\ref{Eq common constituent'}) that $\lambda_1\leq 2\ell(\mu)= a$. Thus no such $\lambda$ exists and we recover (i) from the statement. If instead $\mu=(a^{b-1},a-1)$, similarly, $\ell(\lambda)\leq 2\ell(\mu)=a$ and $\lambda_1\leq 2\ell(\mu)= a$. Hence $\lambda\subseteq (a^a)$. Since $|\lambda|=a^2-2$, we only have two choices: $\lambda_A=(a^{a-2},(a-1)^2)$ and $\lambda_A'$. One readily checks that $c(\mu,\mu;\lambda_A)\leq 1$ and $c(\mu,\mu;\lambda_A')\leq 1$; see the first diagram of Figure~\ref{Figure LR almost rectangles}.
	
	\begin{figure}[!h]
		\begin{tikzpicture}[x=0.5cm, y=0.5cm]
		\begin{pgfonlayer}{nodelayer}
		\node [style=none] (0) at (-17, 8) {};
		\node [style=none] (1) at (-11, 8) {};
		\node [style=none] (2) at (-11, 6) {};
		\node [style=none] (3) at (-12, 6) {};
		\node [style=none] (4) at (-12, 5) {};
		\node [style=none] (5) at (-17, 5) {};
		\node [style=none] (6) at (-17, 2) {};
		\node [style=none] (14) at (-11.5, 3.5) {};
		\node [style=none] (15) at (-12.5, 2.5) {};
		\node [style=none] (17) at (-17.5, 2) {};
		\node [style=none] (18) at (-18, 3.5) {$b$};
		\node [style=none] (19) at (-17, 1.5) {};
		\node [style=none] (20) at (-12, 1.5) {};
		\node [style=none] (21) at (-17.5, 5) {};
		\node [style=none] (22) at (-14.5, 1) {$2b-1$};
		\node [style=none] (26) at (-8, 9) {};
		\node [style=none] (27) at (-8, 5) {};
		\node [style=none] (29) at (-1, 9) {};
		\node [style=none] (30) at (-1, 8) {};
		\node [style=none] (31) at (-2, 8) {};
		\node [style=none] (32) at (-2, 5) {};
		\node [style=none] (33) at (-1, 5) {};
		\node [style=none] (34) at (0, 9) {};
		\node [style=none] (36) at (0, 5) {};
		\node [style=none] (37) at (-8, 1) {};
		\node [style=none] (38) at (-4, 1) {};
		\node [style=none] (39) at (-4, 3) {};
		\node [style=none] (41) at (-3, 4) {};
		\node [style=none] (42) at (-4, 4) {};
		\node [style=none] (44) at (-3, 5) {};
		\node [style=none] (45) at (-2, 5) {};
		\node [style=none] (47) at (-3.5, 3.5) {};
		\node [style=none] (48) at (-2.5, 4.5) {};
		\node [style=none] (49) at (-8.5, 5) {};
		\node [style=none] (50) at (-8.5, 1) {};
		\node [style=none] (51) at (-9, 3) {$b$};
		\node [style=none] (52) at (-8, 0.5) {};
		\node [style=none] (53) at (-4, 0.5) {};
		\node [style=none] (54) at (-6, 0) {$b$};
		\node [style=none] (55) at (0.5, 9) {};
		\node [style=none] (56) at (0.5, 5) {};
		\node [style=none] (57) at (1, 7) {$b$};
		\node [style=none] (58) at (-1, 9.5) {};
		\node [style=none] (59) at (0, 9.5) {};
		\node [style=none] (60) at (-0.5, 10) {$\frac{b-2}{2}$};
		\node [style=none] (61) at (-12, 2) {};
		\node [style=none] (62) at (-12, 4) {};
		\node [style=none] (63) at (-11, 4) {};
		\node [style=none] (64) at (-13, 3) {};
		\node [style=none] (65) at (-12, 3) {};
		\node [style=none] (66) at (-13, 2) {};
		\node [style=none] (67) at (-11, 3) {};
		\node [style=none] (68) at (-3, 3) {};
		\node [style=none] (69) at (-2, 4) {};
		\end{pgfonlayer}
		\begin{pgfonlayer}{edgelayer}
		\draw [style=Grey diagram] (3.center)
		to (4.center)
		to (5.center)
		to (0.center)
		to (1.center)
		to (2.center)
		to cycle;
		\draw [style=Border edge] (6.center) to (5.center);
		\draw [style=measuredots] (21.center) to (17.center);
		\draw [style=measuredots] (19.center) to (20.center);
		\draw [style=Light grey column] (30.center)
		to (33.center)
		to (32.center)
		to (31.center)
		to cycle;
		\draw [style=Grey diagram] (29.center)
		to (30.center)
		to (31.center)
		to (32.center)
		to (27.center)
		to (26.center)
		to cycle;
		\draw [style=Border edge] (29.center) to (34.center);
		\draw [style=Border edge] (34.center) to (36.center);
		\draw [style=Border edge] (36.center) to (45.center);
		\draw [style=Border edge] (38.center) to (37.center);
		\draw [style=Border edge] (37.center) to (27.center);
		\draw [style=measuredots] (49.center) to (50.center);
		\draw [style=measuredots] (52.center) to (53.center);
		\draw [style=measuredots] (58.center) to (59.center);
		\draw [style=measuredots] (55.center) to (56.center);
		\draw [style=Move it, bend right=45, looseness=1.75] (15.center) to (14.center);
		\draw [style=Border edge] (2.center) to (63.center);
		\draw [style=Border edge] (63.center) to (62.center);
		\draw [style=Border edge] (62.center) to (61.center);
		\draw [style=Border edge] (61.center) to (6.center);
		\draw (64.center) to (65.center);
		\draw (64.center) to (66.center);
		\draw [style=Extra box] (65.center) to (67.center);
		\draw [style=Extra box] (67.center) to (63.center);
		\draw [style=Move it, bend right=45, looseness=1.75] (47.center) to (48.center);
		\draw [style=Border edge] (38.center) to (39.center);
		\draw [style=Border edge] (39.center) to (68.center);
		\draw [style=Border edge] (68.center) to (44.center);
		\draw [style=Extra box] (69.center) to (41.center);
		\draw [style=Extra box] (69.center) to (45.center);
		\draw (42.center) to (41.center);
		\draw (42.center) to (39.center);
		\end{pgfonlayer}
		\end{tikzpicture}
		\caption{The first diagram is the Young diagram of $\lambda_A/\mu$ with $\mu=(a^{b-1},a-1), \lambda_A=(a^{a-2},(a-1)^2)$ and $a=2b$. By moving the highlighted box as indicated, we get the Young diagram of $\lambda_A'/\mu$. In both cases our diagram has $2b-1$ columns with $b=\ell(\mu)$ boxes and $1$ column with $b-1$ boxes. Thus, in both cases, there is at most one semistandard tableau with weight $\mu$, establishing $c(\mu,\mu;\lambda_A)\leq 1$ and $c(\mu,\mu;\lambda_A')\leq 1$. In the second diagram we replace $\lambda_A$ with $\lambda_B=((2b)^b,(b+1)^2,b^{b-2})$ and $\mu$ by $(a+1,a^{b-1})$ with $2a=3b$. This time there are $2$ or $3$ columns of length less than $b=\ell(\mu)$. But we still have $c(\mu,\mu;\lambda_B)\leq 1$ and $c(\mu,\mu;\lambda_B')\leq 1$ since the lightly shaded column of length $b-1$ must contain $1,2,\dots,b-1$ from top downwards for a $\lambda_b/\mu$-tableau or a $\lambda_b'/\mu$-tableau to be semistandard with a latticed reading word, which forces the remaining two neighbouring boxes to contain $1$ and $b$.}
		\label{Figure LR almost rectangles}
	\end{figure}
	
	We now consider domino $\mu$-tableaux $T_{\varphi}$ and $T_{\vartheta}$ with $\varphi=\o$ and $\vartheta=(0^{b-1},1)$ from Example~\ref{Example top filling} (here and below, conventions for denoting partitions are used for sequences of integers). We obtain $p(\mu,(1^2);\lambda_A)\geq 1$ and $p(\mu,(2);\lambda_A') \geq 1$ using Theorem~\ref{Theorem domino rule}; see Figure~\ref{Figure almost domino minus} for details. In conclusion $p(\mu,(1^2);\lambda_A)= p(\mu,(2);\lambda_A') = 1$ and $p(\mu,(2);\lambda_A)= p(\mu,(1^2);\lambda_A') = 0$. Since $|\mu|=2b^2-1$ is odd, we recover $p(\mu',(2);\lambda_A')=p(\mu',(1^2);\lambda_A)=1$ and $p(\mu',(1^2);\lambda_A')=p(\mu',(2);\lambda_A)=0$. This gives (ii).
	
	\begin{figure}[!h]
		\begin{tikzpicture}[x=0.5cm, y=0.5cm]
		\begin{pgfonlayer}{nodelayer}
		\node [style=hdomino] (0) at (-9, 6.5) {$\scriptstyle 1$};
		\node [style=hdomino] (1) at (-9, 5.5) {$\scriptstyle 2$};
		\node [style=hdomino] (4) at (0, 6.5) {$\scriptstyle 1$};
		\node [style=hdomino] (5) at (0, 5.5) {$\scriptstyle 2$};
		\node [style=hdomino] (7) at (-2, 0.5) {$\scriptstyle 2b$};
		\node [style=hdomino] (8) at (0, 2.5) {$\scriptstyle 2b-2$};
		\node [style=none] (10) at (-3, 7) {};
		\node [style=none] (11) at (-8, 7) {};
		\node [style=none] (12) at (-8, 5) {};
		\node [style=none] (13) at (1, 5) {};
		\node [style=none] (14) at (1, 3) {};
		\node [style=none] (15) at (-1, 3) {};
		\node [style=none] (16) at (-1, 5) {};
		\node [style=none] (17) at (0, 4.25) {$\vdots$};
		\node [style=none] (19) at (-3, 2) {};
		\node [style=none] (20) at (-1, 1) {};
		\node [style=none] (21) at (-8, 2) {};
		\node [style=none] (22) at (-8, 0) {};
		\node [style=none] (23) at (-3, 0) {};
		\node [style=none] (24) at (-5.5, 0.5) {$\dots$};
		\node [style=none] (25) at (-4.5, 5.5) {$\dots$};
		\node [style=none] (26) at (-4.5, 6.5) {$\dots$};
		\node [style=none] (27) at (-10, 5) {};
		\node [style=none] (28) at (-10, 0.75) {};
		\node [style=none] (29) at (-9, 4.25) {$\vdots$};
		\node [style=none] (30) at (-4.5, 4.25) {$\ddots$};
		\node [style=none] (31) at (-8, 6) {};
		\node [style=none] (32) at (-1, 6) {};
		\node [style=none] (33) at (-10, 7) {};
		\node [style=none] (34) at (1, 7) {};
		\node [style=none] (35) at (1, 2) {};
		\node [style=none] (36) at (-1, 2) {};
		\node [style=none] (37) at (-1, 0) {};
		\node [style=none] (38) at (-10, 0) {};
		\node [style=hdomino] (39) at (3, 6.5) {$\scriptstyle 1$};
		\node [style=hdomino] (40) at (3, 5.5) {$\scriptstyle 2$};
		\node [style=hdomino] (43) at (12, 6.5) {$\scriptstyle 1$};
		\node [style=hdomino] (44) at (12, 5.5) {$\scriptstyle 2$};
		\node [style=hdomino] (46) at (10, 0.5) {$\scriptstyle 2b$};
		\node [style=hdomino] (47) at (12, 2.5) {$\scriptstyle 2b-2$};
		\node [style=none] (52) at (13, 5) {};
		\node [style=none] (53) at (13, 3) {};
		\node [style=none] (54) at (11, 3) {};
		\node [style=none] (55) at (11, 5) {};
		\node [style=none] (56) at (12, 4.25) {$\vdots$};
		\node [style=none] (59) at (11, 1) {};
		\node [style=none] (61) at (4, 0) {};
		\node [style=none] (62) at (9, 0) {};
		\node [style=none] (63) at (7.5, 0.5) {$\dots$};
		\node [style=none] (64) at (8.5, 5.5) {$\dots$};
		\node [style=none] (65) at (8.5, 6.5) {$\dots$};
		\node [style=none] (66) at (2, 5) {};
		\node [style=none] (67) at (2, 0.75) {};
		\node [style=none] (68) at (3, 4.25) {$\vdots$};
		\node [style=none] (69) at (8.5, 4.25) {$\ddots$};
		\node [style=none] (71) at (11, 6) {};
		\node [style=none] (72) at (2, 7) {};
		\node [style=none] (73) at (13, 7) {};
		\node [style=none] (74) at (13, 2) {};
		\node [style=none] (75) at (11, 2) {};
		\node [style=none] (76) at (11, 0) {};
		\node [style=none] (77) at (2, 0) {};
		\node [style=hdomino] (78) at (5, 6.5) {$\scriptstyle 1$};
		\node [style=hdomino] (79) at (5, 5.5) {$\scriptstyle 2$};
		\node [style=hdomino] (80) at (3, 2.5) {$\scriptstyle 2b-2$};
		\node [style=hdomino] (81) at (5, 2.5) {$\scriptstyle 2b-2$};
		\node [style=hdomino] (82) at (5, 1.5) {$\scriptstyle 2b-1$};
		\node [style=hdomino] (83) at (5, 0.5) {$\scriptstyle 2b$};
		\node [style=none] (86) at (6, 6) {};
		\node [style=none] (87) at (6, 5) {};
		\node [style=none] (89) at (4, 3) {};
		\node [style=none] (90) at (4, 5) {};
		\node [style=none] (91) at (6, 3) {};
		\node [style=none] (92) at (5, 4.25) {$\vdots$};
		\node [style=vnew] (94) at (2.5, 1) {$\scriptstyle 2b-1$};
		\node [style=vnew] (95) at (3.5, 1) {$\scriptstyle 2b-1$};
		\node [style=hdomino] (96) at (10, 1.5) {$\scriptstyle 2b-1$};
		\node [style=none] (97) at (9, 2) {};
		\node [style=none] (98) at (9, 1) {};
		\node [style=none] (99) at (6, 1) {};
		\node [style=none] (100) at (6, 2) {};
		\node [style=none] (101) at (7.5, 1.5) {$\dots$};
		\node [style=none] (102) at (8.5, 2.5) {$\dots$};
		\node [style=none] (103) at (-8, 1) {};
		\node [style=none] (104) at (-3, 1) {};
		\node [style=none] (105) at (-8, 3) {};
		\node [style=none] (106) at (-4.5, 2.5) {$\dots$};
		\node [style=none] (107) at (-5.5, 1.5) {$\dots$};
		\node [style=hdomino] (108) at (-9, 2.5) {$\scriptstyle 2b-2$};
		\node [style=hdomino] (110) at (-2, 1.5) {$\scriptstyle 2b-1$};
		\node [style=hnew] (111) at (-9, 1.5) {$\scriptstyle 2b-1$};
		\node [style=hnew] (112) at (-9, 0.5) {$\scriptstyle 2b$};
		\end{pgfonlayer}
		\begin{pgfonlayer}{edgelayer}
		\draw (11.center) to (10.center);
		\draw (16.center) to (15.center);
		\draw (13.center) to (14.center);
		\draw (23.center) to (22.center);
		\draw (21.center) to (19.center);
		\draw (27.center) to (28.center);
		\draw (31.center) to (32.center);
		\draw [style=Border edge] (33.center) to (34.center);
		\draw [style=Border edge] (34.center) to (35.center);
		\draw [style=Border edge] (35.center) to (36.center);
		\draw [style=Border edge] (36.center) to (37.center);
		\draw [style=Border edge] (37.center) to (38.center);
		\draw [style=Border edge] (38.center) to (33.center);
		\draw (55.center) to (54.center);
		\draw (52.center) to (53.center);
		\draw (62.center) to (61.center);
		\draw (66.center) to (67.center);
		\draw [style=Border edge] (72.center) to (73.center);
		\draw [style=Border edge] (73.center) to (74.center);
		\draw [style=Border edge] (74.center) to (75.center);
		\draw [style=Border edge] (75.center) to (76.center);
		\draw [style=Border edge] (76.center) to (77.center);
		\draw [style=Border edge] (77.center) to (72.center);
		\draw (87.center) to (91.center);
		\draw (87.center) to (55.center);
		\draw (91.center) to (54.center);
		\draw (100.center) to (97.center);
		\draw (99.center) to (98.center);
		\draw (12.center) to (16.center);
		\draw (12.center) to (105.center);
		\draw (105.center) to (15.center);
		\draw (103.center) to (104.center);
		\draw (86.center) to (71.center);
		\draw (90.center) to (89.center);
		\end{pgfonlayer}
		\end{tikzpicture}
		\caption{The domino $\mu$-tableaux $T_{\varphi}$ and $T_{\vartheta}$ with $\varphi=\o, \vartheta=(0^{b-1},1)$ and $\mu=(a^{b-1},a-1)$ where $a=2b$. Note the domino tableaux differ only in the pairs of the lightly shaded dominoes. We know they are both semistandard and that the first one has a latticed reading word (as $\varphi$ is a partition). This is true even for the second tableau since the only failure of the latticed condition can occur when reading the bottom left pair of vertical dominoes, however, this clearly does not happen. Observe that the domino tableaux have weights $(a^{a-2},(a-1)^2)=\lambda_A$ and $(a^{a-1},a-2)=\lambda_A'$, respectively. Finally, $|\mu|=2b^2-1$ is odd and so is the first domino tableau. Consequently, the second domino tableau is even and $p(\mu,(1^2);\lambda_A)\geq 1$ and $p(\mu,(2);\lambda_A') \geq 1$.}
		\label{Figure almost domino minus}
	\end{figure}
	
	Finally, if $k=2$, or equivalently, $2a=3b$, and $\mu=(a+1,a^{b-1})$, the inequalities (\ref{Eq common constituent}) and (\ref{Eq common constituent'}) show that $\ell(\lambda)\leq 2\ell(\mu)=2b$ and $\lambda_1\leq 2\ell(\mu)=2b$. By Lemma~\ref{Lemma two boxes}(i) $\lambda$ contains an $(a,b)$-birectangular partition. Therefore $\lambda_{2b}\geq 2a-\lambda_1=3b-\lambda_1\geq b$. Replacing $\lambda$ with $\lambda'$ gives $\lambda'_{2b}\geq b$. Therefore $((2b)^b,b^b)\subseteq\lambda\subseteq((2b)^{2b})$. Since $|\lambda|=3b^2+2$ we only have two choices for $\lambda$: $\lambda_B=((2b)^b,(b+1)^2,b^{b-2})$ and $\lambda_B'$. As in the previous case, $c(\mu,\mu;\lambda_B)\leq 1$ and $c(\mu,\mu;\lambda_B')\leq 1$; see the second diagram of Figure~\ref{Figure LR almost rectangles}.
	
	Letting $\varphi=(b/2-1,(b/2)^{b-2},b/2-1)$ and $\vartheta=(b/2-1,(b/2)^{b-1})$ we consider $T_{\varphi}$ and $T_{\vartheta}$ from Example~\ref{Example top filling}. From Figure~\ref{Figure almost domino plus} we conclude that $p(\mu,(1^2);\lambda_B)\geq 1$ and $p(\mu,(2);\lambda_B') \geq 1$. Thus $p(\mu,(1^2);\lambda_B)= p(\mu,(2);\lambda_B') = 1$ and $p(\mu,(2);\lambda_B)= p(\mu,(1^2);\lambda_B') = 0$. As $|\mu|=2a^2/3 +1$ is odd, we conclude that $p(\mu',(2);\lambda_B')=p(\mu',(1^2);\lambda_B)=1$ and $p(\mu',(1^2);\lambda_B')=p(\mu',(2);\lambda_B)=0$ and recover (iii). 
	
	\begin{figure}[!h]
		\hspace*{-1.5cm}
		\begin{tikzpicture}[x=0.5cm, y=0.5cm]
		\begin{pgfonlayer}{nodelayer}
		\node [style=vdomino] (0) at (-8.5, 9) {$\scriptstyle 1$};
		\node [style=vdomino] (1) at (-5.5, 9) {$\scriptstyle 1$};
		\node [style=vdomino] (2) at (-8.5, 7) {$\scriptstyle 2$};
		\node [style=vdomino] (3) at (-5.5, 7) {$\scriptstyle 2$};
		\node [style=vdomino] (4) at (-4.5, 7) {$\scriptstyle 3$};
		\node [style=vdomino] (5) at (-3.5, 7) {$\scriptstyle 3$};
		\node [style=hdomino] (6) at (-4, 9.5) {$\scriptstyle 1$};
		\node [style=hdomino] (7) at (-4, 8.5) {$\scriptstyle 2$};
		\node [style=hdomino] (8) at (-2, 9.5) {$\scriptstyle 1$};
		\node [style=hdomino] (9) at (-2, 8.5) {$\scriptstyle 2$};
		\node [style=hdomino] (10) at (-2, 7.5) {$\scriptstyle 3$};
		\node [style=hdomino] (11) at (-2, 6.5) {$\scriptstyle 4$};
		\node [style=vdomino] (12) at (-8.5, 1) {$\scriptstyle b$};
		\node [style=vdomino] (13) at (-5.5, 1) {$\scriptstyle b$};
		\node [style=hdomino] (16) at (-2, 1.5) {$\scriptstyle 2b-1$};
		\node [style=hdomino] (17) at (-2, 0.5) {$\scriptstyle 2b$};
		\node [style=hdomino] (18) at (3, 9.5) {$\scriptstyle 1$};
		\node [style=hdomino] (19) at (5, 9.5) {$\scriptstyle 1$};
		\node [style=hdomino] (20) at (3, 8.5) {$\scriptstyle 2$};
		\node [style=hdomino] (21) at (5, 8.5) {$\scriptstyle 2$};
		\node [style=hdomino] (23) at (3, 0.5) {$\scriptstyle 2b$};
		\node [style=none] (24) at (-9, 10) {};
		\node [style=none] (25) at (-9, 0) {};
		\node [style=none] (26) at (4, 0) {};
		\node [style=none] (27) at (4, 8) {};
		\node [style=none] (28) at (6, 8) {};
		\node [style=none] (29) at (6, 10) {};
		\node [style=none] (30) at (-1, 9) {};
		\node [style=none] (31) at (2, 9) {};
		\node [style=none] (32) at (-1, 8) {};
		\node [style=none] (33) at (2, 8) {};
		\node [style=none] (36) at (-1, 1) {};
		\node [style=none] (37) at (-1, 2) {};
		\node [style=none] (38) at (-1, 6) {};
		\node [style=none] (39) at (-3, 6) {};
		\node [style=none] (40) at (-3, 2) {};
		\node [style=none] (41) at (-4, 2) {};
		\node [style=none] (42) at (-5, 2) {};
		\node [style=none] (43) at (-4, 6) {};
		\node [style=none] (44) at (-5, 6) {};
		\node [style=none] (45) at (-6, 6) {};
		\node [style=none] (46) at (-8, 6) {};
		\node [style=none] (47) at (-8, 2) {};
		\node [style=none] (48) at (-6, 2) {};
		\node [style=none] (49) at (-7, 4.25) {$\ddots$};
		\node [style=none] (50) at (-8.5, 4.25) {$\vdots$};
		\node [style=none] (51) at (-7, 7) {$\dots$};
		\node [style=none] (52) at (-7, 9) {$\dots$};
		\node [style=none] (53) at (-7, 1) {$\dots$};
		\node [style=none] (54) at (-5.5, 4.25) {$\vdots$};
		\node [style=none] (55) at (-4.5, 4.25) {$\vdots$};
		\node [style=none] (56) at (-3.5, 4.25) {$\vdots$};
		\node [style=none] (57) at (-2, 4.25) {$\vdots$};
		\node [style=none] (58) at (0.5, 4.25) {$\ddots$};
		\node [style=none] (59) at (0.5, 8.5) {$\dots$};
		\node [style=none] (61) at (0.5, 0.5) {$\dots$};
		\node [style=none] (62) at (0.5, 9.5) {$\dots$};
		\node [style=none] (63) at (3, 4.25) {$\vdots$};
		\node [style=none] (64) at (2, 1) {};
		\node [style=vdomino] (65) at (-24.5, 9) {$\scriptstyle 1$};
		\node [style=vdomino] (66) at (-21.5, 9) {$\scriptstyle 1$};
		\node [style=vdomino] (67) at (-24.5, 7) {$\scriptstyle 2$};
		\node [style=vdomino] (68) at (-21.5, 7) {$\scriptstyle 2$};
		\node [style=vdomino] (69) at (-20.5, 7) {$\scriptstyle 3$};
		\node [style=vdomino] (70) at (-19.5, 7) {$\scriptstyle 3$};
		\node [style=hdomino] (71) at (-20, 9.5) {$\scriptstyle 1$};
		\node [style=hdomino] (72) at (-20, 8.5) {$\scriptstyle 2$};
		\node [style=hdomino] (73) at (-18, 9.5) {$\scriptstyle 1$};
		\node [style=hdomino] (74) at (-18, 8.5) {$\scriptstyle 2$};
		\node [style=hdomino] (75) at (-18, 7.5) {$\scriptstyle 3$};
		\node [style=hdomino] (76) at (-18, 6.5) {$\scriptstyle 4$};
		\node [style=vdomino] (77) at (-24.5, 1) {$\scriptstyle b$};
		\node [style=vdomino] (78) at (-21.5, 1) {$\scriptstyle b$};
		\node [style=hdomino] (81) at (-18, 1.5) {$\scriptstyle 2b-1$};
		\node [style=hdomino] (82) at (-18, 0.5) {$\scriptstyle 2b$};
		\node [style=hdomino] (83) at (-13, 9.5) {$\scriptstyle 1$};
		\node [style=hdomino] (84) at (-11, 9.5) {$\scriptstyle 1$};
		\node [style=hdomino] (85) at (-13, 8.5) {$\scriptstyle 2$};
		\node [style=hdomino] (86) at (-11, 8.5) {$\scriptstyle 2$};
		\node [style=hdomino] (87) at (-13, 0.5) {$\scriptstyle 2b$};
		\node [style=none] (88) at (-25, 10) {};
		\node [style=none] (89) at (-25, 0) {};
		\node [style=none] (90) at (-12, 0) {};
		\node [style=none] (91) at (-12, 8) {};
		\node [style=none] (92) at (-10, 8) {};
		\node [style=none] (93) at (-10, 10) {};
		\node [style=none] (94) at (-17, 9) {};
		\node [style=none] (95) at (-14, 9) {};
		\node [style=none] (96) at (-17, 8) {};
		\node [style=none] (97) at (-14, 8) {};
		\node [style=none] (98) at (-17, 1) {};
		\node [style=none] (99) at (-17, 2) {};
		\node [style=none] (100) at (-17, 6) {};
		\node [style=none] (101) at (-19, 6) {};
		\node [style=none] (102) at (-19, 4) {};
		\node [style=none] (103) at (-20, 4) {};
		\node [style=none] (104) at (-21, 4) {};
		\node [style=none] (105) at (-20, 6) {};
		\node [style=none] (106) at (-21, 6) {};
		\node [style=none] (107) at (-22, 6) {};
		\node [style=none] (108) at (-24, 6) {};
		\node [style=none] (109) at (-24, 2) {};
		\node [style=none] (110) at (-22, 2) {};
		\node [style=none] (111) at (-23, 4.25) {$\ddots$};
		\node [style=none] (112) at (-24.5, 4.25) {$\vdots$};
		\node [style=none] (113) at (-23, 7) {$\dots$};
		\node [style=none] (114) at (-23, 9) {$\dots$};
		\node [style=none] (115) at (-23, 1) {$\dots$};
		\node [style=none] (116) at (-21.5, 4.25) {$\vdots$};
		\node [style=none] (117) at (-20.5, 5.25) {$\vdots$};
		\node [style=none] (118) at (-19.5, 5.25) {$\vdots$};
		\node [style=none] (119) at (-18, 4.25) {$\vdots$};
		\node [style=none] (120) at (-15.5, 4.25) {$\ddots$};
		\node [style=none] (121) at (-15.5, 8.5) {$\dots$};
		\node [style=none] (122) at (-15.5, 0.5) {$\dots$};
		\node [style=none] (123) at (-15.5, 9.5) {$\dots$};
		\node [style=none] (124) at (-13, 4.25) {$\vdots$};
		\node [style=none] (125) at (-14, 1) {};
		\node [style=none] (128) at (2, 7) {};
		\node [style=none] (129) at (2, 6) {};
		\node [style=none] (130) at (-1, 7) {};
		\node [style=none] (131) at (2, 2) {};
		\node [style=hdomino] (132) at (3, 7.5) {$\scriptstyle 3$};
		\node [style=hdomino] (133) at (3, 6.5) {$\scriptstyle 4$};
		\node [style=hdomino] (134) at (3, 1.5) {$\scriptstyle 2b-1$};
		\node [style=none] (135) at (0.5, 7.5) {$\dots$};
		\node [style=none] (136) at (0.5, 6.5) {$\dots$};
		\node [style=none] (137) at (0.5, 1.5) {$\dots$};
		\node [style=none] (138) at (-14, 7) {};
		\node [style=none] (139) at (-14, 6) {};
		\node [style=none] (140) at (-14, 2) {};
		\node [style=none] (141) at (-17, 7) {};
		\node [style=none] (142) at (-15.5, 7.5) {$\dots$};
		\node [style=none] (143) at (-15.5, 6.5) {$\dots$};
		\node [style=hdomino] (144) at (-13, 7.5) {$\scriptstyle 3$};
		\node [style=hdomino] (145) at (-13, 6.5) {$\scriptstyle 4$};
		\node [style=hdomino] (146) at (-13, 1.5) {$\scriptstyle 2b-1$};
		\node [style=none] (147) at (-15.5, 1.5) {$\dots$};
		\node [style=vdomino] (148) at (-20.5, 3) {$\scriptstyle b$};
		\node [style=vdomino] (149) at (-19.5, 3) {$\scriptstyle b$};
		\node [style=hnew] (150) at (-20, 1.5) {$\scriptstyle b+1$};
		\node [style=hnew] (151) at (-20, 0.5) {$\scriptstyle b+2$};
		\node [style=vnew] (152) at (-4.5, 1) {$\scriptstyle b+1$};
		\node [style=vnew] (153) at (-3.5, 1) {$\scriptstyle b+1$};
		\node [style=none] (154) at (-9, 10.5) {};
		\node [style=none] (155) at (-5, 10.5) {};
		\node [style=none] (156) at (-7, 11) {$b-2$};
		\node [style=none] (157) at (-3, 10.5) {};
		\node [style=none] (158) at (4, 10.5) {};
		\node [style=none] (159) at (0.5, 11) {$2b$};
		\node [style=none] (160) at (-8, 8) {};
		\node [style=none] (161) at (-6, 8) {};
		\node [style=none] (162) at (-24, 8) {};
		\node [style=none] (163) at (-22, 8) {};
		\node [style=none] (164) at (-25, 10.5) {};
		\node [style=none] (165) at (-21, 10.5) {};
		\node [style=none] (166) at (-19, 10.5) {};
		\node [style=none] (167) at (-12, 10.5) {};
		\node [style=none] (168) at (-23, 11) {$b-2$};
		\node [style=none] (169) at (-15.5, 11) {$2b$};
		\end{pgfonlayer}
		\begin{pgfonlayer}{edgelayer}
		\draw [style=Border edge] (24.center) to (29.center);
		\draw [style=Border edge] (29.center) to (28.center);
		\draw [style=Border edge] (28.center) to (27.center);
		\draw [style=Border edge] (27.center) to (26.center);
		\draw [style=Border edge] (26.center) to (25.center);
		\draw [style=Border edge] (25.center) to (24.center);
		\draw (46.center) to (45.center);
		\draw (45.center) to (48.center);
		\draw (48.center) to (47.center);
		\draw (47.center) to (46.center);
		\draw (44.center) to (42.center);
		\draw (43.center) to (41.center);
		\draw (39.center) to (40.center);
		\draw (30.center) to (31.center);
		\draw (32.center) to (33.center);
		\draw (38.center) to (37.center);
		\draw (64.center) to (36.center);
		\draw [style=Border edge] (88.center) to (93.center);
		\draw [style=Border edge] (93.center) to (92.center);
		\draw [style=Border edge] (92.center) to (91.center);
		\draw [style=Border edge] (91.center) to (90.center);
		\draw [style=Border edge] (90.center) to (89.center);
		\draw [style=Border edge] (89.center) to (88.center);
		\draw (108.center) to (107.center);
		\draw (107.center) to (110.center);
		\draw (110.center) to (109.center);
		\draw (109.center) to (108.center);
		\draw (106.center) to (104.center);
		\draw (105.center) to (103.center);
		\draw (101.center) to (102.center);
		\draw (94.center) to (95.center);
		\draw (96.center) to (97.center);
		\draw (100.center) to (99.center);
		\draw (125.center) to (98.center);
		\draw (130.center) to (128.center);
		\draw (38.center) to (129.center);
		\draw (129.center) to (131.center);
		\draw (131.center) to (37.center);
		\draw (141.center) to (138.center);
		\draw (100.center) to (139.center);
		\draw (139.center) to (140.center);
		\draw (140.center) to (99.center);
		\draw [style=measuredots] (154.center) to (155.center);
		\draw [style=measuredots] (157.center) to (158.center);
		\draw (160.center) to (161.center);
		\draw (162.center) to (163.center);
		\draw [style=measuredots] (164.center) to (165.center);
		\draw [style=measuredots] (166.center) to (167.center);
		\end{pgfonlayer}
		\end{tikzpicture}
		\caption{The domino $\mu$-tableaux $T_{\varphi}$ and $T_{\vartheta}$ with $\varphi=(b/2-1,(b/2)^{b-2},b/2-1), \vartheta=(b/2-1,(b/2)^{b-1})$ and $\mu=(a+1,a^{b-1})$ where $2a=3b$. Observe that the domino tableaux differ only in the lightly shaded dominoes. We know they are both semistandard. Checking the pair of columns with lightly shaded dominoes carefully, we see that their reading words are latticed. The weight of the first domino tableau is $\lambda_B=((2b)^b,(b+1)^2,b^{b-2})$ from which we easily conclude that the weight of the second one is $((2b)^b,b+2,b^{b-1})=\lambda_B'$. The first domino tableau is odd since $|\mu|-\varphi_1-\varphi_2-\dots-\varphi_b=2a^2/3 +1-2a^2/9+2=4a^2/9+3$ is odd. Consequently, the second domino tableau is even and $p(\mu,(1^2);\lambda_B)$ and $p(\mu,(2);\lambda_B')$ are at least $1$.}
		\label{Figure almost domino plus}
	\end{figure}
\end{proof}

We are ready to provide the classification of the elementary irreducible induced-multiplicity-free characters of $T_{m,2}$ and $\left( S_m\wr S_2 \right)\cap A_{2m}$. We provide two statements with a shared proof using the discussion before Lemma~\ref{Lemma hooks}. 

\begin{corollary}\label{Cor Sm wr S2 cap An class}
	Let $m\geq 37$. All the elementary irreducible induced-multiplicity-free characters of $G=\left( S_m\wr S_2 \right)\cap A_{2m} $ are $\left( \charwr{\chi^{\mu}}{\chi^{\nu}}{2}\right) _G$, where one of the following holds:
	\begin{enumerate}[label=\textnormal{(\roman*)}]
		\item $\mu$ is of the form $(a^b)$ \emph{with} $a-b\nmid a$ or $\nu=(1^2)$,
		\item $\mu$ is of the form $(a+1,1^b)$ with $b>a+1$,
		\item $\mu$ is of the form $(a+1,a^{b-1}), (a^b,1)$ or $(a^{b-1},a-1)$ with $a-b\nmid a$ and $a>b+2$,
		\item $\mu$ is of the form  $(2c+1,(2c)^{c-1}), ((2c)^{c-1}, 2c-1)$ or $(3c+1,(3c)^{2c-1})$,
		\item $\mu$ is of the form $(a^{a-1},a-1)$, provided $m$ is even.
	\end{enumerate}
\end{corollary}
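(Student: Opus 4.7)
The approach is to translate the condition that $\rho_G$ is induced-multiplicity-free (for $\rho = \charwr{\chi^{\mu}}{\chi^{\nu}}{2}$ an elementary irreducible character of $S_m\wr S_2$) into multiplicity-freeness of an explicit symmetric function of degree $2m$, and then to apply the lemmas of this section together with Theorem~\ref{Theorem MF plethysms}. The restriction to $S_m\wr S_2$ of the sign of $S_{2m}$ equals $\eta = \charwrnb{\sgn}{\psi}{2}$ with $\psi=\mathbbm{1}$ for $m$ even and $\psi=\sgn$ for $m$ odd, so Lemma~\ref{Lemma tensoring plethysms} yields $\rho\times\eta = \charwr{\chi^{\mu'}}{\chi^{\bar\nu}}{2}$, where $\bar\nu=\nu$ if $m$ is even and $\bar\nu=\nu'$ if $m$ is odd. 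By Lemma~\ref{Lemma index two}(iv) and transitivity of induction, $\rho_G\ind^{S_{2m}}$ corresponds to the symmetric function $s_\nu\circ s_\mu$ when $\rho=\rho\times\eta$ (which happens only for $m$ even with $\mu=\mu'$), and to $s_\nu\circ s_\mu + s_{\bar\nu}\circ s_{\mu'}$ otherwise. Multiplicity-freeness of this sum forces $s_\nu\circ s_\mu$ itself to be multiplicity-free, so Theorem~\ref{Theorem MF plethysms} leaves us with three shapes of $\mu$: rectangular, hook, or almost rectangular.

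For $\mu=(a^b)$ rectangular with $a=b$ (so $\mu=\mu'$ and $m=a^2$), the relevant symmetric function ($s_\nu\circ s_\mu$ for $m$ even, $s_\mu^2$ for $m$ odd) is multiplicity-free by Theorem~\ref{Theorem MF plethysms}(i) or Theorem~\ref{Theorem Stembridge}(iv), yielding the $a=b$ subcase of the corollary's (i) (since $0=a-b\nmid a$). For $a\ne b$, Lemma~\ref{Lemma rotate rectangles} says $s_\nu\circ s_\mu + s_{\bar\nu}\circ s_{\mu'}$ fails to be multiplicity-free precisely when $\nu=\bar\nu=(2)$ and $a-b\mid a$. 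For $m$ odd this is impossible ($\bar\nu=\nu'\ne\nu$), while for $m$ even the surviving condition is $a-b\nmid a$ or $\nu=(1^2)$, matching case (i).

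The hook case $\mu=(a+1,1^b)$ is analogous. Self-conjugate hooks have $a=b$, forcing $m=2a+1$ odd; then $s_\mu^2$ is not multiplicity-free by Theorem~\ref{Theorem Stembridge} (a non-linear hook is neither linear nor rectangular), so no character arises. For $\mu\ne\mu'$ we may assume $a<b$ (the opposite case is covered by $\mu'$, which gives the same $G$-character). Lemma~\ref{Lemma hooks} says the sum fails iff $b=a+1$ and $\nu=\bar\nu$; this is vacuous for $m$ odd and reduces to $b=a+1$ for $m$ even, so multiplicity-freeness holds iff $b>a+1$, producing case (ii). Note that for $m$ odd, $a+b+1$ odd forces $a$ and $b$ to have the same parity, so $b=a+1$ cannot occur and $b>a+1$ is automatic.

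Finally, suppose $\mu$ is almost rectangular. The only self-conjugate form is $(a^{a-1},a-1)$ with $|\mu|=a^2-1$; for $m$ even (so $a$ odd), $s_\nu\circ s_\mu$ is multiplicity-free by Theorem~\ref{Theorem MF plethysms}(i), giving case (v), and for $m$ odd $s_\mu^2$ is not multiplicity-free by Theorem~\ref{Theorem Stembridge}. The non-self-conjugate shapes $(a+1,a^{a-1})$ and $(a^a,1)$ (which are mutual conjugates) are ruled out by Lemma~\ref{Lemma square and a box}. For the remaining $\mu\ne\mu'$ we may assume $a>b$ (the opposite case being given by $\mu'$) and apply Lemma~\ref{Lemma almost rectangles}: the generic condition $a-b\nmid a$ and $a>b+2$ gives case (iii), while the three exceptions produce the three shapes in case (iv). A size check ($2c^2+1$, $2c^2-1$, $6c^2+1$ for the three exceptions) shows each exceptional $|\mu|$ is odd, so $m$ is odd and the condition $\nu\ne\bar\nu$ demanded by exceptions (ii) and (iii) of Lemma~\ref{Lemma almost rectangles} holds automatically. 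The main technical difficulty across all cases is the parity bookkeeping: each lemma's conclusion is phrased in terms of independent parameters $\nu,\bar\nu$, whereas for us they are correlated by the parity of $m$. Once this bookkeeping is handled, the multiplicity-free analysis itself is delegated entirely to the lemmas already proved.
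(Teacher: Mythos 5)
Your proposal is correct and takes essentially the same approach as the paper: the paper's (shared) proof of Corollaries~\ref{Cor Sm wr S2 cap An class} and~\ref{Cor Tm,2 class} likewise reduces the problem to the multiplicity-freeness of $s_{\nu}\circ s_{\mu}+s_{\bar{\nu}}\circ s_{\mu'}$ (or $s_{\nu}\circ s_{\mu}$, respectively $s_{\mu}^2$, for self-conjugate $\mu$) and then runs exactly your case analysis via Theorem~\ref{Theorem MF plethysms}, Theorem~\ref{Theorem Stembridge}, Lemma~\ref{Lemma rotate rectangles}, Lemma~\ref{Lemma hooks}, Lemma~\ref{Lemma square and a box} and Lemma~\ref{Lemma almost rectangles}, only phrased uniformly in the parameter $\varepsilon\in\{\pm\}$ rather than in the parity of $m$. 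The one point worth adding is the paper's observation that $a-b\mid a$ forces $ab$ to be even, which is what makes your ``always multiplicity-free'' conclusion for odd $m$ in the non-square rectangular case consistent with the condition stated in~(i).
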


\begin{corollary}\label{Cor Tm,2 class}
	Let $m\geq 37$. All the elementary irreducible induced-multiplicity-free characters of $G=T_{m,2} $ are $\left( \charwr{\chi^{\mu}}{\chi^{\nu}}{2}\right) _G$, where one of the following holds:
	\begin{enumerate}[label=\textnormal{(\roman*)}]
		\item $\mu$ is rectangular,
		\item $\mu$ is a hook,
		\item $\mu$ is of the form $(a+1,a^{b-1}), (a^b,1)$ or $(a^{b-1},a-1)$ with $a-b\nmid a$ and $a>b+2$,
		\item $\mu$ is of the form  $(2c+1,(2c)^{c-1})$,
		\item $\mu$ is of the form $(a^{a-1},a-1)$, provided $m$ is odd.
	\end{enumerate}
\end{corollary}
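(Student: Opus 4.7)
The plan is a case analysis in the shape of $\mu$, reducing each case to a symmetric-function statement handled by one of the lemmas developed earlier in this section and in \S\ref{Sec n=2m}. First, the opening discussion of \S\ref{Sec n=2m} lets me reduce the problem: the elementary character $\left(\charwr{\chi^{\mu}}{\chi^{\nu}}{2}\right)_{T_{m,2}}$ is induced-multiplicity-free precisely when a certain element of $\Lambda^{+}$ is multiplicity-free, namely $s_{\mu}^{2}$ (when $\mu=\mu'$ and $m$ is even, so $T_{m,2}=G_-$), $s_{\nu}\circ s_{\mu}$ (when $\mu=\mu'$ and $m$ is odd, so $T_{m,2}=G_+$), or $s_{\nu}\circ s_{\mu}+s_{\bar{\nu}}\circ s_{\mu'}$ (when $\mu\neq\mu'$), with $\bar{\nu}=\nu$ for odd $m$ and $\bar{\nu}=\nu'$ for even $m$. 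Theorem~\ref{Theorem MF plethysms} then forces $\mu$ to be rectangular, almost rectangular, or a hook.

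Next I would dispose of the rectangular and hook cases. For rectangular $\mu=(a^{b})$, squares are self-conjugate and are handled directly (Theorem~\ref{Theorem MF plethysms} for odd $m$, Theorem~\ref{Theorem Stembridge}(iv) for even $m$); for non-squares, Lemma~\ref{Lemma rotate rectangles} gives multiplicity-freeness unless $\nu=\bar{\nu}=(2)$ and $a-b\mid a$. This bad case never arises for $T_{m,2}$: if $m$ is even then $\bar{\nu}=\nu'\neq\nu$, and if $m$ is odd then $ab=m$ forces both $a$ and $b$ to be odd, so $a-b$ is even and cannot divide the odd integer $a$. This yields (i). For hooks, Lemma~\ref{Lemma hooks} gives multiplicity-freeness except when $|a-b|=1$ and $\nu=\bar{\nu}$; the analogous parity argument (noting further that self-conjugate hooks have odd size and hence only arise for odd $m$, where the relevant symmetric function is the single plethysm $s_{\nu}\circ s_{\mu}$) excludes this, yielding (ii).

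The remaining shape is almost rectangular. The self-conjugate subcase $\mu=(a^{a-1},a-1)$ yields (v) for odd $m$ via Theorem~\ref{Theorem MF plethysms}, and is ruled out for even $m$ by Theorem~\ref{Theorem Stembridge} since then the symmetric function is $s_{\mu}^{2}$ with $\mu$ not rectangular. For non-self-conjugate $\mu$ with $a\neq b$, conjugating if necessary I would assume $a>b$ and apply Lemma~\ref{Lemma almost rectangles}. Its main clause ``$a-b\nmid a$ and $a>b+2$'' gives (iii); its exceptional case~(i) gives (iv); and its exceptional cases~(ii) and (iii) both require $\nu\neq\bar{\nu}$ and hence even $m$, but have odd sizes ($2b^{2}-1$ and $6k^{2}+1$), so they do not occur.

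The main obstacle is the leftover case $a=b$, where $\mu=(a+1,a^{a-1})$ and $\mu'=(a^{a},1)$ are almost rectangular but sit outside the hypothesis $a>b$ of Lemma~\ref{Lemma almost rectangles}; the claim is that $s_{\nu}\circ s_{\mu}+s_{\bar{\nu}}\circ s_{\mu'}$ is never multiplicity-free. My plan is to exhibit a self-conjugate partition $\lambda\vdash 2(a^{2}+1)$ with $c(\mu,\mu;\lambda)\geq 2$ (the bound $m\geq 37$ forces $a\geq 6$, giving ample room to maneuver); by Lemma~\ref{Lemma find two} this produces $p(\mu,(2);\lambda)=p(\mu,(1^{2});\lambda)=1$, and then Lemma~\ref{Lemma plethysms and char properties}(ii), together with $\lambda=\lambda'$, transports these equalities to the conjugate plethysms $p(\mu',\kappa;\lambda)=1$ for both $\kappa\vdash 2$ (the parity of $|\mu|=a^{2}+1$ controls whether one has to conjugate $\kappa$, but both options are already covered). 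Hence $\lambda$ is a common constituent of both summands for every choice of $\nu,\bar{\nu}$, precluding multiplicity-freeness. I expect the construction of $\lambda$ to follow the template of Lemma~\ref{Lemma almost rectangles}'s proof, building explicit Young tableaux witnessing the multiplicity, adapted to the diagonal regime $a=b$, which Lemma~\ref{Lemma constructions of partitions} does not directly reach.
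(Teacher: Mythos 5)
Your reduction, and your treatment of the rectangular, hook, self-conjugate almost rectangular, and $a\neq b$ almost rectangular cases, are all correct and run along the same lines as the paper's proof: the paper likewise works case-by-case through Lemma~\ref{Lemma rotate rectangles}, Lemma~\ref{Lemma hooks} and Lemma~\ref{Lemma almost rectangles}, and your parity arguments (e.g.\ that $a-b\mid a$ forces $m=ab$ even, and that the exceptional cases (ii) and (iii) of Lemma~\ref{Lemma almost rectangles} have odd size while requiring $\bar{\nu}=\nu'$) are equivalent to the ones used there.

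The genuine gap is the case you yourself flag as the main obstacle, $\mu=(a+1,a^{a-1})$ (equivalently $\mu'=(a^a,1)$). You do not prove it: you only state that you ``expect'' to construct a self-conjugate $\lambda\vdash 2(a^2+1)$ with $c(\mu,\mu;\lambda)\geq 2$, and no candidate $\lambda$ or tableaux are given. This is not a routine omission. Your plan demands strictly more than is needed, and its feasibility is unclear: for small $a$ (e.g.\ $a=3$, admittedly below the range $m\geq 37$) one can check that no self-conjugate partition of $2(a^2+1)$ carries Littlewood--Richardson multiplicity $\geq 2$ for $\mu^2$, so the existence of such a $\lambda$ for $a\geq 6$ would itself require a real argument, and neither Lemma~\ref{Lemma constructions of partitions} nor the tableaux templates of Lemma~\ref{Lemma almost rectangles} supply it. The paper closes this case with Lemma~\ref{Lemma square and a box}, which you appear not to have noticed: it takes the (generally non-self-conjugate) partition $\lambda=(2a,(2a-1)^{a-1},2,1^{a-1})$ and exhibits two semistandard latticed $\lambda/\mu$-tableaux of weight $\mu$ and two $\lambda/\mu'$-tableaux of weight $\mu'$, so that $c(\mu,\mu;\lambda),c(\mu',\mu';\lambda)\geq 2$ and Lemma~\ref{Lemma find two} gives a common constituent of $s_{\nu}\circ s_{\mu}$ and $s_{\bar{\nu}}\circ s_{\mu'}$ for every $\nu,\bar{\nu}\vdash 2$, with no self-conjugacy needed. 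Citing that lemma (or reproducing its four tableaux) is what your argument is missing; as written, the classification in case (iv)/(v) versus the excluded shapes $(a+1,a^{a-1})$ and $(a^a,1)$ is not established.
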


\begin{proof}[Proof of Corollary~\ref{Cor Sm wr S2 cap An class} and Corollary~\ref{Cor Tm,2 class}]
	We go through all the possible choices for $\mu$ and observe which choices of $\nu\vdash 2$ and $\varepsilon\in\left\lbrace \pm\right\rbrace $ make the character $\left( \charwr{\chi^{\mu}}{\chi^{\nu}}{2}\right) _{G_{\varepsilon}}$ induced-multiplicity-free.
	
	If $\mu$ is a square partition we can take arbitrary $\nu$ and $\varepsilon$ since $s_{\mu}^2$ and $s_{\nu}\circ s_{\mu}$ are always multiplicity-free by Theorem~\ref{Theorem Stembridge}(iv) and Theorem~\ref{Theorem MF plethysms}(i). If $\mu=(a^b)$ is non-square rectangular, Lemma~\ref{Lemma rotate rectangles} shows that the only prohibited choice is given by $a-b\mid a, \varepsilon=+$ and $\nu=(2)$. Since the condition $a-b\mid a$ implies that $m=ab$ is even, we recover (i).
	
	If $\mu$ is a self-conjugate hook, then $m$ is odd and we can allow only $\varepsilon=+$ since $s_{\mu}^2$ is not multiplicity-free by Theorem~\ref{Theorem Stembridge}. On the other hand, if $\mu=(a+1,1^b)$ is a non-self-conjugate hook, without loss of generality suppose that $a<b$ and use Lemma~\ref{Lemma hooks} to obtain the only prohibited choice given by $b=a+1$ and $\varepsilon=+$. For these parameters, $m=a+b+1$ is even and we obtain (ii).
	
	Now suppose that $\mu$ is an almost rectangular partition, say $(a+1,a^{b-1})$, $(a^b,1)$ or $(a^{b-1},a-1)$, and without loss of generality suppose that $a\geq b$. If $a> b$, we obtain (iii) immediately from the non-exceptional choices of $a$ and $b$ in Lemma~\ref{Lemma almost rectangles}. The exceptional choices in Lemma~\ref{Lemma almost rectangles}(i)-(iii) yield (iv) (with $c=b$ for (i) and (ii), and $c=b/2$ for (iii)), because (ii) and (iii) require $\varepsilon=-$ and in both cases $\mu$ has an odd size. Finally if $a=b$, by Lemma~\ref{Lemma square and a box} we conclude that $\mu=(a^{a-1},a-1)$. Since $s_{\mu}^2$ is not multiplicity-free by Theorem~\ref{Theorem Stembridge}, we get an induced-multiplicity-free character only for $\varepsilon=+$ which yields (v).
\end{proof}

\section{Proof of the main theorem}\label{Sec main thm}

Combining our classifications of irreducible induced-multiplicity-free characters, we can verify the main theorem.

\begin{proof}[Proof of Theorem~\ref{Theorem main}]
	The list after the statement of Theorem~\ref{Theorem main} provides references to results which show that the groups from Theorem~\ref{Theorem main}(iii)--(x) are multiplicity-free with the exceptions of $\left( S_m\wr S_2\right)\cap A_{2m}$ and $T_{m,2}$ for $33\leq m\leq 36$. These results and Corollary~\ref{Cor S1 times S2 wr Sm class} (needed for $(S_2\wr S_h)\cap A_{2h}$ embedded in $S_{2h+1}$) also show that the requirements for $k,m$ and $h$ in (iii), (iv) and (x) are necessary. For the two exceptions one may use Lemma~\ref{Lemma hooks} with $a=1$ and $b=m-2$ to obtain an induced multiplicity-free character $\left( \charwr{\chi^{(2,1^{m-2})}}{\chi^{(2)}}{2}\right) _G$, where $G$ is either $\left( S_m\wr S_2\right)\cap A_{2m}$ or $T_{m,2}$.
	
	Using Proposition~\ref{Prop possible MF subgroups}, to finish the proof it remains to show, firstly, that the only non-multiplicity-free groups in Proposition~\ref{Prop possible MF subgroups}(i) and (ii) are $S_k\times A_2$ and $A_k\times A_2$, and secondly, that the groups $S_2\wr A_h, T_{2,h}$ and $T_{2,h}\cap A_{2h}$ embedded in $S_{2h}$ and $S_{2h+1}$, and $S_m\wr A_3$ embedded in $S_{3m}$ are not multiplicity-free. The latter part follows from Corollary~\ref{Cor S2 wr Sm class}(ii), Corollary~\ref{Cor S1 times S2 wr Sm class} and Proposition~\ref{Prop Sm wr S3 class}(ii).
	
	Clearly, $S_n$ and $A_n$ are multiplicity-free, and so are $A_k\times S_2$ and $(S_k\times S_2)\cap A_{k+2}$ (and consequently $S_k\times S_2$), since by Lemma~\ref{Lemma rotate linear} applied with partitions $(k)$ and $(1^k)$, the symmetric function $s_{(k)}s_{\nu}+s_{(1^k)}s_{\bar{\nu}}$ is multiplicity-free for any $\nu,\bar{\nu}\vdash 2$. On the other hand, the groups $S_k\times A_2$ and $A_k\times A_2$ are not multiplicity-free by Lemma~\ref{Lemma sanity lemma}.
	
	Now suppose that $k\geq l\neq 2$ with $k+l\geq 66$. To finish the proof we need to show that $A_k\times A_l$ is multiplicity-free as then all groups containing it are also multiplicity-free. Since $l\neq 2$, there is a self-conjugate partition $\mu\vdash l$. By Theorem~\ref{Theorem Stembridge}(i), we know that $s_{(k)}s_{\mu}$ and $s_{(1^k)}s_{\mu}$ are multiplicity-free, and so is their sum as any common constituent $s_{\lambda}$ would satisfy $\lambda_1\geq k$ and, since $\mu$ is self-conjugate, also $\lambda_1\leq \mu_1+1\leq (l+1)/2+1< k$ which is impossible. Therefore $\chi^{(k)}_{A_k}\boxtimes \chi^{\mu}_{A_l}$ is induced-multiplicity-free, as required.
\end{proof}

\begin{remark}\label{Remark improving n}
	Theorem~\ref{Theorem main} remains true for $n=65$ and can be proved by improving the bound in Corollary~\ref{Cor S1 times S2 wr Sm class}. However, for $n=64$ we need to include the group $T_{2,32}$ for the statement to remain valid.
\end{remark}

\subsection*{Acknowledgements} The author would like to thank Mark Wildon for suggesting this project as well as providing useful comments and {\sc Magma} code used in a couple of proofs, and an anonymous referee for a careful reading and valuable suggestions and corrections.

	\bibliographystyle{alpha}
\bibliography{../../../References/MSNrefs}

\begin{thebibliography}{dBPW21}

\bibitem[AC12]{AkerCanParkingGelfand12}
K\"{u}r\c{s}at Aker and Mahir~Bilen Can.
\newblock From parking functions to {G}elfand pairs.
\newblock {\em Proc. Amer. Math. Soc.}, 140(4):1113--1124, 2012.

\bibitem[AHN21]{AndersonHumphriesNicholsonStrongGelfPairsSn21}
Gradin Anderson, Stephen~P. Humphries, and Nathan Nicholson.
\newblock Strong {G}elfand pairs of symmetric groups.
\newblock {\em J. Algebra Appl.}, 20(4):Paper No. 2150054, 22, 2021.

\bibitem[BBP22]{BessenrodtBowmanPagetMF22}
Christine Bessenrodt, Chris Bowman, and Rowena Paget.
\newblock The classification of multiplicity-free plethysms of {S}chur
  functions.
\newblock {\em Trans. Amer. Math. Soc.}, 375(7):5151--5194, 2022.

\bibitem[BCP97]{BosmaCannonPlayoustMagma97}
Wieb Bosma, John Cannon, and Catherine Playoust.
\newblock The {M}agma algebra system. {I}. {T}he user language.
\newblock {\em J. Symbolic Comput.}, 24(3-4):235--265, 1997.
\newblock Computational algebra and number theory (London, 1993).

\bibitem[Ben88]{BensonSpinModules88}
Dave Benson.
\newblock Spin modules for symmetric groups.
\newblock {\em J. London Math. Soc. (2)}, 38(2):250--262, 1988.

\bibitem[BR18]{BensonRatcliffGelfarndWreath18}
Chal Benson and Gail Ratcliff.
\newblock A family of finite {G}elfand pairs associated with wreath products.
\newblock {\em Colloq. Math.}, 152(1):65--78, 2018.

\bibitem[CL95]{CarreSplitting95}
Christophe Carr\'{e} and Bernard Leclerc.
\newblock Splitting the square of a {S}chur function into its symmetric and
  antisymmetric parts.
\newblock {\em J. Algebraic Combin.}, 4(3):201--231, 1995.

\bibitem[CSS21]{CanSheSpeyerStrongGelfPairsFwrSn21}
Mahir~Bilen Can, Yiyang She, and Liron Speyer.
\newblock Strong {G}elfand subgroups of {$F\wr S_n$}.
\newblock {\em Internat. J. Math.}, 32(2):Paper No. 2150010, 63, 2021.

\bibitem[dBPW21]{deBoeckPagetWildonPlethysms21}
Melanie de~Boeck, Rowena Paget, and Mark Wildon.
\newblock Plethysms of symmetric functions and highest weight representations.
\newblock {\em Trans. Amer. Math. Soc.}, 374(11):8013--8043, 2021.

\bibitem[GM10]{GodsilMeagherMultiplicity-free10}
Chris Godsil and Karen Meagher.
\newblock Multiplicity-free permutation representations of the symmetric group.
\newblock {\em Ann. Comb.}, 13(4):463--490, 2010.

\bibitem[GW15]{GiannelliWildonFoulkesandDecomposition15}
Eugenio Giannelli and Mark Wildon.
\newblock Foulkes modules and decomposition numbers of the symmetric group.
\newblock {\em J. Pure Appl. Algebra}, 219(2):255--276, 2015.

\bibitem[IRS90]{InglisRichardsonSaxlModel90}
N.~F.~J. Inglis, R.~W. Richardson, and J.~Saxl.
\newblock An explicit model for the complex representations of {$S_n$}.
\newblock {\em Arch. Math. (Basel)}, 54(3):258--259, 1990.

\bibitem[Jam78]{JamesSymmetric78}
G.~D. James.
\newblock {\em The representation theory of the symmetric groups}, volume 682
  of {\em Lecture Notes in Mathematics}.
\newblock Springer, Berlin, 1978.

\bibitem[JK81]{JamesKerberSymmetric81}
Gordon James and Adalbert Kerber.
\newblock {\em The representation theory of the symmetric group}, volume~16 of
  {\em Encyclopedia of Mathematics and its Applications}.
\newblock Addison-Wesley Publishing Co., Reading, Mass., 1981.
\newblock With a foreword by P. M. Cohn and an introduction by Gilbert de B.
  Robinson.

\bibitem[JL01]{JamesLiebeckRepTheory01}
Gordon James and Martin Liebeck.
\newblock {\em Representations and characters of groups}.
\newblock Cambridge University Press, New York, second edition, 2001.

\bibitem[Knu98]{KnuthArtofCPVol398}
Donald~E. Knuth.
\newblock {\em The art of computer programming. {V}ol. 3}.
\newblock Addison-Wesley, Reading, MA, second edition, 1998.
\newblock Sorting and searching.

\bibitem[Mac95]{MacdonaldPolynomials95}
I.~G. Macdonald.
\newblock {\em Symmetric functions and {H}all polynomials}.
\newblock Oxford Mathematical Monographs. The Clarendon Press, Oxford
  University Press, New York, second edition, 1995.
\newblock With contributions by A. Zelevinsky, Oxford Science Publications.

\bibitem[Mar02]{MarotiPrimitive02}
Attila Mar\'{o}ti.
\newblock On the orders of primitive groups.
\newblock {\em J. Algebra}, 258(2):631--640, 2002.

\bibitem[Oka98]{OkadaRectangularProducts98}
Soichi Okada.
\newblock Applications of minor summation formulas to rectangular-shaped
  representations of classical groups.
\newblock {\em J. Algebra}, 205(2):337--367, 1998.

\bibitem[PS80]{PraegerSaxlPrimitiveOrder80}
Cheryl~E. Praeger and Jan Saxl.
\newblock On the orders of primitive permutation groups.
\newblock {\em Bull. London Math. Soc.}, 12(4):303--307, 1980.

\bibitem[Sax81]{SaxlMultiplicity-free81}
Jan Saxl.
\newblock On multiplicity-free permutation representations.
\newblock In {\em Finite geometries and designs ({P}roc. {C}onf., {C}helwood
  {G}ate, 1980)}, volume~49 of {\em London Math. Soc. Lecture Note Ser.}, pages
  337--353. Cambridge Univ. Press, Cambridge-New York, 1981.

\bibitem[Sta86]{StanleyPlaneSymmetries86}
Richard~P. Stanley.
\newblock Symmetries of plane partitions.
\newblock {\em J. Combin. Theory Ser. A}, 43(1):103--113, 1986.

\bibitem[Sta99]{StanleyEnumerativeII99}
Richard~P. Stanley.
\newblock {\em Enumerative combinatorics. {V}ol. 2}, volume~62 of {\em
  Cambridge Studies in Advanced Mathematics}.
\newblock Cambridge University Press, Cambridge, 1999.
\newblock With a foreword by Gian-Carlo Rota and appendix 1 by Sergey Fomin.

\bibitem[Sta00]{StanleyPositivity00}
Richard~P. Stanley.
\newblock Positivity problems and conjectures in algebraic combinatorics.
\newblock In {\em Mathematics: frontiers and perspectives}, pages 295--319.
  Amer. Math. Soc., Providence, RI, 2000.

\bibitem[Ste01]{StembridgeMultiplicity-free01}
John~R. Stembridge.
\newblock Multiplicity-free products of {S}chur functions.
\newblock {\em Ann. Comb.}, 5(2):113--121, 2001.

\bibitem[Tou21]{ToutGelfandWreath21}
Omar Tout.
\newblock Gelfand pairs involving the wreath product of finite abelian groups
  with symmetric groups.
\newblock {\em Canad. Math. Bull.}, 64(1):91--97, 2021.

\bibitem[Wil09]{WildonMultiplicity-free09}
Mark Wildon.
\newblock Multiplicity-free representations of symmetric groups.
\newblock {\em J. Pure Appl. Algebra}, 213(7):1464--1477, 2009.

\end{thebibliography}
\end{document}